\providecommand{\U}[1]{\protect\rule{.1in}{.1in}}
\theoremstyle{definition}
\newtheorem{theo}{Theorem}[section]
\newenvironment{theorem}[1][]
{\begin{theo}[#1]\begin{leftbar}}
{\end{leftbar}\end{theo}}
\newtheorem{lem}[theo]{Lemma}
\newenvironment{lemma}[1][]
{\begin{lem}[#1]\begin{leftbar}}
{\end{leftbar}\end{lem}}
\newtheorem{prop}[theo]{Proposition}
\newenvironment{proposition}[1][]
{\begin{prop}[#1]\begin{leftbar}}
{\end{leftbar}\end{prop}}
\newtheorem{defi}[theo]{Definition}
\newenvironment{definition}[1][]
{\begin{defi}[#1]\begin{leftbar}}
{\end{leftbar}\end{defi}}
\newtheorem{remk}[theo]{Remark}
\newenvironment{remark}[1][]
{\begin{remk}[#1]\begin{leftbar}}
{\end{leftbar}\end{remk}}
\newtheorem{coro}[theo]{Corollary}
\newenvironment{corollary}[1][]
{\begin{coro}[#1]\begin{leftbar}}
{\end{leftbar}\end{coro}}
\newtheorem{conv}[theo]{Convention}
\newtheorem{quest}[theo]{Question}
\newenvironment{question}[1][]
{\begin{quest}[#1]\begin{leftbar}}
{\end{leftbar}\end{quest}}
\newtheorem{warn}[theo]{Warning}
\newtheorem{soln}{Solution}
\newtheorem{conj}[theo]{Conjecture}
\newtheorem{exam}[theo]{Example}
\newenvironment{example}[1][]
{\begin{exam}[#1]\begin{leftbar}}
{\end{leftbar}\end{exam}}
\newenvironment{statement}{\begin{quote}}{\end{quote}}
\newenvironment{proof}[1][Proof]{\noindent\textbf{#1.} }{\ \rule{0.5em}{0.5em}}
\let\sumnonlimits\sum
\let\prodnonlimits\prod
\let\cupnonlimits\bigcup
\let\capnonlimits\bigcap
\renewcommand{\sum}{\sumnonlimits\limits}
\renewcommand{\prod}{\prodnonlimits\limits}
\renewcommand{\bigcup}{\cupnonlimits\limits}
\renewcommand{\bigcap}{\capnonlimits\limits}
\newenvironment{verlong}{}{}
\newenvironment{vershort}{}{}
\newenvironment{noncompile}{}{}
\newcommand{\kk}{{\mathbf{k}}}
\newcommand{\xx}{{\mathbf{x}}}
\newcommand{\id}{{\operatorname{id}}}
\newcommand{\ev}{\operatorname{ev}}
\newcommand{\Adm}{\operatorname{Adm}}
\newcommand{\pack}{\operatorname{pack}}
\newcommand{\Mon}{\operatorname{Mon}}
\newcommand{\Comp}{{\operatorname{Comp}}}
\newcommand{\QSym}{{\operatorname{QSym}}}
\newcommand{\Powser}{\mathbf{k}\left[\left[x_1,x_2,x_3,\ldots\right]\right]}
\newcommand{\Par}{\operatorname{Par}}
\newcommand{\bdd}{\operatorname{bdd}}
\newcommand{\sign}{\operatorname{sign}}
\newcommand{\Stab}{\operatorname{Stab}}
\newcommand{\EE}{{\mathbf{E}}}
\newcommand{\FF}{{\mathbf{F}}}
\newcommand{\bk}{{\mathbf{k}}}
\newcommand{\NN}{{\mathbb{N}}}
\newcommand{\ZZ}{{\mathbb{Z}}}
\newcommand{\QQ}{{\mathbb{Q}}}
\begin{document}

\author{Darij Grinberg}

\title{Double posets and the antipode of $\QSym$}

\date{version 3.2 (April 28, 2017, typos corrected 20 June 2026) \\[1pc]
Corrected version of:
\href{https://doi.org/10.37236/6660}{%
Darij Grinberg, \textit{Double Posets and the Antipode of QSym},
The Electronic Journal of Combinatorics \textbf{24} (2), P2.22.}
}

\maketitle

\begin{abstract}
A quasisymmetric function is assigned to every double poset (that is,
every finite set endowed with two partial orders) and any weight function
on its ground set. This generalizes well-known objects such as monomial
and fundamental quasisymmetric functions, (skew) Schur functions, dual
immaculate functions, and quasisymmetric
$\left(P, \omega\right)$-partition enumerators.
We prove a formula for the antipode of this function that
holds under certain conditions (which are satisfied when the second order
of the double poset is total, but also in some other cases); this
restates (in a way that to us seems more natural) a
result by Malvenuto and Reutenauer, but our proof is new and
self-contained. We generalize it further to an even more comprehensive
setting, where a group acts on the double poset by automorphisms.
\\[0.2pc]

\textbf{Keywords:}
antipodes,
double posets,
Hopf algebras,
posets,
P-partitions,
quasisymmetric functions.
\\[0.2pc]

\textbf{MSC2010 Mathematics Subject Classifications:}
05E05, 05E18.
\end{abstract}

\begin{comment}
% Here goes the French abstract (``R\'esum\'e''):

\begin{abstract}
Une fonction quasi-sym\'etrique est attribu\'ee \`a chaque ``double
poset'' (c'est-\`a-dire, ensemble fini dot\'e de deux ordres
partiels) et chaque fonction de poids sur son ensemble
sous-jacent. Cela g\'en\'eralise des objets bien connus tels que les
fonctions quasi-sym\'etriques fondamentales et monomiales, les
fonctions de Schur (obliques), les fonctions immacul\'ees duales,
et les $\left(P, \omega \right)$-partition enumerateurs
quasi-sym\'etriques.
Nous montrons une formule pour l'antipode de cette fonction
valable dans certaines conditions (qui sont satisfaites
lorsque le second ordre
de la double poset est total, mais aussi dans d'autres cas); ce
r\'eaffirme (d'une mani\`ere qui nous semble plus naturelle) un
r\'esultat par Malvenuto et Reutenauer, mais notre preuve est nouvelle et
autonome. Nous g\'en\'eralisons ce r\'esultat plus loin \`a une situation
plus complexe,
o\`u un groupe agit sur le ``double poset'' par automorphismes.
\end{abstract}
\end{comment}

\begin{verlong}
\tableofcontents
\end{verlong}

\section{Introduction}
\label{sec:in}

Double posets and $\EE$-partitions (for $\EE$ a double poset)
have been introduced by Claudia Malvenuto and Christophe
Reutenauer \cite{Mal-Reu-DP}; their goal was to construct a
combinatorial Hopf algebra which harbors a noticeable amount of
structure, including an analogue of the Littlewood-Richardson
rule and a lift of the internal product operation of the
Malvenuto-Reutenauer Hopf algebra of permutations. In this note,
we shall employ these same notions to restate in a simpler form,
and reprove in a more elementary fashion, a formula for the
antipode in the Hopf algebra $\QSym$ of quasisymmetric functions
due to (the same) Malvenuto and Reutenauer
\cite[Theorem 3.1]{Mal-Reu}. We then further generalize this
formula to a setting in which a group acts on the double poset
(a generalization inspired by Katharina Jochemko's
\cite{Joch}).

\begin{vershort}
In the present version of the paper, some (classical and/or
straightforward) proofs are missing or sketched. A more detailed
version exists, in which at least a few of these proofs are
elaborated on more\footnote{It can be downloaded from \newline
\url{http://www.cip.ifi.lmu.de/~grinberg/algebra/dp-abstr-long.pdf} .
It is also archived as an ancillary file on
\url{http://arxiv.org/abs/1509.08355v3}, although the former
website is more likely to be updated.}.
\end{vershort}
\begin{verlong}
The present version of the paper is the
\textit{detailed version}\footnote{This version can be downloaded
from \newline
\url{http://www.cip.ifi.lmu.de/~grinberg/algebra/dp-abstr-long.pdf} .
It is also archived as an ancillary file on
\url{http://arxiv.org/abs/1509.08355v3}, although the former
website is more likely to be updated.}.
A standard version is also available\footnote{at
\url{http://www.cip.ifi.lmu.de/~grinberg/algebra/dp-abstr.pdf}
and
\url{http://arxiv.org/abs/1509.08355v3}}.
The two versions differ in that the detailed version contains extra
details in various proofs (although the level of detail is not
always consistent).
\end{verlong}

A short summary of this paper has been submitted to the FPSAC
conference \cite{Gri-extabs}.

\subsection*{Acknowledgments}

Katharina Jochemko's work \cite{Joch} provoked this research.
I learnt a lot about $\QSym$ from Victor Reiner. The SageMath
computer algebra system \cite{SageMath} was used for some
computations that suggested one of the proofs.

\subsection*{Note on the published version of this paper}

The document you are reading is the
\begin{verlong}
detailed version of a
\end{verlong}
preprint of a paper (of the
same title) that was
published in the
\href{http://www.combinatorics.org/}{Electronic Journal of
Combinatorics}
in 2017
(\href{https://doi.org/10.37236/6660}{Volume 24, Issue 2,
doi:10.37236/6660}).
The published version differs from
\begin{vershort}
this preprint
\end{vershort}
\begin{verlong}
the standard version of this preprint
\end{verlong}
insubstantially\footnote{The main difference is that in the
published version, the long footnote in
Section~\ref{sect.qsym-intro} has been relegated into a
separate subsection (\S 2.2), whereas the remainder of
Section~\ref{sect.qsym-intro} has become \S 2.1.
Other than this, the two versions differ in formatting and
editorialization.}.

\section{Quasisymmetric functions}
\label{sect.qsym-intro}

Let us first briefly introduce the notations that will be used in the
following.

We set $\NN = \left\{0, 1, 2, \ldots\right\}$. A \textit{composition}
means a finite sequence of positive integers. We let $\Comp$ be the set
of all compositions. For $n \in \NN$, a \textit{composition of $n$}
means a composition whose entries sum to $n$ (that is, a composition
$\left(\alpha_1, \alpha_2, \ldots, \alpha_k\right)$ satisfying
$\alpha_1 + \alpha_2 + \cdots + \alpha_k = n$).

Let $\kk$ be an arbitrary commutative ring. We shall keep $\kk$ fixed
throughout this paper.
We consider the $\kk$-algebra $\Powser$ of formal
power series in infinitely many (commuting) indeterminates
$x_1, x_2, x_3, \ldots$ over $\kk$. A \textit{monomial} shall always
mean a monomial (without coefficients) in the variables
$x_1, x_2, x_3, \ldots$.\ \ \ \ \footnote{For the sake of completeness,
let us give a detailed definition of monomials and of the topology
on $\Powser$. (This definition has been copied from
\cite[\S 2]{Gri-dimm}, essentially unchanged.)

Let $x_{1},x_{2},x_{3},\ldots$ be countably many distinct symbols. We
let $\Mon$ be the free abelian monoid on the set $\left\{
x_{1},x_{2},x_{3},\ldots\right\}  $ (written multiplicatively); it consists of
elements of the form $x_{1}^{a_{1}}x_{2}^{a_{2}}x_{3}^{a_{3}}\cdots$ for
finitely supported $\left(  a_{1},a_{2},a_{3},\ldots\right)  \in
 \NN ^{\infty}$ (where \textquotedblleft finitely
supported\textquotedblright\ means that all but finitely many positive
integers $i$ satisfy $a_{i}=0$). A \textit{monomial} will mean an element of
$\Mon$. Thus, a monomial is a combinatorial
object, independent of $\kk$; it does not carry a coefficient.

We consider the $\kk$-algebra $\Powser$ of (commutative) power
series in
countably many distinct indeterminates $x_{1},x_{2},x_{3},\ldots$ over
$\kk$. By abuse of notation, we shall identify every monomial
$x_{1}^{a_{1}} x_{2}^{a_{2}} x_{3}^{a_{3}} \cdots \in \Mon$ with the
corresponding element
$x_{1}^{a_{1}} \cdot x_{2}^{a_{2}} \cdot x_{3}^{a_{3}} \cdot \cdots$
of $\Powser$ when necessary
(e.g., when we speak of the sum of two monomials or
when we multiply a monomial with an element of $\kk$). (To be
very pedantic, this identification is slightly dangerous, because
it can happen that two distinct monomials in $\Mon$ get
identified with two identical elements of $\Powser$. However, this
can only happen when the ring $\kk$ is trivial, and even then it is
not a real problem unless we infer the equality of monomials from the
equality of their counterparts in $\Powser$, which we are not going to
do.)

We furthermore endow the ring $\Powser$ with the following topology
(as in \cite[Proof of Corollary 2.6.11]{Reiner}):

We endow the ring $\kk$ with the discrete topology. To define a
topology on the $\kk$-algebra $\Powser$, we (temporarily) regard every power
series in $\Powser$ as the family of its coefficients (indexed by the
set $\Mon$). More precisely, we have a
$\kk$-module isomorphism
\[
\prod_{\mathfrak{m} \in  \Mon } \kk \to \Powser,
\qquad \qquad \left(\lambda_{\mathfrak{m}}\right)_{\mathfrak{m} \in  \Mon }
\mapsto \sum_{\mathfrak{m} \in  \Mon } \lambda_{\mathfrak{m}} \mathfrak{m} .
\]
We use this isomorphism to transport the product topology on
$\prod_{\mathfrak{m} \in  \Mon } \kk$ to $\Powser$. The
resulting topology on $\Powser$ turns $\Powser$ into a topological
$\kk$-algebra; this is the topology that we will
be using whenever we make statements about convergence in $\Powser$
or write down infinite sums of power series.
A sequence $\left( a_n \right)_{n \in \NN}$ of power series converges
to a power series $a$ with respect to this topology if
and only if for every monomial $\mathfrak{m}$, all sufficiently high
$n \in \NN$ satisfy
\[
\left(  \text{the coefficient of } \mathfrak{m}\text{ in }a_{n}\right)
=\left(  \text{the coefficient of } \mathfrak{m}\text{ in }a\right)  .
\]

Note that this topological $\kk$-algebra $\Powser$ is \textbf{not}
the completion of the polynomial ring
$\kk \left[  x_{1},x_{2},x_{3},\ldots\right]$
with respect to the standard grading (in which all $x_{i}$ have degree $1$).
(They are distinct even as sets.)
}

Inside the $\kk$-algebra $\Powser$ is a
subalgebra $\Powser_{\bdd}$ consisting of the \textit{bounded-degree}
formal power series; these are the power series $f$ for which there
exists a $d \in \NN$ such that no monomial of degree $> d$ appears in
$f$\ \ \ \ \footnote{The \textit{degree} of a monomial
$x_1^{a_1} x_2^{a_2} x_3^{a_3} \cdots$ is defined to be the nonnegative
integer $a_1 + a_2 + a_3 + \cdots$. A monomial $\mathfrak{m}$ is said
to \textit{appear} in a power series $f \in \Powser$ if and only if
the coefficient of $\mathfrak{m}$ in $f$ is nonzero.}.
This $\kk$-subalgebra $\Powser_{\bdd}$ becomes a topological
$\kk$-algebra, by inheriting the topology from $\Powser$.

Two monomials $\mathfrak{m}$ and $\mathfrak{n}$ are said to be
\textit{pack-equivalent}\footnote{Pack-equivalence and the related
notions of packed combinatorial objects that we will encounter below
originate in work of Hivert, Novelli and Thibon
\cite{Nov-Thi}. Simple as they are, they are of great help in dealing
with quasisymmetric functions.} if they have the forms
$x_{i_1}^{a_1} x_{i_2}^{a_2} \cdots x_{i_\ell}^{a_\ell}$ and
$x_{j_1}^{a_1} x_{j_2}^{a_2} \cdots x_{j_\ell}^{a_\ell}$ for two
strictly increasing sequences
$\left(i_1 < i_2 < \cdots < i_\ell\right)$
and $\left(j_1 < j_2 < \cdots < j_\ell\right)$ of positive integers and
one (common) sequence $\left(a_1, a_2, \ldots, a_\ell\right)$ of
positive integers.\footnote{For instance, $x_2^2 x_3 x_4^2$ is
pack-equivalent to $x_1^2 x_4 x_8^2$ but not to $x_2 x_3^2 x_4^2$.}
A power series $f \in \Powser$ is said to be \textit{quasisymmetric}
if it satisfies the following condition:
If $\mathfrak{m}$ and $\mathfrak{n}$ are two pack-equivalent
monomials, then the coefficient of $\mathfrak{m}$ in $f$ equals
the coefficient of $\mathfrak{n}$ in $f$.

It is easy to see that the quasisymmetric
power series form a $\kk$-subalgebra of $\Powser$. But usually one
is interested in a subset of this $\kk$-subalgebra: namely,
the set of quasisymmetric bounded-degree power
series in $\Powser$.
This latter set is a $\kk$-subalgebra of $\Powser_{\bdd}$, and
is known as the \textit{$\kk$-algebra of quasisymmetric functions
over $\kk$}. It is denoted by $\QSym$.

The symmetric functions (in the
usual sense of this word in combinatorics -- so, really, symmetric
bounded-degree power series in $\Powser$) form a $\kk$-subalgebra
of $\QSym$. The quasisymmetric functions have a rich theory which
is related to, and often sheds new light on, the classical theory of
symmetric functions; expositions can be found in
\cite[\S\S 7.19, 7.23]{Stanley-EC2} and \cite[\S\S 5-6]{Reiner} and
other sources.\footnote{The notion of quasisymmetric functions goes
back to Gessel in 1984 \cite{Gessel}; they have been studied by many
authors, most significantly Malvenuto and Reutenauer
\cite{Mal-Reu-dua}.}

As a $\kk$-module, $\QSym$ has a basis
$\left(M_\alpha\right)_{\alpha \in \Comp}$ indexed by all
compositions, where the quasisymmetric function $M_\alpha$ for a
given composition $\alpha$ is defined as follows: Writing $\alpha$
as $\left(\alpha_1, \alpha_2, \ldots, \alpha_\ell\right)$, we set
\[
M_\alpha
= \sum_{i_1 < i_2 < \cdots < i_\ell}
 x_{i_1}^{\alpha_1} x_{i_2}^{\alpha_2} \cdots x_{i_\ell}^{\alpha_\ell}
= \sum_{\substack{\mathfrak{m}\text{ is a monomial pack-equivalent} \\
                  \text{to }
                  x_1^{\alpha_1} x_2^{\alpha_2} \cdots x_\ell^{\alpha_\ell}}}
  \mathfrak{m}
\]
(where the $i_k$ in the first sum are positive integers).
\begin{verlong}
\footnote{
The second equality sign in this equality is proven
in the Appendix (see
Proposition~\ref{prop.Malpha.equivalent}).
}
\end{verlong}
This
basis $\left(M_\alpha\right)_{\alpha \in \Comp}$ is known as the
\textit{monomial basis} of $\QSym$, and is the simplest to
define among many. (We shall briefly encounter another basis in
Example~\ref{exam.Gamma}.)

The $\kk$-algebra $\QSym$ can be endowed with a structure of a
$\kk$-coalgebra which, combined with its $\kk$-algebra structure,
turns it into a Hopf algebra. We refer to the literature both for
the theory of coalgebras and Hopf algebras
(see
\cite{Montg-Hopf}, \cite[\S 1]{Reiner}, \cite[\S 1-\S 2]{Manchon-HA},
\cite{Abe-HA}, \cite{Sweedler-HA}, \cite{Dasca-HA} or
\cite[Chapter 7]{Fresse-Op})
and for a deeper study of the Hopf algebra $\QSym$ (see
\cite{Malve-Thesis}, \cite[Chapter 6]{HGK} or
\cite[\S 5]{Reiner}); in this note we shall need but the very
basics of this structure, and so it is only them that we introduce.

In the following, all tensor products are over $\kk$ by
default (i.e., the sign $\otimes$ stands for $\otimes_{\kk}$ unless
it comes with a subscript).

Now, we define two $\kk$-linear maps $\Delta$ and $\varepsilon$
as follows\footnote{Both of their definitions rely on the fact
that
$\left( M_{\left(\alpha_1, \alpha_2, \ldots, \alpha_\ell\right)} \right)_{
   \left(\alpha_1, \alpha_2, \ldots, \alpha_\ell\right) \in \Comp}
= \left( M_\alpha \right)_{\alpha \in \Comp}$
is a basis of the $\kk$-module $\QSym$.}:
\begin{itemize}
\item
We define a $\kk$-linear map $\Delta : \QSym \to \QSym \otimes \QSym$
by requiring that
\begin{align}
\label{eq.coproduct.M}
\Delta \left( M_{\left( \alpha_1, \alpha_2, \ldots, \alpha_\ell
\right) }\right)
&= \sum_{k=0}^{\ell} M_{\left( \alpha_1, \alpha_2, \ldots,
\alpha_k \right) } \otimes M_{\left( \alpha_{k+1}, \alpha_{k+2},
\ldots, \alpha_\ell \right) } \\
& \qquad \text{ for every } \left(\alpha_1, \alpha_2,
\ldots, \alpha_\ell\right) \in \Comp . \nonumber
\end{align}
\item We define a $\kk$-linear map
$\varepsilon : \QSym \to \kk$ by requiring that
\[
\varepsilon\left(  M_{\left(
\alpha_1, \alpha_2, \ldots, \alpha_\ell \right) }\right)
= \delta_{\ell, 0}
\qquad \text{ for every } \left(\alpha_1, \alpha_2,
\ldots, \alpha_\ell\right) \in \Comp .
\]
(Here, $\delta_{u,v}$ is defined to be
$\begin{cases}
1, & \text{if }u = v \text{;}\\
0, & \text{if }u \neq v
\end{cases}$
whenever $u$ and $v$ are two objects.)
\end{itemize}

The map $\varepsilon$ can also be defined in a simpler
(equivalent) way: Namely, $\varepsilon$ sends every power series
$f \in \QSym$
to the result $f\left(0,0,0,\ldots\right)$ of substituting zeroes
for the variables $x_1, x_2, x_3, \ldots$ in $f$. The map $\Delta$
can also be described in such terms, but with greater
difficulty\footnote{See \cite[(5.1.7)]{Reiner} for the details.}.

It is well-known that these maps $\Delta$ and
$\varepsilon$ make the three diagrams
\begin{align*}
& \xymatrixcolsep{5pc}\xymatrix{
\QSym\ar[r]^-{\Delta} \ar[d]_{\Delta} & \QSym\otimes\QSym\ar[d]^{\Delta
\otimes\id} \\
\QSym\otimes\QSym\ar[r]_-{\id\otimes\Delta} & \QSym\otimes\QSym\otimes\QSym}
, \\
& \xymatrixcolsep{3pc}
\xymatrix{
\QSym\ar[dr]_{\cong} \ar[r]^-{\Delta} & \QSym\otimes\QSym\ar[d]^-{\varepsilon
\otimes\id} \\
& \bk\otimes\QSym}
,
\qquad
\xymatrixcolsep{3pc}
\xymatrix{
\QSym\ar[dr]_{\cong} \ar[r]^-{\Delta} & \QSym\otimes\QSym\ar[d]^-{\id
\otimes\varepsilon} \\
& \QSym\otimes\bk}
\end{align*}
(where the $\cong$ arrows are the canonical isomorphisms)
commutative, and so $\left(\QSym, \Delta, \varepsilon\right)$ is what
is commonly called a \textit{$\kk$-coalgebra}. Furthermore, $\Delta$
and $\varepsilon$ are $\kk$-algebra homomorphisms, which is what makes
this $\kk$-coalgebra $\QSym$ into a \textit{$\kk$-bialgebra}. Finally,
let $m : \QSym \otimes \QSym \to \QSym$ be the $\kk$-linear map sending
every pure tensor $a \otimes b$ to $ab$, and let $u : \kk \to \QSym$ be
the $\kk$-linear map sending $1 \in \kk$ to $1 \in \QSym$. Then, there
exists a unique $\kk$-linear map $S : \QSym \to \QSym$ making the
diagram
\begin{equation}
\xymatrix{
& \QSym \otimes \QSym \ar[rr]^{S \otimes \id} & & \QSym \otimes \QSym \ar[dr]^{m} & \\
\QSym \ar[ur]^{\Delta} \ar[rr]^{\varepsilon} \ar[dr]_{\Delta} & & \kk \ar[rr]^{u} & & \QSym \\
& \QSym \otimes \QSym \ar[rr]^{\id \otimes S} & & \QSym \otimes \QSym \ar[ur]^{m} & 
}
\label{eq.antipode}
\end{equation}
commutative. This map $S$ is known as the \textit{antipode} of $\QSym$.
It is known to be an involution and an algebra automorphism of $\QSym$,
and its action on the various quasisymmetric functions defined
combinatorially is the main topic of this note.
The existence of the antipode $S$ makes $\QSym$ into a
\textit{Hopf algebra}.
\begin{verlong}
\footnote{See the Appendix
(specifically, Proposition~\ref{prop.QSym.hopfalg})
for a proof of the fact that $\QSym$ is a Hopf algebra.}
\end{verlong}

\section{Double posets}
\label{sect.double-posets}

Next, we shall introduce the notion of a double poset, following
Malvenuto and Reutenauer \cite{Mal-Reu-DP}.

\begin{definition}
\label{def.double-poset}
\begin{itemize}

\item[(a)] We shall encode posets as pairs $\left(E, <\right)$,
where $E$ is a set and $<$ is a strict partial order
(i.e., an irreflexive, transitive and antisymmetric
binary relation) on the set $E$; this relation $<$ will be regarded
as the smaller relation of the poset. All binary relations will be
written in infix notation: i.e., we write ``$a < b$'' for ``$a$ is
related to $b$ by the relation $<$''. (If you define binary relations
as sets of pairs, then ``$a$ is related to $b$ by the relation $<$''
means that $\left(a,b\right)$ is an element of the set $<$.)

\item[(b)] If $<$ is a strict partial order on a set $E$,
and if $a$ and $b$ are two elements of $E$, then we say that
$a$ and $b$ are \textit{$<$-comparable} if we have either $a < b$
or $a = b$ or $b < a$. A strict partial order $<$ on a
set $E$ is said to be a \textit{total order} if and only
if every two elements of $E$ are $<$-comparable.

\item[(c)] If $<$ is a strict partial order on a set $E$,
and if $a$ and $b$ are two elements of $E$, then we say that
$a$ is \textit{$<$-covered by $b$} if we have $a < b$ and there
exists no $c \in E$ satisfying $a < c < b$. (For instance, if $<$
is the standard smaller relation on $\ZZ$, then each
$i \in \ZZ$ is $<$-covered by $i+1$.)

\item[(d)] A \textit{double poset} is defined as a triple
$\left(E, <_1, <_2\right)$ where $E$ is a finite set and $<_1$ and
$<_2$ are two strict partial orders on $E$.

\item[(e)] A double poset
$\left(E, <_1, <_2\right)$ is said to be \textit{special} if
the relation $<_2$ is a total order.

\item[(f)] A double poset
$\left(E, <_1, <_2\right)$ is said to be \textit{semispecial} if
every two $<_1$-comparable elements of $E$ are $<_2$-comparable.

\item[(g)] A double poset
$\left(E, <_1, <_2\right)$ is said to be \textit{tertispecial} if
it satisfies the following condition: If $a$ and $b$ are two
elements of $E$ such that $a$ is $<_1$-covered by $b$, then $a$
and $b$ are $<_2$-comparable.

\item[(h)] If $<$ is a binary relation on a set $E$, then the
\textit{opposite relation} of $<$ is defined to be the binary
relation $>$ on the set $E$ that is defined as follows: For any
$e \in E$ and $f \in E$, we have $e > f$ if and only if $f < e$.
Notice that if $<$ is a strict partial order, then so
is the opposite relation $>$ of $<$.
\end{itemize}
\end{definition}

Clearly, every special double poset is semispecial, and every
semispecial double poset is tertispecial.\footnote{The notions of a
double poset and of a special double poset come from
\cite{Mal-Reu-DP}. See \cite{Foissy13} for further results on
special double posets.
The notion of a ``tertispecial double poset''
(Dog Latin for ``slightly less special than semispecial''; in
hindsight, ``locally special'' would have been better terminology)
appears to be new and arguably sounds artificial, but is the
most suitable setting for some of the results below (see, e.g.,
Remark~\ref{rmk.antipode.Gamma.converse} below); moreover,
it appears in nature, beyond the particular case of special
double posets (see Example~\ref{exam.dp}).
We shall not use semispecial
double posets in the following; they were only introduced as a
middle-ground notion between special and tertispecial double posets
having a less daunting definition.}

\begin{definition}
\label{def.E-partition}
If $\EE = \left(E, <_1, <_2\right)$ is a double poset, then
an \textit{$\EE$-partition} shall mean a map
$\phi : E \to \left\{ 1,2,3,\ldots\right\}$ such that:
\begin{itemize}
\item every $e \in E$ and $f \in E$ satisfying $e <_1 f$ satisfy
$\phi\left(e\right) \leq \phi\left(f\right)$;
\item every $e \in E$ and $f \in E$ satisfying $e <_1 f$ and
$f <_2 e$ satisfy $\phi\left(e\right) < \phi\left(f\right)$.
\end{itemize}
\end{definition}

\begin{example}
\label{exam.dp}
The notion of an $\EE$-partition (which was inspired by the earlier
notions of $P$-partitions and $\left(P,\omega\right)$-partitions
as studied by Gessel and Stanley\footnotemark)
generalizes various well-known
combinatorial concepts. For example:
\begin{itemize}
\item If $<_2$ is the same order
as $<_1$ (or any extension of this order), then the
$\EE$-partitions are the weakly increasing maps from the poset
$\left(E, <_1\right)$ to the totally ordered set
$\left\{1, 2, 3, \ldots\right\}$.
\item If $<_2$ is the opposite relation of
$<_1$ (or any extension of this opposite relation), then the
$\EE$-partitions are the strictly increasing maps from the
poset $\left(E, <_1\right)$ to the totally ordered set
$\left\{1, 2, 3, \ldots\right\}$.
\end{itemize}
\begin{verlong}
(See the Appendix
(specifically, Proposition~\ref{prop.example.weaklinc}
and Proposition~\ref{prop.example.strictinc})
for the proofs of these two facts.)
\end{verlong}

For a more interesting example,
let $\mu = \left(\mu_1, \mu_2, \mu_3, \ldots\right)$ and
$\lambda = \left(\lambda_1, \lambda_2, \lambda_3, \ldots\right)$ be
two partitions such that $\mu \subseteq \lambda$.
(See \cite[\S 2]{Reiner} for the notations we are using
here.)
The skew Young
diagram $Y\left(\lambda / \mu\right)$ is then defined as the set of all
$\left(i, j\right) \in \left\{ 1, 2, 3, \ldots \right\}^2$ satisfying
$\mu_i < j \leq \lambda_i$. On this set $Y\left(\lambda / \mu\right)$,
we define two strict partial orders $<_1$ and $<_2$ by
\[
\left(i,j\right) <_1 \left(i',j'\right) \Longleftrightarrow
\left( i \leq i' \text{ and } j \leq j' \text{ and }
\left(i,j\right) \neq \left(i',j'\right) \right)
\]
and
\[
\left(i,j\right) <_2 \left(i',j'\right) \Longleftrightarrow
\left( i \geq i' \text{ and } j \leq j' \text{ and }
\left(i,j\right) \neq \left(i',j'\right) \right) .
\]
The resulting double poset
$\mathbf{Y}\left(\lambda / \mu\right)
= \left(Y\left(\lambda / \mu\right), <_1, <_2\right)$ has the
property that the $\mathbf{Y}\left(\lambda / \mu\right)$-partitions
are precisely the semistandard tableaux of shape
$\lambda / \mu$.
\begin{vershort}
(Again, see \cite[\S 2]{Reiner} for the meaning
of these words.)
\end{vershort}
\begin{verlong}
(Again, see \cite[\S 2]{Reiner} for the meaning
of these words.
Also, see the Appendix
(specifically, Proposition~\ref{prop.example.sst} (a))
for a proof of our claim that the
$\mathbf{Y}\left(\lambda / \mu\right)$-partitions
are precisely the semistandard tableaux of shape
$\lambda / \mu$.)
\end{verlong}

This double poset $\mathbf{Y}\left(\lambda / \mu\right)$
is not special (in general), but it is tertispecial. (Indeed,
if $a$ and $b$ are two elements of $Y\left(\lambda / \mu\right)$
such that $a$ is $<_1$-covered by $b$, then $a$ is either the left
neighbor of $b$ or the top neighbor of $b$, and thus we have
either $a <_2 b$ (in the former case) or $b <_2 a$ (in the latter
case).) Some authors prefer to use a special double poset instead,
which is defined as follows: We define a total
order $<_h$ on $Y\left(\lambda / \mu\right)$ by
\[
\left(i,j\right) <_h \left(i',j'\right) \Longleftrightarrow
\left( i > i' \text{ or } \left( i = i' \text{ and }
j < j' \right) \right) .
\]
Then, $\mathbf{Y}_h\left(\lambda / \mu\right)
= \left(Y\left(\lambda / \mu\right), <_1, <_h\right)$ is a special
double poset, and the
$\mathbf{Y}_h\left(\lambda / \mu\right)$-partitions
are precisely the semistandard tableaux of shape
$\lambda / \mu$.
\begin{verlong}
(See the Appendix
(specifically, Proposition~\ref{prop.example.sst} (b))
for a proof of the latter claim.)
\end{verlong}
\end{example}
\footnotetext{See \cite{Gessel-Ppar}
for the history of these notions, and see \cite{Gessel},
\cite{Stanley-Thes}, \cite[\S 3.15]{Stanley-EC1} and
\cite[\S 7.19]{Stanley-EC2} for
some of their theory. Mind that these sources use different and
sometimes incompatible notations -- e.g., the $P$-partitions of
\cite[\S 3.15]{Stanley-EC1} and \cite{Gessel-Ppar} differ from
those of \cite{Gessel} by a sign reversal.}

We now assign a certain formal power series to every double poset:

\begin{definition}
\label{def.Gammaw}
If $\EE = \left(E, <_1, <_2\right)$ is a double poset, and
$w : E \to \left\{1, 2, 3, \ldots\right\}$ is a map, then we define
a power series $\Gamma\left(\EE , w\right) \in \Powser$ by
\[
\Gamma\left(\EE , w\right)
= \sum_{\pi\text{ is an }\EE\text{-partition}}
  \xx_{\pi, w} ,
\qquad
\text{where } \xx_{\pi, w}
= \prod_{e \in E} x_{\pi\left(e\right)}^{w\left(e\right)} .
\]
\begin{verlong}
(See the Appendix
(specifically, Proposition~\ref{prop.Gammaw.welldef})
for a proof that this sum is well-defined.)
\end{verlong}
\end{definition}

The following fact is easy to see (but will be reproven below):

\begin{proposition}
\label{prop.Gammaw.qsym}
Let $\EE = \left(E, <_1, <_2\right)$ be a double poset, and
$w : E \to \left\{1, 2, 3, \ldots\right\}$ be a map. Then,
$\Gamma\left(\EE , w\right) \in \QSym$.
\end{proposition}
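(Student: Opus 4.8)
The plan is to verify separately the two properties that characterize membership in $\QSym$: that $\Gamma\left(\EE, w\right)$ has bounded degree, and that it is quasisymmetric. The first is immediate. Every $\EE$-partition $\pi$ contributes the monomial $\xx_{\pi, w} = \prod_{e \in E} x_{\pi\left(e\right)}^{w\left(e\right)}$, whose degree is $\sum_{e \in E} w\left(e\right)$, a quantity that does not depend on $\pi$. Hence $\Gamma\left(\EE, w\right)$ is homogeneous of degree $\sum_{e \in E} w\left(e\right)$, and in particular it lies in $\Powser_{\bdd}$.

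For quasisymmetry I would argue directly from the definition. The coefficient of a monomial $\mathfrak{m}$ in $\Gamma\left(\EE, w\right)$ is the number of $\EE$-partitions $\pi$ with $\xx_{\pi, w} = \mathfrak{m}$, so given two pack-equivalent monomials $\mathfrak{m}$ and $\mathfrak{n}$ I must show these two counts agree. Write $\mathfrak{m} = x_{i_1}^{a_1} \cdots x_{i_\ell}^{a_\ell}$ and $\mathfrak{n} = x_{j_1}^{a_1} \cdots x_{j_\ell}^{a_\ell}$ with $i_1 < \cdots < i_\ell$ and $j_1 < \cdots < j_\ell$ and a common exponent sequence $\left(a_1, \ldots, a_\ell\right)$ of positive integers. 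The conceptual heart of the proof is the observation that the two conditions defining an $\EE$-partition (namely $\phi\left(e\right) \leq \phi\left(f\right)$ and $\phi\left(e\right) < \phi\left(f\right)$) are comparisons of values, and are therefore preserved under any strictly increasing relabeling of the target set $\left\{1, 2, 3, \ldots\right\}$.

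To exploit this, first note that because $w\left(e\right) \geq 1$ for every $e \in E$, any $\EE$-partition $\pi$ with $\xx_{\pi, w} = \mathfrak{m}$ must take all its values in the set $\left\{i_1, \ldots, i_\ell\right\}$ of indices occurring in $\mathfrak{m}$: a value $\pi\left(e\right) = i$ forces $x_i$ to appear in $\xx_{\pi, w}$ with positive exponent. Now let $\sigma : \left\{i_1, \ldots, i_\ell\right\} \to \left\{j_1, \ldots, j_\ell\right\}$ be the unique order isomorphism, so that $\sigma\left(i_k\right) = j_k$. Since $\sigma$ is strictly increasing, the assignment $\pi \mapsto \sigma \circ \pi$ sends $\EE$-partitions to $\EE$-partitions, and a short bookkeeping of exponents shows $\xx_{\sigma \circ \pi, w} = \mathfrak{n}$: the exponent of $x_{j_k}$ in $\xx_{\sigma \circ \pi, w}$ equals $\sum_{e \,:\, \pi\left(e\right) = i_k} w\left(e\right) = a_k$. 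This map is inverted by composition with $\sigma^{-1}$, so it is a bijection between the $\EE$-partitions producing $\mathfrak{m}$ and those producing $\mathfrak{n}$, whence the two coefficients agree and $\Gamma\left(\EE, w\right)$ is quasisymmetric.

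I expect no serious obstacle here; the only points demanding care are the justification that $\pi$ is supported on $\left\{i_1, \ldots, i_\ell\right\}$ (which uses $w\left(e\right) \geq 1$ crucially) and the verification that $\sigma \circ \pi$ again satisfies both defining conditions of an $\EE$-partition, both of which become routine once the relabeling idea is in place.
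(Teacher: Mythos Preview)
Your proof is correct. The paper takes a different route: it first establishes the expansion
\[
\Gamma\left(\EE, w\right) = \sum_{\varphi \text{ is a packed } \EE\text{-partition}} M_{\ev_w \varphi}
\]
(Proposition~\ref{prop.Gammaw.packed}), which is a finite sum since packed maps from $E$ take values in $\left\{1, 2, \ldots, \left|E\right|\right\}$, and then deduces $\Gamma\left(\EE, w\right) \in \QSym$ in one line because each $M_\alpha$ already lies in $\QSym$. Your argument is more elementary: you verify the coefficient condition for quasisymmetry directly, without invoking the monomial basis at all. The underlying mechanism is the same in both proofs---being an $\EE$-partition depends only on the relative order of the values, which is precisely what your strictly increasing relabeling $\sigma$ exploits and what the paper's ``packing'' construction encodes---but the paper's version yields more: the expansion into the $M_\alpha$-basis is reused later to compute the coproduct $\Delta\left(\Gamma\left(\EE, w\right)\right)$ in Proposition~\ref{prop.Gammaw.coprod}. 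Your approach, on the other hand, reaches the conclusion with less overhead if quasisymmetry is all one needs.
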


\begin{example}
\label{exam.Gamma}
The power series $\Gamma\left(\EE , w\right)$ generalize various
well-known quasisymmetric functions.

\begin{enumerate}
\item[(a)] If $\EE = \left(E, <_1, <_2\right)$ is a double poset, and
$w : E \to \left\{1, 2, 3, \ldots\right\}$ is the constant
function sending everything to $1$, then
$\Gamma\left(\EE , w\right)
= \sum_{\pi\text{ is an }\EE\text{-partition}} \xx_{\pi}$,
where $\xx_{\pi} = \prod_{e \in E} x_{\pi\left(e\right)}$.
We shall denote this power series $\Gamma\left(\EE , w\right)$
by $\Gamma\left(\EE\right)$; it is exactly what has been called
$\Gamma\left(\EE\right)$ in \cite[\S 2.2]{Mal-Reu-DP}. All results
proven below for $\Gamma\left(\EE , w\right)$ can be applied to
$\Gamma\left(\EE\right)$, yielding simpler (but less general)
statements.

\item[(b)] If $E = \left\{1, 2, \ldots, \ell\right\}$ for some
$\ell \in \NN$, if $<_1$ is the usual total order inherited from
$\ZZ$, and if $<_2$ is the opposite relation of $<_1$, then the
special double poset $\EE = \left(E, <_1, <_2\right)$ satisfies
$\Gamma\left(\EE, w\right) = M_\alpha$, where $\alpha$ is the
composition $\left(w\left(1\right), w\left(2\right), \ldots,
w\left(\ell\right)\right)$.
\begin{verlong}
(See the Appendix
(specifically, Proposition~\ref{prop.example.Gamma.b})
for a proof of this.)
\end{verlong}

Note that every $M_\alpha$ can be obtained
this way (by choosing $\ell$ and $w$ appropriately).
Thus, the elements of the monomial
basis $\left(M_\alpha\right)_{\alpha \in \Comp}$ are special
cases of the functions $\Gamma\left(\EE, w\right)$.
This shows that
the $\Gamma\left(\EE, w\right)$ for varying $\EE$ and $w$
span the $\kk$-module $\QSym$.

\item[(c)] Let
$\alpha = \left(\alpha_1, \alpha_2, \ldots, \alpha_\ell\right)$
be a composition of a nonnegative integer $n$. Let
$D\left(\alpha\right)$ be the set
$\left\{\alpha_1, \alpha_1 + \alpha_2, \alpha_1 + \alpha_2
+ \alpha_3, \ldots, \alpha_1 + \alpha_2 + \cdots + \alpha_{\ell-1}
\right\}$.
Let $E$ be the set $\left\{1, 2, \ldots, n\right\}$, and let
$<_1$ be the total order inherited on $E$ from $\ZZ$. Let $<_2$ be some
partial order on $E$ with the property that
\[
i+1 <_2 i \qquad \text{ for every } i \in D\left(\alpha\right)
\]
and
\[
i <_2 i+1 \qquad \text{ for every }
i \in \left\{1, 2, \ldots, n-1\right\}
\setminus D\left(\alpha\right) .
\]
(There are several choices for such an order; in particular, we
can find one which is a total order.%
\begin{verlong}
\ Indeed, this is proven in the Appendix
(specifically, Proposition~\ref{prop.example.Gamma.c1}).%
\end{verlong}
) Then,
\begin{verlong}
a simple argument (explained in detail in the Appendix, in the
proof of Proposition~\ref{prop.example.Gamma.c3}) shows that
\end{verlong}
\begin{align*}
\Gamma\left(\left(E, <_1, <_2\right)\right)
&= \sum_{\substack{i_1 \leq i_2 \leq \cdots \leq i_n; \\
                  i_j < i_{j+1} \text{ whenever } j \in D\left(\alpha\right)}}
  x_{i_1} x_{i_2} \cdots x_{i_n} \\
&= \sum_{\beta\text{ is a composition of }n;
        \ D\left(\beta\right) \supseteq D\left(\alpha\right)}
  M_\beta .
\end{align*}
This power series is known as the $\alpha$-th
\textit{fundamental quasisymmetric function}, usually called
$F_\alpha$ (in \cite{Gessel}, \cite[\S 2]{Mal-Reu-dua},
\cite[\S 2.4]{BBSSZ} and \cite[\S 2]{Gri-dimm})
or $L_\alpha$ (in \cite[\S 7.19]{Stanley-EC2} or
\cite[Definition 5.2.4]{Reiner}).

\item[(d)] Let $\EE$ be one of the two double posets
$\mathbf{Y}\left(\lambda / \mu\right)$ and
$\mathbf{Y}_h\left(\lambda / \mu\right)$
defined as in Example \ref{exam.dp} for two partitions $\mu$
and $\lambda$. Then, $\Gamma\left(\EE\right)$ is the skew
Schur function $s_{\lambda / \mu}$.

\item[(e)] Similarly, \textit{dual immaculate functions} as defined in
\cite[\S 3.7]{BBSSZ} can be realized as $\Gamma\left(\EE\right)$
for conveniently chosen $\EE$ (see \cite[Proposition 4.4]{Gri-dimm}), which
helped the author to prove one of their properties \cite{Gri-dimm}.
(The $\EE$-partitions here are the so-called
\textit{immaculate tableaux}.)

\item[(f)] When the relation $<_2$ of a double poset
$\EE = \left(E, <_1, <_2\right)$ is a total order (i.e.,
when the double poset $\EE$ is special), the
$\EE$-partitions are precisely the
reverse $\left(P, \omega\right)$-partitions (for
$P = \left(E, <_1\right)$ and $\omega$ being the unique
bijection $E \to \left\{1,2,\ldots,\left|E\right|\right\}$
satisfying
$\omega^{-1}\left(1\right) <_2 \omega^{-1}\left(2\right) <_2 \cdots
<_2 \omega^{-1}\left(\left|E\right|\right)$)
in the terminology
of \cite[\S 7.19]{Stanley-EC2}, and the power series
$\Gamma\left(\EE\right)$ is the $K_{P, \omega}$ of
\cite[\S 7.19]{Stanley-EC2}.
This can also be rephrased using
the notations of \cite[\S 5.2]{Reiner}: When the relation $<_2$ of a
double poset $\EE = \left(E, <_1, <_2\right)$ is a total order, we can
relabel the elements of $E$ by the integers $1, 2, \ldots, n$
(where $n = \left|E\right|$) in such
a way that $1 <_2 2 <_2 \cdots <_2 n$; then, the $\EE$-partitions are
the $P$-partitions in the terminology of \cite[Definition 5.2.1]{Reiner},
where $P$ is the labelled poset $\left(E, <_1\right)$; and furthermore,
our $\Gamma\left(\EE\right)$ is the $F_P\left(\xx\right)$ of
\cite[Definition 5.2.1]{Reiner}. Conversely, if $P$ is a labelled poset, then
the $F_P\left(\xx\right)$ of \cite[Definition 5.2.1]{Reiner} is our
$\Gamma\left(\left(P, <_P, <_{\ZZ}\right)\right)$.

\end{enumerate}

\end{example}

\section{The antipode theorem}
\label{sect.antipode}

We now come to the main results of this note. We first state a
theorem and a corollary which are not new, but will be reproven in
a more self-contained way which allows them to take their
(well-deserved) place as fundamental results rather than
afterthoughts in the theory of $\QSym$.

\begin{definition}
We let $S$ denote the antipode of $\QSym$.
\end{definition}

\begin{theorem}
\label{thm.antipode.Gammaw}
Let $\left(E, <_1, <_2\right)$ be a tertispecial double poset.
Let $w : E \to \left\{1, 2, 3, \ldots\right\}$. Then,
$S\left(\Gamma\left(\left(E, <_1, <_2\right), w\right)\right)
= \left(-1\right)^{\left|E\right|}
\Gamma\left(\left(E, >_1, <_2\right), w\right)$,
where $>_1$ denotes the opposite relation of $<_1$.
\end{theorem}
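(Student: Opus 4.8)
The plan is to induct on $\left|E\right|$, using the antipode axiom together with a coproduct formula for $\Gamma$. First I would establish that, for a double poset $\EE=\left(E,<_1,<_2\right)$ and a weight $w$,
\[
\Delta\left(\Gamma\left(\EE,w\right)\right)=\sum_{A}\Gamma\left(\EE|_A,w|_A\right)\otimes\Gamma\left(\EE|_{E\setminus A},w|_{E\setminus A}\right),
\]
the sum ranging over all \emph{down-sets} (order ideals) $A$ of $\left(E,<_1\right)$, where $\EE|_A$ denotes the induced double poset on $A$. This follows by splitting the variable set $x_1,x_2,\ldots$ into a ``small'' and a ``large'' block: an $\EE$-partition valued in the merged alphabet forces its small-valued part $A$ to be a $<_1$-down-set, and conversely. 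A crucial bookkeeping point is that a down-set restriction of a tertispecial double poset is again tertispecial (if $a$ is $<_1$-covered by $b$ inside $A$, then, $A$ being a down-set, it is $<_1$-covered by $b$ in $E$, so $a,b$ are $<_2$-comparable); hence the induction hypothesis will apply to each $\EE|_A$.

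The base case $E=\emptyset$ is trivial ($\Gamma=1=S\left(1\right)$). Assuming $E\neq\emptyset$, we have $\varepsilon\left(\Gamma\left(\EE,w\right)\right)=0$, so the antipode axiom $m\left(S\otimes\id\right)\Delta=u\varepsilon$ gives, after isolating the $A=E$ term,
\[
S\left(\Gamma\left(\EE,w\right)\right)=-\sum_{A\subsetneq E}S\left(\Gamma\left(\EE|_A,w|_A\right)\right)\cdot\Gamma\left(\EE|_{E\setminus A},w|_{E\setminus A}\right),
\]
the sum over proper down-sets $A$. Applying the induction hypothesis to rewrite each $S\left(\Gamma\left(\EE|_A,w|_A\right)\right)$ as $\left(-1\right)^{\left|A\right|}\Gamma\left(\left(A,>_1,<_2\right),w|_A\right)$, and using the fact that a product $\Gamma\left(\FF,v\right)\Gamma\left(\FF',v'\right)$ of enumerators of double posets on disjoint ground sets equals $\Gamma$ of their disjoint union (with no order relations between the two parts), I would reduce the desired identity to the single cancellation statement
\[
\sum_{A}\left(-1\right)^{\left|A\right|}\Gamma\left(\EE_A',w\right)=0 ,
\]
now summed over \emph{all} down-sets $A\subseteq E$ (the term $A=E$ reproduces the claimed right-hand side $\left(-1\right)^{\left|E\right|}\Gamma\left(\left(E,>_1,<_2\right),w\right)$). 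Here $\EE_A'$ is the double poset on $E$ carrying $>_1$ and $<_2$ on $A$, carrying $<_1$ and $<_2$ on $E\setminus A$, and declaring the two blocks mutually incomparable.

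Since the monomial $\xx_{\pi,w}$ depends only on $\pi$, this last identity amounts to showing that, for each fixed map $\pi:E\to\left\{1,2,3,\ldots\right\}$, the signed count $\sum_{A}\left(-1\right)^{\left|A\right|}$ over the set $\mathcal{A}\left(\pi\right)$ of down-sets $A$ for which $\pi$ is an $\EE_A'$-partition vanishes. I would prove this by a sign-reversing involution that toggles a single, canonically chosen element $x$ between $A$ and $E\setminus A$ (keeping $\pi$ fixed, hence preserving the monomial). One checks that $\mathcal{A}\left(\pi\right)$ is closed under union and intersection, so the real content is to exhibit a \emph{universal} toggle, and this is the main obstacle: naively ``toggle the first admissible element'' fails exactly for chain-like $\mathcal{A}\left(\pi\right)$, whose alternating sum is nonzero. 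The point of tertispeciality is precisely to forbid these configurations. Indeed, for a $<_1$-covering pair $a\lessdot_1 b$, tertispeciality makes $a,b$ $<_2$-comparable, and a short case analysis then shows that the ``both in $A$'' (reverse) and ``both in $E\setminus A$'' (forward) conditions are governed by complementary inequalities between $\pi\left(a\right)$ and $\pi\left(b\right)$; this dichotomy rules out the degenerate situation in which an element's membership is rigidly forced by a neighbor's, and thereby guarantees a free element to toggle. Making this local dichotomy into a globally well-defined involution (accounting also for the non-covering strict conditions) is the step requiring the most care.
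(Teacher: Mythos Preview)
Your approach matches the paper's exactly: induction on $\left|E\right|$, the coproduct formula over down-sets (which the paper calls $\operatorname{Adm}\EE$), the observation that restriction to a down-set preserves tertispeciality (the paper's Lemma~\ref{lem.tertispecial.subset}), and the reduction via the antipode axiom to a signed sum that must vanish monomial by monomial (the paper's Lemma~\ref{lem.Gammaw.altsum}). What you call $\EE_A'$-partitions are precisely the pairs populating the paper's set $Z$.

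The gap you flag at the end is real, and it is the crux of the argument. The paper's choice of toggle element is concrete and uniform across all $A$: let $F=\left\{e\in E:\pi\left(e\right)\text{ is minimum}\right\}$ and take $f$ to be any $<_2$-minimal element of $F$ (this $f$ depends only on $\pi$, not on $A$). The involution sends $\left(P,Q\right)$ to $\left(P\setminus\left\{f\right\},Q\cup\left\{f\right\}\right)$ if $f\in P$, else to $\left(P\cup\left\{f\right\},Q\setminus\left\{f\right\}\right)$. Well-definedness uses tertispeciality in the local way you anticipate: if after toggling one had some $q$ that is $<_1$-covered by some $p$ with $q$ in the new $Q$-side and $p$ in the new $P$-side, then $p$ and $q$ are $<_2$-comparable, and the minimality properties of $f$ (first in $\pi$-value, then in $<_2$) force a contradiction (this is packaged as Lemma~\ref{lem.Gammaw.toggle}); one then invokes Lemma~\ref{lem.admissible.cover} to conclude the new pair lies in $\operatorname{Adm}\EE$. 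The two-stage minimality choice is exactly what makes the involution globally well-defined without any ``first admissible'' heuristic. Your aside that $\mathcal{A}\left(\pi\right)$ is closed under union and intersection is not used (and is not obviously true in general).
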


\begin{corollary}
\label{cor.antipode.Gamma}
Let $\left(E, <_1, <_2\right)$ be a tertispecial double poset.
Then, $S\left(\Gamma\left(\left(E, <_1, <_2\right)\right)\right)
= \left(-1\right)^{\left|E\right|}
\Gamma\left(\left(E, >_1, <_2\right)\right)$,
where $>_1$ denotes the opposite relation of $<_1$.
\end{corollary}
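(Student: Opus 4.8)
The plan is to obtain Corollary~\ref{cor.antipode.Gamma} as an immediate specialization of Theorem~\ref{thm.antipode.Gammaw}, since $\Gamma\left(\EE\right)$ is by definition nothing but $\Gamma\left(\EE, w\right)$ for a particular choice of weight function. Concretely, I would let $w : E \to \left\{1,2,3,\ldots\right\}$ be the constant function sending every element of $E$ to $1$. Example~\ref{exam.Gamma}(a) records that, with this $w$, we have $\Gamma\left(\EE\right) = \Gamma\left(\EE, w\right)$ for any double poset $\EE$; this is precisely the identity that lets us pass between the weighted and unweighted power series.

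The key observation making the specialization clean is that the constant function $1$ on the ground set $E$ does not reference either of the partial orders on $E$. Thus the very same map $w$ simultaneously serves as a legitimate weight function for the double poset $\left(E, <_1, <_2\right)$ and for the double poset $\left(E, >_1, <_2\right)$ obtained by reversing the first order. Since $\left(E, <_1, <_2\right)$ is tertispecial by hypothesis, Theorem~\ref{thm.antipode.Gammaw} applies to it with this $w$, and yields the chain of equalities
\[
S\left(\Gamma\left(\left(E, <_1, <_2\right)\right)\right)
= S\left(\Gamma\left(\left(E, <_1, <_2\right), w\right)\right)
= \left(-1\right)^{\left|E\right|}
\Gamma\left(\left(E, >_1, <_2\right), w\right)
= \left(-1\right)^{\left|E\right|}
\Gamma\left(\left(E, >_1, <_2\right)\right),
\]
where the outer two equalities are two applications of the identity $\Gamma\left(\EE\right) = \Gamma\left(\EE, w\right)$ from Example~\ref{exam.Gamma}(a) (to $\left(E, <_1, <_2\right)$ and to $\left(E, >_1, <_2\right)$ respectively), and the middle equality is the theorem.

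There is, honestly, no real obstacle here: the corollary is a pure unwinding of the theorem's statement, and all the combinatorial and Hopf-algebraic content lives upstream in the proof of Theorem~\ref{thm.antipode.Gammaw}. The only point deserving a moment's care is the bookkeeping just mentioned --- that the constant weight $1$ is order-agnostic, so that it can be plugged into both double posets at once --- but this is immediate. I would therefore present the argument in essentially the two lines displayed above.
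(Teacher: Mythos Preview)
Your proposal is correct and matches the paper's own argument exactly: the paper (in Example~\ref{exam.antipode.Gammaw}(a)) simply notes that Corollary~\ref{cor.antipode.Gamma} follows from Theorem~\ref{thm.antipode.Gammaw} by letting $w$ be the constant function $1$. Your additional remark that this constant $w$ is order-agnostic (and hence serves for both $\left(E, <_1, <_2\right)$ and $\left(E, >_1, <_2\right)$) is a nice bit of explicitness but not something the paper felt the need to spell out.
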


We shall give examples for consequences of these facts shortly
(Example~\ref{exam.antipode.Gammaw}), but
let us first explain where they have already appeared.
Corollary~\ref{cor.antipode.Gamma} is equivalent to
\cite[Corollary 5.2.20]{Reiner}\footnote{It is easiest to derive
\cite[Corollary 5.2.20]{Reiner} from our Corollary~\ref{cor.antipode.Gamma},
as this only requires setting $\EE = \left(P, <_P, <_{\ZZ}\right)$
(this is a special double poset, thus in particular a tertispecial
one) and noticing that
$\Gamma\left(\left(P, <_P, <_{\ZZ}\right)\right)
= F_P\left(\xx\right)$ and
$\Gamma\left(\left(P, >_P, <_{\ZZ}\right)\right)
= F_{P^{\operatorname{opp}}}\left(\xx\right)$, where all unexplained
notations are defined in \cite[Chapter 5]{Reiner}. But one can also
proceed in the opposite direction (hint: replace the partial order
$<_2$ by a linear extension, thus turning the tertispecial double
poset $\left(E, <_1, <_2\right)$ into a special one; argue that
this does not change $\Gamma\left(\left(E, <_1, <_2\right)\right)$
and $\Gamma\left(\left(E, >_1, <_2\right)\right)$).}
(a result found by Malvenuto and Reutenauer
\cite[Lemma 3.2]{Mal-Reu}).
%, as well as by Ehrenborg in an equivalent form).
Theorem~\ref{thm.antipode.Gammaw} is equivalent to Malvenuto's
and Reutenauer's \cite[Theorem 3.1]{Mal-Reu}\footnote{This equivalence
requires some work to set up. First of all, Malvenuto and
Reutenauer, in \cite{Mal-Reu}, do not work with the antipode $S$
of $\QSym$, but instead study a certain automorphism of
$\QSym$ called $\omega$. However, this automorphism is closely
related to $S$ (namely, for each $n \in \NN$ and each homogeneous
element $f \in \QSym$ of degree $n$, we have
$\omega\left(f\right) = \left(-1\right)^n S\left(f\right)$);
therefore, any statements about $\omega$ can be translated into
statements about $S$ and vice versa.
\par
Let me sketch how to derive \cite[Theorem 3.1]{Mal-Reu}
from our Theorem~\ref{thm.antipode.Gammaw}. Indeed, contract
all undirected edges in $G$ and $G'$,
denoting the (common) vertex set of the new graphs by $E$.
Then, define two strict partial orders $<_1$ and
$<_2$ on $E$ by
\[
\left(a <_1 b\right) \Longleftrightarrow \left(a \neq b,
\text{ and there exists a path from } a \text{ to } b \text{ in }
G \right)
\]
and
\[
\left(a <_2 b\right) \Longleftrightarrow \left(a \neq b,
\text{ and there exists a path from } a \text{ to } b \text{ in }
G' \right) .
\]
The map $w$ sends every $e \in E$ to the number of vertices
of $G$ that became $e$ when the edges were contracted. To show that
the resulting double poset $\left(E, <_1, <_2\right)$ is
tertispecial, we must notice that if $a$ is $<_1$-covered by $b$,
then $G$ had an edge from one of the vertices that became $a$ to
one of the vertices that became $b$. The ``$x_i$'s in $X$
satisfying a set of conditions'' (in the language of
\cite[Section 3]{Mal-Reu}) are in 1-to-1 correspondence with
$\left(E, <_1, <_2\right)$-partitions (at least when
$X = \left\{1, 2, 3, \ldots\right\}$); this is not immediately
obvious but not hard to check either (the acyclicity of $G$ and
$G^\prime$ is used in the proof). As a result,
\cite[Theorem 3.1]{Mal-Reu} follows from
Theorem~\ref{thm.antipode.Gammaw} above.
With some harder work, one can conversely derive
our Theorem~\ref{thm.antipode.Gammaw} from
\cite[Theorem 3.1]{Mal-Reu}.}. We
nevertheless believe that our versions of these facts are
slicker and simpler than the ones appearing in existing
literature\footnote{That said, we would not be surprised if
Malvenuto and Reutenauer are aware of them; after all, they have
discovered both the original
version of Theorem~\ref{thm.antipode.Gammaw} in
\cite{Mal-Reu} and the notion of double posets in \cite{Mal-Reu-DP}.},
and if not,
then at least our proofs below are more natural.

To these known results, we add another, which seems to be unknown so
far (probably because it is far harder to state in the terminologies
of $\left(P, \omega\right)$-partitions or
equality-and-inequality conditions appearing in literature). First,
we need to introduce some notation:

\begin{definition}
\label{def.G-sets.terminology}
Let $G$ be a group, and let $E$ be a $G$-set.

\begin{itemize}

\item[(a)] Let $<$ be a
strict partial order on $E$. We say that $G$
\textit{preserves the relation $<$} if the following holds:
For every $g \in G$, $a \in E$ and $b \in E$ satisfying $a < b$,
we have $ga < gb$.

\item[(b)] Let $w : E \to \left\{1, 2, 3, \ldots\right\}$. We
say that $G$ \textit{preserves $w$} if every $g \in G$ and
$e \in E$ satisfy $w\left(ge\right) = w\left(e\right)$.

\item[(c)] Let $g \in G$. Assume that the set $E$ is finite.
We say that $g$ is \textit{$E$-even}
if the action of $g$ on $E$ (that is, the permutation of $E$
that sends every $e \in E$ to $ge$) is an even permutation
of $E$.

\item[(d)] If $X$ is any set, then the set $X^E$ of all maps
$E \to X$ becomes a $G$-set in the following way: For any
$\pi \in X^E$ and $g \in G$, we define the element $g\pi \in X^E$
to be the map sending each $e \in E$ to $\pi\left(g^{-1}e\right)$.

\item[(e)] Let $F$ be a further $G$-set. Assume that the set
$E$ is finite. An element $\pi \in F$
is said to be \textit{$E$-coeven} if every $g \in G$
satisfying $g\pi = \pi$ is $E$-even. A $G$-orbit $O$ on $F$ is said
to be \textit{$E$-coeven} if all elements of $O$ are $E$-coeven.

\end{itemize}
\end{definition}

Before we come to the promised result, let us state two simple facts:

\begin{lemma}
\label{lem.coeven.all-one}
Let $G$ be a group. Let $F$ and $E$ be $G$-sets such that $E$ is
finite. Let $O$ be a
$G$-orbit on $F$. Then, $O$ is $E$-coeven if and only if at least
one element of $O$ is $E$-coeven.
\end{lemma}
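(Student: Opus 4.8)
The plan is to prove the equivalence between "the orbit $O$ is $E$-coeven" and "at least one element of $O$ is $E$-coeven." Since being $E$-coeven for an orbit is \emph{defined} (in Definition~\ref{def.G-sets.terminology}(e)) as requiring \emph{all} elements of $O$ to be $E$-coeven, one direction is trivial: if $O$ is $E$-coeven, then certainly every element of $O$ is $E$-coeven, hence at least one is (the orbit is nonempty). The substance is the converse, and the natural strategy is to show that the property of being $E$-coeven is constant along an orbit --- that is, if $\pi \in F$ is $E$-coeven and $\rho$ lies in the same orbit, then $\rho$ is $E$-coeven as well.

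First I would fix $\pi \in O$ that is $E$-coeven, take an arbitrary $\rho \in O$, and write $\rho = h\pi$ for some $h \in G$. I then want to analyze the stabilizer of $\rho$. The key algebraic observation is that stabilizers of points in the same orbit are conjugate: $\operatorname{Stab}(h\pi) = h\,\operatorname{Stab}(\pi)\,h^{-1}$. Concretely, for $g \in G$ we have $g(h\pi) = h\pi$ if and only if $(h^{-1}gh)\pi = \pi$, i.e.\ $g \in h\operatorname{Stab}(\pi)h^{-1}$. So every $g$ fixing $\rho$ has the form $g = h g' h^{-1}$ with $g' \in \operatorname{Stab}(\pi)$, and by hypothesis $g'$ is $E$-even.

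The main step is then to verify that $E$-evenness is a conjugation-invariant property, so that $g = hg'h^{-1}$ is $E$-even whenever $g'$ is. This reduces to a standard fact about the sign of permutations: the action of $G$ on the finite set $E$ gives a homomorphism $G \to S_E$, and composing with the sign homomorphism $S_E \to \{\pm 1\}$ yields a group homomorphism $\operatorname{sgn}_E : G \to \{\pm 1\}$ (here I am using that $g \mapsto (e \mapsto ge)$ is a homomorphism because $E$ is a $G$-set). An element $g$ is $E$-even precisely when $\operatorname{sgn}_E(g) = 1$. Since $\{\pm 1\}$ is abelian, $\operatorname{sgn}_E(hg'h^{-1}) = \operatorname{sgn}_E(h)\operatorname{sgn}_E(g')\operatorname{sgn}_E(h)^{-1} = \operatorname{sgn}_E(g')$, so conjugate elements have equal sign and $hg'h^{-1}$ is $E$-even iff $g'$ is. This completes the argument: every $g$ fixing $\rho$ is $E$-even, so $\rho$ is $E$-coeven, and as $\rho$ was arbitrary, all elements of $O$ are $E$-coeven, which is exactly the definition of $O$ being $E$-coeven.

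I expect the main (and really only) obstacle to be the bookkeeping around stabilizer conjugation combined with the sign homomorphism --- in particular, making sure the composite $G \to S_E \to \{\pm 1\}$ is genuinely a homomorphism, which rests on the $G$-set axioms $(gh)e = g(he)$ guaranteeing that the map $g \mapsto (e\mapsto ge)$ respects multiplication. Everything else is formal. It is worth noting that the hypothesis "$E$ is finite" is used exactly to make $\operatorname{sgn}$ and hence $E$-evenness meaningful (an infinite permutation has no well-defined sign), matching the standing assumption in Definition~\ref{def.G-sets.terminology}(c),(e).
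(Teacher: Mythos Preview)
Your proof is correct and follows essentially the same approach as the paper's: both reduce the nontrivial direction to the observation that stabilizers of orbit-equivalent points are conjugate, and that $E$-evenness (being in the kernel of the sign homomorphism $G \to \{\pm 1\}$) is a conjugation-invariant property. Your phrasing via the abelianness of $\{\pm 1\}$ is marginally cleaner than the paper's appeal to ``conjugate permutations have the same sign,'' but the underlying argument is identical.
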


\begin{proposition}
\label{prop.xxOw}
Let $\EE = \left(E, <_1, <_2\right)$ be a double poset.
Let $\Par \EE$ denote the set of all $\EE$-partitions.
Let $G$ be a finite
group which acts on $E$. Assume that $G$ preserves both
relations $<_1$ and $<_2$.

\begin{enumerate}

\item[(a)]
Then, $\Par \EE$ is a $G$-subset of the $G$-set
$\left\{1, 2, 3, \ldots\right\}^E$ (see
Definition~\ref{def.G-sets.terminology} (d) for the definition of
the latter).

\item[(b)] Let $w : E \to \left\{1, 2, 3, \ldots\right\}$. Assume that
$G$ preserves $w$.
Let $O$ be a $G$-orbit on $\Par \EE$. Then, the values of $\xx_{\pi, w}$
for all $\pi \in O$ are equal.

\end{enumerate}

\end{proposition}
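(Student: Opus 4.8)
The plan is to verify part (a) directly from the two defining conditions of an $\EE$-partition, and then to deduce part (b) by a change of summation variable in the product defining $\xx_{\pi,w}$.

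For part (a), I would fix an $\EE$-partition $\pi$ and an element $g \in G$, and show that $g\pi$ is again an $\EE$-partition. Writing $\psi = g\pi$, so that $\psi\left(e\right) = \pi\left(g^{-1}e\right)$ for all $e \in E$ by Definition~\ref{def.G-sets.terminology} (d), I need to check the two conditions of Definition~\ref{def.E-partition} for $\psi$. Suppose $e <_1 f$. Since $G$ preserves $<_1$ (Definition~\ref{def.G-sets.terminology} (a)), applying the element $g^{-1}$ gives $g^{-1}e <_1 g^{-1}f$; the first condition for $\pi$ then yields $\pi\left(g^{-1}e\right) \leq \pi\left(g^{-1}f\right)$, that is, $\psi\left(e\right) \leq \psi\left(f\right)$. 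For the second condition, suppose in addition that $f <_2 e$; since $G$ preserves $<_2$ as well, we obtain $g^{-1}f <_2 g^{-1}e$, so the second condition for $\pi$ gives $\pi\left(g^{-1}e\right) < \pi\left(g^{-1}f\right)$, i.e., $\psi\left(e\right) < \psi\left(f\right)$. Hence $\psi = g\pi$ is an $\EE$-partition, which shows that $\Par\EE$ is closed under the $G$-action and is therefore a $G$-subset of $\left\{1,2,3,\ldots\right\}^E$.

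For part (b), since any two elements of a single $G$-orbit $O$ differ by the action of some $g \in G$, it suffices to prove that $\xx_{g\pi, w} = \xx_{\pi, w}$ for every $\pi \in \Par\EE$ and every $g \in G$. Expanding the definitions, I would write
\[
\xx_{g\pi, w}
= \prod_{e \in E} x_{\left(g\pi\right)\left(e\right)}^{w\left(e\right)}
= \prod_{e \in E} x_{\pi\left(g^{-1}e\right)}^{w\left(e\right)} .
\]
I would then substitute $f = g^{-1}e$; as $e$ ranges over $E$, so does $f$, because $g$ acts as a bijection on $E$. This reindexing turns the product into $\prod_{f \in E} x_{\pi\left(f\right)}^{w\left(gf\right)}$. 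Finally, since $G$ preserves $w$ (Definition~\ref{def.G-sets.terminology} (b)), we have $w\left(gf\right) = w\left(f\right)$ for every $f \in E$, so the product equals $\prod_{f \in E} x_{\pi\left(f\right)}^{w\left(f\right)} = \xx_{\pi, w}$, as desired.

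This argument is entirely mechanical, and there is no genuine obstacle. The only point requiring attention is keeping the direction of the group action consistent: the action on maps involves $g^{-1}$, so in part (a) one must apply the ``$G$ preserves $<$'' hypothesis to the element $g^{-1}$ rather than $g$, while the reindexing in part (b) crucially uses that $g$ (and not $g^{-1}$) is a bijection of $E$.
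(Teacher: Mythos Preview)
Your proof is correct and follows essentially the same route as the paper's own proof: both parts are handled by unwinding the definition of the $G$-action on maps and using that $G$ preserves $<_1$, $<_2$, and $w$. The only cosmetic difference is that the paper substitutes $ge$ for $e$ in the product for part (b) whereas you substitute $f = g^{-1}e$, which amounts to the same reindexing.
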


\begin{theorem}
\label{thm.antipode.GammawG}
Let $\EE = \left(E, <_1, <_2\right)$ be a tertispecial double poset.
Let $\Par \EE$ denote the set of all $\EE$-partitions.
Let $w : E \to \left\{1, 2, 3, \ldots\right\}$. Let $G$ be a finite
group which acts on $E$. Assume that $G$ preserves both
relations $<_1$ and $<_2$, and also preserves $w$.
Then, $G$ acts also on the set $\Par \EE$ of all
$\EE$-partitions; namely, $\Par \EE$ is a $G$-subset of the $G$-set
$\left\{1, 2, 3, \ldots\right\}^E$ (according to
Proposition~\ref{prop.xxOw} (a)).
%We say that an $\EE$-partition $\pi$ is
%\textit{even} if every $g \in G$ satisfying $g \pi = \pi$
%is $E$-even. We say that a $G$-orbit $O$ on
%$\Par \EE$ is \textit{even} if its elements are
%even (or, equivalently, one of its elements is even).
For any $G$-orbit $O$ on $\Par \EE$, we define a monomial $\xx_{O, w}$
by
\[ \xx_{O, w} = \xx_{\pi, w} \qquad \text{ for some element }
\pi \text{ of } O .
\]
(This is well-defined, since Proposition~\ref{prop.xxOw} (b) shows
that $\xx_{\pi, w}$ does not depend on the choice of $\pi \in O$.)

Let
\[
\Gamma\left(\EE, w, G\right) = \sum_{O\text{ is a }
G\text{-orbit on } \Par \EE} \xx_{O, w}
\]
and
\[
\Gamma^+\left(\EE, w, G\right) = \sum_{O\text{ is an }E\text{-coeven }
G\text{-orbit on } \Par \EE} \xx_{O, w} .
\]
Then, $\Gamma\left(\EE, w, G\right)$ and
$\Gamma^+\left(\EE, w, G\right)$ belong to $\QSym$ and satisfy
\[
S\left(\Gamma\left(\EE, w, G\right)\right)
= \left(-1\right)^{\left|E\right|}
\Gamma^+\left(\left(E, >_1, <_2\right), w, G\right) .
\]
Here, $>_1$ denotes the opposite relation of $<_1$.
\end{theorem}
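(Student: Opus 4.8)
The plan is to reduce the $G$-equivariant statement to the group-free Theorem~\ref{thm.antipode.Gammaw} by feeding a Takeuchi-type expansion of the antipode into a sign-weighted orbit-counting (Burnside) argument; the sign character of $G$ acting on $E$ is exactly what will manufacture the $E$-coeven condition. First I would dispose of the membership claims. By Proposition~\ref{prop.xxOw} the monomial $\xx_{O,w}$ is well defined on each orbit, and any order-preserving relabeling of the value set $\left\{1,2,3,\ldots\right\}$ commutes with the $G$-action on $\Par\EE$ (since $G$ acts only on $E$) and preserves $w$; hence it sets up a bijection between the $G$-orbits whose monomial lies in one pack-equivalence class and those landing in a pack-equivalent one. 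This matches the relevant coefficients, so $\Gamma\left(\EE,w,G\right)\in\QSym$; the same relabeling preserves stabilizers and so, via Definition~\ref{def.G-sets.terminology}(e) and Lemma~\ref{lem.coeven.all-one}, the $E$-coeven property, giving $\Gamma^+\left(\EE,w,G\right)\in\QSym$ as well.

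Next I would build the combinatorial model of the antipode. Since $\QSym$ is connected graded, Takeuchi's formula writes $S=\sum_{k\geq 0}\left(-1\right)^k m^{\left[k-1\right]}\bar\Delta^{\left[k-1\right]}$, where $\bar\Delta$ is the reduced coproduct and $m^{\left[k-1\right]},\bar\Delta^{\left[k-1\right]}$ are iterates. By induction on $k$ I would compute the iterated coproduct of the deflated series: repeatedly splitting off the top layer shows that $\bar\Delta^{\left[k-1\right]}\Gamma\left(\EE,w,G\right)$ is the sum, over $G$-orbits of pairs $T=\left(\left(B_1,\ldots,B_k\right),\pi\right)$ — an ordered partition of $E$ into nonempty blocks whose prefixes $B_1\cup\cdots\cup B_i$ are $<_1$-down-sets, together with a map $\pi$ restricting to an $\EE|_{B_j}$-partition on each block — of the corresponding tensor of monomials. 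The subtlety is that the stabilizer of a down-set does \emph{not} act as a product on the two complementary pieces, so the iterated coproduct is entangled; arranging the induction so that the index set becomes exactly the $G$-orbits of full data $T$ is the first technical hurdle. Applying $m^{\left[k-1\right]}$ then recombines the factors and yields
\[
S\bigl(\Gamma\left(\EE,w,G\right)\bigr)=\sum_{k\geq 1}\left(-1\right)^k\sum_{\substack{G\text{-orbits }T\\ \text{of valid }k\text{-block data}}}\xx_{\pi,w},
\]
the $G$-orbit deflation of the group-free Takeuchi sum underlying Theorem~\ref{thm.antipode.Gammaw}.

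To evaluate this I would fix a monomial $\mathfrak m$ and apply the sign-weighted Burnside identity to the finite $G$-set of valid data $T$ with $\xx_{\pi,w}=\mathfrak m$, using that the block number $k$ is $G$-invariant. This rewrites $\left[\mathfrak m\right]S\left(\Gamma\left(\EE,w,G\right)\right)$ as $\tfrac1{\left|G\right|}\sum_{g\in G}\Sigma\left(g\right)$, where $\Sigma\left(g\right)=\sum\left(-1\right)^k$ runs over the $g$-fixed valid data, i.e.\ over $g$-invariant maps $\pi$ (with monomial $\mathfrak m$) and ordered partitions into $g$-invariant blocks. For a single $g$, the $g$-invariant blocks are precisely the unions of $\langle g\rangle$-orbits, so they are blocks of an ordered partition of the quotient $\bar E=E/\langle g\rangle$ (which is a genuine poset because $g$ is a finite-order $<_1$-automorphism). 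The key claim is
\[
\Sigma\left(g\right)=\left(-1\right)^{c\left(g\right)}\,\bigl[\,\pi\in\Par\left(E,>_1,<_2\right)\,\bigr],\qquad c\left(g\right):=\#\{\langle g\rangle\text{-orbits on }E\},
\]
obtained by identifying $\Sigma\left(g\right)$ with the group-free signed Takeuchi count on $\bar E$ at the induced partition and invoking Theorem~\ref{thm.antipode.Gammaw} for the $c\left(g\right)$-element quotient. I expect this reduction to be the \textbf{main obstacle}: one must check that the block-validity conditions descend to the quotient, that the covering relations needed to run the argument on $\bar E$ come from covering relations on $E$ (so that tertispeciality is available where required), and that the pointwise content of Theorem~\ref{thm.antipode.Gammaw} transfers to $\bar E$.

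Finally I would assemble the pieces. Writing $\sign_E\left(g\right)$ for the sign of $g$ as a permutation of $E$, we have $\sign_E\left(g\right)=\left(-1\right)^{\left|E\right|-c\left(g\right)}$, hence $\left(-1\right)^{c\left(g\right)}=\left(-1\right)^{\left|E\right|}\sign_E\left(g\right)$, and the coefficient of $\mathfrak m$ becomes
\[
\left(-1\right)^{\left|E\right|}\sum_{\substack{\pi\in\Par\left(E,>_1,<_2\right)\\ \xx_{\pi,w}=\mathfrak m}}\ \frac1{\left|G\right|}\sum_{g\in\Stab\left(\pi\right)}\sign_E\left(g\right).
\]
By character orthogonality, $\tfrac1{\left|\Stab\left(\pi\right)\right|}\sum_{g\in\Stab\left(\pi\right)}\sign_E\left(g\right)$ equals $1$ when every element of $\Stab\left(\pi\right)$ is $E$-even and $0$ otherwise; summing the resulting orbit-weights $1/\left|O\right|$ over each orbit, exactly the $E$-coeven orbits $O$ of $\left(E,>_1,<_2\right)$-partitions survive, each with coefficient $\left(-1\right)^{\left|E\right|}$. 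Since Lemma~\ref{lem.coeven.all-one} makes $E$-coevenness an orbit-invariant notion, matching $\mathfrak m$-coefficients gives $S\left(\Gamma\left(\EE,w,G\right)\right)=\left(-1\right)^{\left|E\right|}\Gamma^+\left(\left(E,>_1,<_2\right),w,G\right)$, as desired.
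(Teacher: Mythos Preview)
Your approach lands on the same three load-bearing ideas as the paper --- passing to the quotient double poset $\mathbf{E}^g=\left(E/\langle g\rangle,<_1^g,<_2^g\right)$, checking that it stays tertispecial so that Theorem~\ref{thm.antipode.Gammaw} applies there, and using the sign identity $\sign_E(g)=(-1)^{|E|-|E^g|}$ together with a signed Burnside count to produce the $E$-coeven condition. The difference is the order of operations. You expand $S$ via Takeuchi \emph{first} and then Burnside-average the resulting orbit sum, which forces you to compute $\bar\Delta^{[k-1]}\Gamma(\EE,w,G)$; the paper Burnside-averages \emph{first}, obtaining the clean linear identity
\[
\Gamma(\EE,w,G)=\frac{1}{|G|}\sum_{g\in G}\Gamma(\EE^g,w^g)
\qquad\text{and}\qquad
\Gamma^+(\EE,w,G)=\frac{1}{|G|}\sum_{g\in G}\sign_E(g)\,\Gamma(\EE^g,w^g),
\]
and then simply applies $S$ term-by-term and invokes Theorem~\ref{thm.antipode.Gammaw} on each $\EE^g$. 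That route never needs a coproduct formula for $\Gamma(\EE,w,G)$ at all.

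Your route is not wrong, but the step you flag as the ``first technical hurdle'' is genuinely underjustified as written. The inductive ``split off the top layer'' argument cannot be run on your displayed expansion, because the tensor factors $\xx_{\pi|_{B_j},w|_{B_j}}$ are bare monomials, not elements of $\QSym$, so $\bar\Delta$ does not apply to them. The fix is to use the packed version: write $\Gamma(\EE,w,G)=\sum_{[\varphi]}M_{\ev_w\varphi}$ over $G$-orbits of packed $\EE$-partitions, use the explicit $\Delta^{[k-1]}(M_\alpha)$, and observe that the bijection from Lemma~\ref{lem.Gammaw.coprod.bij} between $(\varphi,j)$ and $((P,Q),\sigma,\tau)$ is $G$-equivariant. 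This does give your formula, and after applying $m^{[k-1]}$ and Burnside you recover exactly $\frac{1}{|G|}\sum_g S(\Gamma(\EE^g,w^g))$ --- the same expression the paper reaches in one line. Also, be careful with your displayed ``key claim'' $\Sigma(g)=(-1)^{c(g)}[\pi\in\Par(E,>_1,<_2)]$: the left side is a sum over all $g$-invariant $\pi$ with the given monomial, so the right side should carry that same sum; what you actually use (and what Theorem~\ref{thm.antipode.Gammaw} on $\EE^g$ gives) is the coefficient-of-$\mathfrak m$ identity, not a pointwise one.
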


This theorem, which combines Theorem~\ref{thm.antipode.Gammaw} with the
ideas of P\'olya enumeration, is inspired by Jochemko's reciprocity
result for order polynomials \cite[Theorem 2.8]{Joch}, which can be
obtained from it by specializations (see Section~\ref{sect.jochemko}
for the details of how Jochemko's result follows from ours).

We shall now briefly review a number of particular cases of
Theorem~\ref{thm.antipode.Gammaw}.

\begin{example}
\label{exam.antipode.Gammaw}

\begin{enumerate}

\item[(a)] Corollary~\ref{cor.antipode.Gamma}
follows from Theorem~\ref{thm.antipode.Gammaw} by letting $w$
be the function which is constantly $1$.

\item[(b)] Let
$\alpha = \left(\alpha_1, \alpha_2, \ldots, \alpha_\ell\right)$
be a composition of a nonnegative integer $n$, and let
$\EE = \left(E, <_1, <_2\right)$ be the double poset defined
in Example~\ref{exam.Gamma} (b). Let
$w : \left\{1, 2, \ldots, \ell\right\} \to \left\{ 1, 2, 3,
\ldots \right\}$ be the map sending every $i$ to $\alpha_i$.
As Example~\ref{exam.Gamma} (b) shows, we have
$\Gamma\left(\EE, w\right) = M_\alpha$.
%But it is also easy
%to see that $\Gamma\left(\left(
%\left\{1, 2, \ldots, \ell\right\}, <_1, <_2\right)\right)
%= \sum_{\gamma \text{ is a composition of } n
% [todo: details!]
Thus, applying Theorem~\ref{thm.antipode.Gammaw} to these $\EE$
and $w$
\begin{verlong}
and performing some manipulations
(see Proposition~\ref{prop.exam.antipode.Gammaw.b} in the
Appendix for the details)
\end{verlong}
yields
\begin{align*}
S\left(M_\alpha\right)
&= \left(-1\right)^\ell
   \Gamma\left(\left(E, >_1, <_2\right), w\right)
= \left(-1\right)^\ell
  \sum_{i_1 \geq i_2 \geq \cdots \geq i_\ell}
  x_{i_1}^{\alpha_1} x_{i_2}^{\alpha_2} \cdots
      x_{i_\ell}^{\alpha_\ell} \\
&= \left(-1\right)^\ell
  \sum_{i_1 \leq i_2 \leq \cdots \leq i_\ell}
  x_{i_1}^{\alpha_\ell} x_{i_2}^{\alpha_{\ell-1}} \cdots
      x_{i_\ell}^{\alpha_1}
= \left(-1\right)^\ell
  \sum_{\substack{\gamma \text{ is a composition of } n ; \\
        D\left(\gamma\right) \subseteq
        D\left(\left(\alpha_\ell, \alpha_{\ell-1},
                     \ldots, \alpha_1\right)\right)}}
  M_\gamma .
\end{align*}
This is the formula for $S\left(M_\alpha\right)$
given in \cite[Proposition 3.4]{Ehrenb96},
in \cite[(4.26)]{Malve-Thesis}, in
\cite[Theorem 5.1.11]{Reiner}, and in
\cite[Theorem 4.1]{BenSag} (originally due to Ehrenborg
and to Malvenuto and Reutenauer).

\item[(c)] Applying Corollary~\ref{cor.antipode.Gamma} to the
double poset of Example~\ref{exam.Gamma} (c) (where the relation
$<_2$ is chosen to be a total order) yields the formula for the
antipode of a fundamental quasisymmetric function
(\cite[(4.27)]{Malve-Thesis}, \cite[(5.2.7)]{Reiner}, \cite[Theorem 5.1]{BenSag}).

\item[(d)] Let us use the notations of Example~\ref{exam.dp}.
For any partition $\lambda$, let $\lambda^t$ denote the
conjugate partition of $\lambda$.
Let $\mu$ and $\lambda$ be two partitions satisfying
$\mu \subseteq \lambda$. Let $>_1$ and $>_2$ be the opposite
relations of $<_1$ and $<_2$.
Then, there is a bijection
$\tau : Y\left(\lambda / \mu\right) \to
Y\left(\lambda^t / \mu^t\right)$ sending each
$\left(i, j\right) \in Y\left(\lambda / \mu\right)$
to $\left(j, i\right)$. This bijection is an isomorphism
of double posets from
$\left(Y\left(\lambda / \mu\right), >_1, <_2\right)$
to
$\left(Y\left(\lambda^t / \mu^t\right), >_1, >_2\right)$
(where the notion of an ``isomorphism of double posets'' is defined
in the natural way -- i.e., an isomorphism of double posets is a
bijection $\phi$ between their ground sets such that each of the
two maps $\phi$ and $\phi^{-1}$ preserves each of the two orders).
Hence,
\begin{align}
\Gamma\left(\left(Y\left(\lambda / \mu\right), >_1, <_2\right)\right)
&=
\Gamma\left(\left(Y\left(\lambda^t / \mu^t\right), >_1, >_2\right)\right) .
\label{eq.exam.antipode.Gammaw.schur.0}
\end{align}
But applying Corollary~\ref{cor.antipode.Gamma} to the
tertispecial double poset $\mathbf{Y}\left(\lambda / \mu\right)$,
we obtain
\begin{align}
S\left(\Gamma\left(\mathbf{Y}\left(\lambda / \mu\right)\right)\right)
&= \left(-1\right)^{\left|\lambda / \mu\right|}
\Gamma\left(\left(Y\left(\lambda / \mu\right), >_1, <_2\right)\right)
\nonumber \\
&= \left(-1\right)^{\left|\lambda / \mu\right|}
\Gamma\left(\left(Y\left(\lambda^t / \mu^t\right), >_1, >_2\right)\right)
\label{eq.exam.antipode.Gammaw.schur.1}
\end{align}
(by (\ref{eq.exam.antipode.Gammaw.schur.0})).
But from Example~\ref{exam.Gamma} (d), we know that
$\Gamma\left(\mathbf{Y}\left(\lambda / \mu\right)\right)
= s_{\lambda / \mu}$. Moreover, a similar argument using
\cite[Remark 2.2.5]{Reiner} shows that
$\Gamma\left(\left(Y\left(\lambda / \mu\right), >_1, >_2\right)\right)
= s_{\lambda / \mu}$. Applying this to $\lambda^t$ and $\mu^t$
instead of $\lambda$ and $\mu$, we obtain
$\Gamma\left(\left(Y\left(\lambda^t / \mu^t\right), >_1, >_2\right)\right)
= s_{\lambda^t / \mu^t}$.
Now,
\eqref{eq.exam.antipode.Gammaw.schur.1} rewrites as
\begin{equation}
S\left(s_{\lambda / \mu}\right)
= \left(-1\right)^{\left|\lambda / \mu\right|}
s_{\lambda^t / \mu^t}
\label{eq.exam.antipode.Gammaw.schur.2}
\end{equation}
(since
$\Gamma\left(\mathbf{Y}\left(\lambda / \mu\right)\right)
= s_{\lambda / \mu}$ and
$\Gamma\left(\left(Y\left(\lambda^t / \mu^t\right), >_1, >_2\right)\right)
= s_{\lambda^t / \mu^t}$).
This is a well-known formula, and is usually stated
for $S$ being the antipode of the Hopf algebra of symmetric
(rather than quasisymmetric) functions; but this is an
equivalent statement, since the latter antipode
is a restriction of the antipode of $\QSym$.

It is also possible (but more difficult) to derive
\eqref{eq.exam.antipode.Gammaw.schur.2} by using the double
poset $\mathbf{Y}_h\left(\lambda / \mu\right)$ instead of
$\mathbf{Y}\left(\lambda / \mu\right)$. (This boils down to what
was done in \cite[proof of Corollary 5.2.22]{Reiner}.)

\item[(e)] A result of Benedetti and Sagan
\cite[Theorem 8.2]{BenSag} on the antipodes of immaculate
functions can be obtained from Corollary~\ref{cor.antipode.Gamma}
using dualization.

\end{enumerate}

\end{example}

\begin{remark}
\label{rmk.antipode.Gamma.converse}
Corollary \ref{cor.antipode.Gamma} has a
sort of converse. Namely, let us assume that $\kk = \ZZ$. If
$\left(  E,<_{1},<_{2}\right)  $ is a double poset satisfying $S\left(
\Gamma\left(  \left(  E,<_{1},<_{2}\right)  \right)  \right)  =\left(
-1\right)  ^{\left\vert E\right\vert }\Gamma\left(  \left(  E,>_{1}%
,<_{2}\right)  \right)  $, then $\left(  E,<_{1},<_{2}\right)  $ is tertispecial.

More precisely, the following holds: Define the \textit{length} $\ell\left(
\alpha\right)  $ of a composition $\alpha$ to be the number of entries of
$\alpha$. Define the \textit{size} $\left\vert \alpha\right\vert $ of a
composition $\alpha$ to be the sum of the entries of $\alpha$. Let
$\eta : \QSym \to \QSym$ be the $\kk$-linear map defined by
\[
\eta \left( M_{\alpha} \right)
=
\begin{cases}
M_{\alpha}, & \text{if }\ell\left(  \alpha\right)  \geq\left\vert
\alpha\right\vert -1;\\
0, & \text{if }\ell\left(  \alpha\right)  <\left\vert \alpha\right\vert -1
\end{cases}
\qquad \text{for every } \alpha \in \Comp .
\]
Thus, $\eta$ transforms a quasisymmetric function by removing all monomials
$\mathfrak{m}$ for which the number of indeterminates appearing in
$\mathfrak{m}$ is $<\deg\mathfrak{m}-1$. We partially order the ring
$\kk \left[  \left[ x_1, x_2, x_3, \ldots \right]  \right]  $
by a coefficientwise
order (i.e., two power series $a$ and $b$ satisfy $a\leq b$ if and only if
each coefficient of $a$ is $\leq$ to the corresponding coefficient of $b$).
Now, every double poset $\left(  E,<_{1},<_{2}\right)  $ satisfies
\begin{equation}
\eta\left(  \left(  -1\right)  ^{\left\vert E\right\vert }S\left(
\Gamma\left(  \left(  E,<_{1},<_{2}\right)  \right)  \right)  \right)
\leq\eta\left(  \Gamma\left(  \left(  E,>_{1},<_{2}\right)  \right)  \right)
,\label{eq.rmk.antipode.Gamma.converse.ineq}
\end{equation}
and equality holds if and only if the double poset
$\left( E, <_{1}, <_{2} \right)  $ is tertispecial.
(If we omit $\eta$, then the inequality fails in general.)

\begin{vershort}
The proof of (\ref{eq.rmk.antipode.Gamma.converse.ineq}) is somewhat
technical, but not too hard. A rough outline is given in the detailed
version of this paper.
\end{vershort}

\begin{verlong}
The proof of (\ref{eq.rmk.antipode.Gamma.converse.ineq}) is somewhat
technical, but not too difficult. I shall only give a rough outline, as the
result is tangential to this paper. Fix a double poset $\left(  E,<_{1}%
,<_{2}\right)  $, and set $n=\left\vert E\right\vert $ and $\left[  n\right]
=\left\{  1,2,\ldots,n\right\}  $. A \textit{costrictor} will mean a bijection
$\phi:\left[  n\right]  \rightarrow E$ whose inverse $\phi^{-1}:E\rightarrow
\left[  n\right]  $ is a strictly increasing map from the poset $\left(
E,<_{1}\right)  $ to $\left(  \left[  n\right]  ,<_{\mathbb{Z}}\right)  $.
(The costrictors are in 1-to-1 correspondence with the linear extensions of
$\left(  E,<_{1}\right)  $.) For two elements $e$ and $f$ of $E$, we write
$e\parallel_{1}f$ if and only if $e$ and $f$ are not $<_{1}$-comparable.
Whenever $k \in \NN$, we shall use the notation $1^k$ for ``$k$ ones,
written in a row''; thus, for example, $\left(3,1^5,4\right)$ is the
composition $\left(3,1,1,1,1,1,4\right)$.
Then, it is not hard to see that
\begin{align*}
& \Gamma\left(  \left(  E,<_{1},<_{2}\right)  \right)  \\
& =\left(  \text{the number of all costrictors}\right)  M_{\left(
1^{n}\right)  }+\sum_{k=1}^{n-1}\gamma_{\left(  E,<_{1},<_{2}\right)
,k}M_{\left(  1^{k-1},2,1^{n-k-1}\right)  }\\
& \ \ \ \ \ \ \ \ \ \ +\left(  \text{a linear combination of }M_{\alpha}\text{
with }\left|\alpha\right| = n \text{ and } \ell\left(  \alpha\right)  <n-1\right)  ,
\end{align*}
where
\begin{align*}
& \gamma_{\left(  E,<_{1},<_{2}\right)  ,k}\\
& =\left(  \text{the number of all costrictors}\right)  \\
& \ \ \ \ \ \ \ \ \ \ -\dfrac{1}{2}\left(  \text{the number of all costrictors
}\phi\text{ satisfying }\phi\left(  k\right)  \parallel_{1}\phi\left(
k+1\right)  \right)  \\
& \ \ \ \ \ \ \ \ \ \ -\left(  \text{the number of all costrictors }\phi\text{
satisfying }\phi\left(  k\right)  <_{1}\phi\left(  k+1\right) \right. \\
& \ \ \ \ \ \ \ \ \ \ \qquad \left. \text{ and
}\phi\left(  k\right)  >_{2}\phi\left(  k+1\right)  \right)  .
\end{align*}
Hence,
\begin{align*}
& \eta\left(  \Gamma\left(  \left(  E,<_{1},<_{2}\right)  \right)  \right)
\\
& =\left(  \text{the number of all costrictors}\right)  M_{\left(
1^{n}\right)  }+\sum_{k=1}^{n-1}\gamma_{\left(  E,<_{1},<_{2}\right)
,k}M_{\left(  1^{k-1},2,1^{n-k-1}\right)  }.
\end{align*}
Using this (and the formula for $S\left(  M_{\alpha}\right)  $ in Example
\ref{exam.antipode.Gammaw} (b)), it is easy to show that
\begin{align*}
& \eta\left(  \left(  -1\right)  ^{\left\vert E\right\vert }S\left(
\Gamma\left(  \left(  E,<_{1},<_{2}\right)  \right)  \right)  \right)  \\
& =\left(  \text{the number of all costrictors}\right)  M_{\left(
1^{n}\right)  }\\
& \ \ \ \ \ \ \ \ \ \ +\sum_{k=1}^{n-1}\left(  \left(  \text{the number of all
costrictors}\right)  -\gamma_{\left(  E,<_{1},<_{2}\right)  ,k}\right)
M_{\left(  1^{n-k-1},2,1^{k-1}\right)  }.
\end{align*}

But we can also define an \textit{anticostrictor} as a bijection $\phi:\left[
n\right]  \rightarrow E$ whose inverse $\phi^{-1}:E\rightarrow\left[
n\right]  $ is a strictly decreasing map from the poset $\left(
E,<_{1}\right)  $ to $\left(  \left[  n\right]  ,<_{\mathbb{Z}}\right)  $.
Then, similarly to our formula for $\eta\left(  \Gamma\left(  \left(
E,<_{1},<_{2}\right)  \right)  \right)  $, we can derive the formula
\begin{align*}
& \eta\left(  \Gamma\left(  \left(  E,>_{1},<_{2}\right)  \right)  \right)
\\
& =\left(  \text{the number of all anticostrictors}\right)  M_{\left(
1^{n}\right)  }+\sum_{k=1}^{n-1}\widetilde{\gamma}_{\left(  E,<_{1}%
,<_{2}\right)  ,k}M_{\left(  1^{k-1},2,1^{n-k-1}\right)  },
\end{align*}
where
\begin{align*}
& \widetilde{\gamma}_{\left(  E,<_{1},<_{2}\right)  ,k}\\
& =\left(  \text{the number of all anticostrictors}\right)  \\
& \ \ \ \ \ \ \ \ \ \ -\dfrac{1}{2}\left(  \text{the number of all
anticostrictors }\phi\text{ satisfying }\phi\left(  k\right)  \parallel
_{1}\phi\left(  k+1\right)  \right)  \\
& \ \ \ \ \ \ \ \ \ \ -\left(  \text{the number of all anticostrictors }%
\phi\text{ satisfying }\phi\left(  k\right)  >_{1}\phi\left(  k+1\right)
\right. \\
& \ \ \ \ \ \ \ \ \ \ \qquad \left.
\text{ and }\phi\left(  k\right)  >_{2}\phi\left(  k+1\right)  \right)  .
\end{align*}

Recall that we want to prove (\ref{eq.rmk.antipode.Gamma.converse.ineq}). In
light of our formulas for $\eta\left(  \left(  -1\right)  ^{\left\vert
E\right\vert }S\left(  \Gamma\left(  \left(  E,<_{1},<_{2}\right)  \right)
\right)  \right)  $ and $\eta\left(  \Gamma\left(  \left(  E,>_{1}%
,<_{2}\right)  \right)  \right)  $, this boils down to proving the following
two facts:

\begin{enumerate}
\item The number of all costrictors is $\leq$ to the number of all anticostrictors.

\item For each $k\in\left\{  1,2,\ldots,n-1\right\}  $, we have
\[
\left(  \text{the number of all costrictors}\right)  -\gamma_{\left(
E,<_{1},<_{2}\right)  ,k}\leq\widetilde{\gamma}_{\left(  E,<_{1},<_{2}\right)
,n-k}.
\]

\end{enumerate}

But the first of these two facts is easy to see: Let $w_{0}$ be the involution
$\left[  n\right]  \rightarrow\left[  n\right]  ,\ i\mapsto n+1-i$. Then,
$w_{0}$ is a poset isomorphism $\left(  \left[  n\right]  ,<_{\mathbb{Z}%
}\right)  \rightarrow\left(  \left[  n\right]  ,>_{\mathbb{Z}}\right)  $.
Hence, there is a 1-to-1 correspondence between the costrictors and the
anticostrictors, given by $\phi\mapsto\phi\circ w_{0}$. Thus, the number of
all costrictors equals the number of all anticostrictors. This proves Fact 1.

Proving Fact 2 is harder. Fix $k\in\left\{  1,2,\ldots,n-1\right\}  $.
Recalling our definition of $\gamma_{\left(  E,<_{1},<_{2}\right)  ,k}$ and
$\widetilde{\gamma}_{\left(  E,<_{1},<_{2}\right)  ,n-k}$, we notice that we
must show%
\begin{align*}
& \dfrac{1}{2}\left(  \text{the number of all costrictors }\phi\text{
satisfying }\phi\left(  k\right)  \parallel_{1}\phi\left(  k+1\right)
\right)  \\
& \ \ \ \ \ \ \ \ \ \ +\left(  \text{the number of all costrictors }\phi\text{
satisfying }\phi\left(  k\right)  <_{1}\phi\left(  k+1\right)  \right. \\
& \ \ \ \ \ \ \ \ \ \ \qquad \left. \text{ and
}\phi\left(  k\right)  >_{2}\phi\left(  k+1\right)  \right)  \\
& \leq\left(  \text{the number of all anticostrictors}\right)  \\
& \ \ \ \ \ \ \ \ \ \ -\dfrac{1}{2}\left(  \text{the number of all
anticostrictors }\phi\text{ satisfying }\phi\left(  n-k\right)  \parallel
_{1}\phi\left(  n-k+1\right)  \right)  \\
& \ \ \ \ \ \ \ \ \ \ -\left(  \text{the number of all anticostrictors }%
\phi\text{ satisfying }\phi\left(  n-k\right)  >_{1}\phi\left(  n-k+1\right)
\right. \\
& \ \ \ \ \ \ \ \ \ \ \qquad \left.
\text{ and }\phi\left(  n-k\right)  >_{2}\phi\left(  n-k+1\right)  \right)  .
\end{align*}
Using the 1-to-1 correspondence between the costrictors and the
anticostrictors (which we already used in the proof of Fact 1), we can rewrite
this as%
\begin{align*}
& \dfrac{1}{2}\left(  \text{the number of all costrictors }\phi\text{
satisfying }\phi\left(  k\right)  \parallel_{1}\phi\left(  k+1\right)
\right)  \\
& \ \ \ \ \ \ \ \ \ \ +\left(  \text{the number of all costrictors }\phi\text{
satisfying }\phi\left(  k\right)  <_{1}\phi\left(  k+1\right) \right. \\
& \ \ \ \ \ \ \ \ \ \ \qquad \left. \text{ and
}\phi\left(  k\right)  >_{2}\phi\left(  k+1\right)  \right)  \\
& \leq\left(  \text{the number of all costrictors}\right)  \\
& \ \ \ \ \ \ \ \ \ \ -\dfrac{1}{2}\left(  \text{the number of all costrictors
}\phi\text{ satisfying }\phi\left(  k\right)  \parallel_{1}\phi\left(
k+1\right)  \right)  \\
& \ \ \ \ \ \ \ \ \ \ -\left(  \text{the number of all costrictors }\phi\text{
satisfying }\phi\left(  k\right)  <_{1}\phi\left(  k+1\right) \right. \\
& \ \ \ \ \ \ \ \ \ \ \qquad \left. \text{ and
}\phi\left(  k\right)  <_{2}\phi\left(  k+1\right)  \right)
\end{align*}
(here, we have used the fact that $\left(  \phi\circ w_{0}\right)  \left(
n-k\right)  =\phi\left(  \underbrace{w_{0}\left(  n-k\right)  }_{=k+1}\right)
=\phi\left(  k+1\right)  $ and $\left(  \phi\circ w_{0}\right)  \left(
n-k+1\right)  =\phi\left(  \underbrace{w_{0}\left(  n-k+1\right)  }%
_{=k}\right)  =\phi\left(  k\right)  $). This simplifies to%
\begin{align*}
& \left(  \text{the number of all costrictors }\phi\text{ satisfying }%
\phi\left(  k\right)  \parallel_{1}\phi\left(  k+1\right)  \right)  \\
& \ \ \ \ \ \ \ \ \ \ +\left(  \text{the number of all costrictors }\phi\text{
satisfying }\phi\left(  k\right)  <_{1}\phi\left(  k+1\right)  \right. \\
& \ \ \ \ \ \ \ \ \ \ \qquad \left.\text{ and
}\phi\left(  k\right)  >_{2}\phi\left(  k+1\right)  \right)  \\
& \ \ \ \ \ \ \ \ \ \ +\left(  \text{the number of all costrictors }\phi\text{
satisfying }\phi\left(  k\right)  <_{1}\phi\left(  k+1\right)  \right. \\
& \ \ \ \ \ \ \ \ \ \ \qquad \left.\text{ and
}\phi\left(  k\right)  <_{2}\phi\left(  k+1\right)  \right)  \\
& \leq\left(  \text{the number of all costrictors}\right)  .
\end{align*}
This inequality is clearly satisfied (since each costrictor $\phi$ satisfies
at most one of the relations $\phi\left(  k\right)  \parallel_{1}\phi\left(
k+1\right)  $, $\left(  \phi\left(  k\right)  <_{1}\phi\left(  k+1\right)
\text{ and }\phi\left(  k\right)  >_{2}\phi\left(  k+1\right)  \right)  $ and
$\left(  \phi\left(  k\right)  <_{1}\phi\left(  k+1\right)  \text{ and }%
\phi\left(  k\right)  <_{2}\phi\left(  k+1\right)  \right)  $). Thus, the
inequality (\ref{eq.rmk.antipode.Gamma.converse.ineq}) is proven. It now
remains to show that equality holds only when the double poset $\left(
E,<_{1},<_{2}\right)  $ is tertispecial.

Indeed, assume that $\left(  E,<_{1},<_{2}\right)  $ is not tertispecial.
Then, there exist two elements $a$ and $b$ of $E$ such that $a$ is $<_{1}%
$-covered by $b$ but $a$ and $b$ are not $<_{2}$-comparable. Consider such $a$
and $b$. There exists at least one pair $\left(  \phi,k\right)  $ of a
costrictor $\phi$ and an element $k\in\left\{  1,2,\ldots,n-1\right\}  $
satisfying $\phi\left(  k\right)  =a$ and $\phi\left(  k+1\right)  =b$. (In
fact, in order to construct such a pair, we write our set $E$ as the
disjoint union $E=E_{1}\cup\left\{  a\right\}  \cup\left\{  b\right\}  \cup
E_{2}$, where $E_{1}=\left\{  e\in E\ \mid\ e<_{1}b\text{ and }e\neq
a\right\}  $ and $E_{2}=\left\{  e\in E\ \mid\ \text{neither }e<_{1}b\text{
nor }e=b\right\}  $. Then, we set $k=\left\vert E_{1}\right\vert +1$, and
choose strictly increasing bijections $\alpha:E_{1}\rightarrow\left[
k-1\right]  $ and $\beta:E_{2}\rightarrow\left[  n-k-1\right]  $. Finally, we
define a map $\gamma:E\rightarrow\left[  n\right]  $ by $\gamma\left(
e\right)  =
\begin{cases}
\alpha\left(  e\right)  , & \text{if }e\in E_{1};\\
k, & \text{if }e=a;\\
k+1, & \text{if }e=b;\\
k+1+\beta\left(  e\right)  , & \text{if }e\in E_{2}
\end{cases}
$, and define $\phi$ to be $\gamma^{-1}$. It is not hard to check that $\phi$
is a costrictor.) This causes one of the inequalities from which we obtained
(\ref{eq.rmk.antipode.Gamma.converse.ineq}) to be strict. This completes the
(outline of the) proof.
\end{verlong}
\end{remark}

\section{Lemmas: packed $\EE$-partitions and comultiplications}
\label{sect.lemmas}

We shall now prepare for the proofs of our results. To this end,
we introduce the notion of a \textit{packed map}.

\begin{definition}
\begin{itemize}

\item[(a)]
An \textit{initial interval} will mean a set of the form
$\left\{1, 2, \ldots, \ell\right\}$ for some $\ell \in \NN$.

\item[(b)]
If $E$ is a set and $\pi : E \to \left\{1, 2, 3, \ldots\right\}$ is
a map, then $\pi$ is said to be \textit{packed} if $\pi\left(E\right)$
is an initial interval. Clearly, this initial interval must be
$\left\{1, 2, \ldots, \left|\pi\left(E\right)\right|\right\}$.
\begin{verlong}
(Indeed, this follows from
Proposition~\ref{prop.ev.comp} (a), applied to
$\ell = \left|\pi\left(E\right)\right|$.)
\end{verlong}

\end{itemize}
\end{definition}

\begin{proposition}
\label{prop.ev.comp}
Let $E$ be a finite set.
Let $\pi : E \to \left\{1, 2, 3, \ldots\right\}$ be a packed map.
Let $\ell = \left|\pi\left(E\right)\right|$.

\begin{itemize}
\item[(a)] We have $\pi\left(E\right) = \left\{1, 2, \ldots, \ell\right\}$.

\item[(b)] Let $w : E \to \left\{1, 2, 3, \ldots\right\}$ be a
map. For each $i \in \left\{1, 2, \ldots, \ell\right\}$, define an
integer $\alpha_i$ by $\alpha_i = \sum_{e \in \pi^{-1}\left(i\right)}
w\left(e\right)$. Then,
$\left(\alpha_1, \alpha_2, \ldots, \alpha_\ell\right)$ is a
composition.
\end{itemize}
\end{proposition}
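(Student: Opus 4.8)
The plan is to read part~(a) straight off the definition of a packed map, and then to feed it into part~(b), where the only thing to verify is positivity of the entries.

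First, for part~(a): since $\pi$ is packed, the set $\pi\left(E\right)$ is an initial interval by definition, so $\pi\left(E\right) = \left\{1, 2, \ldots, m\right\}$ for some $m \in \NN$. Taking cardinalities gives $\ell = \left|\pi\left(E\right)\right| = \left|\left\{1, 2, \ldots, m\right\}\right| = m$, and therefore $\pi\left(E\right) = \left\{1, 2, \ldots, \ell\right\}$. This also handles the degenerate case $E = \varnothing$, in which $\pi\left(E\right) = \varnothing$ is the initial interval corresponding to $m = 0 = \ell$.

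For part~(b), I would first note that $\left(\alpha_1, \alpha_2, \ldots, \alpha_\ell\right)$ is a finite sequence of integers, since it has exactly $\ell$ entries and $\ell \in \NN$ is finite (being the cardinality of an initial interval). It thus remains only to show that each entry is a \emph{positive} integer. Fix $i \in \left\{1, 2, \ldots, \ell\right\}$. By part~(a) we have $i \in \left\{1, 2, \ldots, \ell\right\} = \pi\left(E\right)$, so the fiber $\pi^{-1}\left(i\right)$ is nonempty. Since $w$ takes values in $\left\{1, 2, 3, \ldots\right\}$, every summand $w\left(e\right)$ occurring in $\alpha_i = \sum_{e \in \pi^{-1}\left(i\right)} w\left(e\right)$ is $\geq 1$, and the index set of this sum is nonempty; hence $\alpha_i \geq 1 > 0$. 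Consequently each $\alpha_i$ is a positive integer, and $\left(\alpha_1, \alpha_2, \ldots, \alpha_\ell\right)$ is a composition.

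This proposition is elementary enough that there is no substantive obstacle; the one point requiring care is simply the well-definedness of each $\alpha_i$ as an integer, which needs the fiber $\pi^{-1}\left(i\right)$ to be finite. In the intended applications $E$ is the finite ground set of a double poset, so this holds automatically, and the fibers being nonempty is exactly the input that part~(a) supplies to force positivity.
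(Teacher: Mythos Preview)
Your proof is correct and follows essentially the same approach as the paper: part~(a) is read off the definition of ``packed'' via a cardinality count, and part~(b) uses part~(a) to see that each fiber $\pi^{-1}(i)$ is nonempty, whence $\alpha_i$ is a nonempty sum of positive integers. Your closing remark about the finiteness of the fibers is a fair caveat that the paper's proof also leaves implicit; as you note, it is harmless in the intended setting where $E$ is the finite ground set of a double poset.
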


\begin{vershort}
\begin{proof}[Proof of Proposition \ref{prop.ev.comp}.]
This follows from the assumption that $\pi$ be packed. (Details
are left to the reader.)
\end{proof}
\end{vershort}

\begin{verlong}
\begin{proof}
[Proof of Proposition \ref{prop.ev.comp}.] The map $\pi$ is packed. In other
words, $\pi\left(  E\right)  $ is an initial interval (by the definition of
\textquotedblleft packed\textquotedblright). In other words, $\pi\left(
E\right)  =\left\{  1,2,\ldots,k\right\}  $ for some $k\in \NN $.
Consider this $k$. From $\pi\left(  E\right)  =\left\{  1,2,\ldots,k\right\}
$, we obtain $\left\vert \pi\left(  E\right)  \right\vert =\left\vert \left\{
1,2,\ldots,k\right\}  \right\vert =k$, so that $k=\left\vert \pi\left(
E\right)  \right\vert =\ell$. Now, $\pi\left(  E\right)  =\left\{
1,2,\ldots,k\right\}  =\left\{  1,2,\ldots,\ell\right\}  $ (since $k=\ell$).
This proves Proposition \ref{prop.ev.comp} (a).

(b) Let $i\in\left\{  1,2,\ldots,\ell\right\}  $. Then, $i\in\left\{
1,2,\ldots,\ell\right\}  =\pi\left(  E\right)  $. Hence, there exists some
$f\in E$ such that $i=\pi\left(  f\right)  $. Consider this $f$. We have
$f\in\pi^{-1}\left(  i\right)  $ (since $\pi\left(  f\right)  =i$). Also,
$w\left(  f\right)  \in\left\{  1,2,3,\ldots\right\}  $ (since $w$ is a map
$E\rightarrow\left\{  1,2,3,\ldots\right\}  $). Now,
\begin{align*}
\alpha_{i}  & =\sum_{e\in\pi^{-1}\left(  i\right)  }w\left(  e\right)
=\underbrace{w\left(  f\right)  }_{>0}+\sum_{\substack{e\in\pi^{-1}\left(
i\right)  ;\\e\neq f}}\underbrace{w\left(  e\right)  }_{\substack{\geq
0\\\text{(since }w\left(  e\right)  \in\left\{  1,2,3,\ldots\right\}
\text{)}}}\\
& \ \ \ \ \ \ \ \ \ \ \left(
\begin{array}[c]{c}
\text{here, we have split off the addend for }e=f\text{ from the sum}\\
\text{(since }f\in\pi^{-1}\left(  i\right)  \text{)}%
\end{array}
\right)  \\
& >0+\sum_{\substack{e\in\pi^{-1}\left(  i\right)  ;\\e\neq f}}0=0.
\end{align*}
Thus, $\alpha_{i}$ is a positive integer.

Now, forget that we have fixed $i$. We thus have shown that $\alpha_{i}$ is a
positive integer for each $i\in\left\{  1,2,\ldots,\ell\right\}  $. In other
words, $\left(  \alpha_{1},\alpha_{2},\ldots,\alpha_{\ell}\right)  $ is a
finite list of positive integers, i.e., a composition. This proves Proposition
\ref{prop.ev.comp} (b).
\end{proof}
\end{verlong}

\begin{definition}
Let $E$ be a finite set.
Let $\pi : E \to \left\{1, 2, 3, \ldots\right\}$ be a packed map.
Let $w : E \to \left\{1, 2, 3, \ldots\right\}$ be a map. Then, the
composition $\left(\alpha_1, \alpha_2, \ldots, \alpha_\ell\right)$
defined in Proposition \ref{prop.ev.comp} (b) will be denoted by
$\ev_w \pi$.
\end{definition}

\begin{proposition}
\label{prop.Gammaw.packed}
Let $\EE = \left(E, <_1, <_2\right)$ be a double poset. Let
$w : E \to \left\{1, 2, 3, \ldots\right\}$ be a map.
Then,
\begin{equation}
\label{eq.prop.Gammaw.packed}
\Gamma\left(\EE , w\right)
= \sum_{\varphi \text{ is a packed } \EE\text{-partition}}
M_{\ev_w \varphi} .
\end{equation}
\end{proposition}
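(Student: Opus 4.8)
The plan is to prove Proposition~\ref{prop.Gammaw.packed} by grouping the monomials appearing in $\Gamma\left(\EE, w\right)$ according to pack-equivalence. Recall that $\Gamma\left(\EE, w\right) = \sum_{\pi} \xx_{\pi, w}$, where $\pi$ ranges over all $\EE$-partitions and $\xx_{\pi, w} = \prod_{e \in E} x_{\pi(e)}^{w(e)}$. The key idea is that every map $\pi : E \to \left\{1, 2, 3, \ldots\right\}$ has a canonical ``packing'' $\varphi$ obtained by relabeling the values of $\pi$ to an initial interval while preserving their relative order. Formally, if $\pi(E) = \left\{v_1 < v_2 < \cdots < v_\ell\right\}$, then the packing $\varphi$ sends each $e$ with $\pi(e) = v_i$ to $i$. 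First I would establish that $\varphi$ is again an $\EE$-partition whenever $\pi$ is: the defining conditions of an $\EE$-partition (in Definition~\ref{def.E-partition}) only involve the comparisons $\phi(e) \leq \phi(f)$ and $\phi(e) < \phi(f)$, and these order relations are preserved by the monotone relabeling $v_i \mapsto i$. Moreover $\varphi$ is packed by construction.

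Next I would organize the double sum. The plan is to partition the set of all $\EE$-partitions into fibers according to their packing: for a fixed packed $\EE$-partition $\varphi$, the $\EE$-partitions $\pi$ packing to $\varphi$ are exactly those of the form $\pi(e) = i_{\varphi(e)}$ for some strictly increasing sequence $i_1 < i_2 < \cdots < i_\ell$ of positive integers (where $\ell = \left|\varphi(E)\right|$). For such a $\pi$, I would compute
\[
\xx_{\pi, w}
= \prod_{e \in E} x_{i_{\varphi(e)}}^{w(e)}
= \prod_{k=1}^{\ell} x_{i_k}^{\sum_{e \in \varphi^{-1}(k)} w(e)}
= \prod_{k=1}^{\ell} x_{i_k}^{\alpha_k} ,
\]
where $\left(\alpha_1, \alpha_2, \ldots, \alpha_\ell\right) = \ev_w \varphi$ by the definition of $\ev_w$. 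Summing over all strictly increasing sequences $i_1 < i_2 < \cdots < i_\ell$ then gives exactly $M_{\ev_w \varphi}$, by the definition of the monomial quasisymmetric function. Summing this over all packed $\EE$-partitions $\varphi$ yields the right-hand side of \eqref{eq.prop.Gammaw.packed}.

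The one point requiring care is the bookkeeping that the map $\pi \mapsto \varphi$ (taking an $\EE$-partition to its packing) sets up a bijection between the set of all $\EE$-partitions and the set of pairs $\left(\varphi, \left(i_1 < i_2 < \cdots < i_\ell\right)\right)$ consisting of a packed $\EE$-partition $\varphi$ together with a strictly increasing sequence of length $\ell = \left|\varphi(E)\right|$. I would verify injectivity (the sequence $(i_k)$ and $\varphi$ jointly reconstruct $\pi$) and surjectivity (every such pair assembles into a genuine $\EE$-partition, again because the relabeling $k \mapsto i_k$ is order-preserving). The main obstacle, modest as it is, lies precisely in confirming that reassembling a packed $\EE$-partition with an arbitrary strictly increasing sequence of indices always produces a valid $\EE$-partition — this is where one must check that the two conditions in Definition~\ref{def.E-partition} survive the substitution, using that $k < k'$ implies $i_k < i_{k'}$ and $k \leq k'$ implies $i_k \leq i_{k'}$. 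Everything else is a routine reindexing of the sum, and the statement follows once the bijection is in place.
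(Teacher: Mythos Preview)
Your proposal is correct and follows essentially the same approach as the paper: define the packing of each $\EE$-partition via the order-preserving relabeling of its image, group $\EE$-partitions by their packing, and identify the fiber over a packed $\varphi$ with the set of strictly increasing sequences (equivalently, $\ell$-element subsets) of positive integers, yielding $M_{\ev_w \varphi}$ for each fiber. The paper phrases the fiber parametrization as $\pi = r_T \circ \varphi$ for $\ell$-element subsets $T$, but this is exactly your $\pi(e) = i_{\varphi(e)}$, and the ``main obstacle'' you flag is handled there by the same observation that composing with a strictly increasing injection preserves both defining inequalities of an $\EE$-partition.
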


\begin{proof}[Proof of Proposition~\ref{prop.Gammaw.packed}.]
For every finite subset $T$ of
$\left\{1, 2, 3, \ldots\right\}$, there exists a unique strictly
increasing bijection $\left\{1, 2, \ldots,
\left|T\right|\right\} \to T$. We shall denote this bijection by
$r_T$.
For every map $\pi : E \to \left\{1, 2, 3, \ldots\right\}$, we
define the \textit{packing of $\pi$} as the map
$r_{\pi\left(E\right)}^{-1} \circ \pi : E \to
\left\{1, 2, 3, \ldots\right\}$; this is a packed map (indeed,
its image is
$\left\{1, 2, \ldots, \left|\pi\left(E\right)\right|\right\}$),
and will be
denoted by $\pack \pi$. This map $\pack \pi$ is an $\EE$-partition
if and only if $\pi$ is an $\EE$-partition\footnote{Indeed,
$\pack \pi = r_{\pi\left(E\right)}^{-1} \circ \pi$. Since
$r_{\pi\left(E\right)}$ is strictly increasing, we thus see that,
for any given $e \in E$ and $f \in E$, the equivalences
\[
\left(\left(\pack \pi\right)\left(e\right)
      \leq \left(\pack \pi\right)\left(f\right)\right)
\Longleftrightarrow
\left( \pi\left(e\right) \leq \pi\left(f\right) \right)
\]
and
\[
\left(\left(\pack \pi\right)\left(e\right)
      < \left(\pack \pi\right)\left(f\right)\right)
\Longleftrightarrow
\left( \pi\left(e\right) < \pi\left(f\right) \right)
\]
hold. Hence, $\pack \pi$ is an $\EE$-partition
if and only if $\pi$ is an $\EE$-partition.}.
Hence, $\pack \pi$ is a packed $\EE$-partition for every
$\EE$-partition $\pi$.

We shall show that for every packed $\EE$-partition $\varphi$, we
have
\begin{equation}
\sum_{\pi\text{ is an }\EE\text{-partition; } \pack \pi = \varphi}
\xx_{\pi, w} = M_{\ev_w \varphi} .
\label{pf.prop.Gammaw.packed.1}
\end{equation}
Once this is proven, it will follow that
\begin{align*}
\Gamma\left(\EE , w\right)
&= \sum_{\pi\text{ is an }\EE\text{-partition}}
  \xx_{\pi, w}
= \sum_{\varphi \text{ is a packed } \EE\text{-partition}}
  \underbrace{\sum_{\pi\text{ is an }\EE\text{-partition; } \pack \pi = \varphi}
              \xx_{\pi, w}}_{\substack{ = M_{\ev_w \varphi} \\
                             \text{(by \eqref{pf.prop.Gammaw.packed.1})}
                             }} \\
& \qquad \left(\text{since } \pack \pi \text{ is a packed }
             \EE\text{-partition for every }
             \EE\text{-partition } \pi\right) \\
&= \sum_{\varphi \text{ is a packed } \EE\text{-partition}}
M_{\ev_w \varphi} ,
\end{align*}
and Proposition~\ref{prop.Gammaw.packed} will be proven.

So it remains to prove \eqref{pf.prop.Gammaw.packed.1}. Let $\varphi$
be a packed $\EE$-partition.
\begin{verlong}
Hence, $\varphi$ is a packed map
$E \to \left\{ 1, 2, 3, \ldots \right\}$.
\end{verlong}
Let
$\ell = \left|\varphi\left(E\right)\right|$; thus
$\varphi\left(E\right) = \left\{1, 2, \ldots, \ell\right\}$
\begin{vershort}
(since $\varphi$ is packed).
\end{vershort}
\begin{verlong}
(by Proposition~\ref{prop.ev.comp} (a) (applied to $\varphi$
instead of $\pi$)).
\end{verlong}
Let $\alpha_i = \sum_{e \in \varphi^{-1}\left(i\right)}
w\left(e\right)$ for every $i \in \left\{ 1, 2, \ldots, \ell \right\}$;
thus,
$\ev_w \varphi = \left(\alpha_1, \alpha_2, \ldots, \alpha_\ell\right)$
(by the definition of $\ev_w \varphi$).
Hence, the definition of $M_{\ev_w \varphi}$ yields
\begin{align}
M_{\ev_w \varphi}
&= \sum_{i_1 < i_2 < \cdots < i_\ell}
\underbrace{x_{i_1}^{\alpha_1} x_{i_2}^{\alpha_2} \cdots x_{i_\ell}^{\alpha_\ell}}
_{= \prod_{k = 1}^\ell x_{i_k}^{\alpha_k}}
= \sum_{i_1 < i_2 < \cdots < i_\ell}
\prod_{k = 1}^\ell \underbrace{x_{i_k}^{\alpha_k}}
                              _{\substack{= x_{i_k}^{\sum_{e \in \varphi^{-1}\left(k\right)}
                                w\left(e\right)} \\
                                \text{(since } \alpha_k = \sum_{e \in \varphi^{-1}\left(k\right)}
                                w\left(e\right) \text{)}}}
\nonumber \\
&= \sum_{i_1 < i_2 < \cdots < i_\ell}
\prod_{k = 1}^\ell \underbrace{x_{i_k}^{\sum_{e \in \varphi^{-1}\left(k\right)}
                                w\left(e\right)}}
                              _{\substack{= \prod_{e \in \varphi^{-1}\left(k\right)}
                                x_{i_k}^{w\left(e\right)} \\
                                = \prod_{e \in E;\ \varphi\left(e\right) = k}
                                x_{i_k}^{w\left(e\right)} }}
= \sum_{i_1 < i_2 < \cdots < i_\ell}
\prod_{k = 1}^\ell \prod_{e \in E;\ \varphi\left(e\right) = k}
\underbrace{x_{i_k}^{w\left(e\right)}}_{\substack{
         =x_{i_{\varphi\left(e\right)}}^{w\left(e\right)} \\
         \text{(since } k = \varphi\left(e\right) \text{)}
         }}
\nonumber \\
&= \sum_{i_1 < i_2 < \cdots < i_\ell}
\underbrace{\prod_{k = 1}^\ell \prod_{e \in E;\ \varphi\left(e\right) = k}
x_{i_{\varphi\left(e\right)}}^{w\left(e\right)}}
_{ = \prod_{e \in E} x_{i_{\varphi\left(e\right)}}^{w\left(e\right)}}
= \sum_{i_1 < i_2 < \cdots < i_\ell}
\prod_{e \in E} x_{i_{\varphi\left(e\right)}}^{w\left(e\right)}
\nonumber \\
&= \sum_{T \subseteq \left\{1, 2, 3, \ldots\right\} ; \ \left|T\right| = \ell}
\prod_{e \in E} x_{r_T\left(\varphi\left(e\right)\right)}^{w\left(e\right)}
\nonumber
\end{align}
\footnote{In the last equality, we have used the fact that
the strictly increasing sequences
$\left(i_1 < i_2 < \cdots < i_\ell\right)$ of positive integers are
in bijection with the subsets
$T \subseteq \left\{1, 2, 3, \ldots\right\}$
such that $\left|T\right| = \ell$. The bijection sends a sequence
$\left(i_1 < i_2 < \cdots < i_\ell\right)$ to the set of its entries;
its inverse map sends every $T$ to the sequence
$\left(r_T\left(1\right), r_T\left(2\right), \ldots,
r_T\left(\left|T\right|\right)\right)$.}. Hence,
\begin{align}
M_{\ev_w \varphi}
&= \sum_{T \subseteq \left\{1, 2, 3, \ldots\right\} ; \ \left|T\right| = \ell}
\underbrace{\prod_{e \in E} x_{r_T\left(\varphi\left(e\right)\right)}^{w\left(e\right)}}
           _{\substack{
              = \prod_{e \in E} x_{\left(r_T\circ \varphi\right)\left(e\right)}^{w\left(e\right)}
              = \xx_{r_T\circ\varphi,w} \\
              \text{(by the definition of } \xx_{r_T\circ\varphi,w} \text{)}
             }}
= \sum_{T \subseteq \left\{1, 2, 3, \ldots\right\} ; \ \left|T\right| = \ell}
\xx_{r_T\circ\varphi,w} .
\label{pf.prop.Gammaw.packed.1.pf.1}
\end{align}

On the other hand, recall that $\varphi$ is an $\EE$-partition.
Hence, every map $\pi$ satisfying $\pack \pi = \varphi$
is an $\EE$-partition (because, as we know, $\pack \pi$ is an
$\EE$-partition if and only if $\pi$ is an $\EE$-partition).
Thus, the $\EE$-partitions $\pi$ satisfying
$\pack \pi = \varphi$ are precisely the maps
$\pi : E \to \left\{1, 2, 3, \ldots\right\}$ satisfying
$\pack \pi = \varphi$. Hence,
\begin{align*}
\sum_{\pi\text{ is an }\EE\text{-partition; } \pack \pi = \varphi}
\xx_{\pi, w}
&= \sum_{\pi : E \to \left\{1, 2, 3, \ldots\right\} \text{; } \pack \pi = \varphi}
\xx_{\pi, w} \\
&= \sum_{T \subseteq \left\{1, 2, 3, \ldots\right\} ; \ \left|T\right| = \ell}
\sum_{\pi : E \to \left\{1, 2, 3, \ldots\right\} \text{; } \pack \pi = \varphi
\text{; } \pi\left(E\right) = T}
\xx_{\pi, w}
\end{align*}
(because if $\pi : E \to \left\{1, 2, 3, \ldots\right\}$ is a map
satisfying $\pack \pi = \varphi$, then
$\left|\pi\left(E\right)\right| = \ell$\ \ \ \ \footnote{\textit{Proof.}
Let $\pi : E \to \left\{1, 2, 3, \ldots\right\}$ be a map
satisfying $\pack \pi = \varphi$. The definition of $\pack \pi$
yields $\pack \pi = r_{\pi\left(E\right)}^{-1} \circ \pi$. Hence,
$\left|\left(\pack \pi\right)\left(E\right)\right|
= \left|\left(r_{\pi\left(E\right)}^{-1} \circ \pi\right)\left(E\right)\right|
= \left|r_{\pi\left(E\right)}^{-1} \left(\pi\left(E\right)\right)\right|
= \left|\pi\left(E\right)\right|$
(since $r_{\pi\left(E\right)}^{-1}$ is a bijection). Since
$\pack \pi = \varphi$, this rewrites as
$\left|\varphi\left(E\right)\right| = \left|\pi\left(E\right)\right|$.
Hence, $ \left|\pi\left(E\right)\right|
= \left|\varphi\left(E\right)\right| = \ell$, qed.}). But for every
$\ell$-element subset $T$ of
$\left\{1, 2, 3, \ldots\right\}$, there exists exactly
one $\pi : E \to \left\{1, 2, 3, \ldots\right\}$ satisfying
$\pack \pi = \varphi$ and $\pi\left(E\right) = T$: namely,
$\pi = r_T \circ \varphi$\ \ \ \ \footnote{\textit{Proof.}
Let $T$ be an $\ell$-element subset of $\left\{
1,2,3,\ldots\right\}  $. We need to show that there exists exactly one
$\pi:E\rightarrow\left\{  1,2,3,\ldots\right\}  $ satisfying
$ \pack \pi=\varphi$ and $\pi\left(  E\right)  =T$: namely,
$\pi=r_{T}\circ\varphi$. In other words, we need to prove the following two claims:

\begin{statement}
\textit{Claim 1:} The map $r_{T}\circ\varphi$ is a map $\pi:E\rightarrow
\left\{  1,2,3,\ldots\right\}  $ satisfying $ \pack \pi=\varphi$
and $\pi\left(  E\right)  =T$.
\end{statement}

\begin{statement}
\textit{Claim 2:} If $\pi:E\rightarrow\left\{  1,2,3,\ldots\right\}  $ is a
map satisfying $ \pack \pi=\varphi$ and $\pi\left(  E\right)  =T$,
then $\pi=r_{T}\circ\varphi$.
\end{statement}

\textit{Proof of Claim 1.} We have $\left|T\right| = \ell$ (since the set
$T$ is $\ell$-element), thus $\ell = \left|T\right|$.
We have $\left(  r_{T}\circ\varphi\right)  \left(
E\right)  =r_{T}\left(  \underbrace{\varphi\left(  E\right)  }_{=\left\{
1,2,\ldots,\ell\right\}  }\right)  =r_{T}\left(  \left\{  1,2,\ldots
,\underbrace{\ell}_{=\left\vert T\right\vert }\right\}  \right)
=r_{T}\left(  \left\{
1,2,\ldots,\left\vert T\right\vert \right\}  \right)  =T$ (by the definition
of $r_{T}$). Now, the definition of $ \pack \left(  r_{T}%
\circ\varphi\right)  $ shows that
\begin{align*}
 \pack \left(  r_{T}\circ\varphi\right)   & =r_{\left(  r_{T}%
\circ\varphi\right)  \left(  E\right)  }^{-1}\circ\left(  r_{T}\circ
\varphi\right)  =r_{T}^{-1}\circ\left(  r_{T}\circ\varphi\right)
\ \ \ \ \ \ \ \ \ \ \left(  \text{since }\left(  r_{T}\circ\varphi\right)
\left(  E\right)  =T\right) \\
& =\varphi.
\end{align*}
Thus, the map $r_{T}\circ\varphi:E\rightarrow\left\{  1,2,3,\ldots\right\}  $
satisfies $ \pack \left(  r_{T}\circ\varphi\right)  =\varphi$ and
$\left(  r_{T}\circ\varphi\right)  \left(  E\right)  =T$. In other words,
$r_{T}\circ\varphi$ is a map $\pi:E\rightarrow\left\{  1,2,3,\ldots\right\}  $
satisfying $ \pack \pi=\varphi$ and $\pi\left(  E\right)  =T$.
This proves Claim 1.

\textit{Proof of Claim 2.} Let $\pi:E\rightarrow\left\{  1,2,3,\ldots\right\}
$ be a map satisfying $ \pack \pi=\varphi$ and $\pi\left(
E\right)  =T$. The definition of $ \pack \pi$ shows that
$ \pack \pi=r_{\pi\left(  E\right)  }^{-1}\circ\pi=r_{T}^{-1}%
\circ\pi$ (since $\pi\left(  E\right)  =T$). Hence, $r_{T}^{-1}\circ
\pi= \pack \pi=\varphi$, so that $\pi=r_{T}\circ\varphi$. This
proves Claim 2.

Now, both Claims 1 and 2 are proven; hence, our proof is complete.
}. Therefore, for every $\ell$-element subset $T$ of
$\left\{1, 2, 3, \ldots\right\}$, we have
\[
\sum_{\pi : E \to \left\{1, 2, 3, \ldots\right\} \text{; } \pack \pi = \varphi
\text{; } \pi\left(E\right) = T}
\xx_{\pi, w}
= \xx_{r_T\circ\varphi,w} .
\]
Hence,
\begin{align*}
\sum_{\pi\text{ is an }\EE\text{-partition; } \pack \pi = \varphi}
\xx_{\pi, w}
&= \sum_{T \subseteq \left\{1, 2, 3, \ldots\right\} ; \ \left|T\right| = \ell}
\underbrace{\sum_{\pi : E \to \left\{1, 2, 3, \ldots\right\} \text{; } \pack \pi = \varphi
\text{; } \pi\left(E\right) = T}
\xx_{\pi, w}}_{= \xx_{r_T\circ\varphi,w}} \\
&= \sum_{T \subseteq \left\{1, 2, 3, \ldots\right\} ; \ \left|T\right| = \ell}
\xx_{r_T\circ\varphi,w}
= M_{\ev_w \varphi}
\end{align*}
(by \eqref{pf.prop.Gammaw.packed.1.pf.1}).
Thus, \eqref{pf.prop.Gammaw.packed.1}
is proven, and with it Proposition~\ref{prop.Gammaw.packed}.
\end{proof}

\begin{proof}[Proof of Proposition~\ref{prop.Gammaw.qsym}.]
Proposition~\ref{prop.Gammaw.qsym} follows immediately from
Proposition~\ref{prop.Gammaw.packed}
(since $M_\alpha \in \QSym$ for every composition $\alpha$).
\end{proof}

We shall now describe the coproduct of $\Gamma\left(\EE, w\right)$,
essentially giving the proof that is left to the reader in
\cite[Theorem 2.2]{Mal-Reu-DP}.

\begin{definition}
Let $\EE = \left(E, <_1, <_2\right)$ be a double poset.

\begin{itemize}

\item[(a)]
Then, $\Adm \EE$ will mean the set of all pairs
$\left(P, Q\right)$, where $P$ and $Q$ are subsets of $E$ satisfying
$P \cap Q = \varnothing$ and $P \cup Q = E$ and having the property
that no $p \in P$ and $q \in Q$ satisfy $q <_1 p$. These pairs
$\left(P, Q\right)$ are
called the \textit{admissible partitions} of $\EE$. (In the
terminology of \cite{Mal-Reu-DP}, they are the
\textit{decompositions} of $\left(E, <_1\right)$.)

\item[(b)] For
any subset $T$ of $E$, we let $\EE\mid_T$ denote the double poset
$\left(T, <_1, <_2\right)$, where $<_1$ and $<_2$ (by abuse of
notation) denote the restrictions of the relations $<_1$ and $<_2$
to $T$.

\end{itemize}
\end{definition}

\begin{proposition}
\label{prop.Gammaw.coprod}
Let $\EE = \left(E, <_1, <_2\right)$ be a double poset. Let
$w : E \to \left\{1, 2, 3, \ldots\right\}$ be a map.
Then,
\begin{equation}
\label{eq.prop.Gammaw.coprod}
\Delta\left(\Gamma\left(\EE, w\right)\right)
= \sum_{\left(P, Q\right) \in \Adm \EE}
\Gamma\left(\EE\mid_P, w\mid_P\right)
\otimes \Gamma\left(\EE\mid_Q, w\mid_Q\right) .
\end{equation}
\end{proposition}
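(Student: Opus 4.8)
The plan is to reduce everything to the packed expansion of Proposition~\ref{prop.Gammaw.packed}. I would first observe that, since $E$ is finite, there are only finitely many packed maps $E \to \left\{1,2,3,\ldots\right\}$ (each is determined by its sequence of nonempty fibers, i.e.\ by an ordered set partition of $E$); hence the sum in \eqref{eq.prop.Gammaw.packed} is \emph{finite}, and so is the analogous sum for each restriction $\EE\mid_P$ and $\EE\mid_Q$. This lets me apply the $\kk$-linear map $\Delta$ to \eqref{eq.prop.Gammaw.packed} term by term, with no convergence concerns. Using \eqref{eq.coproduct.M} on each summand then gives
\[
\Delta\left(\Gamma\left(\EE, w\right)\right)
= \sum_{\varphi}\ \sum_{k=0}^{\ell}
M_{\left(\alpha_1, \ldots, \alpha_k\right)}
\otimes
M_{\left(\alpha_{k+1}, \ldots, \alpha_{\ell}\right)} ,
\]
where $\varphi$ runs over the packed $\EE$-partitions and, for each such $\varphi$, I abbreviate $\ell = \left|\varphi\left(E\right)\right|$ and $\left(\alpha_1, \ldots, \alpha_\ell\right) = \ev_w \varphi$. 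On the other hand, expanding each factor on the right of \eqref{eq.prop.Gammaw.coprod} by Proposition~\ref{prop.Gammaw.packed} rewrites that right-hand side as a finite sum of tensors $M_{\ev_{w\mid_P}\psi} \otimes M_{\ev_{w\mid_Q}\chi}$, indexed by triples $\left(\left(P,Q\right), \psi, \chi\right)$ with $\left(P,Q\right) \in \Adm\EE$, with $\psi$ a packed $\EE\mid_P$-partition, and with $\chi$ a packed $\EE\mid_Q$-partition. It thus suffices to match these two families of index data by a bijection respecting the tensor summands.

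The key device would be a \emph{cut-and-glue} bijection. Given $\left(\varphi, k\right)$ with $k \in \left\{0,1,\ldots,\ell\right\}$, I would set $P = \varphi^{-1}\left(\left\{1,\ldots,k\right\}\right)$ and $Q = \varphi^{-1}\left(\left\{k+1,\ldots,\ell\right\}\right)$, take $\psi = \varphi\mid_P$, and let $\chi$ be the shifted restriction $e \mapsto \varphi\left(e\right) - k$ on $Q$; packedness of $\varphi$ forces $\psi$ to have image $\left\{1,\ldots,k\right\}$ and $\chi$ image $\left\{1,\ldots,\ell-k\right\}$, so both are packed. Conversely, given $\left(\left(P,Q\right),\psi,\chi\right)$, I would put $k = \left|\psi\left(P\right)\right|$ and glue $\psi,\chi$ into one map $\varphi$ by $\varphi\left(e\right)=\psi\left(e\right)$ for $e\in P$ and $\varphi\left(e\right)=\chi\left(e\right)+k$ for $e\in Q$. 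These constructions are visibly mutually inverse, and splitting the sum defining each $\alpha_i$ along the fibers of $\varphi$ shows the summands agree: $M_{\left(\alpha_1,\ldots,\alpha_k\right)}=M_{\ev_{w\mid_P}\psi}$ and $M_{\left(\alpha_{k+1},\ldots,\alpha_\ell\right)}=M_{\ev_{w\mid_Q}\chi}$.

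The hard part will be checking that each direction actually lands in the asserted index set, and this is exactly where the admissibility condition is used. In the cutting direction I must verify $\left(P,Q\right)\in\Adm\EE$: if some $q\in Q$ and $p\in P$ had $q<_1 p$, then the $\EE$-partition property of $\varphi$ would force $\varphi\left(q\right)\leq\varphi\left(p\right)$, contradicting $\varphi\left(q\right)>k\geq\varphi\left(p\right)$; so no such pair exists, which is precisely admissibility. (That $\psi$ and $\chi$ are $\EE\mid_P$- and $\EE\mid_Q$-partitions is routine, since restricting an $\EE$-partition to a subset and shifting all its values by a constant both preserve the two defining inequalities.) In the gluing direction I must verify that $\varphi$ is again a packed $\EE$-partition: its image is the initial interval $\left\{1,\ldots,k+\left|\chi\left(Q\right)\right|\right\}$, and for a relation $e<_1 f$ I would argue by the positions of $e,f$. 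If both lie in $P$, resp.\ both in $Q$, the inequalities come from $\psi$, resp.\ $\chi$ (the shift by $k$ being order-preserving); if $e\in P$ and $f\in Q$ then $\varphi\left(e\right)\leq k<\varphi\left(f\right)$, so both inequalities hold automatically irrespective of $<_2$; and the case $e\in Q$, $f\in P$ cannot occur, since it would give $q<_1 p$ with $q=e\in Q$, $p=f\in P$, which admissibility forbids. Combining the bijection with the term-by-term identification of summands then turns the displayed expansion of $\Delta\left(\Gamma\left(\EE,w\right)\right)$ into the right-hand side of \eqref{eq.prop.Gammaw.coprod}, which is the assertion.
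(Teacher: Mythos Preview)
Your proposal is correct and follows essentially the same approach as the paper: both reduce to the packed expansion of Proposition~\ref{prop.Gammaw.packed}, apply \eqref{eq.coproduct.M} termwise, and then exhibit the identical cut-and-glue bijection between pairs $(\varphi,k)$ and triples $((P,Q),\psi,\chi)$, with the same well-definedness checks (admissibility from the $\EE$-partition inequality, and the four-case analysis for the glued map). Your explicit remark that the packed sum is finite is a nice touch the paper leaves implicit.
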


A particular case of Proposition~\ref{prop.Gammaw.coprod} (namely,
the case when $w\left(e\right) = 1$ for each $e \in E$) appears in
\cite[Th\'eor\`eme 4.16]{Malve-Thesis}.

\begin{vershort}
The proof of Proposition~\ref{prop.Gammaw.coprod} relies on
a simple bijection that an experienced combinatorialist will
have no trouble finding (and proving even less); let us just
give a brief outline of the argument\footnote{See the detailed
version of this note for an (almost) completely written-out
proof; I am afraid that the additional level of detail is of
no help to the understanding.}:

\begin{proof}[Proof of Proposition~\ref{prop.Gammaw.coprod}.]
Whenever
$\alpha = \left(\alpha_1, \alpha_2, \ldots, \alpha_\ell\right)$
is a composition and $k \in \left\{0, 1, \ldots, \ell\right\}$,
we introduce the notation
$\alpha\left[:k\right]$ for the composition
$\left(\alpha_1, \alpha_2, \ldots, \alpha_k\right)$, and the
notation $\alpha\left[k:\right]$ for the composition
$\left(\alpha_{k+1}, \alpha_{k+2}, \ldots, \alpha_\ell\right)$.
Now, the formula \eqref{eq.coproduct.M} can be rewritten as
follows:
\begin{align}
\label{pf.prop.Gammaw.coprod.DeltaM}
\Delta \left( M_\alpha \right)
&= \sum_{k=0}^{\ell} M_{\alpha\left[:k\right]}
\otimes M_{\alpha\left[k:\right]} \\
& \qquad \text{ for every } \ell \in \NN \text{ and every composition } \alpha
\text{ with } \ell \text{ entries.} \nonumber
\end{align}

Now, applying $\Delta$ to the equality
\eqref{eq.prop.Gammaw.packed} yields
\begin{align}
\Delta\left(\Gamma\left(\EE , w\right)\right)
&= \sum_{\varphi \text{ is a packed } \EE\text{-partition}}
\underbrace{\Delta\left(M_{\ev_w \varphi}\right)}_{
  \substack{ = \sum_{k=0}^{\left|\varphi\left(E\right)\right|}
  M_{\left(\ev_w \varphi\right)\left[:k\right]} \otimes
  M_{\left(\ev_w \varphi\right)\left[k:\right]} \\
  \text{(by \eqref{pf.prop.Gammaw.coprod.DeltaM})} }}
 \nonumber \\
&= \sum_{\varphi \text{ is a packed } \EE\text{-partition}}
\sum_{k=0}^{\left|\varphi\left(E\right)\right|}
M_{\left(\ev_w \varphi\right)\left[:k\right]} \otimes
M_{\left(\ev_w \varphi\right)\left[k:\right]} .
\label{pf.Gammaw.coprod.lhs}
\end{align}

On the other hand, rewriting each of the tensorands on the right
hand side of \eqref{eq.prop.Gammaw.coprod} using
\eqref{eq.prop.Gammaw.packed}, we obtain
\begin{align*}
%\label{pf.Gammaw.coprod.rhs}
& \sum_{\left(P, Q\right) \in \Adm \EE}
\Gamma\left(\EE\mid_P, w\mid_P\right)
\otimes \Gamma\left(\EE\mid_Q, w\mid_Q\right) \\
&= \sum_{\left(P, Q\right) \in \Adm \EE}
\left(\sum_{\varphi \text{ is a packed } \EE\mid_P\text{-partition}}
M_{\ev_{w\mid_P} \varphi}\right)
\otimes
\left(\sum_{\varphi \text{ is a packed } \EE\mid_Q\text{-partition}}
M_{\ev_{w\mid_Q} \varphi}\right) \\
& = \sum_{\left(P, Q\right) \in \Adm \EE}
\left(\sum_{\sigma \text{ is a packed } \EE\mid_P\text{-partition}}
M_{\ev_{w\mid_P} \sigma}\right)
\otimes
\left(\sum_{\tau \text{ is a packed } \EE\mid_Q\text{-partition}}
M_{\ev_{w\mid_Q} \tau}\right) \\
& = \sum_{\left(P, Q\right) \in \Adm \EE}
\sum_{\sigma \text{ is a packed } \EE\mid_P\text{-partition}}
\sum_{\tau \text{ is a packed } \EE\mid_Q\text{-partition}}
M_{\ev_{w\mid_P} \sigma}
\otimes
M_{\ev_{w\mid_Q} \tau} .
\end{align*}
We need to prove that the right hand sides of this equality and of
\eqref{pf.Gammaw.coprod.lhs} are equal (because then, it will follow
that so are the left hand sides, and thus
Proposition~\ref{prop.Gammaw.coprod} will be proven). For this, it
is clearly enough to exhibit a bijection between
\begin{itemize}
\item the pairs
$\left(\varphi, k\right)$ consisting of a packed $\EE$-partition
$\varphi$ and a
$k \in \left\{0, 1, \ldots, \left|\varphi\left(E\right)\right|
\right\}$
\end{itemize}
and
\begin{itemize}
\item the triples $\left(\left(P, Q\right), \sigma, \tau
\right)$ consisting of a $\left(P, Q\right) \in \Adm \EE$, a packed
$\EE\mid_P$-partition $\sigma$ and a packed $\EE\mid_Q$-partition
$\tau$
\end{itemize}
which bijection has the property that
whenever it maps $\left(\varphi, k\right)$ to
$\left(\left(P, Q\right), \sigma, \tau\right)$,
we have the equalities
$\left(\ev_w \varphi\right)\left[:k\right]
= \ev_{w\mid_P}\sigma$
and
$\left(\ev_w \varphi\right)\left[k:\right]
= \ev_{w\mid_Q}\tau$.
Such a bijection is easy to construct: Given
$\left(\varphi, k\right)$, it sets
$P = \varphi^{-1}\left(\left\{1, 2, \ldots, k\right\}\right)$,
$Q = \varphi^{-1}\left(\left\{k+1, k+2, \ldots, \left|\varphi\left(E\right)\right|\right\}\right)$,
$\sigma = \varphi\mid_P$ and
$\tau = \pack \left(\varphi\mid_Q\right)$\ \ \ \ \footnote{We
notice that these $P$, $Q$, $\sigma$ and $\tau$ satisfy
$\sigma\left(e\right) = \varphi\left(e\right)$ for every
$e \in P$, and $\tau\left(e\right) = \varphi\left(e\right) - k$
for every $e \in Q$.}. Conversely, given
$\left(\left(P, Q\right), \sigma, \tau\right)$,
the inverse bijection
sets $k = \left|\sigma\left(P\right)\right|$ and constructs
$\varphi$ as the map $E \to \left\{1, 2, 3, \ldots\right\}$
which sends every $e \in E$ to
$\begin{cases} \sigma\left(e\right), &\mbox{if } e \in P; \\
\tau\left(e\right) + k, &\mbox{if } e \in Q \end{cases}$.
Proving that this alleged bijection and its alleged inverse
bijection are well-defined and actually mutually inverse is
straightforward and left to the reader\footnote{The only
part of the argument that is a bit trickier is proving the
well-definedness of the inverse bijection: We need to show
that if $\left(\left(P, Q\right), \sigma, \tau
\right)$ is a triple consisting of a
$\left(P, Q\right) \in \Adm \EE$, a packed
$\EE\mid_P$-partition $\sigma$ and a packed $\EE\mid_Q$-partition
$\tau$, and if we set $k = \left|\sigma\left(P\right)\right|$,
then the map $\varphi : E \to \left\{1, 2, 3, \ldots\right\}$
which sends every $e \in E$ to
$\begin{cases} \sigma\left(e\right), &\mbox{if } e \in P; \\
\tau\left(e\right) + k, &\mbox{if } e \in Q \end{cases}$
is actually a packed $\EE$-partition.

Indeed, it is clear that this map $\varphi$ is packed. It remains
to show that it is an $\EE$-partition. To do so, we must prove
the following two claims:

\begin{statement}
\textit{Claim 1:} Every $e \in E$ and $f \in E$ satisfying
$e <_1 f$ satisfy
$\varphi\left(e\right) \leq \varphi\left(f\right)$.
\end{statement}

\begin{statement}
\textit{Claim 2:} Every $e \in E$ and $f \in E$ satisfying
$e <_1 f$ and $f <_2 e$ satisfy
$\varphi\left(e\right) < \varphi\left(f\right)$.
\end{statement}

We shall only prove Claim 1 (as the proof of Claim 2 is
similar). So let $e \in E$ and $f \in E$ be such that
$e <_1 f$. We need to show that
$\varphi\left(e\right) \leq \varphi\left(f\right)$.
We are in one of the following four cases:

\textit{Case 1:} We have $e \in P$ and $f \in P$.

\textit{Case 2:} We have $e \in P$ and $f \in Q$.

\textit{Case 3:} We have $e \in Q$ and $f \in P$.

\textit{Case 4:} We have $e \in Q$ and $f \in Q$.

In Case 1, our claim
$\varphi\left(e\right) \leq \varphi\left(f\right)$ follows
from the assumption that $\sigma$ is an
$\EE\mid_P$-partition (because in Case 1,
we have $\varphi\left(e\right) = \sigma\left(e\right)$
and $\varphi\left(f\right) = \sigma\left(f\right)$).
In Case 4, it follows from the
assumption that $\tau$ is an $\EE\mid_Q$-partition
(since in Case 4, we have
$\varphi\left(e\right) = \tau\left(e\right) + k$ and
$\varphi\left(f\right) = \tau\left(f\right) + k$). In
Case 2, it clearly holds (indeed,
if $e \in P$, then the definition of $\varphi$ yields
$\varphi\left(e\right) = \sigma\left(e\right) \leq k$,
and if $f \in Q$, then
the definition of $\varphi$ yields
$\varphi\left(f\right) = \tau\left(f\right) + k > k$;
therefore, in Case 2, we have
$\varphi\left(e\right) \leq k < \varphi\left(f\right)$).
Finally, Case 3 is impossible (because having $e \in Q$
and $f \in P$ and $e <_1 f$ would contradict
$\left(P, Q\right) \in \Adm \EE$). Thus, we have proven the
claim in each of the four cases, and consequently Claim 1 is
proven. As we have said above, Claim 2 is proven similarly.}.
\end{proof}
\end{vershort}

\begin{verlong}
The proof of Proposition \ref{prop.Gammaw.coprod} is based upon a simple
bijection. We shall introduce it after some preparations.

\begin{lemma}
\label{lem.Gammaw.coprod.bij.a}Let $ \EE =\left(  E,<_{1},<_{2}\right)
$ be a double poset.

Let $\mathcal{S}$ be the set of all pairs $\left(  \varphi,k\right)  $
consisting of a packed $\EE$-partition $\varphi$ and a $k\in\left\{
0,1,\ldots,\left\vert \varphi\left(  E\right)  \right\vert \right\}  $.

Let $\mathcal{T}$ be the set of all triples $\left(  \left(  P,Q\right)
,\sigma,\tau\right)  $ consisting of a $\left(  P,Q\right)  \in
\Adm \EE $, a packed $ \EE \mid_{P}$-partition
$\sigma$ and a packed $ \EE \mid_{Q}$-partition $\tau$.

For every $\ell\in\mathbb{Z}$, we let $\operatorname{add}_{\ell}$
denote the bijective map $\mathbb{Z}\rightarrow\mathbb{Z},\ z\mapsto z+\ell$.

Fix $\left(  \varphi,k\right)  \in\mathcal{S}$. Set
\begin{align}
P  &  =\varphi^{-1}\left(  \left\{  1,2,\ldots,k\right\}  \right)
,\ \ \ \ \ \ \ \ \ \ Q=\varphi^{-1}\left(  \left\{  k+1,k+2,\ldots,\left\vert
\varphi\left(  E\right)  \right\vert \right\}  \right)
,\label{eq.lem.Gammaw.coprod.bij.def1}\\
\sigma &  =\varphi\mid_{P}\ \ \ \ \ \ \ \ \ \ \text{and}
\ \ \ \ \ \ \ \ \ \ \tau=\operatorname{add}_{-k}\circ\left(
\varphi\mid_{Q}\right)  .
\label{eq.lem.Gammaw.coprod.bij.def2}
\end{align}
Then, $\left(  \left(  P,Q\right)  ,\sigma,\tau\right)  \in\mathcal{T}$.
\end{lemma}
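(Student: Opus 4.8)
The plan is to verify directly that the triple $\left(\left(P,Q\right), \sigma, \tau\right)$ satisfies each of the three defining conditions of membership in $\mathcal{T}$: namely that $\left(P,Q\right) \in \Adm \EE$, that $\sigma$ is a packed $\EE\mid_P$-partition, and that $\tau$ is a packed $\EE\mid_Q$-partition. Throughout I would abbreviate $\ell = \left\vert\varphi\left(E\right)\right\vert$ and recall from Proposition~\ref{prop.ev.comp}~(a) that $\varphi\left(E\right) = \left\{1, 2, \ldots, \ell\right\}$, since $\varphi$ is packed.

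First I would establish $\left(P,Q\right) \in \Adm \EE$. The sets $\left\{1, \ldots, k\right\}$ and $\left\{k+1, \ldots, \ell\right\}$ are disjoint with union $\left\{1, \ldots, \ell\right\} = \varphi\left(E\right)$, so taking preimages under $\varphi$ yields $P \cap Q = \varnothing$ and $P \cup Q = E$ at once. The only substantive point---and the step I expect to be the crux---is the admissibility requirement that no $p \in P$ and $q \in Q$ satisfy $q <_1 p$. Here I would argue by contradiction: if $q <_1 p$, then $\varphi$ being an $\EE$-partition forces $\varphi\left(q\right) \leq \varphi\left(p\right)$; but $p \in P$ gives $\varphi\left(p\right) \leq k$ while $q \in Q$ gives $\varphi\left(q\right) \geq k+1$, contradicting $\varphi\left(q\right) \leq \varphi\left(p\right)$. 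Hence no such pair exists.

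Next I would check that $\sigma = \varphi\mid_P$ is a packed $\EE\mid_P$-partition. Packedness follows from the standard identity $\varphi\left(\varphi^{-1}\left(A\right)\right) = A \cap \varphi\left(E\right)$, which gives $\sigma\left(P\right) = \left\{1, \ldots, k\right\}$, an initial interval. That $\sigma$ is an $\EE\mid_P$-partition is inherited from $\varphi$ by restriction: for $e, f \in P$ the two defining inequalities of an $\EE\mid_P$-partition are literally the corresponding inequalities for $\varphi$ applied to elements of $P$. The argument for $\tau = \operatorname{add}_{-k} \circ \left(\varphi\mid_Q\right)$ runs in parallel. Since $\varphi\left(Q\right) = \left\{k+1, \ldots, \ell\right\}$, the shift by $-k$ carries it to $\left\{1, \ldots, \ell-k\right\}$, so $\tau$ is well-defined into the positive integers and is packed. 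The $\EE\mid_Q$-partition conditions are preserved because $\operatorname{add}_{-k}$ is strictly order-preserving: for $e, f \in Q$ with $e <_1 f$ we get $\varphi\left(e\right) \leq \varphi\left(f\right)$ and hence $\tau\left(e\right) \leq \tau\left(f\right)$, and the strict version (under the additional hypothesis $f <_2 e$) transfers likewise. Assembling the three verifications shows $\left(\left(P,Q\right), \sigma, \tau\right) \in \mathcal{T}$, as claimed.
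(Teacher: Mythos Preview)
Your proposal is correct and follows essentially the same approach as the paper's proof: both verify the three defining conditions of $\mathcal{T}$ directly, using the same contradiction argument for admissibility (via $\varphi\left(q\right) \leq \varphi\left(p\right)$ against the gap at $k$), the same computation of $\sigma\left(P\right)$ and $\varphi\left(Q\right)$ as initial and shifted intervals, and the same observation that restriction and post-composition with the strictly increasing map $\operatorname{add}_{-k}$ preserve the $\EE$-partition inequalities.
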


\begin{lemma}
\label{lem.Gammaw.coprod.bij.b}
Let $\EE = \left(  E,<_{1},<_{2}\right)  $
be a double poset. Let $\mathcal{S}$ and $\mathcal{T}$ be defined as in Lemma
\ref{lem.Gammaw.coprod.bij.a}.

Fix $\left(  \left(  P,Q\right)  ,\sigma,\tau\right)  \in\mathcal{T}$. Set
$k=\left\vert \sigma\left(  P\right)  \right\vert $, and let $\varphi$ be the
map $E\rightarrow\left\{  1,2,3,\ldots\right\}  $ which sends every $e\in E$
to $%
\begin{cases}
\sigma\left(  e\right)  , & \text{if }e\in P;\\
\tau\left(  e\right)  +k, & \text{if }e\in Q
\end{cases}
$. Then, $\left(  \varphi,k\right)  \in\mathcal{S}$.
\end{lemma}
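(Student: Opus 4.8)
The plan is to verify the two membership conditions that define $\mathcal{S}$: first that $\varphi$ is a packed $\EE$-partition, and second that $k \in \left\{0, 1, \ldots, \left|\varphi\left(E\right)\right|\right\}$. I would begin with the packing claim and the range of $k$, as both follow from a direct computation of the image $\varphi\left(E\right)$. Since $\sigma$ is a packed $\EE\mid_P$-partition, Proposition~\ref{prop.ev.comp}~(a) gives $\sigma\left(P\right) = \left\{1, 2, \ldots, k\right\}$ (recall $k = \left|\sigma\left(P\right)\right|$); likewise $\tau\left(Q\right) = \left\{1, 2, \ldots, \left|\tau\left(Q\right)\right|\right\}$. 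From the definition of $\varphi$ one then reads off $\varphi\left(P\right) = \sigma\left(P\right) = \left\{1, \ldots, k\right\}$ and $\varphi\left(Q\right) = \left\{k+1, k+2, \ldots, k + \left|\tau\left(Q\right)\right|\right\}$, so that $\varphi\left(E\right) = \varphi\left(P\right) \cup \varphi\left(Q\right) = \left\{1, 2, \ldots, k + \left|\tau\left(Q\right)\right|\right\}$ is an initial interval. Hence $\varphi$ is packed, and $\left|\varphi\left(E\right)\right| = k + \left|\tau\left(Q\right)\right| \geq k \geq 0$, which gives $k \in \left\{0, 1, \ldots, \left|\varphi\left(E\right)\right|\right\}$ as required.

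It then remains to show that $\varphi$ is an $\EE$-partition, i.e.\ to verify the two defining conditions of Definition~\ref{def.E-partition}. I would prove both by the same case distinction, according to which of $P$ and $Q$ the two elements $e, f \in E$ lie in. For the first condition, fix $e <_1 f$. If $e, f \in P$, then $\varphi\left(e\right) = \sigma\left(e\right) \leq \sigma\left(f\right) = \varphi\left(f\right)$, because $\sigma$ is an $\EE\mid_P$-partition and the relation $e <_1 f$ restricts to a relation in $\EE\mid_P$; the case $e, f \in Q$ is handled identically using $\tau$ and the common shift by $k$. If $e \in P$ and $f \in Q$, then $\varphi\left(e\right) = \sigma\left(e\right) \leq k < \tau\left(f\right) + k = \varphi\left(f\right)$, using $\sigma\left(e\right) \in \left\{1, \ldots, k\right\}$ and $\tau\left(f\right) \geq 1$. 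The second condition (strict inequality when additionally $f <_2 e$) follows by the same four cases: the two ``same-block'' cases invoke the strict clause in the definition of an $\EE\mid_P$- respectively $\EE\mid_Q$-partition, while the mixed case reuses the strict separation $\varphi\left(e\right) \leq k < \varphi\left(f\right)$.

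The one genuinely structural point --- and the place where admissibility of $\left(P, Q\right)$ enters --- is the remaining case $e \in Q$, $f \in P$: here $e <_1 f$ would exhibit some $q \in Q$ and $p \in P$ with $q <_1 p$, contradicting $\left(P, Q\right) \in \Adm \EE$, so this case simply cannot occur and requires no estimate. Thus I expect no real obstacle beyond bookkeeping; the crux is recognizing that the definition of $\varphi$ is engineered exactly so that the block structure interacts correctly with the two partial orders. The packing of $\sigma$ and $\tau$ supplies the numerical separation $\varphi\left(P\right) = \left\{1, \ldots, k\right\}$ lying strictly below $\varphi\left(Q\right)$ that disposes of the mixed case, while admissibility rules out the only problematic direction of $<_1$-comparison across blocks. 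Assembling the four cases (together with the impossibility of the last) completes the verification that $\varphi$ is an $\EE$-partition, and hence that $\left(\varphi, k\right) \in \mathcal{S}$.
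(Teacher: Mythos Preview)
Your proposal is correct and follows essentially the same route as the paper: compute $\varphi(P)=\{1,\ldots,k\}$ and $\varphi(Q)=\{k+1,\ldots,k+|\tau(Q)|\}$ from the packedness of $\sigma$ and $\tau$ to conclude $\varphi$ is packed with $k\le|\varphi(E)|$, then verify the two $\EE$-partition axioms by the same four-case split on which of $P,Q$ contain $e$ and $f$, using admissibility to exclude the case $e\in Q$, $f\in P$. The paper's proof is the same argument, only written out in slightly more detail.
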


\begin{lemma}
\label{lem.Gammaw.coprod.bij}
Let $\EE = \left(  E,<_{1},<_{2}\right)  $
be a double poset. Let $\mathcal{S}$, $\mathcal{T}$ and
$\operatorname{add}_{\ell}$ be defined as in
Lemma \ref{lem.Gammaw.coprod.bij.a}.

Define a map $\Phi:\mathcal{S}\rightarrow\mathcal{T}$ as follows: Let $\left(
\varphi,k\right)  \in\mathcal{S}$. Then, define $P$, $Q$, $\sigma$ and $\tau$
by (\ref{eq.lem.Gammaw.coprod.bij.def1}) and
(\ref{eq.lem.Gammaw.coprod.bij.def2}). From Lemma
\ref{lem.Gammaw.coprod.bij.a}, we know that $\left(  \left(  P,Q\right)
,\sigma,\tau\right)  \in\mathcal{T}$. Define $\Phi\left(  \varphi,k\right)  $
to be $\left(  \left(  P,Q\right)  ,\sigma,\tau\right)  $. Thus, a map
$\Phi:\mathcal{S}\rightarrow\mathcal{T}$ is defined.

Define a map $\Psi:\mathcal{T}\rightarrow\mathcal{S}$ as follows: Let $\left(
\left(  P,Q\right)  ,\sigma,\tau\right)  \in\mathcal{T}$. Set $k=\left\vert
\sigma\left(  P\right)  \right\vert $, and let $\varphi$ be the map
$E\rightarrow\left\{  1,2,3,\ldots\right\}  $ which sends every $e\in E$ to $%
\begin{cases}
\sigma\left(  e\right)  , & \text{if }e\in P;\\
\tau\left(  e\right)  +k, & \text{if }e\in Q
\end{cases}
$. From Lemma \ref{lem.Gammaw.coprod.bij.b}, we know that $\left(
\varphi,k\right)  \in\mathcal{S}$. Set $\Psi\left(  \left(  P,Q\right)
,\sigma,\tau\right)  =\left(  \varphi,k\right)  $. Thus, a map $\Psi
:\mathcal{T}\rightarrow\mathcal{S}$ is defined.

The maps $\Phi : \mathcal{S} \rightarrow \mathcal{T}$ and
$\Psi : \mathcal{T} \rightarrow \mathcal{S}$ are mutually inverse.
\end{lemma}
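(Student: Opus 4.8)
The plan is to verify directly that the two composites $\Psi \circ \Phi$ and $\Phi \circ \Psi$ are the respective identity maps of $\mathcal{S}$ and $\mathcal{T}$; since Lemmas~\ref{lem.Gammaw.coprod.bij.a} and~\ref{lem.Gammaw.coprod.bij.b} already guarantee that $\Phi$ lands in $\mathcal{T}$ and $\Psi$ lands in $\mathcal{S}$, this is all that remains to be checked.

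First I would show $\Psi \circ \Phi = \id_{\mathcal{S}}$. Starting from $\left(\varphi, k\right) \in \mathcal{S}$, I set $\left(\left(P,Q\right),\sigma,\tau\right) = \Phi\left(\varphi,k\right)$, so that $P = \varphi^{-1}\left(\left\{1,\ldots,k\right\}\right)$, $\sigma = \varphi\mid_P$ and $\tau = \operatorname{add}_{-k}\circ\left(\varphi\mid_Q\right)$. The crucial observation is that $\sigma\left(P\right) = \varphi\left(P\right) = \left\{1,2,\ldots,k\right\}$: this uses that $\varphi$ is packed, so $\varphi\left(E\right) = \left\{1,2,\ldots,\left|\varphi\left(E\right)\right|\right\} \supseteq \left\{1,\ldots,k\right\}$ (as $k \leq \left|\varphi\left(E\right)\right|$), whence every element of $\left\{1,\ldots,k\right\}$ is attained by $\varphi$. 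Consequently the integer recomputed by $\Psi$ is $\left|\sigma\left(P\right)\right| = k$, matching the original $k$. The map rebuilt by $\Psi$ then sends each $e \in P$ to $\sigma\left(e\right) = \varphi\left(e\right)$ and each $e \in Q$ to $\tau\left(e\right) + k = \left(\varphi\left(e\right) - k\right) + k = \varphi\left(e\right)$, so it equals $\varphi$. Hence $\Psi\left(\Phi\left(\varphi,k\right)\right) = \left(\varphi, k\right)$.

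Next I would show $\Phi \circ \Psi = \id_{\mathcal{T}}$, which is the more delicate direction. Starting from $\left(\left(P,Q\right),\sigma,\tau\right) \in \mathcal{T}$, I put $k = \left|\sigma\left(P\right)\right|$ and let $\varphi$ be the reconstructed map, so $\Psi\left(\left(P,Q\right),\sigma,\tau\right) = \left(\varphi,k\right)$. Because $\sigma$ is a packed $\EE\mid_P$-partition, $\sigma\left(P\right) = \left\{1,\ldots,k\right\}$, and because $\tau$ is a packed $\EE\mid_Q$-partition, $\tau\left(Q\right) = \left\{1,\ldots,\left|\tau\left(Q\right)\right|\right\}$; therefore $\varphi\left(P\right) = \left\{1,\ldots,k\right\}$ and $\varphi\left(Q\right) = \left\{k+1,\ldots,k+\left|\tau\left(Q\right)\right|\right\}$. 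These two image sets are disjoint, so applying $\Phi$ to $\left(\varphi,k\right)$ recovers $\varphi^{-1}\left(\left\{1,\ldots,k\right\}\right) = P$ and likewise $Q$; restricting $\varphi$ to $P$ gives back $\sigma$, and composing $\varphi\mid_Q$ with $\operatorname{add}_{-k}$ undoes the shift $\tau\left(e\right) \mapsto \tau\left(e\right)+k$ to give back $\tau$. Hence $\Phi\left(\Psi\left(\left(P,Q\right),\sigma,\tau\right)\right) = \left(\left(P,Q\right),\sigma,\tau\right)$.

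The only genuine obstacle is the bookkeeping around packedness in the preimage computations: one must invoke packedness of $\varphi$ in the first direction (to see $\left\{1,\ldots,k\right\} \subseteq \varphi\left(E\right)$, hence $\left|\sigma\left(P\right)\right| = k$) and packedness of $\sigma$ and $\tau$ in the second (to see that $\varphi\left(P\right)$ and $\varphi\left(Q\right)$ are exactly the intended initial and shifted intervals), so that the preimages $\varphi^{-1}\left(\left\{1,\ldots,k\right\}\right)$ and $\varphi^{-1}\left(\left\{k+1,\ldots,\left|\varphi\left(E\right)\right|\right\}\right)$ recover $P$ and $Q$ precisely. Everything else reduces to the cancellation of the $\pm k$ shift, which is immediate from the definition of $\operatorname{add}_{-k}$.
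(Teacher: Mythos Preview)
Your proposal is correct and follows essentially the same approach as the paper's proof: verifying both composites are the identity, with packedness of $\varphi$ (resp.\ of $\sigma$ and $\tau$) used to pin down the images $\varphi(P)$ and $\varphi(Q)$ as the required intervals so that the preimages $\varphi^{-1}(\{1,\ldots,k\})$ and $\varphi^{-1}(\{k+1,\ldots,|\varphi(E)|\})$ recover $P$ and $Q$ exactly. The paper merely does the two directions in the opposite order and spells out the preimage computations in more detail, but the substance is identical.
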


The preceding three lemmas should be obvious if the reader has
\textquotedblleft the right picture in their mind\textquotedblright. The
following proof is merely a formalization of the argument that such a picture
would straightforwardly produce; we are not sure whether it is actually worth
reading (as opposed to trying to conjure \textquotedblleft the right
picture\textquotedblright).

\begin{proof}
[Proof of Lemma \ref{lem.Gammaw.coprod.bij.a}.]
We have $\left(  \varphi ,k\right)  \in\mathcal{S}$. Thus,
$\varphi$ is a packed $\EE$-partition, and $k$ is an element
of $\left\{  0,1,\ldots,\left\vert
\varphi\left(  E\right)  \right\vert \right\}  $ (by the definition of
$\mathcal{S}$).

The map $\varphi : E \to \left\{ 1, 2, 3, \ldots \right\}$ is packed
and satisfies
$\left| \varphi\left(E\right) \right| = \left| \varphi\left(E\right) \right|$.
Hence, $\varphi\left(  E\right)  =\left\{  1,2,\ldots,\left\vert
\varphi\left(  E\right)  \right\vert \right\}  $
(by Proposition~\ref{prop.ev.comp} (a) (applied to $\varphi$ and
$\left| \varphi\left(E\right) \right|$
instead of $\pi$ and $\ell$)).

Now, $\left(  P,Q\right)  \in \Adm \EE$%
\ \ \ \ \footnote{\textit{Proof.} It is clear that $P$ and $Q$ are subsets of
$E$. Also, from (\ref{eq.lem.Gammaw.coprod.bij.def1}), we obtain
\begin{align*}
P\cap Q  &  =\varphi^{-1}\left(  \left\{  1,2,\ldots,k\right\}  \right)
\cap\varphi^{-1}\left(  \left\{  k+1,k+2,\ldots,\left\vert \varphi\left(
E\right)  \right\vert \right\}  \right) \\
&  =\varphi^{-1}\left(  \underbrace{\left\{  1,2,\ldots,k\right\}
\cap\left\{  k+1,k+2,\ldots,\left\vert \varphi\left(  E\right)  \right\vert
\right\}  }_{=\varnothing}\right)  =\varphi^{-1}\left(  \varnothing\right)
=\varnothing
\end{align*}
and
\begin{align*}
P\cup Q  &  =\varphi^{-1}\left(  \left\{  1,2,\ldots,k\right\}  \right)
\cup\varphi^{-1}\left(  \left\{  k+1,k+2,\ldots,\left\vert \varphi\left(
E\right)  \right\vert \right\}  \right) \\
&  =\varphi^{-1}\left(  \underbrace{\left\{  1,2,\ldots,k\right\}
\cup\left\{  k+1,k+2,\ldots,\left\vert \varphi\left(  E\right)  \right\vert
\right\}  }_{\substack{=\left\{  1,2,\ldots,\left\vert \varphi\left(
E\right)  \right\vert \right\}  =\varphi\left(  E\right)  \\\text{(since
}\varphi\text{ is packed)}}}\right)  =\varphi^{-1}\left(  \varphi\left(
E\right)  \right)  =E.
\end{align*}
Hence, in order to prove that $\left( P, Q \right) \in \Adm
\EE$, it remains to show that no $p\in P$ and $q\in Q$ satisfy
$q<_{1}p$.
\par
Let us assume the contrary (for the sake of contradiction). Thus, let $p\in P$
and $q\in Q$ be such that $q<_{1}p$. Since $\varphi$ is an
$\EE$-partition, we have
$\varphi\left(  q\right)  \leq\varphi\left(  p\right)  $
(because $q<_{1}p$). But $p\in P=\varphi^{-1}\left(  \left\{  1,2,\ldots
,k\right\}  \right)  $, so that $\varphi\left(  p\right)  \leq k$. On the
other hand, $q\in Q=\varphi^{-1}\left(  \left\{  k+1,k+2,\ldots,\left\vert
\varphi\left(  E\right)  \right\vert \right\}  \right)  $, so that
$\varphi\left(  q\right)  >k$. This contradicts $\varphi\left(  q\right)
\leq\varphi\left(  p\right)  \leq k$. This contradiction shows that our
assumption was false. Hence, the proof of $\left(  P,Q\right)  \in
\Adm \EE$ is complete.}. Furthermore, it is
straightforward to see that for every subset $T$ of $E$,
\begin{equation}
\text{the map }\varphi\mid_{T}\text{ is an }
\EE\mid_{T}\text{-partition}.
\label{pf.lem.Gammaw.coprod.bij.a.1}
\end{equation}
Applying this to $T=P$, we conclude that $\varphi\mid_{P}$ is an
$\EE\mid_{P}$-partition.

Since $P=\varphi^{-1}\left(  \left\{  1,2,\ldots,k\right\}  \right)  $, we
have $\varphi\left(  P\right)  \subseteq\left\{  1,2,\ldots,k\right\}  $.
Moreover, this inclusion is actually an equality (since $\varphi\left(
E\right)  =\left\{  1,2,\ldots,\left\vert \varphi\left(  E\right)  \right\vert
\right\}  $)\ \ \ \ \footnote{The proof in more detail: Let $g\in\left\{
1,2,\ldots,k\right\}  $. Then, $g\in\left\{  1,2,\ldots,k\right\}
\subseteq\left\{  1,2,\ldots,\left\vert \varphi\left(  E\right)  \right\vert
\right\}  =\varphi\left(  E\right)  $. Thus, there exists some $e\in E$ such
that $g=\varphi\left(  e\right)  $. Consider this $e$. From $\varphi\left(
e\right)  =g\in\left\{  1,2,\ldots,k\right\}  $, we obtain $e\in\varphi
^{-1}\left(  \left\{  1,2,\ldots,k\right\}  \right)  =P$. Thus, $\varphi
\left(  e\right)  \in\varphi\left(  P\right)  $, so that $g=\varphi\left(
e\right)  \in\varphi\left(  P\right)  $. Now, let us forget that we fixed $g$.
We thus have proven that $g\in\varphi\left(  P\right)  $ for every
$g\in\left\{  1,2,\ldots,k\right\}  $. In other words, $\left\{
1,2,\ldots,k\right\}  \subseteq\varphi\left(  P\right)  $. Combining this with
$\varphi\left(  P\right)  \subseteq\left\{  1,2,\ldots,k\right\}  $, we obtain
$\varphi\left(  P\right)  =\left\{  1,2,\ldots,k\right\}  $, qed.}. In other
words, we have
\begin{equation}
\varphi\left(  P\right)  =\left\{  1,2,\ldots,k\right\}  .
\label{pf.lem.Gammaw.coprod.bij.a.3}
\end{equation}
Similarly,
\begin{equation}
\varphi\left(  Q\right)  =\left\{  k+1,k+2,\ldots,\left\vert \varphi\left(
E\right)  \right\vert \right\}  .
\label{pf.lem.Gammaw.coprod.bij.a.4}
\end{equation}
Hence,
\begin{align*}
\left(  \operatorname{add}_{-k}\circ\left(  \varphi\mid_{Q}\right)
\right)  \left(  Q\right)   &  =\operatorname{add}_{-k}\left(
\underbrace{\left(  \varphi\mid_{Q}\right)  \left(  Q\right)  }_{=\varphi
\left(  Q\right)  =\left\{  k+1,k+2,\ldots,\left\vert \varphi\left(  E\right)
\right\vert \right\}  }\right) \\
&=  \operatorname{add}_{-k}\left(
\left\{  k+1,k+2,\ldots,\left\vert \varphi\left(  E\right)  \right\vert
\right\}  \right) \\
&  =\left\{  1,2,\ldots,\left\vert \varphi\left(  E\right)  \right\vert
-k\right\}
\end{align*}
(by the definition of $\operatorname{add}_{-k}$). Since $\left(
\varphi\mid_{P}\right)  \left(  P\right)  =\varphi\left(  P\right)  =\left\{
1,2,\ldots,k\right\}  $ is an initial interval, we deduce that the
$\EE\mid_{P}$-partition $\varphi\mid_{P}$ is packed. Thus,
$\sigma=\varphi\mid_{P}$ is a packed $\EE\mid_{P}$-partition.

On the other hand, (\ref{pf.lem.Gammaw.coprod.bij.a.1}) (applied to $T=Q$)
shows that $\varphi\mid_{Q}$ is an $\EE\mid_{Q}$-partition. Hence, the
map $\operatorname{add}_{-k}\circ\left(  \varphi\mid_{Q}\right)  $
is an $\EE\mid_{Q}$-partition (since the map
$\operatorname{add}_{-k}$ is strictly increasing, and since
$\left(  \operatorname{add}_{-k}\circ\left(  \varphi\mid_{Q}\right)
 \right)  \left(  Q\right)
=\left\{  1,2,\ldots,\left\vert \varphi\left(  E\right)  \right\vert
-k\right\}  \subseteq\left\{  1,2,3,\ldots\right\}  $). This $\EE%
\mid_{Q}$-partition $\operatorname{add}_{-k}\circ\left(  \varphi
\mid_{Q}\right)  $ is packed \newline (since
$\left(  \operatorname{add}_{-k}\circ
\left(  \varphi\mid_{Q}\right)  \right)  \left(  Q\right)
=\left\{  1,2,\ldots,\left\vert \varphi\left(  E\right)  \right\vert
-k\right\}  $ is an initial interval). Thus,
$\tau = \operatorname{add}_{-k} \circ\left(  \varphi\mid_{Q}\right)  $
is a packed $\EE\mid_{Q}$-partition.

We now know that $\left(  P,Q\right)  \in \Adm \EE$, that
$\sigma$ is a packed $\EE\mid_{P}$-partition, and that $\tau$ is a
packed $\EE\mid_{Q}$-partition. In other words, we know that $\left(
\left(  P,Q\right)  ,\sigma,\tau\right)  \in\mathcal{T}$. This proves Lemma
\ref{lem.Gammaw.coprod.bij.a}.
\end{proof}

\begin{proof}
[Proof of Lemma \ref{lem.Gammaw.coprod.bij.b}.]
We have $\left( \left( P, Q \right) , \sigma, \tau \right)
\in \mathcal{T}$. According to the definition of $\mathcal{T}$, this
means that $\left( P, Q \right) \in \Adm \EE $, that $\sigma$ is a
packed $\EE \mid_{P}$-partition, and that $\tau$ is a packed
$\EE \mid_{Q}$-partition.

From $\left(  P,Q\right)  \in \Adm \EE$, we conclude that
$P$ and $Q$ are subsets of $E$ satisfying $P\cap Q=\varnothing$ and $P\cup
Q=E$ and having the property that
\begin{equation}
\text{no }p\in P\text{ and }q\in Q\text{ satisfy }q<_{1}p.
\label{pf.lem.Gammaw.coprod.bij.b.Adm}
\end{equation}
Using $P \cap Q = \varnothing$ and $P \cup Q = E$, we see that the
map $\varphi$ is well-defined. (Indeed, we defined it as the map
$E \to \left\{1,2,3,\ldots\right\}$ which sends every $e\in E$ to $%
\begin{cases}
\sigma\left(  e\right)  , & \text{if }e\in P;\\
\tau\left(  e\right)  +k, & \text{if }e\in Q
\end{cases}
$.)

The map $\sigma : P \to \left\{ 1, 2, 3, \ldots \right\}$ is packed,
and we have
$k = \left\vert \sigma\left(  P\right)  \right\vert$.
Hence, Proposition~\ref{prop.ev.comp} (a) (applied to $P$, $\sigma$ and
$k$ instead of $E$, $\pi$ and $\ell$)
yields $\sigma\left(  P\right)  = \left\{ 1,2,\ldots,k\right\}  $.

The map $\tau : Q \to \left\{ 1, 2, 3, \ldots \right\}$ is packed,
and we have
$\left\vert \tau\left(  Q\right)  \right\vert
= \left\vert \tau\left(  Q\right)  \right\vert$.
Hence, Proposition~\ref{prop.ev.comp} (a) (applied to $Q$, $\tau$ and
$\left\vert \tau\left(  Q\right)  \right\vert$ instead of
$E$, $\pi$ and $\ell$)
yields $\tau\left(  Q\right)  =\left\{
1,2,\ldots,\left\vert \tau\left(  Q\right)  \right\vert \right\}  $.

The definition of $\varphi$ shows that%
\begin{equation}
\varphi\left(  e\right)  =\sigma\left(  e\right)
\ \ \ \ \ \ \ \ \ \ \text{for every }e\in P.
\label{pf.lem.Gammaw.coprod.bij.b.1o}%
\end{equation}
Hence, $\varphi\left(  P\right)  =\sigma\left(  P\right)
=\left\{  1,2,\ldots,k\right\}  $.

Also, the definition of $\varphi$ shows that%
\begin{equation}
\varphi\left(  e\right)  =\tau\left(  e\right)
+k\ \ \ \ \ \ \ \ \ \ \text{for every }e\in Q.
\label{pf.lem.Gammaw.coprod.bij.b.2o}
\end{equation}
Thus,
\begin{align*}
\varphi\left(  Q\right)
& = \left\{ \underbrace{\varphi\left(e\right)}_{= \tau\left(e\right) + k}
\mid e \in Q \right\} = \left\{ \tau\left(e\right) + k \mid e \in Q \right\} \\
&  =\left\{  u+k\ \mid\ u\in\underbrace{\tau\left(
Q\right)  }_{=\left\{  1,2,\ldots,\left\vert \tau\left( Q \right)  \right\vert
\right\}  }\right\}  =\left\{  u+k\ \mid\ u\in\left\{  1,2,\ldots,\left\vert
\tau\left(  Q\right)  \right\vert \right\}  \right\} \\
&  =\left\{  k+1,k+2,\ldots,k+\left\vert \tau\left( Q \right)  \right\vert
\right\}  .
\end{align*}
Now, $E=P\cup Q$, so that%
\begin{align}
\varphi\left(  E\right)   &  =\varphi\left(  P\cup Q\right)
=\underbrace{\varphi\left(  P\right)  }_{=\left\{  1,2,\ldots,k\right\}  }%
\cup\underbrace{\varphi\left(  Q\right)  }_{=\left\{  k+1,k+2,\ldots
,k+\left\vert \tau\left( Q \right)  \right\vert \right\}  }\nonumber\\
&  =\left\{  1,2,\ldots,k\right\}  \cup\left\{  k+1,k+2,\ldots,k+\left\vert
\tau\left( Q \right)  \right\vert \right\}  \nonumber \\
&=  \left\{  1,2,\ldots,k+\left\vert
\tau\left( Q \right)  \right\vert \right\}  .
\label{pf.lem.Gammaw.coprod.bij.b.3o}%
\end{align}
Thus, $\varphi\left(  E\right)  $ is an initial interval; in other words, the
map $\varphi$ is packed. Furthermore, (\ref{pf.lem.Gammaw.coprod.bij.b.3o})
shows that $\left\vert \varphi\left(  E\right)  \right\vert
=k+\underbrace{\left\vert \tau\left( Q \right)  \right\vert }_{\geq0}\geq k$,
so that $k\in\left\{  0,1,\ldots,\left\vert \varphi\left(  E\right)
\right\vert \right\}  $.

We shall now show that $\varphi$ is an $\EE$-partition. To do so, we
must prove the following two claims:

\begin{statement}
\textit{Claim 1:} Every $e\in E$ and $f\in E$ satisfying $e<_{1}f$ satisfy
$\varphi\left(  e\right)  \leq\varphi\left(  f\right)  $.
\end{statement}

\begin{statement}
\textit{Claim 2:} Every $e\in E$ and $f\in E$ satisfying $e<_{1}f$ and
$f<_{2}e$ satisfy $\varphi\left(  e\right)  <\varphi\left(  f\right)  $.
\end{statement}

We shall only prove Claim 1 (as the proof of Claim 2 is similar). So let $e\in
E$ and $f\in E$ be such that $e<_{1}f$. We need to show that $\varphi\left(
e\right)  \leq\varphi\left(  f\right)  $. We are in one of the following four cases:

\textit{Case 1:} We have $e\in P$ and $f\in P$.

\textit{Case 2:} We have $e\in P$ and $f\in Q$.

\textit{Case 3:} We have $e\in Q$ and $f\in P$.

\textit{Case 4:} We have $e\in Q$ and $f\in Q$.

In Case 1, our claim $\varphi\left(  e\right)  \leq\varphi\left(  f\right)  $
follows from the assumption that $\sigma$ is an $ \EE \mid_{P}%
$-partition\footnote{\textit{Proof.} Assume that we are in Case 1. Thus, we
have $e\in P$ and $f\in P$. Thus, $e$ and $f$ are elements of $P$ satisfying
$e<_{1}f$. Hence, $\sigma\left(  e\right)  \leq\sigma\left(  f\right)  $
(since $\sigma$ is an $ \EE \mid_{P}$-partition). But the definition of
$\varphi$ yields $\varphi\left(  e\right)  =%
\begin{cases}
\sigma\left(  e\right)  , & \text{if }e\in P;\\
\tau\left(  e\right)  +k, & \text{if }e\in Q
\end{cases}
=\sigma\left(  e\right)  $ (since $e\in P$) and $\varphi\left(  f\right)
=\sigma\left(  f\right)  $ (similarly). Hence, $\varphi\left(  e\right)
=\sigma\left(  e\right)  \leq\sigma\left(  f\right)  =\varphi\left(  f\right)
$. Qed.}. In Case 4, it follows from the assumption that $\tau$ is an
$ \EE \mid_{Q}$-partition\footnote{\textit{Proof.} Assume that we are
in Case 4. Thus, we have $e\in Q$ and $f\in Q$. Thus, $e$ and $f$ are elements
of $Q$ satisfying $e<_{1}f$. Hence, $\tau\left(  e\right)  \leq\tau\left(
f\right)  $ (since $\tau$ is an $ \EE \mid_{Q}$-partition). But the
definition of $\varphi$ yields $\varphi\left(  e\right)  =%
\begin{cases}
\sigma\left(  e\right)  , & \text{if }e\in P;\\
\tau\left(  e\right)  +k, & \text{if }e\in Q
\end{cases}
=\tau\left(  e\right)  +k$ (since $e\in Q$) and $\varphi\left(  f\right)
=\tau\left(  f\right)  +k$ (similarly). Hence, $\varphi\left(  e\right)
=\underbrace{\tau\left(  e\right)  }_{\leq\tau\left(  f\right)  }+k\leq
\tau\left(  f\right)  +k=\varphi\left(  f\right)  $. Qed.}. In Case 2, it
clearly holds\footnote{\textit{Proof.} Assume that we are in Case 2. Thus, we
have $e\in P$ and $f\in Q$. The definition of $\varphi$ yields $\varphi\left(
e\right)  =
\begin{cases}
\sigma\left(  e\right)  , & \text{if }e\in P;\\
\tau\left(  e\right)  +k, & \text{if }e\in Q
\end{cases}
=\sigma\left(  e\right)  $ (since $e\in P$) and $\varphi\left(  f\right)  =%
\begin{cases}
\sigma\left(  f\right)  , & \text{if }f\in P;\\
\tau\left(  f\right)  +k, & \text{if }f\in Q
\end{cases}
=\tau\left(  f\right)  +k$ (since $f\in Q$). But we have $\varphi\left(
e\right)  =\sigma\left(  \underbrace{e}_{\in P}\right)  \in\sigma\left(
P\right)  =\left\{  1,2,\ldots,k\right\}  $, so that $\varphi\left(  e\right)
\leq k$. Meanwhile, $\varphi\left(  f\right)  =\underbrace{\tau\left(
f\right)  }_{>0}+k>k$. Thus, $\varphi\left(  e\right)  \leq k<\varphi\left(
f\right)  $, and therefore $\varphi\left(  e\right)  \leq\varphi\left(
f\right)  $. Qed.}. Finally, Case 3 is impossible\footnote{\textit{Proof.}
Assume that we are in Case 3. Thus, $e\in Q$ and $f\in P$. The elements $f\in
P$ and $e\in Q$ satisfy $e<_{1}f$. This contradicts
(\ref{pf.lem.Gammaw.coprod.bij.b.Adm}) (applied to $p=f$ and $q=e$). Thus, we
have obtained a contradiction; hence, our assumption (that we are in Case 3)
was wrong. Therefore, Case 3 is impossible.}. Thus, we have proven the claim
in each of the four cases, and consequently Claim 1 is proven. As we have said
above, Claim 2 is proven similarly. Thus, we have proven that $\varphi$ is an
$\EE$-partition.

Altogether, we now know that $\varphi$ is a packed $\EE$-partition,
and that \newline
$k\in\left\{  0,1,\ldots,\left\vert \varphi\left(  E\right)
\right\vert \right\}  $. In other words, $\left(  \varphi,k\right)
\in\mathcal{S}$. This proves Lemma \ref{lem.Gammaw.coprod.bij.b}.
\end{proof}

\begin{proof}
[Proof of Lemma \ref{lem.Gammaw.coprod.bij}.]We need to prove the following
two claims:

\begin{statement}
\textit{Claim 1:} We have $\Phi\circ\Psi=\id$.
\end{statement}

\begin{statement}
\textit{Claim 2:} We have $\Psi\circ\Phi=\id$.
\end{statement}

\textit{Proof of Claim 1:} Fix $\left(  \left(  P,Q\right)  ,\sigma
,\tau\right)  \in\mathcal{T}$. Set $k=\left\vert \sigma\left(  P\right)
\right\vert $, and let $\varphi$ be the map $E\rightarrow\left\{
1,2,3,\ldots\right\}  $ which sends every $e\in E$ to $%
\begin{cases}
\sigma\left(  e\right)  , & \text{if }e\in P;\\
\tau\left(  e\right)  +k, & \text{if }e\in Q
\end{cases}
$. The definition of $\Psi$ thus yields $\Psi\left(  \left(  P,Q\right)
,\sigma,\tau\right)  =\left(  \varphi,k\right)  $. We shall now show that
$\Phi\left(  \varphi,k\right)  =\left(  \left(  P,Q\right)  ,\sigma
,\tau\right)  $.

Lemma \ref{lem.Gammaw.coprod.bij.b} shows that $\left(  \varphi,k\right)
\in\mathcal{S}$. In other words, $\varphi$ is a packed $ \EE %
$-partition, and we have $k\in\left\{  0,1,\ldots,\left\vert \varphi\left(
E\right)  \right\vert \right\}  $.
The map $\varphi : E \to \left\{ 1, 2, 3, \ldots \right\}$ is packed
and satisfies
$\left\vert \varphi\left( E\right) \right\vert
= \left\vert \varphi\left( E\right) \right\vert $.
Hence, we have
$\varphi \left( E \right)
= \left\{ 1, 2, \ldots, \left\vert \varphi\left( E\right)
            \right\vert \right\}$
(by Proposition~\ref{prop.ev.comp} (a) (applied to $\varphi$ and
$\left| \varphi\left(E\right) \right|$
instead of $\pi$ and $\ell$)).

The map $\operatorname{add}_{k}:\mathbb{Z}\rightarrow\mathbb{Z}$ is
a bijection, and its inverse is $\left(  \operatorname{add}_{k}%
\right)  ^{-1}=\operatorname{add}_{-k}$.

The map $\sigma : P \to \left\{ 1, 2, 3, \ldots \right\}$ is packed,
and we have
$k = \left\vert \sigma\left(  P\right)  \right\vert$.
Hence, Proposition~\ref{prop.ev.comp} (a) (applied to $P$, $\sigma$ and
$k$ instead of $E$, $\pi$ and $\ell$)
yields $\sigma\left(  P\right) = \left\{ 1,2,\ldots,k\right\}  $.

From $\left(  P,Q\right)  \in \Adm \EE$, we conclude that
$P$ and $Q$ are subsets of $E$ satisfying $P\cap Q=\varnothing$ and $P\cup
Q=E$. Hence, $Q=E\setminus P$ and $P=E\setminus Q$.

The definition of $\varphi$ shows that%
\begin{equation}
\varphi\left(  e\right)  =\sigma\left(  e\right)
\ \ \ \ \ \ \ \ \ \ \text{for every }e\in P.
\label{pf.lem.Gammaw.coprod.bij.c.1}%
\end{equation}
Hence, $\varphi\mid_{P}=\sigma$. Also, the definition of $\varphi$ shows that%
\begin{equation}
\varphi\left(  e\right)  =\tau\left(  e\right)
+k\ \ \ \ \ \ \ \ \ \ \text{for every }e\in Q.
\label{pf.lem.Gammaw.coprod.bij.c.2}%
\end{equation}
Thus, every $e\in Q$ satisfies%
\begin{align}
\varphi\left(  e\right)   &  =\tau\left(  e\right)  +k
= \operatorname{add}_{k}\left(  \tau\left(  e\right)  \right)
\ \ \ \ \ \ \ \ \ \ \left(  \text{since }\operatorname{add}_{k}%
\left(  \tau\left(  e\right)  \right)  \text{ is defined to be }\tau\left(
e\right)  +k\right) \nonumber\\
&  =\left(  \operatorname{add}_{k}\circ\tau\right)  \left(
e\right)  . \label{pf.lem.Gammaw.coprod.bij.c.2b}%
\end{align}
Hence, $\varphi\mid_{Q}=\operatorname{add}_{k}\circ\tau$, so that
$\tau=\underbrace{\left(  \operatorname{add}_{k}\right)  ^{-1}%
}_{=\operatorname{add}_{-k}}\circ\left(  \varphi\mid_{Q}\right)
=\operatorname{add}_{-k}\circ\left(  \varphi\mid_{Q}\right)  $.

Furthermore, $P=\varphi^{-1}\left(  \left\{  1,2,\ldots,k\right\}  \right)
$\ \ \ \ \footnote{\textit{Proof.} Let $e\in\varphi^{-1}\left(  \left\{
1,2,\ldots,k\right\}  \right)  $. Thus, $e\in E$ and $\varphi\left(  e\right)
\in\left\{  1,2,\ldots,k\right\}  $. If we had $e\in Q$, then we would have
\begin{align*}
\varphi\left(  e\right)   &  =\underbrace{\tau\left(  e\right)  }%
_{>0}+k\ \ \ \ \ \ \ \ \ \ \left(  \text{by
(\ref{pf.lem.Gammaw.coprod.bij.c.2})}\right) \\
&  >k,
\end{align*}
which would contradict $\varphi\left(  e\right)  \in\left\{  1,2,\ldots
,k\right\}  $. Hence, we cannot have $e\in Q$. Thus, $e\in E\setminus Q=P$.
\par
Now, let us forget that we fixed $e$. Thus we have proven that $e\in P$ for
every $e\in\varphi^{-1}\left(  \left\{  1,2,\ldots,k\right\}  \right)  $. In
other words, $\varphi^{-1}\left(  \left\{  1,2,\ldots,k\right\}  \right)
\subseteq P$.
\par
On the other hand, fix $p\in P$. Then, $\varphi\left(  p\right)
=\sigma\left(  p\right)  $ (by (\ref{pf.lem.Gammaw.coprod.bij.c.1})). Hence,
$\varphi\left(  p\right)  =\sigma\left(  p\right)  \in\sigma\left(  P\right)
=\left\{  1,2,\ldots,k\right\}  $, so that $p\in\varphi^{-1}\left(  \left\{
1,2,\ldots,k\right\}  \right)  $.
\par
Now, let us forget that we fixed $p$. Thus we have proven that $p\in
\varphi^{-1}\left(  \left\{  1,2,\ldots,k\right\}  \right)  $ for every $p\in
P$. In other words, $P\subseteq\varphi^{-1}\left(  \left\{  1,2,\ldots
,k\right\}  \right)  $. Combining this with $\varphi^{-1}\left(  \left\{
1,2,\ldots,k\right\}  \right)  \subseteq P$, we obtain $P=\varphi^{-1}\left(
\left\{  1,2,\ldots,k\right\}  \right)  $, qed.} and $Q=\varphi^{-1}\left(
\left\{  k+1,k+2,\ldots,\left\vert \varphi\left(  E\right)  \right\vert
\right\}  \right)  $\ \ \ \ \footnote{\textit{Proof.} Let $e\in\varphi
^{-1}\left(  \left\{  k+1,k+2,\ldots,\left\vert \varphi\left(  E\right)
\right\vert \right\}  \right)  $. Thus, $e\in E$ and $\varphi\left(  e\right)
\in\left\{  k+1,k+2,\ldots,\left\vert \varphi\left(  E\right)  \right\vert
\right\}  $. If we had $e\in P$, then we would have
\begin{align*}
\varphi\left(  e\right)   &  =\sigma\left(  e\right)
\ \ \ \ \ \ \ \ \ \ \left(  \text{by (\ref{pf.lem.Gammaw.coprod.bij.c.1}%
)}\right) \\
&  \in\sigma\left(  P\right)  =\left\{  1,2,\ldots,k\right\}
\end{align*}
and therefore $\varphi\left(  e\right)  \leq k$, which would contradict
$\varphi\left(  e\right)  \in\left\{  k+1,k+2,\ldots,\left\vert \varphi\left(
E\right)  \right\vert \right\}  $. Hence, we cannot have $e\in P$. Thus, $e\in
E\setminus P=Q$.
\par
Now, let us forget that we fixed $e$. Thus we have proven that $e\in Q$ for
every $e\in\varphi^{-1}\left(  \left\{  k+1,k+2,\ldots,\left\vert
\varphi\left(  E\right)  \right\vert \right\}  \right)  $. In other words,
$\varphi^{-1}\left(  \left\{  k+1,k+2,\ldots,\left\vert \varphi\left(
E\right)  \right\vert \right\}  \right)  \subseteq Q$.
\par
On the other hand, fix $q\in Q$. Then, $\varphi\left(  q\right)  =\tau\left(
q\right)  +k$ (by (\ref{pf.lem.Gammaw.coprod.bij.c.2})). Hence, $\varphi
\left(  q\right)  =\underbrace{\tau\left(  q\right)  }_{>0}+k>k$. Combining
this with $\varphi\left(  q\right)  \in\varphi\left(  E\right)  =\left\{
1,2,\ldots,\left\vert \varphi\left(  E\right)  \right\vert \right\}  $, we
obtain $\varphi\left(  q\right)  \in\left\{  k+1,k+2,\ldots,\left\vert
\varphi\left(  E\right)  \right\vert \right\}  $. Hence, $q\in\varphi
^{-1}\left(  \left\{  k+1,k+2,\ldots,\left\vert \varphi\left(  E\right)
\right\vert \right\}  \right)  $.
\par
Now, let us forget that we fixed $q$. Thus we have proven that $q\in
\varphi^{-1}\left(  \left\{  k+1,k+2,\ldots,\left\vert \varphi\left(
E\right)  \right\vert \right\}  \right)  $ for every $q\in Q$. In other words,
$Q\subseteq\varphi^{-1}\left(  \left\{  k+1,k+2,\ldots,\left\vert
\varphi\left(  E\right)  \right\vert \right\}  \right)  $. Combining this with
$\varphi^{-1}\left(  \left\{  k+1,k+2,\ldots,\left\vert \varphi\left(
E\right)  \right\vert \right\}  \right)  \subseteq Q$, we obtain
$Q=\varphi^{-1}\left(  \left\{  k+1,k+2,\ldots,\left\vert \varphi\left(
E\right)  \right\vert \right\}  \right)  $, qed.}. Altogether, we thus know
that
\begin{align*}
P  &  =\varphi^{-1}\left(  \left\{  1,2,\ldots,k\right\}  \right)
,\ \ \ \ \ \ \ \ \ \ Q=\varphi^{-1}\left(  \left\{  k+1,k+2,\ldots,\left\vert
\varphi\left(  E\right)  \right\vert \right\}  \right)  ,\\
\sigma &  =\varphi\mid_{P}\ \ \ \ \ \ \ \ \ \ \text{and}%
\ \ \ \ \ \ \ \ \ \ \tau=\operatorname{add}_{-k}\circ\left(
\varphi\mid_{Q}\right)  .
\end{align*}
These equations are identical with the equations
(\ref{eq.lem.Gammaw.coprod.bij.def1}) and (\ref{eq.lem.Gammaw.coprod.bij.def2}%
) that were used in the definition of $\Phi\left(  \varphi,k\right)  $. Hence,
the definition of $\Phi$ shows that $\Phi\left(  \varphi,k\right)  =\left(
\left(  P,Q\right)  ,\sigma,\tau\right)  $. Thus, $\left(  \left(  P,Q\right)
,\sigma,\tau\right)  =\Phi\underbrace{\left(  \varphi,k\right)  }%
_{=\Psi\left(  \left(  P,Q\right)  ,\sigma,\tau\right)  }=\Phi\left(
\Psi\left(  \left(  P,Q\right)  ,\sigma,\tau\right)  \right)  $.

Now, let us forget that we fixed $\left(  \left(  P,Q\right)  ,\sigma
,\tau\right)  $. We thus have shown that \newline
$\Phi\left(  \Psi\left(  \left(
P,Q\right)  ,\sigma,\tau\right)  \right)  =\left(  \left(  P,Q\right)
,\sigma,\tau\right)  $ for every $\left(  \left(  P,Q\right)  ,\sigma
,\tau\right)  \in\mathcal{T}$. In other words, $\Phi\circ\Psi
=\id$. This proves Claim 1.

\textit{Proof of Claim 2:} Fix $\left(  \varphi,k\right)  \in\mathcal{S}$.
Define $P$, $Q$, $\sigma$ and $\tau$ by (\ref{eq.lem.Gammaw.coprod.bij.def1})
and (\ref{eq.lem.Gammaw.coprod.bij.def2}). The definition of $\Phi$ shows that
$\Phi\left(  \varphi,k\right)  =\left( \left(P,Q\right),\sigma,\tau\right)  $. From Lemma
\ref{lem.Gammaw.coprod.bij.a}, we know that $\left(  \left(  P,Q\right)
,\sigma,\tau\right)  \in\mathcal{T}$. In other words, we know that $\left(
P,Q\right)  \in \Adm \EE $, that $\sigma$ is a packed
$ \EE \mid_{P}$-partition, and that $\tau$ is a packed
$\EE\mid_{Q}$-partition.

From $\left(  P,Q\right)  \in \Adm \EE$, we conclude that
$P$ and $Q$ are subsets of $E$ satisfying $P\cap Q=\varnothing$ and $P\cup
Q=E$.

We have $\varphi\left(  P\right)  =\left\{  1,2,\ldots,k\right\}  $. (This was
proven in our proof of Lemma \ref{lem.Gammaw.coprod.bij.a} above; see the
equality (\ref{pf.lem.Gammaw.coprod.bij.a.3}).)

We have $\sigma=\varphi\mid_{P}$. Thus, for every $e\in P$, we have
$\sigma\left(  e\right)  =\left(  \varphi\mid_{P}\right)  \left(  e\right)
=\varphi\left(  e\right)  $. In other words, for every $e\in P$, we have
\begin{equation}
\varphi\left(  e\right)  =\sigma\left(  e\right)  .
\label{pf.lem.Gammaw.coprod.bij.c.11}%
\end{equation}
Also, $\tau=\operatorname{add}_{-k}\circ\left(  \varphi\mid
_{Q}\right)  $. Hence, for every $e\in Q$, we have%
\begin{align*}
\tau\left(  e\right)   &  =\left(  \operatorname{add}_{-k}%
\circ\left(  \varphi\mid_{Q}\right)  \right)  \left(  e\right)
=\operatorname{add}_{-k}\left(  \underbrace{\left(  \varphi\mid
_{Q}\right)  \left(  e\right)  }_{=\varphi\left(  e\right)  }\right) \\
&  =\operatorname{add}_{-k}\left(  \varphi\left(  e\right)  \right)
=\varphi\left(  e\right)  +\left(  -k\right)  \ \ \ \ \ \ \ \ \ \ \left(
\text{by the definition of }\operatorname{add}_{-k}\right) \\
&  =\varphi\left(  e\right)  -k.
\end{align*}
Thus, for every $e\in Q$, we have%
\[
\varphi\left(  e\right)  =\tau\left(  e\right)  +k.
\]
Combining this with (\ref{pf.lem.Gammaw.coprod.bij.c.11}), we conclude that%
\begin{equation}
\varphi\left(  e\right)  =%
\begin{cases}
\sigma\left(  e\right)  , & \text{if }e\in P;\\
\tau\left(  e\right)  +k, & \text{if }e\in Q
\end{cases}
\ \ \ \ \ \ \ \ \ \ \text{for every }e\in E
\label{pf.lem.Gammaw.coprod.bij.c.12}%
\end{equation}
\footnote{The right hand side of this equality makes sense because $P\cap
Q=\varnothing$ and $P\cup Q=E$.}.
Moreover, $\sigma : P \to \left\{ 1, 2, 3, \ldots \right\}$ is a
packed map and satisfies
$\left\vert \sigma\left(  P\right)  \right\vert
= \left\vert \sigma\left(  P\right)  \right\vert$. Thus,
Proposition~\ref{prop.ev.comp} (a) (applied to $P$, $\sigma$ and
$\left\vert \sigma\left(  P\right)  \right\vert$ instead of $E$, $\pi$
and $\ell$) yields
$\sigma\left(  P\right)  =\left\{  1,2,\ldots,\left\vert \sigma\left(
P\right)  \right\vert \right\}  $. Hence,%
\[
\left\{  1,2,\ldots,\left\vert \sigma\left(  P\right)  \right\vert \right\}
=\underbrace{\sigma}_{=\varphi\mid_{P}}\left(  P\right)  =\left(  \varphi
\mid_{P}\right)  \left(  P\right)  =\varphi\left(  P\right)  =\left\{
1,2,\ldots,k\right\}  .
\]
Thus, $\left\vert \sigma\left(  P\right)  \right\vert =k$.

So we know that $k=\left\vert \sigma\left(  P\right)  \right\vert $, and that
$\varphi$ is the map $E\rightarrow\left\{  1,2,3,\ldots\right\}  $ which sends
every $e\in E$ to $%
\begin{cases}
\sigma\left(  e\right)  , & \text{if }e\in P;\\
\tau\left(  e\right)  +k, & \text{if }e\in Q
\end{cases}
$ (because of (\ref{pf.lem.Gammaw.coprod.bij.c.12})). Thus, our $k$ and our
$\varphi$ are precisely the $k$ and the $\varphi$ in the definition of
$\Psi\left(  \left(  P,Q\right)  ,\sigma,\tau\right)  $. Hence, $\Psi\left(
\left(  P,Q\right)  ,\sigma,\tau\right)  =\left(  \varphi,k\right)  $. Thus,
$\left(  \varphi,k\right)  =\Psi\underbrace{\left(  \left(  P,Q\right)
,\sigma,\tau\right)  }_{=\Phi\left(  \varphi,k\right)  }=\Psi\left(
\Phi\left(  \varphi,k\right)  \right)  $.

Now, let us forget that we fixed $\left(  \varphi,k\right)  $. We thus have
shown that $\Psi\left(  \Phi\left(  \varphi,k\right)  \right)  =\left(
\varphi,k\right)  $ for every $\left(  \varphi,k\right)  \in\mathcal{S}$. In
other words, $\Psi\circ\Phi=\id$. This proves Claim 2.

Now, both Claims 1 and 2 are proven. Thus, the maps $\Phi$ and $\Psi$ are
mutually inverse. This proves Lemma \ref{lem.Gammaw.coprod.bij}.
\end{proof}

\begin{proof}
[Proof of Proposition~\ref{prop.Gammaw.coprod}.]Define $\mathcal{S}$,
$\mathcal{T}$, $\Phi$ and $\Psi$ as in Lemma \ref{lem.Gammaw.coprod.bij}. From
Lemma \ref{lem.Gammaw.coprod.bij}, we know that the maps $\Phi$ and $\Psi$ are
mutually inverse. Hence, $\Phi$ is a bijection from $\mathcal{S}$ to
$\mathcal{T}$.

Whenever $\alpha=\left(  \alpha_{1},\alpha_{2},\ldots,\alpha_{\ell}\right)  $
is a composition and $k\in\left\{  0,1,\ldots,\ell\right\}  $, we introduce
the notation $\alpha\left[  :k\right]  $ for the composition $\left(
\alpha_{1},\alpha_{2},\ldots,\alpha_{k}\right)  $, and the notation
$\alpha\left[  k:\right]  $ for the composition $\left(  \alpha_{k+1}%
,\alpha_{k+2},\ldots,\alpha_{\ell}\right)  $. Now, the formula
\eqref{eq.coproduct.M} can be rewritten as follows:
\begin{align}
\Delta\left(  M_{\alpha}\right)   &  =\sum_{k=0}^{\ell}M_{\alpha\left[
:k\right]  }\otimes M_{\alpha\left[  k:\right]  }%
\label{pf.prop.Gammaw.coprod.long.DeltaM}\\
&  \qquad\text{ for every }\ell\in \NN \text{ and every composition
}\alpha\text{ with }\ell\text{ entries.}\nonumber
\end{align}

Let us observe a simple fact: For any $\left(  \varphi,k\right)
\in\mathcal{S}$, we have%
\begin{equation}
\left(  \operatorname{ev}_{w}\varphi\right)  \left[  :k\right]
=\operatorname{ev}_{w\mid_{P}}\sigma\ \ \ \ \ \ \ \ \ \ \text{and}%
\ \ \ \ \ \ \ \ \ \ \left(  \operatorname{ev}_{w}\varphi\right)  \left[
k:\right]  =\operatorname{ev}_{w\mid_{Q}}\tau,
\label{pf.prop.Gammaw.coprod.evs}%
\end{equation}
where $\left(  \left(  P,Q\right)  ,\sigma,\tau\right)  =\Phi\left(
\varphi,k\right)  $\ \ \ \ \footnote{\textit{Proof of
(\ref{pf.prop.Gammaw.coprod.evs}):} Let $\left(  \varphi,k\right)
\in\mathcal{S}$. Let $\left(  \left(  P,Q\right)  ,\sigma,\tau\right)
=\Phi\left(  \varphi,k\right)  $. We must prove the equalities
(\ref{pf.prop.Gammaw.coprod.evs}).
\par
For every $\ell\in\mathbb{Z}$, define the map
$\operatorname{add}_{\ell}:\mathbb{Z}\rightarrow\mathbb{Z}$ as in
Lemma \ref{lem.Gammaw.coprod.bij.a}.
\par
Let $\ell=\left\vert \varphi\left(  E\right)  \right\vert $.
Thus, Proposition~\ref{prop.ev.comp} (a) (applied to $\varphi$
instead of $\pi$) yields
$\varphi\left(  E\right)  =\left\{  1,2,\ldots,\ell\right\}  $ (since
the map $\varphi : E \to \left\{ 1, 2, 3, \ldots \right\}$ is packed).
For each $i\in\left\{  1,2,\ldots,\ell\right\}  $,
define $\alpha_{i}\in \NN $ by
\begin{equation}
\alpha_{i}=\sum_{e\in\varphi^{-1}\left(  i\right)  }w\left(  e\right)  .
\label{pf.prop.Gammaw.coprod.evs.pf.alphai}%
\end{equation}
Then, $\operatorname{ev}_{w}\varphi=\left(  \alpha_{1},\alpha
_{2},\ldots,\alpha_{\ell}\right)  $ (by the definition of
$\operatorname{ev}_{w}\varphi$).
\par
We have $\left(  \varphi,k\right)  \in\mathcal{S}$. Thus, $\varphi$ is a
packed $\EE$-partition, and $k$ is an element of $\left\{
0,1,\ldots,\left\vert \varphi\left(  E\right)  \right\vert \right\}  $ (by the
definition of $\mathcal{S}$). Thus, $k\in\left\{  0,1,\ldots,\left\vert
\varphi\left(  E\right)  \right\vert \right\}  =\left\{  0,1,\ldots
,\ell\right\}  $ (since $\left\vert \varphi\left(  E\right)  \right\vert
=\ell$).
\par
Now, from $\operatorname{ev}_{w}\varphi=\left(  \alpha_{1}%
,\alpha_{2},\ldots,\alpha_{\ell}\right)  $, we obtain%
\[
\left(  \operatorname{ev}_{w}\varphi\right)  \left[  :k\right]  =\left(
\alpha_{1},\alpha_{2},\ldots,\alpha_{k}\right)  \ \ \ \ \ \ \ \ \ \ \text{and}%
\ \ \ \ \ \ \ \ \ \ \left(  \operatorname{ev}_{w}\varphi\right)  \left[
k:\right]  =\left(  \alpha_{k+1},\alpha_{k+2},\ldots,\alpha_{\ell}\right)  .
\]
\par
Recall that $\left(  \left(  P,Q\right)  ,\sigma,\tau\right)  =\Phi\left(
\varphi,k\right)  $. Hence, $P$, $Q$, $\sigma$ and $\tau$ are defined by
(\ref{eq.lem.Gammaw.coprod.bij.def1}) and (\ref{eq.lem.Gammaw.coprod.bij.def2}%
) (according to the definition of $\Phi$). We know (from Lemma
\ref{lem.Gammaw.coprod.bij.a}) that $\left(  \left(  P,Q\right)  ,\sigma
,\tau\right)  \in\mathcal{T}$. In other words, we know that $\left(
P,Q\right)  \in \Adm \EE $, that $\sigma$ is a packed
$ \EE \mid_{P}$-partition, and that $\tau$ is a packed
$\EE\mid_{Q}$-partition.
\par
For every $e\in P$, we have%
\begin{equation}
\underbrace{\sigma}_{\substack{=\varphi\mid_{P}\\\text{(by
(\ref{eq.lem.Gammaw.coprod.bij.def2}))}}}\left(  e\right)  =\left(
\varphi\mid_{P}\right)  \left(  e\right)  =\varphi\left(  e\right)  .
\label{pf.prop.Gammaw.coprod.evs.pf.1}%
\end{equation}
\par
For every $e\in Q$, we have%
\begin{align}
\underbrace{\tau}_{\substack{=\operatorname{add}_{-k}\circ\left(
\varphi\mid_{Q}\right)  \\\text{(by (\ref{eq.lem.Gammaw.coprod.bij.def2}))}%
}}\left(  e\right)   &  =\left(  \operatorname{add}_{-k}\circ\left(
\varphi\mid_{Q}\right)  \right)  \left(  e\right)
= \operatorname{add}_{-k}
\left(  \left(  \varphi\mid_{Q}\right)  \left(  e\right)
\right) \nonumber\\
&  =\underbrace{\left(  \varphi\mid_{Q}\right)  \left(  e\right)  }%
_{=\varphi\left(  e\right)  }+\left(  -k\right)  \ \ \ \ \ \ \ \ \ \ \left(
\text{by the definition of }\operatorname{add}_{-k}\right)
\nonumber\\
&  =\varphi\left(  e\right)  -k. \label{pf.prop.Gammaw.coprod.evs.pf.2}%
\end{align}
\par
For every $i\in\left\{  1,2,\ldots,k\right\}  $, we have%
\begin{align}
\sigma^{-1}\left(  i\right)   &  =\left\{  e\in P\ \mid\ \underbrace{\sigma
\left(  e\right)  }_{\substack{=\varphi\left(  e\right)  \\\text{(by
(\ref{pf.prop.Gammaw.coprod.evs.pf.1}))}}}=i\right\}  =\left\{  e\in
P\ \mid\ \varphi\left(  e\right)  =i\right\} \nonumber\\
&  =\underbrace{\left\{  e\in E\ \mid\ \varphi\left(  e\right)  =i\right\}
}_{=\varphi^{-1}\left(  i\right)  }\cap\underbrace{P}_{=\varphi^{-1}\left(
\left\{  1,2,\ldots,k\right\}  \right)  }\nonumber\\
&  =\varphi^{-1}\left(  i\right)  \cap\varphi^{-1}\left(  \left\{
1,2,\ldots,k\right\}  \right)  =\varphi^{-1}\left(  i\right)
\label{pf.prop.Gammaw.coprod.evs.pf.3}%
\end{align}
(since $\varphi^{-1}\left(  i\right)  \subseteq\varphi^{-1}\left(  \left\{
1,2,\ldots,k\right\}  \right)  $ (because $i\in\left\{  1,2,\ldots,k\right\}
$)).
\par
For every $i\in\left\{  1,2,\ldots,\ell-k\right\}  $, we have%
\begin{align}
\tau^{-1}\left(  i\right)   &  =\left\{  e\in Q\ \mid\ \underbrace{\tau\left(
e\right)  }_{\substack{=\varphi\left(  e\right)  -k\\\text{(by
(\ref{pf.prop.Gammaw.coprod.evs.pf.2}))}}}=i\right\}  =\left\{  e\in
Q\ \mid\ \underbrace{\varphi\left(  e\right)  -k=i}_{\Longleftrightarrow
\ \left(  \varphi\left(  e\right)  =k+i\right)  }\right\} \nonumber\\
&  =\left\{  e\in Q\ \mid\ \varphi\left(  e\right)  =k+i\right\}
=\underbrace{\left\{  e\in E\ \mid\ \varphi\left(  e\right)  =k+i\right\}
}_{=\varphi^{-1}\left(  k+i\right)  }\cap\underbrace{Q}_{\substack{=\varphi
^{-1}\left(  \left\{  k+1,k+2,\ldots,\left\vert \varphi\left(  E\right)
\right\vert \right\}  \right)  \\=\varphi^{-1}\left(  \left\{  k+1,k+2,\ldots
,\ell\right\}  \right)  \\\text{(since }\left\vert \varphi\left(  E\right)
\right\vert =\ell\text{)}}}\nonumber\\
&  =\varphi^{-1}\left(  k+i\right)  \cap\varphi^{-1}\left(  \left\{
k+1,k+2,\ldots,\ell\right\}  \right)  =\varphi^{-1}\left(  k+i\right)
\label{pf.prop.Gammaw.coprod.evs.pf.4}%
\end{align}
(since $\varphi^{-1}\left(  k+i\right)  \subseteq\varphi^{-1}\left(  \left\{
k+1,k+2,\ldots,\ell\right\}  \right)  $ (since $k+i\in\left\{  k+1,k+2,\ldots
,\ell\right\}  $ (since $i\in\left\{  1,2,\ldots,\ell-k\right\}  $))).
\par
We have $\varphi\left(  P\right)  =\left\{  1,2,\ldots,k\right\}  $. (This was
proven in our proof of Lemma \ref{lem.Gammaw.coprod.bij.a} above; see the
equality (\ref{pf.lem.Gammaw.coprod.bij.a.3}).) But $\sigma=\varphi\mid_{P}$,
so that $\sigma\left(  P\right)  =\left(  \varphi\mid_{P}\right)  \left(
P\right)  =\varphi\left(  P\right)  =\left\{  1,2,\ldots,k\right\}  $. Hence,
$\left\vert \sigma\left(  P\right)  \right\vert =\left\vert \left\{
1,2,\ldots,k\right\}  \right\vert =k$. Therefore, the definition of
$\operatorname{ev}_{w\mid_{P}}\sigma$ shows that $\operatorname{ev}_{w\mid
_{P}}\sigma=\left(  \beta_{1},\beta_{2},\ldots,\beta_{k}\right)  $, where each
$\beta_{i}$ is defined as $\sum_{e\in\sigma^{-1}\left(  i\right)  }\left(
w\mid_{P}\right)  \left(  e\right)  $. Thus, every $i\in\left\{
1,2,\ldots,k\right\}  $ satisfies%
\[
\beta_{i}=\underbrace{\sum_{e\in\sigma^{-1}\left(  i\right)  }}%
_{\substack{=\sum_{e\in\varphi^{-1}\left(  i\right)  }\\\text{(by
(\ref{pf.prop.Gammaw.coprod.evs.pf.3}))}}}\underbrace{\left(  w\mid
_{P}\right)  \left(  e\right)  }_{=w\left(  e\right)  }=\sum_{e\in\varphi
^{-1}\left(  i\right)  }w\left(  e\right)  =\alpha_{i}%
\ \ \ \ \ \ \ \ \ \ \left(  \text{by
(\ref{pf.prop.Gammaw.coprod.evs.pf.alphai})}\right)  .
\]
Hence, $\left(  \beta_{1},\beta_{2},\ldots,\beta_{k}\right)  =\left(
\alpha_{1},\alpha_{2},\ldots,\alpha_{k}\right)  =\left(  \operatorname{ev}%
_{w}\varphi\right)  \left[  :k\right]  $, so that $\left(  \operatorname{ev}%
_{w}\varphi\right)  \left[  :k\right]  =\left(  \beta_{1},\beta_{2}%
,\ldots,\beta_{k}\right)  =\operatorname{ev}_{w\mid_{P}}\sigma$.
\par
We have $\varphi\left(  Q\right)  =\left\{  k+1,k+2,\ldots,\left\vert
\varphi\left(  E\right)  \right\vert \right\}  $. (This was proven in our
proof of Lemma \ref{lem.Gammaw.coprod.bij.a} above; see the equality
(\ref{pf.lem.Gammaw.coprod.bij.a.4}).) But
\begin{align*}
\tau\left(  Q\right)   &  =\left\{  \underbrace{\tau\left(  e\right)
}_{\substack{=\varphi\left(  e\right)  -k\\\text{(by
(\ref{pf.prop.Gammaw.coprod.evs.pf.2}))}}}\ \mid\ e\in Q\right\}  =\left\{
\varphi\left(  e\right)  -k\ \mid\ e\in Q\right\}
 =\left\{  u-k\ \mid\ u\in\varphi\left(  Q\right)  \right\}  .
\end{align*}
Thus,
\begin{align*}
\left\vert \tau\left(  Q\right)  \right\vert  &  =\left\vert \varphi\left(
Q\right)  \right\vert =\left\vert \left\{  k+1,k+2,\ldots,\left\vert
\varphi\left(  E\right)  \right\vert \right\}  \right\vert
\ \ \ \ \ \ \ \ \ \ \left(  \text{since }\varphi\left(  Q\right)  =\left\{
k+1,k+2,\ldots,\left\vert \varphi\left(  E\right)  \right\vert \right\}
\right) \\
&  =\underbrace{\left\vert \varphi\left(  E\right)  \right\vert }_{=\ell
}-k=\ell-k.
\end{align*}
Therefore, the definition of $\operatorname{ev}_{w\mid_{Q}}\tau$ shows that
$\operatorname{ev}_{w\mid_{Q}}\tau=\left(  \gamma_{1},\gamma_{2},\ldots
,\gamma_{\ell-k}\right)  $, where each $\gamma_{i}$ is defined as $\sum
_{e\in\tau^{-1}\left(  i\right)  }\left(  w\mid_{Q}\right)  \left(  e\right)
$. Thus, every $i\in\left\{  1,2,\ldots,\ell-k\right\}  $ satisfies%
\begin{align*}
\gamma_{i}  &  =\underbrace{\sum_{e\in\tau^{-1}\left(  i\right)  }%
}_{\substack{=\sum_{e\in\varphi^{-1}\left(  k+i\right)  }\\\text{(by
(\ref{pf.prop.Gammaw.coprod.evs.pf.4}))}}}\underbrace{\left(  w\mid
_{Q}\right)  \left(  e\right)  }_{=w\left(  e\right)  }=\sum_{e\in\varphi
^{-1}\left(  k+i\right)  }w\left(  e\right)  =\alpha_{k+i}\\
&  \ \ \ \ \ \ \ \ \ \ \left(
\begin{array}
[c]{c}%
\text{since (\ref{pf.prop.Gammaw.coprod.evs.pf.alphai}) (applied to }k+i\text{
instead of }i\text{)}\\
\text{shows that }\alpha_{k+i}=\sum_{e\in\varphi^{-1}\left(  k+i\right)
}w\left(  e\right)
\end{array}
\right)  .
\end{align*}
Hence, $\left(  \gamma_{1},\gamma_{2},\ldots,\gamma_{\ell-k}\right)  =\left(
\alpha_{k+1},\alpha_{k+2},\ldots,\alpha_{\ell}\right)  =\left(
\operatorname{ev}_{w}\varphi\right)  \left[  k:\right]  $, so that $\left(
\operatorname{ev}_{w}\varphi\right)  \left[  k:\right]  =\left(  \gamma
_{1},\gamma_{2},\ldots,\gamma_{\ell-k}\right)  =\operatorname{ev}_{w\mid_{Q}%
}\tau$.
\par
Thus, both parts of (\ref{pf.prop.Gammaw.coprod.evs}) are proven.}.

If $\varphi : E \to \left\{1,2,3,\ldots\right\}$ is any packed map, then
$\ev_w \varphi$ is a composition with $\left| \varphi\left(E\right) \right|$
entries (by the definition of $\ev_w \varphi$), and thus it satisfies
\begin{equation}
\Delta \left( M_{\ev_w \varphi} \right)
= \sum_{k=0}^{\left\vert \varphi\left(  E\right)  \right\vert }
  M_{\left(  \operatorname{ev}_{w}\varphi\right)  \left[  :k\right] }
  \otimes
  M_{\left(  \operatorname{ev}_{w}\varphi\right)  \left[  k:\right] }
\label{pf.prop.Gammaw.coprod.long.lhs-addend}
\end{equation}
(by \eqref{pf.prop.Gammaw.coprod.long.DeltaM}, applied to
$\alpha = \ev_w \varphi$ and $\ell = \left| \varphi\left(E\right) \right|$).

Now, applying $\Delta$ to the equality \eqref{eq.prop.Gammaw.packed} yields
\begin{align}
\Delta \left( \Gamma \left( \EE, w \right) \right)
&= \Delta \left( \sum_{\varphi \text{ is a packed } \EE\text{-partition}}
  M_{\ev_w \varphi} \right) \nonumber \\
&= \sum_{\varphi \text{ is a packed } \EE\text{-partition}}
  \underbrace{\Delta \left( M_{\ev_w \varphi} \right)}_{\substack{
    =\sum_{k=0}^{\left\vert \varphi\left(  E\right)  \right\vert }M_{\left(
    \operatorname{ev}_{w}\varphi\right)  \left[  :k\right]  }\otimes M_{\left(
    \operatorname{ev}_{w}\varphi\right)  \left[  k:\right]  }\\
    \text{(by \eqref{pf.prop.Gammaw.coprod.long.lhs-addend})}}}\nonumber\\
&  =\underbrace{\sum_{\varphi\text{ is a packed } \EE \text{-partition}%
}\sum_{k=0}^{\left\vert \varphi\left(  E\right)  \right\vert }}%
_{\substack{=\sum_{\left(  \varphi,k\right)  \in\mathcal{S}}\\\text{(by the
definition of }\mathcal{S}\text{)}}}M_{\left(  \operatorname{ev}_{w}%
\varphi\right)  \left[  :k\right]  }\otimes M_{\left(  \operatorname{ev}%
_{w}\varphi\right)  \left[  k:\right]  }\nonumber\\
&  =\sum_{\left(  \varphi,k\right)  \in\mathcal{S}}M_{\left(
\operatorname{ev}_{w}\varphi\right)  \left[  :k\right]  }\otimes M_{\left(
\operatorname{ev}_{w}\varphi\right)  \left[  k:\right]  }%
.\label{pf.Gammaw.coprod.long.lhs}%
\end{align}

On the other hand, rewriting each of the tensorands on the right hand side of
\eqref{eq.prop.Gammaw.coprod} using \eqref{eq.prop.Gammaw.packed}, we obtain
\begin{align*}
&  \sum_{\left(  P,Q\right)  \in \Adm \EE }%
\underbrace{\Gamma\left(   \EE \mid_{P},w\mid_{P}\right)
}_{\substack{=\sum_{\varphi\text{ is a packed } \EE \mid_{P}%
\text{-partition}}M_{\operatorname{ev}_{w\mid_{P}}\varphi}\\\text{(by
\eqref{eq.prop.Gammaw.packed})}}}\otimes\underbrace{\Gamma\left(
\EE\mid_{Q},w\mid_{Q}\right)  }_{\substack{=\sum_{\varphi
\text{ is a packed } \EE \mid_{Q}\text{-partition}}
M_{\operatorname{ev}_{w\mid_{Q}} \varphi} \\
\text{(by \eqref{eq.prop.Gammaw.packed})}}}\\
&  =\sum_{\left(  P,Q\right)  \in \Adm \EE}
\underbrace{ \left(
 \sum_{\varphi\text{ is a packed } \EE \mid_{P}\text{-partition}%
  }M_{\operatorname{ev}_{w\mid_{P}}\varphi}\right)}
  _{\substack{
    = \sum_{\sigma\text{ is a packed } \EE \mid_{P}\text{-partition}%
    }M_{\operatorname{ev}_{w\mid_{P}}\sigma} \\
    \text{(here, we have renamed the } \\
    \text{summation index } \varphi \text{ as } \sigma \text{)}
    }}
\otimes
\underbrace{ \left(
 \sum_{\varphi\text{ is a packed } \EE \mid_{Q}\text{-partition}%
  }M_{\operatorname{ev}_{w\mid_{Q}}\varphi}\right)}
   _{\substack{
     = \sum_{\tau\text{ is a packed } \EE \mid_{Q}\text{-partition}}%
     M_{\operatorname{ev}_{w\mid_{Q}}\tau} \\
    \text{(here, we have renamed the } \\
    \text{summation index } \varphi \text{ as } \tau \text{)}
    }}
\\
&  =\sum_{\left(  P,Q\right)  \in \Adm \EE }\left(
\sum_{\sigma\text{ is a packed } \EE \mid_{P}\text{-partition}%
}M_{\operatorname{ev}_{w\mid_{P}}\sigma}\right)  \otimes\left(  \sum
_{\tau\text{ is a packed } \EE \mid_{Q}\text{-partition}}%
M_{\operatorname{ev}_{w\mid_{Q}}\tau}\right)  \\
&  =\underbrace{\sum_{\left(  P,Q\right)  \in \Adm \EE %
}\sum_{\sigma\text{ is a packed } \EE \mid_{P}\text{-partition}}%
\sum_{\tau\text{ is a packed } \EE \mid_{Q}\text{-partition}}%
}_{\substack{=\sum_{\left(  \left(  P,Q\right)  ,\sigma,\tau\right)
\in\mathcal{T}}\\\text{(by the definition of }\mathcal{T}\text{)}%
}}M_{\operatorname{ev}_{w\mid_{P}}\sigma}\otimes M_{\operatorname{ev}%
_{w\mid_{Q}}\tau}\\
&  =\sum_{\left(  \left(  P,Q\right)  ,\sigma,\tau\right)  \in\mathcal{T}%
}M_{\operatorname{ev}_{w\mid_{P}}\sigma}\otimes M_{\operatorname{ev}%
_{w\mid_{Q}}\tau}\\
&  =\sum_{\left(  \varphi,k\right)  \in\mathcal{S}}M_{\left(
\operatorname{ev}_{w}\varphi\right)  \left[  :k\right]  }\otimes M_{\left(
\operatorname{ev}_{w}\varphi\right)  \left[  k:\right]  }%
\end{align*}
(here, we have substituted $\Phi\left(  \varphi,k\right)  $ for $\left(
\left(  P,Q\right)  ,\sigma,\tau\right)  $ in the sum, using the fact that
$\Phi$ is a bijection from $\mathcal{S}$ to $\mathcal{T}$, and using the
equalities (\ref{pf.prop.Gammaw.coprod.evs}) to rewrite the addend
$M_{\operatorname{ev}_{w\mid_{P}}\sigma}\otimes M_{\operatorname{ev}%
_{w\mid_{Q}}\tau}$ as $M_{\left(  \operatorname{ev}_{w}\varphi\right)  \left[
:k\right]  }\otimes M_{\left(  \operatorname{ev}_{w}\varphi\right)  \left[
k:\right]  }$). Comparing this with (\ref{pf.Gammaw.coprod.long.lhs}), we
obtain%
\[
\Delta\left(  \Gamma\left(   \EE ,w\right)  \right)  =\sum_{\left(
P,Q\right)  \in \Adm \EE }\Gamma\left(   \EE %
\mid_{P},w\mid_{P}\right)  \otimes\Gamma\left(   \EE \mid_{Q},w\mid
_{Q}\right)  .
\]
This proves Proposition \ref{prop.Gammaw.coprod}.
\end{proof}
\end{verlong}

We note in passing that there is also a rule for multiplying
quasisymmetric functions of the form $\Gamma\left(\EE, w\right)$.
Namely, if $\EE$ and $\FF$ are two double posets and $u$ and $v$
are corresponding maps, then $\Gamma\left(\EE, u\right)
\Gamma\left(\FF, v\right) = \Gamma\left(\EE \FF, w\right)$ for a
map $w$ which is defined to be $u$ on the subset $\EE$ of
$\EE \FF$, and $v$ on the subset $\FF$ of $\EE \FF$. Here, $\EE \FF$
is a double poset defined as in \cite[\S 2.1]{Mal-Reu-DP}.
\begin{verlong}
\footnote{See the Appendix
(specifically, Corollary \ref{cor.djun.Gamma.EF}) for the exact
statement of this rule (and a proof).}
\end{verlong}
Combined with Proposition~\ref{prop.Gammaw.qsym}, this fact gives
a combinatorial proof for the fact that $\QSym$ is a $\kk$-algebra%
\begin{verlong}
\footnote{This combinatorial proof is shown in detail in the Appendix
(specifically, see the proof of Proposition~\ref{prop.QSym.alg}).
\par
Actually, we can do better: We can use these ideas to show that
$\QSym$ is a Hopf algebra.
See the proof of Proposition~\ref{prop.QSym.hopfalg} for how this
is done.}%
\end{verlong}
,
as well as for some standard formulas for multiplications of
quasisymmetric functions; similarly,
Proposition~\ref{prop.Gammaw.coprod} can be used to derive the
well-known formulas for $\Delta M_\alpha$, $\Delta L_\alpha$,
$\Delta s_{\lambda / \mu}$ etc. (although, of course, we have
already used the formula for $\Delta M_\alpha$ in our proof of
Proposition~\ref{prop.Gammaw.coprod}).

\begin{verlong}
Finally, let us state one more almost-trivial lemma that will
be used later:

\begin{lemma}
\label{lem.tertispecial.op}
Let $\left(  E,<_{1},<_{2}\right)  $ be a
tertispecial double poset. Let $>_{1}$ be the opposite relation of $<_{1}$.
Then, $\left(  E,>_{1},<_{2}\right)  $ is a tertispecial double poset.
\end{lemma}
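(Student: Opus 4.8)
The plan is to first check that $\left(E, >_1, <_2\right)$ is a double poset at all, and then verify the tertispeciality condition directly from its definition. For the former, I would invoke Definition~\ref{def.double-poset} (h): since $<_1$ is a strict partial order, its opposite relation $>_1$ is again a strict partial order; as $<_2$ is a strict partial order by hypothesis and $E$ is finite, the triple $\left(E, >_1, <_2\right)$ satisfies all requirements of Definition~\ref{def.double-poset} (d). The substantive part is to show that this double poset is tertispecial, i.e.\ that whenever $a$ is $>_1$-covered by $b$, the elements $a$ and $b$ are $<_2$-comparable.

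The key observation is that the covering relation of $>_1$ is merely the covering relation of $<_1$ with its two arguments interchanged. First I would unfold Definition~\ref{def.double-poset} (c): saying that $a$ is $>_1$-covered by $b$ means $a >_1 b$ together with the nonexistence of any $c \in E$ satisfying $a >_1 c >_1 b$. Rewriting each occurrence of $>_1$ in terms of $<_1$ (so that $a >_1 b$ becomes $b <_1 a$, and $a >_1 c >_1 b$ becomes $b <_1 c <_1 a$), this condition turns into: $b <_1 a$, and there is no $c \in E$ with $b <_1 c <_1 a$. By Definition~\ref{def.double-poset} (c) again, this says precisely that $b$ is $<_1$-covered by $a$.

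With that translation established, the conclusion follows immediately. Since $\left(E, <_1, <_2\right)$ is tertispecial and $b$ is $<_1$-covered by $a$, the elements $b$ and $a$ are $<_2$-comparable. Because $<_2$-comparability (Definition~\ref{def.double-poset} (b)) is a symmetric condition---``$a$ and $b$ are $<_2$-comparable'' means $a <_2 b$ or $a = b$ or $b <_2 a$, which is unchanged when $a$ and $b$ are swapped---it follows that $a$ and $b$ are $<_2$-comparable as well, which is exactly the tertispeciality of $\left(E, >_1, <_2\right)$.

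There is no genuine obstacle here; the entire content is the bookkeeping identity ``$a$ is $>_1$-covered by $b$'' $\Longleftrightarrow$ ``$b$ is $<_1$-covered by $a$'', after which the tertispeciality transfers for free. The one point to be careful about is not to conflate the (non-)symmetry of the covering relation itself with the symmetry of the comparability relation: covering is \emph{not} symmetric, whereas comparability \emph{is}, and it is precisely the latter symmetry that lets the conclusion go through.
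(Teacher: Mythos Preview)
Your proof is correct and follows essentially the same approach as the paper's own proof: first verify that $\left(E, >_1, <_2\right)$ is a double poset, then establish the equivalence ``$a$ is $>_1$-covered by $b$'' $\Longleftrightarrow$ ``$b$ is $<_1$-covered by $a$'' by unfolding the definition of covering, and finally transfer the tertispeciality via the symmetry of $<_2$-comparability. Your closing remark about the distinction between the asymmetry of covering and the symmetry of comparability is a nice clarifying touch that the paper does not make explicit.
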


\begin{proof}
[Proof of Lemma \ref{lem.tertispecial.op}.] The relations $<_{1}$ and $<_{2}$
are strict partial orders (since $\left(  E,<_{1},<_{2}\right)  $ is a double
poset). The relation $>_{1}$ is the opposite relation of $<_{1}$, and thus is
a strict partial order (since $<_{1}$ is a strict partial order). Now we know
that both relations $>_{1}$ and $<_{2}$ are strict partial orders on the set
$E$. Hence, $\left(  E,>_{1},<_{2}\right)  $ is a double poset. It remains to
prove that this double poset $\left(  E,>_{1},<_{2}\right)  $ is tertispecial.

We know that the double poset $\left(  E,<_{1},<_{2}\right)  $ is
tertispecial. In other words, the following statement holds:

\textit{Statement 1:} If $a$ and $b$ are two elements of $E$ such that $a$ is
$<_{1}$-covered by $b$, then $a$ and $b$ are $<_{2}$-comparable.

On the other hand, the following statement holds:

\textit{Statement 2:} Let $a$ and $b$ be two elements of $E$. Then, we have
the following logical equivalence:%
\[
\left(  a\text{ is }>_{1}\text{-covered by }b\right)  \ \Longleftrightarrow
\ \left(  b\text{ is }<_{1}\text{-covered by }a\right)  .
\]

[\textit{Proof of Statement 2:} We have the following chain of logical
equivalences:%
\begin{align}
& \ \left(  a\text{ is }>_{1}\text{-covered by }b\right)  \nonumber\\
& \Longleftrightarrow\ \left(  \text{we have }a>_{1}b\text{, and there exists
no }c\in E\text{ satisfying }\underbrace{a>_{1}c>_{1}b}_{\Longleftrightarrow
\ \left(  a>_{1}c\right)  \wedge\left(  c>_{1}b\right)  }\right)  \nonumber\\
& \ \ \ \ \ \ \ \ \ \ \left(  \text{by the definition of the notion
\textquotedblleft}>_{1}\text{-covered by\textquotedblright}\right)
\nonumber\\
& \Longleftrightarrow\ \left(  \text{we have }\underbrace{a>_{1}%
b}_{\substack{\Longleftrightarrow~\left(  b<_{1}a\right)  \\\text{(since
}>_{1}\text{ is}\\\text{the opposite}\\\text{relation}\\\text{of }<_{1}\text{)}%
}}\text{, and there exists no }c\in E\text{ satisfying }\underbrace{\left(
a>_{1}c\right)  }_{\substack{\Longleftrightarrow~\left(  c<_{1}a\right)
\\\text{(since }>_{1}\text{ is}\\\text{the opposite}\\\text{relation}\\\text{of }%
<_{1}\text{)}}}\wedge\underbrace{\left(  c>_{1}b\right)  }%
_{\substack{\Longleftrightarrow~\left(  b<_{1}c\right)  \\\text{(since }%
>_{1}\text{ is}\\\text{the opposite}\\\text{relation}\\\text{of }<_{1}\text{)}%
}}\right)  \nonumber\\
& \Longleftrightarrow\ \left(  \text{we have }b<_{1}a\text{, and there exists
no }c\in E\text{ satisfying }\underbrace{\left(  c<_{1}a\right)  \wedge\left(
b<_{1}c\right)  }_{\substack{\Longleftrightarrow\ \left(  b<_{1}c\right)
\wedge\left(  c<_{1}a\right)  \\\Longleftrightarrow\ \left(  b<_{1}%
c<_{1}a\right)  }}\right)  \nonumber\\
& \Longleftrightarrow\ \left(  \text{we have }b<_{1}a\text{, and there exists
no }c\in E\text{ satisfying }b<_{1}c<_{1}a\right)
.\label{pf.lem.tertispecial.op.5}%
\end{align}
On the other hand, we have the following chain of logical equivalences:%
\begin{align*}
& \ \left(  b\text{ is }<_{1}\text{-covered by }a\right)  \\
& \Longleftrightarrow\ \left(  \text{we have }b<_{1}a\text{, and there exists
no }c\in E\text{ satisfying }b<_{1}c<_{1}a\right)  \\
& \ \ \ \ \ \ \ \ \ \ \left(  \text{by the definition of the notion
\textquotedblleft}<_{1}\text{-covered by\textquotedblright}\right)  \\
& \Longleftrightarrow\ \left(  a\text{ is }>_{1}\text{-covered by }b\right)
\ \ \ \ \ \ \ \ \ \ \left(  \text{by (\ref{pf.lem.tertispecial.op.5})}\right)
.
\end{align*}
This proves Statement 2.]

Now, we shall prove the following statement:

\textit{Statement 3:} If $a$ and $b$ are two elements of $E$ such that $a$ is
$>_{1}$-covered by $b$, then $a$ and $b$ are $<_{2}$-comparable.

[\textit{Proof of Statement 3:} Let $a$ and $b$ be two elements of $E$ such
that $a$ is $>_{1}$-covered by $b$. We must show that $a$ and $b$ are $<_{2}$-comparable.

Statement 2 shows that we have the following logical equivalence:%
\[
\left(  a\text{ is }>_{1}\text{-covered by }b\right)  \ \Longleftrightarrow
\ \left(  b\text{ is }<_{1}\text{-covered by }a\right)  .
\]
Hence, $b$ is $<_{1}$-covered by $a$ (since $a$ is $>_{1}$-covered by $b$).
Thus, Statement 1 (applied to $b$ and $a$ instead of $a$ and $b$) yields that
$b$ and $a$ are $<_{2}$-comparable. In other words, either $b<_{2}a$ or $b=a$
or $a<_{2}b$. In other words, either $a<_{2}b$ or $b=a$ or $b<_{2}a$. In other
words, either $a<_{2}b$ or $a=b$ or $b<_{2}a$ (since $b=a$ is equivalent to
$a=b$). In other words, $a$ and $b$ are $<_{2}$-comparable. This proves
Statement 3.]

But the double poset $\left(  E,>_{1},<_{2}\right)  $ is tertispecial if and
only if Statement 3 holds (by the definition of \textquotedblleft
tertispecial\textquotedblright). Hence, the double poset $\left(
E,>_{1},<_{2}\right)  $ is tertispecial (since Statement 3 holds). This
completes the proof of Lemma~\ref{lem.tertispecial.op}.
\end{proof}
\end{verlong}

\section{Proof of Theorem~\ref{thm.antipode.Gammaw}}
\label{sect.proof}

Before we come to the proof of Theorem~\ref{thm.antipode.Gammaw},
let us state five lemmas:

\begin{lemma}
\label{lem.admissible.cover}
Let $\EE = \left(E, <_1, <_2\right)$ be a double poset.
Let $P$ and $Q$ be subsets of $E$ such that
$P \cap Q = \varnothing$ and $P \cup Q = E$.
Assume that there exist no $p \in P$ and $q \in Q$ such that
$q$ is $<_1$-covered by $p$. Then, $\left(P, Q\right) \in
\Adm \EE$.
\end{lemma}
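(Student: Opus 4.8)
The plan is to verify the three defining conditions of $\Adm\EE$ for the pair $\left(P,Q\right)$. Two of these conditions — namely $P\cap Q=\varnothing$ and $P\cup Q=E$ — are already handed to us among the hypotheses, so the entire content of the lemma is to check the third one: that no $p\in P$ and $q\in Q$ satisfy $q<_1 p$. I would establish this by contradiction, assuming that at least one such ``bad pair'' exists and then extracting from it a \emph{crossing cover relation}, which is precisely what the assumption of the lemma forbids.

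Concretely, I would argue by minimality. Suppose, for the sake of contradiction, that there is a pair $\left(q,p\right)\in Q\times P$ with $q<_1 p$; among all such pairs, choose one for which the number $N=\left|\left\{c\in E \ \mid\ q<_1 c<_1 p\right\}\right|$ of elements strictly $<_1$-between $q$ and $p$ is as small as possible. (This choice is legitimate because $E$ is finite.) If $N=0$, then by the very definition of ``$<_1$-covered'' the element $q$ is $<_1$-covered by $p$; since $q\in Q$ and $p\in P$, this contradicts the assumption that no $p\in P$ and $q\in Q$ satisfy ``$q$ is $<_1$-covered by $p$''.

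If instead $N>0$, I would pick some $c\in E$ with $q<_1 c<_1 p$, and split into cases according to whether $c\in P$ or $c\in Q$ (one of which must hold, since $P\cup Q=E$). In the first case $\left(q,c\right)$ is itself a bad pair, and in the second case $\left(c,p\right)$ is. In either case, transitivity of $<_1$ shows that every element strictly between the new pair lies strictly between $q$ and $p$ as well, whereas $c$ itself lies strictly between $q$ and $p$ but, by irreflexivity of $<_1$, not strictly between the endpoints of the new (shorter) pair. Hence the new pair has a strictly smaller value of $N$, contradicting the minimality of our chosen pair. Both cases thus end in a contradiction, so no bad pair exists, the third condition holds, and $\left(P,Q\right)\in\Adm\EE$.

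The only genuine subtlety — and the step I would treat most carefully — is the strict decrease of $N$ when passing to the sub-pair: it rests on combining transitivity (to get the inclusion of the ``strictly between'' sets) with irreflexivity (to see that $c$ drops out of the smaller set), and it is exactly here that the finiteness of $E$ is exploited to make the minimality argument terminate. Everything else is routine unwinding of the definitions of $\Adm\EE$ and of ``$<_1$-covered by''.
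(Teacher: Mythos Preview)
Your proof is correct and follows essentially the same approach as the paper. The paper likewise chooses a ``bad pair'' (which it calls a \emph{malposition}) minimizing the number of elements strictly $<_1$-between its endpoints, then derives a contradiction by finding an intermediate element $c$ and splitting on whether $c\in P$ or $c\in Q$; the only cosmetic difference is that the paper packages the strict-decrease argument as a preliminary observation about intervals $[a,b]=\{e\in E\mid a<_1 e<_1 b\}$ rather than handling the $N=0$ and $N>0$ cases separately.
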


\begin{proof}[Proof of Lemma~\ref{lem.admissible.cover}.]
For any $a \in E$ and $b \in E$, we let $\left[a, b\right]$
denote the subset \newline
$\left\{e \in E \mid a <_1 e <_1 b\right\}$ of $E$. It is
easy to see that if $a$, $b$ and $c$ are three elements of
$E$ satisfying $a <_1 c <_1 b$, then
both $\left[a, c\right]$ and $\left[c, b\right]$ are proper
subsets of $\left[a, b\right]$, and therefore
\begin{equation}
\text{both numbers }
\left|\left[a, c\right]\right| \text{ and }
\left|\left[c, b\right]\right| \text{ are smaller than }
\left|\left[a, b\right]\right|.
\label{pf.lem.admissible.cover.1}
\end{equation}

\begin{verlong}%
[\textit{Proof of \eqref{pf.lem.admissible.cover.1}:}
Let $a$, $b$ and $c$ be three elements of $E$
satisfying $a <_1 c <_1 b$.

The definition of $\left[a, b\right]$ yields
$\left[a, b\right]
= \left\{e \in E \mid a <_1 e <_1 b\right\}$.
Hence, $c \in \left[a, b\right]$ (since $a <_1 c <_1 b$).

The definition of $\left[a, c\right]$ yields
\[
\left[a, c\right]
= \left\{ e \in E \mid a <_1 e <_1 c \right\}
\subseteq \left\{ e \in E \mid a <_1 e <_1 b \right\}
\]
(because every $e \in E$ satisfying $a <_1 e <_1 c$
must also satisfy $e <_1 c <_1 b$ and therefore
$a <_1 e <_1 b$). Thus,
\[
\left[a, c\right]
\subseteq \left\{ e \in E \mid a <_1 e <_1 b \right\}
= \left[a, b\right] .
\]

If we had $\left[a, c\right] = \left[a, b\right]$,
then we would have $c \in \left[a, b\right]
= \left[a, c\right]
= \left\{ e \in E \mid a <_1 e <_1 c \right\}$
and therefore $a <_1 c <_1 c$; but this would contradict the
fact that we don't have $c <_1 c$. Thus, we cannot have
$\left[a, c\right] = \left[a, b\right]$. Thus, we have
$\left[a, c\right] \neq \left[a, b\right]$. Combining
this with $\left[a, c\right] \subseteq \left[a, b\right]$,
we conclude that $\left[a, c\right]$ is a proper
subset of $\left[a, b\right]$.

The definition of $\left[c, b\right]$ yields
\[
\left[c, b\right]
= \left\{ e \in E \mid c <_1 e <_1 b \right\}
\subseteq \left\{ e \in E \mid a <_1 e <_1 b \right\}
\]
(because every $e \in E$ satisfying $c <_1 e <_1 b$
must also satisfy $a <_1 c <_1 e$ and therefore
$a <_1 e <_1 b$). Thus,
\[
\left[c, b\right]
\subseteq \left\{ e \in E \mid a <_1 e <_1 b \right\}
= \left[a, b\right] .
\]

If we had $\left[c, b\right] = \left[a, b\right]$,
then we would have $c \in \left[a, b\right]
= \left[c, b\right]
= \left\{ e \in E \mid c <_1 e <_1 b \right\}$
and therefore $c <_1 c <_1 b$; but this would contradict the
fact that we don't have $c <_1 c$. Thus, we cannot have
$\left[c, b\right] = \left[a, b\right]$. Thus, we have
$\left[c, b\right] \neq \left[a, b\right]$. Combining
this with $\left[c, b\right] \subseteq \left[a, b\right]$,
we conclude that $\left[c, b\right]$ is a proper
subset of $\left[a, b\right]$.

Thus, we have shown that both $\left[a, c\right]$ and
$\left[c, b\right]$ are proper subsets of $\left[a, b\right]$.
Hence, \eqref{pf.lem.admissible.cover.1} follows
(since $\left[a, b\right]$ is a finite set). This completes
the proof of \eqref{pf.lem.admissible.cover.1}.]
\end{verlong}

A pair $\left(p, q\right) \in P \times Q$ is said to be a
\textit{malposition} if it satisfies $q <_1 p$. Now, let us
assume (for the sake of contradiction) that there exists a
malposition. Fix a malposition $\left(u, v\right)$ for which the
value $\left|\left[v, u\right]\right|$ is minimum. Thus,
$\left(u, v\right) \in P \times Q$ and $v <_1 u$.
From $\left(u, v\right) \in P \times Q$, we obtain
$u \in P$ and $v \in Q$. Hence, $v$ is not
$<_1$-covered by $u$ (since there exist no $p \in P$ and $q \in Q$
such that $q$ is $<_1$-covered by $p$). Hence, there exists a
$w \in E$ such that $v <_1 w <_1 u$ (since $v <_1 u$). Consider
this $w$. Applying \eqref{pf.lem.admissible.cover.1} to $a = v$,
$c = w$ and $b = u$, we see that both numbers
$\left|\left[v, w\right]\right|$ and
$\left|\left[w, u\right]\right|$ are smaller than
$\left|\left[v, u\right]\right|$, and therefore neither
$\left(w, v\right)$ nor $\left(u, w\right)$ is a malposition
(since we picked $\left(u, v\right)$ to be a malposition with
minimum $\left|\left[v, u\right]\right|$). But
$w \in E = P \cup Q$, so that either $w \in P$ or $w \in Q$.
If $w \in P$, then $\left(w, v\right)$ is a malposition;
if $w \in Q$, then $\left(u, w\right)$ is a malposition. In
either case, we obtain a contradiction to the fact that
neither $\left(w, v\right)$ nor $\left(u, w\right)$ is a malposition.
This contradiction shows that our assumption was wrong.
Hence, there exists no malposition. In other words, there
exists no $\left(p, q\right) \in P \times Q$ satisfying
$q <_1 p$ (since this is what ``malposition'' means). In
other words, no $p \in P$ and $q \in Q$ satisfy $q <_1 p$.
Consequently, $\left(P, Q\right) \in \Adm \EE$.
This proves Lemma~\ref{lem.admissible.cover}.
\end{proof}

\begin{lemma}
\label{lem.tertispecial.subset}
Let $\EE = \left(E, <_1, <_2\right)$ be a tertispecial
double poset.
Let $\left(P,Q\right) \in \Adm \EE$. Then, $\EE\mid_P$ is
a tertispecial double poset.
\end{lemma}

\begin{proof}[Proof of Lemma~\ref{lem.tertispecial.subset}.]
Recall that we are using the symbol $<_1$ to denote two
different relations:
a strict partial order on $E$, and its restriction to $P$.
This abuse of notation is usually harmless, but in the
current proof it is dangerous, because it causes the statement
``$a$ is $<_1$-covered by $b$'' (for two elements $a$ and
$b$ of $P$) to carry two meanings (depending on whether the
symbol $<_1$ is interpreted as the strict partial order on
$E$, or as its restriction to $P$). (These two meanings are
actually equivalent, but their equivalence is not immediately
obvious.)

Thus, for the duration of this proof, we shall revert to a
less ambiguous notation. Namely, the notation $<_1$ shall
only be used for the strict partial order on $E$ which
constitutes part of the double poset $\EE$. The restriction
of this partial order $<_1$ to the subset $P$ will be denoted
by $<_{1,P}$ (not by $<_1$). Similarly, the restriction of
the partial order $<_2$ to the subset $P$ will be denoted by
$<_{2,P}$ (not by $<_2$). Thus, the double poset
$\EE\mid_P$ is defined as
$\EE\mid_P = \left(P, <_{1,P}, <_{2,P}\right)$.

We need to show that the double poset
$\EE\mid_P = \left(P, <_{1,P}, <_{2,P}\right)$ is
tertispecial. In other words, we need to show that if $a$
and $b$ are two elements of $P$ such that $a$ is
$<_{1,P}$-covered by $b$, then $a$ and $b$
are $<_{2,P}$-comparable.

Let $a$ and $b$ be two elements of $P$ such that $a$ is
$<_{1,P}$-covered by $b$. Thus,
$a <_{1,P} b$, and
\begin{equation}
\text{there exists no } c \in P \text{ satisfying }
a <_{1,P} c <_{1,P} b .
\label{pf.lem.tertispecial.subset.1}
\end{equation}

We have $a <_{1,P} b$. In other words, $a <_1 b$ (since
$<_{1,P}$ is the restriction of the relation $<_1$ to $P$).

Now, if $c \in E$ is such that $a <_1 c <_1 b$, then $c$ must
belong to $P$\ \ \ \ \footnote{\textit{Proof.} Assume the
contrary. Thus, $c \notin P$. But
$\left(P, Q\right) \in \Adm \EE$. Thus, $P \cap Q = \varnothing$,
$P \cup Q = E$, and
\begin{equation}
\text{no } p \in P \text{ and } q \in Q \text{ satisfy }
q <_1 p .
\label{pf.lem.tertispecial.subset.2}
\end{equation}
From $c \in E$ and $c \notin P$, we obtain
$c \in E\setminus P \subseteq Q$ (since $P \cup Q = E$).
Applying \eqref{pf.lem.tertispecial.subset.2} to $p = b$ and
$q = c$, we thus conclude that we cannot have $c <_1 b$.
This contradicts $c <_1 b$. This contradiction shows that our
assumption was false, qed.}, and therefore satisfy
$a <_{1,P} c <_{1,P} b$\ \ \ \ \footnote{\textit{Proof.}
Let $c \in E$ be such that $a <_1 c <_1 b$. Then, $c$ must
belong to $P$ (as we have just proven). Now, $a <_1 c$. In
light of $a \in P$ and $c \in P$, this rewrites as
$a <_{1,P} c$ (since $<_{1,P}$ is the restriction of the
relation $<_1$ to $P$). Similarly, $c <_1 b$ rewrites as
$c <_{1,P} b$. Thus, $a <_{1,P} c <_{1,P} b$, qed.},
which entails a
contradiction to \eqref{pf.lem.tertispecial.subset.1}. Thus, there
is no $c \in E$ satisfying $a <_1 c <_1 b$. Therefore (and
because we have $a <_1 b$), we see that $a$ is $<_1$-covered
by $b$. Since $\EE$ is tertispecial,
this yields that $a$ and $b$ are $<_2$-comparable. In other
words, either $a <_2 b$ or $a = b$ or $b <_2 a$. Since $a$ and
$b$ both belong to $P$, we can rewrite this by replacing the
relation $<_2$ by its restriction $<_{2,P}$. We thus conclude
that either $a <_{2,P} b$ or $a = b$ or $b <_{2,P} a$. In other
words, $a$ and $b$ are $<_{2,P}$-comparable.

Now, forget that we fixed $a$ and $b$.
Thus, we have shown that if $a$ and $b$ are two elements
of $P$ such that $a$ is $<_{1,P}$-covered by $b$, then $a$ and
$b$ are $<_{2,P}$-comparable. This completes the proof of
Lemma~\ref{lem.tertispecial.subset}.

(We could similarly show that $\EE\mid_Q$ is a tertispecial
double poset; but we will not use this.)
\end{proof}

\begin{lemma}
\label{lem.Gammaw.empty}Let $\EE = \left( E, <_1, <_2 \right)$ be a
double poset.
Let $w : E \rightarrow \left\{ 1, 2, 3, \ldots \right\}$ be a map.

\begin{enumerate}
\item[(a)] If $E = \varnothing$, then $\Gamma \left( \EE , w \right) = 1$.

\item[(b)] If $E \neq \varnothing$, then
$\varepsilon \left( \Gamma \left( \EE, w \right) \right) = 0$.
\end{enumerate}
\end{lemma}

\begin{proof}
[Proof of Lemma \ref{lem.Gammaw.empty}.] (a) Part (a) is obvious (since there is
only one $\EE$-partition $\pi$ when $E=\varnothing$, and since
this $\EE$-partition $\pi$ satisfies $\xx_{\pi, w} = 1$).

(b) Observe that $\Gamma \left( \EE, w \right)$ is a homogeneous power
series of degree $\sum_{e\in E} w\left( e \right)$. When $E \neq \varnothing$,
this degree is $>0$ (since it is then a nonempty sum of positive integers),
and thus the power series $\Gamma \left( \EE, w \right)$
is annihilated by $\varepsilon$ (since $\varepsilon$ annihilates any
homogeneous power series in $\QSym$ whose degree is $> 0$).
\end{proof}

\begin{lemma}
\label{lem.Gammaw.toggle}
Let $\left(E, <_1, <_2\right)$ be a double poset. Let $>_1$ be the opposite
relation of $<_1$. Let $P$ and $Q$ be two
subsets of $E$ satisfying $P \cup Q = E$.
Let $\pi : E \to \left\{1, 2, 3, \ldots\right\}$ be a map such
that $\pi \mid_P$ is a $\left(P, >_1, <_2\right)$-partition.
Let $f \in P$. Assume that
\begin{equation}
\text{no } p \in P \text{ and } q \in Q \text{ satisfy }
q <_1 p.
\label{eq.lem.Gammaw.toggle.eq0}
\end{equation}
Also, assume that
\begin{equation}
\pi\left( f \right) \leq \pi\left( h \right)
\qquad \text{for every } h \in E .
\label{eq.lem.Gammaw.toggle.eq1}
\end{equation}
Furthermore, assume that
\begin{equation}
\pi \left( f \right) < \pi \left( h \right)
\qquad \text{for every } h \in E \text{ satisfying } h <_2 f .
\label{eq.lem.Gammaw.toggle.eq2}
\end{equation}

\begin{enumerate}
\item[(a)] If $p \in P \setminus \left\{f\right\}$
and $q \in Q \cup \left\{f\right\}$ are such that
$q <_1 p$, then we have neither $q <_2 p$ nor
$p <_2 q$.

\item[(b)] If $\pi \mid_Q$ is a $\left(Q, <_1, <_2\right)$-partition,
then
$\pi \mid_{Q \cup \left\{f\right\}}$ is a
$\left(Q \cup \left\{f\right\}, <_1, <_2\right)$-partition.
\end{enumerate}
\end{lemma}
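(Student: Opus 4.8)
The plan is to unwind both parts directly from the definition of an $\EE$-partition (Definition~\ref{def.E-partition}), using the three hypotheses as case-killers. Throughout I would write $\phi = \pi\mid_P$ for the given $\left(P,>_1,<_2\right)$-partition, and recall that $>_1$ being the opposite of $<_1$ means that, for $a,b \in P$, the statement $a >_1 b$ is the same as $b <_1 a$. Thus the two defining conditions of $\phi$ read: condition (1), that $a >_1 b$ forces $\pi\left(a\right) \leq \pi\left(b\right)$; and condition (2), that $a >_1 b$ together with $b <_2 a$ forces $\pi\left(a\right) < \pi\left(b\right)$.

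For part (a) I would split on whether $q \in Q$ or $q = f$ (these exhaust $Q \cup \left\{f\right\}$). If $q \in Q$, then $p \in P$ and $q \in Q$ with $q <_1 p$ directly contradicts \eqref{eq.lem.Gammaw.toggle.eq0}, so this case cannot occur and nothing needs proving. The substantive case is $q = f$. Here $p, f \in P$ and $f <_1 p$, i.e.\ $p >_1 f$; applying condition (1) of $\phi$ gives $\pi\left(p\right) \leq \pi\left(f\right)$, while \eqref{eq.lem.Gammaw.toggle.eq1} (with $h = p$) gives $\pi\left(f\right) \leq \pi\left(p\right)$, so the two values are sandwiched into the equality $\pi\left(p\right) = \pi\left(f\right)$. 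Now I rule out the two forbidden comparabilities. If $q <_2 p$, i.e.\ $f <_2 p$, then condition (2) of $\phi$ (with $a = p$, $b = f$) yields $\pi\left(p\right) < \pi\left(f\right)$, contradicting the equality. If instead $p <_2 q$, i.e.\ $p <_2 f$, then \eqref{eq.lem.Gammaw.toggle.eq2} (with $h = p$, since $p <_2 f$) yields $\pi\left(f\right) < \pi\left(p\right)$, again contradicting the equality. Hence neither relation holds.

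For part (b) I would verify the two defining conditions of a $\left(Q \cup \left\{f\right\}, <_1, <_2\right)$-partition for $\pi\mid_{Q \cup \left\{f\right\}}$, taking $e, g \in Q \cup \left\{f\right\}$ with $e <_1 g$ (and, for the strict condition, additionally $g <_2 e$). Since $<_1$ is irreflexive, $e \neq g$. If $g = f$, then $e \neq f$ forces $e \in Q$, and $e <_1 f$ with $e \in Q$, $f \in P$ contradicts \eqref{eq.lem.Gammaw.toggle.eq0}; so this subcase never arises. Otherwise $g \in Q$, and I split on $e$: if $e \in Q$, both conditions follow from the hypothesis that $\pi\mid_Q$ is a $\left(Q,<_1,<_2\right)$-partition; if $e = f$, the weak inequality $\pi\left(f\right) \leq \pi\left(g\right)$ is \eqref{eq.lem.Gammaw.toggle.eq1} (with $h = g$), and when additionally $g <_2 e = f$ the strict inequality $\pi\left(f\right) < \pi\left(g\right)$ is exactly \eqref{eq.lem.Gammaw.toggle.eq2} (with $h = g$). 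This exhausts all cases.

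The computations are routine; the only points to watch are that $P$ and $Q$ are not assumed disjoint (so $f$ may already lie in $Q$, in which case part (b) is trivial) and the bookkeeping of which hypothesis eliminates which case. The one genuinely load-bearing step is the sandwich $\pi\left(p\right) = \pi\left(f\right)$ in part (a): it is what lets both condition (2) of $\phi$ and \eqref{eq.lem.Gammaw.toggle.eq2} produce contradictions, and it is precisely the reason the \emph{global} minimality of $\pi$ at $f$ expressed by \eqref{eq.lem.Gammaw.toggle.eq1} is needed rather than a merely local version.
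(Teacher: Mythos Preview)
Your proof is correct and follows essentially the same approach as the paper's. The only cosmetic difference is in part~(a): you establish the equality $\pi(p)=\pi(f)$ upfront via the sandwich and then use it to kill both $<_2$-comparabilities, whereas the paper first derives only $\pi(p)\le\pi(f)$, rules out $p<_2 f$ with that, and brings in \eqref{eq.lem.Gammaw.toggle.eq1} only at the end to contradict $f<_2 p$; in part~(b) you split on $g$ first while the paper splits on $a$ first, but the case structure is identical.
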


\begin{proof}[Proof of Lemma~\ref{lem.Gammaw.toggle}.]From
$P \cup Q = E$, we obtain $\underbrace{E}_{= P \cup Q}
\setminus P = \left(P \cup Q\right) \setminus P \subseteq Q$.
% and $\underbrace{E}_{= P \cup Q}
% \setminus Q = \left(P \cup Q\right) \setminus Q \subseteq P$.

(a) Let $p \in P \setminus \left\{f\right\}$
and $q \in Q \cup \left\{f\right\}$ be such that
$q <_1 p$. We must show that we have neither $q <_2 p$ nor $p <_2 q$.

Indeed, assume the contrary. Thus, we have
either $q <_2 p$ or $p <_2 q$.

We have $q <_{1} p$ and
$p\in P\setminus\left\{  f\right\}  \subseteq P$. Hence, if we had $q\in Q$,
then we would obtain a contradiction to
\eqref{eq.lem.Gammaw.toggle.eq0}. Hence, we cannot have $q\in Q$.
Therefore, $q=f$ (since $q\in Q\cup\left\{  f\right\}  $ but not $q\in Q$).
Hence, $f=q<_{1}p$, so that $p>_{1}f$. Therefore, $\pi\left(  p\right)
\leq\pi\left(  f\right)  $ (since $\pi\mid_{P}$ is a
$\left(  P,>_{1},<_{2}\right)  $-partition, and since both $f$
and $p$ belong to $P$).

Now, recall that we have either $q <_2 p$ or $p <_2 q$.
Since $q = f$, we can rewrite this as follows:
We have either $f <_2 p$ or $p <_2 f$.
But $p<_{2}f$ cannot hold (because if we had $p<_{2}f$, then
\eqref{eq.lem.Gammaw.toggle.eq2} (applied to $h=p$) would lead to
$\pi\left(  f\right)  <\pi\left(  p\right)  $, which would contradict
$\pi\left(  p\right)  \leq\pi\left(  f\right)  $).
Thus, we must have $f<_{2}p$.

But $\pi\mid_{P}$ is a $\left(  P,>_{1},<_{2}\right)  $-partition. Hence,
$\pi\left(  p\right)  <\pi\left(  f\right)  $ (since $p>_{1}f$ and $f<_{2}p$,
and since $p$ and $f$ both lie in $P$).
But \eqref{eq.lem.Gammaw.toggle.eq1} (applied to $h=p$) shows that
$\pi\left(  f\right)  \leq\pi\left(  p\right)  $. Hence, $\pi\left(  p\right)
<\pi\left(  f\right)  \leq\pi\left(  p\right)  $, a contradiction. Thus, our
assumption was wrong. This completes the proof of
Lemma~\ref{lem.Gammaw.toggle} (a).

(b) Assume that $\pi \mid_Q$ is a $\left(Q, <_1, <_2\right)$-partition. We need
to show that
$\pi \mid_{Q \cup \left\{f\right\}}$ is a
$\left(Q \cup \left\{f\right\}, <_1, <_2\right)$-partition.
In order
to prove this, we need to verify the following two claims:

\begin{statement}
\textit{Claim 1:} Every $a\in Q\cup\left\{  f\right\}  $ and $b\in
Q\cup\left\{  f\right\}  $ satisfying $a<_{1}b$ satisfy $\pi\left(  a\right)
\leq\pi\left(  b\right)  $.
\end{statement}

\begin{statement}
\textit{Claim 2:} Every $a\in Q\cup\left\{  f\right\}  $ and $b\in
Q\cup\left\{  f\right\}  $ satisfying $a<_{1}b$ and $b<_{2}a$ satisfy
$\pi\left(  a\right)  <\pi\left(  b\right)  $.
\end{statement}

\textit{Proof of Claim 1:} Let $a\in Q\cup\left\{  f\right\}  $ and $b\in
Q\cup\left\{  f\right\}  $ be such that $a<_{1}b$. We need to prove that
$\pi\left(  a\right)  \leq\pi\left(  b\right)  $. If $a=f$, then this follows
immediately from \eqref{eq.lem.Gammaw.toggle.eq1} (applied to $h=b$).
Hence, we WLOG assume that $a\neq f$. Thus, $a\in Q$ (since $a\in
Q\cup\left\{  f\right\}  $). Now, if $b\in P$, then $a<_{1}b$ contradicts
\eqref{eq.lem.Gammaw.toggle.eq0} (applied to $p=b$ and $q=a$). Hence,
we cannot have $b\in P$. Therefore, $b\in E\setminus P
\subseteq Q$. Thus, $\pi\left(  a\right)  \leq\pi\left(
b\right)  $ follows immediately from the fact that $\pi\mid_{Q}$ is a $\left(
Q,<_{1},<_{2}\right)  $-partition (since $a \in Q$ and $b \in Q$ and
$a <_1 b$).
This proves Claim 1.

\textit{Proof of Claim 2:} Let $a\in Q\cup\left\{  f\right\}  $ and $b\in
Q\cup\left\{  f\right\}  $ be such that $a<_{1}b$ and $b<_{2}a$. We need to
prove that $\pi\left(  a\right)  <\pi\left(  b\right)  $. If $a=f$, then this
follows immediately from \eqref{eq.lem.Gammaw.toggle.eq2} (applied
to $h=b$) (because if $a = f$, then $b <_2 a = f$).
Hence, we WLOG assume that $a\neq f$. Thus, $a\in Q$ (since $a\in
Q\cup\left\{  f\right\}  $). Now, if $b\in P$, then $a<_{1}b$ contradicts
\eqref{eq.lem.Gammaw.toggle.eq0} (applied to $p=b$ and $q=a$). Hence,
we cannot have $b\in P$. Therefore, $b\in E\setminus P \subseteq Q$.
Thus, $\pi\left(  a\right)  <\pi\left(
b\right)  $ follows immediately from the fact that $\pi\mid_{Q}$ is a $\left(
Q,<_{1},<_{2}\right)  $-partition (since $a \in Q$ and $b \in Q$ and
$a <_1 b$ and $b <_2 a$).
This proves Claim 2.

Now, both Claim 1 and Claim 2 are proven. As already said, this
completes the proof of Lemma~\ref{lem.Gammaw.toggle} (b).
\end{proof}

\begin{lemma}
\label{lem.Gammaw.altsum}
Let $\EE = \left(E, <_1, <_2\right)$ be a tertispecial double
poset satisfying $\left|E\right| > 0$.
Let $\pi : E \to \left\{ 1, 2, 3, \ldots \right\}$ be a map.
Let $>_1$ denote the opposite relation of $<_1$. Then,
\begin{equation}
\sum_{\substack{\left(P, Q\right) \in \Adm \EE ; \\
                \pi\mid_P \text{ is a }\left(P, >_1, <_2\right)\text{-partition;} \\
                \pi\mid_Q \text{ is a }\left(Q, <_1, <_2\right)\text{-partition}}}
\left(-1\right)^{\left|P\right|}
= 0 .
\label{pf.thm.antipode.Gammaw.signrev}
\end{equation}
\end{lemma}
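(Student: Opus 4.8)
The plan is to prove \eqref{pf.thm.antipode.Gammaw.signrev} by exhibiting a sign-reversing involution on the index set of the sum. Write $\mathcal{A}$ for the set of all pairs $\left(P, Q\right) \in \Adm\EE$ such that $\pi\mid_P$ is a $\left(P, >_1, <_2\right)$-partition and $\pi\mid_Q$ is a $\left(Q, <_1, <_2\right)$-partition; these are exactly the pairs indexing the sum. I will construct a fixed-point-free map $\iota : \mathcal{A} \to \mathcal{A}$ with $\iota \circ \iota = \id$ and $\left|P'\right| = \left|P\right| \pm 1$ whenever $\iota\left(P, Q\right) = \left(P', Q'\right)$. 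Such a map partitions $\mathcal{A}$ into two-element orbits on which the summand $\left(-1\right)^{\left|P\right|}$ takes opposite values, so the whole sum vanishes.

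First I fix, once and for all, an auxiliary total order $\prec$ on the finite set $E$, together with a single distinguished element $f \in E$ that depends only on $\pi$ (and on $<_2$ and $\prec$), not on $\left(P,Q\right)$. Namely, let $m = \min_{e \in E}\pi\left(e\right)$ (which exists since $E \neq \varnothing$, by the hypothesis $\left|E\right| > 0$), let $M = \pi^{-1}\left(m\right)$, and let $f$ be the $\prec$-smallest among the $<_2$-minimal elements of $M$. Then $f$ satisfies the two minimality conditions \eqref{eq.lem.Gammaw.toggle.eq1} and \eqref{eq.lem.Gammaw.toggle.eq2}: the former because $\pi\left(f\right) = m$ is the global minimum of $\pi$; the latter because any $h <_2 f$ with $\pi\left(h\right) \leq \pi\left(f\right)$ would lie in $M$ and so would contradict the $<_2$-minimality of $f$ in $M$.

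The involution $\iota$ simply toggles the membership of $f$: it sends $\left(P, Q\right)$ to $\left(P \setminus \left\{f\right\}, Q \cup \left\{f\right\}\right)$ if $f \in P$, and to $\left(P \cup \left\{f\right\}, Q \setminus \left\{f\right\}\right)$ if $f \in Q$. Since $f$ is unaffected by toggling, $\iota$ is visibly an involution, it has no fixed point, and it changes $\left|P\right|$ by exactly $\pm 1$. It remains to check that $\iota$ maps $\mathcal{A}$ into $\mathcal{A}$, that is, that toggling $f$ preserves the three defining conditions. The two partition conditions that pass to a smaller set are automatic, since the restriction of a $\left(S, \lessdot_1, <_2\right)$-partition to a subset of $S$ is again a partition of the same kind. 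When $f$ enters $Q$ (the case $f \in P$), the required $\left(Q \cup \left\{f\right\}, <_1, <_2\right)$-partition property is precisely the conclusion of Lemma~\ref{lem.Gammaw.toggle}(b), whose hypotheses \eqref{eq.lem.Gammaw.toggle.eq0}, \eqref{eq.lem.Gammaw.toggle.eq1}, \eqref{eq.lem.Gammaw.toggle.eq2} are the admissibility of $\left(P,Q\right)$ and the two minimality properties of $f$. When $f$ enters $P$ (the case $f \in Q$), the new $\left(P \cup \left\{f\right\}, >_1, <_2\right)$-partition property reduces to the relations involving $f$: those with $f$ as the $>_1$-greater element are controlled by \eqref{eq.lem.Gammaw.toggle.eq1} and \eqref{eq.lem.Gammaw.toggle.eq2}, while a relation with $f$ as the $>_1$-lesser element would mean $f <_1 a$ for some $a \in P$, which is impossible because $f \in Q$ and $\left(P,Q\right)$ is admissible.

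The main obstacle is the preservation of admissibility under toggling, and this is exactly where tertispeciality is used. To prove $\iota\left(P,Q\right) \in \Adm\EE$ I invoke Lemma~\ref{lem.admissible.cover}, reducing the task to ruling out a pair $\left(p,q\right)$ with $p$ in the new $P$-part, $q$ in the new $Q$-part, and $q$ being $<_1$-covered by $p$. If both $p$ and $q$ lay in the old parts, such a pair would already contradict the admissibility of $\left(P,Q\right)$, so one of $p,q$ must be $f$. In the case $f \in P$, the pair has $p \in P \setminus \left\{f\right\}$ and $q \in Q \cup \left\{f\right\}$ with $q <_1 p$, so Lemma~\ref{lem.Gammaw.toggle}(a) forces $p$ and $q$ to be $<_2$-incomparable, contradicting tertispeciality applied to the $<_1$-cover. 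In the case $f \in Q$, the pair must have $p = f$ with $q \in Q \setminus \left\{f\right\}$ and $q <_1 f$; tertispeciality makes $q$ and $f$ $<_2$-comparable, and either alternative combines with the $\left(Q, <_1, <_2\right)$-partition property of $\pi\mid_Q$ and the minimality \eqref{eq.lem.Gammaw.toggle.eq1}--\eqref{eq.lem.Gammaw.toggle.eq2} of $f$ to produce a contradictory chain $\pi\left(f\right) < \pi\left(q\right) \leq \pi\left(f\right)$ (when $q <_2 f$) or $\pi\left(q\right) < \pi\left(f\right) \leq \pi\left(q\right)$ (when $f <_2 q$). Hence no such covering pair exists, $\iota\left(P,Q\right)$ is admissible, $\iota$ is a well-defined fixed-point-free sign-reversing involution on $\mathcal{A}$, and \eqref{pf.thm.antipode.Gammaw.signrev} follows.
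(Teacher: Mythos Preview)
Your proof is correct and follows essentially the same approach as the paper: a sign-reversing involution that toggles a fixed $<_2$-minimal element $f$ of the set where $\pi$ attains its minimum, with well-definedness verified via Lemma~\ref{lem.admissible.cover} and Lemma~\ref{lem.Gammaw.toggle}. The only cosmetic differences are that you introduce an auxiliary total order $\prec$ to make the choice of $f$ canonical (the paper simply fixes one such $f$), and that in the case $f\in Q$ you argue directly for admissibility and the $\left(P\cup\{f\},>_1,<_2\right)$-partition property, whereas the paper applies Lemma~\ref{lem.Gammaw.toggle} a second time with the roles of $<_1$, $>_1$ and of $P$, $Q$ swapped.
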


\begin{proof}[Proof of Lemma~\ref{lem.Gammaw.altsum}.]
\begin{vershort}
Our goal is to prove \eqref{pf.thm.antipode.Gammaw.signrev}.
To do so, we denote by $Z$ the set of all
$\left(P, Q\right) \in \Adm \EE$ such that
$\pi\mid_P$ is a $\left(P, >_1, <_2\right)$-partition and
$\pi\mid_Q$ is a $\left(Q, <_1, <_2\right)$-partition. We are going
to define an involution $T : Z \to Z$ of the set $Z$ having the
following property:
\begin{statement}
\textit{Property P:} Let $\left(P, Q\right) \in Z$. If we write
$T\left(\left(P, Q\right)\right)$ in the form
$\left(P', Q'\right)$, then
$\left(-1\right)^{\left|P'\right|}
= - \left(-1\right)^{\left|P\right|}$.
\end{statement}
Once such an involution $T$
is found, it will be clear that it matches the addends on the left
hand side of \eqref{pf.thm.antipode.Gammaw.signrev} into pairs of
mutually cancelling addends\footnote{In fact, Property P
entails that $T$ has no fixed points. Therefore, to each addend
on the left
hand side of \eqref{pf.thm.antipode.Gammaw.signrev} corresponds an
addend with opposite sign, which cancels it: Namely, for each
$\left(A, B\right) \in Z$, the addend for
$\left(P, Q\right) = \left(A, B\right)$ is cancelled by the addend
for $\left(P, Q\right) = T\left(\left(A, B\right)\right)$.}, and so
\eqref{pf.thm.antipode.Gammaw.signrev}
will follow and we will be done. It thus remains to find $T$.
\end{vershort}

\begin{verlong}
Our goal is to prove \eqref{pf.thm.antipode.Gammaw.signrev}.
To do so, we denote by $Z$ the set of all
$\left(P, Q\right) \in \Adm \EE$ such that
$\pi\mid_P$ is a $\left(P, >_1, <_2\right)$-partition and
$\pi\mid_Q$ is a $\left(Q, <_1, <_2\right)$-partition. We are going
to define an involution $T : Z \to Z$ of the set $Z$ having the
following property:
\begin{statement}
\textit{Property P:} Let $\left(P, Q\right) \in Z$. If we write
$T\left(\left(P, Q\right)\right)$ in the form
$\left(P', Q'\right)$, then
$\left(-1\right)^{\left|P'\right|}
= - \left(-1\right)^{\left|P\right|}$.
\end{statement}
Once such an involution $T$
is found, the equality \eqref{pf.thm.antipode.Gammaw.signrev} will
follow\footnote{Here is the argument in detail:

Assume that we have found an involution $T:Z\rightarrow Z$ of the set $Z$
satisfying Property P. Consider this $T$. Then,
for any $\left(  P,Q\right)  \in Z$, if we write
$T\left(  \left(  P,Q\right)  \right)  $ in the form $\left(  P^{\prime
},Q^{\prime}\right)  $, then
\begin{equation}
\left(  -1\right)  ^{\left\vert P^{\prime}\right\vert }=-\left(  -1\right)
^{\left\vert P\right\vert }
\label{pf.thm.antipode.Gammaw.signrev.final.1}
\end{equation}
(because Property P is satisfied).
We now need to prove the equality \eqref{pf.thm.antipode.Gammaw.signrev}.

The map $T$ is an involution, and thus a bijection.

Now, let $Z_{0}$ be the subset $\left\{  \left(  P,Q\right)  \in
Z\ \mid\ \left\vert P\right\vert \text{ is even}\right\}  $ of $Z$. Thus, for
every $\left(  P,Q\right)  \in Z$, we have the following logical equivalence:%
\begin{equation}
\left(  \left(  P,Q\right)  \in Z_{0}\right)  \ \Longleftrightarrow\ \left(
\left\vert P\right\vert \text{ is even}\right)
.\label{pf.thm.antipode.Gammaw.signrev.final.defZ0}%
\end{equation}
Hence, for every $\left(  P,Q\right)  \in Z$, we have the following logical
equivalence:%
\begin{align}
\left(  \left(  P,Q\right)  \notin Z_{0}\right)  \   & \Longleftrightarrow
\ \left(  \left\vert P\right\vert \text{ is not even}\right)  \nonumber\\
& \ \ \ \ \ \ \ \ \ \ \left(
\begin{array}
[c]{c}%
\text{this equivalence is obtained from
(\ref{pf.thm.antipode.Gammaw.signrev.final.defZ0})}\\
\text{by replacing each part by its negation}%
\end{array}
\right)  \nonumber\\
& \Longleftrightarrow\ \left(  \left\vert P\right\vert \text{ is odd}\right)
.\label{pf.thm.antipode.Gammaw.signrev.final.defZ1}%
\end{align}

Now, for every $\left(  P,Q\right)  \in Z$, we have the following logical
equivalence:%
\begin{equation}
\left(  \left(  P,Q\right)  \in Z_{0}\right)  \ \Longleftrightarrow\ \left(
T\left(  \left(  P,Q\right)  \right)  \notin Z_{0}\right)
.\label{pf.thm.antipode.Gammaw.signrev.final.3}%
\end{equation}

\textit{Proof of (\ref{pf.thm.antipode.Gammaw.signrev.final.3}):} Let $\left(
P,Q\right)  \in Z$. Write $T\left(  \left(  P,Q\right)  \right)  \in Z$ in the
form $\left(  P^{\prime},Q^{\prime}\right)  $. Then,
(\ref{pf.thm.antipode.Gammaw.signrev.final.defZ1}) (applied to $\left(
P^{\prime},Q^{\prime}\right)  $ instead of $\left(  P,Q\right)  $) shows that
we have the following logical equivalence:%
\[
\left(  \left(  P^{\prime},Q^{\prime}\right)  \notin Z_{0}\right)
\ \Longleftrightarrow\ \left(  \left\vert P^{\prime}\right\vert \text{ is
odd}\right)  .
\]
Thus, we have the following logical equivalence:%
\begin{align*}
\left(  \left(  P^{\prime},Q^{\prime}\right)  \notin Z_{0}\right)  \   &
\Longleftrightarrow\ \left(  \left\vert P^{\prime}\right\vert \text{ is
odd}\right)  \ \Longleftrightarrow\ \left(  \underbrace{\left(  -1\right)
^{\left\vert P^{\prime}\right\vert }}_{\substack{=-\left(  -1\right)
^{\left\vert P\right\vert }\\\text{(by
(\ref{pf.thm.antipode.Gammaw.signrev.final.1}))}}}=-1\right)
\ \Longleftrightarrow\ \left(  -\left(  -1\right)  ^{\left\vert P\right\vert
}=-1\right)  \\
& \Longleftrightarrow\ \left(  \left(  -1\right)  ^{\left\vert P\right\vert
}=1\right)  \ \Longleftrightarrow\ \left(  \left\vert P\right\vert \text{ is
even}\right)  \ \Longleftrightarrow\ \left(  \left(  P,Q\right)  \in
Z_{0}\right)  \ \ \ \ \ \ \ \ \ \ \left(  \text{by
(\ref{pf.thm.antipode.Gammaw.signrev.final.defZ0})}\right)  .
\end{align*}
Hence, we have the following logical equivalence:%
\[
\left(  \left(  P,Q\right)  \in Z_{0}\right)  \ \Longleftrightarrow\ \left(
\underbrace{\left(  P^{\prime},Q^{\prime}\right)  }_{=T\left(  \left(
P,Q\right)  \right)  }\notin Z_{0}\right)  \ \Longleftrightarrow\ \left(
T\left(  \left(  P,Q\right)  \right)  \notin Z_{0}\right)  .
\]
This proves (\ref{pf.thm.antipode.Gammaw.signrev.final.3}).

Now,
\[
\sum_{\substack{\left(  P,Q\right)  \in Z;\\\left\vert P\right\vert \text{ is
even}}}\underbrace{\left(  -1\right)  ^{\left\vert P\right\vert }%
}_{\substack{=1\\\text{(since }\left\vert P\right\vert \text{ is even)}%
}}=\underbrace{\sum_{\substack{\left(  P,Q\right)  \in Z;\\\left\vert
P\right\vert \text{ is even}}}}_{\substack{=\sum_{\substack{\left(
P,Q\right)  \in Z;\\\left(  P,Q\right)  \in Z_{0}}}\\\text{(because for every
}\left(  P,Q\right)  \in Z\text{, the condition}\\\left(  \left\vert
P\right\vert \text{ is even}\right)  \text{ is equivalent to }\left(  \left(
P,Q\right)  \in Z_{0}\right)  \\\text{(by
(\ref{pf.thm.antipode.Gammaw.signrev.final.defZ0})))}}}1=\sum
_{\substack{\left(  P,Q\right)  \in Z;\\\left(  P,Q\right)  \in Z_{0}}}1
\]
and%
\begin{align*}
\sum_{\substack{\left(  P,Q\right)  \in Z;\\\left\vert P\right\vert \text{ is
odd}}}\underbrace{\left(  -1\right)  ^{\left\vert P\right\vert }%
}_{\substack{=-1\\\text{(since }\left\vert P\right\vert \text{ is odd)}}}  &
=\underbrace{\sum_{\substack{\left(  P,Q\right)  \in Z;\\\left\vert
P\right\vert \text{ is odd}}}}_{\substack{=\sum_{\substack{\left(  P,Q\right)
\in Z;\\\left(  P,Q\right)  \notin Z_{0}}}\\\text{(because for every }\left(
P,Q\right)  \in Z\text{, the condition}\\\left(  \left\vert P\right\vert
\text{ is odd}\right)  \text{ is equivalent to }\left(  \left(  P,Q\right)
\notin Z_{0}\right)  \\\text{(by
(\ref{pf.thm.antipode.Gammaw.signrev.final.defZ1})))}}}\left(  -1\right)
=\sum_{\substack{\left(  P,Q\right)  \in Z;\\\left(  P,Q\right)  \notin Z_{0}%
}}\left(  -1\right)  \\
& =\underbrace{\sum_{\substack{\left(  P,Q\right)  \in Z;\\T\left(  \left(
P,Q\right)  \right)  \notin Z_{0}}}}_{\substack{=\sum_{\substack{\left(
P,Q\right)  \in Z;\\\left(  P,Q\right)  \in Z_{0}}}\\\text{(because for every
}\left(  P,Q\right)  \in Z\text{, the condition}\\\left(  T\left(  \left(
P,Q\right)  \right)  \notin Z_{0}\right)  \text{ is equivalent to }\left(
\left(  P,Q\right)  \in Z_{0}\right)  \\\text{(by
(\ref{pf.thm.antipode.Gammaw.signrev.final.3})))}}}\left(  -1\right) \\
&\ \ \ \ \ \ \ \ \ \ \left(
\begin{array}
[c]{c}%
\text{here, we have substituted }T\left(  \left(  P,Q\right)  \right)  \text{
for }\left(  P,Q\right)  \\
\text{in the sum, since the map }T:Z\rightarrow Z\text{ is a bijection}%
\end{array}
\right)  \\
& =\sum_{\substack{\left(  P,Q\right)  \in Z;\\\left(  P,Q\right)  \in Z_{0}%
}}\left(  -1\right)  =-\sum_{\substack{\left(  P,Q\right)  \in Z;\\\left(
P,Q\right)  \in Z_{0}}}1.
\end{align*}
Finally,
\begin{align*}
\underbrace{\sum_{\substack{\left(  P,Q\right)  \in \Adm%
 \EE ;\\\pi\mid_{P}\text{ is a }\left(  P,>_{1},<_{2}\right)
\text{-partition;}\\\pi\mid_{Q}\text{ is a }\left(  Q,<_{1},<_{2}\right)
\text{-partition}}}}_{\substack{=\sum_{\left(  P,Q\right)  \in Z}\\\text{(by
the definition of }Z\text{)}}}\left(  -1\right)  ^{\left\vert P\right\vert
}
& =\sum_{\left(  P,Q\right)  \in Z}\left(  -1\right)  ^{\left\vert
P\right\vert }=\underbrace{\sum_{\substack{\left(  P,Q\right)  \in
Z;\\\left\vert P\right\vert \text{ is even}}}\left(  -1\right)  ^{\left\vert
P\right\vert }}_{=\sum_{\substack{\left(  P,Q\right)  \in Z;\\\left(
P,Q\right)  \in Z_{0}}}1}+\underbrace{\sum_{\substack{\left(  P,Q\right)  \in
Z;\\\left\vert P\right\vert \text{ is odd}}}\left(  -1\right)  ^{\left\vert
P\right\vert }}_{=-\sum_{\substack{\left(  P,Q\right)  \in Z;\\\left(
P,Q\right)  \in Z_{0}}}1}\\
& =\sum_{\substack{\left(  P,Q\right)  \in Z;\\\left(  P,Q\right)  \in Z_{0}%
}}1+\left(  -\sum_{\substack{\left(  P,Q\right)  \in Z;\\\left(  P,Q\right)
\in Z_{0}}}1\right)  =0.
\end{align*}
Thus, \eqref{pf.thm.antipode.Gammaw.signrev} is proven.
} and we will be done. It thus remains to find $T$.
\end{verlong}

The definition of the map $T : Z \to Z$
is simple (although it will take us a while to prove
that it is well-defined): Let $F$ be the subset of $E$ consisting of those
$e\in E$ for which the value $\pi \left( e \right)$ is minimum.
Then, $F$ is a nonempty
subposet\footnote{The nonemptiness of $F$ follows from the nonemptiness
of $E$ (which, in turn, follows from $\left|E\right| > 0$).}
of the poset $\left(  E,<_{2}\right)  $, and hence has a minimal
element\footnote{A \textit{minimal element} of a poset
$\left(P, \prec\right)$ is an element $p \in P$ such that no
$g \in P$ satisfies $g \prec p$. It is well-known that every nonempty
finite poset has at least one minimal element. We are using this fact
here.}
$f$ (that is, an element $f$ such that no $g\in F$ satisfies $g<_{2}
f$). Fix such an $f$. Now, the map $T$ sends a $\left(  P,Q\right)  \in Z$ to
$
\begin{cases}
\left(  P\cup\left\{  f\right\}  ,Q\setminus\left\{  f\right\}  \right)  , &
\text{if }f\notin P;\\
\left(  P\setminus\left\{  f\right\}  ,Q\cup\left\{  f\right\}  \right)  , &
\text{if }f\in P
\end{cases}
$.

In order to prove that the map $T$ is well-defined, we need to prove that its
output values all belong to $Z$. In other words, we need to prove that
\begin{equation}
\begin{cases}
\left(  P\cup\left\{  f\right\}  ,Q\setminus\left\{  f\right\}  \right)  , &
\text{if }f\notin P;\\
\left(  P\setminus\left\{  f\right\}  ,Q\cup\left\{  f\right\}  \right)  , &
\text{if }f\in P
\end{cases}
\in Z
\label{pf.thm.antipode.Gammaw.Zwd}
\end{equation}
for every $\left(  P,Q\right)  \in Z$.

\textit{Proof of \eqref{pf.thm.antipode.Gammaw.Zwd}:} Fix $\left(  P,Q\right)
\in Z$. Thus, $\left(  P,Q\right)  $ is an element of
$ \Adm \EE$ with the property that
$\pi\mid_{P}$ is a $\left(  P,>_{1},<_{2}\right)  $-partition
and $\pi\mid_{Q}$ is a $\left(  Q,<_{1},<_{2}\right)  $-partition
(by the definition of $Z$).

From $\left(  P,Q\right)  \in \Adm \EE$, we see that
$P\cap Q=\varnothing$ and $P\cup Q=E$, and furthermore that
\begin{equation}
\text{no }p\in P\text{ and }q\in Q\text{ satisfy }q<_{1}
p.
\label{pf.thm.antipode.Gammaw.Zwd.pf.Adm}
\end{equation}

We know that $f$ belongs to the set $F$, which is the subset of $E$ consisting
of those $e\in E$ for which the value $\pi \left( e \right)$
is minimum. Thus,
\begin{equation}
\pi\left(  f\right)  \leq\pi\left(  h\right)  \qquad\text{for every }h\in
E.
\label{pf.thm.antipode.Gammaw.Zwd.pf.min}
\end{equation}

Moreover,
\begin{equation}
\pi\left(  f\right)  <\pi\left(  h\right)
\qquad\text{for every } h\in E \text{ satisfying } h<_{2}f
\label{pf.thm.antipode.Gammaw.Zwd.pf.min2}
\end{equation}
\begin{vershort}
\footnote{\textit{Proof of \eqref{pf.thm.antipode.Gammaw.Zwd.pf.min2}:}
Let $h \in E$ be such that $h <_2 f$. We must prove
\eqref{pf.thm.antipode.Gammaw.Zwd.pf.min2}.
Indeed, assume the contrary. Thus,
$\pi \left( f \right) \geq \pi \left( h \right)$.
But every $g \in E$ satisfies
$\pi \left( f \right) \leq \pi \left( g \right)$
(by \eqref{pf.thm.antipode.Gammaw.Zwd.pf.min}, applied to
$g$ instead of $h$). Hence, every $g \in E$ satisfies
$\pi \left( g \right) \geq \pi \left( f \right)
\geq \pi \left( h \right)$. In other words,
$h$ is one of those $e\in E$ for which the value
$\pi \left( e \right)$ is minimum.
\par
But recall that $F$ is the subset of $E$ consisting
of those $e \in E$ for which the value $\pi \left( e \right)$
is minimum.
Since $h$ is one of these $e \in E$, we thus conclude that $h \in F$.
But $f$ is a minimal element
of the subposet $F$ of $\left(E, <_2\right)$.
In other words, no $g\in F$ satisfies $g<_{2}f$.
This contradicts the fact that $h\in F$ satisfies $h <_2 f$.
This contradiction proves that our assumption was wrong, qed.}.
\end{vershort}
\begin{verlong}
\footnote{\textit{Proof of \eqref{pf.thm.antipode.Gammaw.Zwd.pf.min2}:}
Let $h \in E$ be such that $h <_2 f$. We must prove
\eqref{pf.thm.antipode.Gammaw.Zwd.pf.min2}.
Indeed, assume the contrary. Thus,
$\pi \left( f \right) \geq \pi \left( h \right)$.
But every $g \in E$ satisfies
$\pi \left( f \right) \leq \pi \left( g \right)$
(by \eqref{pf.thm.antipode.Gammaw.Zwd.pf.min}, applied to
$g$ instead of $h$). Hence, every $g \in E$ satisfies
$\pi \left( g \right) \geq \pi \left( f \right)
\geq \pi \left( h \right)$. In other words,
$h$ is one of those $e\in E$ for which the value
$\pi \left( e \right)$ is minimum.
\par
But recall that $F$ is the subset of $E$ consisting
of those $e \in E$ for which the value $\pi \left( e \right)$
is minimum.
Since $h$ is one of these $e \in E$, we thus conclude that $h \in F$.
Recall also that $h <_2 f$.
Hence, there exists some $g \in F$ satisfying $g <_2 f$
(namely, $g = h$).
But $f$ is a minimal element
of the subposet $F$ of $\left(E, <_2\right)$.
In other words, no $g\in F$ satisfies $g<_{2}f$.
This contradicts the fact that there exists some $g \in F$ satisfying
$g <_2 f$.
This contradiction proves that our assumption was wrong, qed.}.
\end{verlong}

We need to prove \eqref{pf.thm.antipode.Gammaw.Zwd}. We are in one of the
following two cases:

\textit{Case 1:} We have $f\in P$.

\textit{Case 2:} We have $f\notin P$.

Let us first consider Case 1. In this case, we have $f\in P$.

Recall that $P\cap Q=\varnothing$ and $P\cup Q=E$. From this, we easily obtain
$\left(  P\setminus\left\{  f\right\}  \right)  \cap\left(  Q\cup\left\{
f\right\}  \right)  =\varnothing$ and $\left(  P\setminus\left\{  f\right\}
\right)  \cup\left(  Q\cup\left\{  f\right\}  \right)  =E$.

Furthermore, there exist no $p\in P\setminus\left\{  f\right\}  $ and $q\in
Q\cup\left\{  f\right\}  $ such that $q$ is $<_{1}$-covered by
$p$\ \ \ \ \footnote{\textit{Proof.}
Assume the contrary. Thus, there exist $p\in
P\setminus\left\{  f\right\}  $ and $q\in Q\cup\left\{  f\right\}  $ such that
$q$ is $<_{1}$-covered by $p$. Consider such $p$ and $q$.
\par
We know that $q$ is $<_{1}$-covered by $p$, and thus we have $q<_{1}p$.
Hence, Lemma~\ref{lem.Gammaw.toggle} (a) shows that
we have neither $q <_2 p$ nor $p <_2 q$.
On the other hand,
$q$ is $<_{1}$-covered by $p$. Hence, $q$ and $p$ are
$<_{2}$-comparable (since $\EE$ is tertispecial).
In other words, we have either $q <_2 p$ or $q = p$ or $p <_2 q$.
Hence, we must have $q = p$ (since we have neither $q <_2 p$
nor $p <_2 q$). But this contradicts $q <_1 p$.
This contradiction shows that our
assumption was wrong, qed.}. Hence, Lemma~\ref{lem.admissible.cover} (applied
to $P\setminus\left\{  f\right\}  $ and $Q\cup\left\{  f\right\}  $ instead of
$P$ and $Q$) shows that $\left(  P\setminus\left\{  f\right\}  ,Q\cup\left\{
f\right\}  \right)  \in \Adm \EE$.

Furthermore, $\pi\mid_{P}$ is a $\left(  P,>_{1},<_{2}\right)  $-partition.
Hence, $\pi\mid_{P\setminus\left\{  f\right\}  }$ is a $\left(
P\setminus\left\{  f\right\}  ,>_{1},<_{2}\right)  $-partition (since
$P\setminus\left\{  f\right\}  \subseteq P$).

Furthermore, $\pi\mid_{Q\cup\left\{  f\right\}  }$ is a $\left(  Q\cup\left\{
f\right\}  ,<_{1},<_{2}\right)  $-partition\footnote{This follows from
Lemma~\ref{lem.Gammaw.toggle} (b) (since $\pi\mid_Q$ is a
$\left(Q, <_1, <_2\right)$-partition).}.

Altogether, we now know that $\left(  P\setminus\left\{  f\right\}
,Q\cup\left\{  f\right\}  \right)  \in \Adm \EE$, that
$\pi\mid_{P\setminus\left\{  f\right\}  }$ is a $\left(  P\setminus\left\{
f\right\}  ,>_{1},<_{2}\right)  $-partition, and that $\pi\mid_{Q\cup\left\{
f\right\}  }$ is a $\left(  Q\cup\left\{  f\right\}  ,<_{1},<_{2}\right)
$-partition. In other words, $\left(  P\setminus\left\{  f\right\}
,Q\cup\left\{  f\right\}  \right)  \in Z$ (by the definition of $Z$). Thus,
\begin{align*}
\begin{cases}
\left(  P\cup\left\{  f\right\}  ,Q\setminus\left\{  f\right\}  \right)  , &
\text{if }f\notin P;\\
\left(  P\setminus\left\{  f\right\}  ,Q\cup\left\{  f\right\}  \right)  , &
\text{if }f\in P
\end{cases}
& =\left(  P\setminus\left\{  f\right\}  ,Q\cup\left\{  f\right\}  \right)
\ \ \ \ \ \ \ \ \ \ \left(  \text{since }f\in P\right)  \\
& \in Z.
\end{align*}
Hence, \eqref{pf.thm.antipode.Gammaw.Zwd} is proven in Case 1.

Let us next consider Case 2. In this case, we have $f\notin P$. Hence,
$f \in E \setminus P = Q$ (since
$P \cap Q = \varnothing$ and $P \cup Q = E$).

Recall that $P\cap Q=\varnothing$ and $P\cup Q=E$. From this, we easily obtain
$\left(  P\cup\left\{  f\right\}  \right)  \cap\left(  Q\setminus\left\{
f\right\}  \right)  =\varnothing$ and $\left(  P\cup\left\{  f\right\}
\right)  \cup\left(  Q\setminus\left\{  f\right\}  \right)  =E$.

We have $f \in Q$ and
$Q \cup P = P \cup Q = E$. Furthermore, $>_1$ is the
opposite relation of $<_1$, and thus is a strict partial
order (since $<_1$ is a strict partial order). Hence,
$\left(E, >_1, <_2\right)$ is a double poset. Furthermore,
the relation $<_1$ is the opposite relation of $>_1$ (since $>_1$
is the opposite relation of $<_1$).
The map
$\pi\mid_Q$ is a $\left(Q, <_1, <_2\right)$-partition. Moreover,
\begin{equation}
\text{no } p \in Q \text{ and } q \in P \text{ satisfy }
q >_1 p
\label{pf.thm.antipode.Gammaw.Zwd.pf.AdmQP}
\end{equation}
\footnote{\textit{Proof.} Let $a \in Q$ and $b \in P$ be such that
$b >_1 a$. We shall derive a contradiction.

We have $b >_1 a$. In other words, $a <_1 b$. Thus, $b \in P$ and
$a \in Q$ satisfy $a <_1 b$. This contradicts
\eqref{pf.thm.antipode.Gammaw.Zwd.pf.Adm} (applied to $p = b$ and
$q = a$).

Now, forget that we fixed $a$ and $b$. We thus have found a
contradiction for every $a \in Q$ and $b \in P$ satisfying
$b >_1 a$. Hence, no $a \in Q$ and $b \in P$ satisfy $b >_1 a$.
Renaming $a$ and $b$ as $p$ and $q$ in this statement, we obtain
the following: No $p \in Q$ and $q \in P$ satisfy $q >_1 p$.
This proves \eqref{pf.thm.antipode.Gammaw.Zwd.pf.AdmQP}.}.
Hence, we can apply Lemma~\ref{lem.Gammaw.toggle} to
$\left(E, >_1, <_2\right)$, $<_1$, $Q$ and $P$ instead of
$\left(E, <_1, <_2\right)$, $>_1$, $P$ and $Q$.

There exist no $p\in P\cup\left\{  f\right\}  $ and $q\in
Q\setminus\left\{  f\right\}  $ such that $q$ is $<_{1}$-covered by
$p$\ \ \ \ \footnote{\textit{Proof.} Assume the contrary. Thus, there exist
$p\in P\cup\left\{  f\right\}  $ and $q\in Q\setminus\left\{  f\right\}  $
such that $q$ is $<_{1}$-covered by $p$. Consider such $p$ and $q$.
\par
We know that $q$ is $<_1$-covered by $p$, and thus we have
$q <_1 p$. In other words, $p >_1 q$. Thus,
Lemma~\ref{lem.Gammaw.toggle} (a) (applied to
$\left(E, >_1, <_2\right)$, $<_1$, $Q$, $P$, $q$ and $p$ instead of
$\left(E, <_1, <_2\right)$, $>_1$, $P$, $Q$, $p$ and $q$) yields
that we have neither $p <_2 q$ nor $q <_2 p$.
On the other hand,
$q$ is $<_{1}$-covered by $p$. Hence, $q$ and $p$ are
$<_{2}$-comparable (since $\EE$ is tertispecial).
In other words, we have either $q <_2 p$ or $q = p$ or $p <_2 q$.
Hence, we must have $q = p$ (since we have neither $p <_2 q$
nor $q <_2 p$). But this contradicts $q <_1 p$.
This contradiction shows that our
assumption was wrong, qed.}. Hence, Lemma \ref{lem.admissible.cover} (applied
to $P\cup\left\{  f\right\}  $ and $Q\setminus\left\{  f\right\}  $ instead of
$P$ and $Q$) shows that $\left(  P\cup\left\{  f\right\}  ,Q\setminus\left\{
f\right\}  \right)  \in \Adm \EE$.

Furthermore, $\pi\mid_{Q}$ is a $\left(  Q,<_{1},<_{2}\right)  $-partition.
Hence, $\pi\mid_{Q\setminus\left\{  f\right\}  }$ is a $\left(
Q\setminus\left\{  f\right\}  ,<_{1},<_{2}\right)  $-partition (since
$Q\setminus\left\{  f\right\}  \subseteq Q$).

Furthermore, $\pi\mid_{P\cup\left\{  f\right\}  }$ is a $\left(  P\cup\left\{
f\right\}  ,>_{1},<_{2}\right)  $-partition\footnote{This follows from
Lemma~\ref{lem.Gammaw.toggle} (b) (applied to
$\left(E, >_1, <_2\right)$, $<_1$, $Q$ and $P$ instead of
$\left(E, <_1, <_2\right)$, $>_1$, $P$ and $Q$), since $\pi\mid_P$ is a
$\left(P, >_1, <_2\right)$-partition.}.

Altogether, we now know that $\left(  P\cup\left\{  f\right\}  ,Q\setminus
\left\{  f\right\}  \right)  \in \Adm \EE$, that $\pi
\mid_{P\cup\left\{  f\right\}  }$ is a $\left(  P\cup\left\{  f\right\}
,>_{1},<_{2}\right)  $-partition, and that $\pi\mid_{Q\setminus\left\{
f\right\}  }$ is a $\left(  Q\setminus\left\{  f\right\}  ,<_{1},<_{2}\right)
$-partition. In other words, $\left(  P\cup\left\{  f\right\}  ,Q\setminus
\left\{  f\right\}  \right)  \in Z$ (by the definition of $Z$). Thus,
\begin{align*}
\begin{cases}
\left(  P\cup\left\{  f\right\}  ,Q\setminus\left\{  f\right\}  \right)  , &
\text{if }f\notin P;\\
\left(  P\setminus\left\{  f\right\}  ,Q\cup\left\{  f\right\}  \right)  , &
\text{if }f\in P
\end{cases}
& =\left(  P\cup\left\{  f\right\}  ,Q\setminus\left\{  f\right\}  \right)
\ \ \ \ \ \ \ \ \ \ \left(  \text{since }f\notin P\right)  \\
& \in Z.
\end{align*}
Hence, \eqref{pf.thm.antipode.Gammaw.Zwd} is proven in Case 2.

We have now proven \eqref{pf.thm.antipode.Gammaw.Zwd} in both Cases 1 and 2.
Thus, \eqref{pf.thm.antipode.Gammaw.Zwd} always holds. In other words, the map
$T$ is well-defined.

\begin{vershort}
What the map $T$ does to a pair $\left(  P,Q\right)  \in Z$ can be described
as moving the element $f$ from the set where it resides (either $P$ or $Q$) to
the other set. Clearly, doing this twice gives us the original pair back.
Hence, the map $T$ is an involution. Furthermore, for any $\left(  P,Q\right)
\in Z$, if we write $T\left(  \left(  P,Q\right)  \right)  $ in the form
$\left(  P^{\prime},Q^{\prime}\right)  $, then $\left(  -1\right)
^{\left\vert P^{\prime}\right\vert }=-\left(  -1\right)  ^{\left\vert
P\right\vert }$ (because $P^{\prime}=
\begin{cases}
P\cup\left\{  f\right\}  , & \text{if }f\notin P;\\
P\setminus\left\{  f\right\}  , & \text{if }f\in P
\end{cases}
$ and thus
$\left|P^\prime\right| = \left|P\right| \pm 1$).
In other words, the involution $T$ satisfies Property P.
\end{vershort}
\begin{verlong}
Every $\alpha\in Z$ satisfies $\left(  T\circ T\right)  \left(  \alpha\right)
=\id\left(  \alpha\right)  $\ \ \ \ \footnote{\textit{Proof.}
Let $\alpha\in Z$. We want to show that $\left(  T\circ T\right)  \left(
\alpha\right)  = \id \left(  \alpha\right)  $.
\par
We have $\alpha\in Z$. In other words, $\alpha$ can be written in the form
$\alpha=\left(  P,Q\right)  $ for some $\left(  P,Q\right)  \in
\Adm \EE $ having the property that $\pi\mid_{P}$ is a
$\left(  P,>_{1},<_{2}\right)  $-partition and $\pi\mid_{Q}$ is a $\left(
Q,<_{1},<_{2}\right)  $-partition (by the definition of $Z$). Write $\alpha$
in this form.
\par
From $\left(  P,Q\right)  \in \Adm \EE $, we see that
$P\cap Q=\varnothing$ and $P\cup Q=E$, and furthermore that no $p\in P$ and
$q\in Q$ satisfy $q<_{1}p$. From $P\cap Q=\varnothing$ and $P\cup Q=E$, we
conclude that $P=E\setminus Q$ and $Q=E\setminus P$.
\par
We are in one of the following two cases:
\par
\textit{Case 1:} We have $f\in P$.
\par
\textit{Case 2:} We have $f\notin P$.
\par
Let us first consider Case 1. In this case, we have $f\in P$. Hence, $f\notin
E\setminus P=Q$. Clearly, $f\notin P\setminus\left\{  f\right\}  $ (since
$f\in\left\{  f\right\}  $) and $\left\{  f\right\}  \subseteq P$ (since $f\in
P$). Furthermore, the sets $Q$ and $\left\{  f\right\}  $ are disjoint (since
$f\notin Q$). Now,%
\begin{align*}
T\left(  \underbrace{\alpha}_{=\left(  P,Q\right)  }\right)    & =T\left(
\left(  P,Q\right)  \right)  =%
\begin{cases}
\left(  P\cup\left\{  f\right\}  ,Q\setminus\left\{  f\right\}  \right)  , &
\text{if }f\notin P;\\
\left(  P\setminus\left\{  f\right\}  ,Q\cup\left\{  f\right\}  \right)  , &
\text{if }f\in P
\end{cases}
\ \ \ \ \ \ \ \ \ \ \left(  \text{by the definition of }T\right)  \\
& =\left(  P\setminus\left\{  f\right\}  ,Q\cup\left\{  f\right\}  \right)
\ \ \ \ \ \ \ \ \ \ \left(  \text{since }f\in P\right)  .
\end{align*}
Now,%
\begin{align*}
\left(  T\circ T\right)  \left(  \alpha\right)    & =T\left(
\underbrace{T\left(  \alpha\right)  }_{=\left(  P\setminus\left\{  f\right\}
,Q\cup\left\{  f\right\}  \right)  }\right)  =T\left(  \left(  P\setminus
\left\{  f\right\}  ,Q\cup\left\{  f\right\}  \right)  \right)  \\
& =%
\begin{cases}
\left(  \left(  P\setminus\left\{  f\right\}  \right)  \cup\left\{  f\right\}
,\left(  Q\cup\left\{  f\right\}  \right)  \setminus\left\{  f\right\}
\right)  , & \text{if }f\notin P\setminus\left\{  f\right\}  ;\\
\left(  \left(  P\setminus\left\{  f\right\}  \right)  \setminus\left\{
f\right\}  ,\left(  Q\cup\left\{  f\right\}  \right)  \cup\left\{  f\right\}
\right)  , & \text{if }f\in P\setminus\left\{  f\right\}
\end{cases}
\ \ \ \ \ \ \ \ \ \ \left(  \text{by the definition of }T\right)  \\
& =\left(  \underbrace{\left(  P\setminus\left\{  f\right\}  \right)
\cup\left\{  f\right\}  }_{\substack{=P\\\text{(since }\left\{  f\right\}
\subseteq P\text{)}}},\underbrace{\left(  Q\cup\left\{  f\right\}  \right)
\setminus\left\{  f\right\}  }_{\substack{=Q\\\text{(since the sets }Q\text{
and }\left\{  f\right\}  \text{ are disjoint)}}}\right)
\ \ \ \ \ \ \ \ \ \ \left(  \text{since }f\notin P\setminus\left\{  f\right\}
\right)  \\
& =\left(  P,Q\right)  =\alpha = \id \left(  \alpha\right)  .
\end{align*}
Hence, $\left(  T\circ T\right)  \left(  \alpha\right)
= \id \left(  \alpha\right)  $ is proven in Case 1.
\par
Let us now consider Case 2. In this case, we have $f\notin P$. Hence, $f\in
E\setminus P=Q$. Clearly, $f\in\left\{  f\right\}  \subseteq P\cup\left\{
f\right\}  $. Also, $\left\{  f\right\}  \subseteq Q$ (since $f\in Q$).
Furthermore, the sets $P$ and $\left\{  f\right\}  $ are disjoint (since
$f\notin P$). Now,%
\begin{align*}
T\left(  \underbrace{\alpha}_{=\left(  P,Q\right)  }\right)    & =T\left(
\left(  P,Q\right)  \right)  =%
\begin{cases}
\left(  P\cup\left\{  f\right\}  ,Q\setminus\left\{  f\right\}  \right)  , &
\text{if }f\notin P;\\
\left(  P\setminus\left\{  f\right\}  ,Q\cup\left\{  f\right\}  \right)  , &
\text{if }f\in P
\end{cases}
\ \ \ \ \ \ \ \ \ \ \left(  \text{by the definition of }T\right)  \\
& =\left(  P\cup\left\{  f\right\}  ,Q\setminus\left\{  f\right\}  \right)
\ \ \ \ \ \ \ \ \ \ \left(  \text{since }f\notin P\right)  .
\end{align*}
Now,%
\begin{align*}
\left(  T\circ T\right)  \left(  \alpha\right)    & =T\left(
\underbrace{T\left(  \alpha\right)  }_{=\left(  P\cup\left\{  f\right\}
,Q\setminus\left\{  f\right\}  \right)  }\right)  =T\left(  \left(
P\cup\left\{  f\right\}  ,Q\setminus\left\{  f\right\}  \right)  \right)  \\
& =%
\begin{cases}
\left(  \left(  P\cup\left\{  f\right\}  \right)  \cup\left\{  f\right\}
,\left(  Q\setminus\left\{  f\right\}  \right)  \setminus\left\{  f\right\}
\right)  , & \text{if }f\notin P\cup\left\{  f\right\}  ;\\
\left(  \left(  P\cup\left\{  f\right\}  \right)  \setminus\left\{  f\right\}
,\left(  Q\setminus\left\{  f\right\}  \right)  \cup\left\{  f\right\}
\right)  , & \text{if }f\in P\cup\left\{  f\right\}
\end{cases}
\ \ \ \ \ \ \ \ \ \ \left(  \text{by the definition of }T\right)  \\
& =\left(  \underbrace{\left(  P\cup\left\{  f\right\}  \right)
\setminus\left\{  f\right\}  }_{\substack{=P\\\text{(since the sets }P\text{
and }\left\{  f\right\}  \text{ are disjoint)}}},\underbrace{\left(
Q\setminus\left\{  f\right\}  \right)  \cup\left\{  f\right\}  }%
_{\substack{=Q\\\text{(since }\left\{  f\right\}  \subseteq Q\text{)}%
}}\right)  \ \ \ \ \ \ \ \ \ \ \left(  \text{since }f\in P\cup\left\{
f\right\}  \right)  \\
& =\left(  P,Q\right)  =\alpha = \id \left(  \alpha\right)  .
\end{align*}
Hence, $\left(  T\circ T\right)  \left(  \alpha\right)
= \id \left(  \alpha\right)  $ is proven in Case 2.
\par
We have now proven $\left(  T\circ T\right)  \left(  \alpha\right)
= \id \left(  \alpha\right)  $ in both Cases 1 and 2. Thus,
$\left(  T\circ T\right)  \left(  \alpha\right)  = \id \left(
\alpha\right)  $ always holds. Qed.}. In other words, $T\circ
T = \id$. In other words, the map $T$ is an involution.
Furthermore, this involution $T$ satisfies Property
P\ \ \ \ \footnote{\textit{Proof.} Let $\left(  P,Q\right)  \in Z$. Write
$T\left(  \left(  P,Q\right)  \right)  $ in the form $\left(  P^{\prime
},Q^{\prime}\right)  $. Then, we must prove that $\left(  -1\right)
^{\left\vert P^{\prime}\right\vert }=-\left(  -1\right)  ^{\left\vert
P\right\vert }$.
\par
We are in one of the following two cases:
\par
\textit{Case 1:} We have $f\in P$.
\par
\textit{Case 2:} We have $f\notin P$.
\par
Let us first consider Case 1. In this case, we have $f\in P$. Now,%
\begin{align*}
\left(  P^{\prime},Q^{\prime}\right)    & =T\left(  \left(  P,Q\right)
\right)  =%
\begin{cases}
\left(  P\cup\left\{  f\right\}  ,Q\setminus\left\{  f\right\}  \right)  , &
\text{if }f\notin P;\\
\left(  P\setminus\left\{  f\right\}  ,Q\cup\left\{  f\right\}  \right)  , &
\text{if }f\in P
\end{cases}
\ \ \ \ \ \ \ \ \ \ \left(  \text{by the definition of }T\right)  \\
& =\left(  P\setminus\left\{  f\right\}  ,Q\cup\left\{  f\right\}  \right)
\ \ \ \ \ \ \ \ \ \ \left(  \text{since }f\in P\right)  .
\end{align*}
In other words, $P^{\prime}=P\setminus\left\{  f\right\}  $ and $Q^{\prime
}=Q\cup\left\{  f\right\}  $. Now, $\left\vert \underbrace{P^{\prime}%
}_{=P\setminus\left\{  f\right\}  }\right\vert =\left\vert P\setminus\left\{
f\right\}  \right\vert =\left\vert P\right\vert -1$ (since $f\in P$), and thus
$\left(  -1\right)  ^{\left\vert P^{\prime}\right\vert }=\left(  -1\right)
^{\left\vert P\right\vert -1}=-\left(  -1\right)  ^{\left\vert P\right\vert }%
$. Hence, $\left(  -1\right)  ^{\left\vert P^{\prime}\right\vert }=-\left(
-1\right)  ^{\left\vert P\right\vert }$ is proven in Case 1.
\par
Let us now consider Case 2. In this case, we have $f\notin P$. Now,%
\begin{align*}
\left(  P^{\prime},Q^{\prime}\right)    & =T\left(  \left(  P,Q\right)
\right)  =%
\begin{cases}
\left(  P\cup\left\{  f\right\}  ,Q\setminus\left\{  f\right\}  \right)  , &
\text{if }f\notin P;\\
\left(  P\setminus\left\{  f\right\}  ,Q\cup\left\{  f\right\}  \right)  , &
\text{if }f\in P
\end{cases}
\ \ \ \ \ \ \ \ \ \ \left(  \text{by the definition of }T\right)  \\
& =\left(  P\cup\left\{  f\right\}  ,Q\setminus\left\{  f\right\}  \right)
\ \ \ \ \ \ \ \ \ \ \left(  \text{since }f\notin P\right)  .
\end{align*}
In other words, $P^{\prime}=P\cup\left\{  f\right\}  $ and $Q^{\prime
}=Q\setminus\left\{  f\right\}  $. Now, $\left\vert \underbrace{P^{\prime}%
}_{=P\cup\left\{  f\right\}  }\right\vert =\left\vert P\cup\left\{  f\right\}
\right\vert =\left\vert P\right\vert +1$ (since $f\notin P$), and thus
$\left(  -1\right)  ^{\left\vert P^{\prime}\right\vert }=\left(  -1\right)
^{\left\vert P\right\vert +1}=-\left(  -1\right)  ^{\left\vert P\right\vert }%
$. Hence, $\left(  -1\right)  ^{\left\vert P^{\prime}\right\vert }=-\left(
-1\right)  ^{\left\vert P\right\vert }$ is proven in Case 2.
\par
We have now proven $\left(  -1\right)  ^{\left\vert P^{\prime}\right\vert
}=-\left(  -1\right)  ^{\left\vert P\right\vert }$ in both Cases 1 and 2.
Thus, $\left(  -1\right)  ^{\left\vert P^{\prime}\right\vert }=-\left(
-1\right)  ^{\left\vert P\right\vert }$ always holds. This completes the proof
of Property P.}. We thus have defined an involution $T:Z\rightarrow Z$ of the
set $Z$ satisfying Property P. This was precisely our goal.
\end{verlong}
As we have already
explained, this proves \eqref{pf.thm.antipode.Gammaw.signrev}. Hence,
Lemma~\ref{lem.Gammaw.altsum} is proven.
\end{proof}

\begin{proof}[Proof of Theorem~\ref{thm.antipode.Gammaw}.]
We shall
prove Theorem~\ref{thm.antipode.Gammaw} by strong induction over
$\left|E\right|$. The induction step proceeds as follows: Consider a
tertispecial double poset $\EE = \left(E, <_1, <_2\right)$ and
a map $w : E \to \left\{1, 2, 3, \ldots\right\}$, and
assume (as the induction hypothesis)
that Theorem~\ref{thm.antipode.Gammaw} is proven for all
tertispecial double posets of smaller size\footnote{The
\textit{size} of a double poset $\left(P, <_1, <_2\right)$
means the nonnegative integer $\left|P\right|$.}.
\begin{verlong}
More precisely:
Assume (as the induction hypothesis) that every tertispecial
double poset $\left(P, \prec_1, \prec_2\right)$ satisfying
$\left|P\right| < \left|E\right|$ and every map
$x : P \to \left\{1, 2, 3, \ldots\right\}$ satisfy
\begin{equation}
S\left(\Gamma\left(\left(P, \prec_1, \prec_2\right), x\right)\right)
= \left(-1\right)^{\left|P\right|}
\Gamma\left(\left(P, \succ_1, \prec_2\right), x\right) ,
\label{pf.thm.antipode.Gammaw.indhyp}
\end{equation}
where $\succ_1$ denotes the opposite relation of $\prec_1$.
\end{verlong}
Our goal is to show that \newline
$S\left(\Gamma\left(\left(E, <_1, <_2\right), w\right) \right)
= \left(-1\right)^{\left|E\right|}
\Gamma\left(\left(E, >_1, <_2\right), w\right)$.
Here, as usual, $>_1$ denotes the opposite relation of $<_1$.

If $E = \varnothing$, then this is easy\footnote{Hint:
If $E = \varnothing$, then both
$\Gamma\left(\left(E, <_1, <_2\right), w\right)$ and
$\Gamma\left(\left(E, >_1, <_2\right), w\right)$ are equal
to $1$ (by Lemma~\ref{lem.Gammaw.empty} (a)),
but the antipode $S$ satisfies $S\left(1\right) = 1$
and $\left(-1\right)^{\left|\varnothing\right|} = 1$.}.
Thus, we WLOG assume that $E \neq \varnothing$. Hence,
$\left| E \right| > 0$. Moreover,
Lemma~\ref{lem.Gammaw.empty} (b) shows that
$\varepsilon \left( \Gamma\left(\EE, w\right) \right) = 0$. Thus,
$\left( u \circ \varepsilon \right) \left( \Gamma \left( \EE, w \right) \right)
= u \left( \underbrace{\varepsilon \left( \Gamma \left( \EE, w \right) \right)}_{= 0} \right)
= u \left( 0 \right) = 0$.

\begin{verlong}
Using the induction hypothesis, we can see the following:
If $\left(P, Q\right) \in \Adm \EE$ is such that
$\left(P, Q\right) \neq \left(E, \varnothing\right)$,
then
\begin{equation}
S\left(\Gamma\left(\EE\mid_P, w\mid_P\right)\right)
= \left(-1\right)^{\left|P\right|}
\Gamma\left(\left(P, >_1, <_2\right), w\mid_P\right)
\label{pf.thm.antipode.Gammaw.indhyp-used}
\end{equation}
\footnote{\textit{Proof of
\eqref{pf.thm.antipode.Gammaw.indhyp-used}:}
Let $\left(P, Q\right) \in \Adm \EE$ be such that
$\left(P, Q\right) \neq \left(E, \varnothing\right)$.
From $\left(P, Q\right) \in \Adm \EE$, we conclude
that $P$ and $Q$ are subsets of $E$ satisfying
$P \cap Q = \varnothing$ and $P \cup Q = E$.
Hence, $Q = E \setminus P$.

The double poset $\EE\mid_P = \left(P, <_1, <_2\right)$
is tertispecial (by Lemma~\ref{lem.tertispecial.subset}).

If we had $P = E$, then we would have
$\left(\underbrace{P}_{= E}, \underbrace{Q}_{= E \setminus P}\right)
= \left(E, E \setminus \underbrace{P}_{= E}\right)
= \left(E, \underbrace{E \setminus E}_{= \varnothing}\right)
= \left(E, \varnothing\right)$, which would contradict
$\left(P, Q\right) \neq \left(E, \varnothing\right)$.
Hence, we cannot have $P = E$. Thus, $P$ is a proper
subset of $E$ (since $P$ is a subset of $E$). Hence,
$\left|P\right| < \left|E\right|$. Therefore,
\eqref{pf.thm.antipode.Gammaw.indhyp} (applied to
$\left(\prec_1\right) = \left(<_1\right)$,
$\left(\prec_2\right) = \left(<_2\right)$,
$\left(\succ_1\right) = \left(>_1\right)$,
and $x = w\mid_P$) yields
$S\left(\Gamma\left(\left(P, <_1, <_2\right), w\mid_P\right)\right)
= \left(-1\right)^{\left|P\right|}
\Gamma\left(\left(P, >_1, <_2\right), w\mid_P\right)$.

Now,
\[
S\left(\Gamma\left(\underbrace{\EE\mid_P}_{= \left(P, <_1, <_2\right)}, w\mid_P\right)\right)
= S\left(\Gamma\left(\left(P, <_1, <_2\right), w\mid_P\right)\right)
= \left(-1\right)^{\left|P\right|}
\Gamma\left(\left(P, >_1, <_2\right), w\mid_P\right) .
\]
This proves \eqref{pf.thm.antipode.Gammaw.indhyp-used}.}.
Furthermore, it is straightforward to see that
$\left(E, \varnothing\right) \in \Adm \EE$.
Notice that
\[
\Gamma\left(\underbrace{\EE\mid_\varnothing}_{= \left(\varnothing, <_1, <_2\right)},
            w\mid_\varnothing\right)
=\Gamma\left(\left(\varnothing, <_1, <_2\right), w\mid_\varnothing\right)
=1
\]
(by Lemma~\ref{lem.Gammaw.empty} (a)).
\end{verlong}

\begin{vershort}
The upper commutative pentagon of \eqref{eq.antipode} shows that
$u \circ \varepsilon = m \circ \left(S \otimes \id\right) \circ
\Delta$. Applying both sides of this equality to
$\Gamma\left(\EE, w\right)$, we obtain
$\left(u \circ \varepsilon\right)
\left(\Gamma\left(\EE, w\right)\right)
= \left(m \circ \left(S \otimes \id\right) \circ
\Delta\right) \left(\Gamma\left(\EE, w\right)\right)$.
Since
$\left(u \circ \varepsilon\right)
\left(\Gamma\left(\EE, w\right)\right) = 0$, this
becomes
\begin{align}
0
&= \left(m \circ \left(S \otimes \id\right) \circ
\Delta\right) \left(\Gamma\left(\EE, w\right)\right)
= m \left(\left(S \otimes \id\right) \left(
\Delta \left(\Gamma\left(\EE, w\right)\right)\right)\right)
\nonumber\\
&= m \left(\left(S \otimes \id\right) \left(
\sum_{\left(P, Q\right) \in \Adm \EE}
\Gamma\left(\EE\mid_P, w\mid_P\right)
\otimes \Gamma\left(\EE\mid_Q, w\mid_Q\right) \right) \right)
\qquad \left(\text{by \eqref{eq.prop.Gammaw.coprod}}\right)
\nonumber\\
&= m \left(\sum_{\left(P, Q\right) \in \Adm \EE}
S\left(\Gamma\left(\EE\mid_P, w\mid_P\right)\right)
\otimes
\Gamma\left(\EE\mid_Q, w\mid_Q\right)\right)
\nonumber\\
% & \ \ \ \ \ \ \ \ \ \ \left(
%   \text{by the definition of the map } S \otimes \id \right)
% \nonumber \\
&= \sum_{\left(P, Q\right) \in \Adm \EE}
S\left(\Gamma\left(\EE\mid_P, w\mid_P\right)\right)
\Gamma\left(\EE\mid_Q, w\mid_Q\right)
\nonumber \\
% & \ \ \ \ \ \ \ \ \ \ \left(
%   \text{by the definition of the map } m \right)
% \nonumber \\
&= S \left(\Gamma\left(\EE\mid_E, w\mid_E\right)\right)
\Gamma\left(\EE\mid_\varnothing, w\mid_\varnothing\right)
+ \sum_{\substack{\left(P, Q\right) \in \Adm \EE ; \\
                  \left|P\right| < \left|E\right|}}
S\left(\Gamma\left(\EE\mid_P, w\mid_P\right)\right)
\Gamma\left(\EE\mid_Q, w\mid_Q\right)
\label{pf.thm.antipode.Gammaw.Req.1}
\end{align}
(since the only pair $\left(P, Q\right) \in \Adm \EE$ satisfying
$\left|P\right| \geq \left|E\right|$ is $\left(E, \varnothing\right)$,
whereas all other pairs $\left(P, Q\right) \in \Adm \EE$
satisfy $\left|P\right| < \left|E\right|$).

But whenever $\left(P, Q\right) \in \Adm \EE$ is such that
$\left|P\right| < \left|E\right|$, the double poset
$\EE\mid_P = \left(P, <_1, <_2\right)$ is tertispecial
(by Lemma~\ref{lem.tertispecial.subset}), and
therefore we have
$S\left(\Gamma\left(\EE\mid_P, w\mid_P\right)\right)
= S\left(\Gamma\left(\left(P, <_1, <_2\right), w\mid_P\right)\right)
= \left(-1\right)^{\left|P\right|}
\Gamma\left(\left(P, >_1, <_2\right), w\mid_P\right)$
(by the induction hypothesis).
Hence,
\eqref{pf.thm.antipode.Gammaw.Req.1} becomes
\begin{align}
0
&= S \left(\Gamma\left(\underbrace{\EE\mid_E}_{=\EE},
           \underbrace{w\mid_E}_{=w}\right)\right)
  \underbrace{\Gamma\left(\EE\mid_\varnothing, w\mid_\varnothing\right)
             }_{\substack{
             =\Gamma\left(\left(\varnothing, <_1, <_2\right), w\mid_\varnothing\right)
             =1 \\ \text{ (by Lemma \ref{lem.Gammaw.empty} (a))}}}
     \nonumber \\
& \qquad + \sum_{\substack{\left(P, Q\right) \in \Adm \EE ; \\
                  \left|P\right| < \left|E\right|}}
  \underbrace{S\left(\Gamma\left(\EE\mid_P, w\mid_P\right)\right)
             }_{= \left(-1\right)^{\left|P\right|}
                  \Gamma\left(\left(P, >_1, <_2\right), w\mid_P\right)}
  \Gamma\left(\EE\mid_Q, w\mid_Q\right)
     \nonumber \\
& = S\left(\Gamma\left(\EE, w\right)\right)
  + \sum_{\substack{\left(P, Q\right) \in \Adm \EE ; \\
                  \left|P\right| < \left|E\right|}}
  \left(-1\right)^{\left|P\right|}
  \Gamma\left(\left(P, >_1, <_2\right), w\mid_P\right)
  \Gamma\left(\EE\mid_Q, w\mid_Q\right) .
\nonumber
\end{align}
Thus,
\begin{equation}
S\left(\Gamma\left(\EE, w\right)\right)
= - \sum_{\substack{\left(P, Q\right) \in \Adm \EE ; \\
                  \left|P\right| < \left|E\right|}}
\left(-1\right)^{\left|P\right|}
\Gamma\left(\left(P, >_1, <_2\right), w\mid_P\right)
\Gamma\left(\EE\mid_Q, w\mid_Q\right) .
\label{pf.thm.antipode.Gammaw.Seq}
\end{equation}
\end{vershort}

\begin{verlong}
The upper commutative pentagon of \eqref{eq.antipode} shows that
$u \circ \varepsilon = m \circ \left(S \otimes \id\right) \circ
\Delta$. Applying both sides of this equality to
$\Gamma\left(\EE, w\right)$, we obtain
$\left(u \circ \varepsilon\right)
\left(\Gamma\left(\EE, w\right)\right)
= \left(m \circ \left(S \otimes \id\right) \circ
\Delta\right) \left(\Gamma\left(\EE, w\right)\right)$.
Since
$\left(u \circ \varepsilon\right)
\left(\Gamma\left(\EE, w\right)\right) = 0$, this
becomes
\begin{align}
0
&= \left(m \circ \left(S \otimes \id\right) \circ
\Delta\right) \left(\Gamma\left(\EE, w\right)\right)
= m \left(\left(S \otimes \id\right) \left(
\Delta \left(\Gamma\left(\EE, w\right)\right)\right)\right)
\nonumber\\
&= m \left(\left(S \otimes \id\right) \left(
\sum_{\left(P, Q\right) \in \Adm \EE}
\Gamma\left(\EE\mid_P, w\mid_P\right)
\otimes \Gamma\left(\EE\mid_Q, w\mid_Q\right) \right) \right)
\qquad \left(\text{by \eqref{eq.prop.Gammaw.coprod}}\right)
\nonumber\\
&= m \left(\sum_{\left(P, Q\right) \in \Adm \EE}
S\left(\Gamma\left(\EE\mid_P, w\mid_P\right)\right)
\otimes
\Gamma\left(\EE\mid_Q, w\mid_Q\right)\right)
\nonumber\\
& \ \ \ \ \ \ \ \ \ \ \left(
  \text{by the definition of the map } S \otimes \id \right)
\nonumber \\
&= \sum_{\left(P, Q\right) \in \Adm \EE}
S\left(\Gamma\left(\EE\mid_P, w\mid_P\right)\right)
\Gamma\left(\EE\mid_Q, w\mid_Q\right)
\nonumber \\
& \ \ \ \ \ \ \ \ \ \ \left(
  \text{by the definition of the map } m \right)
\nonumber \\
&= S \left(\Gamma\left(\underbrace{\EE\mid_E}_{=\EE},
           \underbrace{w\mid_E}_{=w}\right)\right)
  \underbrace{\Gamma\left(\EE\mid_\varnothing, w\mid_\varnothing\right)
             }_{= 1}
     \nonumber \\
& \qquad
  + \sum_{\substack{\left(P, Q\right) \in \Adm \EE ; \\
                    \left(P, Q\right) \neq \left(E, \varnothing\right)}}
  \underbrace{S\left(\Gamma\left(\EE\mid_P, w\mid_P\right)\right)
             }_{\substack{
                = \left(-1\right)^{\left|P\right|}
                  \Gamma\left(\left(P, >_1, <_2\right), w\mid_P\right) \\
                \text{(by \eqref{pf.thm.antipode.Gammaw.indhyp-used})}}}
  \Gamma\left(\EE\mid_Q, w\mid_Q\right)
     \nonumber \\
& \ \ \ \ \ \ \ \ \ \ \left( \begin{array}{c}
  \text{here, we have split off the addend } \\
  \text{for } \left(P, Q\right) = \left(E, \varnothing\right)
  \text{ from the sum}
  \end{array} \right) \nonumber \\
& = S\left(\Gamma\left(\EE, w\right)\right)
  + \sum_{\substack{\left(P, Q\right) \in \Adm \EE ; \\
                    \left(P, Q\right) \neq \left(E, \varnothing\right)}}
  \left(-1\right)^{\left|P\right|}
  \Gamma\left(\left(P, >_1, <_2\right), w\mid_P\right)
  \Gamma\left(\EE\mid_Q, w\mid_Q\right) .
\nonumber
\end{align}
Thus,
\begin{equation}
S\left(\Gamma\left(\EE, w\right)\right)
= - \sum_{\substack{\left(P, Q\right) \in \Adm \EE ; \\
                    \left(P, Q\right) \neq \left(E, \varnothing\right)}}
\left(-1\right)^{\left|P\right|}
\Gamma\left(\left(P, >_1, <_2\right), w\mid_P\right)
\Gamma\left(\EE\mid_Q, w\mid_Q\right) .
\label{pf.thm.antipode.Gammaw.Seq}
\end{equation}
\end{verlong}

For every subset $P$ of $E$, we have

\begin{align}
\Gamma\left(\left(P, >_1, <_2\right), w\mid_P\right)
&= \sum_{\pi \text{ is a }\left(P, >_1, <_2\right)\text{-partition}}
\xx_{\pi, w\mid_P}
\nonumber \\
& \qquad \left(\text{by the definition of }
 \Gamma\left(\left(P, >_1, <_2\right), w\mid_P\right) \right)
\nonumber \\
& = \sum_{\sigma \text{ is a }\left(P, >_1, <_2\right)\text{-partition}}
\xx_{\sigma, w\mid_P}
\label{pf.thm.antipode.Gammaw.Req.pf.Gamma1}
\end{align}
(here, we have renamed the summation index $\pi$ as
$\sigma$).

For every subset $Q$ of $E$, we have

\begin{align}
\Gamma\left(\underbrace{\EE\mid_Q}_{=\left(Q, <_1, <_2\right)}, w\mid_Q\right)
&= \Gamma\left(\left(Q, <_1, <_2\right), w\mid_Q\right)
\nonumber 
= \sum_{\pi \text{ is a }\left(Q, <_1, <_2\right)\text{-partition}}
\xx_{\pi, w\mid_Q}
\nonumber \\
& \qquad \left(\text{by the definition of }
 \Gamma\left(\left(Q, <_1, <_2\right), w\mid_Q\right) \right)
\nonumber \\
& = \sum_{\tau \text{ is a }\left(Q, <_1, <_2\right)\text{-partition}}
\xx_{\tau, w\mid_Q}
\label{pf.thm.antipode.Gammaw.Req.pf.Gamma2}
\end{align}
(here, we have renamed the summation index $\pi$ as
$\tau$).

\begin{vershort}
Now,
\begin{align*}
& \sum_{\left(P, Q\right) \in \Adm \EE}
\left(-1\right)^{\left|P\right|}
\underbrace{\Gamma\left(\left(P, >_1, <_2\right), w\mid_P\right)}_{
 \substack{ = \sum_{\sigma \text{ is a }\left(P, >_1, <_2\right)\text{-partition}}
            \xx_{\sigma, w\mid_P} \\
            \text{(by \eqref{pf.thm.antipode.Gammaw.Req.pf.Gamma1})}}}
\underbrace{\Gamma\left(\EE\mid_Q, w\mid_Q\right)}_{
 \substack{ = \sum_{\tau \text{ is a }\left(Q, <_1, <_2\right)\text{-partition}}
            \xx_{\tau, w\mid_Q} \\
            \text{(by \eqref{pf.thm.antipode.Gammaw.Req.pf.Gamma2})}}}
\\
&= \sum_{\left(P, Q\right) \in \Adm \EE}
\left(-1\right)^{\left|P\right|}
\left(\sum_{\sigma \text{ is a }\left(P, >_1, <_2\right)\text{-partition}}
\xx_{\sigma, w\mid_P}\right)
\left(\sum_{\tau \text{ is a }\left(Q, <_1, <_2\right)\text{-partition}}
\xx_{\tau, w\mid_Q}\right) \\
&= \sum_{\left(P, Q\right) \in \Adm \EE}
\left(-1\right)^{\left|P\right|}
\sum_{\sigma \text{ is a }\left(P, >_1, <_2\right)\text{-partition}}
\sum_{\tau \text{ is a }\left(Q, <_1, <_2\right)\text{-partition}}
\xx_{\sigma, w\mid_P} \xx_{\tau, w\mid_Q} \\
&= \sum_{\left(P, Q\right) \in \Adm \EE}
\left(-1\right)^{\left|P\right|}
\sum_{\substack{\left(\sigma, \tau\right); \\
                \sigma : P \to \left\{1, 2, 3, \ldots\right\}; \\
                \tau : Q \to \left\{1, 2, 3, \ldots\right\}; \\
                \sigma \text{ is a }\left(P, >_1, <_2\right)\text{-partition;} \\
                \tau \text{ is a }\left(Q, <_1, <_2\right)\text{-partition}}}
\xx_{\sigma, w\mid_P} \xx_{\tau, w\mid_Q} \\
&= \sum_{\left(P, Q\right) \in \Adm \EE}
\left(-1\right)^{\left|P\right|}
\sum_{\substack{\pi : E \to \left\{1, 2, 3, \ldots\right\}; \\
                \pi\mid_P \text{ is a }\left(P, >_1, <_2\right)\text{-partition;} \\
                \pi\mid_Q \text{ is a }\left(Q, <_1, <_2\right)\text{-partition}}}
\underbrace{\xx_{\pi\mid_P, w\mid_P} \xx_{\pi\mid_Q, w\mid_Q}}_{=\xx_{\pi, w}} \\
& \qquad \left(
 \begin{array}{c}
 \text{here, we have substituted } \left(\pi\mid_P, \pi\mid_Q\right)
 \text{ for } \left(\sigma, \tau\right) \text{ in the inner sum,} \\
 \text{ since every pair } \left(\sigma, \tau\right)
 \text{ consisting of a map }
 \sigma : P \to \left\{1, 2, 3, \ldots\right\} \\
 \text{ and a map } \tau : Q \to \left\{1, 2, 3, \ldots\right\} \\
 \text{ can be written as } \left(\pi\mid_P, \pi\mid_Q\right)
 \text{ for a unique }
 \pi : E \to \left\{1, 2, 3, \ldots\right\} \\
 \text{(namely, for the }
 \pi : E \to \left\{1, 2, 3, \ldots\right\}
 \text{ that is defined to send every } \\
 e \in P \text{ to }
 \sigma\left(e\right) \text{ and to send every } e \in Q
 \text{ to } \tau\left(e\right) \text{)}
 \end{array}
 \right) \\
& = \sum_{\left(P, Q\right) \in \Adm \EE}
\left(-1\right)^{\left|P\right|}
\sum_{\substack{\pi : E \to \left\{1, 2, 3, \ldots\right\}; \\
                \pi\mid_P \text{ is a }\left(P, >_1, <_2\right)\text{-partition;} \\
                \pi\mid_Q \text{ is a }\left(Q, <_1, <_2\right)\text{-partition}}}
\xx_{\pi, w} \\
& = \sum_{\pi : E \to \left\{1, 2, 3, \ldots\right\}}
\underbrace{
\sum_{\substack{\left(P, Q\right) \in \Adm \EE ; \\
                \pi\mid_P \text{ is a }\left(P, >_1, <_2\right)\text{-partition;} \\
                \pi\mid_Q \text{ is a }\left(Q, <_1, <_2\right)\text{-partition}}}
\left(-1\right)^{\left|P\right|}
}
_{\substack{=0 \\
            \text{(by \eqref{pf.thm.antipode.Gammaw.signrev})}}}
\xx_{\pi, w}
= \sum_{\pi : E \to \left\{1, 2, 3, \ldots\right\}} 0 \xx_{\pi, w}
= 0.
\end{align*}
\end{vershort}
\begin{verlong}
Now, for each $\left(P, Q\right) \in \Adm \EE$, we have
\begin{equation}
\Gamma\left(\left(P, >_1, <_2\right), w\mid_P\right)
\Gamma\left(\EE\mid_Q, w\mid_Q\right)
= \sum_{\substack{\pi : E \to \left\{1, 2, 3, \ldots\right\}; \\
                \pi\mid_P \text{ is a }\left(P, >_1, <_2\right)\text{-partition;} \\
                \pi\mid_Q \text{ is a }\left(Q, <_1, <_2\right)\text{-partition}}}
\xx_{\pi, w}
\label{pf.thm.antipode.Gammaw.long.GG=sum}
\end{equation}
\footnote{
\textit{Proof of \eqref{pf.thm.antipode.Gammaw.long.GG=sum}.}
Let $\left(P, Q\right) \in \Adm \EE$. Thus, $P$ and $Q$ are
two subsets of $E$ satisfying $P \cap Q = \varnothing$ and
$P \cup Q = E$. Thus, the set $E$ is the union of its two
disjoint subsets $P$ and $Q$.

If $\pi:E\rightarrow\left\{  1,2,3,\ldots\right\}  $ is a map, then
\begin{align*}
\underbrace{\xx_{\pi\mid_{P},w\mid_{P}}}_{\substack{
=\prod_{e\in P}x_{\left(
\pi\mid_{P}\right)  \left(  e\right)  }^{\left(  w\mid_{P} \right)
\left(  e\right) }  \\
\text{(by the definition of } \xx_{\pi\mid_P, w\mid_P} \text{)}
}}
\underbrace{\xx_{\pi\mid_{Q},w\mid_{Q}}}_{\substack{
=\prod_{e\in Q}x_{\left(  \pi\mid_{Q}\right)  \left(  e\right)  }^{
\left(  w\mid_{Q} \right) \left(e\right) } \\
\text{(by the definition of } \xx_{\pi\mid_Q, w\mid_Q} \text{)}
}}
& =\left(  \prod_{e\in P}\underbrace{x_{\left(  \pi\mid_{P}\right)  \left(
e\right)  }^{\left(  w\mid_{P}  \right) \left(  e\right) }}_{\substack{=x_{\pi
\left(  e\right)  }^{w\left(  e\right)  }\\\text{(since }\left(  \pi\mid
_{P}\right)  \left(  e\right)  =\pi\left(  e\right)  \\\text{and }\left(
w\mid_{P} \right) \left(  e\right) =w\left(  e\right)  \text{)}}}\right)
\left(  \prod_{e\in Q}\underbrace{x_{\left(  \pi\mid_{Q}\right)  \left(
e\right)  }^{\left(  w\mid_{Q} \right) \left(  e\right) }}_{\substack{=x_{\pi
\left(  e\right)  }^{w\left(  e\right)  }\\\text{(since }\left(  \pi\mid
_{Q}\right)  \left(  e\right)  =\pi\left(  e\right)  \\\text{and }\left(
w\mid_{Q} \right) \left(  e\right)  =w\left(  e\right)  \text{)}}}\right)  \\
& =\left(  \prod_{e\in P}x_{\pi\left(  e\right)  }^{w\left(  e\right)
}\right)  \left(  \prod_{e\in Q}x_{\pi\left(  e\right)  }^{w\left(  e\right)
}\right)  =\prod_{e\in E}x_{\pi\left(  e\right)  }^{w\left(  e\right)  }%
\end{align*}
(here, we have merged the two products, since the set $E$ is the union of its
two disjoint subsets $P$ and $Q$).

But the set $E$ is the union of its two disjoint subsets $P$ and $Q$. Hence,
every pair $\left(  \sigma,\tau\right)  $ consisting of a map $\sigma
:P\rightarrow\left\{  1,2,3,\ldots\right\}  $ and a map $\tau:Q\rightarrow
\left\{  1,2,3,\ldots\right\}  $ can be written as $\left(  \pi\mid_{P}%
,\pi\mid_{Q}\right)  $ for a unique $\pi:E\rightarrow\left\{  1,2,3,\ldots
\right\}  $ (namely, for the $\pi:E\rightarrow\left\{  1,2,3,\ldots\right\}  $
that sends every $e \in E$ to
$\begin{cases}
\sigma\left(e\right), & \text{if } e \in P \text{;}\\
\tau\left(e\right), & \text{if } e \in Q
\end{cases}$). Hence, we can substitute
$\left(  \pi\mid_{P},\pi\mid_{Q}\right)  $ for $\left(  \sigma,\tau\right)  $
in the sum $\sum_{\substack{\left(\sigma, \tau\right); \\
                \sigma : P \to \left\{1, 2, 3, \ldots\right\}; \\
                \tau : Q \to \left\{1, 2, 3, \ldots\right\}; \\
                \sigma \text{ is a }\left(P, >_1, <_2\right)\text{-partition;} \\
                \tau \text{ is a }\left(Q, <_1, <_2\right)\text{-partition}}}
\xx_{\sigma, w\mid_P} \xx_{\tau, w\mid_Q}
$. We thus obtain
\begin{align*}
\sum_{\substack{\left(\sigma, \tau\right); \\
                \sigma : P \to \left\{1, 2, 3, \ldots\right\}; \\
                \tau : Q \to \left\{1, 2, 3, \ldots\right\}; \\
                \sigma \text{ is a }\left(P, >_1, <_2\right)\text{-partition;} \\
                \tau \text{ is a }\left(Q, <_1, <_2\right)\text{-partition}}}
\xx_{\sigma, w\mid_P} \xx_{\tau, w\mid_Q}
& =\sum_{\substack{\pi:E\rightarrow\left\{  1,2,3,\ldots\right\} ; \\\pi
\mid_{P}\text{ is a }\left(  P,>_{1},<_{2}\right)  \text{-partition;}\\\pi
\mid_{Q}\text{ is a }\left(  Q,<_{1},<_{2}\right)  \text{-partition}%
}}\underbrace{\xx_{\pi\mid_{P},w\mid_{P}}{\xx}_{\pi\mid
_{Q},w\mid_{Q}}}_{\substack{=\prod_{e\in E}x_{\pi\left(  e\right)  }^{w\left(
e\right)  } = \xx_{\pi,w}\\\text{(since }\xx_{\pi,w}=\prod_{e\in
E}x_{\pi\left(  e\right)  }^{w\left(  e\right)  }\\\text{(by the definition of
}\xx_{\pi,w}\text{))}}}\\
& =\sum_{\substack{\pi:E\rightarrow\left\{  1,2,3,\ldots\right\} ; \\\pi
\mid_{P}\text{ is a }\left(  P,>_{1},<_{2}\right)  \text{-partition;}\\\pi
\mid_{Q}\text{ is a }\left(  Q,<_{1},<_{2}\right)  \text{-partition}%
}}\xx_{\pi,w}.
\end{align*}

Now,
\begin{align*}
&
\underbrace{\Gamma\left(\left(P, >_1, <_2\right), w\mid_P\right)}_{
 \substack{ = \sum_{\sigma \text{ is a }\left(P, >_1, <_2\right)\text{-partition}}
            \xx_{\sigma, w\mid_P} \\
            \text{(by \eqref{pf.thm.antipode.Gammaw.Req.pf.Gamma1})}}}
\underbrace{\Gamma\left(\EE\mid_Q, w\mid_Q\right)}_{
 \substack{ = \sum_{\tau \text{ is a }\left(Q, <_1, <_2\right)\text{-partition}}
            \xx_{\tau, w\mid_Q} \\
            \text{(by \eqref{pf.thm.antipode.Gammaw.Req.pf.Gamma2})}}}
\\
&=
\left(\sum_{\sigma \text{ is a }\left(P, >_1, <_2\right)\text{-partition}}
\xx_{\sigma, w\mid_P}\right)
\left(\sum_{\tau \text{ is a }\left(Q, <_1, <_2\right)\text{-partition}}
\xx_{\tau, w\mid_Q}\right) \\
&=
\underbrace{
 \sum_{\sigma \text{ is a }\left(P, >_1, <_2\right)\text{-partition}}
 \sum_{\tau \text{ is a }\left(Q, <_1, <_2\right)\text{-partition}}
}_{=
   \sum_{\substack{\left(\sigma, \tau\right); \\
                   \sigma : P \to \left\{1, 2, 3, \ldots\right\}; \\
                   \tau : Q \to \left\{1, 2, 3, \ldots\right\}; \\
                   \sigma \text{ is a }\left(P, >_1, <_2\right)\text{-partition;} \\
                   \tau \text{ is a }\left(Q, <_1, <_2\right)\text{-partition}}}
   }
\xx_{\sigma, w\mid_P} \xx_{\tau, w\mid_Q}
&=
\sum_{\substack{\left(\sigma, \tau\right); \\
                \sigma : P \to \left\{1, 2, 3, \ldots\right\}; \\
                \tau : Q \to \left\{1, 2, 3, \ldots\right\}; \\
                \sigma \text{ is a }\left(P, >_1, <_2\right)\text{-partition;} \\
                \tau \text{ is a }\left(Q, <_1, <_2\right)\text{-partition}}}
\xx_{\sigma, w\mid_P} \xx_{\tau, w\mid_Q} \\
&= \sum_{\substack{\pi : E \to \left\{1, 2, 3, \ldots\right\}; \\
                \pi\mid_P \text{ is a }\left(P, >_1, <_2\right)\text{-partition;} \\
                \pi\mid_Q \text{ is a }\left(Q, <_1, <_2\right)\text{-partition}}}
\xx_{\pi, w}.
\end{align*}
This proves \eqref{pf.thm.antipode.Gammaw.long.GG=sum}.}.

Now,
\begin{align*}
& \sum_{\left(P, Q\right) \in \Adm \EE}
\left(-1\right)^{\left|P\right|}
\underbrace{\Gamma\left(\left(P, >_1, <_2\right), w\mid_P\right)
            \Gamma\left(\EE\mid_Q, w\mid_Q\right)}_{
 \substack{ = \sum_{\substack{\pi : E \to \left\{1, 2, 3, \ldots\right\}; \\
                \pi\mid_P \text{ is a }\left(P, >_1, <_2\right)\text{-partition;} \\
                \pi\mid_Q \text{ is a }\left(Q, <_1, <_2\right)\text{-partition}}}
              \xx_{\pi, w} \\
            \text{(by \eqref{pf.thm.antipode.Gammaw.long.GG=sum})}}}
\\
& = \sum_{\left(P, Q\right) \in \Adm \EE}
\left(-1\right)^{\left|P\right|}
\sum_{\substack{\pi : E \to \left\{1, 2, 3, \ldots\right\}; \\
                \pi\mid_P \text{ is a }\left(P, >_1, <_2\right)\text{-partition;} \\
                \pi\mid_Q \text{ is a }\left(Q, <_1, <_2\right)\text{-partition}}}
\xx_{\pi, w} \\
& =
\underbrace{\sum_{\left(P, Q\right) \in \Adm \EE}
\sum_{\substack{\pi : E \to \left\{1, 2, 3, \ldots\right\}; \\
                \pi\mid_P \text{ is a }\left(P, >_1, <_2\right)\text{-partition;} \\
                \pi\mid_Q \text{ is a }\left(Q, <_1, <_2\right)\text{-partition}}}
}_{ =
\sum_{\pi : E \to \left\{1, 2, 3, \ldots\right\}}
\sum_{\substack{\left(P, Q\right) \in \Adm \EE ; \\
                \pi\mid_P \text{ is a }\left(P, >_1, <_2\right)\text{-partition;} \\
                \pi\mid_Q \text{ is a }\left(Q, <_1, <_2\right)\text{-partition}}}
}
\left(-1\right)^{\left|P\right|}
\xx_{\pi, w} \\
& = \sum_{\pi : E \to \left\{1, 2, 3, \ldots\right\}}
\underbrace{
\sum_{\substack{\left(P, Q\right) \in \Adm \EE ; \\
                \pi\mid_P \text{ is a }\left(P, >_1, <_2\right)\text{-partition;} \\
                \pi\mid_Q \text{ is a }\left(Q, <_1, <_2\right)\text{-partition}}}
\left(-1\right)^{\left|P\right|}
}
_{\substack{=0 \\
            \text{(by \eqref{pf.thm.antipode.Gammaw.signrev})}}}
\xx_{\pi, w}
= \sum_{\pi : E \to \left\{1, 2, 3, \ldots\right\}} 0 \xx_{\pi, w}
= 0.
\end{align*}
\end{verlong}
\begin{vershort}
Thus,
\begin{align*}
0 &= \sum_{\left(P, Q\right) \in \Adm \EE}
\left(-1\right)^{\left|P\right|}
\Gamma\left(\left(P, >_1, <_2\right), w\mid_P\right)
\Gamma\left(\EE\mid_Q, w\mid_Q\right) \\
&= \left(-1\right)^{\left|E\right|}
\Gamma\left(\left(E, >_1, <_2\right), \underbrace{w\mid_E}_{=w}\right)
\underbrace{\Gamma\left(\EE\mid_\varnothing, w\mid_\varnothing\right)
           }_{
           =\Gamma\left(\left(\varnothing, <_1, <_2\right), w\mid_\varnothing\right)
           =1}
\\
&\qquad + \sum_{\substack{\left(P, Q\right) \in \Adm \EE ; \\
                  \left|P\right| < \left|E\right|}}
\left(-1\right)^{\left|P\right|}
\Gamma\left(\left(P, >_1, <_2\right), w\mid_P\right)
\Gamma\left(\EE\mid_Q, w\mid_Q\right)
\\
& \ \ \ \ \ \ \ \ \ \ \left(
 \begin{array}{c}
  \text{because the only pair } \left(P, Q\right) \in \Adm \EE
  \text{ satisfying } \left|P\right| \geq \left|E\right| \\
  \text{ is } \left(P, Q\right) = \left(E, \varnothing\right)
  \text{,} \\
  \text{whereas all other pairs } \left(P, Q\right) \in \Adm \EE
  \text{ satisfy } \left|P\right| < \left|E\right|
 \end{array}
\right) \\
&= \left(-1\right)^{\left|E\right|} \Gamma\left(\left(E, >_1, <_2\right), w\right) \\
& \qquad
+ \sum_{\substack{\left(P, Q\right) \in \Adm \EE ; \\
                  \left|P\right| < \left|E\right|}}
\left(-1\right)^{\left|P\right|}
\Gamma\left(\left(P, >_1, <_2\right), w\mid_P\right)
\Gamma\left(\EE\mid_Q, w\mid_Q\right) ,
\end{align*}
so that
\begin{align*}
\left(-1\right)^{\left|E\right|} \Gamma\left(\left(E, >_1, <_2\right), w\right)
&= - \sum_{\substack{\left(P, Q\right) \in \Adm \EE ; \\
                  \left|P\right| < \left|E\right|}}
\left(-1\right)^{\left|P\right|}
\Gamma\left(\left(P, >_1, <_2\right), w\mid_P\right)
\Gamma\left(\EE\mid_Q, w\mid_Q\right) \\
&= S\left(\Gamma\left(\underbrace{\EE}_{=\left(E, <_1, <_2\right)}, w\right)\right)
\qquad \left(\text{by \eqref{pf.thm.antipode.Gammaw.Seq}}\right) \\
&= S\left(\Gamma\left(\left(E, <_1, <_2\right), w\right)\right) ,
\end{align*}
and thus
$S\left(\Gamma\left(\left(E, <_1, <_2\right), w\right) \right)
= \left(-1\right)^{\left|E\right|}
\Gamma\left(\left(E, >_1, <_2\right), w\right)$.
\end{vershort}
\begin{verlong}
Thus,
\begin{align*}
0 &= \sum_{\left(P, Q\right) \in \Adm \EE}
\left(-1\right)^{\left|P\right|}
\Gamma\left(\left(P, >_1, <_2\right), w\mid_P\right)
\Gamma\left(\EE\mid_Q, w\mid_Q\right) \\
&= \left(-1\right)^{\left|E\right|}
\Gamma\left(\left(E, >_1, <_2\right), \underbrace{w\mid_E}_{=w}\right)
\underbrace{\Gamma\left(\EE\mid_\varnothing, w\mid_\varnothing\right)
           }_{=1}
\\
& \qquad
 + \sum_{\substack{\left(P, Q\right) \in \Adm \EE ; \\
                   \left(P, Q\right) \neq \left(E, \varnothing\right)}}
\left(-1\right)^{\left|P\right|}
\Gamma\left(\left(P, >_1, <_2\right), w\mid_P\right)
\Gamma\left(\EE\mid_Q, w\mid_Q\right)
\\
& \ \ \ \ \ \ \ \ \ \ \left( \begin{array}{c}
  \text{here, we have split off the addend } \\
  \text{for } \left(P, Q\right) = \left(E, \varnothing\right)
  \text{ from the sum}
  \end{array} \right) \\
&= \left(-1\right)^{\left|E\right|} \Gamma\left(\left(E, >_1, <_2\right), w\right) \\
& \qquad
+ \sum_{\substack{\left(P, Q\right) \in \Adm \EE ; \\
                  \left(P, Q\right) \neq \left(E, \varnothing\right)}}
\left(-1\right)^{\left|P\right|}
\Gamma\left(\left(P, >_1, <_2\right), w\mid_P\right)
\Gamma\left(\EE\mid_Q, w\mid_Q\right) ,
\end{align*}
so that
\begin{align*}
\left(-1\right)^{\left|E\right|} \Gamma\left(\left(E, >_1, <_2\right), w\right)
&= - \sum_{\substack{\left(P, Q\right) \in \Adm \EE ; \\
                     \left(P, Q\right) \neq \left(E, \varnothing\right)}}
\left(-1\right)^{\left|P\right|}
\Gamma\left(\left(P, >_1, <_2\right), w\mid_P\right)
\Gamma\left(\EE\mid_Q, w\mid_Q\right) \\
&= S\left(\Gamma\left(\underbrace{\EE}_{=\left(E, <_1, <_2\right)}, w\right)\right)
\qquad \left(\text{by \eqref{pf.thm.antipode.Gammaw.Seq}}\right) \\
&= S\left(\Gamma\left(\left(E, <_1, <_2\right), w\right)\right) ,
\end{align*}
and thus
$S\left(\Gamma\left(\left(E, <_1, <_2\right), w\right)\right)
= \left(-1\right)^{\left|E\right|}
\Gamma\left(\left(E, >_1, <_2\right), w\right)$.
\end{verlong}
This completes the
induction step and thus the proof of Theorem~\ref{thm.antipode.Gammaw}.
\end{proof}

\section{Proof of Theorem~\ref{thm.antipode.GammawG}}
\label{sect.proofG}

Before we begin proving Theorem~\ref{thm.antipode.GammawG}, we state a
criterion for $\EE$-partitions that is less wasteful (in the sense that
it requires fewer verifications) than the definition:

\begin{lemma}
\label{lem.Epartition.cover}
Let $\EE = \left(E, <_1, <_2\right)$ be a tertispecial double poset.
Let $\phi : E \to \left\{1, 2, 3, \ldots\right\}$ be a map. Assume
that the following two conditions hold:

\begin{itemize}

\item \textit{Condition 1:} If $e \in E$ and $f \in E$ are such that
$e$ is $<_1$-covered by $f$, and if we have $e <_2 f$, then
$\phi\left(e\right) \leq \phi\left(f\right)$.

\item \textit{Condition 2:} If $e \in E$ and $f \in E$ are such that
$e$ is $<_1$-covered by $f$, and if we have $f <_2 e$, then
$\phi\left(e\right) < \phi\left(f\right)$.

\end{itemize}

Then, $\phi$ is an $\EE$-partition.
\end{lemma}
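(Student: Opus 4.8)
The plan is to verify directly the two conditions of Definition~\ref{def.E-partition} characterizing an $\EE$-partition, bootstrapping from the covering relation (where the hypotheses apply) to the full order $<_1$. The central tool is the standard fact that, since $E$ is finite, any relation $e <_1 f$ refines into a \emph{saturated} chain $e = a_0 <_1 a_1 <_1 \cdots <_1 a_k = f$ in which each $a_i$ is $<_1$-covered by $a_{i+1}$; here $k \geq 1$ because $e \neq f$. This reduces assertions about arbitrary $<_1$-relations to assertions about covers.

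First I would record a \emph{cover lemma}: whenever $a$ is $<_1$-covered by $b$, we have $\phi(a) \leq \phi(b)$. Indeed, $a \neq b$ (by irreflexivity of $<_1$), and since $\EE$ is tertispecial the elements $a$ and $b$ are $<_2$-comparable, so either $a <_2 b$ or $b <_2 a$. In the former case Condition~1 gives $\phi(a) \leq \phi(b)$; in the latter Condition~2 gives the stronger $\phi(a) < \phi(b)$; either way $\phi(a) \leq \phi(b)$. The first defining condition of an $\EE$-partition now follows at once: given $e <_1 f$, applying the cover lemma to each step of a saturated chain yields $\phi(e) = \phi(a_0) \leq \phi(a_1) \leq \cdots \leq \phi(a_k) = \phi(f)$.

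The second defining condition is the main obstacle: from $e <_1 f$ and $f <_2 e$ I must extract the \emph{strict} inequality $\phi(e) < \phi(f)$, whereas the cover lemma alone only delivers $\phi(e) \leq \phi(f)$. The idea is to locate one chain step where strictness is genuinely gained. The key observation is that the steps of a saturated chain from $e$ to $f$ cannot all be $<_2$-increasing: if every consecutive pair satisfied $a_i <_2 a_{i+1}$, then transitivity of $<_2$ (together with $k \geq 1$) would force $e = a_0 <_2 a_k = f$, which contradicts the hypothesis $f <_2 e$ since the strict partial order $<_2$ is irreflexive. Hence some step has $a_{i+1} <_2 a_i$; for that step Condition~2 (with $a_i$ and $a_{i+1}$ in the roles of $e$ and $f$) gives $\phi(a_i) < \phi(a_{i+1})$, while the cover lemma supplies $\phi(a_j) \leq \phi(a_{j+1})$ at every other step. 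Chaining these inequalities produces $\phi(e) < \phi(f)$, completing the proof. Beyond Conditions~1 and~2, the argument invokes only the tertispeciality of $\EE$, the existence of saturated chains in a finite poset, and the transitivity and irreflexivity of $<_2$.
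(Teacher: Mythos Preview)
Your proof is correct. It differs from the paper's in presentation: the paper argues by minimal counterexample on the size of the open interval $[a,b] = \{e \in E \mid a <_1 e <_1 b\}$, whereas you build a saturated chain from $e$ to $f$ and analyze its steps directly. Both rely on finiteness of $E$ in the same essential way, but the texture is different. For the strict inequality, the paper actually proves a stronger intermediate claim (replacing the hypothesis ``$f <_2 e$'' by the weaker ``not $e <_2 f$'') and extracts the desired result from it; its argument there requires a second appeal to the already-proven weak monotonicity to force a chain of equalities $\phi(u) = \phi(w) = \phi(v)$ and then deduce $u <_2 w <_2 v$. Your route is shorter and more transparent for the lemma as stated: the observation that not every step of the saturated chain can be $<_2$-increasing (else $e <_2 f$, contradicting $f <_2 e$) immediately locates a strict step. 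The paper's detour buys the stronger Claim 3 as a byproduct, but that strengthening is not used elsewhere in the paper.
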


\begin{proof}[Proof of Lemma~\ref{lem.Epartition.cover}.]
For any $a \in E$ and $b \in E$, we define a subset
$\left[a, b\right]$ of $E$ as in the proof of
Lemma~\ref{lem.admissible.cover}.

We need to show that $\phi$ is an $\EE$-partition. In other words,
we need to prove the following two claims:

\begin{statement}
\textit{Claim 1:} Every $e \in E$ and $f \in E$ satisfying
$e <_1 f$ satisfy $\phi\left(e\right) \leq \phi\left(f\right)$.
\end{statement}

\begin{statement}
\textit{Claim 2:} Every $e \in E$ and $f \in E$ satisfying
$e <_1 f$ and $f <_2 e$ satisfy
$\phi\left(e\right) < \phi\left(f\right)$.
\end{statement}

\textit{Proof of Claim 1:} Assume the contrary. Thus, there
exists a pair $\left(e, f\right) \in E \times E$ satisfying
$e <_1 f$ but not $\phi\left(e\right) \leq \phi\left(f\right)$.
Such a pair will be called a \textit{malrelation}. Fix a
malrelation $\left(u, v\right)$ for which the value
$\left|\left[u, v\right]\right|$ is minimum (such a
$\left(u, v\right)$ exists, since there exists a malrelation).
Thus, $u \in E$ and $v \in E$ and $u <_1 v$ but not
$\phi\left(u\right) \leq \phi\left(v\right)$.

If $u$ was $<_1$-covered by $v$, then we would obtain
$\phi\left(u\right) \leq \phi\left(v\right)$
\ \ \ \ \footnote{\textit{Proof.} Assume that $u$ is
$<_1$-covered by $v$. Thus, $u$ and $v$ are $<_2$-comparable
(since the double poset $\EE$ is tertispecial). In other words,
we have either $u <_2 v$ or $u = v$ or $v <_2 u$. In the
first of these three cases, we obtain
$\phi\left(u\right) \leq \phi\left(v\right)$ by applying
Condition 1 to $e = u$ and $f = v$. In the third of these
cases, we obtain
$\phi\left(u\right) < \phi\left(v\right)$ (and thus
$\phi\left(u\right) \leq \phi\left(v\right)$) 
by applying Condition 2 to $e = u$ and $f = v$. The second
of these cases cannot happen because $u <_1 v$. Thus, we
always have $\phi\left(u\right) \leq \phi\left(v\right)$,
qed.}, which would
contradict the fact that we do not have
$\phi\left(u\right) \leq \phi\left(v\right)$. Hence, $u$ is not
$<_1$-covered by $v$. Consequently, there exists a $w \in E$
such that $u <_1 w <_1 v$ (since $u <_1 v$). Consider this
$w$. Applying \eqref{pf.lem.admissible.cover.1} to $a = u$,
$c = w$ and $b = v$, we see that both numbers
$\left|\left[u, w\right]\right|$ and
$\left|\left[w, v\right]\right|$ are smaller than
$\left|\left[u, v\right]\right|$. Hence, neither
$\left(u, w\right)$ nor $\left(w, v\right)$ is a malrelation
(since we picked $\left(u, v\right)$ to be a malrelation with
minimum $\left|\left[u, v\right]\right|$). Therefore, we have
$\phi\left(u\right) \leq \phi\left(w\right)$ (since $u <_1 w$,
but $\left(u, w\right)$ is not a malrelation) and
$\phi\left(w\right) \leq \phi\left(v\right)$ (since $w <_1 v$,
but $\left(w, v\right)$ is not a malrelation).
Combining these two inequalities, we obtain
$\phi\left(u\right) \leq \phi\left(w\right)
\leq \phi\left(v\right)$. This contradicts
the fact that we do not have
$\phi\left(u\right) \leq \phi\left(v\right)$. This contradiction
concludes the proof of Claim 1.

Instead of Claim 2, we shall prove the following stronger claim:

\begin{statement}
\textit{Claim 3:} Every $e \in E$ and $f \in E$ satisfying
$e <_1 f$ and not $e <_2 f$ satisfy
$\phi\left(e\right) < \phi\left(f\right)$.
\end{statement}

\textit{Proof of Claim 3:} Assume the contrary. Thus, there
exists a pair $\left(e, f\right) \in E \times E$ satisfying
$e <_1 f$ and not $e <_2 f$ but not
$\phi\left(e\right) < \phi\left(f\right)$.
Such a pair will be called a \textit{malrelation}. Fix a
malrelation $\left(u, v\right)$ for which the value
$\left|\left[u, v\right]\right|$ is minimum (such a
$\left(u, v\right)$ exists, since there exists a malrelation).
Thus, $u \in E$ and $v \in E$ and $u <_1 v$ and not $u <_2 v$
but not $\phi\left(u\right) < \phi\left(v\right)$.

If $u$ was $<_1$-covered by $v$, then we would obtain
$\phi\left(u\right) < \phi\left(v\right)$
easily\footnote{\textit{Proof.} Assume that $u$ is
$<_1$-covered by $v$. Thus, $u$ and $v$ are $<_2$-comparable
(since the double poset $\EE$ is tertispecial). In other words,
we have either $u <_2 v$ or $u = v$ or $v <_2 u$. Since
neither $u <_2 v$ nor $u = v$ can hold (indeed, $u <_2 v$
is ruled out by assumption, whereas $u = v$ is ruled out by
$u <_1 v$), we thus have $v <_2 u$. Therefore,
$\phi\left(u\right) < \phi\left(v\right)$
by Condition 2 (applied to $e = u$ and $f = v$), qed.}, which
would contradict the fact that we do not have
$\phi\left(u\right) < \phi\left(v\right)$. Hence, $u$ is not
$<_1$-covered by $v$. Consequently, there exists a $w \in E$
such that $u <_1 w <_1 v$ (since $u <_1 v$). Consider this
$w$. Applying \eqref{pf.lem.admissible.cover.1} to $a = u$,
$c = w$ and $b = v$, we see that both numbers
$\left|\left[u, w\right]\right|$ and
$\left|\left[w, v\right]\right|$ are smaller than
$\left|\left[u, v\right]\right|$. Hence, neither
$\left(u, w\right)$ nor $\left(w, v\right)$ is a malrelation
(since we picked $\left(u, v\right)$ to be a malrelation with
minimum $\left|\left[u, v\right]\right|$).

But $\phi\left(v\right) \leq \phi\left(u\right)$ (since we do
not have $\phi\left(u\right) < \phi\left(v\right)$). On the
other hand, $u <_1 w$ and therefore $\phi\left(u\right) \leq
\phi\left(w\right)$ (by Claim 1, applied to $e = u$ and
$f = w$). Furthermore, $w <_1 v$ and
thus $\phi\left(w\right) \leq \phi\left(v\right)$ (by Claim 1,
applied to $e = w$ and $f = v$).
The chain of inequalities
$\phi\left(v\right) \leq \phi\left(u\right)
\leq \phi\left(w\right) \leq \phi\left(v\right)$ ends with
the same term that it begins with; therefore, it must be a chain
of equalities. In other words, we have
$\phi\left(v\right) = \phi\left(u\right)
= \phi\left(w\right) = \phi\left(v\right)$.

Now, using $\phi\left(w\right) = \phi\left(v\right)$, we can
see that $w <_2 v$\ \ \ \ \footnote{\textit{Proof.} Assume
the contrary. Thus, we do not have $w <_2 v$. But
$\phi\left(w\right) = \phi\left(v\right)$ shows that we do not
have $\phi\left(w\right) < \phi\left(v\right)$. Hence,
$\left(w, v\right)$ is a malrelation (since $w <_1 v$ and not
$w <_2 v$ but not $\phi\left(w\right) < \phi\left(v\right)$).
This contradicts the fact that $\left(w, v\right)$ is not
a malrelation. This contradiction completes the proof.}.
The same argument (applied to $u$ and $w$ instead of $w$ and
$v$) shows that $u <_2 w$. Thus, $u <_2 w <_2 v$, which
contradicts the fact that we do not have $u <_2 v$. This
contradiction proves Claim 3.

\textit{Proof of Claim 2:} The condition ``$f <_2 e$'' is stronger
than ``not $e <_2 f$''. Thus, Claim 2 follows from Claim 3.

Claims 1 and 2 are now both proven, and so
Lemma~\ref{lem.Epartition.cover} follows.
\end{proof}

\begin{proof}
[Proof of Lemma \ref{lem.coeven.all-one}.] Consider the following three
logical statements:

\textit{Statement 1:} The $G$-orbit $O$ is $E$-coeven.

\textit{Statement 2:} All elements of $O$ are $E$-coeven.

\textit{Statement 3:} At least one element of $O$ is $E$-coeven.

Statements 1 and 2 are equivalent (according to the definition of
when a $G$-orbit is $E$-coeven).
Our goal is to prove that Statements 1 and 3 are
equivalent (because this is precisely what Lemma \ref{lem.coeven.all-one}
says). Thus, it suffices to show that Statements 2 and 3 are
equivalent (because we already know that Statements 1 and 2 are
equivalent). Since Statement 2 obviously implies Statement 3 (in fact,
the $G$-orbit $O$ contains at least one element), we therefore only
need to show that Statement 3 implies Statement 2. Thus, assume that Statement
3 holds. We need to prove that Statement 2 holds.

There exists at least one $E$-coeven $\phi\in O$ (because we assumed that
Statement 3 holds). Consider this $\phi$. Now, let $\pi\in O$ be arbitrary. We
shall show that $\pi$ is $E$-coeven.

We know that $\phi$ is $E$-coeven. In other words,
\begin{equation}
\text{every }g\in G\text{ satisfying }g\phi=\phi\text{ is }E\text{-even.}%
\label{pf.lem.coeven-all-one.1}%
\end{equation}

Now, let $g\in G$ be such that $g\pi=\pi$. Since $\phi$ belongs to the
$G$-orbit $O$, we have $O=G\phi$. Now, $\pi\in O=G\phi$. In other words, there
exists some $h\in G$ such that $\pi=h\phi$. Consider this $h$. We have
$g\pi=\pi$. Since $\pi=h\phi$, this rewrites as $gh\phi=h\phi$. In other
words, $h^{-1}gh\phi=\phi$. Thus, (\ref{pf.lem.coeven-all-one.1}) (applied to
$h^{-1}gh$ instead of $g$) shows that $h^{-1}gh$ is $E$-even. In other words,%
\begin{equation}
\text{the action of }h^{-1}gh\text{ on }E\text{ is an even permutation of
}E\text{.}\label{pf.lem.coeven-all-one.2}%
\end{equation}

Now, let $\varepsilon$ be the group homomorphism from $G$ to
$\operatorname{Aut}E$\ \ \ \ \footnote{We use the notation
$\operatorname{Aut}E$ for the group of all permutations
of the set $E$.} which describes the $G$-action on $E$. Then,
$\varepsilon\left(  h^{-1}gh\right)  $ is the action of $h^{-1}gh$ on $E$, and
thus is an even permutation of $E$ (by (\ref{pf.lem.coeven-all-one.2})).

But since $\varepsilon$ is a group homomorphism, we have $\varepsilon\left(
h^{-1}gh\right)  =\varepsilon\left(  h\right)  ^{-1}\varepsilon\left(
g\right)  \varepsilon\left(  h\right)  $. Thus, the permutations
$\varepsilon\left(  h^{-1}gh\right)  $ and $\varepsilon\left(  g\right)  $ of
$E$ are conjugate. Since the permutation $\varepsilon\left(  h^{-1}gh\right)
$ is even, this shows that the permutation $\varepsilon\left(  g\right)  $ is
even. In other words, the action of $g$ on $E$ is an even permutation of $E$.
In other words, $g$ is $E$-even.

Now, let us forget that we fixed $g$. We thus have shown that every $g\in G$
satisfying $g\pi=\pi$ is $E$-even. In other words, $\pi$ is $E$-coeven.

Let us now forget that we fixed $\pi$. Thus, we have proven that every $\pi\in
O$ is $E$-coeven. In other words, Statement 2 holds. We have thus shown that
Statement 3 implies Statement 2. Consequently, Statements 2 and 3 are
equivalent, and so the proof of Lemma \ref{lem.coeven.all-one} is complete.
\end{proof}

\begin{vershort}
We leave the fairly straightforward proof of Proposition~\ref{prop.xxOw}
to the reader.
\end{vershort}

\begin{verlong}
\begin{proof}[Proof of Proposition~\ref{prop.xxOw}.]

(a) We need to prove that $g \pi \in \Par \EE$ for each $g \in G$ and
each $\pi \in \Par \EE$. So let us fix $g \in G$ and $\pi \in \Par \EE$.
We must prove that $g \pi \in \Par \EE$.

We know that $\pi$ is an $\EE$-partition (since $\pi \in \Par \EE$).
In other words, the following two claims hold:

\begin{statement}
\textit{Claim 1:} Every $e \in E$ and $f \in E$ satisfying $e <_1 f$
satisfy $\pi\left(e\right) \leq \pi\left(f\right)$.
\end{statement}

\begin{statement}
\textit{Claim 2:} Every $e \in E$ and $f \in E$ satisfying $e <_1 f$
and $f <_2 e$ satisfy $\pi\left(e\right) < \pi\left(f\right)$.
\end{statement}

But our goal is to prove that $g \pi \in \Par \EE$. In other words,
our goal is to prove that $g \pi$ is an $\EE$-partition. In other
words, we must show that the following two claims hold:

\begin{statement}
\textit{Claim 3:} Every $e \in E$ and $f \in E$ satisfying $e <_1 f$
satisfy
$\left(g \pi\right)\left(e\right) \leq \left(g \pi\right)\left(f\right)$.
\end{statement}

\begin{statement}
\textit{Claim 4:} Every $e \in E$ and $f \in E$ satisfying $e <_1 f$
and $f <_2 e$ satisfy
$\left(g \pi\right)\left(e\right) < \left(g \pi\right)\left(f\right)$.
\end{statement}

Let us first prove Claim 4:

\textit{Proof of Claim 4:} Let $e \in E$ and $f \in E$ be such that
$e <_1 f$ and $f <_2 e$. The definition of the $G$-action on the set
$\left\{1, 2, 3, \ldots\right\}^E$ shows that
$\left(g \pi\right) \left(e\right) = \pi \left(g^{-1} e\right)$ and
$\left(g \pi\right) \left(f\right) = \pi \left(g^{-1} f\right)$.

Now, from $e <_1 f$, we obtain
$g^{-1} e <_1 g^{-1} f$ (since $G$ preserves
the relation $<_1$). Also, from $f <_2 e$, we obtain
$g^{-1} f <_2 g^{-1} e$ (since $G$ preserves
the relation $<_2$). Thus, we can apply Claim 2 to
$g^{-1} e$ and $g^{-1} f$ instead of $e$ and $f$. As a result, we
conclude that
$\pi \left(g^{-1} e\right) < \pi \left(g^{-1} f\right)$. In view of
$\left(g \pi\right) \left(e\right) = \pi \left(g^{-1} e\right)$ and
$\left(g \pi\right) \left(f\right) = \pi \left(g^{-1} f\right)$, this
rewrites as
$\left(g \pi\right)\left(e\right) < \left(g \pi\right)\left(f\right)$.
Thus, Claim 4 is proven.

We thus have derived Claim 4 from Claim 2. Similarly, Claim 3 can be
derived from Claim 1. Thus, Claim 3 and Claim 4 are proven. As
explained above, this shows that $g \pi$ is an $\EE$-partition. In
other words, $g \pi \in \Par \EE$. This completes our proof of
Proposition~\ref{prop.xxOw} (a).

(b) We need to show that $\xx_{\pi, w} = \xx_{\psi, w}$ for any two
elements $\pi$ and $\psi$ of $O$. So let $\pi$ and $\psi$ be two
elements of $O$.

We know that $G$ preserves $w$. In other words, every $g \in G$ and
$e \in E$ satisfy
\begin{equation}
w \left( ge \right) = w \left( e \right) .
\label{pf.prop.xxOw.b.1}
\end{equation}

The two elements $\pi$ and $\psi$ belong to one and
the same $G$-orbit (namely, to $O$). Thus, there exists a $g \in G$
satisfying $g \pi = \psi$. Consider this $g$. For each $e \in E$,
we have
\begin{equation}
\underbrace{\psi}_{= g \pi} \left(e\right)
= \left(g \pi\right) \left(e\right)
= \pi \left(g^{-1} e\right)
\label{pf.prop.xxOw.b.3}
\end{equation}
(by the definition of the $G$-action on the set
$\left\{1, 2, 3, \ldots\right\}^E$).

The definition of $\xx_{\pi, w}$ yields
$\xx_{\pi, w}
= \prod_{e \in E} x_{\pi\left(e\right)}^{w\left(e\right)}$.
The definition of $\xx_{\psi, w}$ yields
\begin{align*}
\xx_{\psi, w}
&= \prod_{e \in E} \underbrace{x_{\psi\left(e\right)}^{w\left(e\right)}}_{\substack{= x_{\pi \left(g^{-1} e\right)}^{w\left(e\right)} \\ \text{(by (\ref{pf.prop.xxOw.b.3}))}}}
= \prod_{e \in E} x_{\pi \left(g^{-1} e\right)}^{w\left(e\right)}
= \prod_{e \in E}
     \underbrace{x_{\pi \left(g^{-1} g e\right)}^{w\left(g e\right)}}_{
                \substack{= x_{\pi\left(e\right)}^{w\left(g e\right)} \\
                           \text{(since } g^{-1} g e = e \text{)}}} \\
& \ \ \ \ \ \ \ \ \ \ \left( \begin{array}{c}
\text{here, we have substituted } g e \text{ for } e \text{ in the product,} \\
\text{since the map } E \to E, \ e \mapsto g e \text{ is a bijection}
\end{array} \right) \\
&= \prod_{e \in E} \underbrace{x_{\pi\left(e\right)}^{w\left(g e\right)}}_{\substack{= x_{\pi\left(e\right)}^{w\left(e\right)} \\ \text{(by (\ref{pf.prop.xxOw.b.1}))}}}
= \prod_{e \in E} x_{\pi\left(e\right)}^{w\left(e\right)}
= \xx_{\pi, w} .
\end{align*}
Thus, $\xx_{\pi, w} = \xx_{\psi, w}$ is proven. This completes the
proof of Proposition~\ref{prop.xxOw} (b).
\end{proof}
\end{verlong}

Next, we will show three simple properties of posets on which
groups act. First, we introduce a notation:

\begin{definition}
\label{def.G-poset.g-orbit}
Let $G$ be a group. Let $g \in G$. Let $E$ be a $G$-set.
Then, the subgroup $\left< g \right>$ of $G$ (this is the
subgroup of $G$ generated by $g$) also acts on $E$. The
$\left< g \right>$-orbits on $E$ will be called the
\textit{$g$-orbits} on $E$. When $E$ is clear from the
context, we shall simply call them the
\textit{$g$-orbits}.

We can also describe these $g$-orbits explicitly: For any given
$e \in E$, the $g$-orbit of $e$ (that is, the unique
$g$-orbit that contains $e$) is
$\left< g \right> e = \left\{ g^k e \mid k \in \ZZ \right\}$.

Equivalently, the $g$-orbits on $E$ can be characterized as
follows: The action of $g$ on $E$ is a permutation of $E$.
The cycles of this permutation are the $g$-orbits on $E$
(at least when $E$ is finite).
\end{definition}

\begin{proposition}
\label{prop.G-poset.quot.poset}
Let $E$ be a set. Let $<_1$ be a strict partial order on $E$.
Let $G$ be a finite group which acts on $E$. Assume that $G$ preserves
the relation $<_1$.

Let $g \in G$. Let $E^g$ be the set of all $g$-orbits
on $E$. Define a binary relation $<_1^g$ on $E^g$ by
\[
\left(  u<_{1}^{g}v\right)  \Longleftrightarrow\left(  \text{there exist }a\in
u\text{ and }b\in v\text{ with }a<_{1}b\right)  .
\]
Then, $<_1^g$ is a strict partial order.
\end{proposition}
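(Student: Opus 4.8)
The plan is to verify the three defining properties of a strict partial order (irreflexivity, transitivity, and antisymmetry) directly from the definition of $<_1^g$, exploiting throughout the fact that each power $g^k$ acts on $E$ as an automorphism of the poset $\left(E, <_1\right)$. Indeed, since $G$ preserves $<_1$, applying the hypothesis to both $g^k$ and $g^{-k}$ yields the equivalence
\[
\left(a <_1 b\right) \Longleftrightarrow \left(g^k a <_1 g^k b\right)
\qquad \text{for all } a, b \in E \text{ and } k \in \ZZ ,
\]
which is essentially the only tool I expect to need. I would also record the elementary observation that each $g$-orbit is finite: because $G$ is finite, $g$ has some finite order $n$, so $g^n$ is the identity and every $g$-orbit $\left< g \right> e$ equals $\left\{ e, ge, \ldots, g^{n-1} e \right\}$.

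For transitivity, suppose $u <_1^g v$ and $v <_1^g w$. Then there are $a \in u$ and $b \in v$ with $a <_1 b$, and there are $b' \in v$ and $c \in w$ with $b' <_1 c$. Since $b$ and $b'$ lie in the same $g$-orbit $v$, I can write $b' = g^k b$ for some $k \in \ZZ$; applying the displayed equivalence to $a <_1 b$ gives $g^k a <_1 g^k b = b'$, and then transitivity of $<_1$ yields $g^k a <_1 c$. As $g^k a \in u$ and $c \in w$, this exhibits $u <_1^g w$, as desired.

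The main obstacle is irreflexivity, and this is the only place where the finiteness of the orbits (and hence of $G$) is genuinely used. I would argue by contradiction: suppose $u <_1^g u$ for some $g$-orbit $u$. Then there exist $a, b \in u$ with $a <_1 b$, and writing $b = g^k a$ I obtain $a <_1 g^k a$. Applying the automorphism $g^k$ repeatedly (via the displayed equivalence) produces the chain $a <_1 g^k a <_1 g^{2k} a <_1 \cdots <_1 g^{nk} a$, where $n$ is the order of $g$; transitivity of $<_1$ then gives $a <_1 g^{nk} a$. But $g^{nk} = \left(g^n\right)^k$ is the identity, so $g^{nk} a = a$, whence $a <_1 a$, contradicting the irreflexivity of $<_1$. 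Finally, antisymmetry requires no extra work: if $u <_1^g v$ and $v <_1^g u$ both held, the transitivity just proven would force $u <_1^g u$, contradicting irreflexivity. Thus $<_1^g$ is irreflexive, transitive, and antisymmetric, i.e.\ a strict partial order.
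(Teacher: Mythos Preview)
Your proof is correct and follows essentially the same approach as the paper's own proof: irreflexivity via the chain $a <_1 g^k a <_1 g^{2k} a <_1 \cdots <_1 g^{nk} a = a$ (using the finite order of $g$), transitivity by shifting one witness by an appropriate power of $g$ to align the middle elements, and antisymmetry as a formal consequence of the other two. The only cosmetic difference is that you state the two-sided equivalence $\left(a <_1 b\right) \Longleftrightarrow \left(g^k a <_1 g^k b\right)$ up front, whereas the paper invokes ``$G$ preserves $<_1$'' at each step; both arguments in fact only use the forward implication.
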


Proposition~\ref{prop.G-poset.quot.poset} is precisely \cite[Lemma 2.4]{Joch},
but let us outline the proof for the sake of completeness:

\begin{proof}[Proof of Proposition~\ref{prop.G-poset.quot.poset}.]
Let us first show that the relation $<_{1}^{g}$ is irreflexive.
Indeed, assume the contrary. Thus, there exists a $u \in E^g$
such that $u <_1^g u$. Consider this $u$.

We have $u \in E^g$. In other words, $u$ is a $g$-orbit on $E$.

Since $u<_{1}^{g}u$, there exist $a\in u$ and $b\in u$ with
$a<_{1}b$ (by the definition of the relation $<_{1}^{g}$).
Consider these $a$ and $b$. There exists a $k\in\mathbb{Z}$ such
that $b=g^{k}a$ (since $a$ and $b$ both lie in one and the same $g$-orbit
$u$). Consider this $k$.

Each element of $G$ has finite order (since $G$ is a finite group).
In particular, the element $g$ of $G$ has finite order. In other words,
there exists a positive integer $n$ such that $g^{n}=1_G$.
Consider this $n$. Every $p \in \ZZ$ satisfies $g^{np}=\left(
g^{n}\right)  ^{p}=1_G$ (since $g^{n}=1_G$).
Applying this to $p=k$, we obtain $g^{nk}=1_G$.

Now, $a<_{1}b=g^{k}a$. Since $G$ preserves the relation $<_{1}$, this shows
that $ha<_{1}hg^{k}a$ for every $h\in G$.
\begin{vershort}
Thus, $g^{\ell k}a<_{1}g^{\ell k}g^{k}a$ for every $\ell\in \NN $.
\end{vershort}
\begin{verlong}
Thus, for every $\ell \in \NN$, we have
$g^{\ell k}a<_{1}g^{\ell k}g^{k}a$ (by the inequality $ha <_1 h g^k a$,
applied to $h = g^{\ell k}$).
\end{verlong}
Hence,
$g^{\ell k} a <_{1} g^{\ell k} g^{k} a
= g^{\ell k + k} a = g^{\left(  \ell+1\right)  k} a$
for every $\ell\in \NN $.
Consequently, $g^{0k}a<_{1}g^{1k}a<_{1}g^{2k}a<_{1}\cdots<_{1}g^{nk}a$. Thus,
$g^{0k}a<_{1}\underbrace{g^{nk}}_{=1_G}a=a$, which
contradicts $\underbrace{g^{0k}}_{=g^0=1_G}a=1_Ga=a$.
This contradiction proves that our assumption was
wrong. Hence, the relation $<_{1}^{g}$ is irreflexive.

Let us next show that the relation $<_{1}^{g}$ is transitive. Indeed, let $u$,
$v$ and $w$ be three elements of $E^{g}$ such that $u<_{1}^{g}v$ and
$v<_{1}^{g}w$. We must prove that $u<_{1}^{g}w$.

There exist $a\in u$ and $b\in v$ with $a<_{1}b$ (since $u<_{1}^{g}v$).
Consider these $a$ and $b$.

There exist $a^{\prime}\in v$ and $b^{\prime}\in w$ with $a^{\prime}
<_{1}b^{\prime}$ (since $v<_{1}^{g}w$). Consider these $a^{\prime}$ and
$b^{\prime}$.

The set $v$ is a $g$-orbit (since $v \in E^g$).
The elements $b$ and $a^{\prime}$ lie in one and the same $g$-orbit (namely,
in $v$). Hence, there exists some $k\in\mathbb{Z}$ such that $a^{\prime}
=g^{k}b$. Consider this $k$. We have $a<_{1}b$ and thus $g^{k}a<_{1}g^{k}b$
(since $G$ preserves the relation $<_{1}$). Hence,
$g^{k}a<_{1}g^{k}b=a^{\prime}<_{1}b^{\prime}$.
Since $g^{k}a\in u$ (because $a\in u$, and because $u$ is a
$g$-orbit) and $b^{\prime}\in w$, this shows that $u<_{1}^{g}w$
(by the definition of the relation $<_{1}^{g}$).
We thus have proven that the
relation $<_{1}^{g}$ is transitive.

Now, we know that the relation $<_{1}^{g}$ is irreflexive and transitive, and
thus also antisymmetric (since every irreflexive and transitive binary
relation is antisymmetric). In other words, $<_{1}^{g}$ is a strict partial
order. This proves Proposition~\ref{prop.G-poset.quot.poset}.
\end{proof}

\begin{remark}
Proposition~\ref{prop.G-poset.quot.poset} can be generalized:
Let $E$ be a set. Let $<_1$ be a strict partial order on $E$.
Let $G$ be a finite group which acts on $E$. Assume that $G$ preserves
the relation $<_1$. Let $H$ be a subgroup of $G$.
Let $E^H$ be the set of all $H$-orbits on $E$.
Define a binary relation $<_{1}^H$ on $E^H$ by
\[
\left(  u<_{1}^H v\right)  \Longleftrightarrow\left(  \text{there exist }a\in
u\text{ and }b\in v\text{ with }a<_{1}b\right)  .
\]
Then, $<_1^H$ is a strict partial order.

This result (whose proof is quite similar to that of
Proposition~\ref{prop.G-poset.quot.poset}) implicitly appears in
\cite[p. 30]{Stanley-Peck}.
\end{remark}

\begin{proposition}
\label{prop.G-poset.quot.double}
Let $\EE = \left(E, <_1, <_2\right)$ be a tertispecial double poset.
Let $G$ be a finite group which acts on $E$. Assume that $G$ preserves
both relations $<_1$ and $<_2$.

Let $g \in G$. Let $E^g$ be the set of all $g$-orbits
on $E$. Define a binary relation $<_{1}^{g}$ on $E^{g}$ by
\[
\left(  u<_{1}^{g}v\right)  \Longleftrightarrow\left(  \text{there exist }a\in
u\text{ and }b\in v\text{ with }a<_{1}b\right)  .
\]
Define a binary relation $<_{2}^{g}$ on $E^{g}$ by
\[
\left(  u<_{2}^{g}v\right)  \Longleftrightarrow\left(  \text{there exist }a\in
u\text{ and }b\in v\text{ with }a<_{2}b\right)  .
\]
Let $\EE^g$ be the triple $\left(E^g, <_1^g, <_2^g\right)$. Then,
$\EE^g$ is a tertispecial double poset.
\end{proposition}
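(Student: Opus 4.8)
The plan is to prove Proposition~\ref{prop.G-poset.quot.double} in three stages, reusing Proposition~\ref{prop.G-poset.quot.poset} for the easy parts so that only the tertispecial condition requires real work. First I would invoke Proposition~\ref{prop.G-poset.quot.poset} twice: once with the relation $<_1$ to conclude that $<_1^g$ is a strict partial order on $E^g$, and once with $<_2$ in place of $<_1$ (this is legitimate since $G$ preserves $<_2$ as well) to conclude that $<_2^g$ is a strict partial order. This immediately shows that $\EE^g = \left(E^g, <_1^g, <_2^g\right)$ is a double poset, provided we also note that $E^g$ is finite (it is a set of subsets of the finite set $E$). It then remains only to verify that this double poset is tertispecial.

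To check tertispeciality, let $u$ and $v$ be two $g$-orbits such that $u$ is $<_1^g$-covered by $v$; I must show that $u$ and $v$ are $<_2^g$-comparable. By the definition of $<_1^g$ together with $u <_1^g v$, there exist $a \in u$ and $b \in v$ with $a <_1 b$. The plan is to descend to the original double poset $\EE$ and use that it is tertispecial. The obstacle is that the witnessing pair $\left(a, b\right)$ need not satisfy ``$a$ is $<_1$-covered by $b$'' — the covering relation in the quotient $E^g$ does not pull back directly to a covering relation in $E$. So I would instead choose a pair $\left(a, b\right)$ with $a \in u$, $b \in v$, $a <_1 b$ for which the ``interval'' $\left[a, b\right] = \left\{e \in E \mid a <_1 e <_1 b\right\}$ (the notation from the proof of Lemma~\ref{lem.admissible.cover}) is as small as possible, and argue that for this minimal pair, $a$ is actually $<_1$-covered by $b$.

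Here is the key step in more detail, and where I expect the main difficulty. Suppose $a$ is not $<_1$-covered by $b$; then there is some $c \in E$ with $a <_1 c <_1 b$. The $g$-orbit $w$ of $c$ satisfies $u <_1^g w <_1^g v$ (witnessed by $a, c$ and by $c, b$). I would like this to contradict the minimality of $\left[a,b\right]$ or the covering of $u$ by $v$. The clean route is to exploit that $u$ is $<_1^g$-covered by $v$: this forbids any $g$-orbit strictly between $u$ and $v$ unless $w = u$ or $w = v$. If $w = v$, then $c$ lies in the same orbit as $b$, so $c = g^k b$ for some $k$, and applying $g$-translates to $c <_1 b$ produces an infinite strictly descending (or the telescoping/cyclic contradiction used in the proof of Proposition~\ref{prop.G-poset.quot.poset}) chain, which is impossible since $g$ has finite order; symmetrically $w = u$ forces $a$ and $c$ into one orbit with $a <_1 c$, again impossible. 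Hence $a <_1 c <_1 b$ cannot occur with $c$ in orbit $u$ or $v$, and cannot occur with $c$ in a third orbit either (that would violate the covering $u <_1^g v$). Therefore $a$ is $<_1$-covered by $b$. Now tertispeciality of $\EE$ yields that $a$ and $b$ are $<_2$-comparable, i.e.\ $a <_2 b$ or $a = b$ or $b <_2 a$; the case $a = b$ is ruled out by $a <_1 b$, so either $a <_2 b$ (giving $u <_2^g v$) or $b <_2 a$ (giving $v <_2^g u$). In either case $u$ and $v$ are $<_2^g$-comparable, which is what we needed. The delicate point to get right is the finite-order cyclic argument ruling out $w \in \left\{u, v\right\}$, which is exactly the mechanism already used to prove irreflexivity in Proposition~\ref{prop.G-poset.quot.poset}, so I would phrase it to mirror that proof.
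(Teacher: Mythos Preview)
Your proposal is correct and follows the same overall architecture as the paper (invoke Proposition~\ref{prop.G-poset.quot.poset} twice, then verify tertispeciality by descending to a covering pair in $E$), but you make the tertispeciality step harder than it needs to be. The paper picks \emph{any} witnesses $a\in u$, $b\in v$ with $a<_1 b$ --- no minimality --- and simply observes: if some $c\in E$ satisfies $a<_1 c<_1 b$, then its $g$-orbit $w$ satisfies $u<_1^g w<_1^g v$ directly from the definition of $<_1^g$, contradicting the covering. Your concern about the cases $w=u$ or $w=v$ is a red herring: the statement $u<_1^g w$ is established straight from the definition (witnessed by $a<_1 c$), and if $w$ happened to equal $u$ this would read $u<_1^g u$, violating the irreflexivity of $<_1^g$ that you already obtained from Proposition~\ref{prop.G-poset.quot.poset}. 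So those cases are automatically impossible, and your minimal-interval choice together with the separate cyclic argument are superfluous --- that cyclic work is precisely what the irreflexivity half of Proposition~\ref{prop.G-poset.quot.poset} has already packaged for you.
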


\begin{proof}[Proof of Proposition~\ref{prop.G-poset.quot.double}.]
\begin{vershort}
Proposition~\ref{prop.G-poset.quot.poset} shows that $<_{1}^{g}$ is a
strict partial order. Similarly, $<_{2}^{g}$ is a
strict partial order.
\end{vershort}
\begin{verlong}
Both relations $<_1$ and $<_2$ are strict partial orders
(since $\EE$ is a double poset).
Proposition~\ref{prop.G-poset.quot.poset} shows that $<_{1}^{g}$ is a
strict partial order.
Proposition~\ref{prop.G-poset.quot.poset} (applied to $<_2$ and
$<_2^g$ instead of $<_1$ and $<_1^g$) shows that $<_{2}^{g}$ is a
strict partial order.
\end{verlong}
Thus, $\EE^g
= \left(E^g, <_1^g, <_2^g\right)$ is a double poset. It remains
to show that this double poset $\EE^g$ is tertispecial.

Let $u$ and $v$ be two elements of
$E^{g}$ such that $u$ is $<_{1}^{g}$-covered by $v$. We shall prove that $u$
and $v$ are $<_{2}^{g}$-comparable.

We have $u<_{1}^{g}v$ (since $u$ is $<_{1}^{g}$-covered by $v$). In other
words, there exist $a\in u$ and $b\in v$ with $a<_{1}b$ (by the
definition of the relation $<_{1}^{g}$). Consider these $a$ and $b$.

\begin{vershort}
If there was a $c \in E$ satisfying $a <_1 c <_1 b$, then we
would have $u <_1^g w <_1^g v$ with $w$ being the $g$-orbit of
$c$, and this would contradict the condition that $u$ is
$<_1^g$-covered by $v$. Hence, no such $c$ can exist.
\end{vershort}
\begin{verlong}
There exists no $c \in E$ satisfying $a <_1 c <_1 b$
\ \ \ \ \footnote{\textit{Proof.} Assume the contrary. Thus,
there exists some $c \in E$ satisfying $a <_1 c <_1 b$. Consider
this $c$. Let $w$ be the $g$-orbit of $c$. Thus, $w \in E^g$
and $c \in w$.
\par
Now, the elements $a \in u$ and $c \in w$ satisfy
$a <_1 c$. Hence, $u <_1^g w$ (by the definition of the
relation $<_1^g$).
\par
Also, the elements $c \in w$ and $b \in v$ satisfy
$c <_1 b$. Hence, $w <_1^g v$ (by the definition of the
relation $<_1^g$).
\par
Now, we have $u <_1^g w <_1^g v$. This contradicts the
fact that $u$ is $<_1^g$-covered by $v$. Thus, we have
obtained a contradiction; hence, our assumption was wrong.
Qed.}.
\end{verlong}
In other words, $a$ is $<_1$-covered by $b$ (since we know that
$a <_1 b$). Thus, $a$ and $b$ are $<_2$-comparable (since the
double poset $\EE$ is tertispecial).
\begin{vershort}
Consequently, $u$ and $v$ are $<_2^g$-comparable.
\end{vershort}
\begin{verlong}
In other words, either $a <_2 b$ or $a = b$ or $b <_2 a$.
Therefore, either $u <_2^g v$ or $u = v$ or $v <_2^g u$
\ \ \ \ \footnote{Here, we have used the following facts:
\begin{itemize}
\item If $a <_2 b$, then $u <_2^g v$. (This follows from the
definition of the relation $<_2^g$, since $a \in u$ and
$b \in v$.)
\item If $a = b$, then $u = v$. (This follows from the
fact that $u$ is the $g$-orbit of $a$ (since $u$ is a $g$-orbit
and contains $a$) whereas $v$ is the $g$-orbit of $b$
(since $v$ is a $g$-orbit and contains $b$).)
\item If $b <_2 a$, then $v <_2^g u$. (This follows from the
definition of the relation $<_2^g$, since $b \in v$ and
$a \in u$.)
\end{itemize}
}. In other words, $u$ and $v$ are $<_2^g$-comparable.
\end{verlong}

Now, let us forget that we fixed $u$ and $v$. We thus have shown that if
$u$ and $v$ are two elements of $E^{g}$ such that $u$ is $<_{1}^{g}$-covered
by $v$, then $u$ and $v$ are $<_{2}^{g}$-comparable. In other words, the
double poset $\EE^g = \left(E^g, <_1^g, <_2^g\right)$ is tertispecial. This
proves Proposition~\ref{prop.G-poset.quot.double}.
\end{proof}

\begin{verlong}
Next, let us state a basic fact about $G$-sets:

\begin{proposition}
\label{prop.G-set.quot.Phi}Let $G$ be a finite group. Let $E$ be a $G$-set.
Let $g \in G$.
Let $X$ be any set. Recall that the set $X^{E}$ of all maps $E\rightarrow X$
becomes a $G$-set (according to Definition \ref{def.G-sets.terminology} (d)).

There is a bijection $\Phi$ between

\begin{itemize}
\item the maps $\pi:E\rightarrow X$ satisfying $g\pi=\pi$
\end{itemize}

and

\begin{itemize}
\item the maps $\overline{\pi}:E^{g}\rightarrow X$.
\end{itemize}

Namely, this bijection $\Phi$ sends any map $\pi:E\rightarrow X$ satisfying
$g\pi=\pi$ to the map $\overline{\pi}:E^{g}\rightarrow X$ defined by
\[
\overline{\pi}\left(  u\right)  =\pi\left(  a\right)  \qquad\text{for every
}u\in E^{g}\text{ and }a\in u.
\]

\end{proposition}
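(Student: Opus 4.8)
The plan is to prove Proposition~\ref{prop.G-set.quot.Phi} by explicitly constructing both the map $\Phi$ and its candidate inverse, then verifying that the two compositions are identities. The central observation is that a map $\pi : E \to X$ satisfies $g\pi = \pi$ if and only if it is constant on every $g$-orbit; once this is established, such maps correspond naturally to maps defined on the set $E^g$ of $g$-orbits.

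First I would unfold the definition of the $G$-action on $X^E$ from Definition~\ref{def.G-sets.terminology} (d): we have $\left(g\pi\right)\left(e\right) = \pi\left(g^{-1}e\right)$ for every $e \in E$. Hence the condition $g\pi = \pi$ is equivalent to requiring $\pi\left(g^{-1}e\right) = \pi\left(e\right)$ for all $e \in E$, which (replacing $e$ by $ge$) is the same as $\pi\left(ge\right) = \pi\left(e\right)$ for all $e \in E$. Iterating this, I obtain $\pi\left(g^k e\right) = \pi\left(e\right)$ for every $k \in \ZZ$ and every $e \in E$. Since the $g$-orbit of $e$ is exactly $\left\{g^k e \mid k \in \ZZ\right\}$ (as recorded in Definition~\ref{def.G-poset.g-orbit}), this says precisely that $\pi$ is constant on each $g$-orbit. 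This is the key lemma and the only place where any real content enters.

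Next I would use this to see that $\Phi$ is well-defined: given $\pi$ with $g\pi = \pi$, the prescription $\overline{\pi}\left(u\right) = \pi\left(a\right)$ for $u \in E^g$ and $a \in u$ does not depend on the choice of $a \in u$, precisely because $\pi$ is constant on the $g$-orbit $u$. Conversely, I would define a map $\Psi$ sending any $\overline{\pi} : E^g \to X$ to the map $\pi : E \to X$ given by $\pi\left(e\right) = \overline{\pi}\left(\text{the }g\text{-orbit of }e\right)$. Such a $\pi$ is automatically constant on each $g$-orbit, hence satisfies $g\pi = \pi$ by the equivalence above, so $\Psi$ lands in the correct domain. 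Finally I would check $\Phi \circ \Psi = \id$ and $\Psi \circ \Phi = \id$ by a direct chase through the definitions: starting from $\overline{\pi}$, applying $\Psi$ and then $\Phi$ recovers $\overline{\pi}$ since evaluating at $u \in E^g$ picks some $a \in u$ and returns $\overline{\pi}\left(u\right)$; starting from $\pi$, applying $\Phi$ and then $\Psi$ recovers $\pi$ since evaluating at $e \in E$ returns $\pi\left(a\right)$ for $a$ in the $g$-orbit of $e$, and $e$ itself is such an $a$.

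I do not expect any serious obstacle here; the statement is essentially a repackaging of the fact that functions invariant under a cyclic action descend to the quotient by that action. The only point requiring genuine (if routine) care is the well-definedness of $\Phi$, namely the independence of $\overline{\pi}\left(u\right)$ from the representative $a \in u$, which is exactly what the ``constant on $g$-orbits'' characterization delivers. Everything else is bookkeeping with the definitions of the $G$-action and of $g$-orbits.
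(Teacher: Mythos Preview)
Your proposal is correct and follows essentially the same approach as the paper: both construct $\Phi$ by descending $\pi$ to the orbit set (checking well-definedness via constancy on $g$-orbits), construct the inverse $\Psi$ by precomposing with the orbit map $E \to E^g$, and verify the two compositions are identities. The only cosmetic difference is that you isolate ``$g\pi=\pi$ iff $\pi$ is constant on $g$-orbits'' as a preliminary lemma, whereas the paper folds this into the well-definedness check using a stabilizer argument; the content is the same.
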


\begin{proof}
[Proof of Proposition \ref{prop.G-set.quot.Phi}.]Let $\mathfrak{A}$ be the set
of all maps $\pi:E\rightarrow X$ satisfying $g\pi=\pi$. Thus,%
\[
\mathfrak{A}=\left\{  \pi:E\rightarrow X\ \mid\ g\pi=\pi\right\}  =\left\{
\psi:E\rightarrow X\ \mid\ g\psi=\psi\right\}
\]
(here, we have renamed the index $\pi$ as $\psi$).

Let $\mathfrak{B}$ be the set of all maps $\overline{\pi}:E^{g}\rightarrow X$.
Thus,%
\[
\mathfrak{B}=\left\{  \overline{\pi}:E^{g}\rightarrow X\right\}  =X^{E^{g}}.
\]

Let $\pi\in\mathfrak{A}$. Thus, $\pi\in\mathfrak{A}=\left\{  \psi:E\rightarrow
X\ \mid\ g\psi=\psi\right\}  $. In other words, $\pi$ is a map $\psi
:E\rightarrow X$ satisfying $g\psi=\psi$. In other words, $\pi$ is a map
$E\rightarrow X$ and satisfies $g\pi=\pi$. Now, we define a map $\pi^{\circ
}:E^{g}\rightarrow X$ as follows: Let $u\in E^{g}$. Thus, $u$ is a $g$-orbit.
In particular, $u$ is a nonempty set. Hence, we can pick some $a\in u$. Now,
the element $\pi\left(  a\right)  \in X$ is independent on the choice of
$a$\ \ \ \ \footnote{\textit{Proof.} We must show that if $a_{1}$ and $a_{2}$
are two elements of $u$, then $\pi\left(  a_{1}\right)  =\pi\left(
a_{2}\right)  $.
\par
Indeed, let $a_{1}$ and $a_{2}$ be two elements of $u$. We must show that
$\pi\left(  a_{1}\right)  =\pi\left(  a_{2}\right)  $.
\par
The elements $a_{1}$ and $a_{2}$ lie in one and the same $g$-orbit (namely, in
$u$). In other words, there exists some $k\in\mathbb{Z}$ such that
$a_{2}=g^{k}a_{1}$. Consider this $k$.
\par
Recall that $g\pi=\pi$. Thus, $g$ lies in the stabilizer of $\pi$. Since the
stabilizer of $\pi$ is a subgroup of $G$, we therefore conclude that every
power of $g$ must also lie in the stabilizer of $\pi$. In particular, $g^{k}$
lies in the stabilizer of $\pi$ (since $g^{k}$ is a power of $g$). In other
words, $g^{k}\pi=\pi$. But $a_{2}=g^{k}a_{1}$, so that $a_{1}=\left(
g^{k}\right)  ^{-1}a_{2}$. Applying the map $\pi$ to both sides of this
equality, we obtain
\[
\pi\left(  a_{1}\right)  =\pi\left(  \left(  g^{k}\right)  ^{-1}a_{2}\right)
.
\]
Comparing this with%
\[
\left(  g^{k}\pi\right)  \left(  a_{2}\right)  =\pi\left(  \left(
g^{k}\right)  ^{-1}a_{2}\right)  \ \ \ \ \ \ \ \ \ \ \left(  \text{by the
definition of the }G\text{-action on }X^{E}\right)  ,
\]
we obtain $\pi\left(  a_{1}\right)  =\underbrace{\left(  g^{k}\pi\right)
}_{=\pi}\left(  a_{2}\right)  =\pi\left(  a_{2}\right)  $. This completes our
proof.}. Thus, we can define $\pi^{\circ}\left(  u\right)  $ as $\pi\left(
a\right)  $.

Thus, we have defined an element $\pi^{\circ}\left(  u\right)  $ for each
$u\in E^{g}$. In other words, we have defined a map $\pi^{\circ}%
:E^{g}\rightarrow X$. Furthermore, this map $\pi^{\circ}$ has the following
property: If $u\in E^{g}$, then%
\begin{equation}
\pi^{\circ}\left(  u\right)  =\pi\left(  a\right)
\ \ \ \ \ \ \ \ \ \ \text{for every }a\in u.\label{pf.prop.G-set.quot.Phi.1}%
\end{equation}

The element $\pi^{\circ}$ is a map $E^{g}\rightarrow X$, thus an element of
$X^{E^{g}}$. In other words, $\pi^{\circ}\in X^{E^{g}}=\mathfrak{B}$.

Now, forget that we fixed $\pi$. Thus, for each $\pi\in\mathfrak{A}$, we have
defined a $\pi^{\circ}\in\mathfrak{B}$, and this $\pi^{\circ}$ satisfies
(\ref{pf.prop.G-set.quot.Phi.1}) for each $u\in E^{g}$.

Now, let $\Phi$ be the map%
\[
\mathfrak{A}\rightarrow\mathfrak{B},\ \ \ \ \ \ \ \ \ \ \pi\mapsto\pi^{\circ}.
\]
(This is well-defined, since $\pi^{\circ}\in\mathfrak{B}$ for each $\pi
\in\mathfrak{A}$.)

Next, let us introduce one more notation: If $e$ is an element of $E$, then
$\left[  e\right]  $ shall mean the $g$-orbit of $e$. Furthermore, let
$\mathfrak{o}$ be the map $E\rightarrow E^{g}$ that sends each element $e\in
E$ to its $g$-orbit $\left[  e\right]  $.

For every $\overline{\pi}\in\mathfrak{B}$, we have $\overline{\pi}%
\circ\mathfrak{o}\in\mathfrak{A}$\ \ \ \ \footnote{\textit{Proof:} Let
$\overline{\pi}\in\mathfrak{B}$. Thus, $\overline{\pi}\in\mathfrak{B}%
=X^{E^{g}}$. In other words, $\overline{\pi}$ is a map $E^{g}\rightarrow X$.
\par
Let $e\in E$. Then, $\mathfrak{o}\left(  e\right)  =\left[  e\right]  $ (by
the definition of $\mathfrak{o}$). Moreover, $\left(  g\left(  \overline{\pi
}\circ\mathfrak{o}\right)  \right)  \left(  e\right)  =\left(  \overline{\pi
}\circ\mathfrak{o}\right)  \left(  g^{-1}e\right)  $ (by the definition of the
$G$-action on $X^{E}$). But the elements $e$ and $g^{-1}e$ of $E$ lie in one
and the same $g$-orbit. In other words, $\left[  e\right]  =\left[
g^{-1}e\right]  $. Now,%
\begin{align*}
\left(  g\left(  \overline{\pi}\circ\mathfrak{o}\right)  \right)  \left(
e\right)   &  =\left(  \overline{\pi}\circ\mathfrak{o}\right)  \left(
g^{-1}e\right)  =\overline{\pi}\left(  \underbrace{\mathfrak{o}\left(
g^{-1}e\right)  }_{\substack{=\left[  g^{-1}e\right]  \\\text{(by the
definition of }\mathfrak{o}\text{)}}}\right)  =\overline{\pi}\left(
\underbrace{\left[  g^{-1}e\right]  }_{=\left[  e\right]  }\right)  \\
&  =\overline{\pi}\left(  \underbrace{\left[  e\right]  }_{=\mathfrak{o}%
\left(  e\right)  }\right)  =\overline{\pi}\left(  \mathfrak{o}\left(
e\right)  \right)  =\left(  \overline{\pi}\circ\mathfrak{o}\right)  \left(
e\right)  .
\end{align*}
\par
Now, forget that we fixed $e$. We thus have proven that $\left(  g\left(
\overline{\pi}\circ\mathfrak{o}\right)  \right)  \left(  e\right)  =\left(
\overline{\pi}\circ\mathfrak{o}\right)  \left(  e\right)  $ for each $e\in E$.
In other words, $g\left(  \overline{\pi}\circ\mathfrak{o}\right)
=\overline{\pi}\circ\mathfrak{o}$.
\par
Now, $\overline{\pi}\circ\mathfrak{o}$ is a map $E\rightarrow X$ and satisfies
$g\left(  \overline{\pi}\circ\mathfrak{o}\right)  =\overline{\pi}%
\circ\mathfrak{o}$. In other words, $\overline{\pi}\circ\mathfrak{o}$ is a map
$\psi:E\rightarrow X$ satisfying $g\psi=\psi$. In other words,%
\[
\overline{\pi}\circ\mathfrak{o}\in\left\{  \psi:E\rightarrow X\ \mid
\ g\psi=\psi\right\}  =\mathfrak{A}.
\]
Qed.}.

Now, let $\Psi$ be the map%
\[
\mathfrak{B}\rightarrow\mathfrak{A},\ \ \ \ \ \ \ \ \ \ \overline{\pi}%
\mapsto\overline{\pi}\circ\mathfrak{o}.
\]
(This is well-defined, since $\overline{\pi}\circ\mathfrak{o}\in\mathfrak{A}$
for every $\overline{\pi}\in\mathfrak{B}$.)

Now, we have the equalities $\Phi\circ\Psi=\id$%
\ \ \ \ \footnote{\textit{Proof.} Let $\beta\in\mathfrak{B}$. Then,
$\Psi\left(  \beta\right)  =\beta\circ\mathfrak{o}$ (by the definition of
$\Psi$). Let $\pi=\Psi\left(  \beta\right)  $. Thus, $\pi=\Psi\left(
\beta\right)  \in\mathfrak{A}$.
\par
Now, let $u\in E^{g}$. Thus, $u$ is a $g$-orbit; hence, $u$ is nonempty. Thus,
there exists some $a\in u$. Consider such an $a$. Now,
(\ref{pf.prop.G-set.quot.Phi.1}) yields $\pi^{\circ}\left(  u\right)
=\pi\left(  a\right)  $. But $a$ is an element of the $g$-orbit $u$; thus, the
$g$-orbit of $a$ is $u$. In other words, $\left[  a\right]  =u$. The
definition of $\mathfrak{o}$ yields $\mathfrak{o}\left(  a\right)  =\left[
a\right]  =u$. Now, $\pi=\Psi\left(  \beta\right)  =\beta\circ\mathfrak{o}$,
so that $\pi\left(  a\right)  =\left(  \beta\circ\mathfrak{o}\right)  \left(
a\right)  =\beta\left(  \underbrace{\mathfrak{o}\left(  a\right)  }%
_{=u}\right)  =\beta\left(  u\right)  $. Now, $\pi^{\circ}\left(  u\right)
=\pi\left(  a\right)  =\beta\left(  u\right)  $.
\par
Now, forget that we fixed $u$. We thus have proven that $\pi^{\circ}\left(
u\right)  =\beta\left(  u\right)  $ for each $u\in E^{g}$. In other words,
$\pi^{\circ}=\beta$. But the definition of $\Phi$ yields $\Phi\left(
\pi\right)  =\pi^{\circ}=\beta$. Now, $\left(  \Phi\circ\Psi\right)  \left(
\beta\right)  =\Phi\left(  \underbrace{\Psi\left(  \beta\right)  }_{=\pi
}\right)  =\Phi\left(  \pi\right)  =\beta = \id\left(
\beta\right)  $.
\par
Now, forget that we fixed $\beta$. We thus have proven that $\left(  \Phi
\circ\Psi\right)  \left(  \beta\right)  = \id \left(
\beta\right)  $ for each $\beta\in\mathfrak{B}$. In other words, $\Phi
\circ\Psi = \id$, qed.} and $\Psi\circ\Phi = \id%
$\ \ \ \ \footnote{\textit{Proof.} Let $\alpha\in\mathfrak{A}$. Then, the
definition of $\Phi$ yields $\Phi\left(  \alpha\right)  =\alpha^{\circ}$.
\par
Now, let $a\in E$. Let $u=\mathfrak{o}\left(  a\right)  $. Thus,
$u=\mathfrak{o}\left(  a\right)  =\left[  a\right]  $ (by the definition of
$\mathfrak{o}$). In other words, $u$ is the $g$-orbit of $a$. Thus, $u$ is a
$g$-orbit and contains $a$. So we know that $u\in E^{g}$ (since $u$ is a
$g$-orbit) and that $a\in u$ (since $u$ contains $a$).
\par
The equality (\ref{pf.prop.G-set.quot.Phi.1}) (applied to $\pi=\alpha$) yields
$\alpha^{\circ}\left(  u\right)  =\alpha\left(  a\right)  $. We have $\left(
\alpha^{\circ}\circ\mathfrak{o}\right)  \left(  a\right)  =\alpha^{\circ
}\left(  \underbrace{\mathfrak{o}\left(  a\right)  }_{=u}\right)
=\alpha^{\circ}\left(  u\right)  =\alpha\left(  a\right)  $.
\par
Now, forget that we fixed $a$. We thus have shown that $\left(  \alpha^{\circ
}\circ\mathfrak{o}\right)  \left(  a\right)  =\alpha\left(  a\right)  $ for
each $a\in E$. In other words, $\alpha^{\circ}\circ\mathfrak{o}=\alpha$.
\par
But $\left(  \Psi\circ\Phi\right)  \left(  \alpha\right)  =\Psi\left(
\underbrace{\Phi\left(  \alpha\right)  }_{=\alpha^{\circ}}\right)
=\Psi\left(  \alpha^{\circ}\right)  =\alpha^{\circ}\circ\mathfrak{o}$ (by the
definition of $\Psi$). Hence, $\left(  \Psi\circ\Phi\right)  \left(
\alpha\right)  =\alpha^{\circ}\circ\mathfrak{o}=\alpha = \id
\left(  \alpha\right)  $.
\par
Now, forget that we fixed $\alpha$. We thus have proven that $\left(
\Psi\circ\Phi\right)  \left(  \alpha\right)  = \id \left(
\alpha\right)  $ for each $\alpha\in\mathfrak{A}$. In other words, $\Psi
\circ\Phi = \id$. Qed.}. These two equalities show that the maps
$\Phi$ and $\Psi$ are mutually inverse. Hence, the map $\Phi$ is invertible.
In other words, $\Phi$ is a bijection.

The map $\Phi$ is a bijection from $\mathfrak{A}$ to $\mathfrak{B}$. In other
words, the map $\Phi$ is a bijection between

\begin{itemize}
\item the maps $\pi:E\rightarrow X$ satisfying $g\pi=\pi$
\end{itemize}

and

\begin{itemize}
\item the maps $\overline{\pi}:E^{g}\rightarrow X$
\end{itemize}

(because $\mathfrak{A}$ is the set of all maps $\pi:E\rightarrow X$ satisfying
$g\pi=\pi$, whereas $\mathfrak{B}$ is the set of all maps $\overline{\pi
}:E^{g}\rightarrow X$). Furthermore, this bijection $\Phi$ sends any map
$\pi:E\rightarrow X$ satisfying $g\pi=\pi$ to the map $\overline{\pi}%
:E^{g}\rightarrow X$ defined by
\[
\overline{\pi}\left(  u\right)  =\pi\left(  a\right)  \qquad\text{for every
}u\in E^{g}\text{ and }a\in u
\]
\footnote{\textit{Proof.} Let $\pi:E\rightarrow X$ be a map satisfying
$g\pi=\pi$. We must prove that the bijection $\Phi$ sends $\pi$ to the map
$\overline{\pi}:E^{g}\rightarrow X$ defined by
\begin{equation}
\overline{\pi}\left(  u\right)  =\pi\left(  a\right)  \qquad\text{for every
}u\in E^{g}\text{ and }a\in u.\label{pf.prop.G-set.quot.Phi.fn6.1}%
\end{equation}
\par
In fact, (\ref{pf.prop.G-set.quot.Phi.1}) shows that $\pi^{\circ}\left(
u\right)  =\pi\left(  a\right)  $ for every $u\in E^{g}$ and $a\in u$. Thus,
the map $\pi^{\circ}$ is the map $\overline{\pi}:E^{g}\rightarrow X$ defined
by (\ref{pf.prop.G-set.quot.Phi.fn6.1}). Now, the bijection $\Phi$ sends $\pi$
to $\pi^{\circ}$ (by the definition of $\Phi$). In other words, the bijection
$\Phi$ sends $\pi$ to the map $\overline{\pi}:E^{g}\rightarrow X$ defined by
(\ref{pf.prop.G-set.quot.Phi.fn6.1}) (since $\pi^{\circ}$ is the map
$\overline{\pi}:E^{g}\rightarrow X$ defined by
(\ref{pf.prop.G-set.quot.Phi.fn6.1})). Qed.}. Hence, we have constructed the
bijection $\Phi$ whose existence was claimed in Proposition
\ref{prop.G-set.quot.Phi}. Thus, Proposition \ref{prop.G-set.quot.Phi} is proven.
\end{proof}
\end{verlong}

\begin{proposition}
\label{prop.G-poset.quot.Phi}
Let $\EE = \left(E, <_1, <_2\right)$ be a tertispecial double poset.
Let $G$ be a finite group which acts on $E$. Assume that $G$ preserves
both relations $<_1$ and $<_2$.

Let $g \in G$. Define the set $E^g$, the relations $<_1^g$ and $<_2^g$
and the triple $\EE^g$ as in Proposition~\ref{prop.G-poset.quot.double}.
Thus, $\EE^g$ is a tertispecial double poset (by
Proposition~\ref{prop.G-poset.quot.double}).

\begin{vershort}
There is a bijection $\Phi$ between

\begin{itemize}
\item the maps $\pi:E\rightarrow\left\{  1,2,3,\ldots\right\}  $ satisfying
$g\pi=\pi$
\end{itemize}

and

\begin{itemize}
\item the maps $\overline{\pi}:E^{g}\rightarrow\left\{  1,2,3,\ldots\right\}
$.
\end{itemize}

Namely, this bijection $\Phi$ sends any map
$\pi:E\rightarrow\left\{  1,2,3,\ldots \right\}  $ satisfying $g\pi=\pi$
to the map $\overline{\pi}:E^{g}
\rightarrow\left\{  1,2,3,\ldots\right\}  $ defined by
\[
\overline{\pi}\left( u \right)
= \pi\left( a \right)
\qquad\text{for every } u \in E^{g} \text{ and } a \in u.
\]
(The well-definedness of this map $\overline{\pi}$ is easy to see:
Indeed, from $g\pi=\pi$, we can conclude that any two elements
$a_1$ and $a_2$ of a given $g$-orbit $u$ satisfy
$\pi\left(a_1\right) = \pi\left(a_2\right)$.)
\end{vershort}

\begin{verlong}
Proposition~\ref{prop.G-set.quot.Phi} (applied to
$X = \left\{1,2,3,\ldots\right\}$) shows the following:

There is a bijection $\Phi$ between

\begin{itemize}
\item the maps $\pi:E\rightarrow\left\{  1,2,3,\ldots\right\}  $ satisfying
$g\pi=\pi$
\end{itemize}

and

\begin{itemize}
\item the maps $\overline{\pi}:E^{g}\rightarrow\left\{  1,2,3,\ldots\right\}
$.
\end{itemize}

Namely, this bijection $\Phi$ sends any map
$\pi:E\rightarrow\left\{  1,2,3,\ldots \right\}  $ satisfying $g\pi=\pi$
to the map $\overline{\pi}:E^{g}
\rightarrow\left\{  1,2,3,\ldots\right\}  $ defined by
\[
\overline{\pi}\left( u \right)
= \pi\left( a \right)
\qquad\text{for every } u \in E^{g} \text{ and } a \in u.
\]
\end{verlong}

Consider this bijection $\Phi$. Let
$\pi:E\rightarrow\left\{  1,2,3,\ldots\right\}  $ be a
map satisfying $g\pi=\pi$.

\begin{enumerate}
\item[(a)] If $\pi$ is an $\EE$-partition, then $\Phi\left(
\pi\right)  $ is an $ \EE ^{g}$-partition.

\item[(b)] If $\Phi\left(  \pi\right)  $ is an
$\EE^{g}$-partition, then $\pi$ is an $\EE$-partition.

\item[(c)] Let $w : E \to \left\{1,2,3,\ldots\right\}$ be
a map. Define a map $w^{g}:E^{g}\rightarrow
\left\{1,2,3,\ldots\right\}$
by
\[
w^{g}\left(  u\right)  =\sum\limits_{a\in u}w\left(
a\right)  \qquad \qquad \text{ for every } u \in E^g .
\]
Then, $\xx_{\Phi\left(  \pi\right)  ,w^{g}}
=\xx_{\pi,w}$.

\end{enumerate}
\end{proposition}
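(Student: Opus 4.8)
The plan is to prove the three parts (a), (b), (c) by unwinding the definitions, using the explicit description of $\Phi$ and the fact that the relations $<_1^g$ and $<_2^g$ on $E^g$ are built by existential quantification over the constituent $g$-orbits. Throughout, I write $\overline{\pi} = \Phi\left(\pi\right)$, so that $\overline{\pi}\left(u\right) = \pi\left(a\right)$ for every $u \in E^g$ and every $a \in u$; recall that this is well-defined precisely because $g\pi = \pi$ forces $\pi$ to be constant on each $g$-orbit. The key elementary observation, used repeatedly, is that for elements $a, b \in E$ lying in $g$-orbits $u, v$ respectively, we have $\overline{\pi}\left(u\right) = \pi\left(a\right)$ and $\overline{\pi}\left(v\right) = \pi\left(b\right)$.

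For part (a), I assume $\pi$ is an $\EE$-partition and verify that $\overline{\pi}$ satisfies the two defining conditions of an $\EE^g$-partition. Suppose $u <_1^g v$ in $E^g$. By the definition of $<_1^g$, there exist $a \in u$ and $b \in v$ with $a <_1 b$. Since $\pi$ is an $\EE$-partition, $\pi\left(a\right) \leq \pi\left(b\right)$, which (by the displayed formula for $\overline{\pi}$) is exactly $\overline{\pi}\left(u\right) \leq \overline{\pi}\left(v\right)$. For the strict condition, suppose $u <_1^g v$ and $v <_2^g u$. The first gives $a \in u$, $b \in v$ with $a <_1 b$; the second gives $a' \in v$, $b' \in u$ with $a' <_2 b'$. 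The subtlety is that the witnesses need not coincide, so I cannot directly conclude $b <_2 a$. I expect this to be the main obstacle. To overcome it I will exploit that $\pi$ is constant on orbits: since $a, b'$ both lie in $u$ we have $\pi\left(a\right) = \pi\left(b'\right)$, and since $b, a'$ both lie in $v$ we have $\pi\left(b\right) = \pi\left(a'\right)$. Then from $a' <_2 b'$ (witnesses for $v <_2^g u$) combined with $a <_1 b$ I will argue as follows: apply the two $\EE$-partition inequalities to the actual witnesses. From $a <_1 b$ we get $\pi\left(a\right) \leq \pi\left(b\right)$; I then need a strict inequality somewhere. The cleanest route is to reduce $u <_1^g v$ to a cover relation in $E^g$ is unnecessary; instead I replace the argument by choosing witnesses more carefully, or—more robustly—invoke that on a single $g$-orbit all $\pi$-values agree, so I may transport the $<_2$-witness $a' <_2 b'$ to the $<_1$-witness pair. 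Concretely, since $a <_1 b$ and since we may re-choose $a', b'$ to be $b, a$ respectively after checking $b <_2 a$: I will instead prove the contrapositive-style statement that $\overline{\pi}\left(u\right) = \overline{\pi}\left(v\right)$ together with $u <_1^g v$ forces, via $\pi$ being an $\EE$-partition on genuine witnesses, a contradiction with $v <_2^g u$. I anticipate needing the orbit-constancy of $\pi$ as the decisive tool to glue independent witnesses together.

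For part (b), the argument runs in the reverse direction and is simpler: I assume $\overline{\pi}$ is an $\EE^g$-partition and verify $\pi$ is an $\EE$-partition. If $e, f \in E$ satisfy $e <_1 f$, let $u, v$ be their $g$-orbits; then $e \in u$, $f \in v$ witness $u <_1^g v$, so $\overline{\pi}\left(u\right) \leq \overline{\pi}\left(v\right)$, i.e.\ $\pi\left(e\right) \leq \pi\left(f\right)$. If moreover $f <_2 e$, then $f \in v$, $e \in u$ witness $v <_2^g u$, hence $\overline{\pi}\left(v\right) < \overline{\pi}\left(u\right)$ is \emph{not} what I want; rather $u <_1^g v$ and $v <_2^g u$ together give $\overline{\pi}\left(u\right) < \overline{\pi}\left(v\right)$ by the strict clause of the $\EE^g$-partition definition, which is $\pi\left(e\right) < \pi\left(f\right)$. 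This direction is clean because the witnesses are forced to be the given elements $e, f$.

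For part (c), I compute $\xx_{\overline{\pi}, w^g}$ directly from its definition and match it to $\xx_{\pi, w}$. By definition $\xx_{\overline{\pi}, w^g} = \prod_{u \in E^g} x_{\overline{\pi}\left(u\right)}^{w^g\left(u\right)}$. Substituting $w^g\left(u\right) = \sum_{a \in u} w\left(a\right)$ and $\overline{\pi}\left(u\right) = \pi\left(a\right)$ (valid for any $a \in u$), each factor becomes $x_{\overline{\pi}\left(u\right)}^{\sum_{a \in u} w\left(a\right)} = \prod_{a \in u} x_{\pi\left(a\right)}^{w\left(a\right)}$, using that $\pi$ is constant on $u$ so the common base $x_{\pi\left(a\right)}$ agrees for all $a \in u$. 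Since the $g$-orbits partition $E$, taking the product over all $u \in E^g$ and then over all $a \in u$ reassembles the product over all $e \in E$, giving $\prod_{e \in E} x_{\pi\left(e\right)}^{w\left(e\right)} = \xx_{\pi, w}$. This step is a routine bookkeeping of a product over a partition and presents no difficulty once the orbit-constancy of $\pi$ is in hand.
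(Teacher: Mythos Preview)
Your arguments for parts (b) and (c) are correct and essentially match the paper's. (In fact your direct argument for (b) is slightly cleaner than the paper's, which routes through Lemma~\ref{lem.Epartition.cover}; but since the witnesses in (b) are the given elements $e,f$ themselves, no such reduction is needed.)

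Part (a), however, has a genuine gap that you yourself flag but do not close. You correctly identify the obstacle: from $u <_1^g v$ you get witnesses $a\in u$, $b\in v$ with $a<_1 b$, and from $v <_2^g u$ you get \emph{different} witnesses $a'\in v$, $b'\in u$ with $a'<_2 b'$; and the $\EE$-partition strict clause only applies to a single pair $e<_1 f$ with $f<_2 e$. Your proposed remedy, ``orbit-constancy of $\pi$'', is not enough: knowing $\pi(a)=\pi(b')$ and $\pi(b)=\pi(a')$ gives you no strict inequality, because the definition of an $\EE$-partition imposes no constraint from a bare $<_2$-relation. In particular, you never invoke the tertispecial hypothesis on $\EE$, which is a red flag: without it the claim is false in general.

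The paper's fix is to verify the $\EE^g$-partition conditions only on \emph{covers}, via Lemma~\ref{lem.Epartition.cover} (applicable since $\EE^g$ is tertispecial). If $u$ is $<_1^g$-covered by $v$, then any witnesses $a\in u$, $b\in v$ with $a<_1 b$ must in fact satisfy that $a$ is $<_1$-\emph{covered} by $b$: an intermediate $c$ with $a<_1 c<_1 b$ would put the $g$-orbit of $c$ strictly between $u$ and $v$, contradicting the cover. Now tertispeciality of $\EE$ forces $a$ and $b$ to be $<_2$-comparable; $a<_2 b$ would yield $u<_2^g v$, contradicting $v<_2^g u$, and $a=b$ is impossible, so $b<_2 a$. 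Then the strict $\EE$-partition clause gives $\pi(a)<\pi(b)$, i.e.\ $\overline{\pi}(u)<\overline{\pi}(v)$. This is the missing idea in your part (a).
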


\begin{proof}[Proof of Proposition~\ref{prop.G-poset.quot.Phi} (sketched).]
The definition of $\Phi$ shows that
\begin{equation}
\left(\Phi\left(\pi\right)\right) \left( u \right)
= \pi\left( a \right)
\qquad\text{for every } u \in E^{g} \text{ and } a \in u.
\label{pf.prop.G-poset.quot.phi.main}
\end{equation}

(a) Assume that $\pi$ is an $\EE$-partition. We
want to show that $\Phi\left(  \pi\right)  $ is an
$ \EE ^{g}$-partition. In order to do so, we can
use Lemma \ref{lem.Epartition.cover}
(applied to $\EE^{g}$, $\left(  E^{g},<_{1}^{g},<_{2}^{g}\right)  $ and
$\Phi\left(  \pi\right)  $ instead of $\EE$,
$\left(  E,<_{1},<_{2}\right)  $ and $\phi$); we only need to
check the following two conditions:

\textit{Condition 1:} If $e\in E^{g}$ and $f\in E^{g}$ are such that $e$ is
$<_{1}^{g}$-covered by $f$, and if we have $e<_{2}^{g}f$, then $\left(
\Phi\left(  \pi\right)  \right)  \left(  e\right)  \leq\left(  \Phi\left(
\pi\right)  \right)  \left(  f\right)  $.

\textit{Condition 2:} If $e\in E^{g}$ and $f\in E^{g}$ are such that $e$ is
$<_{1}^{g}$-covered by $f$, and if we have $f<_{2}^{g}e$, then $\left(
\Phi\left(  \pi\right)  \right)  \left(  e\right)  <\left(  \Phi\left(
\pi\right)  \right)  \left(  f\right)  $.

\textit{Proof of Condition 1:} Let $e\in E^{g}$ and $f\in E^{g}$ be such that
$e$ is $<_{1}^{g}$-covered by $f$. Assume that we have $e<_{2}^{g}f$.

We have $e<_{1}^{g}f$ (because $e$ is $<_{1}^{g}$-covered by $f$). In other
words, there exist $a\in e$ and $b\in f$ satisfying $a<_{1}b$. Consider these
$a$ and $b$. Since $\pi$ is an $\EE$-partition, we have $\pi\left(
a\right)  \leq\pi\left(  b\right)  $ (since $a<_{1}b$). But the definition of
$\Phi\left(  \pi\right)  $ shows that $\left(  \Phi\left(  \pi\right)
\right)  \left(  e\right)  =\pi\left(  a\right)  $ (since $a\in e$) and
$\left(  \Phi\left(  \pi\right)  \right)  \left(  f\right)  =\pi\left(
b\right)  $ (since $b\in f$). Thus, $\left(  \Phi\left(  \pi\right)  \right)
\left(  e\right)  =\pi\left(  a\right)  \leq\pi\left(  b\right)  =\left(
\Phi\left(  \pi\right)  \right)  \left(  f\right)  $. Hence, Condition 1 is proven.

\textit{Proof of Condition 2:} Let $e\in E^{g}$ and $f\in E^{g}$ be such that
$e$ is $<_{1}^{g}$-covered by $f$. Assume that we have $f<_{2}^{g}e$.

We have $e<_{1}^{g}f$ (because $e$ is $<_{1}^{g}$-covered by $f$). In other
words, there exist $a\in e$ and $b\in f$ satisfying $a<_{1}b$. Consider these
$a$ and $b$.

\begin{vershort}
If there was a $c \in E$ satisfying $a <_1 c <_1 b$, then
the $g$-orbit $w$ of this $c$ would satisfy
$e <_1^g w <_1^g f$, which would contradict the fact that $e$ is
$<_1^g$-covered by $f$. Hence, there exists no such $c$.
\end{vershort}
\begin{verlong}
There exists no $c \in E$ satisfying $a <_1 c <_1 b$
\ \ \ \ \footnote{\textit{Proof.} Assume the contrary. Thus,
there exists some $c \in E$ satisfying $a <_1 c <_1 b$. Consider
this $c$. Let $w$ be the $g$-orbit of $c$. Thus, $w \in E^g$
and $c \in w$.
\par
Now, the elements $a \in e$ and $c \in w$ satisfy
$a <_1 c$. Hence, $e <_1^g w$ (by the definition of the
relation $<_1^g$).
\par
Also, the elements $c \in w$ and $b \in f$ satisfy
$c <_1 b$. Hence, $w <_1^g f$ (by the definition of the
relation $<_1^g$).
\par
Now, we have $e <_1^g w <_1^g f$. This contradicts the
fact that $e$ is $<_1^g$-covered by $f$. Thus, we have
obtained a contradiction; hence, our assumption was wrong.
Qed.}.
\end{verlong}
In other words, $a$ is $<_{1}$-covered by $b$ (since $a<_{1}b$). Therefore,
$a$ and $b$ are $<_{2}$-comparable (since $\EE$ is tertispecial). In
other words, we have either $a<_{2}b$ or $a=b$ or $b<_{2}a$. Since $a<_{2}b$
is impossible (because if we had $a<_{2}b$, then we would have $e<_{2}^{g}f$
(since $a\in e$ and $b\in f$), which would contradict $f<_{2}^{g}e$ (since
$<_{2}^{g}$ is a strict partial order)), and since $a=b$ is
impossible (because $a<_{1}b$), we therefore must have $b<_{2}a$. But since
$\pi$ is an $\EE$-partition, we have $\pi\left(  a\right)  <\pi\left(
b\right)  $ (since $a<_{1}b$ and $b<_{2}a$). But the definition of
$\Phi\left(  \pi\right)  $ shows that $\left(  \Phi\left(  \pi\right)
\right)  \left(  e\right)  =\pi\left(  a\right)  $ (since $a\in e$) and
$\left(  \Phi\left(  \pi\right)  \right)  \left(  f\right)  =\pi\left(
b\right)  $ (since $b\in f$). Thus, $\left(  \Phi\left(  \pi\right)  \right)
\left(  e\right)  =\pi\left(  a\right)  <\pi\left(  b\right)  =\left(
\Phi\left(  \pi\right)  \right)  \left(  f\right)  $. Hence, Condition 2 is proven.

Thus, Condition 1 and Condition 2 are proven. Hence,
Proposition~\ref{prop.G-poset.quot.Phi} (a) is proven.

(b) Assume that $\Phi\left(  \pi\right)  $ is an
$\EE^{g}$-partition. We want to show that $\pi$ is an
$\EE$-partition. In order to do so, we can use
Lemma \ref{lem.Epartition.cover}
(applied to $\phi=\pi$); we only need to check the following two conditions:

\textit{Condition 1:} If $e\in E$ and $f\in E$ are such that $e$ is
$<_{1}$-covered by $f$, and if we have $e<_{2}f$, then
$\pi\left(  e\right)  \leq \pi\left(  f\right)  $.

\textit{Condition 2:} If $e\in E$ and $f\in E$ are such that $e$ is
$<_{1}$-covered by $f$, and if we have $f<_{2}e$, then
$\pi\left(  e\right) <\pi\left(  f\right)  $.

\textit{Proof of Condition 1:} Let $e\in E$ and $f\in E$ be such that $e$ is
$<_{1}$-covered by $f$. Assume that we have $e<_{2}f$.

We have $e<_{1}f$ (since $e$ is $<_{1}$-covered by $f$). Let $u$ and $v$ be
the $g$-orbits of $e$ and $f$, respectively. Thus, $u$ and $v$ belong to
$E^{g}$, and satisfy $e \in u$ and $f \in v$. Hence,
$u<_{1}^g v$ (since $e<_{1}f$). Hence, $\left(  \Phi\left(
\pi\right)  \right)  \left(  u\right)  \leq\left(  \Phi\left(  \pi\right)
\right)  \left(  v\right)  $ (since $\Phi\left(  \pi\right)  $ is an
$\EE^{g}$-partition). But the definition of $\Phi\left(  \pi\right)  $
shows that $\left(  \Phi\left(  \pi\right)  \right)  \left(  u\right)
=\pi\left(  e\right)  $ (since $e\in u$) and $\left(  \Phi\left(  \pi\right)
\right)  \left(  v\right)  =\pi\left(  f\right)  $ (since $f\in v$). Thus,
$\pi\left(  e\right)  =\left(  \Phi\left(  \pi\right)  \right)  \left(
u\right)  \leq\left(  \Phi\left(  \pi\right)  \right)  \left(  v\right)
=\pi\left(  f\right)  $. Hence, Condition 1 is proven.

\textit{Proof of Condition 2:} Let $e\in E$ and $f\in E$ be such that $e$ is
$<_{1}$-covered by $f$. Assume that we have $f<_{2}e$.

We have $e<_{1}f$ (since $e$ is $<_{1}$-covered by $f$). Let $u$ and $v$ be
the $g$-orbits of $e$ and $f$, respectively. Thus, $u$ and $v$ belong to
$E^{g}$, and satisfy $e \in u$ and $f \in v$. Hence,
$u<_{1}^g v$ (since $e<_{1}f$) and $v<_{2}^g u$ (since
$f<_{2}e$). Hence, $\left(  \Phi\left(  \pi\right)  \right)  \left(  u\right)
<\left(  \Phi\left(  \pi\right)  \right)  \left(  v\right)  $ (since
$\Phi\left(  \pi\right)  $ is an $\EE^{g}$-partition). But the
definition of $\Phi\left(  \pi\right)  $ shows that $\left(  \Phi\left(
\pi\right)  \right)  \left(  u\right)  =\pi\left(  e\right)  $ (since $e\in
u$) and $\left(  \Phi\left(  \pi\right)  \right)  \left(  v\right)
=\pi\left(  f\right)  $ (since $f\in v$). Thus, $\pi\left(  e\right)  =\left(
\Phi\left(  \pi\right)  \right)  \left(  u\right)  <\left(  \Phi\left(
\pi\right)  \right)  \left(  v\right)  =\pi\left(  f\right)  $. Hence,
Condition 2 is proven.

Thus, Condition 1 and Condition 2 are proven. Hence,
Proposition~\ref{prop.G-poset.quot.Phi} (b) is proven.

\begin{vershort}
(c) 
The definition of $\xx_{\Phi\left(  \pi\right)  ,w^{g}}$ shows that
\begin{align*}
\xx_{\Phi\left(  \pi\right)  ,w^{g}}
&= \prod_{e\in E^{g}}
 x_{\left( \Phi\left(  \pi\right)  \right)  \left(  e\right)  }^{
    w^{g}\left(  e\right) }
= \prod_{u\in E^{g}}
 \underbrace{x_{\left(  \Phi\left(  \pi\right)  \right)
    \left(  u\right)  }^{w^{g}\left(  u\right)  }}_{
    \substack{=\prod_{a\in u}x_{\left(  \Phi\left(  \pi\right) 
    \right)  \left(  u\right)  }^{w\left( a\right)  }\\\text{(since }
    w^{g}\left(  u\right)  =\sum\limits_{a\in u}w\left(  a\right)
    \text{)}}}
= \prod_{u\in E^{g}} \prod_{a\in u}
  \underbrace{x_{\left(  \Phi\left(  \pi\right)  \right) 
    \left(  u\right) }^{w\left(  a\right)  }}_{
    \substack{=x_{\pi\left(  a\right)  }^{w\left( a\right)  }\\
    \text{(by (\ref{pf.prop.G-poset.quot.phi.main}))}}}\\
&= \underbrace{\prod_{u\in E^{g}}\prod_{a\in u}}_{=\prod_{a\in E}}
  x_{\pi\left(  a\right)  }^{w\left(  a\right)  }
=\prod_{a\in E}x_{\pi\left( a\right)  }^{w\left(  a\right)  }
=\prod_{e\in E}x_{\pi\left(  e\right) }^{w\left(  e\right)  }
=\xx_{\pi,w}%
\end{align*}
(by the definition of $\xx_{\pi,w}$). This proves
Proposition~\ref{prop.G-poset.quot.Phi} (c).
\end{vershort}
\begin{verlong}
(c) The elements of $E^g$ are the $g$-orbits on $E$. Hence, the
elements of $E^g$ are pairwise disjoint subsets of $E$, and their
union is $E$. In other words, the set $E$ is the union of its
disjoint subsets $u \in E^g$. Therefore,
$\prod_{a\in E} = \prod_{u\in E^{g}} \prod_{a\in u}$ (an equality
between product signs).

The definition of $\xx_{\Phi\left(  \pi\right)  ,w^{g}}$ shows that
\begin{align*}
\xx_{\Phi\left(  \pi\right)  ,w^{g}}
&= \prod_{e\in E^{g}}
 x_{\left( \Phi\left(  \pi\right)  \right)  \left(  e\right)  }^{
    w^{g}\left(  e\right) }
= \prod_{u\in E^{g}}
 \underbrace{x_{\left(  \Phi\left(  \pi\right)  \right)
    \left(  u\right)  }^{w^{g}\left(  u\right)  }}_{
    \substack{= x_{\left(  \Phi\left(  \pi\right) \right) \left( u
    \right) }^{\sum_{a \in u} w \left( a \right) } \\
    \text{(since }
    w^{g}\left(  u\right) = \sum\limits_{a\in u}w\left(  a\right)
    \text{)}}}
\\
&\ \ \ \ \ \ \ \ \ 
\left(\text{here, we have renamed the index } e \text{ as } u
\text{ in the product}\right) \\
&= \prod_{u\in E^{g}}
 \underbrace{x_{\left(  \Phi\left(  \pi\right)  \right)
    \left(  u\right)  }^{ \sum_{a \in u} w \left( a \right) }}_{
    \substack{=\prod_{a\in u}x_{\left(  \Phi\left(  \pi\right) 
    \right)  \left(  u\right)  }^{w\left( a\right)  }}}
= \prod_{u\in E^{g}} \prod_{a\in u}
  \underbrace{x_{\left(  \Phi\left(  \pi\right)  \right) 
    \left(  u\right) }^{w\left(  a\right)  }}_{
    \substack{=x_{\pi\left(  a\right)  }^{w\left( a\right)  }\\
    \text{(since } \left( \Phi \left( \pi \right) \right) \left(
    u \right) = \pi \left( a \right) \\
    \text{(by (\ref{pf.prop.G-poset.quot.phi.main})))}}}\\
&= \underbrace{\prod_{u\in E^{g}}\prod_{a\in u}}_{=\prod_{a\in E}}
  x_{\pi\left(  a\right)  }^{w\left(  a\right)  }
= \prod_{a\in E}x_{\pi\left( a\right)  }^{w\left(  a\right)  }
= \prod_{e\in E}x_{\pi\left(  e\right) }^{w\left(  e\right)  } \\
&\ \ \ \ \ \ \ \ \ 
\left(\text{here, we have renamed the index } a \text{ as } e
\text{ in the product}\right) \\
&= \xx_{\pi,w}%
\end{align*}
(since the definition of $\xx_{\pi,w}$ yields
$\xx_{\pi, w}
= \prod_{e\in E}x_{\pi\left(  e\right) }^{w\left(  e\right)  }$).
This proves Proposition~\ref{prop.G-poset.quot.Phi} (c).
\end{verlong}
\end{proof}

Our next lemma is a standard argument in P\'olya enumeration theory (compare
it with the proof of Burnside's lemma):

\begin{lemma}
\label{lem.burnside.sums} Let $G$ be a finite group. Let $F$ be a
$G$-set. Let $O$ be a $G$-orbit on $F$, and let $\pi\in O$.

\begin{enumerate}
\item[(a)] We have
\begin{equation}
\dfrac{1}{\left\vert O\right\vert }=\dfrac{1}{\left\vert G\right\vert }%
\sum_{\substack{g\in G;\\g\pi=\pi}}1.\label{eq.lem.burnside.sums.a}%
\end{equation}

\item[(b)] Let $E$ be a finite $G$-set. For every $g\in G$, let
$\sign_E g$ denote the sign of the permutation of $E$ that
sends every $e\in E$ to $ge$. (Thus, $g\in G$ is $E$-even if and only if
$\sign_E g = 1$.) Then,
\begin{equation}
\begin{cases}
\dfrac{1}{\left\vert O\right\vert }, & \text{if }O\text{ is }E\text{-coeven}%
;\\
0, & \text{if }O\text{ is not }E\text{-coeven}%
\end{cases}
=
\dfrac{1}{\left\vert G\right\vert }
\sum_{\substack{g\in G;\\ g\pi=\pi}} \sign_E g .
\label{eq.lem.burnside.sums.b}
\end{equation}

\end{enumerate}
\end{lemma}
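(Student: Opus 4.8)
The plan is to prove both parts by the standard orbit-counting argument (stabilizer-sum manipulation), treating part (a) as a warm-up and special case of the more refined sign-weighted count in part (b).

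First I would establish part (a). Let $\Stab \pi = \left\{ g \in G \mid g\pi = \pi \right\}$ denote the stabilizer of $\pi$; this is a subgroup of $G$. The orbit-stabilizer theorem gives $\left|G\right| = \left|O\right| \cdot \left|\Stab \pi\right|$, since $O = G\pi$ is the orbit of $\pi$. Rewriting,
\[
\dfrac{1}{\left|O\right|} = \dfrac{\left|\Stab \pi\right|}{\left|G\right|} = \dfrac{1}{\left|G\right|} \sum_{\substack{g \in G; \\ g\pi = \pi}} 1 ,
\]
since the sum on the right simply counts the elements of $\Stab \pi$. This proves \eqref{eq.lem.burnside.sums.a}.

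For part (b), the key observation is that $g \mapsto \sign_E g$ is a group homomorphism $G \to \left\{1, -1\right\}$ (being the composite of the action homomorphism $G \to \operatorname{Aut} E$ with the sign character of the symmetric group). Restricting to the subgroup $\Stab \pi$, I would consider the sum $\sum_{g \in \Stab \pi} \sign_E g$. A standard fact about characters says that for a homomorphism $\chi : H \to \left\{1,-1\right\}$ on a finite group $H$, the sum $\sum_{h \in H} \chi(h)$ equals $\left|H\right|$ if $\chi$ is trivial, and equals $0$ otherwise (in the nontrivial case, exactly half the elements map to $-1$, since $\ker \chi$ has index $2$). Applying this to $H = \Stab \pi$ and $\chi = \sign_E\!\restriction_{\Stab \pi}$: the restriction is trivial precisely when every $g \in \Stab \pi$ satisfies $\sign_E g = 1$, i.e.\ every $g$ fixing $\pi$ is $E$-even, which is exactly the condition that $\pi$ is $E$-coeven; and by Lemma~\ref{lem.coeven.all-one} (together with the definition of an $E$-coeven orbit), $\pi$ is $E$-coeven if and only if the orbit $O$ is $E$-coeven.

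Putting these together, I would conclude
\[
\dfrac{1}{\left|G\right|} \sum_{\substack{g \in G; \\ g\pi = \pi}} \sign_E g = \dfrac{1}{\left|G\right|} \sum_{g \in \Stab \pi} \sign_E g = \begin{cases} \dfrac{\left|\Stab \pi\right|}{\left|G\right|}, & \text{if } O \text{ is } E\text{-coeven}; \\ 0, & \text{if } O \text{ is not } E\text{-coeven}, \end{cases}
\]
and then invoke $\left|\Stab \pi\right|/\left|G\right| = 1/\left|O\right|$ from part (a) to match the left-hand side of \eqref{eq.lem.burnside.sums.b}. I do not anticipate a serious obstacle here: the only point requiring care is the correct identification of ``$\sign_E\!\restriction_{\Stab \pi}$ is trivial'' with ``$O$ is $E$-coeven,'' which is where Lemma~\ref{lem.coeven.all-one} does its work (it guarantees that coevenness of a single orbit representative is equivalent to coevenness of the whole orbit, so the statement is genuinely a property of $O$ and independent of the chosen $\pi$). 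The character-sum dichotomy itself is elementary and can be stated in one line.
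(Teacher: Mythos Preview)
Your proposal is correct and follows essentially the same approach as the paper: both parts use the orbit--stabilizer theorem, and part (b) is handled via the observation that $g \mapsto \sign_E g$ restricts to a homomorphism $\Stab \pi \to \{1,-1\}$ whose triviality is equivalent (via Lemma~\ref{lem.coeven.all-one} and the definition) to $O$ being $E$-coeven, after which the character-sum dichotomy finishes the job. The paper organizes part (b) as two separate cases rather than invoking the dichotomy as a single lemma, but the underlying argument is identical.
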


\begin{proof}
[Proof of Lemma \ref{lem.burnside.sums}.] Let $\Stab_{G}\pi$
denote the stabilizer of $\pi$; this is the subgroup $\left\{  g\in
G\ \mid\ g\pi=\pi\right\}  $ of $G$. (This subgroup is also known as
the \textit{stabilizer subgroup} or the \textit{isotropy group} of
$\pi$.) The $G$-orbit of $\pi$ is $O$ (since $O$
is a $G$-orbit on $F$, and since $\pi\in O$). In other words, $O = G\pi$.
Therefore,
$\left\vert O\right\vert =\left\vert G\pi\right\vert
= \left\vert G\right\vert / \left\vert \Stab_G \pi \right\vert$
(by the orbit-stabilizer theorem). Hence,
\begin{equation}
\dfrac{1}{\left\vert O\right\vert }
= \dfrac{1}{\left\vert G\right\vert / \left\vert \Stab_G \pi \right\vert}
= \dfrac{\left\vert \Stab_G \pi \right\vert}{\left\vert G\right\vert}
.
\label{pf.thm.antipode.GammawG.os2}
\end{equation}

(a) We have%
\[
\sum_{\substack{g\in G;\\g\pi=\pi}}1=\left\vert \underbrace{\left\{  g\in
G\ \mid\ g\pi=\pi\right\}  }_{= \Stab_G \pi
}\right\vert =\left\vert  \Stab_G \pi\right\vert .
\]
Hence,
\[
\dfrac{1}{\left\vert G\right\vert }
\underbrace{\sum_{\substack{g \in G;\\ g \pi = \pi}} 1}_{
 = \left\vert \Stab_G \pi \right\vert}
= \dfrac{1}{\left\vert G\right\vert }
\left\vert \Stab_G \pi \right\vert
= \dfrac{\left\vert \Stab_G \pi \right\vert }{
\left\vert G\right\vert }
= \dfrac{1}{\left\vert O\right\vert }
\]
(by (\ref{pf.thm.antipode.GammawG.os2})). This proves
Lemma~\ref{lem.burnside.sums} (a).

(b) We need to prove (\ref{eq.lem.burnside.sums.b}). Assume first that $O$ is
$E$-coeven. Thus, all elements of $O$ are $E$-coeven (by the
definition of what it means for
$O$ to be $E$-coeven). Hence, $\pi$ is $E$-coeven (since $\pi \in O$).
This means that every $g\in G$ satisfying $g\pi=\pi$ is
$E$-even. Hence, every $g\in G$ satisfying $g\pi=\pi$ satisfies
$\sign_E g = 1$ (since $g$ is $E$-even if and only if
$\sign_E g = 1$). Thus,
\begin{align*}
\dfrac{1}{\left\vert G\right\vert }\sum_{\substack{g\in G;\\g\pi=\pi
}}\underbrace{\sign_E g}_{=1} &  =\dfrac
{1}{\left\vert G\right\vert }\sum_{\substack{g\in G;\\g\pi=\pi}}1=\dfrac
{1}{\left\vert O\right\vert }\ \ \ \ \ \ \ \ \ \ \left(  \text{by
\eqref{eq.lem.burnside.sums.a}}\right)  \\
&  =%
\begin{cases}
\dfrac{1}{\left\vert O\right\vert }, & \text{if }O\text{ is }E\text{-coeven}%
;\\
0, & \text{if }O\text{ is not }E\text{-coeven}%
\end{cases}
\ \ \ \ \ \ \ \ \ \ \left(  \text{since }O\text{ is }E\text{-coeven}\right)  .
\end{align*}

Thus, we have proven (\ref{eq.lem.burnside.sums.b}) under the assumption that
$O$ is $E$-coeven. We can therefore WLOG assume the opposite now. Thus, assume
WLOG that $O$ is not $E$-coeven. Hence, no element of $O$ is
$E$-coeven (due to the contrapositive of Lemma~\ref{lem.coeven.all-one}).
In particular, $\pi$ is not $E$-coeven (since $\pi \in O$).
In other words, not every $g\in G$ satisfying $g\pi=\pi$ is
$E$-even. In other words, not every $g\in \Stab_G \pi$
is $E$-even (since the elements $g\in G$ satisfying $g\pi=\pi$ are exactly the
elements $g\in \Stab_G \pi$). In other words, not
every $g\in \Stab_G \pi$ satisfies
$\sign_E g = 1$ (since $g$ is $E$-even if and only if
$\sign_E g = 1$).

Now, the map
\[
 \Stab_G \pi\rightarrow\left\{  1,-1\right\}
,\ \ \ \ \ \ \ \ \ \ g \mapsto \sign_E g
\]
is a group homomorphism (since the action of $G$ on $E$ is a group
homomorphism $G \to \operatorname{Aut} E$, and since
the sign of a permutation is multiplicative)
and is not the trivial homomorphism (since not every
$g \in \Stab_G \pi$ satisfies $\sign_E g = 1$). Hence, it
must send exactly half the elements of $ \Stab_G \pi$
to $1$ and the other half to $-1$. Therefore, the addends in the sum
$\sum_{g\in \Stab_G \pi}
\sign_E g$ cancel each other out (one half of them are $1$, and the
others are $-1$). Therefore,
$\sum_{g\in \Stab_G \pi} \sign_E g = 0$. Now,
\[
\dfrac{1}{\left\vert G\right\vert }\underbrace{\sum_{\substack{g\in
G;\\g\pi=\pi}}}_{=\sum_{g\in \Stab_G \pi}}
\sign_E g
= \dfrac{1}{\left\vert G\right\vert }
\underbrace{\sum_{g\in \Stab_G \pi} \sign_E g}_{=0}
= 0
=
\begin{cases}
\dfrac{1}{\left\vert O\right\vert }, & \text{if }O\text{ is }E\text{-coeven}%
;\\
0, & \text{if }O\text{ is not }E\text{-coeven}%
\end{cases}
\]
(since $O$ is not $E$-coeven).
This proves (\ref{eq.lem.burnside.sums.b}). Lemma \ref{lem.burnside.sums} (b)
is thus proven.
\end{proof}

\begin{proof}
[Proof of Theorem~\ref{thm.antipode.GammawG} (sketched).]Let $g\in G$.
Define the set $E^g$, the relations $<_1^g$ and $<_2^g$
and the triple $\EE^g$ as in Proposition~\ref{prop.G-poset.quot.double}.
Thus, $\EE^g$ is a tertispecial double poset (by
Proposition~\ref{prop.G-poset.quot.double}). In other words,
$\left( E^g, <_1^g, <_2^g \right)$ is a tertispecial double poset
(since $\EE^g = \left( E^g, <_1^g, <_2^g \right)$).

Now, forget that we fixed $g$. We thus have constructed a
tertispecial double poset
$\EE^g = \left( E^g, <_1^g, <_2^g \right)$ for every $g \in G$.

Moreover, for every $g \in G$, let us define $>_1^g$ to be the
opposite relation of $<_1^g$.

Furthermore, for every $g\in G$, define a map $w^{g}:E^{g}\rightarrow
\left\{1,2,3,\ldots\right\}$
by $w^{g}\left(  u\right)  =\sum\limits_{a\in u}w\left(
a\right)  $. (Since $G$ preserves $w$, the numbers $w\left(  a\right)  $ for
all $a\in u$ are equal (for given $u$), and thus $\sum\limits_{a\in u}w\left(
a\right)  $ can be rewritten as $\left\vert u\right\vert \cdot w\left(
b\right)  $ for any particular $b\in u$. But we shall not use
this observation.)
Now, every $g \in G$ satisfies
\begin{equation}
S\left(  \Gamma\left(  \left(  E^{g},<_{1}^{g},<_{2}^{g}\right)
,w^{g}\right)  \right)  =\left(  -1\right)  ^{\left\vert E^{g}\right\vert
}\Gamma\left(  \left(  E^{g},>_{1}^{g},<_{2}^{g}\right)  ,w^{g}\right) .
\label{pf.thm.antipode.GammawG.S1}
\end{equation}
(Indeed, this follows from Theorem~\ref{thm.antipode.Gammaw}
(applied to $\left( E^g, <_1^g, <_2^g \right)$ and $w^g$
instead of $\left( E, <_1, <_2 \right)$ and $w$)
since the double poset
$\left( E^g, <_1^g, <_2^g \right)$ is tertispecial.)

For every $g\in G$, we have
\begin{equation}
\sum_{\substack{\pi\text{ is an } \EE \text{-partition;}\\g\pi=\pi
}}\xx_{\pi,w}
= \Gamma \left( \EE^g , w^g \right)
\label{pf.thm.antipode.GammawG.red}
\end{equation}
\footnote{\textit{Proof of \eqref{pf.thm.antipode.GammawG.red}:} Let $g\in G$.
The definition of $\Gamma \left( \EE^g , w^g \right)$ yields
\begin{equation}
\Gamma\left(  \EE^{g},w^{g}\right)
= \sum_{\pi\text{ is an }
\EE ^{g}\text{-partition}}\xx_{\pi,w^{g}}
= \sum_{\overline{\pi}\text{ is an } \EE ^{g}\text{-partition}}
\xx_{\overline{\pi},w^{g}}
\label{pf.thm.antipode.GammawG.red.pf.1}
\end{equation}
(here, we have renamed the summation index $\pi$ as
$\overline{\pi}$).
\par
In Proposition~\ref{prop.G-poset.quot.Phi}, we have introduced a
bijection $\Phi$ between
\par
\begin{itemize}
\item the maps $\pi:E\rightarrow\left\{  1,2,3,\ldots\right\}  $ satisfying
$g\pi=\pi$
\end{itemize}
\par
and
\par
\begin{itemize}
\item the maps $\overline{\pi}:E^{g}\rightarrow\left\{  1,2,3,\ldots\right\}
$.
\end{itemize}
\par
Parts (a) and (b) of Proposition~\ref{prop.G-poset.quot.Phi} show that
this bijection $\Phi$ restricts to a bijection between
\par
\begin{itemize}
\item the $\EE$-partitions $\pi:E\rightarrow\left\{  1,2,3,\ldots
\right\}  $ satisfying $g\pi=\pi$
\end{itemize}
\par
and
\par
\begin{itemize}
\item the $\EE ^g$-partitions $\overline{\pi}:E^{g}\rightarrow\left\{
1,2,3,\ldots\right\}  $.
\end{itemize}
\par
Hence, we can substitute $\Phi\left(\pi\right)$ for
$\overline{\pi}$ in the sum
$\sum_{\overline{\pi} \text{ is an } \EE ^g\text{-partition}}
\xx_{\overline{\pi},w^{g}}$. We thus obtain
\[
\sum_{\overline{\pi}\text{ is an } \EE ^g\text{-partition}}
\xx_{\overline{\pi},w^{g}}
=\sum_{\substack{\pi\text{ is an } \EE \text{-partition;}\\g\pi=\pi
}}
\underbrace{\xx_{\Phi\left(  \pi\right)  ,w^{g}}}_{
\substack{=\xx_{\pi,w} \\
\text{(by Proposition~\ref{prop.G-poset.quot.Phi} (c))}
}}
= \sum_{\substack{\pi\text{ is an } \EE \text{-partition;}
\\ g\pi=\pi}}\xx_{\pi,w},
\]
whence $\sum_{\substack{\pi\text{ is an } \EE \text{-partition;}
\\g\pi=\pi}}\xx_{\pi,w}
=\sum_{\overline{\pi}\text{ is an }
\EE ^{g}\text{-partition}}\xx_{\overline{\pi},w^{g}}
=\Gamma\left(  \EE^{g},w^{g}\right)  $
(by \eqref{pf.thm.antipode.GammawG.red.pf.1}).
This proves \eqref{pf.thm.antipode.GammawG.red}.}.

It is clearly sufficient to prove
Theorem~\ref{thm.antipode.GammawG} for $\kk = \ZZ$ (since all
the power series that we are discussing are defined functorially
in $\kk$ (and so are the Hopf algebra $\QSym$ and its antipode
$S$), and thus any identity between these series that holds
over $\ZZ$ must hold over any $\kk$). Therefore, it is sufficient
to prove Theorem~\ref{thm.antipode.GammawG} for $\kk = \QQ$ (since
$\ZZ\left[\left[x_1,x_2,x_3,\ldots\right]\right]$ embeds into
$\QQ\left[\left[x_1,x_2,x_3,\ldots\right]\right]$, and using
this embedding we have
$\QSym_{\ZZ} = \QSym_{\QQ} \cap
\ZZ\left[\left[x_1,x_2,x_3,\ldots\right]\right]$
\ \ \ \ \footnote{Here, we
are using the notation $\QSym_{\kk}$ for the Hopf algebra $\QSym$
defined over a base ring $\kk$.}).
Thus, we WLOG assume that $\kk = \QQ$.
This will allow us to divide by positive integers.

Every $G$-orbit $O$ on $\Par \EE$ satisfies
\begin{equation}
\dfrac{1}{\left|O\right|} \sum_{\pi \in O}
\underbrace{\xx_{\pi, w}}_{\substack{= \xx_{O, w} \\
                           \text{(since } \xx_{O, w}
                           \text{ is defined} \\
                           \text{to be } \xx_{\pi, w}
                           \text{)}}}
= \dfrac{1}{\left|O\right|} \underbrace{\sum_{\pi \in O} \xx_{O, w}}_{
                                        = \left|O\right| \xx_{O, w}}
= \dfrac{1}{\left|O\right|} \left|O\right| \xx_{O, w}
= \xx_{O, w} .
\label{pf.thm.antipode.GammawG.averaging}
\end{equation}

Now,
\begin{align}
\Gamma\left( \EE , w, G\right)   &  =\sum_{O\text{ is a
}G\text{-orbit on } \Par \EE }\underbrace{\xx_{O,w}}_{
\substack{=\dfrac{1}{\left\vert O\right\vert }\sum\limits_{\pi\in
O}\xx_{\pi,w}\\\text{(by \eqref{pf.thm.antipode.GammawG.averaging})}%
}}=\sum_{O\text{ is a }G\text{-orbit on } \Par \EE }
\dfrac{1}{\left\vert O\right\vert }\sum\limits_{\pi\in O}\xx_{\pi
,w}\nonumber\\
&  =\sum_{O\text{ is a }G\text{-orbit on } \Par \EE
}\sum\limits_{\pi\in O}\underbrace{\dfrac{1}{\left\vert O\right\vert }%
}_{\substack{=\dfrac{1}{\left\vert G\right\vert }\sum_{\substack{g\in
G;\\g\pi=\pi}}1\\\text{(by \eqref{eq.lem.burnside.sums.a}, applied to }
F = \Par\EE {)}}}\xx_{\pi,w}
\nonumber\\
& = \underbrace{\sum_{O \text{ is a } G\text{-orbit on } \Par \EE}
\sum\limits_{\pi\in O}}_{=\sum_{\pi\in \Par \EE }
=\sum_{\pi\text{ is an } \EE \text{-partition}}}
\left(
\dfrac{1}{\left\vert G\right\vert }\sum_{\substack{g\in G;\\g\pi=\pi
}}1\right)  \xx_{\pi,w}\nonumber\\
&  =\sum_{\pi\text{ is an } \EE \text{-partition}}\left(  \dfrac
{1}{\left\vert G\right\vert }\sum_{\substack{g\in G;\\g\pi=\pi}}1\right)
\xx_{\pi,w}=\dfrac{1}{\left\vert G\right\vert }\underbrace{\sum
_{\pi\text{ is an } \EE \text{-partition}}\sum_{\substack{g\in
G;\\g\pi=\pi}}}_{=\sum_{g\in G}\sum_{\substack{\pi\text{ is an }\EE
\text{-partition;}\\g\pi=\pi}}}\xx_{\pi,w}\nonumber\\
&  =\dfrac{1}{\left\vert G\right\vert }\sum_{g\in G}\underbrace{\sum
_{\substack{\pi\text{ is an } \EE \text{-partition;}\\g\pi=\pi
}}\xx_{\pi,w}}_{\substack{=\Gamma\left(   \EE ^{g},w^{g}\right)
\\\text{(by \eqref{pf.thm.antipode.GammawG.red})}}}\nonumber\\
&  =\dfrac{1}{\left\vert G\right\vert }\sum_{g\in G}\Gamma\left(
\underbrace{ \EE ^{g}}_{=\left(  E^{g},<_{1}^{g},<_{2}^{g}\right)
},w^{g}\right)  \nonumber\\
&=\dfrac{1}{\left\vert G\right\vert }\sum_{g\in G}\Gamma\left(
\left(  E^{g},<_{1}^{g},<_{2}^{g}\right)  ,w^{g}\right)
.
\label{pf.thm.antipode.GammawG.1}
\end{align}
Hence, $\Gamma\left(  { \EE },w,G\right) \in \QSym$
(by Proposition~\ref{prop.Gammaw.qsym}).

\begin{vershort}
Applying the map $S$ to both sides of the equality
\eqref{pf.thm.antipode.GammawG.1}, we obtain
\begin{align}
S\left(  \Gamma\left(  { \EE },w,G\right)  \right)
& = \dfrac{1}{
\left\vert G\right\vert }\sum_{g\in G}\underbrace{S\left(  \Gamma\left(
\left(  E^{g},<_{1}^{g},<_{2}^{g}\right)  ,w^{g}\right)  \right)
}_{\substack{=\left(  -1\right)  ^{\left\vert E^{g}\right\vert }\Gamma\left(
\left(  E^{g},>_{1}^{g},<_{2}^{g}\right)  ,w^{g}\right)  \\\text{(by
\eqref{pf.thm.antipode.GammawG.S1})}}}\nonumber\\
& =\dfrac{1}{\left\vert G\right\vert }\sum_{g\in G}\left(  -1\right)
^{\left\vert E^{g}\right\vert }
\Gamma\left(  \left(  E^{g},>_{1}^{g},<_{2}^{g}\right)  ,w^{g}\right)
.
\label{pf.thm.antipode.GammawG.1S}
\end{align}
\end{vershort}
\begin{verlong}
Applying the map $S$ to both sides of the equality
\eqref{pf.thm.antipode.GammawG.1}, we obtain
\begin{align}
S\left(  \Gamma\left(  { \EE },w,G\right)  \right)
& = S \left( \dfrac{1}{
\left\vert G\right\vert }\sum_{g\in G} \Gamma\left(
\left(  E^{g},<_{1}^{g},<_{2}^{g}\right)  ,w^{g}\right)
\right)
= \dfrac{1}{
\left\vert G\right\vert }\sum_{g\in G}\underbrace{S\left(  \Gamma\left(
\left(  E^{g},<_{1}^{g},<_{2}^{g}\right)  ,w^{g}\right)  \right)
}_{\substack{=\left(  -1\right)  ^{\left\vert E^{g}\right\vert }\Gamma\left(
\left(  E^{g},>_{1}^{g},<_{2}^{g}\right)  ,w^{g}\right)  \\\text{(by
\eqref{pf.thm.antipode.GammawG.S1})}}}\nonumber\\
& =\dfrac{1}{\left\vert G\right\vert }\sum_{g\in G}\left(  -1\right)
^{\left\vert E^{g}\right\vert }
\Gamma\left(  \left(  E^{g},>_{1}^{g},<_{2}^{g}\right)  ,w^{g}\right)
.
\label{pf.thm.antipode.GammawG.1S}
\end{align}
\end{verlong}

On the other hand, for every $g\in G$, let $\sign_E g$
denote the sign of the permutation of $E$ that sends every
$e\in E$ to $ge$. Thus, $g\in G$ is $E$-even if and only if
$\sign_E g = 1$. Now, every $G$-orbit $O$ on $\Par \EE $
and every $\pi\in O$ satisfy%
\begin{equation}%
\begin{cases}
\dfrac{1}{\left\vert O\right\vert }, & \text{if }O\text{ is }E\text{-coeven};\\
0, & \text{if }O\text{ is not }E\text{-coeven}%
\end{cases}
=\dfrac{1}{\left\vert G\right\vert }\sum_{\substack{g\in G; \\ g\pi = \pi}}
\sign_E g
\label{pf.thm.antipode.GammawG.signed}
\end{equation}
(by \eqref{eq.lem.burnside.sums.b}, applied to $F = \Par\EE$). Furthermore,
\begin{equation}
\sign_E g = \left(  -1\right)  ^{\left\vert
E\right\vert -\left\vert E^{g}\right\vert }
\label{pf.thm.antipode.GammawG.sign}
\end{equation}
for every $g\in G$\ \ \ \ \footnote{\textit{Proof of
\eqref{pf.thm.antipode.GammawG.sign}:} Let $g\in G$. Recall that
$\sign_E g$ is the sign of the permutation of $E$
that sends every $e\in E$ to $ge$. Denote this permutation by $\zeta$.
Thus, $\sign_E g$ is the sign of $\zeta$.
\par
The permutation $\zeta$ is the permutation of $E$
that sends every $e \in E$ to $ge$. In other words,
$\zeta$ is the action of $g$ on $E$.
Hence, the cycles of $\zeta$ are the $g$-orbits on $E$.
Thus, the set of all cycles of $\zeta$ is the set
of all $g$-orbits on $E$; this latter set is $E^g$.
Hence, $E^g$ is the set of all cycles of $\zeta$.
\par
But if $\sigma$ is a permutation of a
finite set $X$, then the sign of $\sigma$ is $\left(  -1\right)  ^{\left\vert
X\right\vert -\left\vert X^{\sigma}\right\vert }$, where $X^{\sigma}$ is the
set of all cycles of $\sigma$. Applying this to $X=E$,
$\sigma=\zeta$ and $X^{\sigma}=E^{g}$, we see that the sign of $\zeta$
is $\left(  -1\right)  ^{\left\vert
E\right\vert -\left\vert E^{g}\right\vert }$
(because $E^g$ is the set of all cycles of $\zeta$).
In other words,
$\sign_E g = \left(  -1\right)  ^{\left\vert
E\right\vert -\left\vert E^{g}\right\vert }$ (since
$\sign_E g$ is the sign of $\zeta$), qed.}.

\begin{vershort}
Now,%
\begin{align}
&  \Gamma^{+}\left(   \EE ,w,G\right)  \nonumber\\
&  =\sum_{O\text{ is an }E\text{-coeven }G\text{-orbit on } \Par
 \EE }\underbrace{\xx_{O,w}}_{\substack{=\dfrac{1}{\left\vert
O\right\vert }\sum\limits_{\pi\in O}\xx_{\pi,w}\\\text{(by
\eqref{pf.thm.antipode.GammawG.averaging})}}}=\sum_{O\text{ is an
}E\text{-coeven }G\text{-orbit on } \Par \EE }\dfrac
{1}{\left\vert O\right\vert }\sum\limits_{\pi\in O}\xx_{\pi
,w}\nonumber\\
&  =\sum_{O\text{ is a }G\text{-orbit on } \Par \EE }
\begin{cases}
\dfrac{1}{\left\vert O\right\vert }, & \text{if }O\text{ is }E\text{-coeven}%
;\\
0, & \text{if }O\text{ is not }E\text{-coeven}%
\end{cases}
\sum\limits_{\pi\in O}\xx_{\pi,w}\nonumber\\
&  \qquad\left(
\begin{array}
[c]{c}%
\text{here, we have extended the sum to all }G\text{-orbits on } \Par \EE
\\
\text{ (not just the }E\text{-coeven ones); but all new addends are }0\\
\text{and therefore do not influence the value of the sum}%
\end{array}
\right)  \nonumber\\
&  =\sum_{O\text{ is a }G\text{-orbit on } \Par \EE }
\sum\limits_{\pi\in O}\underbrace{%
\begin{cases}
\dfrac{1}{\left\vert O\right\vert }, & \text{if }O\text{ is }E\text{-coeven}%
;\\
0, & \text{if }O\text{ is not }E\text{-coeven}%
\end{cases}
}_{\substack{=\dfrac{1}{\left\vert G\right\vert }\sum_{\substack{g\in
G;\\g\pi=\pi}} \sign_E g \\\text{(by
\eqref{pf.thm.antipode.GammawG.signed})}}}\xx_{\pi,w}\nonumber\\
&  =\underbrace{\sum_{O\text{ is a }G\text{-orbit on } \Par
\EE }\sum\limits_{\pi\in O}}_{=\sum_{\pi\in \Par
\EE }=\sum_{\pi\text{ is an } \EE \text{-partition}}}\left(
\dfrac{1}{\left\vert G\right\vert }\sum_{\substack{g\in G;\\g\pi=\pi
}} \sign_E g \right)  \xx_{\pi,w}\nonumber\\
&  =\sum_{\pi\text{ is an } \EE \text{-partition}}\left(  \dfrac
{1}{\left\vert G\right\vert }\sum_{\substack{g\in G;\\g\pi=\pi}%
} \sign_E g \right)  \xx_{\pi,w}=\dfrac
{1}{\left\vert G\right\vert }\underbrace{\sum_{\pi\text{ is an }
\EE\text{-partition}}
\sum_{\substack{g\in G;\\g\pi=\pi}}}_{=\sum_{g\in G}%
\sum_{\substack{\pi\text{ is an } \EE \text{-partition;}\\g\pi=\pi}%
}}\left(   \sign_E g \right)  \xx_{\pi,w}
\nonumber\\
&  =\dfrac{1}{\left\vert G\right\vert }\sum_{g\in G}%
\underbrace{ \sign_E g }_{\substack{=\left(  -1\right)
^{\left\vert E\right\vert -\left\vert E^{g}\right\vert }\\\text{(by
\eqref{pf.thm.antipode.GammawG.sign})}}}\underbrace{\sum_{\substack{\pi\text{
is an } \EE \text{-partition;}\\g\pi=\pi}}\xx_{\pi,w}%
}_{\substack{=\Gamma\left(   \EE ^{g},w^{g}\right)  \\\text{(by
\eqref{pf.thm.antipode.GammawG.red})}}}
=\dfrac{1}{\left\vert G\right\vert }\sum_{g\in G}\left(  -1\right)
^{\left\vert E\right\vert -\left\vert E^{g}\right\vert }\Gamma\left(
 \underbrace{\EE^{g}}_{=\left(E^g, <_1^g, <_2^g\right)},w^{g}\right)
\nonumber\\
&= \dfrac{1}{\left\vert G\right\vert }\sum_{g\in G}\left(  -1\right)
^{\left\vert E\right\vert -\left\vert E^{g}\right\vert }\Gamma\left(
 \left(E^g, <_1^g, <_2^g\right),w^{g}\right) .
\label{pf.thm.antipode.GammawG.Gammaplus}
\end{align}
Hence, $\Gamma^+\left( \EE ,w,G\right) \in \QSym$
(by Proposition~\ref{prop.Gammaw.qsym}).
\end{vershort}

\begin{verlong}
Now,%
\begin{align*}
&  \Gamma^{+}\left(   \EE ,w,G\right)  \\
&  =\sum_{O\text{ is an }E\text{-coeven }G\text{-orbit on } \Par
 \EE }\underbrace{\xx_{O,w}}_{\substack{=\dfrac{1}{\left\vert
O\right\vert }\sum\limits_{\pi\in O}\xx_{\pi,w}\\\text{(by
\eqref{pf.thm.antipode.GammawG.averaging})}}}
=\underbrace{\sum_{O\text{ is an
}E\text{-coeven }G\text{-orbit on } \Par \EE }}%
_{=\sum_{\substack{O\text{ is a }G\text{-orbit on } \Par
 \EE ;\\O\text{ is }E\text{-coeven}}}}\dfrac{1}{\left\vert O\right\vert
}\sum\limits_{\pi\in O} \xx_{\pi,w}\\
&  =\sum_{\substack{O\text{ is a }G\text{-orbit on } \Par
 \EE ;\\O\text{ is }E\text{-coeven}}}\dfrac{1}{\left\vert O\right\vert
}\sum\limits_{\pi\in O} \xx_{\pi,w}\\
&  =\sum_{\substack{O\text{ is a }G\text{-orbit on } \Par
 \EE ;\\O\text{ is }E\text{-coeven}}}\underbrace{\dfrac{1}{\left\vert
O\right\vert }}_{\substack{=%
\begin{cases}
\dfrac{1}{\left\vert O\right\vert }, & \text{if }O\text{ is }E\text{-coeven}%
;\\
0, & \text{if }O\text{ is not }E\text{-coeven}%
\end{cases}
\\\text{(since }O\text{ is }E\text{-coeven)}}}\sum\limits_{\pi\in
O} \xx_{\pi,w} \\
& \ \ \ \ \ \ \ \ \ \ +\sum_{\substack{O\text{ is a }G\text{-orbit on
} \Par \EE ;\\O\text{ is not }E\text{-coeven}%
}}\underbrace{0}_{\substack{=%
\begin{cases}
\dfrac{1}{\left\vert O\right\vert }, & \text{if }O\text{ is }E\text{-coeven}%
;\\
0, & \text{if }O\text{ is not }E\text{-coeven}%
\end{cases}
\\\text{(since }O\text{ is not }E\text{-coeven)}}}\sum\limits_{\pi\in
O} \xx_{\pi,w}\\
&  \ \ \ \ \ \ \ \ \ \ \left(
\begin{array}
[c]{c}%
\text{since }\sum_{\substack{O\text{ is a }G\text{-orbit on }%
 \Par \EE ;\\O\text{ is }E\text{-coeven}}}\dfrac
{1}{\left\vert O\right\vert }\sum\limits_{\pi\in O} \xx_{\pi
,w}+\underbrace{\sum_{\substack{O\text{ is a }G\text{-orbit on }%
 \Par \EE ;\\O\text{ is not }E\text{-coeven}}%
}0\sum\limits_{\pi\in O} \xx_{\pi,w}}_{=0}\\
=\sum_{\substack{O\text{ is a }G\text{-orbit on } \Par
 \EE ;\\O\text{ is }E\text{-coeven}}}\dfrac{1}{\left\vert O\right\vert
}\sum\limits_{\pi\in O} \xx_{\pi,w}%
\end{array}
\right)  \\
&  =\sum_{\substack{O\text{ is a }G\text{-orbit on } \Par
 \EE ;\\O\text{ is }E\text{-coeven}}}%
\begin{cases}
\dfrac{1}{\left\vert O\right\vert }, & \text{if }O\text{ is }E\text{-coeven}%
;\\
0, & \text{if }O\text{ is not }E\text{-coeven}%
\end{cases}
\sum\limits_{\pi\in O} \xx_{\pi,w} \\
& \ \ \ \ \ \ \ \ \ \ +\sum_{\substack{O\text{ is a
}G\text{-orbit on } \Par \EE ;\\O\text{ is not
}E\text{-coeven}}}%
\begin{cases}
\dfrac{1}{\left\vert O\right\vert }, & \text{if }O\text{ is }E\text{-coeven}%
;\\
0, & \text{if }O\text{ is not }E\text{-coeven}%
\end{cases}
\sum\limits_{\pi\in O} \xx_{\pi,w}\\
&  =\sum_{O\text{ is a }G\text{-orbit on } \Par \EE }%
\begin{cases}
\dfrac{1}{\left\vert O\right\vert }, & \text{if }O\text{ is }E\text{-coeven}%
;\\
0, & \text{if }O\text{ is not }E\text{-coeven}%
\end{cases}
\sum\limits_{\pi\in O} \xx_{\pi,w}
\end{align*}
\begin{align}
&  =\sum_{O\text{ is a }G\text{-orbit on } \Par \EE }
\sum\limits_{\pi\in O}\underbrace{%
\begin{cases}
\dfrac{1}{\left\vert O\right\vert }, & \text{if }O\text{ is }E\text{-coeven}%
;\\
0, & \text{if }O\text{ is not }E\text{-coeven}%
\end{cases}
}_{\substack{=\dfrac{1}{\left\vert G\right\vert }\sum_{\substack{g\in
G;\\g\pi=\pi}} \sign_E g \\\text{(by
\eqref{pf.thm.antipode.GammawG.signed})}}}\xx_{\pi,w}\nonumber\\
&  =\underbrace{\sum_{O\text{ is a }G\text{-orbit on } \Par
\EE }\sum\limits_{\pi\in O}}_{=\sum_{\pi\in \Par
\EE }=\sum_{\pi\text{ is an } \EE \text{-partition}}}\left(
\dfrac{1}{\left\vert G\right\vert }\sum_{\substack{g\in G;\\g\pi=\pi
}} \sign_E g \right)  \xx_{\pi,w}\nonumber\\
&  =\sum_{\pi\text{ is an } \EE \text{-partition}}\left(  \dfrac
{1}{\left\vert G\right\vert }\sum_{\substack{g\in G;\\g\pi=\pi}%
} \sign_E g \right)  \xx_{\pi,w}=\dfrac
{1}{\left\vert G\right\vert }\underbrace{\sum_{\pi\text{ is an }
\EE\text{-partition}}
\sum_{\substack{g\in G;\\g\pi=\pi}}}_{=\sum_{g\in G}%
\sum_{\substack{\pi\text{ is an } \EE \text{-partition;}\\g\pi=\pi}%
}}\left(   \sign_E g \right)  \xx_{\pi,w}
\nonumber\\
&  =\dfrac{1}{\left\vert G\right\vert }\sum_{g\in G}%
\underbrace{ \sign_E g }_{\substack{=\left(  -1\right)
^{\left\vert E\right\vert -\left\vert E^{g}\right\vert }\\\text{(by
\eqref{pf.thm.antipode.GammawG.sign})}}}\underbrace{\sum_{\substack{\pi\text{
is an } \EE \text{-partition;}\\g\pi=\pi}}\xx_{\pi,w}%
}_{\substack{=\Gamma\left(   \EE ^{g},w^{g}\right)  \\\text{(by
\eqref{pf.thm.antipode.GammawG.red})}}}
=\dfrac{1}{\left\vert G\right\vert }\sum_{g\in G}\left(  -1\right)
^{\left\vert E\right\vert -\left\vert E^{g}\right\vert }\Gamma\left(
 \underbrace{\EE^{g}}_{=\left(E^g, <_1^g, <_2^g\right)},w^{g}\right)
\nonumber\\
&= \dfrac{1}{\left\vert G\right\vert }\sum_{g\in G}\left(  -1\right)
^{\left\vert E\right\vert -\left\vert E^{g}\right\vert }\Gamma\left(
 \left(E^g, <_1^g, <_2^g\right),w^{g}\right) .
\label{pf.thm.antipode.GammawG.Gammaplus}
\end{align}
Hence, $\Gamma^+\left( \EE ,w,G\right) \in \QSym$
(by Proposition~\ref{prop.Gammaw.qsym}).
\end{verlong}

The group $G$ preserves the relation $>_1$ (since it preserves the
relation $<_1$).
\begin{vershort}
Furthermore, the double poset $\left( E, >_1, <_2 \right)$ is
tertispecial\footnote{This can be easily derived from the fact
that $\left( E, <_1, <_2 \right)$ is tertispecial. (Observe that
an $a \in E$ is $>_1$-covered by a $b \in E$ if and only if $b$
is $<_1$-covered by $a$.)}.
\end{vershort}
\begin{verlong}
Furthermore, Lemma~\ref{lem.tertispecial.op} shows that
$\left( E, >_1, <_2 \right)$ is a tertispecial double poset.
\end{verlong}
Hence, we can apply
\eqref{pf.thm.antipode.GammawG.Gammaplus}
to $\left(  E,>_{1},<_{2}\right)  $, $>_1$ and $>_1^g$
instead of $\EE$, $<_1$ and $<_1^g$. As a result, we obtain
\[
\Gamma^{+}\left(  \left(  E,>_{1},<_{2}\right)  ,w,G\right)  =\dfrac
{1}{\left\vert G\right\vert }\sum_{g\in G}\left(  -1\right)  ^{\left\vert
E\right\vert -\left\vert E^{g}\right\vert }\Gamma\left(  \left(  E^{g}%
,>_{1}^{g},<_{2}^{g}\right)  ,w^{g}\right)  .
\]
Multiplying both sides of this equality by $\left(  -1\right)  ^{\left\vert
E\right\vert }$, we transform it into
\begin{align*}
\left(  -1\right)  ^{\left\vert E\right\vert }\Gamma^{+}\left(  \left(
E,>_{1},<_{2}\right)  ,w,G\right)
& = \left(  -1\right)  ^{\left\vert E\right\vert}
\dfrac{1}{\left\vert G\right\vert }\sum_{g\in G}
\left(  -1\right)  ^{\left\vert E\right\vert -\left\vert E^{g}\right\vert }%
\Gamma\left(  \left(
E^{g},>_{1}^{g},<_{2}^{g}\right)  ,w^{g}\right)  \\
& =\dfrac{1}{\left\vert G\right\vert
}\sum_{g\in G}\underbrace{\left(  -1\right)  ^{\left\vert E\right\vert
}\left(  -1\right)  ^{\left\vert E\right\vert -\left\vert E^{g}\right\vert }%
}_{=\left(  -1\right)  ^{\left\vert E^{g}\right\vert }}\Gamma\left(  \left(
E^{g},>_{1}^{g},<_{2}^{g}\right)  ,w^{g}\right)  \\
& =\dfrac{1}{\left\vert G\right\vert }\sum_{g\in G}\left(  -1\right)
^{\left\vert E^{g}\right\vert }\Gamma\left(  \left(  E^{g},>_{1}^{g},<_{2}%
^{g}\right)  ,w^{g}\right)  \\
& =S\left(  \Gamma\left(  { \EE },w,G\right)  \right)
\ \ \ \ \ \ \ \ \ \ 
\left(  \text{by \eqref{pf.thm.antipode.GammawG.1S}}\right)  .
\end{align*}
This completes the proof of Theorem~\ref{thm.antipode.GammawG}.
\end{proof}

\section{Application: Jochemko's theorem}
\label{sect.jochemko}

We shall now demonstrate an application of Theorem \ref{thm.antipode.GammawG}:
namely, we will use it to provide an alternative proof of \cite[Theorem
2.13]{Joch}. The way we derive \cite[Theorem 2.13]{Joch} from Theorem
\ref{thm.antipode.GammawG} is classical, and in fact was what originally
motivated the discovery of Theorem \ref{thm.antipode.GammawG} (although, of
course, it cannot be conversely derived from \cite[Theorem 2.13]{Joch}, so it
is an actual generalization).

An intermediate step between \cite[Theorem 2.13]{Joch} and Theorem
\ref{thm.antipode.GammawG} will be the following fact:

\begin{corollary}
\label{cor.reciprocity.GammawG}Let $ \EE =\left(  E,<_{1},<_{2}\right)
$ be a tertispecial double poset. Let $w:E\rightarrow\left\{  1,2,3,\ldots
\right\}  $. Let $G$ be a finite group which acts on $E$. Assume that $G$
preserves both relations $<_{1}$ and $<_{2}$, and also preserves $w$. For
every $q\in \NN $, let $\Par_q \EE$
denote the set of all $\EE$-partitions whose image is contained in
$\left\{  1,2,\ldots,q\right\}  $. Then, the group $G$ also acts on
$\Par_q \EE$; namely,
$\Par_q \EE$ is a $G$-subset of the $G$-set $\left\{  1,2,\ldots
,q\right\}  ^{E}$ (see Definition~\ref{def.G-sets.terminology} (d) for the
definition of the latter).

\begin{enumerate}
\item[(a)] There exists a unique polynomial $\Omega_{\EE, G}
\in \QQ \left[  X\right]  $ such that every $q\in \NN $ satisfies%
\begin{equation}
\Omega_{\EE, G}\left(  q\right)  =\left(  \text{the number of all
}G\text{-orbits on } \Par_q \EE \right)  .
\label{eq.cor.reciprocity.GammawG.a.def}
\end{equation}

\item[(b)] This polynomial satisfies%
\begin{align}
&  \Omega_{\EE,G}\left(  -q\right) \nonumber\\
&  =\left(  -1\right)  ^{\left\vert E\right\vert }\left(  \text{the number of
all $E$-coeven }G\text{-orbits on } \Par_q \left(
E,>_{1},<_{2}\right)  \right) \nonumber\\
&  =\left(  -1\right)  ^{\left\vert E\right\vert }\left(  \text{the number of
all $E$-coeven }G\text{-orbits on } \Par_q \left(
E,<_{1},>_{2}\right)  \right)
\label{eq.cor.reciprocity.GammawG.b.2}
\end{align}
for all $q\in \NN $.
\end{enumerate}
\end{corollary}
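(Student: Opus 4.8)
The plan is to derive Corollary~\ref{cor.reciprocity.GammawG} from Theorem~\ref{thm.antipode.GammawG} by specializing the quasisymmetric functions $\Gamma(\EE, w, G)$ and $\Gamma^+(\EE, w, G)$ at the point where all but the first $q$ variables are set to zero. Recall the standard \emph{principal specialization} on $\QSym$: for $q \in \NN$, let $\ps_q : \QSym \to \QQ[X]$ (or, more precisely, the evaluation sending a quasisymmetric function $f$ to $f(\underbrace{1,1,\ldots,1}_{q},0,0,\ldots)$) be the map substituting $1$ for $x_1, \ldots, x_q$ and $0$ for all later variables. Applied to a monomial quasisymmetric function $M_\alpha$, this yields $\binom{q}{\ell(\alpha)}$, a polynomial in $q$ of degree $\ell(\alpha) \le |\alpha|$; hence $\ps_q(f)$ is polynomial in $q$ for every homogeneous $f \in \QSym$. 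This is exactly the mechanism that will manufacture the polynomial $\Omega_{\EE, G}$.

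First I would establish part (a). By the definition of $\Gamma(\EE, w, G)$ as $\sum_{O} \xx_{O, w}$ over $G$-orbits $O$ on $\Par \EE$, and since each $\xx_{O, w}$ is a monomial in finitely many variables, substituting $x_1 = \cdots = x_q = 1$ and $x_i = 0$ for $i > q$ picks out precisely those orbits $O$ all of whose representatives $\pi$ take values in $\{1, 2, \ldots, q\}$ --- that is, the $G$-orbits on $\Par_q \EE$. (One must check that an orbit contributes $1$ exactly when its representative's image lies in $\{1,\ldots,q\}$; this uses Proposition~\ref{prop.xxOw}(b), which guarantees that the image-containment condition is orbit-invariant since $G$ permutes the values consistently.) Thus the specialization of $\Gamma(\EE, w, G)$ at $q$ counts the $G$-orbits on $\Par_q \EE$. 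Since $\Gamma(\EE, w, G) \in \QSym$ is homogeneous of degree $\sum_{e \in E} w(e)$, its principal specialization is a polynomial in $q$; calling it $\Omega_{\EE, G}(X)$ and invoking that a polynomial is determined by its values on $\NN$ gives both existence and uniqueness, proving \eqref{eq.cor.reciprocity.GammawG.a.def}.

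Next I would prove part (b) by feeding the antipode formula from Theorem~\ref{thm.antipode.GammawG} through the same specialization together with the behavior of $S$ under principal specialization. The key identity is the classical \emph{reciprocity} for the antipode: for a homogeneous $f \in \QSym$ of degree $n$, one has $\ps_q(S(f)) = \ps_{-q}(f)$ in the sense of extending the polynomial $\ps_{\bullet}(f)$ to negative arguments --- equivalently, evaluating $\Omega$ at $-q$ corresponds to applying $S$ before specializing. Applying $S$ to $\Gamma(\EE, w, G)$ and using Theorem~\ref{thm.antipode.GammawG}, which gives $S(\Gamma(\EE, w, G)) = (-1)^{|E|} \Gamma^+((E, >_1, <_2), w, G)$, the $\Gamma^+$ on the right specializes at $q$ to $(-1)^{|E|}$ times the number of \emph{$E$-coeven} $G$-orbits on $\Par_q(E, >_1, <_2)$ --- here ``$E$-coeven'' is exactly the paper's notion of ``even'' orbit in the statement. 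This yields the first equality in \eqref{eq.cor.reciprocity.GammawG.b.2}. The second equality, replacing $(E, >_1, <_2)$ by $(E, <_1, >_2)$, I would obtain by observing that the ``opposite'' double poset symmetry swaps the roles of the two orders: reversing both $<_1$ and $<_2$ sends $\EE$-partitions to a bijective image, and combined with the first reversal this shows the two orbit counts coincide (one can realize this by a suitable identification of the two double posets, or by a direct sign-preserving bijection on partitions).

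The main obstacle I anticipate is making the antipode--reciprocity bridge precise: namely, justifying that ``$\Omega_{\EE, G}(-q)$'' literally equals the principal specialization of $S(\Gamma(\EE, w, G))$ at $q$, up to the polynomiality already established in part~(a). The cleanest route is to prove a lemma stating that for homogeneous $f \in \QSym$, the polynomial $q \mapsto \ps_q(f)$ and the polynomial $q \mapsto \ps_q(S(f))$ are related by $X \mapsto -X$; this follows from the known action of $S$ on the monomial basis (the formula for $S(M_\alpha)$ recorded in Example~\ref{exam.antipode.Gammaw}(b)) combined with the Vandermonde-type identity $\binom{-q}{\ell} = (-1)^{\ell} \binom{q + \ell - 1}{\ell}$, but it requires a careful bookkeeping of which compositions $\gamma$ appear and with what signs. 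I would isolate this as a standalone computation on the monomial basis, so that the rest of the argument reduces to the bijective and orbit-counting observations above, which are routine given Proposition~\ref{prop.xxOw} and the definition of $E$-coeven orbits.
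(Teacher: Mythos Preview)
Your proposal is correct and follows essentially the same route as the paper: define the principal specialization $\operatorname{ps}^1(f)(q) = f(\underbrace{1,\ldots,1}_q,0,0,\ldots)$, observe that it turns $\Gamma(\EE,w,G)$ into the orbit count on $\Par_q\EE$ (and analogously $\Gamma^+$ into the $E$-coeven orbit count), invoke the polynomiality of $q \mapsto \operatorname{ps}^1(f)(q)$ and the reciprocity $\operatorname{ps}^1(S(f))(q) = \operatorname{ps}^1(f)(-q)$, and then feed Theorem~\ref{thm.antipode.GammawG} through. The paper cites \cite[Proposition 7.7]{Reiner} for the polynomiality and reciprocity facts rather than reproving them on the monomial basis as you propose, but this is only a difference in where the black box sits.

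One small point worth sharpening: for the second equality in \eqref{eq.cor.reciprocity.GammawG.b.2}, your sketch (``reversing both $<_1$ and $<_2$'') is vague. The paper's argument is simply that $\pi \mapsto w_0 \circ \pi$, where $w_0 : \{1,\ldots,q\} \to \{1,\ldots,q\}$ sends $i \mapsto q+1-i$, is a $G$-equivariant bijection $\Par_q(E,>_1,<_2) \to \Par_q(E,<_1,>_2)$; this is the concrete ``sign-preserving bijection on partitions'' you allude to, and it immediately matches $E$-coeven orbits on both sides.
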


\begin{proof}
[Proof of Corollary \ref{cor.reciprocity.GammawG} (sketched).]
Set $\kk = \QQ$. For any $f \in \QSym$ and any
$q\in \NN $, we define an element $\operatorname{ps}^{1}\left(
f\right)  \left(  q\right)  \in \QQ$ by
\[
\operatorname{ps}^{1}\left(  f\right)  \left(  q\right)  =f\left(
\underbrace{1,1,\ldots,1}_{q\text{ times}},0,0,0,\ldots\right)
\]
(that is, $\operatorname{ps}^{1}\left(  f\right)  \left(  q\right)
$ is the result of substituting $1$ for each of the variables
$x_{1},x_{2},\ldots,x_{q}$ and $0$ for each of the variables
$x_{q+1},x_{q+2},x_{q+3},\ldots$ in the power series $f$).

(a) Consider the elements $\Gamma\left(   \EE ,w,G\right)  $ and
$\Gamma^{+}\left( \EE ,w,G \right)  $ of $\QSym$
defined in Theorem~\ref{thm.antipode.GammawG}. Observe that
$\Par_q \EE$ is a $G$-subset of $\Par \EE$.

\begin{noncompile}
Clearly, there exists \textbf{at most} one polynomial $\Omega_{\EE,
G} \in \QQ \left[  X\right]  $ such that every $q\in \NN $ satisfies
(\ref{eq.cor.reciprocity.GammawG.a.def}) (because a polynomial in
$\QQ \left[  X\right]  $ is uniquely determined by its values at all
nonnegative integers). It remains to show that there exists \textbf{at least}
one such polynomial.
\end{noncompile}

Now, \cite[Proposition 7.1.7 (i)]{Reiner} shows that, for any given
$f\in \QSym $, there exists a unique polynomial in $\QQ
\left[  X\right]  $ whose value on each $q\in \NN $ equals
$\operatorname{ps}^{1}\left(  f\right)  \left(  q\right)  $.
Applying this to $f=\Gamma\left(   \EE ,w,G\right)  $, we conclude that
there exists a unique polynomial in $\QQ \left[  X\right]  $ whose value
on each $q\in \NN $ equals $\operatorname{ps}^{1}\left(
\Gamma\left(   \EE ,w,G\right)  \right)  \left(  q\right)  $. But since
every $q\in \NN $ satisfies
\begin{align}
\operatorname{ps}^{1}\left(  \Gamma\left(   \EE ,w,G\right)
\right)  \left(  q\right)   &  =\underbrace{\left(  \Gamma\left( \EE
,w,G\right)  \right)  }_{=\sum_{O\text{ is a }G\text{-orbit on }
\Par \EE } \xx_{O,w}}\left(
\underbrace{1,1,\ldots,1}_{q\text{ times}},0,0,0,\ldots\right) \nonumber\\
&  =\sum_{O\text{ is a }G\text{-orbit on } \Par \EE}
\underbrace{\xx_{O,w}\left(  \underbrace{1,1,\ldots,1}_{q\text{
times}},0,0,0,\ldots\right)  }_{=%
\begin{cases}
1, & \text{if }O\subseteq \Par_q \EE ;\\
0, & \text{if }O\not \subseteq \Par_q \EE
\end{cases}
}\nonumber\\
&  =\sum_{O\text{ is a }G\text{-orbit on } \Par \EE }
\begin{cases}
1, & \text{if }O\subseteq \Par_q \EE ;\\
0, & \text{if }O\not \subseteq \Par_q \EE
\end{cases}
\nonumber\\
& = \sum_{O\text{ is a }G\text{-orbit on } \Par_{q} \EE} 1
=\left(  \text{the number of all }G\text{-orbits on }
\Par_q \EE \right)  ,
\label{pf.cor.reciprocity.GammawG.a.1}
\end{align}
this rewrites as follows: There exists a unique polynomial in $\QQ
\left[  X\right]  $ whose value on each $q\in \NN $ equals $\left(
\text{the number of all }G\text{-orbits on } \Par_q \EE
\right)  $. This proves Corollary~\ref{cor.reciprocity.GammawG} (a).

(b) \cite[Proposition 7.1.7 (i)]{Reiner} shows that, for any given
$f\in \QSym $, there exists a unique polynomial in $\QQ
\left[  X\right]  $ whose value on each $q\in \NN $ equals
$\operatorname{ps}^{1}\left(  f\right)  \left(  q\right)  $. This
polynomial is denoted by $\operatorname{ps}^{1}\left(  f\right)  $
in \cite[Proposition 7.1.7]{Reiner}. From our above proof of Corollary
\ref{cor.reciprocity.GammawG} (a), we see that
\[
\Omega_{\mathbf{E},G}=\operatorname{ps}^{1}\left(  \Gamma\left(
 \EE ,w,G\right)  \right)  .
\]

But \cite[Proposition 7.1.7 (iii)]{Reiner} shows that, for any
$f \in \QSym$ and $m \in \NN$, we have
$\operatorname{ps}^{1}\left(  S\left(  f\right)  \right)
\left(  m\right)
=\operatorname{ps}^{1}\left(  f\right)  \left(  -m\right)  $.
Applying this to $f=\Gamma\left(   \EE ,w,G\right)  $, we obtain
\[
\operatorname{ps}^{1}\left(  S\left(  \Gamma\left(  {\mathbf{E}%
},w,G\right)  \right)  \right)  \left(  m\right)
=\underbrace{\operatorname{ps}^{1}\left(  \Gamma\left(  {\mathbf{E}%
},w,G\right)  \right)  }_{=\Omega_{\mathbf{E},G}}\left(  -m\right)
=\Omega_{\mathbf{E},G}\left(  -m\right)
\]
for any $m\in \NN $. Thus, any $m\in \NN $ satisfies
\begin{align*}
\Omega_{\mathbf{E},G}\left(  -m\right)   &  =\operatorname{ps}^{1}%
\left(  \underbrace{S\left(  \Gamma\left(  \EE ,w,G\right)  \right)
}_{\substack{=\left(  -1\right)  ^{\left\vert E\right\vert }\Gamma^{+}\left(
\left(  E,>_{1},<_{2}\right)  ,w,G\right)  \\\text{(by Theorem
\ref{thm.antipode.GammawG})}}}\right)  \left(  m\right) \\
&  =\operatorname{ps}^{1}\left(  \left(  -1\right)  ^{\left\vert
E\right\vert }\Gamma^{+}\left(  \left(  E,>_{1},<_{2}\right)  ,w,G\right)
\right)  \left(  m\right) \\
&  =\left(  -1\right)  ^{\left\vert E\right\vert }
\operatorname{ps}^{1}
\left(  \Gamma^{+}\left(  \left(  E,>_{1},<_{2}\right)
,w,G\right)  \right)  \left(  m\right)  .
\end{align*}
Renaming $m$ as $q$ in this equality, we see that every $q\in \NN $
satisfies
\begin{equation}
\Omega_{\mathbf{E},G}\left(  -q\right)  =\left(  -1\right)  ^{\left\vert
E\right\vert }\operatorname{ps}^{1}\left(  \Gamma^{+}\left(  \left(
E,>_{1},<_{2}\right)  ,w,G\right)  \right)  \left(  q\right)  .
\label{pf.cor.reciprocity.GammawG.b.2}
\end{equation}

But just as we proved (\ref{pf.cor.reciprocity.GammawG.a.1}), we can show that
every $q\in \NN $ satisfies
\[
\operatorname{ps}^{1}\left(  \Gamma^{+}\left( \EE ,
w,G\right)  \right)  \left(  q\right)  =\left(  \text{the number of all $E$-coeven
}G\text{-orbits on } \Par_q \EE \right)  .
\]
Applying this to $\left(  E,>_{1},<_{2}\right)  $ instead of $\mathbf{E}$, we
obtain%
\begin{align*}
&  \operatorname{ps}^{1}\left(  \Gamma^{+}\left(  \left(
E,>_{1},<_{2}\right)  ,w,G\right)  \right)  \left(  q\right) \\
&  =\left(  \text{the number of all $E$-coeven }G\text{-orbits on }%
\Par_q \left(  E,>_{1},<_{2}\right)  \right)  .
\end{align*}
Now, (\ref{pf.cor.reciprocity.GammawG.b.2}) becomes%
\begin{align*}
\Omega_{\EE, G}\left(  -q\right)   &  =\left(  -1\right)  ^{\left\vert
E\right\vert }\underbrace{\operatorname{ps}^{1}\left(  \Gamma
^{+}\left(  \left(  E,>_{1},<_{2}\right)  ,w,G\right)  \right)  \left(
q\right)  }_{=\left(  \text{the number of all $E$-coeven }G\text{-orbits on }
\Par_q \left(  E,>_{1},<_{2}\right)  \right)  }\\
&  =\left(  -1\right)  ^{\left\vert E\right\vert }\left(  \text{the number of
all $E$-coeven }G\text{-orbits on }
\Par_q \left( E,>_{1},<_{2}\right)  \right)  .
\end{align*}

In order to prove Corollary \ref{cor.reciprocity.GammawG} (b), it thus remains
to show that
\begin{align}
&  \left(  \text{the number of all $E$-coeven }G\text{-orbits on }%
\Par_{q} \left(  E,>_{1},<_{2}\right)  \right)
\nonumber\\
&  =\left(  \text{the number of all $E$-coeven }G\text{-orbits on }%
\Par_{q} \left(  E,<_{1},>_{2}\right)  \right)
\label{pf.cor.reciprocity.GammawG.b.goal5}
\end{align}
for every $q\in \NN $.

\textit{Proof of (\ref{pf.cor.reciprocity.GammawG.b.goal5}):} Let
$q\in \NN $. Let $w_{0}:\left\{  1,2,\ldots,q\right\}  \rightarrow
\left\{  1,2,\ldots,q\right\}  $ be the map sending each $i\in\left\{
1,2,\ldots,q\right\}  $ to $q+1-i$. Then, the map
\[
 \Par_q \left(  E,>_{1},<_{2}\right)  \rightarrow
 \Par_q \left(  E,<_{1},>_{2}\right)
,\ \ \ \ \ \ \ \ \ \ \pi\mapsto w_{0}\circ\pi
\]
is an isomorphism of $G$-sets (this is easy to check). Thus,
$\Par_q \left(  E,>_{1},<_{2}\right)  \cong
 \Par_q \left(  E,<_{1},>_{2}\right)  $ as $G$-sets.
From this, (\ref{pf.cor.reciprocity.GammawG.b.goal5}) follows (by
functoriality, if one wishes).

The proof of Corollary~\ref{cor.reciprocity.GammawG} (b) is now complete.
\end{proof}

Now, the second formula of \cite[Theorem 2.13]{Joch} follows from our
(\ref{eq.cor.reciprocity.GammawG.b.2}), applied to $\EE = \left(
P,\prec,<_{\omega}\right)  $ (where $<_{\omega}$ is the partial order on $P$
given by $\left(  p<_{\omega}q\right)  \Longleftrightarrow\left(
\omega\left(  p\right)  <\omega\left(  q\right)  \right)  $). The first
formula of \cite[Theorem 2.13]{Joch} can also be derived from our above
arguments. We leave the details to the reader.

\section{A final question}

With the results proven above (specifically, Theorems
\ref{thm.antipode.Gammaw} and \ref{thm.antipode.GammawG}), we have obtained
formulas for a large class of quasisymmetric generating functions for maps
from a double poset to $\left\{  1,2,3,\ldots\right\}  $. At least one
question arises:

\begin{question}
In \cite{Gri-nbc}, I have studied generalizations of Whitney's famous
non-broken-circuit theorem for graphs and matroids. One of the cornerstones of
that study is the bijection $\Phi$ in \cite[proofs of Lemma 2.8, Lemma
5.29 and Lemma 8.25]{Gri-nbc}, which is uncannily reminiscent of the
involution $T$ in the proof
of Theorem~\ref{thm.antipode.Gammaw}. (Actually, this bijection $\Phi$ can be
extended to an involution, thus making the analogy even more palpable.) Both
$\Phi$ and $T$ are defined by toggling a certain element in or out of a
subset; and this element is chosen as the argmin or argmax of a function
defined on the ground set. Is there a connection between the two results, or
even a common generalization?
\end{question}

\begin{verlong}
\begin{footnotesize}
% Appendix begins here!

\section{Appendix: Proofs of some basic properties of quasisymmetric
functions}

In this final section, we are going to restate and prove (in detail) some
foundational facts that were stated without proof in the first few sections of
this note. Most of these facts are well-known, and all are pretty obvious to
anyone with some experience in this subject (although sometimes, formalizing
the intuitively clear arguments is a nontrivial task); the reasons why I
nevertheless have chosen to prove them here are twofold: One is to make this
paper more self-contained (although this is not completely achieved, as some
other results from places such as \cite{Reiner} are used without proof);
another is to do (some of) the groundwork for an eventual formalization of the
theory of quasisymmetric functions in a formal proof system (such as Coq). I
do not expect much of the following to be useful to the reader; most likely,
she will be able to reconstruct at least the proofs herself easily, if not the
theorems as well.

\subsection{Monomial quasisymmetric functions}

We begin with a fact that was used in the definition of $M_{\alpha}$ given in
Section \ref{sect.qsym-intro}:

\begin{proposition}
\label{prop.Malpha.equivalent}Let $\alpha=\left(  \alpha_{1},\alpha_{2}%
,\ldots,\alpha_{\ell}\right)  $ be a composition. Then,%
\[
\sum_{i_{1}<i_{2}<\cdots<i_{\ell}}x_{i_{1}}^{\alpha_{1}}x_{i_{2}}^{\alpha_{2}%
}\cdots x_{i_{\ell}}^{\alpha_{\ell}}=\sum_{\substack{\mathfrak{m}\text{ is a
monomial pack-equivalent}\\\text{to }x_{1}^{\alpha_{1}}x_{2}^{\alpha_{2}%
}\cdots x_{\ell}^{\alpha_{\ell}}}}\mathfrak{m}.
\]

\end{proposition}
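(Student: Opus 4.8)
The plan is to prove the claimed equality by reindexing one sum into the other along an explicit bijection. Write $\mathfrak{n} = x_{1}^{\alpha_{1}} x_{2}^{\alpha_{2}} \cdots x_{\ell}^{\alpha_{\ell}}$, let $\mathcal{I}$ denote the set of all strictly increasing $\ell$-tuples $\left(i_{1} < i_{2} < \cdots < i_{\ell}\right)$ of positive integers, and let $\mathcal{M}$ denote the set of all monomials pack-equivalent to $\mathfrak{n}$. I would define a map $\Phi : \mathcal{I} \to \mathcal{M}$ by $\Phi\left(i_{1}, i_{2}, \ldots, i_{\ell}\right) = x_{i_{1}}^{\alpha_{1}} x_{i_{2}}^{\alpha_{2}} \cdots x_{i_{\ell}}^{\alpha_{\ell}}$. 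This is well-defined: since $\alpha$ is a composition, the entries $\alpha_{1}, \ldots, \alpha_{\ell}$ are positive, so taking the common exponent sequence $\left(\alpha_{1}, \ldots, \alpha_{\ell}\right)$, the increasing sequence $\left(1 < 2 < \cdots < \ell\right)$ for $\mathfrak{n}$, and the increasing sequence $\left(i_{1} < \cdots < i_{\ell}\right)$ for $\Phi\left(i_{1}, \ldots, i_{\ell}\right)$ exhibits $\Phi\left(i_{1}, \ldots, i_{\ell}\right)$ as pack-equivalent to $\mathfrak{n}$, i.e.\ as an element of $\mathcal{M}$. Since the summand of the left-hand side indexed by $\left(i_{1}, \ldots, i_{\ell}\right)$ is precisely $\Phi\left(i_{1}, \ldots, i_{\ell}\right)$, it then suffices to show that $\Phi$ is a bijection.

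For injectivity I would use the fact that a monomial is determined by its exponent sequence (this is how $\Mon$ is defined). Given $\left(i_{1} < \cdots < i_{\ell}\right) \in \mathcal{I}$, the indices $i_{1}, \ldots, i_{\ell}$ are pairwise distinct, so the support of $\Phi\left(i_{1}, \ldots, i_{\ell}\right)$ is exactly $\left\{i_{1}, \ldots, i_{\ell}\right\}$ (here positivity of the $\alpha_{k}$ is what guarantees that no exponent vanishes); listing this support in increasing order recovers the original tuple. Hence $\Phi$ is injective.

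Surjectivity is where the actual content sits, and I expect it to be the main (though still routine) obstacle. Let $\mathfrak{m} \in \mathcal{M}$, so $\mathfrak{m}$ is pack-equivalent to $\mathfrak{n}$. Unfolding the definition, there are an integer $k$, a sequence $\left(a_{1}, \ldots, a_{k}\right)$ of positive integers, and strictly increasing sequences $\left(p_{1} < \cdots < p_{k}\right)$ and $\left(q_{1} < \cdots < q_{k}\right)$ of positive integers with $\mathfrak{m} = x_{p_{1}}^{a_{1}} \cdots x_{p_{k}}^{a_{k}}$ and $\mathfrak{n} = x_{q_{1}}^{a_{1}} \cdots x_{q_{k}}^{a_{k}}$. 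The crux is the uniqueness of this ``canonical form'': writing a monomial as a product of distinct variables in increasing index order with positive exponents is unique, since it merely records the support together with the corresponding exponents. Applying this to the two expressions for $\mathfrak{n} = x_{1}^{\alpha_{1}} \cdots x_{\ell}^{\alpha_{\ell}}$ forces $k = \ell$, $q_{r} = r$, and $a_{r} = \alpha_{r}$ for all $r$. Consequently $\mathfrak{m} = x_{p_{1}}^{\alpha_{1}} \cdots x_{p_{\ell}}^{\alpha_{\ell}} = \Phi\left(p_{1}, \ldots, p_{\ell}\right)$ with $\left(p_{1} < \cdots < p_{\ell}\right) \in \mathcal{I}$, so $\mathfrak{m}$ lies in the image of $\Phi$.

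Having shown that $\Phi$ is a bijection carrying each summand on the left to the corresponding summand on the right, I would conclude by substituting $\mathfrak{m} = \Phi\left(i_{1}, \ldots, i_{\ell}\right)$ in the right-hand sum (equivalently, reindexing the left-hand sum along $\Phi$), which yields the asserted identity. The only genuine care needed is in the surjectivity step, namely pinning down the canonical form of $\mathfrak{n}$ and invoking positivity of the entries of $\alpha$ at the right places.
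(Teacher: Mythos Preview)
Your proof is correct and follows essentially the same approach as the paper's: both arguments establish a bijection between strictly increasing $\ell$-tuples of positive integers and monomials pack-equivalent to $x_{1}^{\alpha_{1}}\cdots x_{\ell}^{\alpha_{\ell}}$, via $(i_{1},\ldots,i_{\ell})\mapsto x_{i_{1}}^{\alpha_{1}}\cdots x_{i_{\ell}}^{\alpha_{\ell}}$, using positivity of the $\alpha_{k}$ to recover the tuple from the support of the monomial. The paper packages this slightly more tersely (stating the uniqueness of recovery as a ``Claim 1'' and then directly equating the two summation signs), while you spell out well-definedness, injectivity, and surjectivity separately, but the content is identical.
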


\begin{proof}
[Proof of Proposition \ref{prop.Malpha.equivalent}.]If $\left(  i_{1}%
<i_{2}<\cdots<i_{\ell}\right)  $ is a length-$\ell$ strictly increasing
sequence of positive integers, then $\left(  i_{1}<i_{2}<\cdots<i_{\ell
}\right)  $ can be uniquely reconstructed from the monomial $x_{i_{1}}%
^{\alpha_{1}}x_{i_{2}}^{\alpha_{2}}\cdots x_{i_{\ell}}^{\alpha_{\ell}}%
$\ \ \ \ \footnote{Namely, $\left(  i_{1}<i_{2}<\cdots<i_{\ell}\right)  $ is
the list of all positive integers $j$ such that $x_{j}$ appears in this
monomial, written in increasing order.}. Hence, we conclude the following:

\begin{statement}
\textit{Claim 1:} If a monomial $\mathfrak{m}$ can be written in the form
$x_{i_{1}}^{\alpha_{1}}x_{i_{2}}^{\alpha_{2}}\cdots x_{i_{\ell}}^{\alpha
_{\ell}}$ for some length-$\ell$ strictly increasing sequence $\left(
i_{1}<i_{2}<\cdots<i_{\ell}\right)  $ of positive integers, then
$\mathfrak{m}$ can be written in this form in a \textbf{unique} way.
\end{statement}

On the other hand, the definition of \textquotedblleft
pack-equivalent\textquotedblright\ yields the following: The monomials which
are pack-equivalent to $x_{1}^{\alpha_{1}}x_{2}^{\alpha_{2}}\cdots x_{\ell
}^{\alpha_{\ell}}$ are precisely the monomials of the form $x_{i_{1}}%
^{\alpha_{1}}x_{i_{2}}^{\alpha_{2}}\cdots x_{i_{\ell}}^{\alpha_{\ell}}$ where
$\left(  i_{1}<i_{2}<\cdots<i_{\ell}\right)  $ is a length-$\ell$ strictly
increasing sequence of positive integers. Hence,%
\[
\sum_{\substack{\mathfrak{m}\text{ is a monomial pack-equivalent}\\\text{to
}x_{1}^{\alpha_{1}}x_{2}^{\alpha_{2}}\cdots x_{\ell}^{\alpha_{\ell}}}%
}=\sum_{\substack{\mathfrak{m}\text{ is a monomial of the form }x_{i_{1}%
}^{\alpha_{1}}x_{i_{2}}^{\alpha_{2}}\cdots x_{i_{\ell}}^{\alpha_{\ell}%
}\\\text{for some length-}\ell\text{ strictly increasing sequence}\\\left(
i_{1}<i_{2}<\cdots<i_{\ell}\right)  \text{ of positive integers}}}
\]
(an equality between summation signs). Thus,%
\begin{align*}
\sum_{\substack{\mathfrak{m}\text{ is a monomial pack-equivalent}\\\text{to
}x_{1}^{\alpha_{1}}x_{2}^{\alpha_{2}}\cdots x_{\ell}^{\alpha_{\ell}}%
}}\mathfrak{m}  &  =\sum_{\substack{\mathfrak{m}\text{ is a monomial of the
form }x_{i_{1}}^{\alpha_{1}}x_{i_{2}}^{\alpha_{2}}\cdots x_{i_{\ell}}%
^{\alpha_{\ell}}\\\text{for some length-}\ell\text{ strictly increasing
sequence}\\\left(  i_{1}<i_{2}<\cdots<i_{\ell}\right)  \text{ of positive
integers}}}\mathfrak{m}\\
&  =\sum_{i_{1}<i_{2}<\cdots<i_{\ell}}x_{i_{1}}^{\alpha_{1}}x_{i_{2}}%
^{\alpha_{2}}\cdots x_{i_{\ell}}^{\alpha_{\ell}}%
\end{align*}
(by Claim 1). This proves Proposition \ref{prop.Malpha.equivalent}.
\end{proof}

\begin{definition}
\label{def.k}Let $k\in\mathbb{Z}$. Then, $\left[  k\right]  $ will denote the
subset $\left\{  1,2,\ldots,k\right\}  =\left\{  a\in\mathbb{Z}\ \mid\ 0<a\leq
k\right\}  $ of $\left\{  1,2,3,\ldots\right\}  $. Notice that $\left\vert
\left[  k\right]  \right\vert =k$ when $k\in\mathbb{N}$. For negative $k$, we
have $\left[  k\right]  =\varnothing$.
\end{definition}

We shall now prove a result that will be used further below. We recall the
definition of $D\left(  \alpha\right)  $ given in Example \ref{exam.Gamma} (c):

\begin{definition}
\label{def.Dalpha}Let $\alpha=\left(  \alpha_{1},\alpha_{2},\ldots
,\alpha_{\ell}\right)  $ be a composition of a nonnegative integer $n$. Let
$D\left(  \alpha\right)  $ denote the set
\begin{align*}
&  \left\{  \alpha_{1},\alpha_{1}+\alpha_{2},\alpha_{1}+\alpha_{2}+\alpha
_{3},\ldots,\alpha_{1}+\alpha_{2}+\cdots+\alpha_{\ell-1}\right\} \\
&  =\left\{  \alpha_{1}+\alpha_{2}+\cdots+\alpha_{i}\ \mid\ i\in\left[
\ell-1\right]  \right\}  .
\end{align*}

\end{definition}

(We notice that this definition of $D\left(  \alpha\right)  $ is identical
with that given in \cite[Definition 5.1.10]{Reiner}.)

\begin{lemma}
\label{lem.Dalpha.n-1}Let $\alpha$ be a composition of a nonnegative integer
$n$. Then, $D\left(  \alpha\right)  \subseteq\left[  n-1\right]  $.
\end{lemma}

\begin{proof}
[Proof of Lemma \ref{lem.Dalpha.n-1}.]Write $\alpha$ in the form $\left(
\alpha_{1},\alpha_{2},\ldots,\alpha_{\ell}\right)  $. Let $k\in D\left(
\alpha\right)  $.

We know that $\left(  \alpha_{1},\alpha_{2},\ldots,\alpha_{\ell}\right)
=\alpha$ is a composition of $n$. Thus, $\alpha_{1},\alpha_{2},\ldots
,\alpha_{\ell}$ are positive integers, and their sum is $\alpha_{1}+\alpha
_{2}+\cdots+\alpha_{\ell}=n$.

We have $k\in D\left(  \alpha\right)  =\left\{  \alpha_{1}+\alpha_{2}%
+\cdots+\alpha_{i}\ \mid\ i\in\left[  \ell-1\right]  \right\}  $. In other
words, there exists some $i\in\left[  \ell-1\right]  $ such that $k=\alpha
_{1}+\alpha_{2}+\cdots+\alpha_{i}$. Consider this $i$.

We have $i\in\left[  \ell-1\right]  $, and thus $1\leq i\leq\ell-1$. From
$i\leq\ell-1$, we obtain%
\[
\alpha_{i+1}+\alpha_{i+2}+\cdots+\alpha_{\ell}=\underbrace{\left(
\alpha_{i+1}+\alpha_{i+2}+\cdots+\alpha_{\ell-1}\right)  }_{\geq0}%
+\alpha_{\ell}\geq\alpha_{\ell}>0.
\]
Now,%
\begin{align*}
n  &  =\alpha_{1}+\alpha_{2}+\cdots+\alpha_{\ell}\\
&  =\left(  \alpha_{1}+\alpha_{2}+\cdots+\alpha_{i}\right)
+\underbrace{\left(  \alpha_{i+1}+\alpha_{i+2}+\cdots+\alpha_{\ell}\right)
}_{>0}\ \ \ \ \ \ \ \ \ \ \left(  \text{since }i\leq\ell-1\leq\ell\right) \\
&  >\alpha_{1}+\alpha_{2}+\cdots+\alpha_{i}=k,
\end{align*}
so that $k<n$. Combining this with%
\begin{align*}
k  &  =\alpha_{1}+\alpha_{2}+\cdots+\alpha_{i}=\alpha_{1}+\underbrace{\left(
\alpha_{2}+\alpha_{3}+\cdots+\alpha_{i}\right)  }_{\geq0}%
\ \ \ \ \ \ \ \ \ \ \left(  \text{since }i\geq1\right) \\
&  \geq\alpha_{1}>0,
\end{align*}
we find that $0<k<n$. In other words, $k\in\left[  n-1\right]  $.

Now, forget that we fixed $k$. We thus have proven that $k\in\left[
n-1\right]  $ for every $k\in D\left(  \alpha\right)  $. In other words,
$D\left(  \alpha\right)  \subseteq\left[  n-1\right]  $. This proves Lemma
\ref{lem.Dalpha.n-1}.
\end{proof}

\begin{lemma}
\label{lem.Dalpha.s}Let $\alpha=\left(  \alpha_{1},\alpha_{2},\ldots
,\alpha_{\ell}\right)  $ be a composition of a nonnegative integer $n$. For
every $i\in\left\{  0,1,\ldots,\ell\right\}  $, define a nonnegative integer
$s_{i}$ by
\[
s_{i}=\alpha_{1}+\alpha_{2}+\cdots+\alpha_{i}.
\]

\begin{enumerate}
\item[(a)] We have $s_{i}\in\left[  n\right]  $ for every $i\in\left[
\ell\right]  $.

\item[(b)] We have $s_{0}<s_{1}<\cdots<s_{\ell}$.

\item[(c)] We have $D\left(  \alpha\right)  =\left\{  s_{1},s_{2}%
,\ldots,s_{\ell-1}\right\}  $.

\item[(d)] We have $s_{j}-s_{j-1}=\alpha_{j}$ for every $j\in\left[
\ell\right]  $.

\item[(e)] We have $s_{\ell}=n$.

\item[(f)] We have $s_{0}=0$.

\item[(g)] For every $k\in\left[  n\right]  $, the element $\min\left\{
p\in\left[  \ell\right]  \ \mid\ s_{p}\geq k\right\}  $ is a well-defined
element of $\left[  \ell\right]  $.
\end{enumerate}
\end{lemma}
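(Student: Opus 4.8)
The plan is to establish the seven assertions in an order that respects their logical dependencies, deriving the three ``building-block'' identities first and then bootstrapping the remaining inequalities and set-identities from them. Throughout, I would keep in mind the single standing fact that makes everything work: since $\alpha = (\alpha_1, \alpha_2, \ldots, \alpha_\ell)$ is a composition of $n$, the entries $\alpha_1, \alpha_2, \ldots, \alpha_\ell$ are \emph{positive} integers whose sum is $\alpha_1 + \alpha_2 + \cdots + \alpha_\ell = n$.

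First I would dispatch parts (f), (d) and (e), each of which is immediate from the definition $s_i = \alpha_1 + \alpha_2 + \cdots + \alpha_i$. Part (f) holds because $s_0$ is the empty sum. Part (d) is a telescoping computation: $s_j - s_{j-1} = (\alpha_1 + \cdots + \alpha_j) - (\alpha_1 + \cdots + \alpha_{j-1}) = \alpha_j$ for $j \in \left[ \ell \right]$. Part (e) is just $s_\ell = \alpha_1 + \cdots + \alpha_\ell = n$. Next, part (b) follows from part (d) together with positivity: for each $j \in \left[ \ell \right]$ we have $s_j - s_{j-1} = \alpha_j > 0$, hence $s_{j-1} < s_j$, and chaining these inequalities gives $s_0 < s_1 < \cdots < s_\ell$.

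With (b) and (e) in hand, part (a) is a sandwiching argument. Fix $i \in \left[ \ell \right]$, so that $1 \le i \le \ell$. The lower bound comes from $s_i = \alpha_1 + \cdots + \alpha_i \ge \alpha_1 \ge 1$ (the sum is nonempty since $i \ge 1$, and all entries are positive); the upper bound comes from $s_i \le s_\ell = n$, using (b) for $i \le \ell$ and (e). Hence $1 \le s_i \le n$, i.e.\ $s_i \in \left[ n \right]$. Part (c) is then purely a matter of unfolding the definition of $D(\alpha)$ from Definition~\ref{def.Dalpha}, which reads $D(\alpha) = \left\{\alpha_1 + \alpha_2 + \cdots + \alpha_i \mid i \in \left[ \ell-1 \right]\right\}$; since $\alpha_1 + \cdots + \alpha_i$ is exactly $s_i$, this set equals $\left\{s_i \mid i \in \left[ \ell - 1 \right]\right\} = \left\{s_1, s_2, \ldots, s_{\ell-1}\right\}$.

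The only part requiring a modicum of care is (g), so I would save it for last. Fix $k \in \left[ n \right]$; in particular $n \ge 1$, which forces $\ell \ge 1$ (otherwise $s_\ell = s_0 = 0 \ne n$ by (e) and (f)), so $\ell \in \left[ \ell \right]$. Since $s_\ell = n \ge k$ by (e), the element $\ell$ lies in the set $\left\{p \in \left[ \ell \right] \mid s_p \ge k\right\}$, whence this set is a nonempty subset of $\left[ \ell \right]$. As a nonempty set of positive integers it has a least element by well-ordering, and that least element is by construction a member of $\left[ \ell \right]$; this is precisely the well-definedness claim. The main (and really the only) obstacle anywhere in the lemma is the bookkeeping at this edge case---ensuring the indexing set is nonempty so that $\min$ makes sense---rather than any substantive difficulty.
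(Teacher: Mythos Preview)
Your proof is correct and follows essentially the same approach as the paper's own proof; the only difference is that you reorder the parts to make the logical dependencies explicit (proving (d), (e), (f) first and deriving (a), (b) from them), whereas the paper proves each part in the listed order with self-contained arguments. The mathematical content of each step is identical.
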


\begin{proof}
[Proof of Lemma \ref{lem.Dalpha.s}.]We have $\alpha=\left(  \alpha_{1}%
,\alpha_{2},\ldots,\alpha_{\ell}\right)  $. Hence, $\left(  \alpha_{1}%
,\alpha_{2},\ldots,\alpha_{\ell}\right)  $ is a composition of $n$ (since
$\alpha$ is a composition of $n$). Thus, $\alpha_{1},\alpha_{2},\ldots
,\alpha_{\ell}$ are positive integers, and their sum is $\alpha_{1}+\alpha
_{2}+\cdots+\alpha_{\ell}=n$.

(a) Let $i\in\left[  \ell\right]  $. Thus, $1\leq i\leq\ell$. Now,%
\begin{align*}
s_{i}  &  =\alpha_{1}+\alpha_{2}+\cdots+\alpha_{i}=\alpha_{1}%
+\underbrace{\left(  \alpha_{2}+\alpha_{3}+\cdots+\alpha_{i}\right)  }_{\geq
0}\ \ \ \ \ \ \ \ \ \ \left(  \text{since }i\geq1\right) \\
&  \geq\alpha_{1}>0.
\end{align*}
Also, $i\leq\ell$, so that%
\[
\alpha_{1}+\alpha_{2}+\cdots+\alpha_{\ell}=\underbrace{\left(  \alpha
_{1}+\alpha_{2}+\cdots+\alpha_{i}\right)  }_{=s_{i}}+\underbrace{\left(
\alpha_{i+1}+\alpha_{i+2}+\cdots+\alpha_{\ell}\right)  }_{\geq0}\geq s_{i},
\]
and thus
\[
s_{i}\leq\alpha_{1}+\alpha_{2}+\cdots+\alpha_{\ell}=n.
\]
Combined with $s_{i}>0$, this yields $s_{i}\in\left\{  1,2,\ldots,n\right\}
=\left[  n\right]  $. This proves Lemma \ref{lem.Dalpha.s} (a).

(b) Let $k\in\left\{  0,1,\ldots,\ell-1\right\}  $. Then, the definition of
$s_{k}$ yields $s_{k}=\alpha_{1}+\alpha_{2}+\cdots+\alpha_{k}$. Also, the
definition of $s_{k+1}$ yields%
\[
s_{k+1}=\alpha_{1}+\alpha_{2}+\cdots+\alpha_{k+1}=\underbrace{\left(
\alpha_{1}+\alpha_{2}+\cdots+\alpha_{k}\right)  }_{=s_{k}}+\underbrace{\alpha
_{k+1}}_{>0}>s_{k}.
\]
In other words, $s_{k}<s_{k+1}$.

Now, let us forget that we fixed $k$. We thus have shown that $s_{k}<s_{k+1}$
for every $k\in\left\{  0,1,\ldots,\ell-1\right\}  $. In other words,
$s_{0}<s_{1}<\cdots<s_{\ell}$. This proves Lemma \ref{lem.Dalpha.s} (b).

(c) We have%
\begin{align*}
\left\{  s_{1},s_{2},\ldots,s_{\ell-1}\right\}   &  =\left\{
\underbrace{s_{i}}_{=\alpha_{1}+\alpha_{2}+\cdots+\alpha_{i}}\ \mid
\ i\in\left[  \ell-1\right]  \right\} \\
&  =\left\{  \alpha_{1}+\alpha_{2}+\cdots+\alpha_{i}\ \mid\ i\in\left[
\ell-1\right]  \right\}  =D\left(  \alpha\right)
\end{align*}
(because this is how $D\left(  \alpha\right)  $ is defined). This proves Lemma
\ref{lem.Dalpha.s} (c).

(d) Let $j\in\left[  \ell\right]  $. The definition of $s_{j-1}$ yields
$s_{j-1}=\alpha_{1}+\alpha_{2}+\cdots+\alpha_{j-1}$. But the definition of
$s_{j}$ yields%
\[
s_{j}=\alpha_{1}+\alpha_{2}+\cdots+\alpha_{j}=\underbrace{\left(  \alpha
_{1}+\alpha_{2}+\cdots+\alpha_{j-1}\right)  }_{=s_{j-1}}+\alpha_{j}%
=s_{j-1}+\alpha_{j}.
\]
Hence, $s_{j}-s_{j-1}=\alpha_{j}$. This proves Lemma \ref{lem.Dalpha.s} (d).

(e) The definition of $s_{\ell}$ yields $s_{\ell}=\alpha_{1}+\alpha_{2}%
+\cdots+\alpha_{\ell}=n$. This proves Lemma \ref{lem.Dalpha.s} (e).

(f) The definition of $s_{0}$ yields $s_{0}=\alpha_{1}+\alpha_{2}%
+\cdots+\alpha_{0}=\left(  \text{empty sum}\right)  =0$. This proves Lemma
\ref{lem.Dalpha.s} (f).

(g) Let $k\in\left[  n\right]  $. Hence, $1\leq k\leq n$. Thus, $n\geq1$,
hence $n\neq0$, so that $\alpha_{1}+\alpha_{2}+\cdots+\alpha_{\ell}=n\neq0$.
If we had $\ell=0$, then we would have $\alpha_{1}+\alpha_{2}+\cdots
+\alpha_{\ell}=\left(  \text{empty sum}\right)  =0$, which would contradict
$\alpha_{1}+\alpha_{2}+\cdots+\alpha_{\ell}\neq0$. Thus, we cannot have
$\ell=0$. Therefore, we have $\ell>0$, so that $\ell\in\left[  \ell\right]  $.

Lemma \ref{lem.Dalpha.s} (e) shows that $s_{\ell}=n\geq k$. Now, $\ell$ is an
element of $\left[  \ell\right]  $ and satisfies $s_{\ell}\geq k$. In other
words, $\ell$ is an element $p$ of $\left[  \ell\right]  $ satisfying
$s_{p}\geq k$. In other words, $\ell\in\left\{  p\in\left[  \ell\right]
\ \mid\ s_{p}\geq k\right\}  $. Hence, the set $\left\{  p\in\left[
\ell\right]  \ \mid\ s_{p}\geq k\right\}  $ is nonempty (since it contains
$\ell$) and finite, and thus has a minimum (since every nonempty finite set of
integers has a minimum). In other words, the minimum $\min\left\{  p\in\left[
\ell\right]  \ \mid\ s_{p}\geq k\right\}  $ is well-defined. This minimum
clearly is an element of $\left[  \ell\right]  $. This proves Lemma
\ref{lem.Dalpha.s} (g).
\end{proof}

Let us next prove a basic lemma about integers:

\begin{lemma}
\label{lem.Dalpha.s-interval}Let $\ell\in\mathbb{N}$. Let $s_{0},s_{1}%
,\ldots,s_{\ell}$ be $\ell+1$ integers satisfying $s_{0}<s_{1}<\cdots<s_{\ell
}$. Let $a\in\left[  \ell\right]  $ and $b\in\left[  \ell\right]  $ and
$u\in\mathbb{Z}$.

\begin{enumerate}
\item[(a)] If $s_{a-1}<u$ and $u\leq s_{b}$, then $a\leq b$.

\item[(b)] If $s_{a-1}<u\leq s_{a}$ and $s_{b-1}<u\leq s_{b}$, then $a=b$.
\end{enumerate}
\end{lemma}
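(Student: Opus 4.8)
The plan is to lean entirely on the single structural fact that $s_{0}<s_{1}<\cdots<s_{\ell}$ is strictly increasing, so that the assignment $i\mapsto s_{i}$ is an order isomorphism from $\left\{0,1,\ldots,\ell\right\}$ onto its image. In particular, for any two indices $i,j\in\left\{0,1,\ldots,\ell\right\}$ one has the equivalences $\left(s_{i}<s_{j}\right)\Longleftrightarrow\left(i<j\right)$ and $\left(s_{i}\leq s_{j}\right)\Longleftrightarrow\left(i\leq j\right)$, both of which are immediate from $s_{0}<s_{1}<\cdots<s_{\ell}$ by transitivity (the forward implications are clear, and the backward ones are their contrapositives). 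Since $a\in\left[\ell\right]$ and $b\in\left[\ell\right]$, all four indices $a-1$, $a$, $b-1$, $b$ lie in $\left\{0,1,\ldots,\ell\right\}$, so these equivalences may be applied to them. Part (a) will then reduce to a one-line chain of inequalities, and part (b) will follow by invoking part (a) twice with the roles of $a$ and $b$ interchanged.

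For part (a), I would assume $s_{a-1}<u$ and $u\leq s_{b}$. Concatenating these gives $s_{a-1}<u\leq s_{b}$, hence $s_{a-1}<s_{b}$. Applying the monotonicity equivalence (with $i=a-1$ and $j=b$) converts this into $a-1<b$, that is, $a\leq b$. This settles part (a).

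For part (b), I would assume $s_{a-1}<u\leq s_{a}$ and $s_{b-1}<u\leq s_{b}$. From $s_{a-1}<u$ together with $u\leq s_{b}$, part (a) yields $a\leq b$. Applying part (a) a second time with $a$ and $b$ swapped (using now $s_{b-1}<u$ together with $u\leq s_{a}$) yields $b\leq a$. Combining $a\leq b$ and $b\leq a$ gives $a=b$, which is exactly part (b).

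There is no genuine obstacle in this argument; the only point requiring a moment's attention is that the values $s_{a-1}$, $s_{a}$, $s_{b-1}$, $s_{b}$ are all well-defined, i.e.\ that the indices $a-1$, $a$, $b-1$, $b$ stay within the range $\left\{0,1,\ldots,\ell\right\}$ on which the $s_{i}$ are given. This is guaranteed precisely by the hypotheses $a\in\left[\ell\right]$ and $b\in\left[\ell\right]$, which force $1\leq a\leq\ell$ and $1\leq b\leq\ell$ and hence $a-1,b-1\in\left\{0,1,\ldots,\ell-1\right\}$.
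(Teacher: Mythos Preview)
Your proof is correct and follows essentially the same approach as the paper. The only cosmetic difference is that for part (a) the paper argues by contradiction (assuming $a>b$, deducing $b\leq a-1$, hence $s_{b}\leq s_{a-1}$, and then obtaining $u\leq s_{b}\leq s_{a-1}<u$), whereas you argue directly via the equivalence $\left(s_{i}<s_{j}\right)\Longleftrightarrow\left(i<j\right)$; but this equivalence is precisely the contrapositive of the monotonicity step the paper uses, so the two arguments are the same in substance. Part (b) is handled identically in both.
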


\begin{proof}
[Proof of Lemma \ref{lem.Dalpha.s-interval}.](a) Assume that $s_{a-1}<u$ and
$u\leq s_{b}$. We must prove that $a\leq b$.

Indeed, assume the contrary. Thus, we don't have $a\leq b$. Hence, we have
$a>b$. In other words, $b<a$. Hence, $b\leq a-1$ (since $b$ and $a$ are
integers). Notice that $b\in\left[  \ell\right]  \subseteq\left\{
0,1,\ldots,\ell\right\}  $. Also, from $a\in\left[  \ell\right]  $, we obtain
$a-1\in\left\{  0,1,\ldots,\ell-1\right\}  \subseteq\left\{  0,1,\ldots
,\ell\right\}  $.

But $s_{0}<s_{1}<\cdots<s_{\ell}$. Hence, $s_{u}\leq s_{v}$ for any
$u\in\left\{  0,1,\ldots,\ell\right\}  $ and $v\in\left\{  0,1,\ldots
,\ell\right\}  $ satisfying $u\leq v$. Applying this to $u=b$ and $v=a-1$, we
obtain $s_{b}\leq s_{a-1}$ (since $b\leq a-1$). Thus, $u\leq s_{b}\leq
s_{a-1}<u$, which is absurd. This contradiction shows that our assumption was
wrong. Thus, $a\leq b$ is proven. This proves Lemma
\ref{lem.Dalpha.s-interval} (a).

(b) Assume that $s_{a-1}<u\leq s_{a}$ and $s_{b-1}<u\leq s_{b}$. We must prove
that $a=b$.

We have $s_{b-1}<u$ and $u\leq s_{a}$. Hence, Lemma
\ref{lem.Dalpha.s-interval} (a) (applied to $b$ and $a$ instead of $a$ and
$b$) yields $b\leq a$.

But $s_{a-1}<u$ and $u\leq s_{b}$. Hence, Lemma \ref{lem.Dalpha.s-interval}
(a) yields $a\leq b$. Combining this with $b\leq a$, we obtain $a=b$. This
proves Lemma \ref{lem.Dalpha.s-interval} (b).
\end{proof}

\begin{lemma}
\label{lem.Dalpha.s2}Let $\alpha=\left(  \alpha_{1},\alpha_{2},\ldots
,\alpha_{\ell}\right)  $ be a composition of a nonnegative integer $n$. For
every $i\in\left\{  0,1,\ldots,\ell\right\}  $, define a nonnegative integer
$s_{i}$ by
\[
s_{i}=\alpha_{1}+\alpha_{2}+\cdots+\alpha_{i}.
\]

Lemma \ref{lem.Dalpha.s} (g) says the following: For every $k\in\left[
n\right]  $, the element $\min\left\{  p\in\left[  \ell\right]  \ \mid
\ s_{p}\geq k\right\}  $ is a well-defined element of $\left[  \ell\right]  $.
Hence, we can define a map $f:\left[  n\right]  \rightarrow\left[
\ell\right]  $ by%
\[
\left(  f\left(  k\right)  =\min\left\{  p\in\left[  \ell\right]
\ \mid\ s_{p}\geq k\right\}  \ \ \ \ \ \ \ \ \ \ \text{for every }k\in\left[
n\right]  \right)  .
\]
Consider this map $f$.

\begin{enumerate}
\item[(a)] We have
\begin{equation}
s_{f\left(  k\right)  -1}<k\leq s_{f\left(  k\right)  }%
\ \ \ \ \ \ \ \ \ \ \text{for every }k\in\left[  n\right]  .
\label{pf.prop.Malpha.D.f.1}%
\end{equation}

\item[(b)] Moreover,%
\begin{equation}
k=s_{f\left(  k\right)  }\ \ \ \ \ \ \ \ \ \ \text{for every }k\in D\left(
\alpha\right)  . \label{pf.prop.Malpha.D.f.4}%
\end{equation}

\item[(c)] Also,%
\begin{equation}
f\left(  s_{i}\right)  =i\ \ \ \ \ \ \ \ \ \ \text{for every }i\in\left[
\ell\right]  . \label{pf.prop.Malpha.D.f.7}%
\end{equation}

\item[(d)] Furthermore,%
\begin{equation}
f\left(  k\right)  \leq f\left(  k+1\right)  \ \ \ \ \ \ \ \ \ \ \text{for
every }k\in\left[  n-1\right]  . \label{pf.prop.Malpha.D.f.2}%
\end{equation}

\item[(e)] Also,%
\begin{equation}
f\left(  k\right)  <f\left(  k+1\right)  \ \ \ \ \ \ \ \ \ \ \text{for every
}k\in D\left(  \alpha\right)  . \label{pf.prop.Malpha.D.f.3}%
\end{equation}

\item[(f)] Moreover,%
\begin{equation}
f\left(  k\right)  =f\left(  k+1\right)  \ \ \ \ \ \ \ \ \ \ \text{for every
}k\in\left[  n-1\right]  \setminus D\left(  \alpha\right)  .
\label{pf.prop.Malpha.D.f.5}%
\end{equation}

\item[(g)] We have%
\begin{equation}
f^{-1}\left(  j\right)  =\left[  s_{j}\right]  \setminus\left[  s_{j-1}%
\right]  \ \ \ \ \ \ \ \ \ \ \text{for every }j\in\left[  \ell\right]  .
\label{pf.prop.Malpha.D.f-1.1}%
\end{equation}

\item[(h)] We have%
\begin{equation}
\left\vert f^{-1}\left(  j\right)  \right\vert =\alpha_{j}%
\ \ \ \ \ \ \ \ \ \ \text{for every }j\in\left[  \ell\right]  .
\label{pf.prop.Malpha.D.f-1.2}%
\end{equation}

\end{enumerate}
\end{lemma}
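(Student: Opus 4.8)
The plan is to prove part (a) first, since every subsequent part follows from it together with the structural facts about the $s_{i}$ collected in Lemma~\ref{lem.Dalpha.s} and the elementary interval comparisons in Lemma~\ref{lem.Dalpha.s-interval}. The guiding observation is that $f\left(k\right)$ is by construction the smallest index $p\in\left[\ell\right]$ with $s_{p}\geq k$, so the chain $s_{f\left(k\right)-1}<k\leq s_{f\left(k\right)}$ should pin $f\left(k\right)$ down completely via the strict monotonicity $s_{0}<s_{1}<\cdots<s_{\ell}$.

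For (a), fix $k\in\left[n\right]$. The inequality $k\leq s_{f\left(k\right)}$ is immediate from the definition of $f\left(k\right)$ as a minimizer of $\left\{p\in\left[\ell\right]\ \mid\ s_{p}\geq k\right\}$. For $s_{f\left(k\right)-1}<k$ I would split into the case $f\left(k\right)=1$, where $s_{f\left(k\right)-1}=s_{0}=0<k$ by Lemma~\ref{lem.Dalpha.s} (f), and the case $f\left(k\right)>1$, where assuming $s_{f\left(k\right)-1}\geq k$ would place $f\left(k\right)-1$ in the minimized set and contradict minimality. Part (c) then drops out by applying (a) at $k=s_{i}$ together with the trivial chain $s_{i-1}<s_{i}\leq s_{i}$ and Lemma~\ref{lem.Dalpha.s-interval} (b). Part (b) is similar: for $k\in D\left(\alpha\right)$ Lemma~\ref{lem.Dalpha.s} (c) writes $k=s_{m}$ with $m\in\left[\ell-1\right]$, and comparing $s_{f\left(k\right)-1}<k\leq s_{f\left(k\right)}$ with $s_{m-1}<k\leq s_{m}$ via Lemma~\ref{lem.Dalpha.s-interval} (b) forces $f\left(k\right)=m$, whence $s_{f\left(k\right)}=s_{m}=k$.

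The monotonicity statements (d)--(f) I would handle as a group. For (d), setting $u=k+1$ gives $s_{f\left(k\right)-1}<k<u\leq s_{f\left(k+1\right)}$, and Lemma~\ref{lem.Dalpha.s-interval} (a) yields $f\left(k\right)\leq f\left(k+1\right)$. For (e), part (b) gives $k=s_{f\left(k\right)}$, so $s_{f\left(k\right)}=k<k+1\leq s_{f\left(k+1\right)}$ and strict monotonicity of the $s_{i}$ upgrades this to $f\left(k\right)<f\left(k+1\right)$; here one uses $D\left(\alpha\right)\subseteq\left[n-1\right]$ (Lemma~\ref{lem.Dalpha.n-1}) to ensure $k+1\in\left[n\right]$. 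For (f), I would argue by contradiction: if $f\left(k\right)<f\left(k+1\right)$ then $s_{f\left(k\right)}\leq s_{f\left(k+1\right)-1}$, which squeezed between $k\leq s_{f\left(k\right)}$ and $s_{f\left(k+1\right)-1}<k+1$ forces $k=s_{f\left(k\right)}$; since $k\leq n-1<n=s_{\ell}$ rules out $f\left(k\right)=\ell$, this would put $k\in\left\{s_{1},\ldots,s_{\ell-1}\right\}=D\left(\alpha\right)$, contradicting $k\notin D\left(\alpha\right)$.

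Finally, (g) is the fiber description: for $k\in\left[n\right]$, part (a) shows $f\left(k\right)=j$ implies $s_{j-1}<k\leq s_{j}$, i.e.\ $k\in\left[s_{j}\right]\setminus\left[s_{j-1}\right]$, while conversely $s_{j-1}<k\leq s_{j}$ together with the chain from (a) and Lemma~\ref{lem.Dalpha.s-interval} (b) gives $f\left(k\right)=j$. Part (h) then follows by counting: $\left[s_{j-1}\right]\subseteq\left[s_{j}\right]$ gives $\left|\left[s_{j}\right]\setminus\left[s_{j-1}\right]\right|=s_{j}-s_{j-1}=\alpha_{j}$ by Lemma~\ref{lem.Dalpha.s} (d). I expect no serious obstacle here; the only real care needed is bookkeeping the boundary indices ($f\left(k\right)-1=0$ and $f\left(k\right)=\ell$) and checking that every application of Lemma~\ref{lem.Dalpha.s-interval} has its hypotheses verified — in particular that the relevant arguments genuinely lie in $\left[n\right]$ and the indices in $\left[\ell\right]$.
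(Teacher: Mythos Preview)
Your proposal is correct and follows essentially the same approach as the paper's proof: part (a) by minimality and a case split on $f(k)=1$, then parts (b)--(h) by repeatedly combining the interval characterization from (a) with the strict monotonicity $s_0<s_1<\cdots<s_\ell$ and Lemma~\ref{lem.Dalpha.s-interval}. The only differences are cosmetic---for instance, you handle (c) and (e) directly via Lemma~\ref{lem.Dalpha.s-interval} (b) and strict monotonicity where the paper argues by contradiction---but the logical content is identical.
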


\begin{proof}
[Proof of Lemma \ref{lem.Dalpha.s2}.]We recall two fundamental properties of
minima of sets:

\begin{itemize}
\item If $A$ is a subset of $\mathbb{Z}$ for which $\min A$ is well-defined,
then%
\begin{equation}
\min A\in A. \label{pf.prop.Malpha.D.min.1}%
\end{equation}

\item If $A$ is a subset of $\mathbb{Z}$ for which $\min A$ is well-defined,
and if $a$ is an element of $A$, then
\begin{equation}
a\geq\min A. \label{pf.prop.Malpha.D.min.2}%
\end{equation}
(In other words, any element of $A$ is greater or equal to the minimum of $A$.)
\end{itemize}

(a) Let $k\in\left[  n\right]  $. Hence, $0<k\leq n$. We have%
\[
f\left(  k\right)  =\min\left\{  p\in\left[  \ell\right]  \ \mid\ s_{p}\geq
k\right\}  \in\left\{  p\in\left[  \ell\right]  \ \mid\ s_{p}\geq k\right\}
\]
(by (\ref{pf.prop.Malpha.D.min.1}), applied to $A=\left\{  p\in\left[
\ell\right]  \ \mid\ s_{p}\geq k\right\}  $). In other words, $f\left(
k\right)  $ is an element of $\left[  \ell\right]  $ and satisfies
$s_{f\left(  k\right)  }\geq k$.

On the other hand, let us prove that $s_{f\left(  k\right)  -1}<k$. Indeed,
assume the contrary (for the sake of contradiction). Hence, $s_{f\left(
k\right)  -1}\geq k$. But Lemma \ref{lem.Dalpha.s} (f) shows that $s_{0}=0<k$,
so that $k>s_{0}$. Hence, $s_{f\left(  k\right)  -1}\geq k>s_{0}$, so that
$s_{f\left(  k\right)  -1}\neq s_{0}$, and therefore $f\left(  k\right)
-1\neq0$. In other words, $f\left(  k\right)  \neq1$. Combined with $f\left(
k\right)  \in\left[  \ell\right]  $, this shows that $f\left(  k\right)
\in\left[  \ell\right]  \setminus\left\{  1\right\}  $. Hence, $f\left(
k\right)  -1\in\left[  \ell-1\right]  \subseteq\left[  \ell\right]  $. Now,
$f\left(  k\right)  -1$ is an element of $\left[  \ell\right]  $ and satisfies
$s_{f\left(  k\right)  -1}\geq k$. In other words, $f\left(  k\right)  -1$ is
an element of the set $\left\{  p\in\left[  \ell\right]  \ \mid\ s_{p}\geq
k\right\}  $. Hence, (\ref{pf.prop.Malpha.D.min.2}) (applied to $A=\left\{
p\in\left[  \ell\right]  \ \mid\ s_{p}\geq k\right\}  $ and $a=f\left(
k\right)  -1$) shows that%
\[
f\left(  k\right)  -1\geq\min\left\{  p\in\left[  \ell\right]  \ \mid
\ s_{p}\geq k\right\}  =f\left(  k\right)  .
\]
In other words, $-1\geq0$. This is absurd. This contradiction proves that our
assumption was wrong; thus, the proof of $s_{f\left(  k\right)  -1}<k$ is
complete. Now, we know that $s_{f\left(  k\right)  -1}<k\leq s_{f\left(
k\right)  }$ (since $s_{f\left(  k\right)  }\geq k$). This proves Lemma
\ref{lem.Dalpha.s2} (a).

(b) Let $k\in D\left(  \alpha\right)  $. Thus, $k\in D\left(  \alpha\right)
\subseteq\left[  n-1\right]  $ (by Lemma \ref{lem.Dalpha.n-1}). Hence,
$k\in\left[  n-1\right]  \subseteq\left[  n\right]  $. Thus, $f\left(
k\right)  \in\left[  \ell\right]  $.

We have $k\in D\left(  \alpha\right)  =\left\{  s_{1},s_{2},\ldots,s_{\ell
-1}\right\}  $ (by Lemma \ref{lem.Dalpha.s} (c)). In other words, $k=s_{j}$
for some $j\in\left[  \ell-1\right]  $. Consider this $j$. Thus, $j\in\left[
\ell-1\right]  \subseteq\left[  \ell\right]  $.

But (\ref{pf.prop.Malpha.D.f.1}) yields $s_{f\left(  k\right)  -1}<k\leq
s_{f\left(  k\right)  }$.

Lemma \ref{lem.Dalpha.s} (b) shows that $s_{0}<s_{1}<\cdots<s_{\ell}$. Hence,
$s_{j-1}<s_{j}$ (since $j\in\left[  \ell\right]  $). Hence, $s_{j-1}<s_{j}=k$
and $k\leq k=s_{j}$. Thus, we know that $s_{j-1}<k\leq s_{j}$ and $s_{f\left(
k\right)  -1}<k\leq s_{f\left(  k\right)  }$. Consequently, Lemma
\ref{lem.Dalpha.s-interval} (b) (applied to $a=j$, $b=f\left(  k\right)  $ and
$u=k$) shows that $j=f\left(  k\right)  $. Hence, $s_{j}=s_{f\left(  k\right)
}$, so that $k=s_{j}=s_{f\left(  k\right)  }$. This proves Lemma
\ref{lem.Dalpha.s2} (b).

(c) Let $i\in\left[  \ell\right]  $. Clearly, $i$ is an element of the set
$\left\{  p\in\left[  \ell\right]  \ \mid\ s_{p}\geq s_{i}\right\}  $ (since
$i\in\left[  \ell\right]  $ and $s_{i}\geq s_{i}$).

Now, the definition of $f\left(  s_{i}\right)  $ yields $f\left(
s_{i}\right)  =\min\left\{  p\in\left[  \ell\right]  \ \mid\ s_{p}\geq
s_{i}\right\}  $. But (\ref{pf.prop.Malpha.D.min.2}) (applied to $A=\left\{
p\in\left[  \ell\right]  \ \mid\ s_{p}\geq s_{i}\right\}  $ and $a=i$) yields
$i\geq\min\left\{  p\in\left[  \ell\right]  \ \mid\ s_{p}\geq s_{i}\right\}  $
(since $i$ is an element of the set $\left\{  p\in\left[  \ell\right]
\ \mid\ s_{p}\geq s_{i}\right\}  $). In other words, $i\geq f\left(
s_{i}\right)  $ (since $f\left(  s_{i}\right)  =\min\left\{  p\in\left[
\ell\right]  \ \mid\ s_{p}\geq s_{i}\right\}  $).

Now, assume (for the sake of contradiction) that $i\neq f\left(  s_{i}\right)
$. Then, $i>f\left(  s_{i}\right)  $ (since $i\geq f\left(  s_{i}\right)  $).
In other words, $f\left(  s_{i}\right)  <i$.

But Lemma \ref{lem.Dalpha.s} (b) shows that $s_{0}<s_{1}<\cdots<s_{\ell}$. In
other words, $s_{u}<s_{v}$ for any $u\in\left\{  0,1,\ldots,\ell\right\}  $
and $v\in\left\{  0,1,\ldots,\ell\right\}  $ satisfying $u<v$. Applying this
to $u=f\left(  s_{i}\right)  $ and $v=i$, we obtain $s_{f\left(  s_{i}\right)
}<s_{i}$.

From Lemma \ref{lem.Dalpha.s} (a), we obtain $s_{i}\in\left[  n\right]  $.
Thus, (\ref{pf.prop.Malpha.D.f.1}) (applied to $k=s_{i}$) shows that
$s_{f\left(  s_{i}\right)  -1}<s_{i}\leq s_{f\left(  s_{i}\right)  }$. Hence,
$s_{i}\leq s_{f\left(  s_{i}\right)  }<s_{i}$. But this is absurd. This
contradiction shows that our assumption (that $i\neq f\left(  s_{i}\right)  $)
was false. We therefore have $i=f\left(  s_{i}\right)  $. This proves Lemma
\ref{lem.Dalpha.s2} (c).

(d) Let $k\in\left[  n-1\right]  $.

From $k\in\left[  n-1\right]  $, we see that both $k$ and $k+1$ are elements
of $\left[  n\right]  $. Thus, $f\left(  k\right)  $ and $f\left(  k+1\right)
$ are well-defined elements of $\left[  \ell\right]  $.

But (\ref{pf.prop.Malpha.D.f.1}) yields $s_{f\left(  k\right)  -1}<k\leq
s_{f\left(  k\right)  }$. Also, (\ref{pf.prop.Malpha.D.f.1}) (applied to $k+1$
instead of $k$) yields $s_{f\left(  k+1\right)  -1}<k+1\leq s_{f\left(
k+1\right)  }$.

Now, $s_{f\left(  k\right)  -1}<k$ and $k\leq k+1\leq s_{f\left(  k+1\right)
}$. Also, Lemma \ref{lem.Dalpha.s} (b) shows that $s_{0}<s_{1}<\cdots<s_{\ell
}$. Thus, Lemma \ref{lem.Dalpha.s-interval} (a) (applied to $a=f\left(
k\right)  $, $b=f\left(  k+1\right)  $ and $u=k$) yields $f\left(  k\right)
\leq f\left(  k+1\right)  $. This completes the proof of Lemma
\ref{lem.Dalpha.s2} (d).

(e) Let $k\in D\left(  \alpha\right)  $. Thus, $k\in D\left(  \alpha\right)
\subseteq\left[  n-1\right]  $ (by Lemma \ref{lem.Dalpha.n-1}). Hence,
$f\left(  k\right)  \leq f\left(  k+1\right)  $ (by
(\ref{pf.prop.Malpha.D.f.2})).

We want to prove that $f\left(  k\right)  <f\left(  k+1\right)  $. Indeed,
assume the contrary (for the sake of contradiction). Thus, $f\left(  k\right)
\geq f\left(  k+1\right)  $. Combined with $f\left(  k\right)  \leq f\left(
k+1\right)  $, this shows that $f\left(  k\right)  =f\left(  k+1\right)  $.

We have $k=s_{f\left(  k\right)  }$ (by (\ref{pf.prop.Malpha.D.f.4})).

But $k+1\in\left[  n\right]  $ (since $k\in\left[  n-1\right]  $). Hence,
(\ref{pf.prop.Malpha.D.f.1}) (applied to $k+1$ instead of $k$) yields
$s_{f\left(  k+1\right)  -1}<k+1\leq s_{f\left(  k+1\right)  }$. Hence,
$k+1\leq s_{f\left(  k+1\right)  }=s_{f\left(  k\right)  }$ (since $f\left(
k+1\right)  =f\left(  k\right)  $). This contradicts $s_{f\left(  k\right)
}=k<k+1$. This contradiction proves that our assumption was false. Hence,
$f\left(  k\right)  <f\left(  k+1\right)  $ is proven. This completes the
proof of Lemma \ref{lem.Dalpha.s2} (e).

(f) Let $k\in\left[  n-1\right]  \setminus D\left(  \alpha\right)  $. Thus,
$k\in\left[  n-1\right]  $ but $k\notin D\left(  \alpha\right)  $.

From $k\in\left[  n-1\right]  $, we see that both $k$ and $k+1$ are elements
of $\left[  n\right]  $. From (\ref{pf.prop.Malpha.D.f.2}), we obtain
$f\left(  k\right)  \leq f\left(  k+1\right)  $.

We must prove that $f\left(  k\right)  =f\left(  k+1\right)  $. Indeed, assume
the contrary (for the sake of contradiction). Thus, $f\left(  k\right)  \neq
f\left(  k+1\right)  $. Combined with $f\left(  k\right)  \leq f\left(
k+1\right)  $, this shows that $f\left(  k\right)  <f\left(  k+1\right)  $.

From (\ref{pf.prop.Malpha.D.f.1}), we obtain $s_{f\left(  k\right)  -1}<k\leq
s_{f\left(  k\right)  }$. From (\ref{pf.prop.Malpha.D.f.1}) (applied to $k+1$
instead of $k$), we obtain $s_{f\left(  k+1\right)  -1}<k+1\leq s_{f\left(
k+1\right)  }$.

But $f\left(  k\right)  <f\left(  k+1\right)  \leq\ell$ (since $f\left(
k+1\right)  \in\left[  \ell\right]  $). Hence, $f\left(  k\right)  \leq\ell-1$
(since $f\left(  k\right)  $ and $\ell$ are integers). Thus, $f\left(
k\right)  \in\left[  \ell-1\right]  $. Hence, $s_{f\left(  k\right)  }%
\in\left\{  s_{1},s_{2},\ldots,s_{\ell-1}\right\}  =D\left(  \alpha\right)  $
(by Lemma \ref{lem.Dalpha.s} (c)).

Also, $f\left(  k\right)  <f\left(  k+1\right)  $, so that $f\left(  k\right)
\leq f\left(  k+1\right)  -1$ (since $f\left(  k\right)  $ and $f\left(
k+1\right)  $ are integers). But Lemma \ref{lem.Dalpha.s} (b) shows that
$s_{0}<s_{1}<\cdots<s_{\ell}$. Thus, $s_{u}\leq s_{v}$ for any $u\in\left\{
0,1,\ldots,\ell\right\}  $ and $v\in\left\{  0,1,\ldots,\ell\right\}  $
satisfying $u\leq v$. Applying this to $u=f\left(  k\right)  $ and $v=f\left(
k+1\right)  -1$, we obtain $s_{f\left(  k\right)  }\leq s_{f\left(
k+1\right)  -1}<k+1$. In other words, $s_{f\left(  k\right)  }\leq\left(
k+1\right)  -1$ (since $s_{f\left(  k\right)  }$ and $k+1$ are integers).

Now, combining $k\leq s_{f\left(  k\right)  }$ with $s_{f\left(  k\right)
}\leq\left(  k+1\right)  -1=k$, we obtain $k=s_{f\left(  k\right)  }\in
D\left(  \alpha\right)  $. This contradicts $k\notin D\left(  \alpha\right)
$. This contradiction proves that our assumption was wrong. Hence, $f\left(
k\right)  =f\left(  k+1\right)  $ is proven. This completes the proof of Lemma
\ref{lem.Dalpha.s2} (f).

(g) Let $j\in\left[  \ell\right]  $.

From Lemma \ref{lem.Dalpha.s} (b), we have $s_{0}<s_{1}<\cdots<s_{\ell}$.
Thus, $s_{j-1}<s_{j}$.

Let $k\in f^{-1}\left(  j\right)  $. Thus, $k\in\left[  n\right]  $ and
$f\left(  k\right)  =j$. From (\ref{pf.prop.Malpha.D.f.1}), we obtain
$s_{f\left(  k\right)  -1}<k\leq s_{f\left(  k\right)  }$. Since $f\left(
k\right)  =j$, this rewrites as follows: $s_{j-1}<k\leq s_{j}$. In other
words, $k\in\left\{  s_{j-1}+1,s_{j-1}+2,\ldots,s_{j}\right\}  =\left[
s_{j}\right]  \setminus\left[  s_{j-1}\right]  $ (since $0\leq s_{j-1}<s_{j}$).

Now, forget that we fixed $k$. We thus have shown that $k\in\left[
s_{j}\right]  \setminus\left[  s_{j-1}\right]  $ for every $k\in f^{-1}\left(
j\right)  $. In other words, $f^{-1}\left(  j\right)  \subseteq\left[
s_{j}\right]  \setminus\left[  s_{j-1}\right]  $.

On the other hand, let $g\in\left[  s_{j}\right]  \setminus\left[
s_{j-1}\right]  $. Thus, $g\in\left[  s_{j}\right]  \setminus\left[
s_{j-1}\right]  =\left\{  s_{j-1}+1,s_{j-1}+2,\ldots,s_{j}\right\}  $ (since
$0\leq s_{j-1}<s_{j}$). In other words, $s_{j-1}<g\leq s_{j}$.

Lemma \ref{lem.Dalpha.s} (a) (applied to $i=j$) yields $s_{j}\in\left[
n\right]  $. Hence, $s_{j}\leq n$. Now, $g\in\left[  s_{j}\right]
\subseteq\left[  n\right]  $ (since $s_{j}\leq n$). Hence,
(\ref{pf.prop.Malpha.D.f.1}) (applied to $k=g$) shows that $s_{f\left(
g\right)  -1}<g\leq s_{f\left(  g\right)  }$.

We have $f\left(  g\right)  \in\left[  \ell\right]  $ and $j\in\left[
\ell\right]  $, and we have $s_{f\left(  g\right)  -1}<g\leq s_{f\left(
g\right)  }$ and $s_{j-1}<g\leq s_{j}$. Hence, Lemma
\ref{lem.Dalpha.s-interval} (b) (applied to $a=f\left(  g\right)  $, $b=j$ and
$c=g$) yields $f\left(  g\right)  =j$. Hence, $g\in f^{-1}\left(  j\right)  $.

Now, forget that we fixed $g$. We thus have shown that $g\in f^{-1}\left(
j\right)  $ for every $g\in\left[  s_{j}\right]  \setminus\left[
s_{j-1}\right]  $. In other words, $\left[  s_{j}\right]  \setminus\left[
s_{j-1}\right]  \subseteq f^{-1}\left(  j\right)  $. Combined with
$f^{-1}\left(  j\right)  \subseteq\left[  s_{j}\right]  \setminus\left[
s_{j-1}\right]  $, this yields $f^{-1}\left(  j\right)  =\left[  s_{j}\right]
\setminus\left[  s_{j-1}\right]  $. This proves Lemma \ref{lem.Dalpha.s2} (g).

(h) Let $j\in\left[  \ell\right]  $. From Lemma \ref{lem.Dalpha.s} (b), we
have $s_{0}<s_{1}<\cdots<s_{\ell}$. Thus, $s_{j-1}<s_{j}$. Consequently,
$0\leq s_{j-1}<s_{j}$. Hence, $\left[  s_{j}\right]  \setminus\left[
s_{j-1}\right]  =\left\{  s_{j-1}+1,s_{j-1}+2,\ldots,s_{j}\right\}  $, so that%
\[
\left\vert \left[  s_{j}\right]  \setminus\left[  s_{j-1}\right]  \right\vert
=\left\vert \left\{  s_{j-1}+1,s_{j-1}+2,\ldots,s_{j}\right\}  \right\vert
=s_{j}-s_{j-1}\ \ \ \ \ \ \ \ \ \ \left(  \text{since }s_{j-1}<s_{j}\right)
.
\]

But (\ref{pf.prop.Malpha.D.f-1.1}) shows that $f^{-1}\left(  j\right)
=\left[  s_{j}\right]  \setminus\left[  s_{j-1}\right]  $. Therefore,
$\left\vert f^{-1}\left(  j\right)  \right\vert =\left\vert \left[
s_{j}\right]  \setminus\left[  s_{j-1}\right]  \right\vert =s_{j}%
-s_{j-1}=\alpha_{j}$ (by Lemma \ref{lem.Dalpha.s} (d)). This proves Lemma
\ref{lem.Dalpha.s2} (h).
\end{proof}

\begin{lemma}
\label{lem.Dalpha.ItoJ}Let $\alpha=\left(  \alpha_{1},\alpha_{2},\ldots
,\alpha_{\ell}\right)  $ be a composition of a nonnegative integer $n$.

For every $i\in\left\{  0,1,\ldots,\ell\right\}  $, define a nonnegative
integer $s_{i}$ by
\[
s_{i}=\alpha_{1}+\alpha_{2}+\cdots+\alpha_{i}.
\]

Define a map $f:\left[  n\right]  \rightarrow\left[  \ell\right]  $ as in
Lemma \ref{lem.Dalpha.s2}.

Let $\left(  i_{1},i_{2},\ldots,i_{n}\right)  $ be an element of $\left\{
1,2,3,\ldots\right\}  ^{n}$ satisfying $i_{1}\leq i_{2}\leq\cdots\leq i_{n}$
and $\left\{  j\in\left[  n-1\right]  \ \mid\ i_{j}<i_{j+1}\right\}  \subseteq
D\left(  \alpha\right)  $. Then:

\begin{enumerate}
\item[(a)] We have $i_{s_{f\left(  k\right)  }}=i_{k}$ for every $k\in\left[
n\right]  $.

\item[(b)] We have $\left(  i_{s_{1}},i_{s_{2}},\ldots,i_{s_{\ell}}\right)
\in\left\{  1,2,3,\ldots\right\}  ^{\ell}$ and $i_{s_{1}}\leq i_{s_{2}}%
\leq\cdots\leq i_{s_{\ell}}$.

\item[(c)] If $\left\{  j\in\left[  n-1\right]  \ \mid\ i_{j}<i_{j+1}\right\}
=D\left(  \alpha\right)  $, then $i_{s_{1}}<i_{s_{2}}<\cdots<i_{s_{\ell}}$.
\end{enumerate}
\end{lemma}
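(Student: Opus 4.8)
The plan is to prove the three parts in order, leaning entirely on the properties of the partial sums $s_i$ recorded in Lemmas~\ref{lem.Dalpha.s} and \ref{lem.Dalpha.s2}. The guiding picture is that the map $f$ cuts $[n]$ into the $\ell$ consecutive ``blocks'' $f^{-1}(j) = [s_j] \setminus [s_{j-1}]$ (Lemma~\ref{lem.Dalpha.s2}~(g)), whose right endpoints are exactly the $s_j$, and that the hypothesis $\{j \in [n-1] \mid i_j < i_{j+1}\} \subseteq D(\alpha) = \{s_1, \ldots, s_{\ell-1}\}$ (using Lemma~\ref{lem.Dalpha.s}~(c)) forces the weakly increasing sequence $(i_1, \ldots, i_n)$ to be constant on each block, since a strict ascent can occur only at a block boundary. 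I expect part~(a) to be the only substantive step; parts~(b) and (c) will fall out of the strict increase of the $s_m$ and the weak monotonicity of the $i_k$ alone, and do not even require part~(a).

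For part~(a), fix $k \in [n]$ and aim to show $i_k = i_{s_{f(k)}}$. Lemma~\ref{lem.Dalpha.s2}~(a) gives $s_{f(k)-1} < k \leq s_{f(k)}$, so $k$ and $s_{f(k)}$ lie in the same block, and because the sequence is weakly increasing it suffices to rule out a strict ascent between them. Suppose $k \leq j < s_{f(k)}$ with $i_j < i_{j+1}$. Then $j \in [n-1]$, because $j \geq k \geq 1$ and $j \leq s_{f(k)} - 1 \leq n - 1$ (the bound $s_{f(k)} \leq n$ coming from Lemma~\ref{lem.Dalpha.s}~(a)); hence $j$ lies in the set on the left of the hypothesis, so $j \in D(\alpha)$, and we may write $j = s_m$ with $m \in [\ell-1]$. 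But then $s_{f(k)-1} < k \leq s_m < s_{f(k)}$, placing the term $s_m$ strictly between the consecutive terms $s_{f(k)-1}$ and $s_{f(k)}$ of the strictly increasing sequence $s_0 < s_1 < \cdots < s_\ell$ (Lemma~\ref{lem.Dalpha.s}~(b)); this is impossible, since no integer index lies strictly between $f(k)-1$ and $f(k)$. Therefore $i_j = i_{j+1}$ for every such $j$, and so $i_k = i_{k+1} = \cdots = i_{s_{f(k)}}$, which is exactly the claim (the case $k = s_{f(k)}$ being the empty chain).

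For part~(b), each $s_m$ lies in $[n]$ by Lemma~\ref{lem.Dalpha.s}~(a), so $i_{s_m}$ is one of the positive integers $i_1, \ldots, i_n$, giving $(i_{s_1}, \ldots, i_{s_\ell}) \in \{1,2,3,\ldots\}^\ell$; and $s_1 < s_2 < \cdots < s_\ell$ (Lemma~\ref{lem.Dalpha.s}~(b)) combined with $i_1 \leq i_2 \leq \cdots \leq i_n$ at once yields $i_{s_1} \leq i_{s_2} \leq \cdots \leq i_{s_\ell}$. For part~(c), assume $\{j \in [n-1] \mid i_j < i_{j+1}\} = D(\alpha)$ and fix $m \in [\ell-1]$; then $s_m \in \{s_1, \ldots, s_{\ell-1}\} = D(\alpha)$, so the hypothesis forces $i_{s_m} < i_{s_m + 1}$, while $s_m < s_{m+1}$ gives $s_m + 1 \leq s_{m+1}$ and hence $i_{s_m+1} \leq i_{s_{m+1}}$ by weak monotonicity. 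Chaining these gives $i_{s_m} < i_{s_{m+1}}$ for each $m \in [\ell-1]$, i.e.\ $i_{s_1} < i_{s_2} < \cdots < i_{s_\ell}$. The only delicate point in the whole argument is the index bookkeeping in part~(a)—verifying $j \in [n-1]$ so that the hypothesis applies, and extracting the contradiction from the strict increase of the partial sums—after which everything else is routine.
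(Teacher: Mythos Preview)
Your proof is correct. Parts~(b) and~(c) match the paper's argument essentially line for line.

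For part~(a), however, you take a genuinely different route from the paper. The paper argues by contradiction with an extremal choice: it picks the \emph{highest} $g \in [n]$ for which $i_{s_{f(g)}} \neq i_g$, then shows $g \notin D(\alpha)$ (via Lemma~\ref{lem.Dalpha.s2}~(b)) and $g \neq n$, so that $g \in [n-1] \setminus D(\alpha)$; Lemma~\ref{lem.Dalpha.s2}~(f) then gives $f(g) = f(g+1)$, and the maximality of $g$ forces $i_{s_{f(g)}} = i_{s_{f(g+1)}} = i_{g+1} \neq i_g$, whence $i_g < i_{g+1}$ and $g \in D(\alpha)$, a contradiction. Your argument is more direct and more transparent: you fix $k$ and simply show that no strict ascent $i_j < i_{j+1}$ can occur in the interval $k \leq j < s_{f(k)}$, because any such $j$ would have to be some $s_m$ with $s_{f(k)-1} < s_m < s_{f(k)}$, which is impossible by the strict monotonicity of the partial sums. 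Your approach avoids the extremal bookkeeping and the auxiliary parts~(b) and~(f) of Lemma~\ref{lem.Dalpha.s2}, relying only on part~(a) of that lemma together with Lemma~\ref{lem.Dalpha.s}~(a)--(c); it also makes the ``constant on blocks'' picture explicit. The paper's approach, by contrast, packages more of the work into the earlier lemma and so reads as a shorter deduction once that lemma is in hand.
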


\begin{proof}
[Proof of Lemma \ref{lem.Dalpha.ItoJ}.]Lemma \ref{lem.Dalpha.s} (b) shows that
$s_{0}<s_{1}<\cdots<s_{\ell}$.

Recall that $i_{1}\leq i_{2}\leq\cdots\leq i_{n}$. In other words,%
\begin{equation}
i_{u}\leq i_{v} \label{pf.lem.Dalpha.ItoJ.rise}%
\end{equation}
for any two elements $u$ and $v$ of $\left[  n\right]  $ satisfying $u\leq v$.

(a) First of all, we notice that $i_{s_{f\left(  k\right)  }}$ is well-defined
for every $k\in\left[  n\right]  $\ \ \ \ \footnote{\textit{Proof.} Let
$k\in\left[  n\right]  $. Thus, $f\left(  k\right)  \in\left[  \ell\right]  $.
Therefore, Lemma \ref{lem.Dalpha.s} (a) (applied to $i=f\left(  k\right)  $)
yields $s_{f\left(  k\right)  }\in\left[  n\right]  $. Thus, $i_{s_{f\left(
k\right)  }}$ is well-defined.}.

We must show that $i_{s_{f\left(  k\right)  }}=i_{k}$ for every $k\in\left[
n\right]  $.

Assume the contrary (for the sake of contradiction). Thus, $i_{s_{f\left(
k\right)  }}=i_{k}$ holds \textbf{not} for every $k\in\left[  n\right]  $. In
other words, there exists some $k\in\left[  n\right]  $ satisfying
$i_{s_{f\left(  k\right)  }}\neq i_{k}$. Let $g$ be the \textbf{highest such
}$k$. Thus, $g$ is a $k\in\left[  n\right]  $ satisfying $i_{s_{f\left(
k\right)  }}\neq i_{k}$. In other words, $g$ is an element of $\left[
n\right]  $ and satisfies $i_{s_{f\left(  g\right)  }}\neq i_{g}$. From
$i_{s_{f\left(  g\right)  }}\neq i_{g}$, we obtain $s_{f\left(  g\right)
}\neq g$. If we had $g\in D\left(  \alpha\right)  $, then we would have%
\begin{align*}
g  &  =s_{f\left(  g\right)  }\ \ \ \ \ \ \ \ \ \ \left(  \text{by
(\ref{pf.prop.Malpha.D.f.4}), applied to }k=g\right) \\
&  \neq g;
\end{align*}
this would be absurd. Hence, we cannot have $g\in D\left(  \alpha\right)  $.
We thus have $g\notin D\left(  \alpha\right)  $.

We have $g\in\left[  n\right]  $, so that $1\leq g\leq n$. Hence, $n\geq1$.
Consequently, $n\neq0$, so that $\alpha_{1}+\alpha_{2}+\cdots+\alpha_{\ell
}=n\neq0$. If we had $\ell=0$, then we would have $\alpha_{1}+\alpha
_{2}+\cdots+\alpha_{\ell}=\left(  \text{empty sum}\right)  =0$, which would
contradict $\alpha_{1}+\alpha_{2}+\cdots+\alpha_{\ell}\neq0$. Thus, we cannot
have $\ell=0$. Therefore, we have $\ell>0$, so that $\ell\in\left[
\ell\right]  $. Thus, $f\left(  s_{\ell}\right)  =\ell$ (by
(\ref{pf.prop.Malpha.D.f.7}), applied to $i=\ell$). Also, $n\in\left[
n\right]  $ (since $n\geq1$).

From Lemma \ref{lem.Dalpha.s} (e), we obtain $n=s_{\ell}$. If we had $g=n$,
then we would have $f\left(  \underbrace{g}_{=n=s_{\ell}}\right)  =f\left(
s_{\ell}\right)  =\ell$ and therefore $s_{f\left(  g\right)  }=s_{\ell}=n=g$;
but this would contradict $s_{f\left(  g\right)  }\neq g$. Hence, we cannot
have $g=n$. Thus, $g\neq n$.

From $g\in\left[  n\right]  $ and $g\neq n$, we obtain $g\in\left[  n\right]
\setminus\left\{  n\right\}  =\left[  n-1\right]  $. Combining this with
$g\notin D\left(  \alpha\right)  $, we find that $g\in\left[  n-1\right]
\setminus D\left(  \alpha\right)  $. Thus, (\ref{pf.prop.Malpha.D.f.5})
(applied to $k=g$) shows that $f\left(  g\right)  =f\left(  g+1\right)  $.

Recall that $g$ is the \textbf{highest} $k\in\left[  n\right]  $ satisfying
$i_{s_{f\left(  k\right)  }}\neq i_{k}$. Hence, for every $k\in\left[
n\right]  $ satisfying $i_{s_{f\left(  k\right)  }}\neq i_{k}$, we have%
\begin{equation}
k\leq g. \label{pf.lem.Dalpha.ItoJ.a.1}%
\end{equation}

From $g\in\left[  n-1\right]  $, we obtain $g+1\in\left[  n\right]  $. Now, if
we had $i_{s_{f\left(  g+1\right)  }}\neq i_{g+1}$, then we would have
$g+1\leq g$ (by (\ref{pf.lem.Dalpha.ItoJ.a.1}), applied to $k=g+1$); but this
would contradict $g+1>g$. Hence, we cannot have $i_{s_{f\left(  g+1\right)  }%
}\neq i_{g+1}$. We therefore must have $i_{s_{f\left(  g+1\right)  }}=i_{g+1}%
$. But $f\left(  g\right)  =f\left(  g+1\right)  $, so that $i_{s_{f\left(
g\right)  }}=i_{s_{f\left(  g+1\right)  }}=i_{g+1}$. From $i_{s_{f\left(
g\right)  }}\neq i_{g}$, we now obtain $i_{g}\neq i_{s_{f\left(  g\right)  }%
}=i_{g+1}$.

Applying (\ref{pf.lem.Dalpha.ItoJ.rise}) to $u=g$ and $v=g+1$, we obtain
$i_{g}\leq i_{g+1}$ (since $g\leq g+1$). Combined with $i_{g}\neq i_{g+1}$,
this yields $i_{g}<i_{g+1}$.

Now, $g$ is an element of $\left[  n-1\right]  $ and satisfies $i_{g}<i_{g+1}%
$. In other words, $g\in\left\{  j\in\left[  n-1\right]  \ \mid\ i_{j}%
<i_{j+1}\right\}  $. Hence, $g \in
\left\{  j\in\left[  n-1\right]  \ \mid\ i_{j}<i_{j+1}\right\}  \subseteq
D\left(  \alpha\right)  $. This contradicts $g\notin D\left(  \alpha\right)  $.
This contradiction proves that our assumption was wrong. Hence, Lemma
\ref{lem.Dalpha.ItoJ} (a) is proven.

(b) From Lemma \ref{lem.Dalpha.s} (a), we see that $s_{i}\in\left[  n\right]
$ for every $i\in\left[  \ell\right]  $. Thus, from $\left(  i_{1}%
,i_{2},\ldots,i_{n}\right)  \in\left\{  1,2,3,\ldots\right\}  ^{n}$, we obtain
$\left(  i_{s_{1}},i_{s_{2}},\ldots,i_{s_{\ell}}\right)  \in\left\{
1,2,3,\ldots\right\}  ^{\ell}$.

It remains to prove that $i_{s_{1}}\leq i_{s_{2}}\leq\cdots\leq i_{s_{\ell}}$.

Let $k\in\left[  \ell-1\right]  $ be arbitrary. Then, both $k$ and $k+1$
belong to $\left[  \ell\right]  $. Hence, both $s_{k}$ and $s_{k+1}$ belong to
$\left[  n\right]  $ (by Lemma \ref{lem.Dalpha.s} (a)).

Now, $s_{k}<s_{k+1}$ (because of $s_{0}<s_{1}<\cdots<s_{\ell}$). Hence,
$s_{k}\leq s_{k+1}$.

Applying (\ref{pf.lem.Dalpha.ItoJ.rise}) to $u=s_{k}$ and $v=s_{k+1}$, we
obtain $i_{s_{k}}\leq i_{s_{k+1}}$ (since $s_{k}\leq s_{k+1}$).

Now, forget that we fixed $k$. We thus have proven that $i_{s_{k}}\leq
i_{s_{k+1}}$ for every $k\in\left[  \ell-1\right]  $. In other words,
$i_{s_{1}}\leq i_{s_{2}}\leq\cdots\leq i_{s_{\ell}}$. This completes the proof
of Lemma \ref{lem.Dalpha.ItoJ} (b).

(c) Assume that $\left\{  j\in\left[  n-1\right]  \ \mid\ i_{j}<i_{j+1}%
\right\}  =D\left(  \alpha\right)  $.

Now, let $k\in\left[  \ell-1\right]  $ be arbitrary. Then, $k+1\in\left[
\ell\right]  $ (since $k\in\left[  \ell-1\right]  $). Thus, $s_{k+1}\in\left[
n\right]  $ (by Lemma \ref{lem.Dalpha.s} (a) (applied to $k+1$ instead of
$i$)), and thus $s_{k+1}\leq n$.

Furthermore, $s_{k}<s_{k+1}$ (because of $s_{0}<s_{1}<\cdots<s_{\ell}$).
Hence, $s_{k}+1\leq s_{k+1}$ (since $s_{k}$ and $s_{k+1}$ are integers).
Hence, $s_{k}+1\leq s_{k+1}\leq n$. Combining this with $\underbrace{s_{k}%
}_{\geq0}+1\geq1$, we obtain $s_{k}+1\in\left[  n\right]  $.

Applying (\ref{pf.lem.Dalpha.ItoJ.rise}) to $u=s_{k}+1$ and $v=s_{k+1}$, we
obtain $i_{s_{k}+1}\leq i_{s_{k+1}}$ (since $s_{k}+1\leq s_{k+1}$).

On the other hand, $k\in\left[  \ell-1\right]  $ and thus
\begin{align*}
s_{k}  &  \in\left\{  s_{1},s_{2},\ldots,s_{\ell-1}\right\}  =D\left(
\alpha\right)  \ \ \ \ \ \ \ \ \ \ \left(  \text{by Lemma \ref{lem.Dalpha.s}
(c)}\right) \\
&  =\left\{  j\in\left[  n-1\right]  \ \mid\ i_{j}<i_{j+1}\right\}  .
\end{align*}
In other words, $s_{k}$ is an element of $\left[  n-1\right]  $ and satisfies
$i_{s_{k}}<i_{s_{k}+1}$. Hence, $i_{s_{k}}<i_{s_{k}+1}\leq i_{s_{k+1}}$.

Now, forget that we fixed $k$. We thus have proven that $i_{s_{k}}<i_{s_{k+1}%
}$ for every $k\in\left[  \ell-1\right]  $. In other words, $i_{s_{1}%
}<i_{s_{2}}<\cdots<i_{s_{\ell}}$. This proves Lemma \ref{lem.Dalpha.ItoJ} (c).
\end{proof}

\begin{lemma}
\label{lem.Dalpha.JtoI}Let $\alpha=\left(  \alpha_{1},\alpha_{2},\ldots
,\alpha_{\ell}\right)  $ be a composition of a nonnegative integer $n$.

For every $i\in\left\{  0,1,\ldots,\ell\right\}  $, define a nonnegative
integer $s_{i}$ by
\[
s_{i}=\alpha_{1}+\alpha_{2}+\cdots+\alpha_{i}.
\]

Define a map $f:\left[  n\right]  \rightarrow\left[  \ell\right]  $ as in
Lemma \ref{lem.Dalpha.s2}.

Let $\left(  h_{1},h_{2},\ldots,h_{\ell}\right)  $ be an $\ell$-tuple in
$\left\{  1,2,3,\ldots\right\}  ^{\ell}$. Assume that $h_{1}\leq h_{2}%
\leq\cdots\leq h_{\ell}$. Then:

\begin{enumerate}
\item[(a)] We have $\left(  h_{f\left(  1\right)  },h_{f\left(  2\right)
},\ldots,h_{f\left(  n\right)  }\right)  \in\left\{  1,2,3,\ldots\right\}
^{n}$ and $h_{f\left(  1\right)  }\leq h_{f\left(  2\right)  }\leq\cdots\leq
h_{f\left(  n\right)  }$.

\item[(b)] We have $\left\{  j\in\left[  n-1\right]  \ \mid\ h_{f\left(
j\right)  }<h_{f\left(  j+1\right)  }\right\}  \subseteq D\left(
\alpha\right)  $.

\item[(c)] If $h_{1}<h_{2}<\cdots<h_{\ell}$, then $\left\{  j\in\left[
n-1\right]  \ \mid\ h_{f\left(  j\right)  }<h_{f\left(  j+1\right)  }\right\}
=D\left(  \alpha\right)  $.
\end{enumerate}
\end{lemma}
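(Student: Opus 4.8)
The plan is to read off all three parts directly from the monotonicity behaviour of the map $f$ recorded in Lemma~\ref{lem.Dalpha.s2}, translating statements about $f\left(j\right)$ versus $f\left(j+1\right)$ into statements about $h_{f\left(j\right)}$ versus $h_{f\left(j+1\right)}$ by means of the hypothesis on $\left(h_1, h_2, \ldots, h_\ell\right)$. This lemma is the exact converse of Lemma~\ref{lem.Dalpha.ItoJ}, and like that lemma it should require no new ideas beyond the properties of $f$. The one preparatory observation I would make at the outset is the following reformulation of the hypotheses: since $h_1 \leq h_2 \leq \cdots \leq h_\ell$, we have $h_u \leq h_v$ for any $u, v \in \left[\ell\right]$ with $u \leq v$; and if moreover $h_1 < h_2 < \cdots < h_\ell$, then $h_u < h_v$ for any $u, v \in \left[\ell\right]$ with $u < v$.

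For part (a), first note that $f\left(k\right) \in \left[\ell\right]$ for each $k \in \left[n\right]$ (this is built into the definition of $f$ in Lemma~\ref{lem.Dalpha.s2}), so $h_{f\left(k\right)}$ is a well-defined element of $\left\{1, 2, 3, \ldots\right\}$ and hence $\left(h_{f\left(1\right)}, h_{f\left(2\right)}, \ldots, h_{f\left(n\right)}\right) \in \left\{1, 2, 3, \ldots\right\}^n$. For the monotonicity, I would fix $k \in \left[n-1\right]$ and invoke \eqref{pf.prop.Malpha.D.f.2} to get $f\left(k\right) \leq f\left(k+1\right)$; since both $f\left(k\right)$ and $f\left(k+1\right)$ lie in $\left[\ell\right]$, the weak monotonicity of $h$ noted above yields $h_{f\left(k\right)} \leq h_{f\left(k+1\right)}$. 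Letting $k$ range over $\left[n-1\right]$ gives $h_{f\left(1\right)} \leq h_{f\left(2\right)} \leq \cdots \leq h_{f\left(n\right)}$.

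For part (b), I would argue by contrapositive: let $j \in \left[n-1\right]$ with $j \notin D\left(\alpha\right)$, so $j \in \left[n-1\right] \setminus D\left(\alpha\right)$; then \eqref{pf.prop.Malpha.D.f.5} gives $f\left(j\right) = f\left(j+1\right)$, whence $h_{f\left(j\right)} = h_{f\left(j+1\right)}$, so that $j$ fails the condition $h_{f\left(j\right)} < h_{f\left(j+1\right)}$. This shows $\left\{j \in \left[n-1\right] \mid h_{f\left(j\right)} < h_{f\left(j+1\right)}\right\} \subseteq D\left(\alpha\right)$. For part (c), part (b) already gives one inclusion, so under the stronger hypothesis $h_1 < h_2 < \cdots < h_\ell$ it remains to prove $D\left(\alpha\right) \subseteq \left\{j \in \left[n-1\right] \mid h_{f\left(j\right)} < h_{f\left(j+1\right)}\right\}$. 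I would take $j \in D\left(\alpha\right)$, use Lemma~\ref{lem.Dalpha.n-1} to see $j \in \left[n-1\right]$, apply \eqref{pf.prop.Malpha.D.f.3} to get the strict inequality $f\left(j\right) < f\left(j+1\right)$, and then use the strict monotonicity of $h$ to conclude $h_{f\left(j\right)} < h_{f\left(j+1\right)}$, placing $j$ in the set. There is no real obstacle here: the entire content is carried by Lemma~\ref{lem.Dalpha.s2}, and the only thing to be careful about is matching strict against weak inequalities correctly when passing from $f$-values to $h$-values.
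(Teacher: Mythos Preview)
Your proposal is correct and follows essentially the same approach as the paper: both proofs reduce each part to the corresponding monotonicity property of $f$ from Lemma~\ref{lem.Dalpha.s2} (namely \eqref{pf.prop.Malpha.D.f.2}, \eqref{pf.prop.Malpha.D.f.5}, and \eqref{pf.prop.Malpha.D.f.3}) and then pass through the weak or strict monotonicity of the $h_i$. The only cosmetic difference is that for part (b) you phrase the argument as a contrapositive while the paper phrases it as a contradiction, but the logical content is identical.
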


\begin{proof}
[Proof of Lemma \ref{lem.Dalpha.JtoI}.]We have $\left(  h_{1},h_{2}%
,\ldots,h_{\ell}\right)  \in\left\{  1,2,3,\ldots\right\}  ^{\ell}$. Hence,
\newline
$\left(  h_{f\left(  1\right)  },h_{f\left(  2\right)  },\ldots,h_{f\left(
n\right)  }\right)  \in\left\{  1,2,3,\ldots\right\}  ^{n}$.

(a) We have $h_{1}\leq h_{2}\leq\cdots\leq h_{\ell}$. Thus,%
\begin{equation}
h_{u}\leq h_{v}\ \ \ \ \ \ \ \ \ \ \text{for any }u\in\left[  \ell\right]
\text{ and }v\in\left[  \ell\right]  \text{ satisfying }u\leq v.
\label{pf.prop.SMalpha.Psi-wd.pf.2}%
\end{equation}

Let $k\in\left[  n-1\right]  $. Then, $f\left(  k\right)  \leq f\left(
k+1\right)  $ (by (\ref{pf.prop.Malpha.D.f.2})), and therefore $h_{f\left(
k\right)  }\leq h_{f\left(  k+1\right)  }$ (by
(\ref{pf.prop.SMalpha.Psi-wd.pf.2}), applied to $u=f\left(  k\right)  $ and
$v=f\left(  k+1\right)  $). Now, let us forget that we fixed $k$. Thus we have
shown that $h_{f\left(  k\right)  }\leq h_{f\left(  k+1\right)  }$ for every
$k\in\left[  n-1\right]  $. In other words, $h_{f\left(  1\right)  }\leq
h_{f\left(  2\right)  }\leq\cdots\leq h_{f\left(  n\right)  }$. Combined with
$\left(  h_{f\left(  1\right)  },h_{f\left(  2\right)  },\ldots,h_{f\left(
n\right)  }\right)  \in\left\{  1,2,3,\ldots\right\}  ^{n}$, this completes
the proof of Lemma \ref{lem.Dalpha.JtoI} (a).

(b) Let $k\in\left\{  j\in\left[  n-1\right]  \ \mid\ h_{f\left(  j\right)
}<h_{f\left(  j+1\right)  }\right\}  $. Thus, $k$ is an element of $\left[
n-1\right]  $ and satisfies $h_{f\left(  k\right)  }<h_{f\left(  k+1\right)
}$. If we had $k\notin D\left(  \alpha\right)  $, then we would have
$k\in\left[  n-1\right]  \setminus D\left(  \alpha\right)  $ (since
$k\in\left[  n-1\right]  $ and $k\notin D\left(  \alpha\right)  $) and
therefore $f\left(  k\right)  =f\left(  k+1\right)  $ (by
(\ref{pf.prop.Malpha.D.f.5})), and thus $h_{f\left(  k\right)  }=h_{f\left(
k+1\right)  }$; but this would contradict $h_{f\left(  k\right)  }<h_{f\left(
k+1\right)  }$. Hence, we cannot have $k\notin D\left(  \alpha\right)  $.
Thus, we have $k\in D\left(  \alpha\right)  $.

Now, let us forget that we fixed $k$. We thus have shown that $k\in D\left(
\alpha\right)  $ for every \newline
$k\in\left\{  j\in\left[  n-1\right]
\ \mid\ h_{f\left(  j\right)  }<h_{f\left(  j+1\right)  }\right\}  $. In other
words, $\left\{  j\in\left[  n-1\right]  \ \mid\ h_{f\left(  j\right)
}<h_{f\left(  j+1\right)  }\right\}  \subseteq D\left(  \alpha\right)  $. This
proves Lemma \ref{lem.Dalpha.JtoI} (b).

(c) Assume that $h_{1}<h_{2}<\cdots<h_{\ell}$. Thus,%
\begin{equation}
h_{u}<h_{v}\ \ \ \ \ \ \ \ \ \ \text{for any }u\in\left[  \ell\right]  \text{
and }v\in\left[  \ell\right]  \text{ satisfying }u<v.
\label{pf.prop.Malpha.Psi-wd.pf.1}%
\end{equation}

From Lemma \ref{lem.Dalpha.JtoI} (b), we obtain%
\begin{equation}
\left\{  j\in\left[  n-1\right]  \ \mid\ h_{f\left(  j\right)  }<h_{f\left(
j+1\right)  }\right\}  \subseteq D\left(  \alpha\right)  .
\label{pf.prop.Malpha.Psi-wd.pf.4}%
\end{equation}

On the other hand, we have%
\begin{equation}
D\left(  \alpha\right)  \subseteq\left\{  j\in\left[  n-1\right]
\ \mid\ h_{f\left(  j\right)  }<h_{f\left(  j+1\right)  }\right\}  .
\label{pf.prop.Malpha.Psi-wd.pf.5}%
\end{equation}

[\textit{Proof of (\ref{pf.prop.Malpha.Psi-wd.pf.5}):} Let $k\in D\left(
\alpha\right)  $. Then, $f\left(  k\right)  <f\left(  k+1\right)  $ (by
(\ref{pf.prop.Malpha.D.f.3})). Therefore, $h_{f\left(  k\right)  }<h_{f\left(
k+1\right)  }$ (by (\ref{pf.prop.Malpha.Psi-wd.pf.1}), applied to $u=f\left(
k\right)  $ and $v=f\left(  k+1\right)  $). Also, $k\in D\left(
\alpha\right)  \subseteq\left[  n-1\right]  $ (by Lemma \ref{lem.Dalpha.n-1}).
Thus, $k$ is an element of $\left[  n-1\right]  $ and satisfies $h_{f\left(
k\right)  }<h_{f\left(  k+1\right)  }$. In other words, $k\in\left\{
j\in\left[  n-1\right]  \ \mid\ h_{f\left(  j\right)  }<h_{f\left(
j+1\right)  }\right\}  $.

Now, let us forget that we fixed $k$. We thus have shown that $k\in\left\{
j\in\left[  n-1\right]  \ \mid\ h_{f\left(  j\right)  }<h_{f\left(
j+1\right)  }\right\}  $ for every $k\in D\left(  \alpha\right)  $. In other
words, $D\left(  \alpha\right)  \subseteq\left\{  j\in\left[  n-1\right]
\ \mid\ h_{f\left(  j\right)  }<h_{f\left(  j+1\right)  }\right\}  $. This
proves (\ref{pf.prop.Malpha.Psi-wd.pf.5}).]

Combining (\ref{pf.prop.Malpha.Psi-wd.pf.4}) with
(\ref{pf.prop.Malpha.Psi-wd.pf.5}), we obtain $\left\{  j\in\left[
n-1\right]  \ \mid\ h_{f\left(  j\right)  }<h_{f\left(  j+1\right)  }\right\}
=D\left(  \alpha\right)  $. This proves Lemma \ref{lem.Dalpha.JtoI} (c).
\end{proof}

\begin{proposition}
\label{prop.Malpha.D}Let $\alpha$ be a composition of a nonnegative integer
$n$. Then,%
\[
M_{\alpha}=\sum_{\substack{i_{1}\leq i_{2}\leq\cdots\leq i_{n};\\\left\{
j\in\left[  n-1\right]  \ \mid\ i_{j}<i_{j+1}\right\}  =D\left(
\alpha\right)  }}x_{i_{1}}x_{i_{2}}\cdots x_{i_{n}}.
\]

\end{proposition}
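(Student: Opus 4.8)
The plan is to set up an explicit monomial-preserving bijection between the index sets of the two sums, with essentially all the combinatorial work already discharged by the auxiliary lemmas in this appendix. First I would write $\alpha = (\alpha_1, \alpha_2, \ldots, \alpha_\ell)$ (so $\ell = \ell(\alpha)$), introduce the partial sums $s_i = \alpha_1 + \cdots + \alpha_i$ for $i \in \{0, 1, \ldots, \ell\}$, and recall the map $f : \left[n\right] \to \left[\ell\right]$ from Lemma~\ref{lem.Dalpha.s2}. By the definition of $M_\alpha$ together with Proposition~\ref{prop.Malpha.equivalent}, we have $M_\alpha = \sum_{i_1 < i_2 < \cdots < i_\ell} x_{i_1}^{\alpha_1} x_{i_2}^{\alpha_2} \cdots x_{i_\ell}^{\alpha_\ell}$, so it suffices to match this sum term-by-term with the right-hand side $\sum x_{i_1} x_{i_2} \cdots x_{i_n}$ over all weakly increasing $(i_1 \leq \cdots \leq i_n)$ whose ascent set $\left\{ j \in \left[n-1\right] \mid i_j < i_{j+1} \right\}$ equals $D(\alpha)$.

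Next I would define the two maps and check they are mutually inverse. The forward map $\Phi$ sends a strictly increasing tuple $(h_1 < h_2 < \cdots < h_\ell)$ to the $n$-tuple $(h_{f(1)}, h_{f(2)}, \ldots, h_{f(n)})$; Lemma~\ref{lem.Dalpha.JtoI}~(a) shows it is weakly increasing, and Lemma~\ref{lem.Dalpha.JtoI}~(c) (applicable since $h_1 < \cdots < h_\ell$) shows its ascent set is exactly $D(\alpha)$, so $\Phi$ lands in the right index set. The backward map $\Psi$ sends a weakly increasing $(i_1 \leq \cdots \leq i_n)$ with ascent set $D(\alpha)$ to $(i_{s_1}, i_{s_2}, \ldots, i_{s_\ell})$, which is strictly increasing by Lemma~\ref{lem.Dalpha.ItoJ}~(c). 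The composite $\Psi \circ \Phi$ is the identity because $f(s_i) = i$ by Lemma~\ref{lem.Dalpha.s2}~(c), and $\Phi \circ \Psi$ is the identity because $i_{s_{f(k)}} = i_k$ by Lemma~\ref{lem.Dalpha.ItoJ}~(a).

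Finally I would verify that $\Phi$ preserves the summand monomials: the product $\prod_{k=1}^n x_{h_{f(k)}}$ can be regrouped according to the fibers of $f$ into $\prod_{j=1}^\ell x_{h_j}^{\left|f^{-1}(j)\right|}$, and $\left|f^{-1}(j)\right| = \alpha_j$ by Lemma~\ref{lem.Dalpha.s2}~(h), so this equals $\prod_{j=1}^\ell x_{h_j}^{\alpha_j}$, the corresponding term of $M_\alpha$. Reindexing the sum $M_\alpha$ along the bijection $\Phi$ then yields exactly the claimed right-hand side, completing the proof. The hard part is not any single step but rather the bookkeeping of assembling the cited lemmas correctly — confirming that $\Phi$ and $\Psi$ are well-defined into the intended index sets and genuinely inverse, and carrying out the fiber-grouping of the product; once the lemmas of this appendix are in hand, each of these reduces to a direct citation, and no genuinely new combinatorial obstacle remains.
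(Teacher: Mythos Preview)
Your proposal is correct and follows essentially the same argument as the paper's proof: the same partial sums $s_i$, the same map $f$, the same bijection between strictly increasing $\ell$-tuples and weakly increasing $n$-tuples with ascent set $D(\alpha)$, and the same lemma citations (Lemma~\ref{lem.Dalpha.s2}~(c),~(h), Lemma~\ref{lem.Dalpha.ItoJ}~(a),~(c), Lemma~\ref{lem.Dalpha.JtoI}~(a),~(c)). The only cosmetic difference is that you have swapped the names $\Phi$ and $\Psi$ relative to the paper, and the reference to Proposition~\ref{prop.Malpha.equivalent} is unnecessary since the definition of $M_\alpha$ already gives the sum over strictly increasing tuples directly.
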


\begin{proof}
[Proof of Proposition \ref{prop.Malpha.D}.]Let $\mathcal{J}$ denote the set of
all length-$\ell$ strictly increasing sequences of positive integers. In other
words,%
\begin{equation}
\mathcal{J}=\left\{  \left(  i_{1},i_{2},\ldots,i_{\ell}\right)  \in\left\{
1,2,3,\ldots\right\}  ^{\ell}\ \mid\ i_{1}<i_{2}<\cdots<i_{\ell}\right\}  .
\label{pf.prop.Malpha.D.J=}%
\end{equation}
Renaming the index $\left(  i_{1},i_{2},\ldots,i_{\ell}\right)  $ as $\left(
j_{1},j_{2},\ldots,j_{\ell}\right)  $ in this formula, we obtain
\[
\mathcal{J}=\left\{  \left(  j_{1},j_{2},\ldots,j_{\ell}\right)  \in\left\{
1,2,3,\ldots\right\}  ^{\ell}\ \mid\ j_{1}<j_{2}<\cdots<j_{\ell}\right\}  .
\]

The definition of $M_{\alpha}$ yields%
\begin{align}
M_{\alpha}  &  =\underbrace{\sum_{i_{1}<i_{2}<\cdots<i_{\ell}}}%
_{\substack{=\sum_{\substack{\left(  i_{1},i_{2},\ldots,i_{\ell}\right)
\in\left\{  1,2,3,\ldots\right\}  ^{\ell};\\i_{1}<i_{2}<\cdots<i_{\ell}}%
}=\sum_{\left(  i_{1},i_{2},\ldots,i_{\ell}\right)  \in\mathcal{J}%
}\\\text{(since }\left\{  \left(  i_{1},i_{2},\ldots,i_{\ell}\right)
\in\left\{  1,2,3,\ldots\right\}  ^{\ell}\ \mid\ i_{1}<i_{2}<\cdots<i_{\ell
}\right\}  =\mathcal{J}\text{)}}}x_{i_{1}}^{\alpha_{1}}x_{i_{2}}^{\alpha_{2}%
}\cdots x_{i_{\ell}}^{\alpha_{\ell}}\nonumber\\
&  =\sum_{\left(  i_{1},i_{2},\ldots,i_{\ell}\right)  \in\mathcal{J}}x_{i_{1}%
}^{\alpha_{1}}x_{i_{2}}^{\alpha_{2}}\cdots x_{i_{\ell}}^{\alpha_{\ell}}%
=\sum_{\left(  j_{1},j_{2},\ldots,j_{\ell}\right)  \in\mathcal{J}}x_{j_{1}%
}^{\alpha_{1}}x_{j_{2}}^{\alpha_{2}}\cdots x_{j_{\ell}}^{\alpha_{\ell}}
\label{pf.prop.Malpha.D.Malpha=}%
\end{align}
(here, we have renamed $\left(  i_{1},i_{2},\ldots,i_{\ell}\right)  $ as
$\left(  j_{1},j_{2},\ldots,j_{\ell}\right)  $ in the sum).

Define a set $\mathcal{I}$ by%
\begin{align}
\mathcal{I}  &  =\left\{  \left(  i_{1},i_{2},\ldots,i_{n}\right)  \in\left\{
1,2,3,\ldots\right\}  ^{n}\ \mid\ i_{1}\leq i_{2}\leq\cdots\leq i_{n}\right.
\nonumber\\
&  \ \ \ \ \ \ \ \ \ \ \left.  \text{and }\left\{  j\in\left[  n-1\right]
\ \mid\ i_{j}<i_{j+1}\right\}  =D\left(  \alpha\right)  \right\}  .
\label{pf.prop.Malpha.D.I=}%
\end{align}
Thus, $\sum_{\substack{i_{1}\leq i_{2}\leq\cdots\leq i_{n};\\\left\{
j\in\left[  n-1\right]  \ \mid\ i_{j}<i_{j+1}\right\}  =D\left(
\alpha\right)  }}=\sum_{\left(  i_{1},i_{2},\ldots,i_{n}\right)
\in\mathcal{I}}$ (an equality between summation signs). Hence,%
\begin{equation}
\sum_{\substack{i_{1}\leq i_{2}\leq\cdots\leq i_{n};\\\left\{  j\in\left[
n-1\right]  \ \mid\ i_{j}<i_{j+1}\right\}  =D\left(  \alpha\right)  }%
}x_{i_{1}}x_{i_{2}}\cdots x_{i_{n}}=\sum_{\left(  i_{1},i_{2},\ldots
,i_{n}\right)  \in\mathcal{I}}x_{i_{1}}x_{i_{2}}\cdots x_{i_{n}}.
\label{pf.prop.Malpha.D.RHS=}%
\end{equation}

The definition of $\mathcal{I}$ shows that%
\begin{align*}
\mathcal{I}  &  =\left\{  \left(  i_{1},i_{2},\ldots,i_{n}\right)  \in\left\{
1,2,3,\ldots\right\}  ^{n}\ \mid\ i_{1}\leq i_{2}\leq\cdots\leq i_{n}\right.
\\
&  \ \ \ \ \ \ \ \ \ \ \left.  \text{and }\left\{  j\in\left[  n-1\right]
\ \mid\ i_{j}<i_{j+1}\right\}  =D\left(  \alpha\right)  \right\} \\
&  =\left\{  \left(  k_{1},k_{2},\ldots,k_{n}\right)  \in\left\{
1,2,3,\ldots\right\}  ^{n}\ \mid\ k_{1}\leq k_{2}\leq\cdots\leq k_{n}\right.
\\
&  \ \ \ \ \ \ \ \ \ \ \left.  \text{and }\left\{  j\in\left[  n-1\right]
\ \mid\ k_{j}<k_{j+1}\right\}  =D\left(  \alpha\right)  \right\}
\end{align*}
(here, we have renamed the index $\left(  i_{1},i_{2},\ldots,i_{n}\right)  $
as $\left(  k_{1},k_{2},\ldots,k_{n}\right)  $).

Now, for every $i\in\left\{  0,1,\ldots,\ell\right\}  $, define a nonnegative
integer $s_{i}$ by
\[
s_{i}=\alpha_{1}+\alpha_{2}+\cdots+\alpha_{i}.
\]

Define a map $f:\left[  n\right]  \rightarrow\left[  \ell\right]  $ as in
Lemma \ref{lem.Dalpha.s2}.

Now, for every $\left(  i_{1},i_{2},\ldots,i_{n}\right)  \in\mathcal{I}$, we
have $\left(  i_{s_{1}},i_{s_{2}},\ldots,i_{s_{\ell}}\right)  \in\mathcal{J}%
$\ \ \ \ \footnote{\textit{Proof.} Let $\left(  i_{1},i_{2},\ldots
,i_{n}\right)  \in\mathcal{I}$. Thus,%
\begin{align*}
\left(  i_{1},i_{2},\ldots,i_{n}\right)   &  \in\mathcal{I}\\
&  =\left\{  \left(  k_{1},k_{2},\ldots,k_{n}\right)  \in\left\{
1,2,3,\ldots\right\}  ^{n}\ \mid\ k_{1}\leq k_{2}\leq\cdots\leq k_{n}\right.
\\
&  \ \ \ \ \ \ \ \ \ \ \left.  \text{and }\left\{  j\in\left[  n-1\right]
\ \mid\ k_{j}<k_{j+1}\right\}  =D\left(  \alpha\right)  \right\}  .
\end{align*}
In other words, $\left(  i_{1},i_{2},\ldots,i_{n}\right)  $ is an element of
$\left\{  1,2,3,\ldots\right\}  ^{n}$ satisfying $i_{1}\leq i_{2}\leq
\cdots\leq i_{n}$ and $\left\{  j\in\left[  n-1\right]  \ \mid\ i_{j}%
<i_{j+1}\right\}  =D\left(  \alpha\right)  $.
\par
Thus, $\left\{  j\in\left[  n-1\right]  \ \mid\ i_{j}<i_{j+1}\right\}
=D\left(  \alpha\right)  \subseteq D\left(  \alpha\right)  $. Hence, Lemma
\ref{lem.Dalpha.ItoJ} (b) shows that we have $\left(  i_{s_{1}},i_{s_{2}%
},\ldots,i_{s_{\ell}}\right)  \in\left\{  1,2,3,\ldots\right\}  ^{\ell}$ and
$i_{s_{1}}\leq i_{s_{2}}\leq\cdots\leq i_{s_{\ell}}$. Furthermore, Lemma
\ref{lem.Dalpha.ItoJ} (c) shows that $i_{s_{1}}<i_{s_{2}}<\cdots<i_{s_{\ell}}$
(since $\left\{  j\in\left[  n-1\right]  \ \mid\ i_{j}<i_{j+1}\right\}
=D\left(  \alpha\right)  $).
\par
Now, $\left(  i_{s_{1}},i_{s_{2}},\ldots,i_{s_{\ell}}\right)  $ is an element
of $\left\{  1,2,3,\ldots\right\}  ^{\ell}$ (since $\left(  i_{s_{1}}%
,i_{s_{2}},\ldots,i_{s_{\ell}}\right)  \in\left\{  1,2,3,\ldots\right\}
^{\ell}$) and satisfies $i_{s_{1}}<i_{s_{2}}<\cdots<i_{s_{\ell}}$. In other
words, $\left(  i_{s_{1}},i_{s_{2}},\ldots,i_{s_{\ell}}\right)  $ is a
$\left(  j_{1},j_{2},\ldots,j_{\ell}\right)  \in\left\{  1,2,3,\ldots\right\}
^{\ell}$ satisfying $j_{1}<j_{2}<\cdots<j_{\ell}$. In other words,%
\[
\left(  i_{s_{1}},i_{s_{2}},\ldots,i_{s_{\ell}}\right)  \in\left\{  \left(
j_{1},j_{2},\ldots,j_{\ell}\right)  \in\left\{  1,2,3,\ldots\right\}  ^{\ell
}\ \mid\ j_{1}<j_{2}<\cdots<j_{\ell}\right\}  =\mathcal{J},
\]
qed.}. Hence, we can define a map $\Phi:\mathcal{I}\rightarrow\mathcal{J}$ by
setting%
\[
\left(  \Phi\left(  i_{1},i_{2},\ldots,i_{n}\right)  =\left(  i_{s_{1}%
},i_{s_{2}},\ldots,i_{s_{\ell}}\right)  \ \ \ \ \ \ \ \ \ \ \text{for every
}\left(  i_{1},i_{2},\ldots,i_{n}\right)  \in\mathcal{I}\right)  .
\]
Consider this $\Phi$.

For every $\left(  i_{1},i_{2},\ldots,i_{n}\right)  \in\mathcal{I}$, we have%
\begin{equation}
i_{s_{f\left(  k\right)  }}=i_{k}\ \ \ \ \ \ \ \ \ \ \text{for every }%
k\in\left[  n\right]  \label{pf.prop.Malpha.D.sf.1}%
\end{equation}
\footnote{\textit{Proof of (\ref{pf.prop.Malpha.D.sf.1}):} Fix $\left(
i_{1},i_{2},\ldots,i_{n}\right)  \in\mathcal{I}$. Thus,%
\begin{align*}
\left(  i_{1},i_{2},\ldots,i_{n}\right)   &  \in\mathcal{I}\\
&  =\left\{  \left(  k_{1},k_{2},\ldots,k_{n}\right)  \in\left\{
1,2,3,\ldots\right\}  ^{n}\ \mid\ k_{1}\leq k_{2}\leq\cdots\leq k_{n}\right.
\\
&  \ \ \ \ \ \ \ \ \ \ \left.  \text{and }\left\{  j\in\left[  n-1\right]
\ \mid\ k_{j}<k_{j+1}\right\}  =D\left(  \alpha\right)  \right\}  .
\end{align*}
In other words, $\left(  i_{1},i_{2},\ldots,i_{n}\right)  $ is an element of
$\left\{  1,2,3,\ldots\right\}  ^{n}$ satisfying $i_{1}\leq i_{2}\leq
\cdots\leq i_{n}$ and $\left\{  j\in\left[  n-1\right]  \ \mid\ i_{j}%
<i_{j+1}\right\}  =D\left(  \alpha\right)  $.
\par
Hence, Lemma \ref{lem.Dalpha.ItoJ} (a) shows that we have $i_{s_{f\left(
k\right)  }}=i_{k}$ for every $k\in\left[  n\right]  $. Hence,
(\ref{pf.prop.Malpha.D.sf.1}) is proven.}.

Now, for every $\left(  h_{1},h_{2},\ldots,h_{\ell}\right)  \in\mathcal{J}$,
we have $\left(  h_{f\left(  1\right)  },h_{f\left(  2\right)  }%
,\ldots,h_{f\left(  n\right)  }\right)  \in\mathcal{I}$%
\ \ \ \ \footnote{\textit{Proof.} Let $\left(  h_{1},h_{2},\ldots,h_{\ell
}\right)  \in\mathcal{J}$. Thus, $\left(  h_{1},h_{2},\ldots,h_{\ell}\right)
\in\mathcal{J}=\left\{  \left(  i_{1},i_{2},\ldots,i_{\ell}\right)
\in\left\{  1,2,3,\ldots\right\}  ^{\ell}\ \mid\ i_{1}<i_{2}<\cdots<i_{\ell
}\right\}  $. In other words, $\left(  h_{1},h_{2},\ldots,h_{\ell}\right)  $
is an $\ell$-tuple in $\left\{  1,2,3,\ldots\right\}  ^{\ell}$ and satisfies
$h_{1}<h_{2}<\cdots<h_{\ell}$.
\par
From $h_{1}<h_{2}<\cdots<h_{\ell}$, we obtain $h_{1}\leq h_{2}\leq\cdots\leq
h_{\ell}$. Hence, Lemma \ref{lem.Dalpha.JtoI} (a) shows that we have $\left(
h_{f\left(  1\right)  },h_{f\left(  2\right)  },\ldots,h_{f\left(  n\right)
}\right)  \in\left\{  1,2,3,\ldots\right\}  ^{n}$ and $h_{f\left(  1\right)
}\leq h_{f\left(  2\right)  }\leq\cdots\leq h_{f\left(  n\right)  }$.
Furthermore, Lemma \ref{lem.Dalpha.JtoI} (c) yields $\left\{  j\in\left[
n-1\right]  \ \mid\ h_{f\left(  j\right)  }<h_{f\left(  j+1\right)  }\right\}
=D\left(  \alpha\right)  $.
\par
Thus, $\left(  h_{f\left(  1\right)  },h_{f\left(  2\right)  },\ldots
,h_{f\left(  n\right)  }\right)  $ is an element of $\left\{  1,2,3,\ldots
\right\}  ^{n}$ which satisfies $h_{f\left(  1\right)  }\leq h_{f\left(
2\right)  }\leq\cdots\leq h_{f\left(  n\right)  }$ and $\left\{  j\in\left[
n-1\right]  \ \mid\ h_{f\left(  j\right)  }<h_{f\left(  j+1\right)  }\right\}
=D\left(  \alpha\right)  $. In other words,%
\begin{align*}
\left(  h_{f\left(  1\right)  },h_{f\left(  2\right)  },\ldots,h_{f\left(
n\right)  }\right)   &  \in\left\{  \left(  i_{1},i_{2},\ldots,i_{n}\right)
\in\left\{  1,2,3,\ldots\right\}  ^{n}\ \mid\ i_{1}\leq i_{2}\leq\cdots\leq
i_{n}\right. \\
&  \ \ \ \ \ \ \ \ \ \ \left.  \text{and }\left\{  j\in\left[  n-1\right]
\ \mid\ i_{j}<i_{j+1}\right\}  =D\left(  \alpha\right)  \right\}  .
\end{align*}
In light of (\ref{pf.prop.Malpha.D.I=}), this rewrites as $\left(  h_{f\left(
1\right)  },h_{f\left(  2\right)  },\ldots,h_{f\left(  n\right)  }\right)
\in\mathcal{I}$. Qed.}. Hence, we can define a map $\Psi:\mathcal{J}%
\rightarrow\mathcal{I}$ by setting%
\[
\left(  \Psi\left(  h_{1},h_{2},\ldots,h_{\ell}\right)  =\left(  h_{f\left(
1\right)  },h_{f\left(  2\right)  },\ldots,h_{f\left(  n\right)  }\right)
\ \ \ \ \ \ \ \ \ \ \text{for every }\left(  h_{1},h_{2},\ldots,h_{\ell
}\right)  \in\mathcal{J}\right)  .
\]
Consider this $\Psi$.

Now, $\Phi\circ\Psi=\operatorname*{id}$\ \ \ \ \footnote{\textit{Proof.} Let
$\left(  h_{1},h_{2},\ldots,h_{\ell}\right)  \in\mathcal{J}$. For every
$i\in\left[  \ell\right]  $, we have $f\left(  s_{i}\right)  =i$ (by
(\ref{pf.prop.Malpha.D.f.7})) and thus $h_{f\left(  s_{i}\right)  }=h_{i}$.
Now,%
\begin{align*}
\left(  \Phi\circ\Psi\right)  \left(  h_{1},h_{2},\ldots,h_{\ell}\right)   &
=\Phi\left(  \underbrace{\Psi\left(  h_{1},h_{2},\ldots,h_{\ell}\right)
}_{=\left(  h_{f\left(  1\right)  },h_{f\left(  2\right)  },\ldots,h_{f\left(
n\right)  }\right)  }\right)  =\Phi\left(  h_{f\left(  1\right)  },h_{f\left(
2\right)  },\ldots,h_{f\left(  n\right)  }\right) \\
&  =\left(  h_{f\left(  s_{1}\right)  },h_{f\left(  s_{2}\right)  }%
,\ldots,h_{f\left(  s_{\ell}\right)  }\right)  \ \ \ \ \ \ \ \ \ \ \left(
\text{by the definition of }\Phi\right) \\
&  =\left(  h_{1},h_{2},\ldots,h_{\ell}\right)
\end{align*}
(since $h_{f\left(  s_{i}\right)  }=h_{i}$ for every $i\in\left[  \ell\right]
$).
\par
Now, forget that we fixed $\left(  h_{1},h_{2},\ldots,h_{\ell}\right)  $. We
thus have shown that $\left(  \Phi\circ\Psi\right)  \left(  h_{1},h_{2}%
,\ldots,h_{\ell}\right)  =\left(  h_{1},h_{2},\ldots,h_{\ell}\right)  $ for
every $\left(  h_{1},h_{2},\ldots,h_{\ell}\right)  \in\mathcal{J}$. In other
words, $\Phi\circ\Psi=\operatorname*{id}$, qed.} and $\Psi\circ\Phi
=\operatorname*{id}$ \ \ \ \footnote{\textit{Proof.} For every $\left(
i_{1},i_{2},\ldots,i_{n}\right)  \in\mathcal{I}$, we have%
\begin{align*}
\left(  \Psi\circ\Phi\right)  \left(  i_{1},i_{2},\ldots,i_{n}\right)   &
=\Psi\left(  \underbrace{\Phi\left(  i_{1},i_{2},\ldots,i_{n}\right)
}_{\substack{=\left(  i_{s_{1}},i_{s_{2}},\ldots,i_{s_{\ell}}\right)
\\\text{(by the definition of }\Phi\text{)}}}\right)  =\Psi\left(  i_{s_{1}%
},i_{s_{2}},\ldots,i_{s_{\ell}}\right) \\
&  =\left(  i_{s_{f\left(  1\right)  }},i_{s_{f\left(  2\right)  }}%
,\ldots,i_{s_{f\left(  n\right)  }}\right)  \ \ \ \ \ \ \ \ \ \ \left(
\text{by the definition of }\Psi\right) \\
&  =\left(  i_{1},i_{2},\ldots,i_{n}\right)  \ \ \ \ \ \ \ \ \ \ \left(
\text{by (\ref{pf.prop.Malpha.D.sf.1})}\right)  .
\end{align*}
In other words, $\Psi\circ\Phi=\operatorname*{id}$, qed.}. Hence, the maps
$\Phi$ and $\Psi$ are mutually inverse. Thus, the map $\Phi$ is a bijection.
In other words, the map
\[
\mathcal{I}\rightarrow\mathcal{J},\ \ \ \ \ \ \ \ \ \ \left(  i_{1}%
,i_{2},\ldots,i_{n}\right)  \mapsto\left(  i_{s_{1}},i_{s_{2}},\ldots
,i_{s_{\ell}}\right)
\]
is a bijection\footnote{since $\Phi$ is the map
\[
\mathcal{I}\rightarrow\mathcal{J},\ \ \ \ \ \ \ \ \ \ \left(  i_{1}%
,i_{2},\ldots,i_{n}\right)  \mapsto\left(  i_{s_{1}},i_{s_{2}},\ldots
,i_{s_{\ell}}\right)
\]
(by the definition of $\Phi$)}.

Now, for every $\left(  i_{1},i_{2},\ldots,i_{n}\right)  \in\mathcal{I}$, we
have
\begin{equation}
x_{i_{1}}x_{i_{2}}\cdots x_{i_{n}}=x_{i_{s_{1}}}^{\alpha_{1}}x_{i_{s_{2}}%
}^{\alpha_{2}}\cdots x_{i_{s_{\ell}}}^{\alpha_{\ell}}
\label{pf.prop.Malpha.D.f-1.3}%
\end{equation}
\footnote{\textit{Proof of (\ref{pf.prop.Malpha.D.f-1.3}):} Let $\left(
i_{1},i_{2},\ldots,i_{n}\right)  \in\mathcal{I}$. Then,%
\begin{align*}
x_{i_{1}}x_{i_{2}}\cdots x_{i_{n}}  &  =\prod_{k\in\left[  n\right]  }%
x_{i_{k}}=\prod_{j\in\left[  \ell\right]  }\underbrace{\prod_{\substack{k\in
\left[  n\right]  ;\\f\left(  k\right)  =j}}}_{=\prod_{k\in f^{-1}\left(
j\right)  }}\underbrace{x_{i_{k}}}_{\substack{=x_{i_{s_{j}}}\\\text{(because
(\ref{pf.prop.Malpha.D.sf.1}) shows that}\\i_{k}=i_{s_{f\left(  k\right)  }%
}=i_{s_{j}}\text{ (since }f\left(  k\right)  =j\text{))}}%
}\ \ \ \ \ \ \ \ \ \ \left(  \text{since }f\left(  k\right)  \in\left[
\ell\right]  \text{ for every }k\in\left[  n\right]  \right) \\
&  =\prod_{j\in\left[  \ell\right]  }\underbrace{\prod_{k\in f^{-1}\left(
j\right)  }x_{i_{s_{j}}}}_{\substack{=x_{i_{s_{j}}}^{\left\vert f^{-1}\left(
j\right)  \right\vert }=x_{i_{s_{j}}}^{\alpha_{j}}\\\text{(by
(\ref{pf.prop.Malpha.D.f-1.2}))}}}=\prod_{j\in\left[  \ell\right]
}x_{i_{s_{j}}}^{\alpha_{j}}=x_{i_{s_{1}}}^{\alpha_{1}}x_{i_{s_{2}}}%
^{\alpha_{2}}\cdots x_{i_{s_{\ell}}}^{\alpha_{\ell}}.
\end{align*}
This proves (\ref{pf.prop.Malpha.D.f-1.3}).}. But
(\ref{pf.prop.Malpha.D.Malpha=}) becomes%
\begin{align*}
M_{\alpha}  &  =\sum_{\left(  j_{1},j_{2},\ldots,j_{\ell}\right)
\in\mathcal{J}}x_{j_{1}}^{\alpha_{1}}x_{j_{2}}^{\alpha_{2}}\cdots x_{j_{\ell}%
}^{\alpha_{\ell}}=\sum_{\left(  i_{1},i_{2},\ldots,i_{n}\right)
\in\mathcal{I}}\underbrace{x_{i_{s_{1}}}^{\alpha_{1}}x_{i_{s_{2}}}^{\alpha
_{2}}\cdots x_{i_{s_{\ell}}}^{\alpha_{\ell}}}_{\substack{=x_{i_{1}}x_{i_{2}%
}\cdots x_{i_{n}}\\\text{(by (\ref{pf.prop.Malpha.D.f-1.3}))}}}\\
&  \ \ \ \ \ \ \ \ \ \ \left(
\begin{array}
[c]{c}%
\text{here, we have substituted }\left(  i_{s_{1}},i_{s_{2}},\ldots
,i_{s_{\ell}}\right)  \text{ for }\left(  j_{1},j_{2},\ldots,j_{\ell}\right)
\text{ in the}\\
\text{sum, since the map }\mathcal{I}\rightarrow\mathcal{J},\ \left(
i_{1},i_{2},\ldots,i_{n}\right)  \mapsto\left(  i_{s_{1}},i_{s_{2}}%
,\ldots,i_{s_{\ell}}\right) \\
\text{is a bijection}%
\end{array}
\right) \\
&  =\sum_{\left(  i_{1},i_{2},\ldots,i_{n}\right)  \in\mathcal{I}}x_{i_{1}%
}x_{i_{2}}\cdots x_{i_{n}}=\sum_{\substack{i_{1}\leq i_{2}\leq\cdots\leq
i_{n};\\\left\{  j\in\left[  n-1\right]  \ \mid\ i_{j}<i_{j+1}\right\}
=D\left(  \alpha\right)  }}x_{i_{1}}x_{i_{2}}\cdots x_{i_{n}}%
\end{align*}
(by (\ref{pf.prop.Malpha.D.RHS=})). This proves Proposition
\ref{prop.Malpha.D}.
\end{proof}

\subsection{More on $D\left(  \alpha\right)  $}

We shall now prove another property of the sets $D\left(  \alpha\right)  $
defined in Definition \ref{def.Dalpha}; this property will be used later on.
Let us start with some definitions.

\begin{definition}
For every set $A$, we let $\mathcal{P}\left(  A\right)  $ denote the powerset
of $A$ (that is, the set of all subsets of $A$).
\end{definition}

\begin{definition}
\label{def.Compn}For every $n\in\mathbb{N}$, we let $\operatorname*{Comp}%
\nolimits_{n}$ denote the set of all compositions of $n$.
\end{definition}

\begin{definition}
\label{def.Dalpha.D}Let $n\in\mathbb{N}$. Let $\alpha\in\operatorname*{Comp}%
\nolimits_{n}$. Thus, $\alpha$ is a composition of $n$ (since
$\operatorname*{Comp}\nolimits_{n}$ is the set of all compositions of $n$).
Hence, Lemma \ref{lem.Dalpha.n-1} shows that $D\left(  \alpha\right)
\subseteq\left[  n-1\right]  $. In other words, $D\left(  \alpha\right)
\in\mathcal{P}\left(  \left[  n-1\right]  \right)  $.

Now, forget that we fixed $\alpha$. Thus, we have defined a $D\left(
\alpha\right)  \in\mathcal{P}\left(  \left[  n-1\right]  \right)  $ for every
$\alpha\in\operatorname*{Comp}\nolimits_{n}$. In other words, we have defined
a map $D:\operatorname*{Comp}\nolimits_{n}\rightarrow\mathcal{P}\left(
\left[  n-1\right]  \right)  $ which sends every $\alpha\in
\operatorname*{Comp}\nolimits_{n}$ to $D\left(  \alpha\right)  \in
\mathcal{P}\left(  \left[  n-1\right]  \right)  $. Consider this map $D$.
\end{definition}

\begin{definition}
If $J$ is a finite subset of $\mathbb{Z}$, then we let $\operatorname*{ilis}J$
be the list of all elements of $J$ in increasing order (with each element
appearing only once). For example, $\operatorname*{ilis}\left\{
2,5,1\right\}  =\left(  1,2,5\right)  $, whereas $\operatorname*{ilis}%
\varnothing$ is the empty list.
\end{definition}

\begin{lemma}
\label{lem.Dalpha.ilis}Let $n\in\mathbb{N}$. Let $I\in\mathcal{P}\left(
\left[  n-1\right]  \right)  $. Write the list $\operatorname*{ilis}\left(
I\cup\left\{  0,n\right\}  \right)  $ in the form $\left(  i_{0},i_{1}%
,\ldots,i_{m}\right)  $ for some integer $m\geq-1$.

\begin{enumerate}
\item[(a)] We have $m\geq0$.

\item[(b)] We have $i_{0}=0$.

\item[(c)] We have $i_{m}=n$.

\item[(d)] We have $\left(  i_{1}-i_{0},i_{2}-i_{1},\ldots,i_{m}%
-i_{m-1}\right)  \in\operatorname*{Comp}\nolimits_{n}$.

\item[(e)] We have $I=\left\{  i_{1},i_{2},\ldots,i_{m-1}\right\}  $.

\item[(f)] We have $D\left(  i_{1}-i_{0},i_{2}-i_{1},\ldots,i_{m}%
-i_{m-1}\right)  =I$.
\end{enumerate}
\end{lemma}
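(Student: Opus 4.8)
The plan is to work directly with the set $S = I \cup \left\{0, n\right\}$ and its increasing enumeration $\left(i_0, i_1, \ldots, i_m\right)$, deducing all six claims from three elementary observations: that $S$ has least element $0$ and greatest element $n$, that the ``interior'' entries $i_1, \ldots, i_{m-1}$ recover exactly $I$, and that the consecutive differences of a strictly increasing enumeration telescope.

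First I would record the one place the hypothesis $I \in \mathcal{P}\left([n-1]\right)$ enters: every element of $I$ lies in $[n-1] = \left\{1, 2, \ldots, n-1\right\}$, hence satisfies $0 < s < n$; in particular $0 \notin I$ and $n \notin I$. Combined with $0, n \in S$, this shows every $s \in S$ obeys $0 \le s \le n$, so that $\min S = 0$ and $\max S = n$. Since $0 \in S$, the set $S$ is nonempty, so $\operatorname*{ilis}\left(S\right)$ has $m \ge 0$ entries, proving (a); and since this list enumerates $S$ in increasing order without repetitions, $i_0 = \min S = 0$ and $i_m = \max S = n$, proving (b) and (c). The degenerate case $n = 0$, where $[n-1] = \varnothing$ forces $I = \varnothing$ and $S = \left\{0\right\}$ with $m = 0$ and $i_0 = i_m = 0$, is covered by the very same statements.

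Next I would establish (e) by noting that the entries $i_1, \ldots, i_{m-1}$ are precisely those elements of $S$ distinct from $i_0$ and $i_m$, so that $\left\{i_1, \ldots, i_{m-1}\right\} = S \setminus \left\{i_0, i_m\right\} = S \setminus \left\{0, n\right\}$; since $S = I \cup \left\{0, n\right\}$ and $0, n \notin I$, this is exactly $I$. Finally I would treat (d) and (f) together by telescoping. Writing $\beta_j = i_j - i_{j-1}$, the strict inequalities $i_0 < i_1 < \cdots < i_m$ make each $\beta_j$ a positive integer, and $\sum_{j=1}^{m} \beta_j = i_m - i_0 = n$, so $\left(\beta_1, \ldots, \beta_m\right) \in \operatorname*{Comp}\nolimits_{n}$ (with the convention that the empty tuple is the unique composition of $0$, matching $m = 0$ when $n = 0$); this is (d). For (f), the relevant partial sums are $\beta_1 + \cdots + \beta_k = i_k - i_0 = i_k$ for $k \in [m-1]$, whence $D\left(\beta_1, \ldots, \beta_m\right) = \left\{i_1, \ldots, i_{m-1}\right\}$, which equals $I$ by (e).

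I do not expect a genuine obstacle here; the statement is essentially bookkeeping. The only points requiring care are the boundary conventions (the empty list and empty tuple when $m = 0$, together with the coincidence $i_0 = i_m$ there) and the verification that $\min$ and $\max$ are attained at $0$ and $n$ respectively -- which is exactly the step that uses $I \subseteq [n-1]$ rather than a weaker containment, and which I would therefore state explicitly.
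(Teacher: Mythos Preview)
Your proposal is correct and follows essentially the same approach as the paper's proof: both establish that $0$ and $n$ are the minimum and maximum of $S$, telescope the differences for parts (d) and (f), and identify $\{i_1,\ldots,i_{m-1}\}$ with $I$ for part (e). Your presentation is somewhat more streamlined---you invoke $i_0=\min S$ and $i_m=\max S$ directly, and handle (e) via the single set identity $S\setminus\{0,n\}=I$, whereas the paper argues (b) and (c) by contradiction and proves (e) via two separate inclusions---but the underlying ideas are identical.
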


\begin{proof}
[Proof of Lemma \ref{lem.Dalpha.ilis}.]We have $\left(  i_{0},i_{1}%
,\ldots,i_{m}\right)  =\operatorname*{ilis}\left(  I\cup\left\{  0,n\right\}
\right)  $ (by the definition of $\left(  i_{0},i_{1},\ldots,i_{m}\right)  $).
In other words, $\left(  i_{0},i_{1},\ldots,i_{m}\right)  $ is the list of all
elements of $I\cup\left\{  0,n\right\}  $ in increasing order (since this is
what we defined $\operatorname*{ilis}\left(  I\cup\left\{  0,n\right\}
\right)  $ to be). Thus,
\[
\left\{  i_{0},i_{1},\ldots,i_{m}\right\}  =I\cup\left\{  0,n\right\}
\]
(since the list $\left(  i_{0},i_{1},\ldots,i_{m}\right)  $ is a list of all
elements of $I\cup\left\{  0,n\right\}  $). Also,
\[
i_{0}<i_{1}<\cdots<i_{m}%
\]
(since the list $\left(  i_{0},i_{1},\ldots,i_{m}\right)  $ is in increasing
order). Thus,%
\begin{equation}
i_{u}<i_{v}\ \ \ \ \ \ \ \ \ \ \text{for any two elements }u\text{ and
}v\text{ of }\left\{  0,1,\ldots,m\right\}  \text{ satisfying }u<v.
\label{pf.lem.Dalpha.ilis.1}%
\end{equation}

But $I\in\mathcal{P}\left(  \left[  n-1\right]  \right)  $, so that
$I\subseteq\left[  n-1\right]  \subseteq\left[  n\right]  \subseteq\left\{
0,1,\ldots,n\right\}  $. Hence,%
\[
\underbrace{I}_{\subseteq\left\{  0,1,\ldots,n\right\}  }\cup
\underbrace{\left\{  0,n\right\}  }_{\subseteq\left\{  0,1,\ldots,n\right\}
}\subseteq\left\{  0,1,\ldots,n\right\}  \cup\left\{  0,1,\ldots,n\right\}
=\left\{  0,1,\ldots,n\right\}  .
\]

(a) Assume the contrary. Thus, $m<0$. Now, $0 \in\left\{  0, n\right\}
\subseteq I\cup\left\{  0,n\right\}  = \left\{  i_{0},i_{1},\ldots
,i_{m}\right\}  = \left\{  \right\}  $ (since $m < 0$). This contradicts the
fact that the set $\left\{  \right\}  $ is empty. This contradiction proves
that our assumption was false. Hence, Lemma \ref{lem.Dalpha.ilis} (a) is proven.

(b) We have $0\in\left\{  0,n\right\}  \subseteq I\cup\left\{  0,n\right\}
=\left\{  i_{0},i_{1},\ldots,i_{m}\right\}  $. Hence, there exists some
$k\in\left\{  0,1,\ldots,m\right\}  $ such that $0=i_{k}$. Consider this $k$.

From Lemma \ref{lem.Dalpha.ilis} (a), we have $m\geq0$. Hence, $0\in\left\{
0,1,\ldots,m\right\}  $. Thus, $i_{0}$ is well-defined. Assume (for the sake
of contradiction) that $i_{0}\neq0$. Thus, $i_{0}\neq0=i_{k}$, so that $0\neq
k$ and thus $0<k$ (since $k\in\left\{  0,1,\ldots,m\right\}  $). Therefore,
(\ref{pf.lem.Dalpha.ilis.1}) (applied to $u=0$ and $v=k$) shows that
$i_{0}<i_{k}=0$.

But $i_{0}\in\left\{  i_{0},i_{1},\ldots,i_{m}\right\}  =I\cup\left\{
0,n\right\}  \subseteq\left\{  0,1,\ldots,n\right\}  $, so that $i_{0}\geq0$.
This contradicts $i_{0}<0$. This contradiction proves that our assumption
(that $i_{0}\neq0$) was false. Hence, we have $i_{0}=0$. This proves Lemma
\ref{lem.Dalpha.ilis} (b).

(c) We have $n\in\left\{  0,n\right\}  \subseteq I\cup\left\{  0,n\right\}
=\left\{  i_{0},i_{1},\ldots,i_{m}\right\}  $. Hence, there exists some
$k\in\left\{  0,1,\ldots,m\right\}  $ such that $n=i_{k}$. Consider this $k$.

From Lemma \ref{lem.Dalpha.ilis} (a), we have $m\geq0$. Hence, $m\in\left\{
0,1,\ldots,m\right\}  $. Thus, $i_{m}$ is well-defined. Assume (for the sake
of contradiction) that $i_{m}\neq n$. Thus, $i_{m}\neq n=i_{k}$, so that
$m\neq k$ and thus $k\neq m$. Hence, $k<m$ (since $k\in\left\{  0,1,\ldots
,m\right\}  $). Therefore, (\ref{pf.lem.Dalpha.ilis.1}) (applied to $u=k$ and
$v=m$) shows that $i_{k}<i_{m}$. Hence, $i_{m}>i_{k}=n$.

But $i_{m}\in\left\{  i_{0},i_{1},\ldots,i_{m}\right\}  =I\cup\left\{
0,n\right\}  \subseteq\left\{  0,1,\ldots,n\right\}  $, so that $i_{m}\leq n$.
This contradicts $i_{m}>n$. This contradiction proves that our assumption
(that $i_{m}\neq n$) was false. Hence, we have $i_{m}=n$. This proves Lemma
\ref{lem.Dalpha.ilis} (c).

(d) Lemma \ref{lem.Dalpha.ilis} (a) shows that $m\geq0$. Lemma
\ref{lem.Dalpha.ilis} (c) yields $i_{m}=n$. Lemma \ref{lem.Dalpha.ilis} (b)
yields $i_{0}=0$. Now,%
\begin{equation}
\left(  i_{1}-i_{0}\right)  +\left(  i_{2}-i_{1}\right)  +\ldots+\left(
i_{m}-i_{m-1}\right)  =n \label{pf.lem.Dalpha.ilis.d.1}%
\end{equation}
\footnote{\textit{Proof of (\ref{pf.lem.Dalpha.ilis.d.1}):} If $m=0$, then%
\begin{align*}
\left(  i_{1}-i_{0}\right)  +\left(  i_{2}-i_{1}\right)  +\ldots+\left(
i_{m}-i_{m-1}\right)   &  =\left(  \text{empty sum}\right)  =0=i_{0}%
=i_{m}\ \ \ \ \ \ \ \ \ \ \left(  \text{since }0=m\right) \\
&  =n.
\end{align*}
Hence, (\ref{pf.lem.Dalpha.ilis.d.1}) is proven in the case when $m=0$. We
thus WLOG assume that we don't have $m=0$.
\par
So we have $m\neq0$ (since we don't have $m=0$) but $m\geq0$. Consequently,
$m>0$. Now,%
\begin{align}
&  \left(  i_{1}-i_{0}\right)  +\left(  i_{2}-i_{1}\right)  +\ldots+\left(
i_{m}-i_{m-1}\right) \nonumber\\
&  =\sum_{r=1}^{m}\left(  i_{r}-i_{r-1}\right)  =\sum_{r=1}^{m}i_{r}%
-\sum_{r=1}^{m}i_{r-1}\nonumber\\
&  =\underbrace{\sum_{r=1}^{m}i_{r}}_{\substack{=\sum_{r=1}^{m-1}i_{r}%
+i_{m}\\\text{(here, we have split off the addend for }r=m\\\text{from the
sum, since }m>0\text{)}}}-\underbrace{\sum_{r=0}^{m-1}i_{r}}_{\substack{=i_{0}%
+\sum_{r=1}^{m-1}i_{r}\\\text{(here, we have split off the addend for
}r=0\\\text{from the sum, since }m>0\text{)}}}\nonumber\\
&  \ \ \ \ \ \ \ \ \ \ \left(  \text{here, we have substituted }r\text{ for
}r-1\text{ in the second sum}\right) \nonumber\\
&  =\left(  \sum_{r=1}^{m-1}i_{r}+i_{m}\right)  -\left(  i_{0}+\sum
_{r=1}^{m-1}i_{r}\right)  =\underbrace{i_{m}}_{=n}-\underbrace{i_{0}}%
_{=0}=n.\nonumber
\end{align}
This proves (\ref{pf.lem.Dalpha.ilis.d.1}).}.

Now, let $k\in\left\{  1,2,\ldots,m\right\}  $. Then, $k-1$ and $k$ are two
elements of $\left\{  0,1,\ldots,m\right\}  $. These two elements satisfy
$k-1<k$. Hence, (\ref{pf.lem.Dalpha.ilis.1}) (applied to $u=k-1$ and $v=k$)
shows that $i_{k-1}<i_{k}$. Thus, $i_{k}-i_{k-1}$ is a positive integer.

Now, forget that we fixed $k$. Thus, we have shown that for every
$k\in\left\{  1,2,\ldots,m\right\}  $, the number $i_{k}-i_{k-1}$ is a
positive integer. Hence, $\left(  i_{1}-i_{0},i_{2}-i_{1},\ldots,i_{m}%
-i_{m-1}\right)  $ is a finite list of positive integers, i.e., a composition.
The entries of this composition sum to $n$ (because of
(\ref{pf.lem.Dalpha.ilis.d.1})). Therefore, $\left(  i_{1}-i_{0},i_{2}%
-i_{1},\ldots,i_{m}-i_{m-1}\right)  $ is a composition of $n$ (by the
definition of a \textquotedblleft composition of $n$\textquotedblright). In
other words, $\left(  i_{1}-i_{0},i_{2}-i_{1},\ldots,i_{m}-i_{m-1}\right)
\in\operatorname*{Comp}\nolimits_{n}$ (since $\operatorname*{Comp}%
\nolimits_{n}$ is the set of all compositions of $n$). This proves Lemma
\ref{lem.Dalpha.ilis} (d).

(e) Let $g\in I$. We shall show that $g\in\left\{  i_{1},i_{2},\ldots
,i_{m-1}\right\}  $.

We have $g\in I\subseteq\left[  n-1\right]  $. Hence, $0<g<n$. But $g\in
I\subseteq I\cup\left\{  0,n\right\}  =\left\{  i_{0},i_{1},\ldots
,i_{m}\right\}  $. Thus, there exists some $p\in\left\{  0,1,\ldots,m\right\}
$ such that $g=i_{p}$. Consider this $p$.

We have $0<g$, thus $g\neq0$. Hence, $i_{p}=g\neq0=i_{0}$ (by Lemma
\ref{lem.Dalpha.ilis} (b)), so that $p\neq0$. Combining $p\in\left\{
0,1,\ldots,m\right\}  $ with $p\neq0$, we obtain $p\in\left\{  0,1,\ldots
,m\right\}  \setminus\left\{  0\right\}  =\left[  m\right]  $.

We have $g<n$, thus $g\neq n$. Hence, $i_{p}=g\neq n=i_{m}$ (by Lemma
\ref{lem.Dalpha.ilis} (c)), so that $p\neq m$. Combining $p\in\left[
m\right]  $ with $p\neq m$, we obtain $p\in\left[  m\right]  \setminus\left\{
m\right\}  =\left[  m-1\right]  $. Therefore, $i_{p}\in\left\{  i_{1}%
,i_{2},\ldots,i_{m-1}\right\}  $. Hence, $g=i_{p}\in\left\{  i_{1}%
,i_{2},\ldots,i_{m-1}\right\}  $.

Now, forget that we fixed $g$. Thus, we have proven that $g\in\left\{
i_{1},i_{2},\ldots,i_{m-1}\right\}  $ for every $g\in I$. In other words,%
\begin{equation}
I\subseteq\left\{  i_{1},i_{2},\ldots,i_{m-1}\right\}  .
\label{pf.lem.Dalpha.ilis.e.dir1}%
\end{equation}

On the other hand, fix $h\in\left\{  i_{1},i_{2},\ldots,i_{m-1}\right\}  $.
Thus, $h=i_{q}$ for some $q\in\left[  m-1\right]  $. Consider this $q$. We
shall show that $h\in I$.

We have $h\in\left\{  i_{1},i_{2},\ldots,i_{m-1}\right\}  \subseteq\left\{
i_{0},i_{1},\ldots,i_{m}\right\}  =I\cup\left\{  0,n\right\}  $.

But $q\in\left[  m-1\right]  $, so that $0<q$. Therefore,
(\ref{pf.lem.Dalpha.ilis.1}) (applied to $u=0$ and $v=q$) yields $i_{0}<i_{q}%
$. But Lemma \ref{lem.Dalpha.ilis} (b) shows that $0=i_{0}<i_{q}=h$. Hence,
$h\neq0$.

Also, $q\in\left[  m-1\right]  $, so that $q<m$. Therefore,
(\ref{pf.lem.Dalpha.ilis.1}) (applied to $u=q$ and $v=m$) yields $i_{q}%
<i_{m}=n$ (by Lemma \ref{lem.Dalpha.ilis} (c)). Hence, $h=i_{q}<n$, so that
$h\neq n$.

So $h$ is none of the two elements $0$ and $n$ (since $h\neq0$ and $h\neq n$).
In other words, $h\notin\left\{  0,n\right\}  $. Combined with $h\in
I\cup\left\{  0,n\right\}  $, this yields $h\in\left(  I\cup\left\{
0,n\right\}  \right)  \setminus\left\{  0,n\right\}  \subseteq I$.

Now, forget that we fixed $h$. We thus have proven that $h\in I$ for every
$h\in\left\{  i_{1},i_{2},\ldots,i_{m-1}\right\}  $. In other words,
\[
\left\{  i_{1},i_{2},\ldots,i_{m-1}\right\}  \subseteq I.
\]
Combining this with (\ref{pf.lem.Dalpha.ilis.e.dir1}), we obtain $I=\left\{
i_{1},i_{2},\ldots,i_{m-1}\right\}  $. This proves Lemma \ref{lem.Dalpha.ilis} (e).

(f) Lemma \ref{lem.Dalpha.ilis} (d) shows that $\left(  i_{1}-i_{0}%
,i_{2}-i_{1},\ldots,i_{m}-i_{m-1}\right)  \in\operatorname*{Comp}%
\nolimits_{n}$. In other words, \newline
$\left(  i_{1}-i_{0},i_{2}-i_{1},\ldots
,i_{m}-i_{m-1}\right)  $ is a composition of $n$ (since $\operatorname*{Comp}%
\nolimits_{n}$ is the set of all compositions of $n$).

If $\alpha=\left(  \alpha_{1},\alpha_{2},\ldots,\alpha_{\ell}\right)  $ is a
composition of $n$, then%
\begin{align*}
D\left(  \alpha\right)   &  =\left\{  \underbrace{\alpha_{1}+\alpha_{2}%
+\cdots+\alpha_{i}}_{=\sum_{r=1}^{i}\alpha_{r}}\ \mid\ i\in\left[
\ell-1\right]  \right\}  \ \ \ \ \ \ \ \ \ \ \left(  \text{by the definition
of }D\left(  \alpha\right)  \right) \\
&  =\left\{  \sum_{r=1}^{i}\alpha_{r}\ \mid\ i\in\left[  \ell-1\right]
\right\}  =\left\{  \sum_{r=1}^{g}\alpha_{r}\ \mid\ g\in\left[  \ell-1\right]
\right\}
\end{align*}
(here, we renamed the index $i$ as $g$). Applying this to $\alpha=\left(
i_{1}-i_{0},i_{2}-i_{1},\ldots,i_{m}-i_{m-1}\right)  $, $\ell=m$ and
$\alpha_{k}=i_{k}-i_{k-1}$, we obtain the following:%
\begin{equation}
D\left(  i_{1}-i_{0},i_{2}-i_{1},\ldots,i_{m}-i_{m-1}\right)  =\left\{
\sum_{r=1}^{g}\left(  i_{r}-i_{r-1}\right)  \ \mid\ g\in\left[  m-1\right]
\right\}  . \label{pf.lem.Dalpha.ilis.f.1}%
\end{equation}

But Lemma~\ref{lem.Dalpha.ilis} (b) yields $i_{0} = 0$. Now, every
$g\in\left[  m-1\right]  $ satisfies%
\begin{align}
\sum_{r=1}^{g}\left(  i_{r}-i_{r-1}\right)   &  =\sum_{r=1}^{g}i_{r}%
-\sum_{r=1}^{g}i_{r-1}\nonumber\\
&  =\underbrace{\sum_{r=1}^{g}i_{r}}_{\substack{=\sum_{r=1}^{g-1}i_{r}%
+i_{g}\\\text{(here, we have split off the addend for }r=g\\\text{from the
sum, since }g>0\text{)}}}-\underbrace{\sum_{r=0}^{g-1}i_{r}}_{\substack{=i_{0}%
+\sum_{r=1}^{g-1}i_{r}\\\text{(here, we have split off the addend for
}r=0\\\text{from the sum, since }g>0\text{)}}}\nonumber\\
&  \ \ \ \ \ \ \ \ \ \ \left(  \text{here, we have substituted }r\text{ for
}r-1\text{ in the second sum}\right) \nonumber\\
&  =\left(  \sum_{r=1}^{g-1}i_{r}+i_{g}\right)  -\left(  i_{0}+\sum
_{r=1}^{g-1}i_{r}\right)  =i_{g}-\underbrace{i_{0}}_{=0}=i_{g}.\nonumber
\end{align}
Thus, (\ref{pf.lem.Dalpha.ilis.f.1}) becomes%
\begin{align*}
D\left(  i_{1}-i_{0},i_{2}-i_{1},\ldots,i_{m}-i_{m-1}\right)   &  =\left\{
\underbrace{\sum_{r=1}^{g}\left(  i_{r}-i_{r-1}\right)  }_{=i_{g}}\ \mid
\ g\in\left[  m-1\right]  \right\} \\
&  =\left\{  i_{g}\ \mid\ g\in\left[  m-1\right]  \right\}  =\left\{
i_{1},i_{2},\ldots,i_{m-1}\right\}  =I
\end{align*}
(by Lemma \ref{lem.Dalpha.ilis} (e)). This proves Lemma \ref{lem.Dalpha.ilis} (f).
\end{proof}

\begin{definition}
\label{def.Dalpha.comp}Let $n\in\mathbb{N}$. We define a map
$\operatorname*{comp}:\mathcal{P}\left(  \left[  n-1\right]  \right)
\rightarrow\operatorname*{Comp}\nolimits_{n}$ as follows: Let $I\in
\mathcal{P}\left(  \left[  n-1\right]  \right)  $. Write the list
$\operatorname*{ilis}\left(  I\cup\left\{  0,n\right\}  \right)  $ in the form
$\left(  i_{0},i_{1},\ldots,i_{m}\right)  $ for some integer $m\geq-1$. Lemma
\ref{lem.Dalpha.ilis} (d) shows that $\left(  i_{1}-i_{0},i_{2}-i_{1}%
,\ldots,i_{m}-i_{m-1}\right)  \in\operatorname*{Comp}\nolimits_{n}$. Define
$\operatorname*{comp}I$ to be $\left(  i_{1}-i_{0},i_{2}-i_{1},\ldots
,i_{m}-i_{m-1}\right)  $.

Hence, a map $\operatorname*{comp}:\mathcal{P}\left(  \left[  n-1\right]
\right)  \rightarrow\operatorname*{Comp}\nolimits_{n}$ is defined.
\end{definition}

\begin{proposition}
\label{prop.Dalpha.comp}Let $n\in\mathbb{N}$. Consider the map
$D:\operatorname*{Comp}\nolimits_{n}\rightarrow\mathcal{P}\left(  \left[
n-1\right]  \right)  $ defined in Definition \ref{def.Dalpha.D}. Consider the
map $\operatorname*{comp}:\mathcal{P}\left(  \left[  n-1\right]  \right)
\rightarrow\operatorname*{Comp}\nolimits_{n}$ introduced in Definition
\ref{def.Dalpha.comp}.

These maps $D$ and $\operatorname*{comp}$ are mutually inverse.
\end{proposition}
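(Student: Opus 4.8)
The plan is to verify the two equalities $D \circ \operatorname{comp} = \operatorname{id}$ and $\operatorname{comp} \circ D = \operatorname{id}$ separately; together they say that $D$ and $\operatorname{comp}$ are mutually inverse bijections between $\operatorname{Comp}_n$ and $\mathcal{P}([n-1])$.

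The equality $D \circ \operatorname{comp} = \operatorname{id}$ is already essentially contained in Lemma~\ref{lem.Dalpha.ilis}. Indeed, I would fix $I \in \mathcal{P}([n-1])$ and write $\operatorname{ilis}(I \cup \{0, n\})$ in the form $(i_0, i_1, \ldots, i_m)$. By the definition of $\operatorname{comp}$, we have $\operatorname{comp} I = (i_1 - i_0, i_2 - i_1, \ldots, i_m - i_{m-1})$. Now Lemma~\ref{lem.Dalpha.ilis} (f) says precisely that $D(i_1 - i_0, \ldots, i_m - i_{m-1}) = I$, i.e., $D(\operatorname{comp} I) = I$. Since $I$ was arbitrary, this proves $D \circ \operatorname{comp} = \operatorname{id}$.

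For the reverse equality $\operatorname{comp} \circ D = \operatorname{id}$, I would fix a composition $\alpha = (\alpha_1, \ldots, \alpha_\ell) \in \operatorname{Comp}_n$ and introduce the partial sums $s_i = \alpha_1 + \cdots + \alpha_i$ for $i \in \{0, 1, \ldots, \ell\}$, as in Lemma~\ref{lem.Dalpha.s}. The key step is to compute $\operatorname{ilis}(D(\alpha) \cup \{0, n\})$ explicitly. By Lemma~\ref{lem.Dalpha.s} (c) we have $D(\alpha) = \{s_1, s_2, \ldots, s_{\ell-1}\}$; combining this with $s_0 = 0$ (Lemma~\ref{lem.Dalpha.s} (f)) and $s_\ell = n$ (Lemma~\ref{lem.Dalpha.s} (e)) gives $D(\alpha) \cup \{0, n\} = \{s_0, s_1, \ldots, s_\ell\}$. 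Since $s_0 < s_1 < \cdots < s_\ell$ by Lemma~\ref{lem.Dalpha.s} (b), these $\ell + 1$ numbers are distinct and already in increasing order, so $\operatorname{ilis}(D(\alpha) \cup \{0, n\}) = (s_0, s_1, \ldots, s_\ell)$. Feeding this into the definition of $\operatorname{comp}$ (with $m = \ell$ and $i_j = s_j$), I obtain
\[
\operatorname{comp}(D(\alpha)) = (s_1 - s_0, s_2 - s_1, \ldots, s_\ell - s_{\ell-1}).
\]
Finally, Lemma~\ref{lem.Dalpha.s} (d) gives $s_j - s_{j-1} = \alpha_j$ for every $j \in [\ell]$, so the right-hand side equals $(\alpha_1, \ldots, \alpha_\ell) = \alpha$. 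Hence $\operatorname{comp}(D(\alpha)) = \alpha$, and since $\alpha$ was arbitrary, $\operatorname{comp} \circ D = \operatorname{id}$.

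The real work has therefore been pushed into the two preparatory lemmas, and the proposition itself is a short assembly. The only genuine obstacle is the bookkeeping needed to identify $\operatorname{ilis}(D(\alpha) \cup \{0, n\})$ with the tuple $(s_0, s_1, \ldots, s_\ell)$: one must be sure that adjoining $0$ and $n$ to $D(\alpha)$ introduces no collisions and yields a strictly increasing list, which is exactly what the strict chain $s_0 < \cdots < s_\ell$ of Lemma~\ref{lem.Dalpha.s} (b), together with parts (e) and (f), guarantees. Degenerate cases (such as $n = 0$, where $\alpha$ is the empty composition and $D(\alpha) = \varnothing$) are handled uniformly by this argument and require no separate treatment.
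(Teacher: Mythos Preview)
Your proof is correct and follows essentially the same approach as the paper: both directions are verified separately using Lemma~\ref{lem.Dalpha.ilis} (f) for $D \circ \operatorname{comp} = \operatorname{id}$ and the various parts of Lemma~\ref{lem.Dalpha.s} for $\operatorname{comp} \circ D = \operatorname{id}$. The only cosmetic difference is that the paper treats the two halves in the opposite order.
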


\begin{proof}
[Proof of Proposition \ref{prop.Dalpha.comp}.]Let us first show that
$\operatorname*{comp}\circ D=\operatorname*{id}$.

Indeed, let $\alpha\in\operatorname*{Comp}\nolimits_{n}$. Thus, $\alpha$ is a
composition of $n$ (since $\operatorname*{Comp}\nolimits_{n}$ is the set of
all compositions of $n$). Write the composition $\alpha$ in the form $\left(
\alpha_{1},\alpha_{2},\ldots,\alpha_{\ell}\right)  $. Hence, $\alpha=\left(
\alpha_{1},\alpha_{2},\ldots,\alpha_{\ell}\right)  $. Lemma
\ref{lem.Dalpha.n-1} shows that $D\left(  \alpha\right)  \subseteq\left[
n-1\right]  $, so that $D\left(  \alpha\right)  \in\mathcal{P}\left(  \left[
n-1\right]  \right)  $. Hence, $\operatorname*{comp}\left(  D\left(
\alpha\right)  \right)  $ is well-defined. The definition of
$\operatorname*{comp}\left(  D\left(  \alpha\right)  \right)  $ shows that if
the list $\operatorname*{ilis}\left(  D\left(  \alpha\right)  \cup\left\{
0,n\right\}  \right)  $ is written in the form $\left(  i_{0},i_{1}%
,\ldots,i_{m}\right)  $ for some integer $m\geq-1$, then%
\begin{equation}
\operatorname*{comp}\left(  D\left(  \alpha\right)  \right)  =\left(
i_{1}-i_{0},i_{2}-i_{1},\ldots,i_{m}-i_{m-1}\right)  .
\label{pf.prop.Dalpha.comp.cD=}%
\end{equation}

Now, for every $i\in\left\{  0,1,\ldots,\ell\right\}  $, define a nonnegative
integer $s_{i}$ by
\[
s_{i}=\alpha_{1}+\alpha_{2}+\cdots+\alpha_{i}.
\]
Lemma \ref{lem.Dalpha.s} (d) yields $s_{j}-s_{j-1}=\alpha_{j}$ for every
$j\in\left[  \ell\right]  $. In other words,%
\begin{equation}
\left(  s_{1}-s_{0},s_{2}-s_{1},\ldots,s_{\ell}-s_{\ell-1}\right)  =\left(
\alpha_{1},\alpha_{2},\ldots,\alpha_{\ell}\right)  .
\label{pf.prop.Dalpha.comp.6}%
\end{equation}

Now, we have%
\begin{equation}
D\left(  \alpha\right)  \cup\left\{  0,n\right\}  =\left\{  s_{0},s_{1}%
,\ldots,s_{\ell}\right\}  \label{pf.prop.Dalpha.comp.11}%
\end{equation}
\footnote{\textit{Proof of (\ref{pf.prop.Dalpha.comp.11}):} From Lemma
\ref{lem.Dalpha.s} (c), we have $D\left(  \alpha\right)  =\left\{  s_{1}%
,s_{2},\ldots,s_{\ell-1}\right\}  $. Lemma \ref{lem.Dalpha.s} (f) yields $0 =
s_{0}$. Lemma \ref{lem.Dalpha.s} (e) yields $n = s_{\ell}$. Thus,%
\begin{align*}
&  \underbrace{D\left(  \alpha\right)  }_{=\left\{  s_{1},s_{2},\ldots
,s_{\ell-1}\right\}  }\cup\left\{  \underbrace{0}_{=s_{0}}, \underbrace{n}%
_{=s_{\ell}}\right\}  =\left\{  s_{1},s_{2},\ldots,s_{\ell-1}\right\}
\cup\left\{  s_{0},s_{\ell}\right\}  =\left\{  s_{0},s_{1},\ldots,s_{\ell
}\right\}  .
\end{align*}
This proves (\ref{pf.prop.Dalpha.comp.11}).}. Thus, the list $\left(
s_{0},s_{1},\ldots,s_{\ell}\right)  $ contains precisely the elements of the
set $D\left(  \alpha\right)  \cup\left\{  0,n\right\}  $. Since this list
$\left(  s_{0},s_{1},\ldots,s_{\ell}\right)  $ is furthermore strictly
increasing (because of Lemma \ref{lem.Dalpha.s} (b)), we thus see that the
list $\left(  s_{0},s_{1},\ldots,s_{\ell}\right)  $ is the list of all
elements of $D\left(  \alpha\right)  \cup\left\{  0,n\right\}  $ in increasing
order (with each element appearing only once). In other words, the list
$\left(  s_{0},s_{1},\ldots,s_{\ell}\right)  $ is $\operatorname*{ilis}\left(
D\left(  \alpha\right)  \cup\left\{  0,n\right\}  \right)  $ (since
$\operatorname*{ilis}\left(  D\left(  \alpha\right)  \cup\left\{  0,n\right\}
\right)  $ is defined to be the list of all elements of $D\left(
\alpha\right)  \cup\left\{  0,n\right\}  $ in increasing order (with each
element appearing only once)). In other words,%
\[
\operatorname*{ilis}\left(  D\left(  \alpha\right)  \cup\left\{  0,n\right\}
\right)  =\left(  s_{0},s_{1},\ldots,s_{\ell}\right)  .
\]
Hence, (\ref{pf.prop.Dalpha.comp.cD=}) (applied to $m=\ell$ and $i_{k}=s_{k}$)
yields%
\begin{align*}
\operatorname*{comp}\left(  D\left(  \alpha\right)  \right)   &  =\left(
s_{1}-s_{0},s_{2}-s_{1},\ldots,s_{\ell}-s_{\ell-1}\right)  =\left(  \alpha
_{1},\alpha_{2},\ldots,\alpha_{\ell}\right)  \ \ \ \ \ \ \ \ \ \ \left(
\text{by (\ref{pf.prop.Dalpha.comp.6})}\right) \\
&  =\alpha.
\end{align*}
Thus, $\left(  \operatorname*{comp}\circ D\right)  \left(  \alpha\right)
=\operatorname*{comp}\left(  D\left(  \alpha\right)  \right)  =\alpha$.

Now, forget that we fixed $\alpha$. We thus have proven that $\left(
\operatorname*{comp}\circ D\right)  \left(  \alpha\right)  =\alpha$ for every
$\alpha\in\operatorname*{Comp}\nolimits_{n}$. In other words,%
\begin{equation}
\operatorname*{comp}\circ D=\operatorname*{id}.
\label{pf.prop.Dalpha.comp.one-side}%
\end{equation}

On the other hand, let us show that $D\circ\operatorname*{comp}%
=\operatorname*{id}$. Indeed, let $I\in\mathcal{P}\left(  \left[  n-1\right]
\right)  $. Write the list $\operatorname*{ilis}\left(  I\cup\left\{
0,n\right\}  \right)  $ in the form $\left(  i_{0},i_{1},\ldots,i_{m}\right)
$ for some integer $m\geq-1$. The definition of $\operatorname*{comp}$ then
shows that $\operatorname*{comp}I=\left(  i_{1}-i_{0},i_{2}-i_{1},\ldots
,i_{m}-i_{m-1}\right)  $. Now,%
\[
\left(  D\circ\operatorname*{comp}\right)  \left(  I\right)  =D\left(
\underbrace{\operatorname*{comp}I}_{=\left(  i_{1}-i_{0},i_{2}-i_{1}%
,\ldots,i_{m}-i_{m-1}\right)  }\right)  =D\left(  i_{1}-i_{0},i_{2}%
-i_{1},\ldots,i_{m}-i_{m-1}\right)  =I
\]
(by Lemma \ref{lem.Dalpha.ilis} (f)).

Now, forget that we fixed $I$. We thus have proven that $\left(
D\circ\operatorname*{comp}\right)  \left(  I\right)  =I$ for every
$I\in\mathcal{P}\left(  \left[  n-1\right]  \right)  $. In other words,
$D\circ\operatorname*{comp}=\operatorname*{id}$. Combining this with
(\ref{pf.prop.Dalpha.comp.one-side}), we conclude that the maps $D$ and
$\operatorname*{comp}$ are mutually inverse. Proposition
\ref{prop.Dalpha.comp} is proven.
\end{proof}

From what we have proven so far, we obtain the following corollary:

\begin{corollary}
\label{cor.Dalpha.F}Let $\alpha$ be a composition of a nonnegative integer
$n$. Then,
\[
\sum_{\substack{i_{1}\leq i_{2}\leq\cdots\leq i_{n};\\i_{j}<i_{j+1}\text{
whenever }j\in D\left(  \alpha\right)  }}x_{i_{1}}x_{i_{2}}\cdots x_{i_{n}%
}=\sum_{\beta\text{ is a composition of }n;\ D\left(  \beta\right)  \supseteq
D\left(  \alpha\right)  }M_{\beta}.
\]
(Here, we are using the notations of Definition \ref{def.k} and Definition
\ref{def.Dalpha}.)
\end{corollary}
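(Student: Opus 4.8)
The plan is to prove the claim by regrouping the monomials on the left-hand side according to their ``strict ascent sets''. For a weakly increasing tuple $\left(i_1 \leq i_2 \leq \cdots \leq i_n\right)$ of positive integers, set $S = \{j \in \left[n-1\right] \ \mid\ i_j < i_{j+1}\}$. The defining condition of the left-hand sum --- namely, that $i_j < i_{j+1}$ whenever $j \in D\left(\alpha\right)$ --- is precisely the condition $D\left(\alpha\right) \subseteq S$ (this uses Lemma~\ref{lem.Dalpha.n-1}, which guarantees $D\left(\alpha\right) \subseteq \left[n-1\right]$, so that the indices $j$ in question all lie in $\left[n-1\right]$ and the terms $i_{j+1}$ are well-defined). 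Hence I would first rewrite the left-hand side as
\[
\sum_{\substack{I \subseteq \left[n-1\right];\\ D\left(\alpha\right) \subseteq I}}
\ \sum_{\substack{i_1 \leq i_2 \leq \cdots \leq i_n;\\ \{j \in \left[n-1\right] \ \mid\ i_j < i_{j+1}\} = I}}
x_{i_1} x_{i_2} \cdots x_{i_n} ,
\]
where the outer sum ranges over all subsets $I$ of $\left[n-1\right]$ containing $D\left(\alpha\right)$, and the inner sum collects exactly those tuples whose strict ascent set equals $I$.

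The next step is to recognize each inner sum as a monomial quasisymmetric function. By Proposition~\ref{prop.Dalpha.comp}, the maps $D$ and $\operatorname{comp}$ are mutually inverse bijections between the set $\operatorname{Comp}_n$ of compositions of $n$ and the powerset $\mathcal{P}\left(\left[n-1\right]\right)$; in particular, for each $I \subseteq \left[n-1\right]$ there is a unique composition $\beta = \operatorname{comp} I$ of $n$ with $D\left(\beta\right) = I$. For this $\beta$, Proposition~\ref{prop.Malpha.D} gives
\[
M_\beta
= \sum_{\substack{i_1 \leq i_2 \leq \cdots \leq i_n;\\ \{j \in \left[n-1\right] \ \mid\ i_j < i_{j+1}\} = D\left(\beta\right)}} x_{i_1} x_{i_2} \cdots x_{i_n}
= \sum_{\substack{i_1 \leq i_2 \leq \cdots \leq i_n;\\ \{j \in \left[n-1\right] \ \mid\ i_j < i_{j+1}\} = I}} x_{i_1} x_{i_2} \cdots x_{i_n} ,
\]
so the inner sum above is exactly $M_\beta$.

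Finally, I would substitute $\beta = \operatorname{comp} I$ into the outer sum. Since $D$ and $\operatorname{comp}$ are mutually inverse, as $I$ ranges over all subsets of $\left[n-1\right]$ containing $D\left(\alpha\right)$, the composition $\beta = \operatorname{comp} I$ ranges bijectively over all compositions $\beta$ of $n$ satisfying $D\left(\beta\right) = I \supseteq D\left(\alpha\right)$. This turns the left-hand side into $\sum_{\beta} M_\beta$, the sum being over all compositions $\beta$ of $n$ with $D\left(\beta\right) \supseteq D\left(\alpha\right)$, which is exactly the right-hand side.

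There is no serious obstacle here: the argument is a straightforward regrouping followed by a change of summation index along the bijection of Proposition~\ref{prop.Dalpha.comp}. The only point that requires a little care is the bookkeeping that the condition ``$i_j < i_{j+1}$ whenever $j \in D\left(\alpha\right)$'' translates into the \emph{inclusion} $D\left(\alpha\right) \subseteq S$ rather than an equality, so that after partitioning by the exact strict ascent set one indeed recovers the compositions $\beta$ with $D\left(\beta\right) \supseteq D\left(\alpha\right)$ and not merely those with $D\left(\beta\right) = D\left(\alpha\right)$.
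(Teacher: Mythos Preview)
Your proposal is correct and follows essentially the same approach as the paper: both proofs translate the condition ``$i_j < i_{j+1}$ whenever $j \in D(\alpha)$'' into the inclusion $D(\alpha) \subseteq \{j \in [n-1] \mid i_j < i_{j+1}\}$ (via Lemma~\ref{lem.Dalpha.n-1}), partition the left-hand sum according to the exact strict ascent set, identify each inner sum as an $M_\beta$ via Proposition~\ref{prop.Malpha.D}, and then reindex using the bijection of Proposition~\ref{prop.Dalpha.comp}. The only cosmetic difference is that the paper substitutes $G = D(\beta)$ whereas you substitute $\beta = \operatorname{comp} I$, which is the same bijection read in the opposite direction.
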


\begin{proof}
[Proof of Corollary \ref{cor.Dalpha.F}.]Lemma \ref{lem.Dalpha.n-1} shows that
$D\left(  \alpha\right)  \subseteq\left[  n-1\right]  $.

Proposition \ref{prop.Dalpha.comp} yields that the maps $D$ and
$\operatorname*{comp}$ are mutually inverse. Thus, the map $D$ is invertible,
i.e., a bijection.

If $\left(  i_{1},i_{2},\ldots,i_{n}\right)  $ is an $n$-tuple of positive
integers, then the condition \newline$\left(  i_{j}<i_{j+1}\text{ whenever
}j\in D\left(  \alpha\right)  \right)  $ is equivalent to the condition
\newline$\left(  D\left(  \alpha\right)  \subseteq\left\{  j\in\left[
n-1\right]  \ \mid\ i_{j}<i_{j+1}\right\}  \right)  $%
\ \ \ \ \footnote{\textit{Proof.} Let $\left(  i_{1},i_{2},\ldots
,i_{n}\right)  $ be an $n$-tuple of positive integers. Then, we have the
following chain of logical equivalences:%
\begin{align*}
&  \ \left(  i_{j}<i_{j+1}\text{ whenever }j\in D\left(  \alpha\right)
\right) \\
&  \Longleftrightarrow\ \left(  i_{h}<i_{h+1}\text{ whenever }h\in D\left(
\alpha\right)  \right) \\
&  \ \ \ \ \ \ \ \ \ \ \left(  \text{here, we have renamed the index }j\text{
as }h\right) \\
&  \Longleftrightarrow\ \left(  \text{every }h\in D\left(  \alpha\right)
\text{ satisfies }\underbrace{i_{h}<i_{h+1}}_{\substack{\text{this is
equivalent to}\\h\in\left\{  j\in\left[  n-1\right]  \ \mid\ i_{j}%
<i_{j+1}\right\}  \\\text{(because }h\text{ is always an element of }\left[
n-1\right]  \\\text{(since }h\in D\left(  \alpha\right)  \subseteq\left[
n-1\right]  \text{))}}}\right) \\
&  \Longleftrightarrow\ \left(  \text{every }h\in D\left(  \alpha\right)
\text{ satisfies }h\in\left\{  j\in\left[  n-1\right]  \ \mid\ i_{j}%
<i_{j+1}\right\}  \right) \\
&  \Longleftrightarrow\ \left(  D\left(  \alpha\right)  \subseteq\left\{
j\in\left[  n-1\right]  \ \mid\ i_{j}<i_{j+1}\right\}  \right)  ,
\end{align*}
Thus, the condition $\left(  i_{j}<i_{j+1}\text{ whenever }j\in D\left(
\alpha\right)  \right)  $ is equivalent to the condition $\left(  D\left(
\alpha\right)  \subseteq\left\{  j\in\left[  n-1\right]  \ \mid\ i_{j}%
<i_{j+1}\right\}  \right)  $. Qed.}. Hence, we have the following equality
between summation signs:%
\[
\sum_{\substack{i_{1}\leq i_{2}\leq\cdots\leq i_{n};\\i_{j}<i_{j+1}\text{
whenever }j\in D\left(  \alpha\right)  }}=\sum_{\substack{i_{1}\leq i_{2}%
\leq\cdots\leq i_{n};\\D\left(  \alpha\right)  \subseteq\left\{  j\in\left[
n-1\right]  \ \mid\ i_{j}<i_{j+1}\right\}  }}.
\]
Thus,%
\begin{align*}
&  \sum_{\substack{i_{1}\leq i_{2}\leq\cdots\leq i_{n};\\i_{j}<i_{j+1}\text{
whenever }j\in D\left(  \alpha\right)  }}x_{i_{1}}x_{i_{2}}\cdots x_{i_{n}}\\
&  =\sum_{\substack{i_{1}\leq i_{2}\leq\cdots\leq i_{n};\\D\left(
\alpha\right)  \subseteq\left\{  j\in\left[  n-1\right]  \ \mid\ i_{j}%
<i_{j+1}\right\}  }}x_{i_{1}}x_{i_{2}}\cdots x_{i_{n}}\\
&  =\underbrace{\sum_{\substack{G\subseteq\left[  n-1\right]  ;\\D\left(
\alpha\right)  \subseteq G}}}_{=\sum_{\substack{G\subseteq\left[  n-1\right]
;\\G\supseteq D\left(  \alpha\right)  }}=\sum_{\substack{G\in\mathcal{P}%
\left(  \left[  n-1\right]  \right)  ;\\G\supseteq D\left(  \alpha\right)  }%
}}\sum_{\substack{i_{1}\leq i_{2}\leq\cdots\leq i_{n};\\\left\{  j\in\left[
n-1\right]  \ \mid\ i_{j}<i_{j+1}\right\}  =G}}x_{i_{1}}x_{i_{2}}\cdots
x_{i_{n}}\\
&  \ \ \ \ \ \ \ \ \ \ \left(  \text{since }\left\{  j\in\left[  n-1\right]
\ \mid\ i_{j}<i_{j+1}\right\}  \subseteq\left[  n-1\right]  \text{ for every
}\left(  i_{1}\leq i_{2}\leq\cdots\leq i_{n}\right)  \right) \\
&  =\sum_{\substack{G\in\mathcal{P}\left(  \left[  n-1\right]  \right)
;\\G\supseteq D\left(  \alpha\right)  }}\sum_{\substack{i_{1}\leq i_{2}%
\leq\cdots\leq i_{n};\\\left\{  j\in\left[  n-1\right]  \ \mid\ i_{j}%
<i_{j+1}\right\}  =G}}x_{i_{1}}x_{i_{2}}\cdots x_{i_{n}}\\
&  =\sum_{\substack{\beta\in\operatorname*{Comp}\nolimits_{n};\\D\left(
\beta\right)  \supseteq D\left(  \alpha\right)  }}\sum_{\substack{i_{1}\leq
i_{2}\leq\cdots\leq i_{n};\\\left\{  j\in\left[  n-1\right]  \ \mid
\ i_{j}<i_{j+1}\right\}  =D\left(  \beta\right)  }}x_{i_{1}}x_{i_{2}}\cdots
x_{i_{n}}%
\end{align*}
(here, we have substituted $D\left(  \beta\right)  $ for $G$ in the outer sum,
since the map $D:\operatorname*{Comp}\nolimits_{n}\rightarrow\mathcal{P}%
\left(  \left[  n-1\right]  \right)  $ is a bijection). Comparing this with%
\begin{align*}
&  \underbrace{\sum_{\beta\text{ is a composition of }n;\ D\left(
\beta\right)  \supseteq D\left(  \alpha\right)  }}_{\substack{=\sum
_{\substack{\beta\in\operatorname*{Comp}\nolimits_{n};\\D\left(  \beta\right)
\supseteq D\left(  \alpha\right)  }}\\\text{(since }\operatorname*{Comp}%
\nolimits_{n}\text{ is the set}\\\text{of all compositions of }n\text{)}%
}}\underbrace{M_{\beta}}_{\substack{=\sum_{\substack{i_{1}\leq i_{2}\leq
\cdots\leq i_{n};\\\left\{  j\in\left[  n-1\right]  \ \mid\ i_{j}%
<i_{j+1}\right\}  =D\left(  \beta\right)  }}x_{i_{1}}x_{i_{2}}\cdots x_{i_{n}%
}\\\text{(by Proposition \ref{prop.Malpha.D} (applied to }\beta\text{ instead
of }\alpha\text{))}}}\\
&  =\sum_{\substack{\beta\in\operatorname*{Comp}\nolimits_{n};\\D\left(
\beta\right)  \supseteq D\left(  \alpha\right)  }}\sum_{\substack{i_{1}\leq
i_{2}\leq\cdots\leq i_{n};\\\left\{  j\in\left[  n-1\right]  \ \mid
\ i_{j}<i_{j+1}\right\}  =D\left(  \beta\right)  }}x_{i_{1}}x_{i_{2}}\cdots
x_{i_{n}},
\end{align*}
we obtain%
\[
\sum_{\substack{i_{1}\leq i_{2}\leq\cdots\leq i_{n};\\i_{j}<i_{j+1}\text{
whenever }j\in D\left(  \alpha\right)  }}x_{i_{1}}x_{i_{2}}\cdots x_{i_{n}%
}=\sum_{\beta\text{ is a composition of }n;\ D\left(  \beta\right)  \supseteq
D\left(  \alpha\right)  }M_{\beta}.
\]
This proves Corollary \ref{cor.Dalpha.F}.
\end{proof}

\subsection{$\Gamma\left(  \mathbf{E},w\right)  $ is well-defined}

Let us next show a really simple fact that was left unproven in Definition
\ref{def.Gammaw}:

\begin{proposition}
\label{prop.Gammaw.welldef}Let ${\mathbf{E}}=\left(  E,<_{1},<_{2}\right)  $
be a double poset. Let $w:E\rightarrow\left\{  1,2,3,\ldots\right\}  $ be a
map. Then, the sum $\sum_{\pi\text{ is an }{\mathbf{E}}\text{-partition}%
}{\mathbf{x}}_{\pi,w}$ in $\mathbf{k}\left[  \left[  x_{1},x_{2},x_{3}%
,\ldots\right]  \right]  $ converges (with respect to the topology on
$\mathbf{k}\left[  \left[  x_{1},x_{2},x_{3},\ldots\right]  \right]  $).
\end{proposition}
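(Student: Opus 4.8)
The plan is to reduce the claimed convergence to the defining property of the product topology on $\Powser$: a family of power series is summable precisely when, for every monomial $\mathfrak{m}$, only finitely many members of the family have nonzero coefficient on $\mathfrak{m}$, in which case the coefficient of $\mathfrak{m}$ in the sum is the (finite) sum of those coefficients. Since each addend $\xx_{\pi, w} = \prod_{e \in E} x_{\pi\left(e\right)}^{w\left(e\right)}$ is a single monomial, the coefficient of a fixed monomial $\mathfrak{m}$ in $\xx_{\pi, w}$ is $1$ if $\xx_{\pi, w} = \mathfrak{m}$ and $0$ otherwise. Thus everything comes down to the following finiteness statement: for each monomial $\mathfrak{m}$, the set of $\EE$-partitions $\pi$ satisfying $\xx_{\pi, w} = \mathfrak{m}$ is finite.

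To prove this finiteness, first I would fix a monomial $\mathfrak{m}$ and let $V$ denote the (finite) set of positive integers $i$ such that $x_i$ appears in $\mathfrak{m}$. Suppose $\pi$ is any map $E \to \left\{1, 2, 3, \ldots\right\}$ with $\xx_{\pi, w} = \mathfrak{m}$. For each $e \in E$ we have $w\left(e\right) \geq 1$, so the factor $x_{\pi\left(e\right)}^{w\left(e\right)}$ forces $x_{\pi\left(e\right)}$ to appear in the product $\prod_{e' \in E} x_{\pi\left(e'\right)}^{w\left(e'\right)} = \mathfrak{m}$; hence $\pi\left(e\right) \in V$. Therefore every such $\pi$ lies in the set $V^E$ of all maps $E \to V$, and since both $E$ and $V$ are finite, $V^E$ is finite. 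Consequently there are only finitely many maps $\pi : E \to \left\{1, 2, 3, \ldots\right\}$ with $\xx_{\pi, w} = \mathfrak{m}$, and in particular only finitely many $\EE$-partitions with this property.

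Combining the two steps, for every monomial $\mathfrak{m}$ only finitely many addends $\xx_{\pi, w}$ are equal to $\mathfrak{m}$, so the coefficient of $\mathfrak{m}$ in $\sum_{\pi\text{ is an }\EE\text{-partition}} \xx_{\pi, w}$ is a well-defined nonnegative integer (namely the number of $\EE$-partitions $\pi$ with $\xx_{\pi, w} = \mathfrak{m}$). This is exactly the summability condition for the product topology, so the sum converges, as desired.

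The argument is essentially routine; the only point demanding care is a clean formulation of what ``convergence'' means for an unordered sum indexed by the (a priori infinite) set of $\EE$-partitions, rather than for a sequence as in the convergence criterion stated earlier. I would handle this by spelling out the summability criterion in terms of coefficientwise finiteness, as transported from the product topology on $\prod_{\mathfrak{m} \in \Mon} \kk$; once that criterion is in place, the finiteness lemma above finishes the proof immediately.
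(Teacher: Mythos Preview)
Your proposal is correct and follows essentially the same approach as the paper: fix a monomial $\mathfrak{m}$, let $V$ (the paper calls it $Z$) be the finite set of indices appearing in $\mathfrak{m}$, and observe that any $\pi$ with $\xx_{\pi,w}=\mathfrak{m}$ must land in $V^E$ since each $w(e)\geq 1$, hence only finitely many addends contribute to the coefficient of $\mathfrak{m}$. Your explicit remark about formulating summability for unordered families (coefficientwise finiteness) is exactly what the paper packages as its ``Fact~1''.
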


Proposition \ref{prop.Gammaw.welldef} shows that the power series
$\Gamma\left(  {\mathbf{E}},w\right)  $ in Definition \ref{def.Gammaw} is well-defined.

\begin{proof}
[Proof of Proposition \ref{prop.Gammaw.welldef}.]We know that $\left(
E,<_{1},<_{2}\right)  $ is a double poset. Hence, $E$ is a finite set.

For every power series $f\in\mathbf{k}\left[  \left[  x_{1},x_{2},x_{3}%
,\ldots\right]  \right]  $ and every monomial $\mathfrak{m}$, we denote by
$\left[  \mathfrak{m}\right]  \left(  f\right)  $ the coefficient of
$\mathfrak{m}$ in $f$. Notice that any two monomials $\mathfrak{m}$ and
$\mathfrak{n}$ satisfy%
\begin{equation}
\left[  \mathfrak{m}\right]  \left(  \mathfrak{n}\right)  =%
\begin{cases}
1, & \text{if }\mathfrak{m}=\mathfrak{n};\\
0, & \text{if }\mathfrak{m}\neq\mathfrak{n}%
\end{cases}
. \label{pf.prop.Gammaw.welldef.mn}%
\end{equation}

The definition of the topology on $\mathbf{k}\left[  \left[  x_{1},x_{2}%
,x_{3},\ldots\right]  \right]  $ has the following consequence:

\begin{statement}
\textit{Fact 1:} Let $P$ be a set. Let $\left(  \alpha_{\pi}\right)  _{\pi\in
P}$ be a family of elements of $\mathbf{k}\left[  \left[  x_{1},x_{2}%
,x_{3},\ldots\right]  \right]  $. Assume that for every monomial
$\mathfrak{m}$, all but finitely many $\pi\in P$ satisfy $\left[
\mathfrak{m}\right]  \left(  \alpha_{\pi}\right)  =0$. Then, the sum
$\sum_{\pi\in P}\alpha_{\pi}$ converges.
\end{statement}

Now, let $\mathfrak{m}$ be a monomial. Let $Z$ be the set of all positive
integers $i$ such that the indeterminate $x_{i}$ appears in the monomial
$\mathfrak{m}$. Thus, $Z$ is a finite subset of $\left\{  1,2,3,\ldots
\right\}  $. Thus, $\left\vert Z\right\vert <\infty$. Also, $\left\vert
E\right\vert <\infty$ (since $E$ is finite).

If $A$ and $B$ are sets, and if $C$ is a subset of $B$, then
\begin{equation}
\left\{  \pi\in B^{A}\ \mid\ \pi\left(  A\right)  \subseteq C\right\}  \cong
C^{A}\text{ as sets} \label{pf.prop.Gammaw.welldef.1}%
\end{equation}
\footnote{\textit{Proof of (\ref{pf.prop.Gammaw.welldef.1}):} This is a
well-known fact about sets. The proof proceeds as follows:
\par
Let $A$ and $B$ be sets. Let $C$ be a subset of $B$. Let $\iota:C\rightarrow
B$ be the canonical inclusion map.
\par
\begin{itemize}
\item Define a map $\Phi:\left\{  \pi\in B^{A}\ \mid\ \pi\left(  A\right)
\subseteq C\right\}  \rightarrow C^{A}$ as follows:
\par
Let $f\in\left\{  \pi\in B^{A}\ \mid\ \pi\left(  A\right)  \subseteq
C\right\}  $. Thus, $f$ is an element of $B^{A}$ and satisfies $f \left(
A\right)  \subseteq C$. Hence, we can define a map $f^{\prime}:A\rightarrow C$
by $\left(  f^{\prime}\left(  a\right)  =f\left(  a\right)  \text{ for every
}a\in A\right)  $. Define $\Phi\left(  f\right)  $ to be this map $f^{\prime}%
$. Hence, a map $\Phi:\left\{  \pi\in B^{A}\ \mid\ \pi\left(  A\right)
\subseteq C\right\}  \rightarrow C^{A}$ is defined.
\par
\item Define a map $\Psi:C^{A}\rightarrow\left\{  \pi\in B^{A}\ \mid
\ \pi\left(  A\right)  \subseteq C\right\}  $ by
\[
\left(  \Psi\left(  g\right)  = \iota\circ g \qquad\text{for each } g \in
C^{A} \right)  .
\]
\end{itemize}
\par
It is easy to see that the maps $\Phi$ and $\Psi$ are mutually inverse
bijections. Hence, there is a bijection from $\left\{  \pi\in B^{A}\ \mid
\ \pi\left(  A\right)  \subseteq C\right\}  $ to $C^{A}$ (namely, the map
$\Phi$). This proves (\ref{pf.prop.Gammaw.welldef.1}).}.

Let $\operatorname*{Par}\mathbf{E}$ be the set of all $\mathbf{E}$-partitions.
Thus, every element of $\operatorname*{Par}\mathbf{E}$ is an $\mathbf{E}%
$-partition, hence a map $E\rightarrow\left\{  1,2,3,\ldots\right\}  $, hence
an element of $\left\{  1,2,3,\ldots\right\}  ^{E}$. In other words,
$\operatorname*{Par}\mathbf{E}\subseteq\left\{  1,2,3,\ldots\right\}  ^{E}$.

Let $Q$ be the subset%
\[
\left\{  \pi\in\operatorname*{Par}\mathbf{E}\ \mid\ \pi\left(  E\right)
\subseteq Z\right\}
\]
of $\operatorname*{Par}\mathbf{E}$. The set $Q$ is
finite\footnote{\textit{Proof.} From (\ref{pf.prop.Gammaw.welldef.1}) (applied
to $A=E$, $B=\left\{  1,2,3,\ldots\right\}  $ and $C=Z$), we obtain the fact
that%
\[
\left\{  \pi\in\left\{  1,2,3,\ldots\right\}  ^{E}\ \mid\ \pi\left(  E\right)
\subseteq Z\right\}  \cong Z^{E}\text{ as sets.}%
\]
Hence,%
\[
\left\vert \left\{  \pi\in\left\{  1,2,3,\ldots\right\}  ^{E}\ \mid
\ \pi\left(  E\right)  \subseteq Z\right\}  \right\vert =\left\vert
Z^{E}\right\vert =\left\vert Z\right\vert ^{\left\vert E\right\vert }<\infty
\]
(since $\left\vert Z\right\vert <\infty$ and $\left\vert E\right\vert <\infty
$). But%
\[
Q=\left\{  \pi\in\underbrace{\operatorname*{Par}\mathbf{E}}_{\subseteq\left\{
1,2,3,\ldots\right\}  ^{E}}\ \mid\ \pi\left(  E\right)  \subseteq Z\right\}
\subseteq\left\{  \pi\in\left\{  1,2,3,\ldots\right\}  ^{E}\ \mid\ \pi\left(
E\right)  \subseteq Z\right\}
\]
and thus%
\[
\left\vert Q\right\vert \leq\left\vert \left\{  \pi\in\left\{  1,2,3,\ldots
\right\}  ^{E}\ \mid\ \pi\left(  E\right)  \subseteq Z\right\}  \right\vert
<\infty.
\]
Hence, the set $Q$ is finite, qed.}.

We have $\left[  \mathfrak{m}\right]  \left(  {\mathbf{x}}_{\phi,w}\right)
=0$ for every $\phi\in\left(  \operatorname*{Par}\mathbf{E}\right)  \setminus
Q$\ \ \ \ \footnote{\textit{Proof.} Let $\phi\in\left(  \operatorname*{Par}%
\mathbf{E}\right)  \setminus Q$. We shall first show that $\mathfrak{m}%
\neq{\mathbf{x}}_{\phi,w}$ (as monomials).
\par
Indeed, assume the contrary. Thus, $\mathfrak{m}={\mathbf{x}}_{\phi,w}$ (as
monomials). But the definition of $\mathbf{x}_{\phi,w}$ shows that
$\mathbf{x}_{\phi,w}=\prod_{e\in E}x_{\phi\left(  e\right)  }^{w\left(
e\right)  }$.
\par
Now, let $f\in E$. Then, $w\left(  f\right)  \in\left\{  1,2,3,\ldots\right\}
$ (since $w$ is a map $E\rightarrow\left\{  1,2,3,\ldots\right\}  $). Hence,
$x_{\phi\left(  f\right)  }\mid x_{\phi\left(  f\right)  }^{w\left(  f\right)
}$ (as monomials). But $x_{\phi\left(  f\right)  }^{w\left(  f\right)  }$ is a
factor in the product $\prod_{e\in E}x_{\phi\left(  e\right)  }^{w\left(
e\right)  }$. Therefore, $x_{\phi\left(  f\right)  }^{w\left(  f\right)  }%
\mid\prod_{e\in E}x_{\phi\left(  e\right)  }^{w\left(  e\right)  }$ (as
monomials). Hence, $x_{\phi\left(  f\right)  }\mid x_{\phi\left(  f\right)
}^{w\left(  f\right)  }\mid\prod_{e\in E}x_{\phi\left(  e\right)  }^{w\left(
e\right)  }=\mathfrak{m}$ (as monomials). Thus, the indeterminate
$x_{\phi\left(  f\right)  }$ appears in the monomial $\mathfrak{m}$. Thus,
$\phi\left(  f\right)  $ is a positive integer $i$ such that the indeterminate
$x_{i}$ appears in the monomial $\mathfrak{m}$. In other words, $\phi\left(
f\right)  \in Z$ (since $Z$ is the set of all positive integers $i$ such that
the indeterminate $x_{i}$ appears in the monomial $\mathfrak{m}$).
\par
Now, forget that we fixed $f$. We thus have shown that $\phi\left(  f\right)
\in Z$ for each $f\in E$. In other words, $\phi\left(  E\right)  \subseteq Z$.
\par
Now, $\phi\in\left(  \operatorname*{Par}\mathbf{E}\right)  \setminus Q$. In
other words, $\phi\in\operatorname*{Par}\mathbf{E}$ but $\phi\notin Q$. We now
know that $\phi$ is an element of $\operatorname*{Par}\mathbf{E}$ satisfying
$\phi\left(  E\right)  \subseteq Z$. In other words, $\phi\in\left\{  \pi
\in\operatorname*{Par}\mathbf{E}\ \mid\ \pi\left(  E\right)  \subseteq
Z\right\}  $. In other words, $\phi\in Q$ (since $Q=\left\{  \pi
\in\operatorname*{Par}\mathbf{E}\ \mid\ \pi\left(  E\right)  \subseteq
Z\right\}  $). This contradicts $\phi\notin Q$.
\par
This contradiction proves that our assumption was wrong. Hence, $\mathfrak{m}%
\neq{\mathbf{x}}_{\phi,w}$ is proven. Now, (\ref{pf.prop.Gammaw.welldef.mn})
(applied to $\mathfrak{n}={\mathbf{x}}_{\phi,w}$) shows that%
\[
\left[  \mathfrak{m}\right]  \left(  {\mathbf{x}}_{\phi,w}\right)  =%
\begin{cases}
1, & \text{if }\mathfrak{m}={\mathbf{x}}_{\phi,w};\\
0, & \text{if }\mathfrak{m}\neq{\mathbf{x}}_{\phi,w}%
\end{cases}
=0\ \ \ \ \ \ \ \ \ \ \left(  \text{since }\mathfrak{m}\neq{\mathbf{x}}%
_{\phi,w}\right)  ,
\]
qed.}. Since $Q$ is a finite set, this shows that we have $\left[
\mathfrak{m}\right]  \left(  {\mathbf{x}}_{\phi,w}\right)  =0$ for all but
finitely many $\phi\in\operatorname*{Par}\mathbf{E}$. If we rename $\phi$ as
$\pi$ in this statement, we obtain the following: We have $\left[
\mathfrak{m}\right]  \left(  {\mathbf{x}}_{\pi,w}\right)  =0$ for all but
finitely many $\pi\in\operatorname*{Par}\mathbf{E}$. In other words, all but
finitely many $\pi\in\operatorname*{Par}\mathbf{E}$ satisfy $\left[
\mathfrak{m}\right]  \left(  {\mathbf{x}}_{\pi,w}\right)  =0$.

Let us now forget that we fixed $\mathfrak{m}$. We therefore have shown that
for every monomial $\mathfrak{m}$, all but finitely many $\pi\in
\operatorname*{Par}\mathbf{E}$ satisfy $\left[  \mathfrak{m}\right]  \left(
{\mathbf{x}}_{\pi,w}\right)  =0$. Thus, Fact 1 (applied to
$P=\operatorname*{Par}\mathbf{E}$ and $\alpha_{\pi}={\mathbf{x}}_{\pi,w}$)
shows that the sum $\sum_{\pi\in\operatorname*{Par}\mathbf{E}}{\mathbf{x}%
}_{\pi,w}$ converges. Since%
\[
\sum_{\pi\in\operatorname*{Par}\mathbf{E}}=\sum_{\pi\text{ is an }%
\mathbf{E}\text{-partition}}%
\]
(because $\operatorname*{Par}\mathbf{E}$ is the set of all $\mathbf{E}%
$-partitions), this rewrites as follows: The sum $\sum_{\pi\text{ is an
}{\mathbf{E}}\text{-partition}}{\mathbf{x}}_{\pi,w}$ converges. This proves
Proposition \ref{prop.Gammaw.welldef}.
\end{proof}

Let us also show a more detailed proof of Lemma \ref{lem.Gammaw.empty}:

\begin{proof}
[Proof of Lemma \ref{lem.Gammaw.empty}.](a) Assume that $E=\varnothing$. We
need to show that $\Gamma\left(  \mathbf{E},w\right)  =1$.

Let $\operatorname*{Par}\mathbf{E}$ be the set of all $\mathbf{E}$-partitions.
Let $g$ be the unique map $\varnothing\rightarrow\left\{  1,2,3,\ldots
\right\}  $. Then, $g\in\operatorname*{Par}\mathbf{E}$%
\ \ \ \ \footnote{\textit{Proof.} Recall the definition of an $\mathbf{E}%
$-partition. This definition shows that $g$ is an $\mathbf{E}$-partition if
and only if $g$ is a map $E\rightarrow\left\{  1,2,3,\ldots\right\}  $
satisfying the following two assertions:
\par
\textit{Assertion }$\mathcal{A}_{1}$\textit{:} Every $e\in E$ and $f\in E$
satisfying $e<_{1}f$ satisfy $g\left(  e\right)  \leq g\left(  f\right)  $.
\par
\textit{Assertion }$\mathcal{A}_{2}$\textit{:} Every $e\in E$ and $f\in E$
satisfying $e<_{1}f$ and $f<_{2}e$ satisfy $g\left(  e\right)  <g\left(
f\right)  $.
\par
Now, $g$ is a map $\varnothing\rightarrow\left\{  1,2,3,\ldots\right\}  $. In
other words, $g$ is a map $E\rightarrow\left\{  1,2,3,\ldots\right\}  $ (since
$E=\varnothing$). Also, there exists no $e\in E$ (since $E=\varnothing$).
Hence, Assertion $\mathcal{A}_{1}$ is vacuously true. Also, Assertion
$\mathcal{A}_{2}$ is vacuously true (for the same reason). Thus, $g$ is a map
$E\rightarrow\left\{  1,2,3,\ldots\right\}  $ satisfying the two Assertions
$\mathcal{A}_{1}$ and $\mathcal{A}_{2}$. In other words, $g$ is an
$\mathbf{E}$-partition (since $g$ is an $\mathbf{E}$-partition if and only if
$g$ is a map $E\rightarrow\left\{  1,2,3,\ldots\right\}  $ satisfying the two
Assertions $\mathcal{A}_{1}$ and $\mathcal{A}_{2}$). In other words, $g$
belongs to the set of all $\mathbf{E}$-partitions. In other words, $g$ belongs
to $\operatorname*{Par}\mathbf{E}$ (since $\operatorname*{Par}\mathbf{E}$ is
the set of all $\mathbf{E}$-partitions). In other words, $g\in
\operatorname*{Par}\mathbf{E}$. Qed.}. Thus, $\left\{  g\right\}
\subseteq\operatorname*{Par}\mathbf{E}$.

Now, $\operatorname*{Par}\mathbf{E}=\left\{  g\right\}  $%
\ \ \ \ \footnote{\textit{Proof.} Let $\phi\in\operatorname*{Par}\mathbf{E}$.
We have $\operatorname*{Par}\mathbf{E}\subseteq\left\{  1,2,3,\ldots\right\}
^{E}$ (indeed, we have shown this in the proof of Proposition
\ref{prop.Gammaw.welldef}). Since $E = \varnothing$, this rewrites as
$\operatorname*{Par}\mathbf{E}\subseteq\left\{  1,2,3,\ldots\right\}
^{\varnothing} = \left\{  g\right\}  $. Combining this with $\left\{
g\right\}  \subseteq\operatorname*{Par}\mathbf{E}$, we obtain
$\operatorname*{Par}\mathbf{E}=\left\{  g\right\}  $, qed.} and $\mathbf{x}%
_{g,w}=1$\ \ \ \ \footnote{\textit{Proof.} The definition of $\mathbf{x}%
_{g,w}$ yields
\begin{align*}
{\mathbf{x}}_{g,w}  &  =\prod_{e\in E}x_{g\left(  e\right)  }^{w\left(
e\right)  }=\prod_{e\in\varnothing}x_{g\left(  e\right)  }^{w\left(  e\right)
}\ \ \ \ \ \ \ \ \ \ \left(  \text{since }E=\varnothing\right) \\
&  =\left(  \text{empty product}\right)  =1,
\end{align*}
qed.}. Now, the definition of $\Gamma\left(  \mathbf{E},w\right)  $ yields%
\begin{align*}
\Gamma\left(  {\mathbf{E}},w\right)   &  =\underbrace{\sum_{\pi\text{ is an
}{\mathbf{E}}\text{-partition}}}_{\substack{=\sum_{\pi\in\operatorname*{Par}%
\mathbf{E}}\\\text{(since }\operatorname*{Par}\mathbf{E}\text{ is the set of
all }\mathbf{E}\text{-partitions)}}}{\mathbf{x}}_{\pi,w}=\sum_{\pi
\in\operatorname*{Par}\mathbf{E}}\mathbf{x}_{\pi,w}\\
&  =\sum_{\pi\in\left\{  g\right\}  }\mathbf{x}_{\pi,w}%
\ \ \ \ \ \ \ \ \ \ \left(  \text{since }\operatorname*{Par}\mathbf{E}%
=\left\{  g\right\}  \right) \\
&  =\mathbf{x}_{g,w}=1.
\end{align*}
This proves Lemma \ref{lem.Gammaw.empty} (a).

(b) Assume that $E\neq\varnothing$. We need to show that $\varepsilon\left(
\Gamma\left(  \mathbf{E},w\right)  \right)  =0$.

For every $\mathbf{E}$-partition $\pi$, we have%
\begin{equation}
\left(  \text{the constant term of }\mathbf{x}_{\pi,w}\right)  =0
\label{pf.lem.Gammaw.empty.b.1}%
\end{equation}
\footnote{\textit{Proof of (\ref{pf.lem.Gammaw.empty.b.1}):} Let $\pi$ be an
$\mathbf{E}$-partition. The definition of $\mathbf{x}_{\pi,w}$ yields
${\mathbf{x}}_{\pi,w}=\prod_{e\in E}x_{\pi\left(  e\right)  }^{w\left(
e\right)  }$. Thus, ${\mathbf{x}}_{\pi,w}$ is a monomial, and its degree is%
\[
\deg\left(  \underbrace{\mathbf{x}_{\pi,w}}_{=\prod_{e\in E}x_{\pi\left(
e\right)  }^{w\left(  e\right)  }}\right)  =\deg\left(  \prod_{e\in E}%
x_{\pi\left(  e\right)  }^{w\left(  e\right)  }\right)  =\sum_{e\in
E}\underbrace{\deg\left(  x_{\pi\left(  e\right)  }^{w\left(  e\right)
}\right)  }_{=w\left(  e\right)  }=\sum_{e\in E}w\left(  e\right)  .
\]
Now, the sum $\sum_{e\in E}w\left(  e\right)  $ is a nonempty sum (since
$E\neq\varnothing$), and all its addends are positive integers (since
$w\left(  e\right)  \in\left\{  1,2,3,\ldots\right\}  $ for each $e\in E$).
Thus, $\sum_{e\in E}w\left(  e\right)  $ is a nonempty sum of positive
integers, and therefore is itself a positive integer (since any nonempty sum
of positive integers is a positive integer). In other words, $\deg\left(
\mathbf{x}_{\pi,w}\right)  $ is a positive integer (since $\deg\left(
\mathbf{x}_{\pi,w}\right)  =\sum_{e\in E}w\left(  e\right)  $). Hence,
$\deg\left(  \mathbf{x}_{\pi,w}\right)  \neq0=\deg1$.
\par
But if $\mathfrak{m}$ is a monomial such that $\mathfrak{m}\neq1$, then
$\left(  \text{the constant term of }\mathfrak{m}\right)  =0$. Applying this
to $\mathfrak{m}=\mathbf{x}_{\pi,w}$, we obtain $\left(  \text{the constant
term of }\mathbf{x}_{\pi,w}\right)  =0$ (since $\mathbf{x}_{\pi,w}\neq1$).
This proves (\ref{pf.lem.Gammaw.empty.b.1}).}.

It is well-known that
\[
\varepsilon\left(  f\right)  =\left(  \text{the constant term of }f\right)
\ \ \ \ \ \ \ \ \ \ \text{for every }f\in\operatorname*{QSym}%
\]
(where the constant term of $f$ makes sense because $f$ is a power series).
Applying this to $f=\Gamma\left(  \mathbf{E},w\right)  $, we obtain%
\begin{align*}
\varepsilon\left(  \Gamma\left(  \mathbf{E},w\right)  \right)   &  =\left(
\text{the constant term of }\underbrace{\Gamma\left(  \mathbf{E},w\right)
}_{=\sum_{\pi\text{ is an }{\mathbf{E}}\text{-partition}}{\mathbf{x}}_{\pi,w}%
}\right) \\
&  =\left(  \text{the constant term of }\sum_{\pi\text{ is an }{\mathbf{E}%
}\text{-partition}}{\mathbf{x}}_{\pi,w}\right) \\
&  =\sum_{\pi\text{ is an }{\mathbf{E}}\text{-partition}}\underbrace{\left(
\text{the constant term of }\mathbf{x}_{\pi,w}\right)  }%
_{\substack{=0\\\text{(by (\ref{pf.lem.Gammaw.empty.b.1}))}}}=\sum_{\pi\text{
is an }{\mathbf{E}}\text{-partition}}0=0.
\end{align*}
This proves Lemma \ref{lem.Gammaw.empty} (b).
\end{proof}

\subsection{Increasing and strictly increasing maps as $\mathbf{E}%
$-partitions}

Now, we shall prove two claims that were left unproven in Example
\ref{exam.dp}:

\begin{proposition}
\label{prop.example.weaklinc}Let $\mathbf{E}=\left(  E,<_{1},<_{2}\right)  $
be a double poset. Assume that the order $<_{2}$ is an extension of the order
$<_{1}$ (that is, we have $u<_{2}v$ for every two elements $u$ and $v$ of $E$
satisfying $u<_{1}v$). Then, the $\mathbf{E}$-partitions are precisely the
weakly increasing maps from the poset $\left(  E,<_{1}\right)  $ to the
totally ordered set $\left\{  1,2,3,\ldots\right\}  $.
\end{proposition}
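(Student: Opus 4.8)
The plan is to unfold both notions appearing in the statement and show that the only feature distinguishing them—the second defining condition of an $\mathbf{E}$-partition—is vacuously satisfied under our hypothesis. Recall from Definition~\ref{def.E-partition} that a map $\phi : E \to \left\{1,2,3,\ldots\right\}$ is an $\mathbf{E}$-partition precisely when it satisfies two conditions: (i) every $e, f \in E$ with $e <_1 f$ satisfy $\phi\left(e\right) \leq \phi\left(f\right)$; and (ii) every $e, f \in E$ with $e <_1 f$ and $f <_2 e$ satisfy $\phi\left(e\right) < \phi\left(f\right)$. On the other hand, $\phi$ is a weakly increasing map from the poset $\left(E, <_1\right)$ to $\left\{1,2,3,\ldots\right\}$ exactly when it satisfies condition (i). Hence it suffices to show that, under our assumption on $<_2$, condition (ii) is automatically true for every map $\phi$.

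First I would establish that there exist no $e, f \in E$ satisfying both $e <_1 f$ and $f <_2 e$. Indeed, suppose such $e$ and $f$ existed. From $e <_1 f$ and the hypothesis that $<_2$ extends $<_1$, we obtain $e <_2 f$. Combining this with $f <_2 e$ yields $e <_2 f <_2 e$, hence $e <_2 e$ by transitivity of $<_2$, contradicting the irreflexivity of the strict partial order $<_2$. (Equivalently, $e <_2 f$ and $f <_2 e$ directly contradict the antisymmetry of $<_2$.) Therefore condition (ii) quantifies over an empty set of pairs $\left(e, f\right)$, and so it is vacuously satisfied by every map $\phi : E \to \left\{1,2,3,\ldots\right\}$.

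Consequently, for any map $\phi : E \to \left\{1,2,3,\ldots\right\}$, being an $\mathbf{E}$-partition is equivalent to satisfying condition (i) alone, which is precisely the requirement that $\phi$ be weakly increasing from $\left(E, <_1\right)$ to $\left\{1,2,3,\ldots\right\}$. This establishes the claimed coincidence of the two classes of maps and completes the proof.

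I expect no substantial obstacle here: the entire content is the vacuity of condition (ii), which rests only on the facts that $<_2$ is a strict partial order (hence irreflexive and transitive, equivalently antisymmetric) and that it extends $<_1$. The single point meriting minor care is to invoke the correct structural property of $<_2$—irreflexivity together with transitivity, or antisymmetry—to rule out the simultaneous relations $e <_2 f$ and $f <_2 e$.
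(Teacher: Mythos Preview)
Your proof is correct and follows essentially the same approach as the paper's: both reduce the claim to showing that condition (ii) is vacuous by noting that $e <_1 f$ forces $e <_2 f$ under the extension hypothesis, which is incompatible with $f <_2 e$ since $<_2$ is a strict partial order.
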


\begin{proposition}
\label{prop.example.strictinc}Let $\mathbf{E}=\left(  E,<_{1},<_{2}\right)  $
be a double poset. Let $>_{1}$ denote the opposite relation of $<_{1}$. Assume
that the order $<_{2}$ is an extension of the order $>_{1}$ (that is, we have
$u<_{2}v$ for every two elements $u$ and $v$ of $E$ satisfying $u>_{1}v$).
Then, the $\mathbf{E}$-partitions are precisely the strictly increasing maps
from the poset $\left(  E,<_{1}\right)  $ to the totally ordered set $\left\{
1,2,3,\ldots\right\}  $.
\end{proposition}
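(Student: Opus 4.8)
The plan is to prove the asserted equality of two sets of maps by fixing an arbitrary map $\phi : E \to \left\{1, 2, 3, \ldots\right\}$ and showing that $\phi$ is an $\mathbf{E}$-partition (in the sense of Definition~\ref{def.E-partition}) if and only if $\phi$ is a strictly increasing map from the poset $\left(E, <_1\right)$ to $\left\{1,2,3,\ldots\right\}$ (that is, every $e, f \in E$ satisfying $e <_1 f$ satisfy $\phi\left(e\right) < \phi\left(f\right)$). Since this equivalence will be established for every such $\phi$, the set of all $\mathbf{E}$-partitions and the set of all strictly increasing maps will coincide, which is exactly the claim. I would organize the argument as two implications.

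For the forward direction (strictly increasing $\Rightarrow$ $\mathbf{E}$-partition), I would assume $\phi$ is strictly increasing and verify the two bullet conditions of Definition~\ref{def.E-partition}. For the first bullet, if $e <_1 f$ then $\phi\left(e\right) < \phi\left(f\right)$ by strict increasingness, hence $\phi\left(e\right) \leq \phi\left(f\right)$. For the second bullet, if $e <_1 f$ and $f <_2 e$, then again $\phi\left(e\right) < \phi\left(f\right)$ holds directly (the extra hypothesis $f <_2 e$ is not even needed); so the conclusion $\phi\left(e\right) < \phi\left(f\right)$ is satisfied. This direction uses nothing about $<_2$ and is essentially immediate.

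For the reverse direction ($\mathbf{E}$-partition $\Rightarrow$ strictly increasing), I would assume $\phi$ is an $\mathbf{E}$-partition and take any $e, f \in E$ with $e <_1 f$; the goal is $\phi\left(e\right) < \phi\left(f\right)$. Here is where the hypothesis on $<_2$ enters: from $e <_1 f$ we get $f >_1 e$ (by the definition of the opposite relation), so applying the assumption ``$u <_2 v$ whenever $u >_1 v$'' with $u = f$ and $v = e$ yields $f <_2 e$. Now $e$ and $f$ satisfy both $e <_1 f$ and $f <_2 e$, so the second bullet of the $\mathbf{E}$-partition definition gives $\phi\left(e\right) < \phi\left(f\right)$, as desired.

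The only delicate point — and hence the ``main obstacle,'' though it is a mild one — is bookkeeping of the three relations $<_1$, $>_1$, $<_2$: one must be careful to invoke the hypothesis ``$u >_1 v \Rightarrow u <_2 v$'' with the elements in the correct roles ($u = f$, $v = e$), so that $e <_1 f$ correctly produces $f <_2 e$ rather than $e <_2 f$. Once the roles are matched, the rest is a direct application of the two definitions, so no further calculation is required; the proof is short and purely logical.
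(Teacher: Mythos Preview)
Your proposal is correct and follows essentially the same approach as the paper: both fix an arbitrary map $\phi$, verify the easy implication (strictly increasing $\Rightarrow$ $\mathbf{E}$-partition) directly from the definitions, and for the converse derive $f <_2 e$ from $e <_1 f$ via the extension hypothesis so as to invoke the second bullet of Definition~\ref{def.E-partition}. The paper packages this slightly differently by naming three assertions $\mathcal{A}_1$, $\mathcal{A}_2$, $\mathcal{A}_3$ and proving their equivalence, but the logical content is identical to yours.
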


\begin{proof}
[Proof of Proposition \ref{prop.example.weaklinc}.]Let $\phi:E\rightarrow
\left\{  1,2,3,\ldots\right\}  $ be a map. We need to show the following
logical equivalence:%
\begin{align}
&  \left(  \phi\text{ is an }\mathbf{E}\text{-partition}\right) \nonumber\\
&  \Longleftrightarrow\ \left(  \phi\text{ is a weakly increasing map from
}\left(  E,<_{1}\right)  \text{ to }\left\{  1,2,3,\ldots\right\}  \right)  .
\label{pf.prop.example.weaklinc.goal}%
\end{align}

Recall the definition of an $\mathbf{E}$-partition. This definition shows that
$\phi$ is an $\mathbf{E}$-partition if and only if it satisfies the following
two assertions:

\textit{Assertion }$\mathcal{A}_{1}$\textit{:} Every $e\in E$ and $f\in E$
satisfying $e<_{1}f$ satisfy $\phi\left(  e\right)  \leq\phi\left(  f\right)
$.

\textit{Assertion }$\mathcal{A}_{2}$\textit{:} Every $e\in E$ and $f\in E$
satisfying $e<_{1}f$ and $f<_{2}e$ satisfy $\phi\left(  e\right)  <\phi\left(
f\right)  $.

In other words, we have the following logical equivalence:%
\[
\ \left(  \phi\text{ is an }\mathbf{E}\text{-partition}\right)
\ \Longleftrightarrow\ \left(  \text{Assertions }\mathcal{A}_{1}\text{ and
}\mathcal{A}_{2}\text{ hold}\right)  .
\]
But Assertion $\mathcal{A}_{2}$ always holds\footnote{\textit{Proof.} We shall
show that Assertion $\mathcal{A}_{2}$ is vacuously true. In other words, we
shall show that there exist no $e\in E$ and $f\in E$ satisfying $e<_{1}f$ and
$f<_{2}e$. Indeed, assume the contrary. Thus, there exist two $e\in E$ and
$f\in E$ satisfying $e<_{1}f$ and $f<_{2}e$. Consider these $e$ and $f$. From
$e<_{1}f$, we obtain $e<_{2}f$ (since the order $<_{2}$ is an extension of the
order $<_{1}$). This contradicts $f<_{2}e$ (since $<_{2}$ is a strict partial
order). This contradiction proves that our assumption was false. Hence, there
exist no $e\in E$ and $f\in E$ satisfying $e<_{1}f$ and $f<_{2}e$. Thus,
Assertion $\mathcal{A}_{2}$ is vacuously true, qed.}. Now, we have the
following chain of equivalences:%
\begin{align*}
&  \ \left(  \phi\text{ is an }\mathbf{E}\text{-partition}\right) \\
&  \Longleftrightarrow\ \left(  \text{Assertions }\mathcal{A}_{1}\text{ and
}\mathcal{A}_{2}\text{ hold}\right) \\
&  \Longleftrightarrow\ \left(  \text{Assertion }\mathcal{A}_{1}\text{
holds}\right)  \ \ \ \ \ \ \ \ \ \ \left(  \text{since Assertion }%
\mathcal{A}_{2}\text{ always holds}\right) \\
&  \Longleftrightarrow\ \left(  \text{every }e\in E\text{ and }f\in E\text{
satisfying }e<_{1}f\text{ satisfy }\phi\left(  e\right)  \leq\phi\left(
f\right)  \right) \\
&  \ \ \ \ \ \ \ \ \ \ \left(  \text{because this is Assertion }%
\mathcal{A}_{1}\right) \\
&  \Longleftrightarrow\ \left(  \phi\text{ is a weakly increasing map from
}\left(  E,<_{1}\right)  \text{ to }\left\{  1,2,3,\ldots\right\}  \right) \\
&  \ \ \ \ \ \ \ \ \ \ \left(  \text{by the definition of a \textquotedblleft
weakly increasing map\textquotedblright}\right)  .
\end{align*}
Thus, (\ref{pf.prop.example.weaklinc.goal}) is proven. This concludes the
proof of Proposition \ref{prop.example.weaklinc}.
\end{proof}

\begin{proof}
[Proof of Proposition \ref{prop.example.strictinc}.]Let $\phi:E\rightarrow
\left\{  1,2,3,\ldots\right\}  $ be a map. We need to show the following
logical equivalence:%
\begin{align}
&  \ \left(  \phi\text{ is an }\mathbf{E}\text{-partition}\right) \nonumber\\
&  \Longleftrightarrow\ \left(  \phi\text{ is a strictly increasing map from
}\left(  E,<_{1}\right)  \text{ to }\left\{  1,2,3,\ldots\right\}  \right)  .
\label{pf.prop.example.strictlinc.goal}%
\end{align}

Recall the definition of an $\mathbf{E}$-partition. This definition shows that
$\phi$ is an $\mathbf{E}$-partition if and only if it satisfies the following
two assertions:

\textit{Assertion }$\mathcal{A}_{1}$\textit{:} Every $e\in E$ and $f\in E$
satisfying $e<_{1}f$ satisfy $\phi\left(  e\right)  \leq\phi\left(  f\right)
$.

\textit{Assertion }$\mathcal{A}_{2}$\textit{:} Every $e\in E$ and $f\in E$
satisfying $e<_{1}f$ and $f<_{2}e$ satisfy $\phi\left(  e\right)  <\phi\left(
f\right)  $.

In other words, we have the following logical equivalence:%
\[
\ \left(  \phi\text{ is an }\mathbf{E}\text{-partition}\right)
\ \Longleftrightarrow\ \left(  \text{Assertions }\mathcal{A}_{1}\text{ and
}\mathcal{A}_{2}\text{ hold}\right)  .
\]

Now, consider the following assertion:

\textit{Assertion }$\mathcal{A}_{3}$\textit{:} Every $e\in E$ and $f\in E$
satisfying $e<_{1}f$ satisfy $\phi\left(  e\right)  <\phi\left(  f\right)  $.

The following logical implication is obvious:%
\begin{equation}
\left(  \text{Assertion }\mathcal{A}_{3}\text{ holds}\right)
\ \Longrightarrow\ \left(  \text{Assertions }\mathcal{A}_{1}\text{ and
}\mathcal{A}_{2}\text{ hold}\right)  . \label{pf.prop.example.strictlinc.3}%
\end{equation}
On the other hand, we also have the following logical implication:%
\begin{equation}
\left(  \text{Assertions }\mathcal{A}_{1}\text{ and }\mathcal{A}_{2}\text{
hold}\right)  \ \Longrightarrow\ \left(  \text{Assertion }\mathcal{A}%
_{3}\text{ holds}\right)  \label{pf.prop.example.strictlinc.4}%
\end{equation}
\footnote{\textit{Proof of (\ref{pf.prop.example.strictlinc.4}):} Assume that
Assertions $\mathcal{A}_{1}$ and $\mathcal{A}_{2}$ hold. We need to show that
Assertion $\mathcal{A}_{3}$ holds.
\par
Let $e\in E$ and $f\in E$ be such that $e<_{1}f$. From $e<_{1}f$, we conclude
that $f>_{1}e$, so that $f<_{2}e$ (since the order $<_{2}$ is an extension of
the order $>_{1}$). Hence, Assertion $\mathcal{A}_{2}$ shows that $\phi\left(
e\right)  <\phi\left(  f\right)  $.
\par
Now, forget that we fixed $e$ and $f$. We thus have proven that every $e\in E$
and $f\in E$ satisfying $e<_{1}f$ satisfy $\phi\left(  e\right)  <\phi\left(
f\right)  $. In other words, Assertion $\mathcal{A}_{3}$ holds. This proves
the implication (\ref{pf.prop.example.strictlinc.4}).}. Combining this
implication with (\ref{pf.prop.example.strictlinc.3}), we obtain the
equivalence%
\[
\left(  \text{Assertions }\mathcal{A}_{1}\text{ and }\mathcal{A}_{2}\text{
hold}\right)  \ \Longleftrightarrow\ \left(  \text{Assertion }\mathcal{A}%
_{3}\text{ holds}\right)  .
\]

Now, we have the following chain of equivalences:%
\begin{align*}
&  \ \left(  \phi\text{ is an }\mathbf{E}\text{-partition}\right) \\
&  \Longleftrightarrow\ \left(  \text{Assertions }\mathcal{A}_{1}\text{ and
}\mathcal{A}_{2}\text{ hold}\right) \\
&  \Longleftrightarrow\ \left(  \text{Assertion }\mathcal{A}_{3}\text{
holds}\right) \\
&  \Longleftrightarrow\ \left(  \text{every }e\in E\text{ and }f\in E\text{
satisfying }e<_{1}f\text{ satisfy }\phi\left(  e\right)  <\phi\left(
f\right)  \right) \\
&  \ \ \ \ \ \ \ \ \ \ \left(  \text{because this is Assertion }%
\mathcal{A}_{3}\right) \\
&  \Longleftrightarrow\ \left(  \phi\text{ is a strictly increasing map from
}\left(  E,<_{1}\right)  \text{ to }\left\{  1,2,3,\ldots\right\}  \right) \\
&  \ \ \ \ \ \ \ \ \ \ \left(  \text{by the definition of a \textquotedblleft
strictly increasing map\textquotedblright}\right)  .
\end{align*}
Thus, (\ref{pf.prop.example.strictlinc.goal}) is proven. This concludes the
proof of Proposition \ref{prop.example.strictinc}.
\end{proof}

\subsection{Semistandard tableaux as $\mathbf{E}$-partitions}

Let us now verify two further claims made in Example \ref{exam.dp} -- namely,
the claims about semistandard tableaux. We recall the definition of a
semistandard tableau:

\begin{definition}
\label{def.sst}Let $\mu$ and $\lambda$ be two partitions such that
$\mu\subseteq\lambda$. Define the set $Y\left(  \lambda/\mu\right)  $ as in
Example \ref{exam.dp}. A \textit{semistandard tableau of shape }$\lambda/\mu$
means a map $\phi:Y\left(  \lambda/\mu\right)  \rightarrow\left\{
1,2,3,\ldots\right\}  $ satisfying the following two assertions:

\textit{Assertion }$\mathcal{T}_{1}$: For any $\left(  i,a\right)  \in
Y\left(  \lambda/\mu\right)  $ and $\left(  i,b\right)  \in Y\left(
\lambda/\mu\right)  $ with $a<b$, we have $\phi\left(  i,a\right)  \leq
\phi\left(  i,b\right)  $.

\textit{Assertion }$\mathcal{T}_{2}$: For any $\left(  a,j\right)  \in
Y\left(  \lambda/\mu\right)  $ and $\left(  b,j\right)  \in Y\left(
\lambda/\mu\right)  $ with $a<b$, we have $\phi\left(  a,j\right)
<\phi\left(  b,j\right)  $.

(It is usual to visualize the set $Y\left(  \lambda/\mu\right)  $ as a set of
$1\times1$-squares on the integer lattice $\mathbb{Z}^{2}$; then, a map
$\phi:Y\left(  \lambda/\mu\right)  \rightarrow\left\{  1,2,3,\ldots\right\}  $
can be regarded as a filling of these squares with numbers in $\left\{
1,2,3,\ldots\right\}  $. In this visual representation, Assertion
$\mathcal{T}_{1}$ claims that the entries of the filling $\phi$ are weakly
increasing from left to right along each row of the tableau, whereas Assertion
$\mathcal{T}_{2}$ says that the entries of $\phi$ are strictly increasing from
top to bottom along each column of the tableau. See \cite[\S 2.2]{Reiner} for
more about semistandard tableaux (which are called column-strict tableaux in
\cite{Reiner}) as well as for examples of this visual representation.)
\end{definition}

The following proposition contains two unproven claims made in Example
\ref{exam.dp}:

\begin{proposition}
\label{prop.example.sst}Let $\mu$ and $\lambda$ be two partitions such that
$\mu\subseteq\lambda$. Define the double posets $\mathbf{Y}\left(  \lambda
/\mu\right)  $ and $\mathbf{Y}_{h}\left(  \lambda/\mu\right)  $ as in Example
\ref{exam.dp}. Then:

\begin{enumerate}
\item[(a)] The $\mathbf{Y}\left(  \lambda/\mu\right)  $-partitions are
precisely the semistandard tableaux of shape $\lambda/\mu$.

\item[(b)] The $\mathbf{Y}_{h}\left(  \lambda/\mu\right)  $-partitions are
precisely the semistandard tableaux of shape $\lambda/\mu$.
\end{enumerate}
\end{proposition}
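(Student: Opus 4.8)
The plan is to prove both parts by directly unpacking Definition~\ref{def.E-partition} for the double poset in question and matching the two resulting conditions against the two assertions $\mathcal{T}_1$ and $\mathcal{T}_2$ of Definition~\ref{def.sst}. In either case, a map $\phi : Y\left(\lambda/\mu\right) \to \left\{1,2,3,\ldots\right\}$ is an $\mathbf{E}$-partition precisely when it satisfies \textbf{Condition 1}, namely $\phi\left(i,j\right) \leq \phi\left(i',j'\right)$ whenever $(i,j) <_1 (i',j')$, together with a second condition governing the strict inequalities. The key auxiliary fact I would record first is a ``corner-cell'' membership statement: if $(i,j)$ and $(i',j')$ both lie in $Y\left(\lambda/\mu\right)$ with $i \leq i'$ and $j \leq j'$, then $(i,j')$ also lies in $Y\left(\lambda/\mu\right)$. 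Indeed, $j' \geq j > \mu_i$ gives $\mu_i < j'$, while $j' \leq \lambda_{i'} \leq \lambda_i$ (using that $\lambda$ is a partition and $i \leq i'$) gives $j' \leq \lambda_i$. This lets me factor any $<_1$-comparison through $(i,j')$, moving first along a row (invoking $\mathcal{T}_1$) and then down a column (invoking $\mathcal{T}_2$).

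For part (a) I would unpack \textbf{Condition 2}, whose hypothesis is $(i,j) <_1 (i',j')$ and $(i',j') <_2 (i,j)$. Combining the definitions of $<_1$ and $<_2$ from Example~\ref{exam.dp} forces $j = j'$ and $i < i'$, so Condition 2 is literally $\mathcal{T}_2$. For Condition 1, the specialization $i = i'$, $j < j'$ is $\mathcal{T}_1$, and the specialization $j = j'$, $i < i'$ follows from $\mathcal{T}_2$; conversely, given a semistandard tableau, any $(i,j) <_1 (i',j')$ is handled by the corner cell, since $\phi(i,j) \leq \phi(i,j')$ by $\mathcal{T}_1$ and $\phi(i,j') \leq \phi(i',j')$ by $\mathcal{T}_2$. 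Hence Conditions 1 and 2 together are equivalent to $\mathcal{T}_1$ and $\mathcal{T}_2$, which proves (a).

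For part (b) the order $<_1$ is unchanged, so Condition 1 is treated exactly as above; only Condition 2 changes. Unpacking $(i,j) <_1 (i',j')$ together with $(i',j') <_h (i,j)$, the definition of $<_h$ (and the fact that $j \leq j'$ rules out the ``same row'' branch) reduces the hypothesis to $i < i'$ and $j \leq j'$, with conclusion $\phi(i,j) < \phi(i',j')$. This is formally stronger than $\mathcal{T}_2$, but it again follows via the corner cell: $\phi(i,j) \leq \phi(i,j')$ by $\mathcal{T}_1$ and $\phi(i,j') < \phi(i',j')$ by $\mathcal{T}_2$ (strict since $i < i'$). The converse specialization $j = j'$ recovers $\mathcal{T}_2$, so (b) follows as well.

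The main obstacle is the converse direction---verifying Condition 1 (and the strengthened Condition 2 of part (b)) from $\mathcal{T}_1$ and $\mathcal{T}_2$: the componentwise order $<_1$ compares cells differing in both coordinates, whereas $\mathcal{T}_1$ and $\mathcal{T}_2$ speak only of cells sharing a row or a column. The corner-cell membership lemma is exactly what bridges this gap, and the one point requiring genuine care is confirming that the intermediate cell $(i,j')$ really lies in $Y\left(\lambda/\mu\right)$ and not merely in the ambient lattice $\left\{1,2,3,\ldots\right\}^2$; once that is in hand, the remainder is routine case analysis on the defining inequalities.
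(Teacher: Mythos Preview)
Your proof is correct and follows essentially the same route as the paper: both hinge on the convexity of the skew shape (your ``corner-cell'' lemma is the paper's Lemma~\ref{lem.example.sst.convex}) to factor a general $<_1$-comparison into a row move and a column move. The only differences are cosmetic: the paper abstracts both cases into a single auxiliary statement (Proposition~\ref{prop.example.sst.gen}, governed by conditions $\mathcal{O}_1$ and $\mathcal{O}_2$ on the second order) rather than treating (a) and (b) separately, and it routes through the opposite corner cell $(i',j)$ rather than your $(i,j')$ --- but either corner works, and your direct unpacking of Condition~2 in part~(a) (observing that $e <_1 f$ and $f <_2 e$ force $j=j'$) is arguably cleaner than the paper's proof by contradiction.
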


We shall prove a slightly more general fact:

\begin{proposition}
\label{prop.example.sst.gen}Let $\mu$ and $\lambda$ be two partitions such
that $\mu\subseteq\lambda$. Define the set $Y\left(  \lambda/\mu\right)  $ and
the relation $<_{1}$ as in Example \ref{exam.dp}. Let $\prec$ be a strict
partial order on the set $Y\left(  \lambda/\mu\right)  $ such that the
following two conditions hold:

\textit{Condition $\mathcal{O}$}$_{1}$\textit{:} For any $\left(  i,a\right)
\in Y\left(  \lambda/\mu\right)  $ and $\left(  i,b\right)  \in Y\left(
\lambda/\mu\right)  $ with $a<b$, we have $\left(  i,a\right)  \prec\left(
i,b\right)  $.

\textit{Condition $\mathcal{O}$}$_{2}$\textit{:} For any $\left(  a,j\right)
\in Y\left(  \lambda/\mu\right)  $ and $\left(  b,j\right)  \in Y\left(
\lambda/\mu\right)  $ with $a<b$, we have $\left(  b,j\right)  \prec\left(
a,j\right)  $.

Then, the $\left(  Y\left(  \lambda/\mu\right)  ,<_{1},\prec\right)
$-partitions are precisely the semistandard tableaux of shape $\lambda/\mu$.
\end{proposition}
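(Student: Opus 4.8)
The plan is to prove the two inclusions separately: a map $\phi : Y\left(\lambda/\mu\right) \to \left\{1,2,3,\ldots\right\}$ is a $\left(Y\left(\lambda/\mu\right), <_1, \prec\right)$-partition if and only if it is a semistandard tableau of shape $\lambda/\mu$. Throughout I will use the fact that $<_1$ is nothing but the strict product order on $\left\{1,2,3,\ldots\right\}^2$ restricted to $Y\left(\lambda/\mu\right)$: we have $\left(i,j\right) <_1 \left(i',j'\right)$ exactly when $i \leq i'$, $j \leq j'$ and $\left(i,j\right) \neq \left(i',j'\right)$.

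The forward direction is immediate from the definition of an $\EE$-partition (Definition \ref{def.E-partition}) together with Conditions $\mathcal{O}_1$ and $\mathcal{O}_2$, and needs nothing more. Suppose $\phi$ is a $\left(Y\left(\lambda/\mu\right), <_1, \prec\right)$-partition. For Assertion $\mathcal{T}_1$, given cells $\left(i,a\right)$ and $\left(i,b\right)$ with $a < b$, we have $\left(i,a\right) <_1 \left(i,b\right)$, so the first defining condition of an $\EE$-partition gives $\phi\left(i,a\right) \leq \phi\left(i,b\right)$. For Assertion $\mathcal{T}_2$, given $\left(a,j\right)$ and $\left(b,j\right)$ with $a < b$, we have $\left(a,j\right) <_1 \left(b,j\right)$, and Condition $\mathcal{O}_2$ gives $\left(b,j\right) \prec \left(a,j\right)$; thus the second defining condition, applied with $e = \left(a,j\right)$ and $f = \left(b,j\right)$ (noting $f \prec e$), yields $\phi\left(a,j\right) < \phi\left(b,j\right)$. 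Hence $\phi$ is a semistandard tableau.

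For the converse I would like to invoke Lemma \ref{lem.Epartition.cover}, which reduces the verification of the $\EE$-partition conditions to pairs related by a cover, but this requires first knowing that $\left(Y\left(\lambda/\mu\right), <_1, \prec\right)$ is tertispecial. The key combinatorial step, and the one I expect to be the main obstacle, is to determine the covering relations of $<_1$: I claim that $\left(i,j\right)$ is $<_1$-covered by $\left(i',j'\right)$ precisely when $\left(i',j'\right) = \left(i,j+1\right)$ or $\left(i',j'\right) = \left(i+1,j\right)$, a horizontal or vertical unit step with both cells in $Y\left(\lambda/\mu\right)$. The nontrivial part is ruling out diagonal covers: if $\left(i,j\right) <_1 \left(i',j'\right)$ with $i < i'$ and $j < j'$, then $\left(i,j'\right) \in Y\left(\lambda/\mu\right)$, since $\mu_i < j < j'$ and $j' \leq \lambda_{i'} \leq \lambda_i$ (using that $\lambda$ is weakly decreasing), and it lies strictly between, so no such cover exists; an entirely analogous interpolation argument rules out non-adjacent covers inside a single row or column. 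Once this cover description is established, tertispeciality follows at once: a horizontal cover $\left(i,j\right), \left(i,j+1\right)$ is $\prec$-comparable by $\mathcal{O}_1$, and a vertical cover $\left(i,j\right), \left(i+1,j\right)$ is $\prec$-comparable by $\mathcal{O}_2$.

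Granting tertispeciality, I finish the converse by checking Conditions 1 and 2 of Lemma \ref{lem.Epartition.cover} for a semistandard tableau $\phi$, using the cover description above. If $\left(i,j\right)$ is $<_1$-covered by $\left(i',j'\right)$ and $\left(i,j\right) \prec \left(i',j'\right)$, then the cover cannot be vertical (for $\mathcal{O}_2$ would force $\left(i',j'\right) \prec \left(i,j\right)$, contradicting the asymmetry of $\prec$), so it is the horizontal step to $\left(i,j+1\right)$, and $\mathcal{T}_1$ gives $\phi\left(i,j\right) \leq \phi\left(i,j+1\right)$, which is Condition 1. Symmetrically, if the cover satisfies $\left(i',j'\right) \prec \left(i,j\right)$, then it cannot be horizontal (by $\mathcal{O}_1$), so it is the vertical step to $\left(i+1,j\right)$, and $\mathcal{T}_2$ gives $\phi\left(i,j\right) < \phi\left(i+1,j\right)$, which is Condition 2. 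Lemma \ref{lem.Epartition.cover} then shows that $\phi$ is a $\left(Y\left(\lambda/\mu\right), <_1, \prec\right)$-partition, completing the proof.
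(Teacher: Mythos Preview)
Your proof is correct, but it takes a different route from the paper's for the converse direction. The paper verifies the two defining conditions of an $\EE$-partition directly: given arbitrary $e = (a_1,b_1) <_1 f = (a_3,b_3)$, it uses the convexity of $Y(\lambda/\mu)$ (Lemma~\ref{lem.example.sst.convex}) to place the ``corner'' cell $(a_3,b_1)$ in the shape and then chains the row and column monotonicities of a semistandard tableau through that cell; for the strict condition under $f \prec e$, it argues by contradiction that $a_1 = a_3$ must hold and then invokes $\mathcal{O}_1$ to contradict $f \prec e$. Your approach instead first establishes that $(Y(\lambda/\mu), <_1, \prec)$ is tertispecial by pinning down the $<_1$-covers as unit horizontal or vertical steps, and then appeals to Lemma~\ref{lem.Epartition.cover} so that only covers need to be checked. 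Both proofs ultimately rest on the same convexity fact (your intermediate-cell arguments for ruling out non-unit covers are special cases of Lemma~\ref{lem.example.sst.convex}), but your version is more modular and reuses the cover-reduction machinery already in the paper, while the paper's direct argument keeps the appendix proof self-contained and independent of Lemma~\ref{lem.Epartition.cover}.
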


Before we begin proving this, let us however reach back and verify a really
basic result about partitions:

\begin{lemma}
\label{lem.example.sst.convex}Let $\mu$ and $\lambda$ be two partitions such
that $\mu\subseteq\lambda$. Let $\left(  a_{1} ,b_{1}\right)  $, $\left(
a_{2},b_{2}\right)  $ and $\left(  a_{3},b_{3}\right)  $ be three elements of
$\mathbb{Z}^{2}$ such that $\left(  a_{1},b_{1}\right)  \in Y\left(
\lambda/\mu\right)  $ and $\left(  a_{3},b_{3}\right)  \in Y\left(
\lambda/\mu\right)  $ and $a_{1}\leq a_{2}\leq a_{3}$ and $b_{1}\leq b_{2}\leq
b_{3}$. Then,
\begin{equation}
\left(  a_{2},b_{2}\right)  \in Y\left(  \lambda/\mu\right)  .
\label{pf.prop.example.sst.gen.2}%
\end{equation}

\end{lemma}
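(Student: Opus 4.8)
The plan is to unfold the definition of $Y\left(\lambda/\mu\right)$ and then chase inequalities, the only genuine input being that a partition is a weakly decreasing sequence of nonnegative integers. Recall that $Y\left(\lambda/\mu\right)$ consists of all pairs $\left(i,j\right) \in \left\{1,2,3,\ldots\right\}^2$ satisfying $\mu_i < j \leq \lambda_i$. So from $\left(a_1,b_1\right) \in Y\left(\lambda/\mu\right)$ I would extract that $a_1$ and $b_1$ are positive integers with $\mu_{a_1} < b_1 \leq \lambda_{a_1}$, and from $\left(a_3,b_3\right) \in Y\left(\lambda/\mu\right)$ that $a_3$ and $b_3$ are positive integers with $\mu_{a_3} < b_3 \leq \lambda_{a_3}$. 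In order to prove \eqref{pf.prop.example.sst.gen.2}, it then suffices to show that $a_2$ and $b_2$ are positive integers and that $\mu_{a_2} < b_2 \leq \lambda_{a_2}$.

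First I would dispose of positivity: since $a_1 \geq 1$ and $a_1 \leq a_2$, we have $a_2 \geq 1$, and likewise $b_2 \geq b_1 \geq 1$, so $\left(a_2,b_2\right) \in \left\{1,2,3,\ldots\right\}^2$. Next I would treat the two flanking inequalities separately. For the upper bound $b_2 \leq \lambda_{a_2}$, I would invoke that $\lambda$ is weakly decreasing: from $a_2 \leq a_3$ we get $\lambda_{a_2} \geq \lambda_{a_3}$, and combining with $b_2 \leq b_3 \leq \lambda_{a_3}$ gives $b_2 \leq \lambda_{a_3} \leq \lambda_{a_2}$. Symmetrically, for the lower bound $\mu_{a_2} < b_2$, I would use that $\mu$ is weakly decreasing: from $a_1 \leq a_2$ we get $\mu_{a_2} \leq \mu_{a_1}$, and combining with $\mu_{a_1} < b_1 \leq b_2$ gives $\mu_{a_2} \leq \mu_{a_1} < b_2$. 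Together these yield $\mu_{a_2} < b_2 \leq \lambda_{a_2}$, which with the positivity just established means precisely $\left(a_2,b_2\right) \in Y\left(\lambda/\mu\right)$, proving \eqref{pf.prop.example.sst.gen.2}.

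There is no real obstacle here; the argument is entirely elementary. The only point that requires a little care is to apply the monotonicity of $\lambda$ and $\mu$ in the correct directions: the upper bound relies on $\lambda_{a_2} \geq \lambda_{a_3}$ (passing to the smaller row index raises the bound), whereas the lower bound relies on $\mu_{a_2} \leq \mu_{a_1}$ (passing to the larger row index lowers the bound). These are exactly the weakly-decreasing inequalities for partitions, applied along the chain $a_1 \leq a_2 \leq a_3$ furnished by the hypotheses.
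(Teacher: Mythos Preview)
Your proof is correct and follows essentially the same approach as the paper's: unpack the definition of $Y\left(\lambda/\mu\right)$, establish positivity of $a_2$ and $b_2$ from $a_2 \geq a_1 \geq 1$ and $b_2 \geq b_1 \geq 1$, then use the weak decrease of $\mu$ along $a_1 \leq a_2$ for the lower bound and the weak decrease of $\lambda$ along $a_2 \leq a_3$ for the upper bound. The paper orders the two monotonicity arguments in the opposite sequence, but the logic is identical.
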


\begin{proof}
[Proof of Lemma~\ref{lem.example.sst.convex}.]Write the partition $\mu$ in the
form $\left(  \mu_{1},\mu_{2},\mu_{3},\ldots\right)  $.
%Thus, $\mu=\left( \mu_{1},\mu_{2},\mu_{3},\ldots\right)  $.

Write the partition $\lambda$ in the form $\left(  \lambda_{1},\lambda
_{2},\lambda_{3},\ldots\right)  $.
%Thus, $\lambda=\left(  \lambda_{1},\lambda_{2},\lambda_{3},\ldots\right)  $.

Now, $Y\left(  \lambda/\mu\right)  $ is the set of all $\left(  i,j\right)
\in\left\{  1,2,3,\ldots\right\}  ^{2}$ satisfying $\mu_{i}<j\leq\lambda_{i}$
(by the definition of $Y\left(  \lambda/\mu\right)  $). In other words,%
\[
Y\left(  \lambda/\mu\right)  =\left\{  \left(  i,j\right)  \in\left\{
1,2,3,\ldots\right\}  ^{2}\ \mid\ \mu_{i}<j\leq\lambda_{i}\right\}  .
\]

Now,
\[
\left(  a_{1},b_{1}\right)  \in Y\left(  \lambda/\mu\right)  =\left\{  \left(
i,j\right)  \in\left\{  1,2,3,\ldots\right\}  ^{2}\ \mid\ \mu_{i}<j\leq
\lambda_{i}\right\}  .
\]
In other words, $\left(  a_{1},b_{1}\right)  $ is an element of $\left\{
1,2,3,\ldots\right\}  ^{2}$ and satisfies $\mu_{a_{1}}<b_{1}\leq\lambda
_{a_{1}}$.

Also,
\[
\left(  a_{3},b_{3}\right)  \in Y\left(  \lambda/\mu\right)  =\left\{  \left(
i,j\right)  \in\left\{  1,2,3,\ldots\right\}  ^{2}\ \mid\ \mu_{i}<j\leq
\lambda_{i}\right\}  .
\]
In other words, $\left(  a_{3},b_{3}\right)  $ is an element of $\left\{
1,2,3,\ldots\right\}  ^{2}$ and satisfies $\mu_{a_{3}}<b_{3}\leq\lambda
_{a_{3}}$.

From $\left(  a_{1},b_{1}\right)  \in\left\{  1,2,3,\ldots\right\}  ^{2}$, we
obtain $a_{1} \geq1$ and $b_{1} \geq1$. Now, $\left(  a_{2},b_{2}\right)  \in\left\{
1,2,3,\ldots\right\}  ^{2}$ (since $a_{2} \geq a_{1} \geq1$ and $b_{2} \geq
b_{1} \geq1$). Hence, $\lambda_{a_{2}}$ and $\mu_{a_{2}}$ are well-defined.

The sequence $\left(  \mu_{1},\mu_{2},\mu_{3},\ldots\right)  =\mu$ is a
partition. Thus, $\mu_{1}\geq\mu_{2}\geq\mu_{3}\geq\cdots$. In other words,
any two positive integers $u$ and $v$ satisfying $u\leq v$ satisfy $\mu
_{u}\geq\mu_{v}$. Applying this to $u=a_{1}$ and $v=a_{2}$, we obtain
$\mu_{a_{1}}\geq\mu_{a_{2}}$ (since $a_{1}\leq a_{2}$). Thus, $\mu_{a_{2}}%
\leq\mu_{a_{1}}<b_{1}\leq b_{2}$.

The sequence $\left(  \lambda_{1},\lambda_{2},\lambda_{3},\ldots\right)
=\lambda$ is a partition. Hence, $\lambda_{1}\geq\lambda_{2}\geq\lambda
_{3}\geq\cdots$. In other words, any two positive integers $u$ and $v$
satisfying $u\leq v$ satisfy $\lambda_{u}\geq\lambda_{v}$. Applying this to
$u=a_{2}$ and $v=a_{3}$, we obtain $\lambda_{a_{2}}\geq\lambda_{a_{3}}$ (since
$a_{2}\leq a_{3}$). Thus, $\lambda_{a_{3}}\leq\lambda_{a_{2}}$, so that
$b_{2}\leq b_{3}\leq\lambda_{a_{3}}\leq\lambda_{a_{2}}$. Hence, $\mu_{a_{2}%
}<b_{2}\leq\lambda_{a_{2}}$.

Now, we know that $\left(  a_{2},b_{2}\right)  $ is an element of $\left\{
1,2,3,\ldots\right\}  ^{2}$ and satisfies $\mu_{a_{2}}<b_{2}\leq\lambda
_{a_{2}}$. Hence,
\[
\left(  a_{2},b_{2}\right)  \in\left\{  \left(  i,j\right)  \in\left\{
1,2,3,\ldots\right\}  ^{2}\ \mid\ \mu_{i}<j\leq\lambda_{i}\right\}  =Y\left(
\lambda/\mu\right)  .
\]
This proves Lemma~\ref{lem.example.sst.convex}.
\end{proof}

\begin{proof}
[Proof of Proposition \ref{prop.example.sst.gen}.]Both $\left(  Y\left(
\lambda/\mu\right)  ,<_{1},\prec\right)  $-partitions and semistandard
tableaux of shape $\lambda/\mu$ are maps from $Y\left(  \lambda/\mu\right)  $
to $\left\{  1,2,3,\ldots\right\}  $. Our goal is to show that the $\left(
Y\left(  \lambda/\mu\right)  ,<_{1},\prec\right)  $-partitions are precisely
the semistandard tableaux of shape $\lambda/\mu$. In other words, our goal is
to prove that, for every map $\phi:Y\left(  \lambda/\mu\right)  \rightarrow
\left\{  1,2,3,\ldots\right\}  $, we have the following equivalence:%
\begin{align}
&  \left(  \phi\text{ is a }\left(  Y\left(  \lambda/\mu\right)  ,<_{1}%
,\prec\right)  \text{-partition}\right) \nonumber\\
&  \Longleftrightarrow\ \left(  \phi\text{ is a semistandard tableau of shape
}\lambda/\mu\right)  . \label{pf.prop.example.sst.gen.goal}%
\end{align}

Let $\phi:Y\left(  \lambda/\mu\right)  \rightarrow\left\{  1,2,3,\ldots
\right\}  $ be a map.

Recall the definition of a $\left(  Y\left(  \lambda/\mu\right)  ,<_{1}%
,\prec\right)  $-partition. This definition shows that $\phi$ is a $\left(
Y\left(  \lambda/\mu\right)  ,<_{1},\prec\right)  $-partition if and only if
it satisfies the following two assertions:

\textit{Assertion }$\mathcal{P}_{1}$\textit{:} Every $e\in Y\left(
\lambda/\mu\right)  $ and $f\in Y\left(  \lambda/\mu\right)  $ satisfying
$e<_{1}f$ satisfy $\phi\left(  e\right)  \leq\phi\left(  f\right)  $.

\textit{Assertion }$\mathcal{P}_{2}$\textit{:} Every $e\in Y\left(
\lambda/\mu\right)  $ and $f\in Y\left(  \lambda/\mu\right)  $ satisfying
$e<_{1}f$ and $f\prec e$ satisfy $\phi\left(  e\right)  <\phi\left(  f\right)
$.

On the other hand, recall the definition of a semistandard tableau of shape
$\lambda/\mu$. This definition shows that $\phi$ is a semistandard tableau of
shape $\lambda/\mu$ if and only if it satisfies the following two assertions:

\textit{Assertion }$\mathcal{T}_{1}$: For any $\left(  i,a\right)  \in
Y\left(  \lambda/\mu\right)  $ and $\left(  i,b\right)  \in Y\left(
\lambda/\mu\right)  $ with $a<b$, we have $\phi\left(  i,a\right)  \leq
\phi\left(  i,b\right)  $.

\textit{Assertion }$\mathcal{T}_{2}$: For any $\left(  a,j\right)  \in
Y\left(  \lambda/\mu\right)  $ and $\left(  b,j\right)  \in Y\left(
\lambda/\mu\right)  $ with $a<b$, we have $\phi\left(  a,j\right)
<\phi\left(  b,j\right)  $.

Now, we shall prove the logical implication%
\begin{align}
&  \ \left(  \phi\text{ is a }\left(  Y\left(  \lambda/\mu\right)
,<_{1},\prec\right)  \text{-partition}\right) \nonumber\\
&  \Longrightarrow\ \left(  \phi\text{ is a semistandard tableau of shape
}\lambda/\mu\right)  . \label{pf.prop.example.sst.gen.goal1}%
\end{align}

\textit{Proof of (\ref{pf.prop.example.sst.gen.goal1}):} Assume that $\phi$ is
a $\left(  Y\left(  \lambda/\mu\right)  ,<_{1},\prec\right)  $-partition. We
shall now prove that $\phi$ is a semistandard tableau of shape $\lambda/\mu$.

We know that $\phi$ is a $\left(  Y\left(  \lambda/\mu\right)  ,<_{1}%
,\prec\right)  $-partition if and only if $\phi$ satisfies Assertions
$\mathcal{P}_{1}$ and $\mathcal{P}_{2}$. Thus, $\phi$ satisfies Assertions
$\mathcal{P}_{1}$ and $\mathcal{P}_{2}$ (since $\phi$ is a $\left(  Y\left(
\lambda/\mu\right)  ,<_{1},\prec\right)  $-partition).

We now notice that $\phi$ satisfies Assertion $\mathcal{T}_{1}$%
\ \ \ \ \footnote{\textit{Proof.} Let $\left(  i,a\right)  \in Y\left(
\lambda/\mu\right)  $ and $\left(  i,b\right)  \in Y\left(  \lambda
/\mu\right)  $ be such that $a<b$. From $a<b$, we obtain $a\neq b$, hence
$\left(  i,a\right)  \neq\left(  i,b\right)  $. Also, $a \leq b$ (since $a <
b$).
\par
The definition of the relation $<_{1}$ shows that $\left(  i,a\right)
<_{1}\left(  i,b\right)  $ holds if and only if $\left(  i\leq i\text{ and
}a\leq b\text{ and }\left(  i,a\right)  \neq\left(  i,b\right)  \right)  $.
Thus, $\left(  i,a\right)  <_{1}\left(  i,b\right)  $ holds (since we have
$\left(  i\leq i\text{ and }a\leq b\text{ and }\left(  i,a\right)  \neq\left(
i,b\right)  \right)  $). But we know that $\phi$ satisfies Assertion
$\mathcal{P}_{1}$. Thus, Assertion $\mathcal{P}_{1}$ (applied to $e=\left(
i,a\right)  $ and $f=\left(  i,b\right)  $) shows that $\phi\left(
i,a\right)  \leq\phi\left(  i,b\right)  $ (since $\left(  i,a\right)
<_{1}\left(  i,b\right)  $).
\par
Now, forget that we fixed $\left(  i,a\right)  $ and $\left(  i,b\right)  $.
We thus have shown that for any $\left(  i,a\right)  \in Y\left(  \lambda
/\mu\right)  $ and $\left(  i,b\right)  \in Y\left(  \lambda/\mu\right)  $
with $a<b$, we have $\phi\left(  i,a\right)  \leq\phi\left(  i,b\right)  $. In
other words, $\phi$ satisfies Assertion $\mathcal{T}_{1}$.} and satisfies
Assertion $\mathcal{T}_{2}$\ \ \ \ \footnote{\textit{Proof.} Let $\left(
a,j\right)  \in Y\left(  \lambda/\mu\right)  $ and $\left(  b,j\right)  \in
Y\left(  \lambda/\mu\right)  $ be such that $a<b$. From $a<b$, we obtain
$a\neq b$, hence $\left(  a,j\right)  \neq\left(  b,j\right)  $. Also, $a \leq
b$ (since $a < b$).
\par
The definition of the relation $<_{1}$ shows that $\left(  a,j\right)
<_{1}\left(  b,j\right)  $ holds if and only if $\left(  a\leq b\text{ and
}j\leq j\text{ and }\left(  a,j\right)  \neq\left(  b,j\right)  \right)  $.
Thus, $\left(  a,j\right)  <_{1}\left(  b,j\right)  $ holds (since we have
$\left(  a\leq b\text{ and }j\leq j\text{ and }\left(  a,j\right)  \neq\left(
b,j\right)  \right)  $). Also, $\left(  b,j\right)  \prec\left(  a,j\right)  $
(because of Condition $\mathcal{O}_{2}$ in the statement of Proposition
\ref{prop.example.sst.gen}). But we know that $\phi$ satisfies Assertion
$\mathcal{P}_{2}$. Thus, Assertion $\mathcal{P}_{2}$ (applied to $e=\left(
a,j\right)  $ and $f=\left(  b,j\right)  $) shows that $\phi\left(
a,j\right)  <\phi\left(  b,j\right)  $ (since $\left(  a,j\right)
<_{1}\left(  b,j\right)  $ and $\left(  b,j\right)  \prec\left(  a,j\right)
$).
\par
Now, forget that we fixed $\left(  a,j\right)  $ and $\left(  b,j\right)  $.
We thus have shown that for any $\left(  a,j\right)  \in Y\left(  \lambda
/\mu\right)  $ and $\left(  b,j\right)  \in Y\left(  \lambda/\mu\right)  $
with $a<b$, we have $\phi\left(  a,j\right)  <\phi\left(  b,j\right)  $. In
other words, $\phi$ satisfies Assertion $\mathcal{T}_{2}$.}.

Recall that $\phi$ is a semistandard tableau of shape $\lambda/\mu$ if and
only if it satisfies Assertions $\mathcal{T}_{1}$ and $\mathcal{T}_{2}$. Thus,
$\phi$ is a semistandard tableau of shape $\lambda/\mu$ (since $\phi$
satisfies Assertions $\mathcal{T}_{1}$ and $\mathcal{T}_{2}$).

Now, forget that we have assumed that $\phi$ is a $\left(  Y\left(
\lambda/\mu\right)  ,<_{1},\prec\right)  $-partition. We thus have shown that
if $\phi$ is a $\left(  Y\left(  \lambda/\mu\right)  ,<_{1},\prec\right)
$-partition, then $\phi$ is a semistandard tableau of shape $\lambda/\mu$.
Thus, the implication (\ref{pf.prop.example.sst.gen.goal1}) is proven.

Let us next prove the logical implication%
\begin{align}
&  \ \left(  \phi\text{ is a semistandard tableau of shape }\lambda/\mu\right)
\nonumber\\
&  \Longrightarrow\ \left(  \phi\text{ is a }\left(  Y\left(  \lambda
/\mu\right)  ,<_{1},\prec\right)  \text{-partition}\right)  .
\label{pf.prop.example.sst.gen.goal2}%
\end{align}

\textit{Proof of (\ref{pf.prop.example.sst.gen.goal2}):} Assume that $\phi$ is
a semistandard tableau of shape $\lambda/\mu$. We shall now prove that $\phi$
is a $\left(  Y\left(  \lambda/\mu\right)  ,<_{1},\prec\right)  $-partition.

Recall that $\phi$ is a semistandard tableau of shape $\lambda/\mu$ if and
only if it satisfies Assertions $\mathcal{T}_{1}$ and $\mathcal{T}_{2}$. Thus,
$\phi$ satisfies Assertions $\mathcal{T}_{1}$ and $\mathcal{T}_{2}$ (since
$\phi$ is a semistandard tableau of shape $\lambda/\mu$).

Let us make some simple observations:

\begin{itemize}
\item For any $\left(  i,a\right)  \in Y\left(  \lambda/\mu\right)  $ and
$\left(  i,b\right)  \in Y\left(  \lambda/\mu\right)  $ with $a\leq b$, we
have
\begin{equation}
\phi\left(  i,a\right)  \leq\phi\left(  i,b\right)
\label{pf.prop.example.sst.gen.goal2.pf.1}%
\end{equation}
\footnote{\textit{Proof of (\ref{pf.prop.example.sst.gen.goal2.pf.1}):} Let
$\left(  i,a\right)  \in Y\left(  \lambda/\mu\right)  $ and $\left(
i,b\right)  \in Y\left(  \lambda/\mu\right)  $ be such that $a\leq b$. We must
prove the inequality (\ref{pf.prop.example.sst.gen.goal2.pf.1}). If $a=b$,
then this inequality holds (because if $a=b$, then $\phi\left(
i,\underbrace{a}_{=b}\right)  =\phi\left(  i,b\right)  \leq\phi\left(
i,b\right)  $). Hence, for the rest of this proof, we can WLOG assume that we
don't have $a=b$. Assume this.
\par
We have $a\neq b$ (since we don't have $a=b$). Combining this with $a\leq b$,
we obtain $a<b$. Now, recall that $\phi$ satisfies Assertion $\mathcal{T}_{1}%
$. Hence, Assertion $\mathcal{T}_{1}$ shows that $\phi\left(  i,a\right)
\leq\phi\left(  i,b\right)  $. This proves
(\ref{pf.prop.example.sst.gen.goal2.pf.1}).}.

\item For any $\left(  a,j\right)  \in Y\left(  \lambda/\mu\right)  $ and
$\left(  b,j\right)  \in Y\left(  \lambda/\mu\right)  $ with $a\leq b$, we
have%
\begin{equation}
\phi\left(  a,j\right)  \leq\phi\left(  b,j\right)
\label{pf.prop.example.sst.gen.goal2.pf.2}%
\end{equation}
\footnote{\textit{Proof of (\ref{pf.prop.example.sst.gen.goal2.pf.2}):} Let
$\left(  a,j\right)  \in Y\left(  \lambda/\mu\right)  $ and $\left(
b,j\right)  \in Y\left(  \lambda/\mu\right)  $ be such that $a\leq b$. We must
prove the inequality (\ref{pf.prop.example.sst.gen.goal2.pf.2}). If $a=b$,
then this inequality holds (because if $a=b$, then $\phi\left(  \underbrace{a}%
_{=b},j\right)  =\phi\left(  b,j\right)  \leq\phi\left(  b,j\right)  $).
Hence, for the rest of this proof, we can WLOG assume that we don't have
$a=b$. Assume this.
\par
We have $a\neq b$ (since we don't have $a=b$). Combining this with $a\leq b$,
we obtain $a<b$. Now, recall that $\phi$ satisfies Assertion $\mathcal{T}_{2}%
$. Hence, Assertion $\mathcal{T}_{2}$ shows that $\phi\left(  a,j\right)
<\phi\left(  b,j\right)  $. Thus, $\phi\left(  a,j\right)  \leq\phi\left(
b,j\right)  $. This proves (\ref{pf.prop.example.sst.gen.goal2.pf.2}).}.
\end{itemize}

Now, we can easily see that $\phi$ satisfies Assertion $\mathcal{P}_{1}%
$\ \ \ \ \footnote{\textit{Proof.} Let $e\in Y\left(  \lambda/\mu\right)  $
and $f\in Y\left(  \lambda/\mu\right)  $ be such that $e<_{1}f$. We shall
prove that $\phi\left(  e\right)  \leq\phi\left(  f\right)  $.
\par
We have $e\in Y\left(  \lambda/\mu\right)  \subseteq\left\{  1,2,3,\ldots
\right\}  ^{2}$. Hence, there exist two positive integers $a_{1}$ and $b_{1}$
such that $e=\left(  a_{1},b_{1}\right)  $. Consider these $a_{1}$ and $b_{1}%
$. Thus, $\left(  a_{1},b_{1}\right)  =e\in Y\left(  \lambda/\mu\right)  $.
\par
We have $f\in Y\left(  \lambda/\mu\right)  \subseteq\left\{  1,2,3,\ldots
\right\}  ^{2}$. Hence, there exist two positive integers $a_{3}$ and $b_{3}$
such that $f=\left(  a_{3},b_{3}\right)  $. Consider these $a_{3}$ and $b_{3}%
$. Thus, $\left(  a_{3},b_{3}\right)  =f\in Y\left(  \lambda/\mu\right)  $.
\par
We have $\left(  a_{1},b_{1}\right)  =e<_{1}f=\left(  a_{3},b_{3}\right)  $.
On the other hand, $\left(  a_{1},b_{1}\right)  <_{1}\left(  a_{3}%
,b_{3}\right)  $ holds if and only if $\left(  a_{1}\leq a_{3}\text{ and
}b_{1}\leq b_{3}\text{ and }\left(  a_{1},b_{1}\right)  \neq\left(
a_{3},b_{3}\right)  \right)  $ (by the definition of the relation $<_{1}$).
Thus, we have $\left(  a_{1}\leq a_{3}\text{ and }b_{1}\leq b_{3}\text{ and
}\left(  a_{1},b_{1}\right)  \neq\left(  a_{3},b_{3}\right)  \right)  $ (since
$\left(  a_{1},b_{1}\right)  <_{1}\left(  a_{3},b_{3}\right)  $ holds).
%Hence,
%$a_{1}\leq a_{3}$ and $b_{1}\leq b_{3}$ and $\left(  a_{1},b_{1}\right)
%\neq\left(  a_{3},b_{3}\right)  $.
\par
Now, $a_{1}\leq a_{3}\leq a_{3}$ and $b_{1}\leq b_{1}\leq b_{3}$. Hence,
(\ref{pf.prop.example.sst.gen.2}) (applied to $\left(  a_{2},b_{2}\right)
=\left(  a_{3},b_{1}\right)  $) shows that $\left(  a_{3},b_{1}\right)  \in
Y\left(  \lambda/\mu\right)  $.
\par
Now, (\ref{pf.prop.example.sst.gen.goal2.pf.1}) (applied to $a_{3}$, $b_{1}$
and $b_{3}$ instead of $i$, $a$ and $b$) shows that $\phi\left(  a_{3}%
,b_{1}\right)  \leq\phi\left(  a_{3},b_{3}\right)  $ (since $\left(
a_{3},b_{1}\right)  \in Y\left(  \lambda/\mu\right)  $, $\left(  a_{3}%
,b_{3}\right)  \in Y\left(  \lambda/\mu\right)  $ and $b_{1}\leq b_{3}$).
Thus, $\phi\left(  a_{3},b_{1}\right)  \leq\phi\underbrace{\left(  a_{3}%
,b_{3}\right)  }_{=f}=\phi\left(  f\right)  $.
\par
Also, (\ref{pf.prop.example.sst.gen.goal2.pf.2}) (applied to $a_{1}$, $a_{3}$
and $b_{1}$ instead of $a$, $b$ and $j$) shows that $\phi\left(  a_{1}%
,b_{1}\right)  \leq\phi\left(  a_{3},b_{1}\right)  $ (since $\left(
a_{1},b_{1}\right)  \in Y\left(  \lambda/\mu\right)  $, $\left(  a_{3}%
,b_{1}\right)  \in Y\left(  \lambda/\mu\right)  $ and $a_{1}\leq a_{3}$).
Thus, $\phi\underbrace{\left(  e\right)  }_{=\left(  a_{1},b_{1}\right)
}=\phi\left(  a_{1},b_{1}\right)  \leq\phi\left(  a_{3},b_{1}\right)  $.
\par
Now, $\phi\left(  e\right)  \leq\phi\left(  a_{3},b_{1}\right)  \leq
\phi\left(  f\right)  $.
\par
Now, forget that we fixed $e$ and $f$. We thus have shown that every $e\in
Y\left(  \lambda/\mu\right)  $ and $f\in Y\left(  \lambda/\mu\right)  $
satisfying $e<_{1}f$ satisfy $\phi\left(  e\right)  \leq\phi\left(  f\right)
$. In other words, $\phi$ satisfies Assertion $\mathcal{P}_{1}$.} and
Assertion $\mathcal{P}_{2}$\ \ \ \ \footnote{\textit{Proof.} Let $e\in
Y\left(  \lambda/\mu\right)  $ and $f\in Y\left(  \lambda/\mu\right)  $ be
such that $e<_{1}f$ and $f\prec e$. We shall prove that $\phi\left(  e\right)
<\phi\left(  f\right)  $.
\par
Assume the contrary. Thus, $\phi\left(  e\right)  \geq\phi\left(  f\right)  $.
\par
We have $e\in Y\left(  \lambda/\mu\right)  \subseteq\left\{  1,2,3,\ldots
\right\}  ^{2}$. Hence, there exist two positive integers $a_{1}$ and $b_{1}$
such that $e=\left(  a_{1},b_{1}\right)  $. Consider these $a_{1}$ and $b_{1}%
$. Thus, $\left(  a_{1},b_{1}\right)  =e\in Y\left(  \lambda/\mu\right)  $.
\par
We have $f\in Y\left(  \lambda/\mu\right)  \subseteq\left\{  1,2,3,\ldots
\right\}  ^{2}$. Hence, there exist two positive integers $a_{3}$ and $b_{3}$
such that $f=\left(  a_{3},b_{3}\right)  $. Consider these $a_{3}$ and $b_{3}%
$. Thus, $\left(  a_{3},b_{3}\right)  =f\in Y\left(  \lambda/\mu\right)  $.
\par
We have $\left(  a_{1},b_{1}\right)  =e<_{1}f=\left(  a_{3},b_{3}\right)  $.
On the other hand, $\left(  a_{1},b_{1}\right)  <_{1}\left(  a_{3}%
,b_{3}\right)  $ holds if and only if $\left(  a_{1}\leq a_{3}\text{ and
}b_{1}\leq b_{3}\text{ and }\left(  a_{1},b_{1}\right)  \neq\left(
a_{3},b_{3}\right)  \right)  $ (by the definition of the relation $<_{1}$).
Thus, we have $\left(  a_{1}\leq a_{3}\text{ and }b_{1}\leq b_{3}\text{ and
}\left(  a_{1},b_{1}\right)  \neq\left(  a_{3},b_{3}\right)  \right)  $ (since
$\left(  a_{1},b_{1}\right)  <_{1}\left(  a_{3},b_{3}\right)  $ holds).
%Hence,
%$a_{1}\leq a_{3}$ and $b_{1}\leq b_{3}$ and $\left(  a_{1},b_{1}\right)
%\neq\left(  a_{3},b_{3}\right)  $.
\par
Now, $a_{1}\leq a_{3}\leq a_{3}$ and $b_{1}\leq b_{1}\leq b_{3}$. Hence,
(\ref{pf.prop.example.sst.gen.2}) (applied to $\left(  a_{2},b_{2}\right)
=\left(  a_{3},b_{1}\right)  $) shows that $\left(  a_{3},b_{1}\right)  \in
Y\left(  \lambda/\mu\right)  $.
\par
Now, (\ref{pf.prop.example.sst.gen.goal2.pf.1}) (applied to $a_{3}$, $b_{1}$
and $b_{3}$ instead of $i$, $a$ and $b$) shows that $\phi\left(  a_{3}%
,b_{1}\right)  \leq\phi\left(  a_{3},b_{3}\right)  $ (since $\left(
a_{3},b_{1}\right)  \in Y\left(  \lambda/\mu\right)  $, $\left(  a_{3}%
,b_{3}\right)  \in Y\left(  \lambda/\mu\right)  $ and $b_{1}\leq b_{3}$).
Thus,
\begin{align*}
\phi\left(  a_{3},b_{1}\right)   &  \leq\phi\underbrace{\left(  a_{3}%
,b_{3}\right)  }_{=f}=\phi\left(  f\right)  \leq\phi\underbrace{\left(
e\right)  }_{=\left(  a_{1},b_{1}\right)  }\ \ \ \ \ \ \ \ \ \ \left(
\text{since }\phi\left(  e\right)  \geq\phi\left(  f\right)  \right) \\
&  =\phi\left(  a_{1},b_{1}\right)  .
\end{align*}
\par
Now, assume (for the sake of contradiction) that $a_{1}<a_{3}$. Recall that
$\phi$ satisfies Assertion $\mathcal{T}_{2}$. Hence, Assertion $\mathcal{T}%
_{2}$ (applied to $a_{1}$, $a_{3}$ and $b_{1}$ instead of $a$, $b$ and $j$)
shows that $\phi\left(  a_{1},b_{1}\right)  <\phi\left(  a_{3},b_{1}\right)  $
(since $\left(  a_{1},b_{1}\right)  \in Y\left(  \lambda/\mu\right)  $,
$\left(  a_{3},b_{1}\right)  \in Y\left(  \lambda/\mu\right)  $ and
$a_{1}<a_{3}$). This contradicts $\phi\left(  a_{3},b_{1}\right)  \leq
\phi\left(  a_{1},b_{1}\right)  $. This contradiction shows that our
assumption (that $a_{1}<a_{3}$) was false. Hence, $a_{1}\geq a_{3}$. Combining
this with $a_{1}\leq a_{3}$, we obtain $a_{1}=a_{3}$. Thus, $\left(
\underbrace{a_{1}}_{=a_{3}},b_{3}\right)  =\left(  a_{3},b_{3}\right)  =f\in
Y\left(  \lambda/\mu\right)  $.
\par
If we had $b_{1}=b_{3}$, then we would have $\left(  \underbrace{a_{1}%
}_{=a_{3}},\underbrace{b_{1}}_{=b_{3}}\right)  =\left(  a_{3},b_{3}\right)  $,
which would contradict $\left(  a_{1},b_{1}\right)  \neq\left(  a_{3}%
,b_{3}\right)  $. Hence, we cannot have $b_{1}=b_{3}$. Thus, we have
$b_{1}\neq b_{3}$. Combined with $b_{1}\leq b_{3}$, this shows that
$b_{1}<b_{3}$.
\par
The relation $\prec$ is a strict partial order, and thus is antisymmetric.
\par
However, recall that the relation $\prec$ satisfies Condition $\mathcal{O}%
_{1}$ in the statement of Proposition \ref{prop.example.sst.gen}. This
Condition $\mathcal{O}_{1}$ (applied to $a_{1}$, $b_{1}$ and $b_{3}$ instead
of $i$, $a$ and $b$) shows that $\left(  a_{1},b_{1}\right)  \prec\left(
a_{1},b_{3}\right)  $ (since $\left(  a_{1},b_{1}\right)  \in Y\left(
\lambda/\mu\right)  $, $\left(  a_{1},b_{3}\right)  \in Y\left(  \lambda
/\mu\right)  $ and $b_{1}<b_{3}$). Thus, $e=\left(  a_{1},b_{1}\right)
\prec\left(  \underbrace{a_{1}}_{=a_{3}},b_{3}\right)  =\left(  a_{3}%
,b_{3}\right)  =f$. Hence, we cannot have $f\prec e$ (since the relation
$\prec$ is antisymmetric). This contradicts $f\prec e$. This contradiction
proves that our assumption was wrong. Hence, the proof of $\phi\left(
e\right)  <\phi\left(  f\right)  $ is complete.
\par
Now, forget that we fixed $e$ and $f$. We thus have shown that every $e\in
Y\left(  \lambda/\mu\right)  $ and $f\in Y\left(  \lambda/\mu\right)  $
satisfying $e<_{1}f$ and $f\prec e$ satisfy $\phi\left(  e\right)
<\phi\left(  f\right)  $. In other words, $\phi$ satisfies Assertion
$\mathcal{P}_{2}$.}.

Recall that $\phi$ is a $\left(  Y\left(  \lambda/\mu\right)  ,<_{1}%
,\prec\right)  $-partition if and only if $\phi$ satisfies Assertions
$\mathcal{P}_{1}$ and $\mathcal{P}_{2}$. Thus, $\phi$ is a $\left(  Y\left(
\lambda/\mu\right)  ,<_{1},\prec\right)  $-partition (since $\phi$ satisfies
Assertions $\mathcal{P}_{1}$ and $\mathcal{P}_{2}$).

Now, forget that we have assumed that $\phi$ is a semistandard tableau of
shape $\lambda/\mu$. We thus have shown that if $\phi$ is a semistandard
tableau of shape $\lambda/\mu$, then $\phi$ is a $\left(  Y\left(  \lambda
/\mu\right)  ,<_{1},\prec\right)  $-partition. Thus, the implication
(\ref{pf.prop.example.sst.gen.goal2}) is proven.

Now, combining the implications (\ref{pf.prop.example.sst.gen.goal1}) and
(\ref{pf.prop.example.sst.gen.goal2}), we obtain the equivalence%
\begin{align*}
&  \left(  \phi\text{ is a }\left(  Y\left(  \lambda/\mu\right)  ,<_{1}%
,\prec\right)  \text{-partition}\right) \\
&  \Longleftrightarrow\ \left(  \phi\text{ is a semistandard tableau of shape
}\lambda/\mu\right)  .
\end{align*}
Thus, the equivalence (\ref{pf.prop.example.sst.gen.goal}) is proven.

Now, forget that we fixed $\phi$. Thus, for every map $\phi:Y\left(
\lambda/\mu\right)  \rightarrow\left\{  1,2,3,\ldots\right\}  $, we have
proven the equivalence (\ref{pf.prop.example.sst.gen.goal}). Thus, a map
$\phi:Y\left(  \lambda/\mu\right)  \rightarrow\left\{  1,2,3,\ldots\right\}  $
is a $\left(  Y\left(  \lambda/\mu\right)  ,<_{1},\prec\right)  $-partition if
and only if it is a semistandard tableau of shape $\lambda/\mu$. Hence, the
$\left(  Y\left(  \lambda/\mu\right)  ,<_{1},\prec\right)  $-partitions are
precisely the semistandard tableaux of shape $\lambda/\mu$ (because both
$\left(  Y\left(  \lambda/\mu\right)  ,<_{1},\prec\right)  $-partitions and
semistandard tableaux of shape $\lambda/\mu$ are maps from $Y\left(
\lambda/\mu\right)  $ to $\left\{  1,2,3,\ldots\right\}  $).
\end{proof}

\begin{proof}
[Proof of Proposition \ref{prop.example.sst}.]Define the set $Y\left(
\lambda/\mu\right)  $ as in Example \ref{exam.dp}.

Define the relations $<_{1}$, $<_{2}$ and $<_{h}$ as in Example \ref{exam.dp}.
It is straightforward to see that $<_{1}$, $<_{2}$ and $<_{h}$ are strict
partial orders. The definition of $\mathbf{Y}\left(  \lambda/\mu\right)  $
yields $\mathbf{Y}\left(  \lambda/\mu\right)  =\left(  Y\left(  \lambda
/\mu\right)  ,<_{1},<_{2}\right)  $. The definition of $\mathbf{Y}_{h}\left(
\lambda/\mu\right)  $ yields $\mathbf{Y}_{h}\left(  \lambda/\mu\right)
=\left(  Y\left(  \lambda/\mu\right)  ,<_{1},<_{h}\right)  $.

(a) It is straightforward to see that the relation $<_{2}$ satisfies the
Conditions $\mathcal{O}_{1}$ and $\mathcal{O}_{2}$ in the statement of
Proposition \ref{prop.example.sst.gen} (with $\prec$ replaced by $<_{2}$).
Hence, Proposition \ref{prop.example.sst.gen} (applied to $<_{2}$ instead of
$\prec$) shows that the $\left(  Y\left(  \lambda/\mu\right)  ,<_{1}%
,<_{2}\right)  $-partitions are precisely the semistandard tableaux of shape
$\lambda/\mu$. In other words, the $\mathbf{Y}\left(  \lambda/\mu\right)
$-partitions are precisely the semistandard tableaux of shape $\lambda/\mu$
(since $\mathbf{Y}\left(  \lambda/\mu\right)  =\left(  Y\left(  \lambda
/\mu\right)  ,<_{1},<_{2}\right)  $). This proves Proposition
\ref{prop.example.sst} (a).

(b) The proof of Proposition \ref{prop.example.sst} (b) is analogous to that
of Proposition \ref{prop.example.sst} (a) (but now, the relation $<_{h}$ and
the double poset $\mathbf{Y}_{h}\left(  \lambda/\mu\right)  $ replace the
relation $<_{2}$ and the double poset $\mathbf{Y}\left(  \lambda/\mu\right)  $).
\end{proof}

\subsection{$M_{\alpha}$ as $\Gamma\left(  \mathbf{E},w\right)  $}

Next, let us prove the claim of Example \ref{exam.Gamma} (b):

\begin{proposition}
\label{prop.example.Gamma.b}Let $\ell\in\mathbb{N}$. Let $E=\left\{
1,2,\ldots,\ell\right\}  $. Let $<_{1}$ be the restriction of the standard
relation $<$ on $\mathbb{Z}$ to the subset $E$. (Thus, two elements $e$ and
$f$ of $E$ satisfy $e<_{1}f$ if and only if $e<f$.) Let $>_{1}$ be the
opposite relation of $<_{1}$. (Thus, two elements $e$ and $f$ of $E$ satisfy
$e>_{1}f$ if and only if $f<_{1}e$.) Let ${\mathbf{E}}=\left(  E,<_{1}%
,>_{1}\right)  $.

\begin{enumerate}
\item[(a)] Then, $\mathbf{E}$ is a special double poset.

\item[(b)] Let $w:E\rightarrow\left\{  1,2,3,\ldots\right\}  $ be any map. Set
$\alpha=\left(  w\left(  1\right)  ,w\left(  2\right)  ,\ldots,w\left(
\ell\right)  \right)  $. Then, $\alpha$ is a composition and satisfies
$\Gamma\left(  {\mathbf{E}},w\right)  =M_{\alpha}$.
\end{enumerate}
\end{proposition}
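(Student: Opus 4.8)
The plan is to prove Proposition~\ref{prop.example.Gamma.b} in two stages, treating part (a) and part (b) separately, both by direct verification against the relevant definitions.

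For part (a), I would first observe that $<_1$ is a strict partial order on $E$ (being the restriction of the standard order $<$ on $\mathbb{Z}$ to $E$), and that $>_1$, being its opposite relation, is also a strict partial order by Definition~\ref{def.double-poset} (h). Hence $\mathbf{E} = \left(E, <_1, >_1\right)$ is a double poset. To show that $\mathbf{E}$ is \emph{special}, I must verify that $>_1$ is a total order. This amounts to noting that $<_1$ itself is a total order on $E = \left\{1, 2, \ldots, \ell\right\}$ (any two distinct integers in $E$ are comparable under $<$), and that the opposite relation of a total order is again a total order. This is the easy part.

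For part (b), I would first note that $\alpha = \left(w\left(1\right), w\left(2\right), \ldots, w\left(\ell\right)\right)$ is a composition because each $w\left(i\right)$ lies in $\left\{1, 2, 3, \ldots\right\}$, so it is a finite sequence of positive integers. The core of the argument is to identify the $\mathbf{E}$-partitions explicitly. Since $>_1$ is an extension of $>_1$ (trivially), Proposition~\ref{prop.example.strictinc} (applied with $<_2 = >_1$) shows that the $\mathbf{E}$-partitions are precisely the strictly increasing maps $\pi : \left(E, <_1\right) \to \left\{1, 2, 3, \ldots\right\}$; that is, the maps $\pi$ with $\pi\left(1\right) < \pi\left(2\right) < \cdots < \pi\left(\ell\right)$. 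Such maps are in bijection with strictly increasing sequences $\left(i_1 < i_2 < \cdots < i_\ell\right)$ of positive integers via $\pi \mapsto \left(\pi\left(1\right), \pi\left(2\right), \ldots, \pi\left(\ell\right)\right)$. Then I would compute, for such a $\pi$, that
\[
\xx_{\pi, w} = \prod_{e \in E} x_{\pi\left(e\right)}^{w\left(e\right)}
= \prod_{k=1}^{\ell} x_{i_k}^{w\left(k\right)}
= x_{i_1}^{\alpha_1} x_{i_2}^{\alpha_2} \cdots x_{i_\ell}^{\alpha_\ell},
\]
where $i_k = \pi\left(k\right)$ and $\alpha_k = w\left(k\right)$. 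Summing over all $\mathbf{E}$-partitions $\pi$ thus gives
\[
\Gamma\left(\mathbf{E}, w\right)
= \sum_{i_1 < i_2 < \cdots < i_\ell} x_{i_1}^{\alpha_1} x_{i_2}^{\alpha_2} \cdots x_{i_\ell}^{\alpha_\ell}
= M_\alpha,
\]
the last equality being the definition of $M_\alpha$.

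The main obstacle, such as it is, will be the careful bookkeeping in applying Proposition~\ref{prop.example.strictinc}: I must confirm that the hypothesis ``$<_2$ is an extension of $>_1$'' is satisfied, which here is immediate since $<_2$ \emph{is} $>_1$ (and any relation extends itself). After that, the identification of $\mathbf{E}$-partitions with strictly increasing sequences and the monomial computation are purely routine, and the only thing to watch is the reindexing between the ground-set product $\prod_{e \in E}$ and the sequence form $\prod_{k=1}^{\ell}$, which is trivial because $E = \left\{1, 2, \ldots, \ell\right\}$ and $\pi\left(k\right) = i_k$.
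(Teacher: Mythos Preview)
Your proposal is correct and follows essentially the same approach as the paper's own proof: both parts are proven by direct verification, with part (b) relying on Proposition~\ref{prop.example.strictinc} to identify the $\mathbf{E}$-partitions with strictly increasing maps, then setting up the obvious bijection with strictly increasing $\ell$-tuples and reading off the monomial sum as $M_\alpha$.
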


\begin{proof}
[Proof of Proposition \ref{prop.example.Gamma.b}.](a) The relation $<_{1}$ is
a total order (since it is a restriction of the relation $<$ on $\mathbb{Z}$,
which is a total order). Hence, the relation $>_{1}$ is a total order as well
(since it is the opposite relation of the total order $<_{1}$). Thus, $\left(
E,<_{1},>_{1}\right)  $ is a special double poset (by the definition of
\textquotedblleft special\textquotedblright). In other words, $\mathbf{E}$ is
a special double poset (since ${\mathbf{E}}=\left(  E,<_{1},>_{1}\right)  $).
This proves Proposition \ref{prop.example.Gamma.b} (a).

(b) The map $w$ is a map $E\rightarrow\left\{  1,2,3,\ldots\right\}  $. In
other words, the map $w$ is a map $\left\{  1,2,\ldots,\ell\right\}
\rightarrow\left\{  1,2,3,\ldots\right\}  $ (since $E=\left\{  1,2,\ldots
,\ell\right\}  $).
%Hence, for every $i\in\left\{  1,2,\ldots,\ell\right\}  $,
%we have $w\left(  i\right)  \in\left\{  1,2,3,\ldots\right\}  $.
Thus, $\left(  w\left(  1\right)  ,w\left(  2\right)  ,\ldots,w\left(
\ell\right)  \right)  $ is a sequence of positive integers, i.e., a
composition. In other words, $\alpha$ is a composition (since $\alpha=\left(
w\left(  1\right)  ,w\left(  2\right)  ,\ldots,w\left(  \ell\right)  \right)
$).

We have $\alpha=\left(  w\left(  1\right)  ,w\left(  2\right)  ,\ldots
,w\left(  \ell\right)  \right)  $. Thus, the definition of $M_{\alpha}$ yields%
\begin{equation}
M_{\alpha}=\sum_{i_{1}<i_{2}<\cdots<i_{\ell}}x_{i_{1}}^{w\left(  1\right)
}x_{i_{2}}^{w\left(  2\right)  }\cdots x_{i_{\ell}}^{w\left(  \ell\right)  }.
\label{pf.prop.example.Gamma.b.b.Malpha=}%
\end{equation}

It remains to prove that $\Gamma\left(  \mathbf{E},w\right)  =M_{\alpha}$. The
order $>_{1}$ is an extension of the order $>_{1}$ (obviously). Thus,
Proposition \ref{prop.example.strictinc} (applied to $>_{1}$ instead of
$<_{2}$) shows that the $\mathbf{E}$-partitions are precisely the strictly
increasing maps from the poset $\left(  E,<_{1}\right)  $ to the totally
ordered set $\left\{  1,2,3,\ldots\right\}  $.

On the other hand, let $\mathcal{J}$ denote the set of all length-$\ell$
strictly increasing sequences of positive integers. In other words,%
\[
\mathcal{J}=\left\{  \left(  i_{1},i_{2},\ldots,i_{\ell}\right)  \in\left\{
1,2,3,\ldots\right\}  ^{\ell}\ \mid\ i_{1}<i_{2}<\cdots<i_{\ell}\right\}  .
\]
Thus,%
\[
\sum_{\left(  i_{1},i_{2},\ldots,i_{\ell}\right)  \in\mathcal{J}}%
=\sum_{\substack{\left(  i_{1},i_{2},\ldots,i_{\ell}\right)  \in\left\{
1,2,3,\ldots\right\}  ^{\ell};\\i_{1}<i_{2}<\cdots<i_{\ell}}}=\sum
_{i_{1}<i_{2}<\cdots<i_{\ell}}%
\]
(an equality between summation signs).

Let $Z$ denote the set of all $\mathbf{E}$-partitions.

For every $\phi\in Z$, we have $\left(  \phi\left(  1\right)  ,\phi\left(
2\right)  ,\ldots,\phi\left(  \ell\right)  \right)  \in\mathcal{J}%
$\ \ \ \ \footnote{\textit{Proof.} Let $\phi\in Z$. Thus, $\phi$ is an element
of $Z$. In other words, $\phi$ is an $\mathbf{E}$-partition (since $Z$ is the
set of all $\mathbf{E}$-partitions). In other words, $\phi$ is a strictly
increasing map from the poset $\left(  E,<_{1}\right)  $ to the totally
ordered set $\left\{  1,2,3,\ldots\right\}  $ (since the $\mathbf{E}%
$-partitions are precisely the strictly increasing maps from the poset
$\left(  E,<_{1}\right)  $ to the totally ordered set $\left\{  1,2,3,\ldots
\right\}  $). In other words, $\phi$ is a map $E\rightarrow\left\{
1,2,3,\ldots\right\}  $ which has the property that if $e$ and $f$ are two
elements of $E$ satisfying $e<_{1}f$, then%
\begin{equation}
\phi\left(  e\right)  <\phi\left(  f\right)
\label{pf.prop.example.Gamma.b.b.fn1.1}%
\end{equation}
(by the definition of a \textquotedblleft strictly increasing
map\textquotedblright).
\par
The map $\phi$ is a map $E\rightarrow\left\{  1,2,3,\ldots\right\}  $. In
other words, the map $\phi$ is a map $\left\{  1,2,\ldots,\ell\right\}
\rightarrow\left\{  1,2,3,\ldots\right\}  $ (since $E=\left\{  1,2,\ldots
,\ell\right\}  $).
%Thus, $\phi\left(  i\right)  \in\left\{  1,2,3,\ldots
%\right\}  $ for every $i\in\left\{  1,2,\ldots,\ell\right\}  $.
Hence, $\left(  \phi\left(  1\right)  ,\phi\left(  2\right)  ,\ldots
,\phi\left(  \ell\right)  \right)  $ is an element of $\left\{  1,2,3,\ldots
\right\}  ^{\ell}$.
\par
Now, let $i$ and $j$ be two elements of $\left\{  1,2,\ldots,\ell\right\}  $
satisfying $i<j$. The elements $\phi\left(  i\right)  $ and $\phi\left(
j\right)  $ are well-defined (since $i$ and $j$ belong to $\left\{
1,2,\ldots,\ell\right\}  = E$). We furthermore have $i < j$. In other words,
$i <_{1} j$ (since the relation $<_{1}$ is the restriction of the standard
relation $<$ on $\mathbb{Z}$ to the subset $E$). Thus,
(\ref{pf.prop.example.Gamma.b.b.fn1.1}) (applied to $e=i$ and $f=j$) shows
that $\phi\left(  i\right)  <\phi\left(  j\right)  $.
\par
Now, forget that we fixed $i$ and $j$. We thus have shown that if $i$ and $j$
are two elements of $\left\{  1,2,\ldots,\ell\right\}  $ satisfying $i<j$,
then $\phi\left(  i\right)  <\phi\left(  j\right)  $. In other words,
$\phi\left(  1\right)  <\phi\left(  2\right)  <\cdots<\phi\left(  \ell\right)
$.
\par
Now, $\left(  \phi\left(  1\right)  ,\phi\left(  2\right)  ,\ldots,\phi\left(
\ell\right)  \right)  $ is an element of $\left\{  1,2,3,\ldots\right\}
^{\ell}$ and satisfies $\phi\left(  1\right)  <\phi\left(  2\right)
<\cdots<\phi\left(  \ell\right)  $. Hence,
\begin{align*}
&  \left(  \phi\left(  1\right)  ,\phi\left(  2\right)  ,\ldots,\phi\left(
\ell\right)  \right)  \in\left\{  \left(  i_{1},i_{2},\ldots,i_{\ell}\right)
\in\left\{  1,2,3,\ldots\right\}  ^{\ell}\ \mid\ i_{1}<i_{2}<\cdots<i_{\ell
}\right\}  =\mathcal{J},
\end{align*}
qed.}. Hence, we can define a map $\Phi:Z\rightarrow\mathcal{J}$ by%
\[
\left(  \Phi\left(  \phi\right)  =\left(  \phi\left(  1\right)  ,\phi\left(
2\right)  ,\ldots,\phi\left(  \ell\right)  \right)
\ \ \ \ \ \ \ \ \ \ \text{for every }\phi\in Z\right)  .
\]
Consider this map $\Phi$. This map $\Phi$ is
injective\footnote{\textit{Proof.} Let $\phi_{1}$ and $\phi_{2}$ be two
elements of $Z$ such that $\Phi\left(  \phi_{1}\right)  =\Phi\left(  \phi
_{2}\right)  $. We shall show that $\phi_{1}=\phi_{2}$.
\par
The definition of $\Phi$ shows that $\Phi\left(  \phi_{1}\right)  =\left(
\phi_{1}\left(  1\right)  ,\phi_{1}\left(  2\right)  ,\ldots,\phi_{1}\left(
\ell\right)  \right)  $. The definition of $\Phi$ shows that $\Phi\left(
\phi_{2}\right)  =\left(  \phi_{2}\left(  1\right)  ,\phi_{2}\left(  2\right)
,\ldots,\phi_{2}\left(  \ell\right)  \right)  $. Hence,%
\[
\left(  \phi_{1}\left(  1\right)  ,\phi_{1}\left(  2\right)  ,\ldots,\phi
_{1}\left(  \ell\right)  \right)  =\Phi\left(  \phi_{1}\right)  =\Phi\left(
\phi_{2}\right)  =\left(  \phi_{2}\left(  1\right)  ,\phi_{2}\left(  2\right)
,\ldots,\phi_{2}\left(  \ell\right)  \right)  .
\]
In other words, $\phi_{1}\left(  i\right)  =\phi_{2}\left(  i\right)  $ for
each $i\in\left\{  1,2,\ldots,\ell\right\}  $. In other words, $\phi
_{1}\left(  i\right)  =\phi_{2}\left(  i\right)  $ for each $i\in E$ (since
$E=\left\{  1,2,\ldots,\ell\right\}  $). In other words, $\phi_{1}=\phi_{2}$.
\par
Now, forget that we fixed $\phi_{1}$ and $\phi_{2}$. We thus have shown that
if $\phi_{1}$ and $\phi_{2}$ are two elements of $Z$ such that $\Phi\left(
\phi_{1}\right)  =\Phi\left(  \phi_{2}\right)  $, then $\phi_{1}=\phi_{2}$. In
other words, the map $\Phi$ is injective, qed.} and
surjective\footnote{\textit{Proof.} Let $\mathbf{j}\in\mathcal{J}$. We shall
show that $\mathbf{j}\in\Phi\left(  Z\right)  $.
\par
We have $\mathbf{j}\in\mathcal{J}=\left\{  \left(  i_{1},i_{2},\ldots,i_{\ell
}\right)  \in\left\{  1,2,3,\ldots\right\}  ^{\ell}\ \mid\ i_{1}<i_{2}%
<\cdots<i_{\ell}\right\}  $. In other words, $\mathbf{j}$ has the form
$\left(  i_{1},i_{2},\ldots,i_{\ell}\right)  $ for some $\left(  i_{1}%
,i_{2},\ldots,i_{\ell}\right)  \in\left\{  1,2,3,\ldots\right\}  ^{\ell}$
satisfying $i_{1}<i_{2}<\cdots<i_{\ell}$. Consider this $\left(  i_{1}%
,i_{2},\ldots,i_{\ell}\right)  $. Thus, $\mathbf{j}=\left(  i_{1},i_{2}%
,\ldots,i_{\ell}\right)  $.
\par
We have $i_{e}\in\left\{  1,2,3,\ldots\right\}  $ for every $e\in\left\{
1,2,\ldots,\ell\right\}  $ (since $\left(  i_{1},i_{2},\ldots,i_{\ell}\right)
\in\left\{  1,2,3,\ldots\right\}  ^{\ell}$). In other words, $i_{e}\in\left\{
1,2,3,\ldots\right\}  $ for every $e\in E$ (since $E=\left\{  1,2,\ldots
,\ell\right\}  $). Thus, we can define a map $\phi:E\rightarrow\left\{
1,2,3,\ldots\right\}  $ by $\left(  \phi\left(  e\right)  =i_{e}\text{ for
every }e\in E\right)  $. Consider this map $\phi$.
\par
We have $i_{1}<i_{2}<\cdots<i_{\ell}$. In other words, if $e$ and $f$ are two
elements of $\left\{  1,2,\ldots,\ell\right\}  $ such that $e<f$, then%
\begin{equation}
i_{e}<i_{f}. \label{pf.prop.example.Gamma.b.b.fn3.1}%
\end{equation}
\par
Let $e$ and $f$ be two elements of $E$ satisfying $e<_{1}f$. From $e <_{1} f$,
we obtain $e < f$ (since the relation $<_{1}$ is the restriction of the
standard relation $<$ on $\mathbb{Z}$ to the subset $E$). Thus,
(\ref{pf.prop.example.Gamma.b.b.fn3.1}) shows that $i_{e}<i_{f}$. But the
definition of $\phi$ shows that $\phi\left(  e\right)  =i_{e}$ and
$\phi\left(  f\right)  =i_{f}$. Hence, $\phi\left(  e\right)  =i_{e}%
<i_{f}=\phi\left(  f\right)  $.
\par
Now, forget that we fixed $e$ and $f$. We thus have shown that if $e$ and $f$
are two elements of $E$ satisfying $e<_{1}f$, then $\phi\left(  e\right)
<\phi\left(  f\right)  $. In other words, $\phi$ is a strictly increasing map
from the poset $\left(  E,<_{1}\right)  $ to the totally ordered set $\left\{
1,2,3,\ldots\right\}  $ (by the definition of a \textquotedblleft strictly
increasing map\textquotedblright). In other words, $\phi$ is an $\mathbf{E}%
$-partition (since the $\mathbf{E}$-partitions are precisely the strictly
increasing maps from the poset $\left(  E,<_{1}\right)  $ to the totally
ordered set $\left\{  1,2,3,\ldots\right\}  $). In other words, $\phi\in Z$
(since $Z$ is the set of all $\mathbf{E}$-partitions).
\par
We have $\phi\left(  e\right)  =i_{e}$ for every $e\in E$ (by the definition
of $\phi$). In other words, $\phi\left(  e\right)  =i_{e}$ for every
$e\in\left\{  1,2,\ldots,\ell\right\}  $ (since $E=\left\{  1,2,\ldots
,\ell\right\}  $).
\par
Now, the definition of $\Phi$ yields%
\begin{align*}
\Phi\left(  \phi\right)   &  =\left(  \phi\left(  1\right)  ,\phi\left(
2\right)  ,\ldots,\phi\left(  \ell\right)  \right)  =\left(  i_{1}%
,i_{2},\ldots,i_{\ell}\right) \\
&  \ \ \ \ \ \ \ \ \ \ \left(  \text{since }\phi\left(  e\right)  =i_{e}\text{
for every }e\in\left\{  1,2,\ldots,\ell\right\}  \right) \\
&  =\mathbf{j}.
\end{align*}
Thus, $\mathbf{j}=\Phi\left(  \underbrace{\phi}_{\in Z}\right)  \in\Phi\left(
Z\right)  $.
\par
Now, forget that we fixed $\mathbf{j}$. We thus have shown that $\mathbf{j}%
\in\Phi\left(  Z\right)  $ for every $\mathbf{j}\in\mathcal{J}$. In other
words, $\mathcal{J}\subseteq\Phi\left(  Z\right)  $. In other words, the map
$\Phi$ is surjective, qed.}. In other words, the map $\Phi$ is bijective.
Thus, $\Phi$ is a bijection. In other words, the map%
\begin{equation}
Z\rightarrow\mathcal{J},\ \ \ \ \ \ \ \ \ \ \phi\mapsto\left(  \phi\left(
1\right)  ,\phi\left(  2\right)  ,\ldots,\phi\left(  \ell\right)  \right)
\label{pf.prop.example.Gamma.b.b.themap}%
\end{equation}
is a bijection\footnote{since the map (\ref{pf.prop.example.Gamma.b.b.themap})
is the map $\Phi$ (because $\Phi\left(  \phi\right)  =\left(  \phi\left(
1\right)  ,\phi\left(  2\right)  ,\ldots,\phi\left(  \ell\right)  \right)  $
for every $\phi\in Z$)}.

For every $\pi\in Z$, we have%
\begin{equation}
\mathbf{x}_{\pi,w}=x_{\pi\left(  1\right)  }^{w\left(  1\right)  }%
x_{\pi\left(  2\right)  }^{w\left(  2\right)  }\cdots x_{\pi\left(
\ell\right)  }^{w\left(  \ell\right)  } \label{pf.prop.example.Gamma.b.b.x=}%
\end{equation}
\footnote{\textit{Proof of (\ref{pf.prop.example.Gamma.b.b.x=}):} Let $\pi\in
Z$. Then, the definition of $\mathbf{x}_{\pi,w}$ yields%
\begin{align*}
{\mathbf{x}}_{\pi,w}  &  =\prod_{e\in E}x_{\pi\left(  e\right)  }^{w\left(
e\right)  } =\underbrace{\prod_{e\in\left\{  1,2,\ldots,\ell\right\}  }%
}_{=\prod_{e=1}^{\ell}}x_{\pi\left(  e\right)  }^{w\left(  e\right)
}\ \ \ \ \ \ \ \ \ \ \left(  \text{since }E=\left\{  1,2,\ldots,\ell\right\}
\right) \\
&  =\prod_{e=1}^{\ell}x_{\pi\left(  e\right)  }^{w\left(  e\right)  }%
=x_{\pi\left(  1\right)  }^{w\left(  1\right)  }x_{\pi\left(  2\right)
}^{w\left(  2\right)  }\cdots x_{\pi\left(  \ell\right)  }^{w\left(
\ell\right)  }.
\end{align*}
This proves (\ref{pf.prop.example.Gamma.b.b.x=}).}.

Now, the definition of $\Gamma\left(  \mathbf{E},w\right)  $ yields%
\begin{align*}
\Gamma\left(  {\mathbf{E}},w\right)   &  =\underbrace{\sum_{\pi\text{ is an
}{\mathbf{E}}\text{-partition}}}_{\substack{=\sum_{\pi\in Z}\\\text{(since
}Z\text{ is the set of}\\\text{all }\mathbf{E}\text{-partitions)}}%
}{\mathbf{x}}_{\pi,w}=\sum_{\pi\in Z}\underbrace{{\mathbf{x}}_{\pi,w}%
}_{\substack{=x_{\pi\left(  1\right)  }^{w\left(  1\right)  }x_{\pi\left(
2\right)  }^{w\left(  2\right)  }\cdots x_{\pi\left(  \ell\right)  }^{w\left(
\ell\right)  }\\\text{(by (\ref{pf.prop.example.Gamma.b.b.x=}))}}}=\sum
_{\pi\in Z}x_{\pi\left(  1\right)  }^{w\left(  1\right)  }x_{\pi\left(
2\right)  }^{w\left(  2\right)  }\cdots x_{\pi\left(  \ell\right)  }^{w\left(
\ell\right)  }\\
&  =\underbrace{\sum_{\left(  i_{1},i_{2},\ldots,i_{\ell}\right)
\in\mathcal{J}}}_{=\sum_{i_{1}<i_{2}<\cdots<i_{\ell}}}x_{i_{1}}^{w\left(
1\right)  }x_{i_{2}}^{w\left(  2\right)  }\cdots x_{i_{\ell}}^{w\left(
\ell\right)  }\\
&  \ \ \ \ \ \ \ \ \ \ \left(
\begin{array}
[c]{c}%
\text{here, we have substituted }\left(  i_{1},i_{2},\ldots,i_{\ell}\right)
\text{ for }\left(  \pi\left(  1\right)  ,\pi\left(  2\right)  ,\ldots
,\pi\left(  \ell\right)  \right)  \text{,}\\
\text{since the map }Z\rightarrow\mathcal{J},\ \phi\mapsto\left(  \phi\left(
1\right)  ,\phi\left(  2\right)  ,\ldots,\phi\left(  \ell\right)  \right) \\
\text{is a bijection}%
\end{array}
\right) \\
&  =\sum_{i_{1}<i_{2}<\cdots<i_{\ell}}x_{i_{1}}^{w\left(  1\right)  }x_{i_{2}%
}^{w\left(  2\right)  }\cdots x_{i_{\ell}}^{w\left(  \ell\right)  }=M_{\alpha
}\ \ \ \ \ \ \ \ \ \ \left(  \text{by (\ref{pf.prop.example.Gamma.b.b.Malpha=}%
)}\right)  .
\end{align*}
This completes the proof of Proposition \ref{prop.example.Gamma.b} (b).
\end{proof}

For future reference, let us state a consequence of Proposition
\ref{prop.example.Gamma.b}:

\begin{corollary}
\label{cor.Malpha.Gamma}Let $\alpha$ be a composition. Then, there exist a set
$E$, a special double poset ${\mathbf{E}}=\left(  E,<_{1},>_{1}\right)  $, and
a map $w:E\rightarrow\left\{  1,2,3,\ldots\right\}  $ satisfying
$\Gamma\left(  {\mathbf{E}},w\right)  =M_{\alpha}$.
\end{corollary}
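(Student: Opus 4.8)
The plan is to deduce this corollary directly from Proposition~\ref{prop.example.Gamma.b}, which has done essentially all of the work already. The only task remaining is to choose the data $\left(E, \mathbf{E}, w\right)$ appropriately and then quote that proposition.

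Concretely, I would write $\alpha$ in the form $\left(\alpha_1, \alpha_2, \ldots, \alpha_\ell\right)$, set $\ell$ to be the number of entries of $\alpha$, and take $E = \left\{1, 2, \ldots, \ell\right\}$. I would let $<_1$ be the restriction of the standard order $<$ on $\ZZ$ to $E$, let $>_1$ be the opposite relation of $<_1$, and put $\mathbf{E} = \left(E, <_1, >_1\right)$. Finally, I would define the map $w : E \to \left\{1, 2, 3, \ldots\right\}$ by $w\left(i\right) = \alpha_i$ for every $i \in E$; this is well-defined and lands in $\left\{1,2,3,\ldots\right\}$ precisely because the entries $\alpha_1, \alpha_2, \ldots, \alpha_\ell$ of a composition are positive integers.

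With these choices in place, the two parts of Proposition~\ref{prop.example.Gamma.b} finish the argument immediately. Part~(a) gives that $\mathbf{E}$ is a special double poset, which supplies the required structure $\mathbf{E} = \left(E, <_1, >_1\right)$. For part~(b), I note that the composition it constructs is $\left(w\left(1\right), w\left(2\right), \ldots, w\left(\ell\right)\right) = \left(\alpha_1, \alpha_2, \ldots, \alpha_\ell\right) = \alpha$, so part~(b) yields $\Gamma\left(\mathbf{E}, w\right) = M_\alpha$. This is exactly the conclusion of the corollary.

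There is no real obstacle here: the content is entirely absorbed into Proposition~\ref{prop.example.Gamma.b}, and the corollary is a pure existence restatement obtained by unwinding the notation. The only point requiring the slightest care is verifying that $w$ genuinely maps into the positive integers (so that it is a legal weight function), but this is automatic from the definition of a composition. I expect the written proof to occupy only a few lines.
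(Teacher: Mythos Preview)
Your proposal is correct and follows essentially the same approach as the paper's proof: both write $\alpha = \left(\alpha_1, \ldots, \alpha_\ell\right)$, take $E = \left\{1, \ldots, \ell\right\}$ with the inherited order and its opposite, define $w\left(i\right) = \alpha_i$, and invoke the two parts of Proposition~\ref{prop.example.Gamma.b} to obtain a special double poset $\mathbf{E}$ with $\Gamma\left(\mathbf{E}, w\right) = M_\alpha$.
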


\begin{proof}
[Proof of Corollary \ref{cor.Malpha.Gamma}.]Write the composition $\alpha$ in
the form $\left(  \alpha_{1},\alpha_{2},\ldots,\alpha_{\ell}\right)  $.
%Thus, $\alpha=\left(  \alpha_{1},\alpha_{2},\ldots,\alpha_{\ell}\right)  $.
Hence, $\left(  \alpha_{1},\alpha_{2},\ldots,\alpha_{\ell}\right)  $ is a
composition (since $\alpha$ is a composition). Therefore, $\alpha_{i}%
\in\left\{  1,2,3,\ldots\right\}  $ for every $i\in\left\{  1,2,\ldots
,\ell\right\}  $.

Define the set $E$, the relations $<_{1}$ and $>_{1}$, and the double poset
${\mathbf{E}} = \left(  E, <_{1}, >_{1}\right)  $ as in Proposition
\ref{prop.example.Gamma.b} (a). Then, Proposition \ref{prop.example.Gamma.b}
(a) shows that $\mathbf{E}$ is a special double poset.

Define a map $w:\left\{  1,2,\ldots,\ell\right\}  \rightarrow\left\{
1,2,3,\ldots\right\}  $ by $\left(  w\left(  i\right)  =\alpha_{i}\text{ for
every }i\in\left\{  1,2,\ldots,\ell\right\}  \right)  $. (This is
well-defined, since $\alpha_{i}\in\left\{  1,2,3,\ldots\right\}  $ for every
$i\in\left\{  1,2,\ldots,\ell\right\}  $). Then, $w$ is a map $\left\{
1,2,\ldots,\ell\right\}  \rightarrow\left\{  1,2,3,\ldots\right\}  $. In other
words, $w$ is a map $E\rightarrow\left\{  1,2,3,\ldots\right\}  $ (since
$E=\left\{  1,2,\ldots,\ell\right\}  $).

Recall that $w\left(  i\right)  =\alpha_{i}$ for every $i\in\left\{
1,2,\ldots,\ell\right\}  $. Thus,
\begin{align*}
\left(  w\left(  1\right)  ,w\left(  2\right)  ,\ldots,w\left(  \ell\right)
\right)   &  =\left(  \alpha_{1},\alpha_{2},\ldots,\alpha_{\ell}\right)
=\alpha.
\end{align*}
In other words, $\alpha=\left(  w\left(  1\right)  ,w\left(  2\right)
,\ldots,w\left(  \ell\right)  \right)  $. Proposition
\ref{prop.example.Gamma.b} (b) thus shows that $\alpha$ is a composition and
satisfies $\Gamma\left(  {\mathbf{E}},w\right)  =M_{\alpha}$.

%Now, let us forget that we have defined $E$, $<_{1}$, $>_{1}$ and $\mathbf{E}%
%$.
We thus have constructed a set $E$, a special double poset ${\mathbf{E}%
}=\left(  E,<_{1},>_{1}\right)  $, and a map $w:E\rightarrow\left\{
1,2,3,\ldots\right\}  $ satisfying $\Gamma\left(  {\mathbf{E}},w\right)
=M_{\alpha}$. Hence, there exist a set $E$, a special double poset
${\mathbf{E}}=\left(  E,<_{1},>_{1}\right)  $, and a map $w:E\rightarrow
\left\{  1,2,3,\ldots\right\}  $ satisfying $\Gamma\left(  {\mathbf{E}%
},w\right)  =M_{\alpha}$. This proves Corollary \ref{cor.Malpha.Gamma}.
\end{proof}

\subsection{Disjoint unions of double posets, and the algebra
$\operatorname*{QSym}$}

We shall now study disjoint unions of double posets. As a result of this
study, we will give a new proof of the fact that $\operatorname*{QSym}$ is a
$\mathbf{k}$-algebra.

Let us first recall the classical definition of the disjoint union of several sets:

\begin{definition}
\label{def.djun.sets}Let $I$ be a set. For each $i\in I$, let $E_{i}$ be a
set. Then, we define a set $\bigsqcup_{i\in I}E_{i}$ by%
\[
\bigsqcup_{i\in I}E_{i}=\bigcup_{i\in I}\left(  \left\{  i\right\}  \times
E_{i}\right)  .
\]
(Notice that the sets $\left\{  i\right\}  \times E_{i}$ for distinct $i\in I$
are disjoint.) The set $\bigsqcup_{i\in I}E_{i}$ is called the
\textit{disjoint union} of the sets $E_{i}$. Thus, an element of this disjoint
union $\bigsqcup_{i\in I}E_{i}$ is a pair $\left(  i,e\right)  $, where $i$ is
an element of $I$ and where $e$ is an element of $E_{i}$.

For each $j\in I$, we let $\operatorname*{inc}\nolimits_{j}$ be the map%
\[
E_{j}\rightarrow\bigsqcup_{i\in I}E_{i},\ \ \ \ \ \ \ \ \ \ e\mapsto\left(
j,e\right)  .
\]
This map $\operatorname*{inc}\nolimits_{j}$ is injective, and satisfies
$\operatorname*{inc}\nolimits_{j}\left(  E_{j}\right)  =\left\{  j\right\}
\times E_{j}$. This map $\operatorname*{inc}\nolimits_{j}$ is called the
\textit{canonical inclusion of the }$j$\textit{-th term into the disjoint
union }$\bigsqcup_{i\in I}E_{i}$.

When the sets $E_{i}$ are disjoint, their disjoint union $\bigsqcup_{i\in
I}E_{i}$ is often identified with their union $\bigcup_{i\in I}E_{i}$ via the
bijection%
\begin{equation}
\bigsqcup_{i\in I}E_{i}\rightarrow\bigcup_{i\in I}E_{i}%
,\ \ \ \ \ \ \ \ \ \ \left(  i,e\right)  \mapsto e.
\label{eq.def.djun.sets.identif}%
\end{equation}
We shall not make this identification, however.

When the sets $E_{i}$ are not disjoint, the map
(\ref{eq.def.djun.sets.identif}) is no longer a bijection (but just a
surjection), and thus our above definition of $\bigsqcup_{i\in I}E_{i}$ really
is the simplest way to define a \textquotedblleft disjoint
union\textquotedblright\ of these sets $E_{i}$ (i.e., a big set into which
each $E_{i}$ is canonically embedded in such a way that all the embeddings
have disjoint images).
\end{definition}

We have thus defined the disjoint union of arbitrarily many sets. As a
particular case of this construction, we can define the disjoint union of two sets:

\begin{definition}
\label{def.djun.sets2}Let $E$ and $F$ be two sets. Then, a set $E\sqcup F$ is
defined as follows: Define a family $\left(  E_{i}\right)  _{i\in\left\{
0,1\right\}  }$ of sets by setting $E_{0}=E$ and $E_{1}=F$. Then, define
$E\sqcup F$ to be the set $\bigsqcup_{i\in\left\{  0,1\right\}  }E_{i}$. Thus,
explicitly, we have $E\sqcup F=\left(  \left\{  0\right\}  \times E\right)
\cup\left(  \left\{  1\right\}  \times F\right)  $. Again, this set $E\sqcup
F$ is often identified with $E\cup F$ when $E$ and $F$ are already disjoint;
we shall not make this identification, however.
\end{definition}

Here are some fundamental properties of the disjoint union of sets:

\begin{remark}
\label{rmk.djun.elt}Let $I$ be a set. For each $i\in I$, let $E_{i}$ be a set.

\begin{enumerate}
\item[(a)] The sets $\left\{  i\right\}  \times E_{i}$ for distinct $i\in I$
are disjoint.

\item[(b)] If $j\in I$ and $e\in E_{j}$, then $\left(  j,e\right)
\in\bigsqcup_{i\in I}E_{i}$.

\item[(c)] Let $x\in\bigsqcup_{i\in I}E_{i}$. Then, there exist an $i\in I$
and an $e\in E_{i}$ such that $x=\left(  i,e\right)  $.
\end{enumerate}
\end{remark}

A basic map associated to any disjoint union is its \textit{index map}:

\begin{definition}
\label{def.djun.ind}Let $I$ be a set. For each $i\in I$, let $E_{i}$ be a set.
Define a map $\operatorname*{ind}:\bigsqcup_{i\in I}E_{i}\rightarrow I$ as
follows: Let $x\in\bigsqcup_{i\in I}E_{i}$. Then, there exist an $i\in I$ and
an $e\in E_{i}$ such that $x=\left(  i,e\right)  $ (according to Remark
\ref{rmk.djun.elt} (c)). Consider these $i$ and $e$. Clearly, $\left(
i,e\right)  $ is uniquely determined by $x$ (since $\left(  i,e\right)  =x$).
Hence, both $i$ and $e$ are uniquely determined by $x$ (since $i$ and $e$ are
the two entries of the pair $\left(  i,e\right)  $). Set $\operatorname*{ind}%
\left(  x\right)  =i$. Thus, a map $\operatorname*{ind}:\bigsqcup_{i\in
I}E_{i}\rightarrow I$ is defined.

We call $\operatorname*{ind}$ the \textit{index map} of the family $\left(
E_{i}\right)  _{i\in I}$.
\end{definition}

\begin{remark}
\label{rmk.djun.ind.explicit}Let $I$ be a set. For each $i\in I$, let $E_{i}$
be a set. For each $j\in I$ and $e\in E_{j}$, we have $\operatorname*{ind}%
\left(  j,e\right)  =j$.
\end{remark}

\begin{proposition}
\label{prop.djun.ind.bij}Let $I$ be a set. For each $i\in I$, let $E_{i}$ be a
set. Consider the map $\operatorname*{ind}:\bigsqcup_{i\in I}E_{i}\rightarrow
I$ defined in Definition \ref{def.djun.ind}.

Let $j\in I$. Then, the map%
\[
E_{j}\rightarrow\operatorname*{ind}\nolimits^{-1}\left(  j\right)
,\ \ \ \ \ \ \ \ \ \ e\mapsto\left(  j,e\right)
\]
is well-defined and a bijection.
\end{proposition}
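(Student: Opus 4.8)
The statement to prove is Proposition~\ref{prop.djun.ind.bij}: for a fixed $j \in I$, the map $\beta_j : E_j \to \operatorname{ind}^{-1}(j)$ sending $e \mapsto (j,e)$ is well-defined and bijective. This is an entirely routine set-theoretic fact, so the plan is simply to verify well-definedness and then exhibit an inverse (or prove injectivity and surjectivity directly). I would take the inverse-map route, since it is cleanest and the inverse is essentially forced by the structure of the disjoint union.

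First I would check well-definedness: I need $(j,e) \in \operatorname{ind}^{-1}(j)$ for every $e \in E_j$. By Remark~\ref{rmk.djun.elt}~(b), we have $(j,e) \in \bigsqcup_{i \in I} E_i$, so $\operatorname{ind}(j,e)$ is defined; by Remark~\ref{rmk.djun.ind.explicit}, $\operatorname{ind}(j,e) = j$, which gives exactly $(j,e) \in \operatorname{ind}^{-1}(j)$. Hence $\beta_j$ is a well-defined map $E_j \to \operatorname{ind}^{-1}(j)$.

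Next I would construct the candidate inverse $\gamma_j : \operatorname{ind}^{-1}(j) \to E_j$. Given $x \in \operatorname{ind}^{-1}(j)$, by Remark~\ref{rmk.djun.elt}~(c) there exist $i \in I$ and $e \in E_i$ with $x = (i,e)$, and (as noted in Definition~\ref{def.djun.ind}) these $i$ and $e$ are uniquely determined by $x$. Moreover $\operatorname{ind}(x) = i$ by Remark~\ref{rmk.djun.ind.explicit}, while $x \in \operatorname{ind}^{-1}(j)$ forces $\operatorname{ind}(x) = j$, so $i = j$ and therefore $e \in E_j$. I would then set $\gamma_j(x) = e$, which lands in $E_j$ as required. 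The verification that $\beta_j \circ \gamma_j = \operatorname{id}$ and $\gamma_j \circ \beta_j = \operatorname{id}$ is immediate from the uniqueness of the decomposition $x = (i,e)$: for $x = (j,e) \in \operatorname{ind}^{-1}(j)$ we get $\gamma_j(x) = e$ and hence $\beta_j(\gamma_j(x)) = (j,e) = x$, while for $e \in E_j$ we get $\beta_j(e) = (j,e)$ and hence $\gamma_j(\beta_j(e)) = e$.

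There is no real obstacle here; the only point requiring any care is the uniqueness of the pair $(i,e)$ representing $x$, but this has already been recorded in Definition~\ref{def.djun.ind} (it follows because $(i,e) = x$ pins down the pair, and the two coordinates of a pair are determined by the pair). Thus the whole argument is bookkeeping: confirm $\beta_j$ maps into the right codomain, define $\gamma_j$, and observe the two composites are identities. I would present it as the two mutually-inverse maps $\beta_j$ and $\gamma_j$, concluding that $\beta_j$ is a bijection, which is precisely the claim.
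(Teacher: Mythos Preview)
Your proof is correct and complete. The paper does not actually give a proof of this proposition: it simply groups it together with Remark~\ref{rmk.djun.elt} and Remark~\ref{rmk.djun.ind.explicit} as ``standard facts about sets'' whose ``proofs are straightforward'' and are left to the reader. Your argument via the explicit inverse $\gamma_j$ is exactly the routine verification the paper is alluding to.
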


Remark~\ref{rmk.djun.elt}, Remark~\ref{rmk.djun.ind.explicit} and
Proposition~\ref{prop.djun.ind.bij} are standard facts about sets, and their
proofs are straightforward; we will not dwell on them here any longer.

Here is a sample application of Proposition \ref{prop.djun.ind.bij} that we
will actually use:

\begin{proposition}
\label{prop.djun.ind.prod}Let $I$ be a finite set. For each $i\in I$, let
$E_{i}$ be a finite set.

\begin{enumerate}
\item[(a)] The set $\bigsqcup_{i\in I}E_{i}$ is finite.

\item[(b)] Let $A$ be a commutative ring. Let $E = \bigsqcup_{i\in I}E_{i}$.
Let $a:E\rightarrow A$ be a map. Then,%
\[
\prod_{e\in E}a\left(  e\right)  =\prod_{i\in I}\prod_{e\in E_{i}}a\left(
\operatorname*{inc}\nolimits_{i}\left(  e\right)  \right)  .
\]

\end{enumerate}
\end{proposition}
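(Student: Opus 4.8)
The plan is to prove the two parts separately, with part~(b) resting on the fact that the index map $\operatorname{ind}$ sorts the elements of $E$ by the summand they come from, together with the bijections furnished by Proposition~\ref{prop.djun.ind.bij}.

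For part~(a), I would argue directly from the definition. By Definition~\ref{def.djun.sets} we have $\bigsqcup_{i\in I}E_{i} = \bigcup_{i\in I}\left(\left\{i\right\}\times E_{i}\right)$. This is a union indexed by the finite set $I$, and each term $\left\{i\right\}\times E_{i}$ is finite (it has cardinality $\left|E_{i}\right|<\infty$). Since a finite union of finite sets is finite, the set $\bigsqcup_{i\in I}E_{i}$ is finite. Part~(a) is thus immediate.

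For part~(b), the structural observation is that the index map $\operatorname{ind}\colon E\to I$ from Definition~\ref{def.djun.ind} has fibers $\operatorname{ind}^{-1}(i)$, for $i\in I$, which are pairwise disjoint and whose union is $E$; that is, they partition $E$. Since $A$ is commutative and $E$ is finite (by part~(a)), the product over $E$ may be regrouped according to these fibers, giving
\[
\prod_{e\in E}a\left(e\right)
=\prod_{i\in I}\prod_{e\in\operatorname{ind}^{-1}\left(i\right)}a\left(e\right).
\]

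It then remains to rewrite each inner product as a product over $E_{i}$. Fixing $i\in I$, Proposition~\ref{prop.djun.ind.bij} tells us that the map $E_{i}\to\operatorname{ind}^{-1}(i)$, $e\mapsto(i,e)$, is a bijection; and by the definition of $\operatorname{inc}_{i}$ in Definition~\ref{def.djun.sets}, this map is exactly $e\mapsto\operatorname{inc}_{i}(e)$. Substituting this bijection into the inner product yields $\prod_{e\in\operatorname{ind}^{-1}(i)}a(e)=\prod_{e\in E_{i}}a(\operatorname{inc}_{i}(e))$, and inserting this into the displayed equality gives the claim. The whole argument is routine; the one point that needs genuine care — and where the commutativity of $A$ is essential — is the regrouping of $\prod_{e\in E}a(e)$ along the fibers of $\operatorname{ind}$, so I would state explicitly beforehand the general fact that a product over a finite set decomposes as an iterated product over the fibers of any map out of that set.
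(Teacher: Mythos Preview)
Your proof is correct and follows essentially the same approach as the paper: part~(a) via the finite union of finite sets $\{i\}\times E_i$, and part~(b) by partitioning $E$ along the fibers of $\operatorname{ind}$ and then invoking the bijection $E_i\to\operatorname{ind}^{-1}(i)$ from Proposition~\ref{prop.djun.ind.bij} to rewrite each inner product. Your explicit remark that commutativity of $A$ is what licenses the regrouping is a nice touch that the paper leaves implicit.
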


An analogue of Proposition \ref{prop.djun.ind.prod} (b) holds for sums instead
of products, of course.

\begin{proof}
[Proof of Proposition \ref{prop.djun.ind.prod}.](a) For each $i\in I$, the set
$\left\{  i\right\}  \times E_{i}$ is finite (since it is the Cartesian
product of the two finite sets $\left\{  i\right\}  $ and $E_{i}$). Hence,
$\bigcup_{i\in I}\left(  \left\{  i\right\}  \times E_{i}\right)  $ is a
finite union of finite sets (since $I$ is finite), and thus itself a finite
set. In other words, $\bigsqcup_{i\in I}E_{i}$ is a finite set (since
$\bigsqcup_{i\in I}E_{i}=\bigcup_{i\in I}\left(  \left\{  i\right\}  \times
E_{i}\right)  $ (by the definition of $\bigsqcup_{i\in I}E_{i}$)). This proves
Proposition \ref{prop.djun.ind.prod} (a).

(b) Let $i\in I$. Then, Proposition \ref{prop.djun.ind.bij} (applied to $j=i$)
shows that the map%
\[
E_{i}\rightarrow\operatorname*{ind}\nolimits^{-1}\left(  i\right)
,\ \ \ \ \ \ \ \ \ \ e\mapsto\left(  i,e\right)
\]
is well-defined and a bijection. Thus, we can substitute $\left(  i,e\right)
$ for $g$ in the product $\prod_{g\in\operatorname*{ind}\nolimits^{-1}\left(
i\right)  }a\left(  g\right)  $. We thus obtain%
\begin{equation}
\prod_{g\in\operatorname*{ind}\nolimits^{-1}\left(  i\right)  }a\left(
g\right)  =\prod_{e\in E_{i}}a\underbrace{\left(  i,e\right)  }%
_{\substack{=\operatorname*{inc}\nolimits_{i}\left(  e\right)  \\\text{(since
}\operatorname*{inc}\nolimits_{i}\left(  e\right)  =\left(  i,e\right)
\\\text{(by the definition of }\operatorname*{inc}\nolimits_{i}\text{))}%
}}=\prod_{e\in E_{i}}a\left(  \operatorname*{inc}\nolimits_{i}\left(
e\right)  \right)  . \label{pf.prop.djun.ind.prod.b.1}%
\end{equation}
Now, forget that we fixed $i$. We thus have proven
(\ref{pf.prop.djun.ind.prod.b.1}) for every $i\in I$.

We have%
\begin{align*}
\prod_{e\in E}a\left(  e\right)   &  =\prod_{g\in E}a\left(  g\right)
\ \ \ \ \ \ \ \ \ \ \left(  \text{here, we have renamed the index }e\text{ as
}g\text{ in the product}\right) \\
&  =\prod_{i\in I}\underbrace{\prod_{\substack{g\in E;\\\operatorname*{ind}%
g=i}}}_{=\prod_{g\in\operatorname*{ind}\nolimits^{-1}\left(  i\right)  }%
}a\left(  g\right)  \ \ \ \ \ \ \ \ \ \ \left(  \text{since }%
\operatorname*{ind}g\in I\text{ for each }g\in E\right) \\
&  =\prod_{i\in I}\underbrace{\prod_{g\in\operatorname*{ind}\nolimits^{-1}%
\left(  i\right)  }a\left(  g\right)  }_{\substack{=\prod_{e\in E_{i}}a\left(
\operatorname*{inc}\nolimits_{i}\left(  e\right)  \right)  \\\text{(by
(\ref{pf.prop.djun.ind.prod.b.1}))}}}=\prod_{i\in I}\prod_{e\in E_{i}}a\left(
\operatorname*{inc}\nolimits_{i}\left(  e\right)  \right)  .
\end{align*}
This proves Proposition \ref{prop.djun.ind.prod} (b).
\end{proof}

The following fact is the universal property of disjoint unions:

\begin{proposition}
\label{prop.djun.uniprop}Let $I$ be a set. For each $i\in I$, let $E_{i}$ be a
set. Let $F$ be a set. For each $i\in I$, let $f_{i}:E_{i}\rightarrow F$ be a
map. Then, there exists a unique map $f:\bigsqcup_{i\in I}E_{i}\rightarrow F$
such that every $j\in I$ satisfies $f_{j}=f\circ\operatorname*{inc}%
\nolimits_{j}$.
\end{proposition}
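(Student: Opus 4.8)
The plan is to prove existence and uniqueness separately, in both cases exploiting the fact that every element of $\bigsqcup_{i\in I}E_{i}$ has a \emph{unique} representation as a pair. Recall from Remark~\ref{rmk.djun.elt} (c) that for each $x\in\bigsqcup_{i\in I}E_{i}$ there exist an $i\in I$ and an $e\in E_{i}$ with $x=\left(i,e\right)$, and recall from the discussion in Definition~\ref{def.djun.ind} that these $i$ and $e$ are uniquely determined by $x$ (being the two entries of the pair $\left(i,e\right)=x$). This uniqueness is the single ingredient that makes the whole proof go through; everything else is bookkeeping.

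For existence, I would define the candidate map $f:\bigsqcup_{i\in I}E_{i}\rightarrow F$ directly: given $x\in\bigsqcup_{i\in I}E_{i}$, write $x=\left(i,e\right)$ with $i\in I$ and $e\in E_{i}$ (using Remark~\ref{rmk.djun.elt} (c)), and set $f\left(x\right)=f_{i}\left(e\right)$. This is well-defined precisely because the pair $\left(i,e\right)$, and hence each of $i$ and $e$, is uniquely determined by $x$, so the value $f_{i}\left(e\right)\in F$ does not depend on any choices. To verify the required compatibility, I would fix $j\in I$ and $e\in E_{j}$ and compute $\left(f\circ\operatorname{inc}_{j}\right)\left(e\right)=f\left(\operatorname{inc}_{j}\left(e\right)\right)=f\left(\left(j,e\right)\right)$, since $\operatorname{inc}_{j}\left(e\right)=\left(j,e\right)$ by the definition of $\operatorname{inc}_{j}$; then, because $\left(j,e\right)$ is already written in the form $\left(i,e\right)$ with $i=j$, the definition of $f$ gives $f\left(\left(j,e\right)\right)=f_{j}\left(e\right)$. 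Thus $\left(f\circ\operatorname{inc}_{j}\right)\left(e\right)=f_{j}\left(e\right)$ for all $e\in E_{j}$, i.e. $f_{j}=f\circ\operatorname{inc}_{j}$, as desired.

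For uniqueness, I would suppose that two maps $f,g:\bigsqcup_{i\in I}E_{i}\rightarrow F$ both satisfy $f_{j}=f\circ\operatorname{inc}_{j}$ and $f_{j}=g\circ\operatorname{inc}_{j}$ for every $j\in I$, and show $f=g$. Fixing an arbitrary $x\in\bigsqcup_{i\in I}E_{i}$, I would again write $x=\left(i,e\right)=\operatorname{inc}_{i}\left(e\right)$ with $i\in I$ and $e\in E_{i}$, and then compute $f\left(x\right)=f\left(\operatorname{inc}_{i}\left(e\right)\right)=\left(f\circ\operatorname{inc}_{i}\right)\left(e\right)=f_{i}\left(e\right)$ and likewise $g\left(x\right)=\left(g\circ\operatorname{inc}_{i}\right)\left(e\right)=f_{i}\left(e\right)$, whence $f\left(x\right)=g\left(x\right)$. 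Since $x$ was arbitrary, $f=g$.

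I do not expect any real obstacle here: the statement is the textbook universal property, and the only point requiring genuine care is the well-definedness of $f$ in the existence half, which is entirely handled by the uniqueness of the pair decomposition recorded in Definition~\ref{def.djun.ind}. Everything reduces to unwinding the definitions of $\bigsqcup_{i\in I}E_{i}$ and of the canonical inclusions $\operatorname{inc}_{j}$, so the write-up should be short and purely formal.
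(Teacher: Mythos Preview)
Your proposal is correct and takes exactly the same approach as the paper: you construct the map $f$ by $\left(i,e\right)\mapsto f_{i}\left(e\right)$, which is precisely the map the paper exhibits. In fact the paper declines to give a full proof at all, stating only that this is ``another basic property of sets that we are not going to prove'' and merely writing down that map; your write-up of existence and uniqueness is a complete version of what the paper leaves to the reader.
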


\begin{proof}
[Proof of Proposition \ref{prop.djun.uniprop}.]Proposition
\ref{prop.djun.uniprop} is another basic property of sets that we are not
going to prove; let us merely exhibit the unique map $f:\bigsqcup_{i\in
I}E_{i}\rightarrow F$ such that every $j\in I$ satisfies $f_{j}=f\circ
\operatorname*{inc}\nolimits_{j}$. Namely, this is the map
\[
\bigsqcup_{i\in I}E_{i}\rightarrow F,\ \ \ \ \ \ \ \ \ \ \left(  i,e\right)
\mapsto f_{i}\left(  e\right)  .
\]

\end{proof}

\begin{definition}
\label{def.djun.restr}Let $I$ be a set. For each $i\in I$, let $E_{i}$ be a
set. Let $F$ be a set. We define a map%
\[
\operatorname*{Restr}:F^{\bigsqcup_{i\in I}E_{i}}\rightarrow\prod_{i\in
I}F^{E_{i}}%
\]
by%
\[
\left(  \operatorname*{Restr}\left(  f\right)  =\left(  f\circ
\operatorname*{inc}\nolimits_{i}\right)  _{i\in I}%
\ \ \ \ \ \ \ \ \ \ \text{for every }f\in F^{\bigsqcup_{i\in I}E_{i}}\right)
.
\]

\end{definition}

\begin{corollary}
\label{cor.djun.restr}Let $I$ be a set. For each $i\in I$, let $E_{i}$ be a
set. Let $F$ be a set. Then, the map $\operatorname*{Restr}:F^{\bigsqcup_{i\in
I}E_{i}}\rightarrow\prod_{i\in I}F^{E_{i}}$ is a bijection.
\end{corollary}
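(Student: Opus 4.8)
The plan is to prove that $\operatorname{Restr}$ is a bijection by exhibiting an explicit two-sided inverse, built directly from the universal property of the disjoint union (Proposition~\ref{prop.djun.uniprop}). The map $\operatorname{Restr}$ sends a map $f : \bigsqcup_{i\in I} E_i \to F$ to the tuple $\left(f \circ \operatorname{inc}_i\right)_{i\in I}$ of its ``restrictions'' to the individual terms. The natural candidate for the inverse sends a tuple $\left(f_i\right)_{i\in I} \in \prod_{i\in I} F^{E_i}$ to the unique map $f : \bigsqcup_{i\in I} E_i \to F$ satisfying $f_j = f \circ \operatorname{inc}_j$ for all $j\in I$, whose existence and uniqueness are exactly what Proposition~\ref{prop.djun.uniprop} provides. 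So first I would invoke Proposition~\ref{prop.djun.uniprop} to define a map $\operatorname{Glue} : \prod_{i\in I} F^{E_i} \to F^{\bigsqcup_{i\in I} E_i}$ sending each tuple $\left(f_i\right)_{i\in I}$ to this unique amalgamated map.

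Next I would check the two composites. For $\operatorname{Restr} \circ \operatorname{Glue} = \operatorname{id}$: given a tuple $\left(f_i\right)_{i\in I}$, the map $f = \operatorname{Glue}\left(\left(f_i\right)_{i\in I}\right)$ satisfies $f \circ \operatorname{inc}_j = f_j$ for every $j\in I$ by its defining property, so $\operatorname{Restr}(f) = \left(f \circ \operatorname{inc}_i\right)_{i\in I} = \left(f_i\right)_{i\in I}$, as desired. For $\operatorname{Glue} \circ \operatorname{Restr} = \operatorname{id}$: given $f \in F^{\bigsqcup_{i\in I} E_i}$, set $f_i = f \circ \operatorname{inc}_i$ so that $\operatorname{Restr}(f) = \left(f_i\right)_{i\in I}$; then $f$ itself is a map satisfying $f \circ \operatorname{inc}_j = f_j$ for all $j$, and since Proposition~\ref{prop.djun.uniprop} guarantees \emph{uniqueness} of such a map, the map $\operatorname{Glue}\left(\left(f_i\right)_{i\in I}\right)$ must equal $f$. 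This uniqueness clause is the one genuinely load-bearing ingredient, so I would be careful to cite it explicitly rather than re-deriving the pointwise formula.

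The only mild subtlety — and thus the place I would pay closest attention — is the direction $\operatorname{Glue} \circ \operatorname{Restr} = \operatorname{id}$, because it is tempting to argue pointwise (``both maps agree on each $\operatorname{inc}_j(e)$, and every element of the disjoint union has this form'') rather than through uniqueness. Both routes work, but the uniqueness route is cleaner and avoids invoking Remark~\ref{rmk.djun.elt}~(c) and the surjectivity of the inclusions. Having established that $\operatorname{Restr}$ and $\operatorname{Glue}$ are mutually inverse, I conclude that $\operatorname{Restr}$ is invertible, hence a bijection, which is precisely the claim of Corollary~\ref{cor.djun.restr}. Since all the work is encapsulated in Proposition~\ref{prop.djun.uniprop}, the whole proof is short and I expect no real obstacle — it is essentially a restatement of the universal property in the language of restriction maps.
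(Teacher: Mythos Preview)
Your proposal is correct. The paper takes a slightly different route: it proves injectivity and surjectivity of $\operatorname{Restr}$ separately rather than constructing an explicit inverse. For surjectivity, the paper does essentially what your $\operatorname{Glue}$ does (invoking Proposition~\ref{prop.djun.uniprop} to build a preimage of a given tuple), but for injectivity it argues pointwise via Remark~\ref{rmk.djun.elt}~(c): if $\operatorname{Restr}\alpha = \operatorname{Restr}\beta$, then for each $x = (i,e) \in \bigsqcup_{i\in I} E_i$ one has $\alpha(x) = (\alpha\circ\operatorname{inc}_i)(e) = (\beta\circ\operatorname{inc}_i)(e) = \beta(x)$. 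Your approach is arguably cleaner, since by invoking the \emph{uniqueness} clause of Proposition~\ref{prop.djun.uniprop} for the direction $\operatorname{Glue}\circ\operatorname{Restr} = \operatorname{id}$, you extract the full content of the universal property and avoid the separate pointwise verification. The paper's approach, on the other hand, is more concrete and makes the injectivity argument self-contained without needing to name the inverse map.
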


\begin{proof}
[Proof of Corollary \ref{cor.djun.restr}.]The map $\operatorname*{Restr}%
:F^{\bigsqcup_{i\in I}E_{i}}\rightarrow\prod_{i\in I}F^{E_{i}}$ is
injective\footnote{\textit{Proof.} Let $\alpha$ and $\beta$ be two elements of
$F^{\bigsqcup_{i\in I}E_{i}}$ such that $\operatorname*{Restr}\alpha
=\operatorname*{Restr}\beta$. We shall prove that $\alpha=\beta$.
\par
Both $\alpha$ and $\beta$ are elements of $F^{\bigsqcup_{i\in I}E_{i}}$, hence
are maps $\bigsqcup_{i\in I}E_{i}\rightarrow F$.
\par
We have $\operatorname*{Restr}\alpha=\left(  \alpha\circ\operatorname*{inc}%
\nolimits_{i}\right)  _{i\in I}$ (by the definition of $\operatorname*{Restr}%
$) and $\operatorname*{Restr}\beta=\left(  \beta\circ\operatorname*{inc}%
\nolimits_{i}\right)  _{i\in I}$ (by the definition of $\operatorname*{Restr}%
$). Hence,%
\[
\left(  \alpha\circ\operatorname*{inc}\nolimits_{i}\right)  _{i\in
I}=\operatorname*{Restr}\alpha=\operatorname*{Restr}\beta=\left(  \beta
\circ\operatorname*{inc}\nolimits_{i}\right)  _{i\in I}.
\]
In other words,
\begin{equation}
\alpha\circ\operatorname*{inc}\nolimits_{i}=\beta\circ\operatorname*{inc}%
\nolimits_{i}\ \ \ \ \ \ \ \ \ \ \text{for every }i\in I.
\label{pf.cor.djun.restr.fn1.1}%
\end{equation}
\par
Now, let $x\in\bigsqcup_{i\in I}E_{i}$. We shall prove that $\alpha\left(
x\right)  =\beta\left(  x\right)  $.
\par
There exist an $i\in I$ and an $e\in E_{i}$ such that $x=\left(  i,e\right)  $
(by Remark \ref{rmk.djun.elt} (c)). Consider these $i$ and $e$. The definition
of $\operatorname*{inc}\nolimits_{i}$ shows that $\operatorname*{inc}%
\nolimits_{i}\left(  e\right)  =\left(  i,e\right)  =x$. Now,%
\[
\left(  \alpha\circ\operatorname*{inc}\nolimits_{i}\right)  \left(  e\right)
=\alpha\left(  \underbrace{\operatorname*{inc}\nolimits_{i}\left(  e\right)
}_{=x}\right)  =\alpha\left(  x\right)  ,
\]
so that%
\[
\alpha\left(  x\right)  =\underbrace{\left(  \alpha\circ\operatorname*{inc}%
\nolimits_{i}\right)  }_{\substack{=\beta\circ\operatorname*{inc}%
\nolimits_{i}\\\text{(by (\ref{pf.cor.djun.restr.fn1.1}))}}}\left(  e\right)
=\left(  \beta\circ\operatorname*{inc}\nolimits_{i}\right)  \left(  e\right)
=\beta\left(  \underbrace{\operatorname*{inc}\nolimits_{i}\left(  e\right)
}_{=x}\right)  =\beta\left(  x\right)  .
\]
\par
Now, forget that we fixed $x$. We thus have proven that $\alpha\left(
x\right)  =\beta\left(  x\right)  $ for every $x\in\bigsqcup_{i\in I}E_{i}$.
In other words, $\alpha=\beta$.
\par
Now, forget that we fixed $\alpha$ and $\beta$. We thus have shown that if
$\alpha$ and $\beta$ are two elements of $F^{\bigsqcup_{i\in I}E_{i}}$ such
that $\operatorname*{Restr}\alpha=\operatorname*{Restr}\beta$, then
$\alpha=\beta$. In other words, the map $\operatorname*{Restr}$ is injective.
Qed.} and surjective\footnote{\textit{Proof.} Let $\sigma\in\prod_{i\in
I}F^{E_{i}}$. Thus, $\sigma$ can be written in the form $\left(  f_{i}\right)
_{i\in I}$, where $f_{i}$ is an element of $F^{E_{i}}$ for every $i\in I$.
Consider these $f_{i}$. Thus, $\sigma=\left(  f_{i}\right)  _{i\in I}$.
\par
For each $i\in I$, the element $f_{i}$ is an element of $F^{E_{i}}$, thus a
map from $E_{i}$ to $F$. Thus, Proposition \ref{prop.djun.uniprop} shows that
there exists a unique map $f:\bigsqcup_{i\in I}E_{i}\rightarrow F$ such that
every $j\in I$ satisfies $f_{j}=f\circ\operatorname*{inc}\nolimits_{j}$.
Consider this $f$.
\par
Every $j\in I$ satisfies $f_{j}=f\circ\operatorname*{inc}\nolimits_{j}$.
Renaming $j$ as $i$ in this statement, we obtain the following: Every $i\in I$
satisfies $f_{i}=f\circ\operatorname*{inc}\nolimits_{i}$. In other words,
$\left(  f_{i}\right)  _{i\in I}=\left(  f\circ\operatorname*{inc}%
\nolimits_{i}\right)  _{i\in I}$.
\par
But $f$ is a map $\bigsqcup_{i\in I}E_{i}\rightarrow F$, thus an element of
$F^{\bigsqcup_{i\in I}E_{i}}$. Hence, $\operatorname*{Restr}\left(  f\right)
$ is well-defined. The definition of $\operatorname*{Restr}$ shows that
$\operatorname*{Restr}\left(  f\right)  =\left(  f\circ\operatorname*{inc}%
\nolimits_{i}\right)  _{i\in I}$. Comparing this with $\sigma=\left(
f_{i}\right)  _{i\in I}=\left(  f\circ\operatorname*{inc}\nolimits_{i}\right)
_{i\in I}$, we obtain $\sigma=\operatorname*{Restr}\left(  f\right)
\in\operatorname*{Restr}\left(  F^{\bigsqcup_{i\in I}E_{i}}\right)  $.
\par
Now, forget that we fixed $\sigma$. We thus have shown that $\sigma
\in\operatorname*{Restr}\left(  F^{\bigsqcup_{i\in I}E_{i}}\right)  $ for
every $\sigma\in\prod_{i\in I}F^{E_{i}}$. In other words, $\prod_{i\in
I}F^{E_{i}}\subseteq\operatorname*{Restr}\left(  F^{\bigsqcup_{i\in I}E_{i}%
}\right)  $. In other words, the map $\operatorname*{Restr}$ is surjective.
Qed.}. Thus, this map $\operatorname*{Restr}$ is bijective, i.e., a bijection.
This proves Corollary \ref{cor.djun.restr}.
\end{proof}

Next, let us define the direct sum of relations:

\Needspace{4cm}
\begin{definition}
\label{def.djun.rels}Let $I$ be a set. For each $i\in I$, let $E_{i}$ be a
set. For each $i\in I$, let $\rho_{i}$ be a binary relation on the set $E_{i}%
$. We shall write the relation $\rho_{i}$ in infix notation (i.e., we shall
write $e\rho_{i}f$ instead of $\left(  e,f\right)  \in\rho_{i}$ when we want
to say that two elements $e$ and $f$ of $E_{i}$ are related by $\rho$). We
define a binary relation $\rho$ on the set $\bigsqcup_{i\in I}E_{i}$ (again,
written in infix notation) by the rule%
\[
\left(
\begin{array}
[c]{l}%
\left(  \left(  j,e\right)  \rho\left(  k,f\right)  \right)
\ \Longleftrightarrow\ \left(  j=k\text{ and }e\rho_{j}f\right) \\
\ \ \ \ \ \ \ \ \ \ \text{for any two elements }\left(  j,e\right)  \text{ and
}\left(  k,f\right)  \text{ of }\bigsqcup_{i\in I}E_{i}%
\end{array}
\right)  .
\]
This relation $\rho$ will be denoted by $\bigoplus_{i\in I}\rho_{i}$ and
called the \textit{direct sum} of the relations $\rho_{i}$.
\end{definition}

Notice that we are denoting the relation $\rho$ in Definition
\ref{def.djun.rels} by $\bigoplus_{i\in I}\rho_{i}$ and not by $\bigsqcup
_{i\in I}\rho_{i}$. The (rather pedantic) reason for this is that the
relations $\rho_{i}$ are sets (of pairs of elements of $E_{i}$), and thus the
expression $\bigsqcup_{i\in I}\rho_{i}$ already has a meaning, which is not
the meaning we want to give $\bigoplus_{i\in I}\rho_{i}$.

\begin{proposition}
\label{prop.djun.rels.strict}Let $I$ be a set. For each $i\in I$, let $E_{i}$
be a set. For each $i\in I$, let $\rho_{i}$ be a strict partial order on the
set $E_{i}$. Then, $\bigoplus_{i\in I}\rho_{i}$ is a strict partial order on
the set $\bigsqcup_{i\in I}E_{i}$.
\end{proposition}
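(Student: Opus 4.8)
The plan is to verify the three defining properties of a strict partial order — irreflexivity, transitivity, and antisymmetry — directly from the definition of $\bigoplus_{i\in I}\rho_i$ given in Definition~\ref{def.djun.rels}. Throughout, I write $\rho$ for $\bigoplus_{i\in I}\rho_i$, and I use repeatedly the equivalence
\[
\left( \left(j,e\right) \rho \left(k,f\right) \right)
\ \Longleftrightarrow\
\left( j=k \text{ and } e\rho_j f \right)
\]
for any two elements $\left(j,e\right)$ and $\left(k,f\right)$ of $\bigsqcup_{i\in I}E_i$, together with the hypothesis that each $\rho_i$ is a strict partial order on $E_i$.

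First I would prove irreflexivity. Let $x\in\bigsqcup_{i\in I}E_i$. By Remark~\ref{rmk.djun.elt}~(c), we can write $x=\left(j,e\right)$ for some $j\in I$ and $e\in E_j$. If we had $x\rho x$, i.e., $\left(j,e\right)\rho\left(j,e\right)$, then the defining equivalence would force $e\rho_j e$; but this contradicts the irreflexivity of $\rho_j$. Hence no $x\in\bigsqcup_{i\in I}E_i$ satisfies $x\rho x$, so $\rho$ is irreflexive. Next I would prove transitivity. Let $x,y,z\in\bigsqcup_{i\in I}E_i$ satisfy $x\rho y$ and $y\rho z$. Writing $x=\left(j,e\right)$, $y=\left(k,f\right)$ and $z=\left(l,g\right)$ via Remark~\ref{rmk.djun.elt}~(c), the defining equivalence applied to $x\rho y$ gives $j=k$ and $e\rho_j f$, and applied to $y\rho z$ gives $k=l$ and $f\rho_k g$. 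Since $j=k=l$, we have $f\rho_k g = f\rho_j g$, so transitivity of $\rho_j$ yields $e\rho_j g$; combined with $j=l$, the defining equivalence gives $x\rho z$. Thus $\rho$ is transitive.

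For antisymmetry, the cleanest route is to invoke the elementary fact that every irreflexive and transitive binary relation is automatically antisymmetric — this is exactly the principle already used in the proof of Proposition~\ref{prop.G-poset.quot.poset} (and parenthetically in Definition~\ref{def.double-poset}~(a)). Having established that $\rho$ is irreflexive and transitive, antisymmetry follows immediately, and so $\rho=\bigoplus_{i\in I}\rho_i$ is a strict partial order on $\bigsqcup_{i\in I}E_i$, as desired. (Alternatively, one could argue antisymmetry by hand: if $x\rho y$ and $y\rho x$ with $x=\left(j,e\right)$ and $y=\left(k,f\right)$, then $j=k$, $e\rho_j f$ and $f\rho_j e$, contradicting antisymmetry of $\rho_j$; but routing through the general principle avoids repeating work.)

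There is no real obstacle here: the statement is a routine unpacking of Definition~\ref{def.djun.rels}, and every step reduces a question about $\rho$ to the corresponding property of one of the $\rho_i$. The only point requiring the slightest care is bookkeeping — consistently applying Remark~\ref{rmk.djun.elt}~(c) to express each element of the disjoint union as a pair, and tracking the equalities of indices ($j=k$, $k=l$) so that the index-$j$ relation $\rho_j$ can be used uniformly. Once that bookkeeping is in place, the proof is entirely mechanical.
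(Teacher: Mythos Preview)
Your proposal is correct and essentially identical to the paper's own proof: both denote $\bigoplus_{i\in I}\rho_i$ by $\rho$, unpack elements via Remark~\ref{rmk.djun.elt}~(c), verify irreflexivity and transitivity by reducing to the corresponding property of $\rho_j$, and then obtain antisymmetry from the general fact that any irreflexive transitive relation is antisymmetric. The only cosmetic difference is that the paper proves transitivity before irreflexivity.
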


\begin{proof}
[Proof of Proposition \ref{prop.djun.rels.strict}.]Let us denote the relation
$\bigoplus_{i\in I}\rho_{i}$ by $\rho$. Thus, $\rho$ is a binary relation on
the set $\bigsqcup_{i\in I}E_{i}$.

We shall write the relations $\rho_{i}$ and also the relations $\bigoplus
_{i\in I}\rho_{i}$ and $\rho$ in infix notation.

The definition of $\bigoplus_{i\in I}\rho_{i}$ shows that we have the
equivalence%
\[
\left(  \left(  j,e\right)  \left(  \bigoplus_{i\in I}\rho_{i}\right)  \left(
k,f\right)  \right)  \ \Longleftrightarrow\ \left(  j=k\text{ and }e\rho
_{j}f\right)
\]
for any two elements $\left(  j,e\right)  $ and $\left(  k,f\right)  $ of
$\bigsqcup_{i\in I}E_{i}$. In other words, we have the equivalence%
\begin{equation}
\left(  \left(  j,e\right)  \rho\left(  k,f\right)  \right)
\ \Longleftrightarrow\ \left(  j=k\text{ and }e\rho_{j}f\right)
\label{pf.prop.djun.rels.strict.rhodef}%
\end{equation}
for any two elements $\left(  j,e\right)  $ and $\left(  k,f\right)  $ of
$\bigsqcup_{i\in I}E_{i}$ (since $\rho=\bigoplus_{i\in I}\rho_{i}$).

The relation $\rho$ is transitive\footnote{\textit{Proof.} Let $u$, $v$ and
$w$ be three elements of $\bigsqcup_{i\in I}E_{i}$ such that $u\rho v$ and
$v\rho w$. We shall prove that $u\rho w$.
\par
Remark \ref{rmk.djun.elt} (c) (applied to $x=u$) shows that there exist an
$i\in I$ and an $e\in E_{i}$ such that $u=\left(  i,e\right)  $. Denote these
$i$ and $e$ by $a$ and $\alpha$. Thus, $a\in I$ and $\alpha\in E_{a}$ are such
that $u=\left(  a,\alpha\right)  $. Thus, $\left(  a,\alpha\right)
=u\in\bigsqcup_{i\in I}E_{i}$.
\par
Remark \ref{rmk.djun.elt} (c) (applied to $x=v$) shows that there exist an
$i\in I$ and an $e\in E_{i}$ such that $v=\left(  i,e\right)  $. Denote these
$i$ and $e$ by $b$ and $\beta$. Thus, $b\in I$ and $\beta\in E_{b}$ are such
that $v=\left(  b,\beta\right)  $. Thus, $\left(  b,\beta\right)
=v\in\bigsqcup_{i\in I}E_{i}$.
\par
Remark \ref{rmk.djun.elt} (c) (applied to $x=w$) shows that there exist an
$i\in I$ and an $e\in E_{i}$ such that $w=\left(  i,e\right)  $. Denote these
$i$ and $e$ by $c$ and $\gamma$. Thus, $c\in I$ and $\gamma\in E_{c}$ are such
that $w=\left(  c,\gamma\right)  $. Thus, $\left(  c,\gamma\right)
=w\in\bigsqcup_{i\in I}E_{i}$.
\par
We have $u\rho v$. This rewrites as $\left(  a,\alpha\right)  \rho\left(
b,\beta\right)  $ (since $u=\left(  a,\alpha\right)  $ and $v=\left(
b,\beta\right)  $). But (\ref{pf.prop.djun.rels.strict.rhodef}) (applied to
$\left(  j,e\right)  =\left(  a,\alpha\right)  $ and $\left(  k,f\right)
=\left(  b,\beta\right)  $) shows that we have the equivalence%
\[
\left(  \left(  a,\alpha\right)  \rho\left(  b,\beta\right)  \right)
\ \Longleftrightarrow\ \left(  a=b\text{ and }\alpha\rho_{a}\beta\right)  .
\]
Thus, we have $\left(  a=b\text{ and }\alpha\rho_{a}\beta\right)  $ (since we
have $\left(  a,\alpha\right)  \rho\left(  b,\beta\right)  $).
\par
We have $v\rho w$. This rewrites as $\left(  b,\beta\right)  \rho\left(
c,\gamma\right)  $ (since $v=\left(  b,\beta\right)  $ and $w=\left(
c,\gamma\right)  $). But (\ref{pf.prop.djun.rels.strict.rhodef}) (applied to
$\left(  j,e\right)  =\left(  b,\beta\right)  $ and $\left(  k,f\right)
=\left(  c,\gamma\right)  $) shows that we have the equivalence%
\[
\left(  \left(  b,\beta\right)  \rho\left(  c,\gamma\right)  \right)
\ \Longleftrightarrow\ \left(  b=c\text{ and }\beta\rho_{b}\gamma\right)  .
\]
Thus, we have $\left(  b=c\text{ and }\beta\rho_{b}\gamma\right)  $ (since we
have $\left(  b,\beta\right)  \rho\left(  c,\gamma\right)  $). This rewrites
as $\left(  a=c\text{ and }\beta\rho_{a}\gamma\right)  $ (since $a=b$).
\par
The relation $\rho_{a}$ is a strict partial order on the set $E_{a}$ (since
$\rho_{i}$ is a strict partial order on the set $E_{i}$ for each $i\in I$),
and thus is transitive. Hence, from $\alpha\rho_{a}\beta$ and $\beta\rho
_{a}\gamma$, we obtain $\alpha\rho_{a}\gamma$. Hence, $\left(  a=c\text{ and
}\alpha\rho_{a}\gamma\right)  $. But (\ref{pf.prop.djun.rels.strict.rhodef})
(applied to $\left(  j,e\right)  =\left(  a,\alpha\right)  $ and $\left(
k,f\right)  =\left(  c,\gamma\right)  $) shows that we have the equivalence%
\[
\left(  \left(  a,\alpha\right)  \rho\left(  c,\gamma\right)  \right)
\ \Longleftrightarrow\ \left(  a=c\text{ and }\alpha\rho_{a}\gamma\right)  .
\]
Thus, we have $\left(  a,\alpha\right)  \rho\left(  c,\gamma\right)  $ (since
we have $\left(  a=c\text{ and }\alpha\rho_{a}\gamma\right)  $). This rewrites
as $u\rho w$ (since $u=\left(  a,\alpha\right)  $ and $w=\left(
c,\gamma\right)  $).
\par
Now, forget that we fixed $u$, $v$ and $w$. We thus have proven that if $u$,
$v$ and $w$ are three elements of $\bigsqcup_{i\in I}E_{i}$ such that $u\rho
v$ and $v\rho w$, then $u\rho w$. In other words, the relation $\rho$ is
transitive, qed.}, irreflexive\footnote{\textit{Proof.} Let $u\in
\bigsqcup_{i\in I}E_{i}$ be such that $u\rho u$. We shall derive a
contradiction.
\par
Remark \ref{rmk.djun.elt} (c) (applied to $x=u$) shows that there exist an
$i\in I$ and an $e\in E_{i}$ such that $u=\left(  i,e\right)  $. Denote these
$i$ and $e$ by $a$ and $\alpha$. Thus, $a\in I$ and $\alpha\in E_{a}$ are such
that $u=\left(  a,\alpha\right)  $. Thus, $\left(  a,\alpha\right)
=u\in\bigsqcup_{i\in I}E_{i}$.
\par
We have $u\rho u$. This rewrites as $\left(  a,\alpha\right)  \rho\left(
a,\alpha\right)  $ (since $u=\left(  a,\alpha\right)  $). Hence,
(\ref{pf.prop.djun.rels.strict.rhodef}) (applied to $\left(  j,e\right)
=\left(  a,\alpha\right)  $ and $\left(  k,f\right)  =\left(  a,\alpha\right)
$) shows that we have the equivalence%
\[
\left(  \left(  a,\alpha\right)  \rho\left(  a,\alpha\right)  \right)
\ \Longleftrightarrow\ \left(  a=a\text{ and }\alpha\rho_{a}\alpha\right)  .
\]
Thus, we have $\left(  a=a\text{ and }\alpha\rho_{a}\alpha\right)  $ (since we
have $\left(  a,\alpha\right)  \rho\left(  a,\alpha\right)  $). Hence,
$\alpha\rho_{a}\alpha$. Thus, there exists an $e\in E_{a}$ such that
$e\rho_{a}e$ (namely, $e=\alpha$).
\par
The relation $\rho_{a}$ is a strict partial order on the set $E_{a}$ (since
$\rho_{i}$ is a strict partial order on the set $E_{i}$ for each $i\in I$),
and thus is irreflexive. Hence, there exists no $e\in E_{a}$ such that
$e\rho_{a}e$. This contradicts the fact that there exists an $e\in E_{a}$ such
that $e\rho_{a}e$.
\par
Now, forget that we fixed $u$. We thus have derived a contradiction for each
$u\in\bigsqcup_{i\in I}E_{i}$ satisfying $u\rho u$. Hence, there exists no
$u\in\bigsqcup_{i\in I}E_{i}$ satisfying $u\rho u$. In other words, the
relation $\rho$ is irreflexive, qed.} and
antisymmetric\footnote{\textit{Proof.} Every binary relation which is
transitive and irreflexive must be antisymmetric (this is well-known).
Applying this to the binary relation $\rho$, we conclude that $\rho$ is
antisymmetric (since $\rho$ is transitive and irreflexive). Qed.}. Thus,
$\rho$ is an irreflexive, transitive and antisymmetric binary relation on the
set $\bigsqcup_{i\in I}E_{i}$. In other words, $\rho$ is a strict partial
order on the set $\bigsqcup_{i\in I}E_{i}$ (because this is how strict partial
orders are defined). In other words, $\bigoplus_{i\in I}\rho_{i}$ is a strict
partial order on the set $\bigsqcup_{i\in I}E_{i}$ (since $\rho=\bigoplus
_{i\in I}\rho_{i}$). This proves Proposition \ref{prop.djun.rels.strict}.
\end{proof}

Now, we can define the disjoint union of double posets:

\begin{proposition}
\label{prop.djun.doubs}Let $I$ be a finite set. For each $i\in I$, let
$\mathbf{E}_{i}=\left(  E_{i},<_{1,i},<_{2,i}\right)  $ be a double poset.
Then, $\left(  \bigsqcup_{i\in I}E_{i},\bigoplus_{i\in I}\left(
<_{1,i}\right)  ,\bigoplus_{i\in I}\left(  <_{2,i}\right)  \right)  $ is a
double poset.
\end{proposition}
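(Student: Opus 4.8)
The plan is to verify directly the three requirements in the definition of a double poset (Definition~\ref{def.double-poset} (d)): namely, that $\bigsqcup_{i\in I}E_{i}$ is a finite set, and that each of $\bigoplus_{i\in I}\left(<_{1,i}\right)$ and $\bigoplus_{i\in I}\left(<_{2,i}\right)$ is a strict partial order on this set. All three of these will follow immediately from results already established in this section, so the proof is essentially a matter of assembling the right citations.

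First I would record the consequences of the hypothesis that each $\mathbf{E}_{i}=\left(E_{i},<_{1,i},<_{2,i}\right)$ is a double poset: by Definition~\ref{def.double-poset} (d), this tells us that each $E_{i}$ is a finite set and that each of $<_{1,i}$ and $<_{2,i}$ is a strict partial order on $E_{i}$. Since $I$ is finite and each $E_{i}$ is finite, Proposition~\ref{prop.djun.ind.prod} (a) shows that $\bigsqcup_{i\in I}E_{i}$ is finite, which settles the first requirement.

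Next I would handle the two relations. Since $<_{1,i}$ is a strict partial order on $E_{i}$ for each $i\in I$, Proposition~\ref{prop.djun.rels.strict} (applied to $\rho_{i}=\left(<_{1,i}\right)$) shows that $\bigoplus_{i\in I}\left(<_{1,i}\right)$ is a strict partial order on $\bigsqcup_{i\in I}E_{i}$. Applying the same proposition to $\rho_{i}=\left(<_{2,i}\right)$ shows likewise that $\bigoplus_{i\in I}\left(<_{2,i}\right)$ is a strict partial order on $\bigsqcup_{i\in I}E_{i}$. Having now checked that $\bigsqcup_{i\in I}E_{i}$ is finite and that both $\bigoplus_{i\in I}\left(<_{1,i}\right)$ and $\bigoplus_{i\in I}\left(<_{2,i}\right)$ are strict partial orders on it, I would conclude that the triple $\left(\bigsqcup_{i\in I}E_{i},\bigoplus_{i\in I}\left(<_{1,i}\right),\bigoplus_{i\in I}\left(<_{2,i}\right)\right)$ is a double poset, completing the proof.

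There is no real obstacle here: the two genuinely substantive facts — the preservation of finiteness under disjoint unions and the preservation of the strict-partial-order axioms under the direct-sum construction — have already been proved as Proposition~\ref{prop.djun.ind.prod} (a) and Proposition~\ref{prop.djun.rels.strict} respectively. The only thing to be mildly careful about is matching notation, i.e.\ remembering that the relation $\bigoplus_{i\in I}\rho_{i}$ in Proposition~\ref{prop.djun.rels.strict} is exactly the direct sum used in the statement, so that the proposition applies verbatim once we set $\rho_{i}=\left(<_{1,i}\right)$ and then $\rho_{i}=\left(<_{2,i}\right)$.
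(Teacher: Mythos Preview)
Your proposal is correct and follows essentially the same approach as the paper: both proofs invoke Proposition~\ref{prop.djun.ind.prod} (a) for finiteness of the disjoint union and Proposition~\ref{prop.djun.rels.strict} (applied once to each family of relations) for the strict partial order axioms, then assemble these into the definition of a double poset. The only difference is cosmetic ordering of the steps.
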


\begin{proof}
[Proof of Proposition \ref{prop.djun.doubs}.]For each $i\in I$, we know that
$E_{i}$ is a finite set (since $\left(  E_{i},<_{1,i},<_{2,i}\right)  $ is a
double poset), and that $<_{1,i}$ and $<_{2,i}$ are two strict partial orders
on the set $E_{i}$ (for the same reason).

For each $i\in I$, the relation $<_{1,i}$ is a strict partial order on the set
$E_{i}$. Thus, $\bigoplus_{i\in I}\left(  <_{1,i}\right)  $ is a strict
partial order on the set $\bigsqcup_{i\in I}E_{i}$ (by Proposition
\ref{prop.djun.rels.strict}, applied to $<_{1,i}$ instead of $\rho_{i}$).
Similarly, $\bigoplus_{i\in I}\left(  <_{2,i}\right)  $ is a strict partial
order on the set $\bigsqcup_{i\in I}E_{i}$.

Proposition \ref{prop.djun.ind.prod} (a) shows that the set $\bigsqcup_{i\in
I}E_{i}$ is finite. Thus, $\bigsqcup_{i\in I}E_{i}$ is a finite set, and
$\bigoplus_{i\in I}\left(  <_{1,i}\right)  $ and $\bigoplus_{i\in I}\left(
<_{2,i}\right)  $ are two strict partial orders on the set $\bigsqcup_{i\in
I}E_{i}$. In other words, \newline
$\left(  \bigsqcup_{i\in I}E_{i},\bigoplus_{i\in
I}\left(  <_{1,i}\right)  ,\bigoplus_{i\in I}\left(  <_{2,i}\right)  \right)
$ is a double poset (by the definition of a \textquotedblleft double
poset\textquotedblright). This proves Proposition \ref{prop.djun.doubs}.
\end{proof}

\begin{definition}
\label{def.djun.doubs}Let $I$ be a finite set. For each $i\in I$, let
$\mathbf{E}_{i}=\left(  E_{i},<_{1,i},<_{2,i}\right)  $ be a double poset.
Proposition \ref{prop.djun.doubs} shows that $\left(  \bigsqcup_{i\in I}%
E_{i},\bigoplus_{i\in I}\left(  <_{1,i}\right)  ,\bigoplus_{i\in I}\left(
<_{2,i}\right)  \right)  $ is a double poset. We denote this double poset by
$\bigsqcup_{i\in I}\mathbf{E}_{i}$. We call it the \textit{disjoint union} of
the double posets $\mathbf{E}_{i}$.
\end{definition}

In particular, let us define the disjoint union of two double posets:

\begin{definition}
\label{def.djun.doubs2}Let $\mathbf{E}$ and $\mathbf{F}$ be two double posets.
Then, a double poset $\mathbf{E}\sqcup\mathbf{F}$ is defined as follows:
Define a family $\left(  \mathbf{E}_{i}\right)  _{i\in\left\{  0,1\right\}  }$
of double posets by setting $\mathbf{E}_{0}=\mathbf{E}$ and $\mathbf{E}%
_{1}=\mathbf{F}$. Then, define $\mathbf{E}\sqcup\mathbf{F}$ to be the double
poset $\bigsqcup_{i\in\left\{  0,1\right\}  }\mathbf{E}_{i}$.
\end{definition}

The next proposition characterizes the $\bigsqcup_{i\in I}\mathbf{E}_{i}%
$-partitions when $\left(  \mathbf{E}_{i}\right)  _{i\in I}$ is a finite
family of double posets:

\begin{proposition}
\label{prop.djun.Epars}For every double poset $\mathbf{E}$, let
$\operatorname*{Par}\mathbf{E}$ be the set of all $\mathbf{E}$-partitions.

Let $F=\left\{  1,2,3,\ldots\right\}  $. Thus, $\operatorname*{Par}%
\mathbf{E}\subseteq F^{E}$ for each double poset $\mathbf{E}=\left(
E,<_{1},<_{2}\right)  $.

Let $I$ be a finite set. For each $i\in I$, let $\mathbf{E}_{i}=\left(
E_{i},<_{1,i},<_{2,i}\right)  $ be a double poset. Corollary
\ref{cor.djun.restr} shows that the map $\operatorname*{Restr}:F^{\bigsqcup
_{i\in I}E_{i}}\rightarrow\prod_{i\in I}F^{E_{i}}$ is a bijection. Let
$\phi\in F^{\bigsqcup_{i\in I}E_{i}}$.

\begin{enumerate}
\item[(a)] Then, we have the following logical equivalence:%
\[
\left(  \phi\in\operatorname*{Par}\left(  \bigsqcup_{i\in I}\mathbf{E}%
_{i}\right)  \right)  \ \Longleftrightarrow\ \left(  \operatorname*{Restr}%
\left(  \phi\right)  \in\prod_{i\in I}\operatorname*{Par}\left(
\mathbf{E}_{i}\right)  \right)  .
\]

\item[(b)] Let $w:\bigsqcup_{i\in I}E_{i}\rightarrow\left\{  1,2,3,\ldots
\right\}  $ be a map. Then,%
\[
\mathbf{x}_{\phi,w}=\prod_{i\in I}\mathbf{x}_{\phi\circ\operatorname*{inc}%
\nolimits_{i},w\circ\operatorname*{inc}\nolimits_{i}}.
\]

\end{enumerate}
\end{proposition}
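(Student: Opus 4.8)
The plan is to prove both parts of Proposition~\ref{prop.djun.Epars} by unwinding definitions and using the fact that the binary relations $\bigoplus_{i \in I}\left(<_{1,i}\right)$ and $\bigoplus_{i \in I}\left(<_{2,i}\right)$ only relate elements lying in the same summand. The central observation, which both parts exploit, is that for two elements $\left(j,e\right)$ and $\left(k,f\right)$ of $\bigsqcup_{i \in I} E_i$, we have $\left(j,e\right) \left(\bigoplus_{i \in I}\left(<_{1,i}\right)\right) \left(k,f\right)$ if and only if $j = k$ and $e <_{1,j} f$ (by the definition of the direct sum of relations), and similarly for $<_2$. Thus the poset $\bigsqcup_{i \in I} \mathbf{E}_i$ is, as a double poset, a disjoint union with no relations crossing between summands.

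**For part (a),** I would argue as follows. Recall that $\operatorname{Restr}\left(\phi\right) = \left(\phi \circ \operatorname{inc}_i\right)_{i \in I}$, so the condition $\operatorname{Restr}\left(\phi\right) \in \prod_{i \in I} \operatorname{Par}\left(\mathbf{E}_i\right)$ is equivalent to the assertion that $\phi \circ \operatorname{inc}_i$ is an $\mathbf{E}_i$-partition for every $i \in I$. Now $\phi$ is a $\bigsqcup_{i \in I}\mathbf{E}_i$-partition iff it satisfies the two defining conditions of Definition~\ref{def.E-partition} with respect to $\bigoplus_{i \in I}\left(<_{1,i}\right)$ and $\bigoplus_{i \in I}\left(<_{2,i}\right)$. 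The forward direction is routine: if $\phi$ is a $\bigsqcup_{i \in I}\mathbf{E}_i$-partition, then fixing $j$ and restricting attention to elements of the form $\left(j,e\right)$ shows that $\phi \circ \operatorname{inc}_j$ satisfies both conditions for $\mathbf{E}_j$, using that $e <_{1,j} f$ implies $\left(j,e\right) \left(\bigoplus_{i}\left(<_{1,i}\right)\right)\left(j,f\right)$ and that $\left(\phi \circ \operatorname{inc}_j\right)\left(e\right) = \phi\left(j,e\right)$. For the reverse direction, suppose every $\phi \circ \operatorname{inc}_i$ is an $\mathbf{E}_i$-partition, and take any $\left(j,e\right), \left(k,f\right)$ with $\left(j,e\right) \left(\bigoplus_i \left(<_{1,i}\right)\right)\left(k,f\right)$; then $j = k$ and $e <_{1,j} f$, so the $\mathbf{E}_j$-partition property of $\phi \circ \operatorname{inc}_j$ gives $\phi\left(j,e\right) \leq \phi\left(j,f\right)$, and the second condition is handled identically. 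Thus $\phi$ is a $\bigsqcup_{i \in I}\mathbf{E}_i$-partition, establishing the equivalence.

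**For part (b),** this is a direct computation using Proposition~\ref{prop.djun.ind.prod}~(b). By the definition of $\mathbf{x}_{\phi, w}$, we have $\mathbf{x}_{\phi, w} = \prod_{e \in \bigsqcup_{i \in I} E_i} x_{\phi\left(e\right)}^{w\left(e\right)}$. Applying Proposition~\ref{prop.djun.ind.prod}~(b) to the commutative ring $\Powser$ (or rather to the monoid of monomials, but the product formula holds verbatim) with the map $e \mapsto x_{\phi\left(e\right)}^{w\left(e\right)}$, this rewrites as $\prod_{i \in I} \prod_{e \in E_i} x_{\phi\left(\operatorname{inc}_i\left(e\right)\right)}^{w\left(\operatorname{inc}_i\left(e\right)\right)}$. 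Since $\phi\left(\operatorname{inc}_i\left(e\right)\right) = \left(\phi \circ \operatorname{inc}_i\right)\left(e\right)$ and $w\left(\operatorname{inc}_i\left(e\right)\right) = \left(w \circ \operatorname{inc}_i\right)\left(e\right)$, the inner product is exactly $\mathbf{x}_{\phi \circ \operatorname{inc}_i,\, w \circ \operatorname{inc}_i}$ by definition, giving $\mathbf{x}_{\phi, w} = \prod_{i \in I} \mathbf{x}_{\phi \circ \operatorname{inc}_i,\, w \circ \operatorname{inc}_i}$ as desired.

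**I do not expect any genuine obstacle here;** both parts are essentially bookkeeping, and the only mild care needed is keeping the pairs $\left(j,e\right)$ and the composite notation $\phi \circ \operatorname{inc}_i$ straight throughout. The real content sits entirely in the fact that the direct sum of relations introduces no cross-summand comparisons, which is immediate from Definition~\ref{def.djun.rels}; once that is invoked, part~(a) is a definition-chase and part~(b) is a single application of the product-splitting lemma.
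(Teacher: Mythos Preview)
Your proposal is correct and follows essentially the same approach as the paper's proof. For part~(a), both you and the paper exploit the defining property of $\bigoplus_{i\in I}\left(<_{1,i}\right)$ and $\bigoplus_{i\in I}\left(<_{2,i}\right)$ (no cross-summand comparisons) to reduce the $\mathbf{E}$-partition conditions to per-summand $\mathbf{E}_j$-partition conditions; for part~(b), both proofs apply Proposition~\ref{prop.djun.ind.prod}~(b) to the map $e\mapsto x_{\phi(e)}^{w(e)}$ and then identify the inner product with $\mathbf{x}_{\phi\circ\operatorname{inc}_i,\,w\circ\operatorname{inc}_i}$.
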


\begin{proof}
[Proof of Proposition \ref{prop.djun.Epars}.]Let $\mathbf{E}$ denote the
double poset $\bigsqcup_{i\in I}\mathbf{E}_{i}$. Thus, \newline
$\mathbf{E}%
=\bigsqcup_{i\in I}\mathbf{E}_{i}=\left(  \bigsqcup_{i\in I}E_{i}%
,\bigoplus_{i\in I}\left(  <_{1,i}\right)  ,\bigoplus_{i\in I}\left(
<_{2,i}\right)  \right)  $ (by the definition of $\bigsqcup_{i\in I}%
\mathbf{E}_{i}$).

Let $E$ be the set $\bigsqcup_{i\in I}E_{i}$. The element $\phi$ is an element
of $F^{\bigsqcup_{i\in I}E_{i}}=F^{E}$ (since $\bigsqcup_{i\in I}E_{i}=E$),
thus a map from $E$ to $F$. In other words, $\phi$ is a map from $E$ to
$\left\{  1,2,3,\ldots\right\}  $ (since $F=\left\{  1,2,3,\ldots\right\}  $).

Let $<_{1}$ denote the binary relation $\bigoplus_{i\in I}\left(
<_{1,i}\right)  $. Thus, $\bigoplus_{i\in I}\left(  <_{1,i}\right)  =\left(
<_{1}\right)  $.

Let $<_{2}$ denote the binary relation $\bigoplus_{i\in I}\left(
<_{2,i}\right)  $. Thus, $\bigoplus_{i\in I}\left(  <_{2,i}\right)  =\left(
<_{2}\right)  $.

(a) Now, $\phi$ is a map $E\rightarrow\left\{  1,2,3,\ldots\right\}  $, and we
have%
\[
\mathbf{E}=\left(  \underbrace{\bigsqcup_{i\in I}E_{i}}_{=E}%
,\underbrace{\bigoplus_{i\in I}\left(  <_{1,i}\right)  }_{=\left(
<_{1}\right)  },\underbrace{\bigoplus_{i\in I}\left(  <_{2,i}\right)
}_{=\left(  <_{2}\right)  }\right)  =\left(  E,<_{1},<_{2}\right)  .
\]
Hence, the definition of an \textquotedblleft$\mathbf{E}$%
-partition\textquotedblright\ shows that $\phi$ is an $\mathbf{E}$-partition
if and only if $\phi$ satisfies the following two conditions:

\textit{Condition $\mathcal{P}$}$_{1}$\textit{:} Every $e\in E$ and $f\in E$
satisfying $e<_{1}f$ satisfy $\phi\left(  e\right)  \leq\phi\left(  f\right)
$.

\textit{Condition $\mathcal{P}$}$_{2}$\textit{:} Every $e\in E$ and $f\in E$
satisfying $e<_{1}f$ and $f<_{2}e$ satisfy $\phi\left(  e\right)  <\phi\left(
f\right)  $.

We have the following chain of logical equivalences:%
\begin{align*}
\ \left(  \phi\in\operatorname*{Par}\mathbf{E}\right)   &  \Longleftrightarrow
\ \left(  \phi\text{ belongs to }\operatorname*{Par}\mathbf{E}\right) \\
&  \Longleftrightarrow\ \left(  \phi\text{ is an }\mathbf{E}\text{-partition}%
\right) \\
&  \ \ \ \ \ \ \ \ \ \ \left(  \text{since }\operatorname*{Par}\mathbf{E}%
\text{ is the set of all }\mathbf{E}\text{-partitions}\right) \\
&  \Longleftrightarrow\ \left(  \phi\text{ satisfies Conditions }%
\mathcal{P}_{1}\text{ and }\mathcal{P}_{2}\right)
\end{align*}
(since $\phi$ is an $\mathbf{E}$-partition if and only if $\phi$ satisfies
Conditions $\mathcal{P}_{1}$ and $\mathcal{P}_{2}$). Thus, we have the
following chain of logical equivalences:%
\begin{align}
&  \ \left(  \phi\in\operatorname*{Par}\mathbf{E}\right) \nonumber\\
&  \Longleftrightarrow\ \left(  \phi\text{ satisfies Conditions }%
\mathcal{P}_{1}\text{ and }\mathcal{P}_{2}\right) \nonumber\\
&  \Longleftrightarrow\ \left(  \text{Conditions }\mathcal{P}_{1}\text{ and
}\mathcal{P}_{2}\text{ hold}\right) \nonumber\\
&  \Longleftrightarrow\ \left(  \text{Condition }\mathcal{P}_{1}\text{
holds}\right)  \wedge\left(  \text{Condition }\mathcal{P}_{2}\text{
holds}\right)  . \label{pf.prop.djun.Epars.eq1}%
\end{align}

On the other hand, let $j\in I$. Then, $\operatorname*{inc}\nolimits_{j}$ is a
map $E_{j}\rightarrow\bigsqcup_{i\in I}E_{i}$. In other words,
$\operatorname*{inc}\nolimits_{j}$ is a map $E_{j}\rightarrow E$ (since
$\bigsqcup_{i\in I}E_{i}=E$). Hence, $\phi\circ\operatorname*{inc}%
\nolimits_{j}$ is a map $E_{j}\rightarrow\left\{  1,2,3,\ldots\right\}  $
(since $\operatorname*{inc}\nolimits_{j}$ is a map $E_{j}\rightarrow E$, and
since $\phi$ is a map $E\rightarrow\left\{  1,2,3,\ldots\right\}  $).

Furthermore, $\mathbf{E}_{i}=\left(  E_{i},<_{1,i},<_{2,i}\right)  $ is a
double poset for each $i\in I$. Applying this to $i=j$, we see that
$\mathbf{E}_{j}=\left(  E_{j},<_{1,j},<_{2,j}\right)  $ is a double poset.

So we know that $\phi\circ\operatorname*{inc}\nolimits_{j}$ is a map
$E_{j}\rightarrow\left\{  1,2,3,\ldots\right\}  $, and we have $\mathbf{E}%
_{j}=\left(  E_{j},<_{1,j},<_{2,j}\right)  $. Hence, the definition of an
\textquotedblleft$\mathbf{E}_{j}$-partition\textquotedblright\ shows that
$\phi\circ\operatorname*{inc}\nolimits_{j}$ is an $\mathbf{E}_{j}$-partition
if and only if $\phi\circ\operatorname*{inc}\nolimits_{j}$ satisfies the
following two conditions:

\textit{Condition $\mathcal{Q}$}$_{1}\left(  j\right)  $\textit{:} Every $e\in
E_{j}$ and $f\in E_{j}$ satisfying $e<_{1,j}f$ satisfy $\left(  \phi
\circ\operatorname*{inc}\nolimits_{j}\right)  \left(  e\right)  \leq\left(
\phi\circ\operatorname*{inc}\nolimits_{j}\right)  \left(  f\right)  $.

\textit{Condition $\mathcal{Q}$}$_{2}\left(  j\right)  $\textit{:} Every $e\in
E_{j}$ and $f\in E_{j}$ satisfying $e<_{1,j}f$ and $f<_{2,j}e$ satisfy
$\left(  \phi\circ\operatorname*{inc}\nolimits_{j}\right)  \left(  e\right)
<\left(  \phi\circ\operatorname*{inc}\nolimits_{j}\right)  \left(  f\right)  $.

We have the following chain of logical equivalences:%
\begin{align}
&  \ \left(  \phi\circ\operatorname*{inc}\nolimits_{j}\in\operatorname*{Par}%
\left(  \mathbf{E}_{j}\right)  \right) \nonumber\\
&  \Longleftrightarrow\ \left(  \phi\circ\operatorname*{inc}\nolimits_{j}%
\text{ belongs to }\operatorname*{Par}\left(  \mathbf{E}_{j}\right)  \right)
\nonumber\\
&  \Longleftrightarrow\ \left(  \phi\circ\operatorname*{inc}\nolimits_{j}%
\text{ is an }\mathbf{E}_{j}\text{-partition}\right) \nonumber\\
&  \ \ \ \ \ \ \ \ \ \ \left(  \text{since }\operatorname*{Par}\left(
\mathbf{E}_{j}\right)  \text{ is the set of all }\mathbf{E}_{j}%
\text{-partitions}\right) \nonumber\\
&  \Longleftrightarrow\ \left(  \phi\circ\operatorname*{inc}\nolimits_{j}%
\text{ satisfies Conditions }\mathcal{Q}_{1}\left(  j\right)  \text{ and
}\mathcal{Q}_{2}\left(  j\right)  \right)  \label{pf.prop.djun.Epars.eq2a}%
\end{align}
(since $\phi\circ\operatorname*{inc}\nolimits_{j}$ is an $\mathbf{E}_j
$-partition if and only if $\phi\circ\operatorname*{inc}\nolimits_{j}$
satisfies Conditions $\mathcal{Q}_{1}\left(  j\right)  $ and $\mathcal{Q}%
_{2}\left(  j\right)  $).

Now, let us forget that we fixed $j$. We thus have introduced two Conditions
$\mathcal{Q}_{1}\left(  j\right)  $ and $\mathcal{Q}_{2}\left(  j\right)  $
for each $j\in I$, and we have proven the logical equivalence
(\ref{pf.prop.djun.Epars.eq2a}) for each $j\in I$.

The definition of $\operatorname*{Restr}$ yields $\operatorname*{Restr}\left(
\phi\right)  =\left(  \phi\circ\operatorname*{inc}\nolimits_{i}\right)  _{i\in
I}$. Now, we have the following chain of logical equivalences:%
\begin{align}
&  \ \left(  \underbrace{\operatorname*{Restr}\left(  \phi\right)  }_{=\left(
\phi\circ\operatorname*{inc}\nolimits_{i}\right)  _{i\in I}}\in\prod_{i\in
I}\operatorname*{Par}\left(  \mathbf{E}_{i}\right)  \right) \nonumber\\
&  \Longleftrightarrow\ \left(  \left(  \phi\circ\operatorname*{inc}%
\nolimits_{i}\right)  _{i\in I}\in\prod_{i\in I}\operatorname*{Par}\left(
\mathbf{E}_{i}\right)  \right) \nonumber\\
&  \Longleftrightarrow\ \left(  \phi\circ\operatorname*{inc}\nolimits_{i}%
\in\operatorname*{Par}\left(  \mathbf{E}_{i}\right)  \text{ for each }i\in
I\right) \nonumber\\
&  \Longleftrightarrow\ \left(  \underbrace{\phi\circ\operatorname*{inc}%
\nolimits_{j}\in\operatorname*{Par}\left(  \mathbf{E}_{j}\right)
}_{\substack{\Longleftrightarrow\ \left(  \phi\circ\operatorname*{inc}%
\nolimits_{j}\text{ satisfies Conditions }\mathcal{Q}_{1}\left(  j\right)
\text{ and }\mathcal{Q}_{2}\left(  j\right)  \right)  \\\text{(by
(\ref{pf.prop.djun.Epars.eq2a}))}}}\text{ for each }j\in I\right) \nonumber\\
&  \ \ \ \ \ \ \ \ \ \ \left(  \text{here, we have renamed the index }i\text{
as }j\right) \nonumber\\
&  \Longleftrightarrow\ \left(  \underbrace{\phi\circ\operatorname*{inc}%
\nolimits_{j}\text{ satisfies Conditions }\mathcal{Q}_{1}\left(  j\right)
\text{ and }\mathcal{Q}_{2}\left(  j\right)  }_{\Longleftrightarrow\ \left(
\text{Conditions }\mathcal{Q}_{1}\left(  j\right)  \text{ and }\mathcal{Q}%
_{2}\left(  j\right)  \text{ hold}\right)  }\text{ for each }j\in I\right)
\nonumber\\
&  \Longleftrightarrow\ \left(  \text{Conditions }\mathcal{Q}_{1}\left(
j\right)  \text{ and }\mathcal{Q}_{2}\left(  j\right)  \text{ hold for each
}j\in I\right) \nonumber\\
&  \Longleftrightarrow\ \left(  \text{Condition }\mathcal{Q}_{1}\left(
j\right)  \text{ holds for each }j\in I\right) \nonumber\\
&  \ \ \ \ \ \ \ \ \ \ \wedge\left(  \text{Condition }\mathcal{Q}_{2}\left(
j\right)  \text{ holds for each }j\in I\right)  .
\label{pf.prop.djun.Epars.eq2b}%
\end{align}

But for any two elements $\left(  j,e\right)  $ and $\left(  k,f\right)  $ of
$E$, we have the equivalence%
\begin{equation}
\left(  \left(  j,e\right)  <_{1}\left(  k,f\right)  \right)
\ \Longleftrightarrow\ \left(  j=k\text{ and }e<_{1,j}f\right)
\label{pf.prop.djun.Epars.le1}%
\end{equation}
\footnote{\textit{Proof of (\ref{pf.prop.djun.Epars.le1}):} Recall that
$\bigoplus_{i\in I}\left(  <_{1,i}\right)  =\left(  <_{1}\right)  $.
\par
Now, the definition of the relation $\bigoplus_{i\in I}\left(  <_{1,i}\right)
$ shows that%
\[
\left(
\begin{array}
[c]{l}%
\left(  \left(  j,e\right)  \left(  \bigoplus_{i\in I}\left(  <_{1,i}\right)
\right)  \left(  k,f\right)  \right)  \ \Longleftrightarrow\ \left(  j=k\text{
and }e<_{1,j}f\right) \\
\ \ \ \ \ \ \ \ \ \ \text{for any two elements }\left(  j,e\right)  \text{ and
}\left(  k,f\right)  \text{ of }\bigsqcup_{i\in I}E_{i}%
\end{array}
\right)  .
\]
In other words,%
\[
\left(
\begin{array}
[c]{l}%
\left(  \left(  j,e\right)  <_{1}\left(  k,f\right)  \right)
\ \Longleftrightarrow\ \left(  j=k\text{ and }e<_{1,j}f\right) \\
\ \ \ \ \ \ \ \ \ \ \text{for any two elements }\left(  j,e\right)  \text{ and
}\left(  k,f\right)  \text{ of }E
\end{array}
\right)
\]
(since $\bigoplus_{i\in I}\left(  <_{1,i}\right)  =\left(  <_{1}\right)  $ and
$\bigsqcup_{i\in I}E_{i}=E$). This proves (\ref{pf.prop.djun.Epars.le1}).}.
Similarly, for any two elements $\left(  j,e\right)  $ and $\left(
k,f\right)  $ of $E$, we have the equivalence%
\begin{equation}
\left(  \left(  j,e\right)  <_{2}\left(  k,f\right)  \right)
\ \Longleftrightarrow\ \left(  j=k\text{ and }e<_{2,j}f\right)  .
\label{pf.prop.djun.Epars.le2}%
\end{equation}

We observe that every $j\in I$ and every $e\in E_{j}$ satisfy%
\begin{equation}
\phi\left(  j,e\right)  =\left(  \phi\circ\operatorname*{inc}\nolimits_{j}%
\right)  \left(  e\right)  \label{pf.prop.djun.Epars.phije}%
\end{equation}
\footnote{\textit{Proof of (\ref{pf.prop.djun.Epars.phije}):} Let $j\in I$ and
$e\in E_{j}$. The definition of $\operatorname*{inc}\nolimits_{j}$ yields
$\operatorname*{inc}\nolimits_{j}\left(  e\right)  =\left(  j,e\right)  $.
Now, $\left(  \phi\circ\operatorname*{inc}\nolimits_{j}\right)  \left(
e\right)  =\phi\left(  \underbrace{\operatorname*{inc}\nolimits_{j}\left(
e\right)  }_{=\left(  j,e\right)  }\right)  =\phi\left(  j,e\right)  $. This
proves (\ref{pf.prop.djun.Epars.phije}).}.

Now, let us prove the implication%
\begin{equation}
\left(  \text{Condition }\mathcal{P}_{1}\text{ holds}\right)  \Longrightarrow
\left(  \text{Condition }\mathcal{Q}_{1}\left(  j\right)  \text{ holds for
each }j\in I\right)  . \label{pf.prop.djun.Epars.dir1a}%
\end{equation}

[\textit{Proof of (\ref{pf.prop.djun.Epars.dir1a}):} Assume that Condition
$\mathcal{P}_{1}$ holds. We must prove that Condition $\mathcal{Q}_{1}\left(
j\right)  $ holds for each $j\in I$.

Indeed, fix $j\in I$. Let $e\in E_{j}$ and $f\in E_{j}$ be such that
$e<_{1,j}f$. We shall show that $\left(  \phi\circ\operatorname*{inc}%
\nolimits_{j}\right)  \left(  e\right)  \leq\left(  \phi\circ
\operatorname*{inc}\nolimits_{j}\right)  \left(  f\right)  $.

The definition of $\operatorname*{inc}\nolimits_{j}$ yields
$\operatorname*{inc}\nolimits_{j}\left(  e\right)  =\left(  j,e\right)  $ and
$\operatorname*{inc}\nolimits_{j}\left(  f\right)  =\left(  j,f\right)  $.

Recall that $\operatorname*{inc}\nolimits_{j}$ is a map $E_{j}\rightarrow
\bigsqcup_{i\in I}E_{i}$. In other words, $\operatorname*{inc}\nolimits_{j}$
is a map $E_{j}\rightarrow E$ (since $\bigsqcup_{i\in I}E_{i}=E$).

We have $\left(  j,e\right)  =\operatorname*{inc}\nolimits_{j}\left(
e\right)  \in E$ (since $\operatorname*{inc}\nolimits_{j}$ is a map
$E_{j}\rightarrow E$). Also, $\left(  j,f\right)  =\operatorname*{inc}%
\nolimits_{j}\left(  f\right)  \in E$ (since $\operatorname*{inc}%
\nolimits_{j}$ is a map $E_{j}\rightarrow E$). Thus,
(\ref{pf.prop.djun.Epars.le1}) (applied to $j$ instead of $k$) shows that we
have the equivalence%
\[
\left(  \left(  j,e\right)  <_{1}\left(  j,f\right)  \right)
\ \Longleftrightarrow\ \left(  j=j\text{ and }e<_{1,j}f\right)  .
\]
Thus, we have $\left(  j,e\right)  <_{1}\left(  j,f\right)  $ (since we have
$\left(  j=j\text{ and }e<_{1,j}f\right)  $).

Now, recall that Condition $\mathcal{P}_{1}$ holds. Hence, Condition
$\mathcal{P}_{1}$ (applied to $\left(  j,e\right)  $ and $\left(  j,f\right)
$ instead of $e$ and $f$) shows that $\phi\left(  j,e\right)  \leq\phi\left(
j,f\right)  $ (since $\left(  j,e\right)  <_{1}\left(  j,f\right)  $). But
(\ref{pf.prop.djun.Epars.phije}) yields $\phi\left(  j,e\right)  =\left(
\phi\circ\operatorname*{inc}\nolimits_{j}\right)  \left(  e\right)  $. Also,
(\ref{pf.prop.djun.Epars.phije}) (applied to $f$ instead of $e$) yields
$\phi\left(  j,f\right)  =\left(  \phi\circ\operatorname*{inc}\nolimits_{j}%
\right)  \left(  f\right)  $. Now,%
\[
\left(  \phi\circ\operatorname*{inc}\nolimits_{j}\right)  \left(  e\right)
=\phi\left(  j,e\right)  \leq\phi\left(  j,f\right)  =\left(  \phi
\circ\operatorname*{inc}\nolimits_{j}\right)  \left(  f\right)  .
\]

Now, forget that we fixed $e$ and $f$. We thus have shown that every $e\in
E_{j}$ and $f\in E_{j}$ satisfying $e<_{1,j}f$ satisfy $\left(  \phi
\circ\operatorname*{inc}\nolimits_{j}\right)  \left(  e\right)  \leq\left(
\phi\circ\operatorname*{inc}\nolimits_{j}\right)  \left(  f\right)  $. In
other words, Condition $\mathcal{Q}_{1}\left(  j\right)  $ holds.

Now, forget that we fixed $j$. We thus have shown that Condition
$\mathcal{Q}_{1}\left(  j\right)  $ holds for each $j\in I$. This completes
the proof of the implication (\ref{pf.prop.djun.Epars.dir1a}).]

Next, let us prove the implication%
\begin{equation}
\left(  \text{Condition }\mathcal{Q}_{1}\left(  j\right)  \text{ holds for
each }j\in I\right)  \Longrightarrow\left(  \text{Condition }\mathcal{P}%
_{1}\text{ holds}\right)  . \label{pf.prop.djun.Epars.dir1b}%
\end{equation}

[\textit{Proof of (\ref{pf.prop.djun.Epars.dir1b}):} Assume that Condition
$\mathcal{Q}_{1}\left(  j\right)  $ holds for each $j\in I$. We must prove
that Condition $\mathcal{P}_{1}$ holds.

We have assumed that%
\begin{equation}
\text{Condition }\mathcal{Q}_{1}\left(  j\right)  \text{ holds for each }j\in
I\text{.} \label{pf.prop.djun.Epars.dir1b.pf.1}%
\end{equation}

Now, let $u\in E$ and $v\in E$ be such that $u<_{1}v$. We shall show that
$\phi\left(  u\right)  \leq\phi\left(  v\right)  $.

We have $v\in E=\bigsqcup_{i\in I}E_{i}$. Thus, Remark \ref{rmk.djun.elt} (c)
(applied to $x=v$) shows that there exist an $i\in I$ and an $e\in E_{i}$ such
that $v=\left(  i,e\right)  $. Denote these $i$ and $e$ by $k$ and $f$. Thus,
$k\in I$ and $f\in E_{k}$ are such that $v=\left(  k,f\right)  $.

We have $u\in E=\bigsqcup_{i\in I}E_{i}$. Thus, Remark \ref{rmk.djun.elt} (c)
(applied to $x=u$) shows that there exist an $i\in I$ and an $e\in E_{i}$ such
that $u=\left(  i,e\right)  $. Denote these $i$ and $e$ by $j$ and $e$. Thus,
$j\in I$ and $e\in E_{j}$ are such that $u=\left(  j,e\right)  $.

Now, $u<_{1}v$. In other words, $\left(  j,e\right)  <_{1}\left(  k,f\right)
$ (since $u=\left(  j,e\right)  $ and $v=\left(  k,f\right)  $).

We have $\left(  j,e\right)  =u\in E$ and $\left(  k,f\right)  =v\in E$. Thus,
(\ref{pf.prop.djun.Epars.le1}) shows that we have the equivalence%
\[
\left(  \left(  j,e\right)  <_{1}\left(  k,f\right)  \right)
\ \Longleftrightarrow\ \left(  j=k\text{ and }e<_{1,j}f\right)  .
\]
Thus, we have $\left(  j=k\text{ and }e<_{1,j}f\right)  $ (since we have
$\left(  j,e\right)  <_{1}\left(  k,f\right)  $). Hence, $j=k$ and $e<_{1,j}%
f$. Now, $f\in E_{k}=E_{j}$ (since $k=j$).

But (\ref{pf.prop.djun.Epars.dir1b.pf.1}) shows that Condition $\mathcal{Q}%
_{1}\left(  j\right)  $ holds. Thus, this condition shows that $\left(
\phi\circ\operatorname*{inc}\nolimits_{j}\right)  \left(  e\right)
\leq\left(  \phi\circ\operatorname*{inc}\nolimits_{j}\right)  \left(
f\right)  $ (since $e<_{1,j}f$). But (\ref{pf.prop.djun.Epars.phije}) yields
$\phi\left(  j,e\right)  =\left(  \phi\circ\operatorname*{inc}\nolimits_{j}%
\right)  \left(  e\right)  $. Also, (\ref{pf.prop.djun.Epars.phije}) (applied
to $f$ instead of $e$) yields $\phi\left(  j,f\right)  =\left(  \phi
\circ\operatorname*{inc}\nolimits_{j}\right)  \left(  f\right)  $. Now,%
\begin{align*}
\phi\left(  \underbrace{u}_{=\left(  j,e\right)  }\right)   &  =\phi\left(
j,e\right)  =\left(  \phi\circ\operatorname*{inc}\nolimits_{j}\right)  \left(
e\right)  \leq\left(  \phi\circ\operatorname*{inc}\nolimits_{j}\right)
\left(  f\right)  =\phi\left(  \underbrace{j}_{=k},f\right) \\
&  =\phi\underbrace{\left(  k,f\right)  }_{=v}=\phi\left(  v\right)  .
\end{align*}

Now, forget that we fixed $u$ and $v$. We thus have shown that every $u\in E$
and $v\in E$ satisfying $u<_{1}v$ satisfy $\phi\left(  u\right)  \leq
\phi\left(  v\right)  $. Renaming $u$ and $v$ as $e$ and $f$ in this
statement, we obtain the following: Every $e\in E$ and $f\in E$ satisfying
$e<_{1}f$ satisfy $\phi\left(  e\right)  \leq\phi\left(  f\right)  $. In other
words, Condition $\mathcal{P}_{1}$ holds. This proves the implication
(\ref{pf.prop.djun.Epars.dir1b}).]

Combining the two implications (\ref{pf.prop.djun.Epars.dir1a}) and
(\ref{pf.prop.djun.Epars.dir1b}), we obtain the logical equivalence%
\begin{equation}
\left(  \text{Condition }\mathcal{P}_{1}\text{ holds}\right)
\Longleftrightarrow\left(  \text{Condition }\mathcal{Q}_{1}\left(  j\right)
\text{ holds for each }j\in I\right)  . \label{pf.prop.djun.Epars.dir1}%
\end{equation}

Next, we shall prove the implication%
\begin{equation}
\left(  \text{Condition }\mathcal{P}_{2}\text{ holds}\right)  \Longrightarrow
\left(  \text{Condition }\mathcal{Q}_{2}\left(  j\right)  \text{ holds for
each }j\in I\right)  . \label{pf.prop.djun.Epars.dir2a}%
\end{equation}

[\textit{Proof of (\ref{pf.prop.djun.Epars.dir2a}):} Assume that Condition
$\mathcal{P}_{2}$ holds. We must prove that Condition $\mathcal{Q}_{2}\left(
j\right)  $ holds for each $j\in I$.

Indeed, fix $j\in I$. Let $e\in E_{j}$ and $f\in E_{j}$ be such that
$e<_{1,j}f$ and $f<_{2,j}e$. We shall show that $\left(  \phi\circ
\operatorname*{inc}\nolimits_{j}\right)  \left(  e\right)  <\left(  \phi
\circ\operatorname*{inc}\nolimits_{j}\right)  \left(  f\right)  $.

Just as in the proof of (\ref{pf.prop.djun.Epars.dir1a}), we can

\begin{itemize}
\item show that $\left(  j,e\right)  \in E$ and $\left(  j,f\right)  \in E$
and $\left(  j,e\right)  <_{1}\left(  j,f\right)  $;

\item show that $\phi\left(  j,e\right)  =\left(  \phi\circ\operatorname*{inc}%
\nolimits_{j}\right)  \left(  e\right)  $ and $\phi\left(  j,f\right)
=\left(  \phi\circ\operatorname*{inc}\nolimits_{j}\right)  \left(  f\right)  $.
\end{itemize}

Furthermore, recall that $\left(  j,f\right)  \in E$ and $\left(  j,e\right)
\in E$. Thus, (\ref{pf.prop.djun.Epars.le2}) (applied to $j$, $f$, $j$ and $e$
instead of $j$, $e$, $k$ and $f$) shows that we have the equivalence%
\[
\left(  \left(  j,f\right)  <_{2}\left(  j,e\right)  \right)
\ \Longleftrightarrow\ \left(  j=j\text{ and }f<_{2,j}e\right)  .
\]
Thus, we have $\left(  j,f\right)  <_{2}\left(  j,e\right)  $ (since we have
$\left(  j=j\text{ and }f<_{2,j}e\right)  $).

Now, recall that Condition $\mathcal{P}_{2}$ holds. Hence, Condition
$\mathcal{P}_{2}$ (applied to $\left(  j,e\right)  $ and $\left(  j,f\right)
$ instead of $e$ and $f$) shows that $\phi\left(  j,e\right)  <\phi\left(
j,f\right)  $ (since $\left(  j,e\right)  <_{1}\left(  j,f\right)  $ and
$\left(  j,f\right)  <_{2}\left(  j,e\right)  $). Now,%
\[
\left(  \phi\circ\operatorname*{inc}\nolimits_{j}\right)  \left(  e\right)
=\phi\left(  j,e\right)  <\phi\left(  j,f\right)  =\left(  \phi\circ
\operatorname*{inc}\nolimits_{j}\right)  \left(  f\right)  .
\]

Now, forget that we fixed $e$ and $f$. We thus have shown that every $e\in
E_{j}$ and $f\in E_{j}$ satisfying $e<_{1,j}f$ and $f<_{2,j}e$ satisfy
$\left(  \phi\circ\operatorname*{inc}\nolimits_{j}\right)  \left(  e\right)
<\left(  \phi\circ\operatorname*{inc}\nolimits_{j}\right)  \left(  f\right)
$. In other words, Condition $\mathcal{Q}_{2}\left(  j\right)  $ holds.

Now, forget that we fixed $j$. We thus have shown that Condition
$\mathcal{Q}_{2}\left(  j\right)  $ holds for each $j\in I$. This proves the
implication (\ref{pf.prop.djun.Epars.dir2a}).]

Next, let us prove the implication%
\begin{equation}
\left(  \text{Condition }\mathcal{Q}_{2}\left(  j\right)  \text{ holds for
each }j\in I\right)  \Longrightarrow\left(  \text{Condition }\mathcal{P}%
_{2}\text{ holds}\right)  . \label{pf.prop.djun.Epars.dir2b}%
\end{equation}

[\textit{Proof of (\ref{pf.prop.djun.Epars.dir2b}):} Assume that Condition
$\mathcal{Q}_{2}\left(  j\right)  $ holds for each $j\in I$. We must prove
that Condition $\mathcal{P}_{2}$ holds.

We have assumed that%
\begin{equation}
\text{Condition }\mathcal{Q}_{2}\left(  j\right)  \text{ holds for each }j\in
I\text{.} \label{pf.prop.djun.Epars.dir2b.pf.1}%
\end{equation}

Now, let $u\in E$ and $v\in E$ be such that $u<_{1}v$ and $v<_{2}u$. We shall
show that $\phi\left(  u\right)  <\phi\left(  v\right)  $.

Just as in the proof of (\ref{pf.prop.djun.Epars.dir1b}), we can

\begin{itemize}
\item construct $k\in I$ and $f\in E_{k}$ such that $v=\left(  k,f\right)  $;

\item construct $j\in I$ and $e\in E_{j}$ such that $u=\left(  j,e\right)  $;

\item show that $\left(  j,e\right)  =u\in E$ and $\left(  k,f\right)  =v\in
E$;

\item show that $j=k$ and $e<_{1,j}f$ and $f\in E_{j}$;

\item show that $\phi\left(  j,e\right)  =\left(  \phi\circ\operatorname*{inc}%
\nolimits_{j}\right)  \left(  e\right)  $ and $\phi\left(  j,f\right)
=\left(  \phi\circ\operatorname*{inc}\nolimits_{j}\right)  \left(  f\right)  $.
\end{itemize}

On the other hand, $v<_{2}u$. In other words, $\left(  k,f\right)
<_{2}\left(  j,e\right)  $ (since $u=\left(  j,e\right)  $ and $v=\left(
k,f\right)  $).

We have $\left(  k,f\right)  \in E$ and $\left(  j,e\right)  \in E$. Thus,
(\ref{pf.prop.djun.Epars.le2}) (applied to $k$, $f$, $j$ and $e$ instead of
$j$, $e$, $k$ and $f$) shows that we have the equivalence%
\[
\left(  \left(  k,f\right)  <_{2}\left(  j,e\right)  \right)
\ \Longleftrightarrow\ \left(  k=j\text{ and }f<_{2,k}e\right)  .
\]
Thus, we have $\left(  k=j\text{ and }f<_{2,k}e\right)  $ (since we have
$\left(  k,f\right)  <_{2}\left(  j,e\right)  $). Hence, $f<_{2,k}e$. In other
words, $f<_{2,j}e$ (since $k=j$).

But (\ref{pf.prop.djun.Epars.dir2b.pf.1}) shows that Condition $\mathcal{Q}%
_{2}\left(  j\right)  $ holds. Thus, this condition shows that $\left(
\phi\circ\operatorname*{inc}\nolimits_{j}\right)  \left(  e\right)  <\left(
\phi\circ\operatorname*{inc}\nolimits_{j}\right)  \left(  f\right)  $ (since
$e<_{1,j}f$ and $f<_{2,j}e$). Now,%
\begin{align*}
\phi\left(  \underbrace{u}_{=\left(  j,e\right)  }\right)   &  =\phi\left(
j,e\right)  =\left(  \phi\circ\operatorname*{inc}\nolimits_{j}\right)  \left(
e\right)  <\left(  \phi\circ\operatorname*{inc}\nolimits_{j}\right)  \left(
f\right)  =\phi\left(  \underbrace{j}_{=k},f\right) \\
&  =\phi\underbrace{\left(  k,f\right)  }_{=v}=\phi\left(  v\right)  .
\end{align*}

Now, forget that we fixed $u$ and $v$. We thus have shown that every $u\in E$
and $v\in E$ satisfying $u<_{1}v$ and $v<_{2}u$ satisfy $\phi\left(  u\right)
<\phi\left(  v\right)  $. Renaming $u$ and $v$ as $e$ and $f$ in this
statement, we obtain the following: Every $e\in E$ and $f\in E$ satisfying
$e<_{1}f$ and $f<_{2}e$ satisfy $\phi\left(  e\right)  <\phi\left(  f\right)
$. In other words, Condition $\mathcal{P}_{2}$ holds. This proves the
implication (\ref{pf.prop.djun.Epars.dir2b}).]

Combining the two implications (\ref{pf.prop.djun.Epars.dir2a}) and
(\ref{pf.prop.djun.Epars.dir2b}), we obtain the logical equivalence%
\begin{equation}
\left(  \text{Condition }\mathcal{P}_{2}\text{ holds}\right)
\Longleftrightarrow\left(  \text{Condition }\mathcal{Q}_{2}\left(  j\right)
\text{ holds for each }j\in I\right)  . \label{pf.prop.djun.Epars.dir2}%
\end{equation}

Now, we have the following chain of logical equivalences:%
\begin{align*}
&  \ \left(  \phi\in\operatorname*{Par}\underbrace{\left(  \bigsqcup_{i\in
I}\mathbf{E}_{i}\right)  }_{=\mathbf{E}}\right) \\
&  \Longleftrightarrow\ \left(  \phi\in\operatorname*{Par}\mathbf{E}\right) \\
&  \Longleftrightarrow\ \underbrace{\left(  \text{Condition }\mathcal{P}%
_{1}\text{ holds}\right)  }_{\substack{\Longleftrightarrow\left(
\text{Condition }\mathcal{Q}_{1}\left(  j\right)  \text{ holds for each }j\in
I\right)  \\\text{(by (\ref{pf.prop.djun.Epars.dir1}))}}}\wedge
\underbrace{\left(  \text{Condition }\mathcal{P}_{2}\text{ holds}\right)
}_{\substack{\Longleftrightarrow\left(  \text{Condition }\mathcal{Q}%
_{2}\left(  j\right)  \text{ holds for each }j\in I\right)  \\\text{(by
(\ref{pf.prop.djun.Epars.dir2}))}}}\\
&  \ \ \ \ \ \ \ \ \ \ \left(  \text{by (\ref{pf.prop.djun.Epars.eq1})}\right)
\\
&  \Longleftrightarrow\ \left(  \text{Condition }\mathcal{Q}_{1}\left(
j\right)  \text{ holds for each }j\in I\right) \\
&  \ \ \ \ \ \ \ \ \ \ \wedge\left(  \text{Condition }\mathcal{Q}_{2}\left(
j\right)  \text{ holds for each }j\in I\right) \\
&  \Longleftrightarrow\ \left(  \operatorname*{Restr}\left(  \phi\right)
\in\prod_{i\in I}\operatorname*{Par}\left(  \mathbf{E}_{i}\right)  \right)
\ \ \ \ \ \ \ \ \ \ \left(  \text{by (\ref{pf.prop.djun.Epars.eq2b})}\right)
.
\end{align*}
This proves Proposition \ref{prop.djun.Epars} (a).

(b) We know that $w$ is a map $\bigsqcup_{i\in I}E_{i}\rightarrow\left\{
1,2,3,\ldots\right\}  $. In other words, $w$ is a map $E\rightarrow\left\{
1,2,3,\ldots\right\}  $ (since $E=\bigsqcup_{i\in I}E_{i}$).

Both $w$ and $\phi$ are maps $E\rightarrow\left\{  1,2,3,\ldots\right\}  $.
Thus, $w\left(  e\right)  $ and $\phi\left(  e\right)  $ are well-defined
elements of $\left\{  1,2,3,\ldots\right\}  $ for each $e\in E$.

Let $A$ be the commutative ring $\mathbf{k}\left[  \left[  x_{1},x_{2}%
,x_{3},\ldots\right]  \right]  $. Define a map $a:E\rightarrow A$ by%
\begin{equation}
\left(  a\left(  e\right)  =x_{\phi\left(  e\right)  }^{w\left(  e\right)
}\ \ \ \ \ \ \ \ \ \ \text{for every }e\in E\right)  .
\label{pf.prop.djun.Epars.b.defa}%
\end{equation}

The definition of $\mathbf{x}_{\phi,w}$ yields%
\begin{align*}
\mathbf{x}_{\phi,w}  &  =\prod_{e\in E}\underbrace{x_{\phi\left(  e\right)
}^{w\left(  e\right)  }}_{\substack{=a\left(  e\right)  \\\text{(by
(\ref{pf.prop.djun.Epars.b.defa}))}}}=\prod_{e\in E}a\left(  e\right) \\
&  =\prod_{i\in I}\prod_{e\in E_{i}}\underbrace{a\left(  \operatorname*{inc}%
\nolimits_{i}\left(  e\right)  \right)  }_{\substack{=x_{\phi\left(
\operatorname*{inc}\nolimits_{i}\left(  e\right)  \right)  }^{w\left(
\operatorname*{inc}\nolimits_{i}\left(  e\right)  \right)  }\\\text{(by the
definition}\\\text{of the map }a\text{)}}}\ \ \ \ \ \ \ \ \ \ \left(  \text{by
Proposition \ref{prop.djun.ind.prod} (b)}\right) \\
&  =\prod_{i\in I}\prod_{e\in E_{i}}\underbrace{x_{\phi\left(
\operatorname*{inc}\nolimits_{i}\left(  e\right)  \right)  }^{w\left(
\operatorname*{inc}\nolimits_{i}\left(  e\right)  \right)  }}_{=x_{\left(
\phi\circ\operatorname*{inc}\nolimits_{i}\right)  \left(  e\right)  }^{\left(
w\circ\operatorname*{inc}\nolimits_{i}\right)  \left(  e\right)  }}%
=\prod_{i\in I}\prod_{e\in E_{i}}x_{\left(  \phi\circ\operatorname*{inc}%
\nolimits_{i}\right)  \left(  e\right)  }^{\left(  w\circ\operatorname*{inc}%
\nolimits_{i}\right)  \left(  e\right)  }.
\end{align*}
Comparing this with%
\[
\prod_{i\in I}\underbrace{\mathbf{x}_{\phi\circ\operatorname*{inc}%
\nolimits_{i},w\circ\operatorname*{inc}\nolimits_{i}}}_{\substack{=\prod_{e\in
E_{i}}x_{\left(  \phi\circ\operatorname*{inc}\nolimits_{i}\right)  \left(
e\right)  }^{\left(  w\circ\operatorname*{inc}\nolimits_{i}\right)  \left(
e\right)  }\\\text{(by the definition of }\mathbf{x}_{\phi\circ
\operatorname*{inc}\nolimits_{i},w\circ\operatorname*{inc}\nolimits_{i}%
}\text{)}}}=\prod_{i\in I}\prod_{e\in E_{i}}x_{\left(  \phi\circ
\operatorname*{inc}\nolimits_{i}\right)  \left(  e\right)  }^{\left(
w\circ\operatorname*{inc}\nolimits_{i}\right)  \left(  e\right)  },
\]
we obtain%
\[
\mathbf{x}_{\phi,w}=\prod_{i\in I}\mathbf{x}_{\phi\circ\operatorname*{inc}%
\nolimits_{i},w\circ\operatorname*{inc}\nolimits_{i}}.
\]
This proves Proposition \ref{prop.djun.Epars} (b).
\end{proof}

Now, we can prove the following fact:

\begin{proposition}
\label{prop.djun.Gamma}Let $I$ be a finite set. For each $i\in I$, let
$\mathbf{E}_{i}=\left(  E_{i},<_{1,i},<_{2,i}\right)  $ be a double poset.
Every map $w:\bigsqcup_{i\in I}E_{i}\rightarrow\left\{  1,2,3,\ldots\right\}
$ satisfies%
\[
\Gamma\left(  \bigsqcup_{i\in I}\mathbf{E}_{i},w\right)  =\prod_{i\in I}%
\Gamma\left(  \mathbf{E}_{i},w\circ\operatorname*{inc}\nolimits_{i}\right)  .
\]

\end{proposition}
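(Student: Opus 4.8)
The plan is to reduce everything to the two parts of Proposition~\ref{prop.djun.Epars}, which have already done the combinatorial work. Set $F = \left\{1,2,3,\ldots\right\}$ and abbreviate $\mathbf{E} = \bigsqcup_{i\in I}\mathbf{E}_i$ and $E = \bigsqcup_{i\in I}E_i$. First I would unfold the definition of $\Gamma\left(\mathbf{E}, w\right)$, writing it as $\sum_{\phi \in \Par \mathbf{E}} \mathbf{x}_{\phi, w}$ (the sum being over all $\mathbf{E}$-partitions, which are exactly the elements of $\Par \mathbf{E}$). The key reindexing tool is the bijection $\operatorname{Restr} : F^{E} \to \prod_{i\in I} F^{E_i}$ from Corollary~\ref{cor.djun.restr}, which sends $\phi$ to $\left(\phi \circ \operatorname{inc}_i\right)_{i\in I}$. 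By Proposition~\ref{prop.djun.Epars}~(a), this bijection restricts to a bijection between $\Par \mathbf{E}$ and $\prod_{i\in I} \Par\left(\mathbf{E}_i\right)$, so I can substitute the tuple $\left(\phi_i\right)_{i\in I} = \operatorname{Restr}\left(\phi\right)$ for $\phi$ in the summation index, where each $\phi_i = \phi \circ \operatorname{inc}_i$ ranges over $\Par\left(\mathbf{E}_i\right)$.

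Next I would rewrite each summand using Proposition~\ref{prop.djun.Epars}~(b), which gives $\mathbf{x}_{\phi, w} = \prod_{i\in I} \mathbf{x}_{\phi \circ \operatorname{inc}_i, \, w \circ \operatorname{inc}_i}$. After the substitution this reads $\mathbf{x}_{\phi, w} = \prod_{i\in I} \mathbf{x}_{\phi_i, \, w \circ \operatorname{inc}_i}$, so the whole sum becomes
\[
\Gamma\left(\mathbf{E}, w\right)
= \sum_{\left(\phi_i\right)_{i\in I} \in \prod_{i\in I} \Par\left(\mathbf{E}_i\right)}
\ \prod_{i\in I} \mathbf{x}_{\phi_i, \, w \circ \operatorname{inc}_i} .
\]
The final step is the generalized distributive law: a sum, over a Cartesian product of index sets, of a product of factors each depending only on its own index, factors as the product over $i \in I$ of the individual sums. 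Applying this to the displayed equation turns the right-hand side into $\prod_{i\in I} \left(\sum_{\phi_i \in \Par\left(\mathbf{E}_i\right)} \mathbf{x}_{\phi_i, \, w \circ \operatorname{inc}_i}\right)$, and each inner sum is exactly $\Gamma\left(\mathbf{E}_i, w \circ \operatorname{inc}_i\right)$ by definition, yielding the claim.

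The main obstacle is the legitimacy of this last factorization, because the individual sums are infinite sums in the topological ring $\Powser$, so the distributive law is not purely formal. I would justify it by invoking that the index set $I$ is finite (so only finitely many factors are multiplied) together with the convergence of each $\Gamma\left(\mathbf{E}_i, w \circ \operatorname{inc}_i\right)$ guaranteed by Proposition~\ref{prop.Gammaw.welldef}: since multiplication in $\Powser$ is continuous and each factor converges, a finite product of these convergent sums expands termwise, coefficient by coefficient, into the sum over the product set. Concretely, for any fixed monomial $\mathfrak{m}$, only finitely many tuples $\left(\phi_i\right)_{i\in I}$ contribute to its coefficient on either side (because each factor has only finitely many partitions producing a given monomial factor of $\mathfrak{m}$), so the two sides agree coefficientwise and hence are equal. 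Everything else in the argument is a bookkeeping chain of substitutions that the two parts of Proposition~\ref{prop.djun.Epars} have already licensed.
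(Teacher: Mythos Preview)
Your proposal is correct and follows essentially the same approach as the paper: both use the bijection $\operatorname{Restr}$ restricted via Proposition~\ref{prop.djun.Epars}~(a) to reindex the sum, then Proposition~\ref{prop.djun.Epars}~(b) to factor each summand, and finally the product rule to split the sum into a product. Your treatment of the convergence issue for the infinite distributive law is, if anything, more explicit than the paper's (which simply invokes ``the product rule'').
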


\begin{proof}
[Proof of Proposition \ref{prop.djun.Gamma}.]For every double poset
$\mathbf{E}$, let $\operatorname*{Par}\mathbf{E}$ be the set of all
$\mathbf{E}$-partitions. For every double poset $\mathbf{E}=\left(
E,<_{1},<_{2}\right)  $ and every map $w:E\rightarrow\left\{  1,2,3,\ldots
\right\}  $, we have%
\begin{align}
\Gamma\left(  \mathbf{E},w\right)   &  =\underbrace{\sum_{\pi\text{ is an
}\mathbf{E}\text{-partition}}}_{\substack{=\sum_{\pi\in\operatorname*{Par}%
\mathbf{E}}\\\text{(since }\operatorname*{Par}\mathbf{E}\text{ is
the}\\\text{set of all }\mathbf{E}\text{-partitions)}}}\mathbf{x}_{\pi
,w}\ \ \ \ \ \ \ \ \ \ \left(  \text{by the definition of }\Gamma\left(
\mathbf{E},w\right)  \right) \nonumber\\
&  =\sum_{\pi\in\operatorname*{Par}\mathbf{E}}\mathbf{x}_{\pi,w}.
\label{pf.prop.djun.Gamma.1}%
\end{align}

Let $F=\left\{  1,2,3,\ldots\right\}  $. Thus, $\operatorname*{Par}%
\mathbf{E}\subseteq F^{E}$ for each double poset $\mathbf{E}=\left(
E,<_{1},<_{2}\right)  $. In particular, $\operatorname*{Par}\left(
\bigsqcup_{i\in I}\mathbf{E}_{i}\right)  \subseteq F^{\bigsqcup_{i\in I}E_{i}%
}$, and every $i\in I$ satisfies $\operatorname*{Par}\left(  \mathbf{E}%
_{i}\right)  \subseteq F^{E_{i}}$.

Let $w:\bigsqcup_{i\in I}E_{i}\rightarrow\left\{  1,2,3,\ldots\right\}  $ be
any map.

Corollary \ref{cor.djun.restr} shows that the map $\operatorname*{Restr}%
:F^{\bigsqcup_{i\in I}E_{i}}\rightarrow\prod_{i\in I}F^{E_{i}}$ is a
bijection. Every $\pi\in\operatorname*{Par}\left(  \bigsqcup_{i\in
I}\mathbf{E}_{i}\right)  $ satisfies $\operatorname*{Restr}\left(  \pi\right)
\in\prod_{i\in I}\operatorname*{Par}\left(  \mathbf{E}_{i}\right)
$\ \ \ \ \footnote{\textit{Proof.} Let $\pi\in\operatorname*{Par}\left(
\bigsqcup_{i\in I}\mathbf{E}_{i}\right)  $. Then, $\pi\in\operatorname*{Par}%
\left(  \bigsqcup_{i\in I}\mathbf{E}_{i}\right)  \subseteq F^{\bigsqcup_{i\in
I}E_{i}}$. Proposition \ref{prop.djun.Epars} (a) (applied to $\phi=\pi$) thus
shows that we have the following logical equivalence:%
\[
\left(  \pi\in\operatorname*{Par}\left(  \bigsqcup_{i\in I}\mathbf{E}%
_{i}\right)  \right)  \ \Longleftrightarrow\ \left(  \operatorname*{Restr}%
\left(  \pi\right)  \in\prod_{i\in I}\operatorname*{Par}\left(  \mathbf{E}%
_{i}\right)  \right)  .
\]
Thus, we have $\operatorname*{Restr}\left(  \pi\right)  \in\prod_{i\in
I}\operatorname*{Par}\left(  \mathbf{E}_{i}\right)  $ (since we have $\pi
\in\operatorname*{Par}\left(  \bigsqcup_{i\in I}\mathbf{E}_{i}\right)  $).
Qed.}. Hence, we can define a map $\rho:\operatorname*{Par}\left(
\bigsqcup_{i\in I}\mathbf{E}_{i}\right)  \rightarrow\prod_{i\in I}%
\operatorname*{Par}\left(  \mathbf{E}_{i}\right)  $ by $\left(  \rho\left(
\pi\right)  =\operatorname*{Restr}\left(  \pi\right)  \text{ for every }\pi
\in\operatorname*{Par}\left(  \bigsqcup_{i\in I}\mathbf{E}_{i}\right)
\right)  $. Consider this map $\rho$.

Every $\pi\in\operatorname*{Par}\left(  \bigsqcup_{i\in I}\mathbf{E}%
_{i}\right)  $ satisfies%
\begin{equation}
\rho\left(  \pi\right)  =\operatorname*{Restr}\left(  \pi\right)  =\left(
\pi\circ\operatorname*{inc}\nolimits_{i}\right)  _{i\in I}
\label{pf.prop.djun.Gamma.rho}%
\end{equation}
(by the definition of $\operatorname*{Restr}$). Thus, $\rho$ is the map%
\[
\operatorname*{Par}\left(  \bigsqcup_{i\in I}\mathbf{E}_{i}\right)
\rightarrow\prod_{i\in I}\operatorname*{Par}\left(  \mathbf{E}_{i}\right)
,\ \ \ \ \ \ \ \ \ \ \pi\mapsto\left(  \pi\circ\operatorname*{inc}%
\nolimits_{i}\right)  _{i\in I}.
\]

The map $\rho:\operatorname*{Par}\left(  \bigsqcup_{i\in I}\mathbf{E}%
_{i}\right)  \rightarrow\prod_{i\in I}\operatorname*{Par}\left(
\mathbf{E}_{i}\right)  $ is injective\footnote{\textit{Proof.} Let $\pi_{1}$
and $\pi_{2}$ be two elements of $\operatorname*{Par}\left(  \bigsqcup_{i\in
I}\mathbf{E}_{i}\right)  $ such that $\rho\left(  \pi_{1}\right)  =\rho\left(
\pi_{2}\right)  $. We must show that $\pi_{1}=\pi_{2}$.
\par
The map $\operatorname*{Restr}$ is a bijection, thus injective.
\par
The definition of $\rho$ yields $\rho\left(  \pi_{1}\right)
=\operatorname*{Restr}\left(  \pi_{1}\right)  $ and $\rho\left(  \pi
_{2}\right)  =\operatorname*{Restr}\left(  \pi_{2}\right)  $. Thus,
$\operatorname*{Restr}\left(  \pi_{1}\right)  =\rho\left(  \pi_{1}\right)
=\rho\left(  \pi_{2}\right)  =\operatorname*{Restr}\left(  \pi_{2}\right)  $.
Hence, $\pi_{1}=\pi_{2}$ (since the map $\operatorname*{Restr}$ is injective).
\par
Now, forget that we fixed $\pi_{1}$ and $\pi_{2}$. We thus have shown that if
$\pi_{1}$ and $\pi_{2}$ are two elements of $\operatorname*{Par}\left(
\bigsqcup_{i\in I}\mathbf{E}_{i}\right)  $ such that $\rho\left(  \pi
_{1}\right)  =\rho\left(  \pi_{2}\right)  $, then $\pi_{1}=\pi_{2}$. In other
words, the map $\rho$ is injective. Qed.} and
surjective\footnote{\textit{Proof.} Let $\gamma\in\prod_{i\in I}%
\operatorname*{Par}\left(  \mathbf{E}_{i}\right)  $. We shall prove that
$\gamma\in\rho\left(  \operatorname*{Par}\left(  \bigsqcup_{i\in I}%
\mathbf{E}_{i}\right)  \right)  $.
\par
The map $\operatorname*{Restr}:F^{\bigsqcup_{i\in I}E_{i}}\rightarrow
\prod_{i\in I}F^{E_{i}}$ is a bijection, thus surjective. Hence, there exists
a $\phi\in F^{\bigsqcup_{i\in I}E_{i}}$ such that $\gamma
=\operatorname*{Restr}\left(  \phi\right)  $ (since $\gamma\in\prod_{i\in
I}\underbrace{\operatorname*{Par}\left(  \mathbf{E}_{i}\right)  }_{\subseteq
F^{E_{i}}}\subseteq\prod_{i\in I}F^{E_{i}}$). Consider this $\phi$.
\par
Proposition \ref{prop.djun.Epars} (a) shows that we have the following logical
equivalence:%
\[
\left(  \phi\in\operatorname*{Par}\left(  \bigsqcup_{i\in I}\mathbf{E}%
_{i}\right)  \right)  \ \Longleftrightarrow\ \left(  \operatorname*{Restr}%
\left(  \phi\right)  \in\prod_{i\in I}\operatorname*{Par}\left(
\mathbf{E}_{i}\right)  \right)  .
\]
Thus, we have $\phi\in\operatorname*{Par}\left(  \bigsqcup_{i\in I}%
\mathbf{E}_{i}\right)  $ (since we have $\operatorname*{Restr}\left(
\phi\right)  =\gamma\in\prod_{i\in I}\operatorname*{Par}\left(  \mathbf{E}%
_{i}\right)  $).
\par
The definition of $\rho$ yields $\rho\left(  \phi\right)
=\operatorname*{Restr}\left(  \phi\right)  =\gamma$, so that $\gamma
=\rho\left(  \underbrace{\phi}_{\in\operatorname*{Par}\left(  \bigsqcup_{i\in
I}\mathbf{E}_{i}\right)  }\right)  \in\rho\left(  \operatorname*{Par}\left(
\bigsqcup_{i\in I}\mathbf{E}_{i}\right)  \right)  $.
\par
Now, forget that we fixed $\gamma$. We thus have shown that $\gamma\in
\rho\left(  \operatorname*{Par}\left(  \bigsqcup_{i\in I}\mathbf{E}%
_{i}\right)  \right)  $ for each $\gamma\in\prod_{i\in I}\operatorname*{Par}%
\left(  \mathbf{E}_{i}\right)  $. In other words, $\prod_{i\in I}%
\operatorname*{Par}\left(  \mathbf{E}_{i}\right)  \subseteq\rho\left(
\operatorname*{Par}\left(  \bigsqcup_{i\in I}\mathbf{E}_{i}\right)  \right)
$. In other words, the map $\rho$ is surjective. Qed.}. Hence, this map $\rho$
is bijective, i.e., a bijection. In other words, the map%
\[
\operatorname*{Par}\left(  \bigsqcup_{i\in I}\mathbf{E}_{i}\right)
\rightarrow\prod_{i\in I}\operatorname*{Par}\left(  \mathbf{E}_{i}\right)
,\ \ \ \ \ \ \ \ \ \ \pi\mapsto\left(  \pi\circ\operatorname*{inc}%
\nolimits_{i}\right)  _{i\in I}%
\]
is a bijection\footnote{since $\rho$ is the map%
\[
\operatorname*{Par}\left(  \bigsqcup_{i\in I}\mathbf{E}_{i}\right)
\rightarrow\prod_{i\in I}\operatorname*{Par}\left(  \mathbf{E}_{i}\right)
,\ \ \ \ \ \ \ \ \ \ \pi\mapsto\left(  \pi\circ\operatorname*{inc}%
\nolimits_{i}\right)  _{i\in I}%
\]
}.

Every $\pi\in\operatorname*{Par}\left(  \bigsqcup_{i\in I}\mathbf{E}%
_{i}\right)  $ satisfies%
\begin{equation}
\mathbf{x}_{\pi,w}=\prod_{i\in I}\mathbf{x}_{\pi\circ\operatorname*{inc}%
\nolimits_{i},w\circ\operatorname*{inc}\nolimits_{i}}
\label{pf.prop.djun.Gamma.5}%
\end{equation}
\footnote{\textit{Proof of (\ref{pf.prop.djun.Gamma.5}):} Let $\pi
\in\operatorname*{Par}\left(  \bigsqcup_{i\in I}\mathbf{E}_{i}\right)  $.
Then, $\pi\in\operatorname*{Par}\left(  \bigsqcup_{i\in I}\mathbf{E}%
_{i}\right)  \subseteq F^{\bigsqcup_{i\in I}E_{i}}$. Hence, Proposition
\ref{prop.djun.Epars} (b) (applied to $\phi=\pi$) yields%
\[
\mathbf{x}_{\pi,w}=\prod_{i\in I}\mathbf{x}_{\pi\circ\operatorname*{inc}%
\nolimits_{i},w\circ\operatorname*{inc}\nolimits_{i}}.
\]
}.

Now, recall that $\bigsqcup_{i\in I}\mathbf{E}_{i}=\left(  \bigsqcup_{i\in
I}E_{i},\bigoplus_{i\in I}\left(  <_{1,i}\right)  ,\bigoplus_{i\in I}\left(
<_{2,i}\right)  \right)  $ (by the definition of $\bigsqcup_{i\in I}%
\mathbf{E}_{i}$). Hence, (\ref{pf.prop.djun.Gamma.1}) (applied to
$\bigsqcup_{i\in I}\mathbf{E}_{i}$, $\bigsqcup_{i\in I}E_{i}$, $\bigoplus
_{i\in I}\left(  <_{1,i}\right)  $ and $\bigoplus_{i\in I}\left(
<_{2,i}\right)  $ instead of $\mathbf{E}$, $E$, $<_{1}$ and $<_{2}$) shows
that%
\begin{align*}
\Gamma\left(  \bigsqcup_{i\in I}\mathbf{E}_{i},w\right)   &  =\sum_{\pi
\in\operatorname*{Par}\left(  \bigsqcup_{i\in I}\mathbf{E}_{i}\right)
}\underbrace{\mathbf{x}_{\pi,w}}_{\substack{=\prod_{i\in I}\mathbf{x}%
_{\pi\circ\operatorname*{inc}\nolimits_{i},w\circ\operatorname*{inc}%
\nolimits_{i}}\\\text{(by (\ref{pf.prop.djun.Gamma.5}))}}}=\sum_{\pi
\in\operatorname*{Par}\left(  \bigsqcup_{i\in I}\mathbf{E}_{i}\right)  }%
\prod_{i\in I}\mathbf{x}_{\pi\circ\operatorname*{inc}\nolimits_{i}%
,w\circ\operatorname*{inc}\nolimits_{i}}\\
&  =\sum_{\left(  \pi_{i}\right)  _{i\in I}\in\prod_{i\in I}%
\operatorname*{Par}\left(  \mathbf{E}_{i}\right)  }\prod_{i\in I}%
\mathbf{x}_{\pi_{i},w\circ\operatorname*{inc}\nolimits_{i}}%
\end{align*}
(here, we have substituted $\left(  \pi_{i}\right)  _{i\in I}$ for $\left(
\pi\circ\operatorname*{inc}\nolimits_{i}\right)  _{i\in I}$ in the sum, since
the map $\operatorname*{Par}\left(  \bigsqcup_{i\in I}\mathbf{E}_{i}\right)
\rightarrow\prod_{i\in I}\operatorname*{Par}\left(  \mathbf{E}_{i}\right)
,\ \pi\mapsto\left(  \pi\circ\operatorname*{inc}\nolimits_{i}\right)  _{i\in
I}$ is a bijection). Comparing this with%
\begin{align*}
\prod_{i\in I}\underbrace{\Gamma\left(  \mathbf{E}_{i},w\circ
\operatorname*{inc}\nolimits_{i}\right)  }_{\substack{=\sum_{\pi
\in\operatorname*{Par}\left(  \mathbf{E}_{i}\right)  }\mathbf{x}_{\pi
,w\circ\operatorname*{inc}\nolimits_{i}}\\\text{(by
(\ref{pf.prop.djun.Gamma.1}), applied to}\\\mathbf{E}_{i}\text{, }E_{i}\text{,
}<_{1,i}\text{, }<_{2,i}\text{ and }w\circ\operatorname*{inc}\nolimits_{i}%
\\\text{instead of }\mathbf{E}\text{, }E\text{, }<_{1}\text{, }<_{2}\text{ and
}w\text{)}}}  &  =\prod_{i\in I}\sum_{\pi\in\operatorname*{Par}\left(
\mathbf{E}_{i}\right)  }\mathbf{x}_{\pi,w\circ\operatorname*{inc}%
\nolimits_{i}}=\sum_{\left(  \pi_{i}\right)  _{i\in I}\in\prod_{i\in
I}\operatorname*{Par}\left(  \mathbf{E}_{i}\right)  }\prod_{i\in I}%
\mathbf{x}_{\pi_{i},w\circ\operatorname*{inc}\nolimits_{i}}\\
&  \ \ \ \ \ \ \ \ \ \ \left(  \text{by the product rule}\right)  ,
\end{align*}
we obtain $\Gamma\left(  \bigsqcup_{i\in I}\mathbf{E}_{i},w\right)
=\prod_{i\in I}\Gamma\left(  \mathbf{E}_{i},w\circ\operatorname*{inc}%
\nolimits_{i}\right)  $. This proves Proposition \ref{prop.djun.Gamma}.
\end{proof}

\begin{corollary}
\label{cor.djun.Gamma.2}Let $I$ be a finite set. For each $i\in I$, let
$\mathbf{E}_{i}=\left(  E_{i},<_{1,i},<_{2,i}\right)  $ be a double poset. For
each $i\in I$, let $w_{i}:E_{i}\rightarrow\left\{  1,2,3,\ldots\right\}  $ be
a map.

Let $F=\left\{  1,2,3,\ldots\right\}  $. For each $i\in I$, we have $w_{i}%
\in\left\{  1,2,3,\ldots\right\}  ^{E_{i}}=F^{E_{i}}$ (since $\left\{
1,2,3,\ldots\right\}  =F$). Thus, $\left(  w_{i}\right)  _{i\in I}\in
\prod_{i\in I}F^{E_{i}}$.

Corollary \ref{cor.djun.restr} shows that the map $\operatorname*{Restr}%
:F^{\bigsqcup_{i\in I}E_{i}}\rightarrow\prod_{i\in I}F^{E_{i}}$ is a
bijection. Hence, $\operatorname*{Restr}\nolimits^{-1}\left(  \left(
w_{i}\right)  _{i\in I}\right)  $ is a well-defined element of $F^{\bigsqcup
_{i\in I}E_{i}}$. Denote this element by $w$. Then,%
\[
\prod_{i\in I}\Gamma\left(  \mathbf{E}_{i},w_{i}\right)  =\Gamma\left(
\bigsqcup_{i\in I}\mathbf{E}_{i},w\right)  .
\]

\end{corollary}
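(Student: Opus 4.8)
The plan is to deduce this corollary directly from Proposition~\ref{prop.djun.Gamma}. That proposition already computes $\Gamma\left(\bigsqcup_{i\in I}\mathbf{E}_{i},w\right)$ as the product $\prod_{i\in I}\Gamma\left(\mathbf{E}_{i},w\circ\operatorname*{inc}\nolimits_{i}\right)$ for an \emph{arbitrary} map $w:\bigsqcup_{i\in I}E_{i}\rightarrow\left\{1,2,3,\ldots\right\}$. Hence the only thing left to do is to identify the restrictions $w\circ\operatorname*{inc}\nolimits_{i}$ with the prescribed maps $w_{i}$ for our specific choice of $w$.

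First I would unwind how $w$ is defined. We have set $w=\operatorname*{Restr}\nolimits^{-1}\left(\left(w_{i}\right)_{i\in I}\right)$, so that $\operatorname*{Restr}\left(w\right)=\left(w_{i}\right)_{i\in I}$. On the other hand, the definition of $\operatorname*{Restr}$ (Definition~\ref{def.djun.restr}) gives $\operatorname*{Restr}\left(w\right)=\left(w\circ\operatorname*{inc}\nolimits_{i}\right)_{i\in I}$. Comparing these two descriptions of $\operatorname*{Restr}\left(w\right)$ yields $\left(w\circ\operatorname*{inc}\nolimits_{i}\right)_{i\in I}=\left(w_{i}\right)_{i\in I}$, and hence, comparing entries, $w\circ\operatorname*{inc}\nolimits_{i}=w_{i}$ for every $i\in I$.

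Then I would simply substitute. Applying Proposition~\ref{prop.djun.Gamma} to this $w$ gives
\[
\Gamma\left(\bigsqcup_{i\in I}\mathbf{E}_{i},w\right)
=\prod_{i\in I}\Gamma\left(\mathbf{E}_{i},w\circ\operatorname*{inc}\nolimits_{i}\right)
=\prod_{i\in I}\Gamma\left(\mathbf{E}_{i},w_{i}\right),
\]
where the last equality uses $w\circ\operatorname*{inc}\nolimits_{i}=w_{i}$ for each $i\in I$. Read from right to left, this is exactly the claimed identity $\prod_{i\in I}\Gamma\left(\mathbf{E}_{i},w_{i}\right)=\Gamma\left(\bigsqcup_{i\in I}\mathbf{E}_{i},w\right)$.

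There is essentially no obstacle here: the entire substantive content of the corollary is already packaged in Proposition~\ref{prop.djun.Gamma}, and the present statement merely repackages it for the situation in which the component weight maps $w_{i}$ are prescribed in advance rather than obtained by restricting a single global map. The one point requiring (minimal) care is the bijectivity of $\operatorname*{Restr}$ (Corollary~\ref{cor.djun.restr}), which is precisely what makes $w=\operatorname*{Restr}\nolimits^{-1}\left(\left(w_{i}\right)_{i\in I}\right)$ well-defined; once that is granted, the proof is a one-line substitution.
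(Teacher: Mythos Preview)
Your proof is correct and follows essentially the same approach as the paper: both unwind $w=\operatorname*{Restr}\nolimits^{-1}\left(\left(w_{i}\right)_{i\in I}\right)$ via the definition of $\operatorname*{Restr}$ to obtain $w\circ\operatorname*{inc}\nolimits_{i}=w_{i}$ for each $i$, then apply Proposition~\ref{prop.djun.Gamma} directly.
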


\begin{proof}
[Proof of Corollary \ref{cor.djun.Gamma.2}.]We have $w=\operatorname*{Restr}%
\nolimits^{-1}\left(  \left(  w_{i}\right)  _{i\in I}\right)  $ (by the
definition of $w$) and thus%
\[
\left(  w_{i}\right)  _{i\in I}=\operatorname*{Restr}\left(  w\right)
=\left(  w\circ\operatorname*{inc}\nolimits_{i}\right)  _{i\in I}%
\ \ \ \ \ \ \ \ \ \ \left(  \text{by the definition of }\operatorname*{Restr}%
\right)  .
\]
In other words,%
\begin{equation}
w_{i}=w\circ\operatorname*{inc}\nolimits_{i}\ \ \ \ \ \ \ \ \ \ \text{for each
}i\in I. \label{pf.cor.djun.Gamma.2}%
\end{equation}

We have $w\in F^{\bigsqcup_{i\in I}E_{i}}$. Thus, $w$ is a map $\bigsqcup
_{i\in I}E_{i}\rightarrow F$. In other words, $w$ is a map $\bigsqcup_{i\in
I}E_{i}\rightarrow\left\{  1,2,3,\ldots\right\}  $ (since $F=\left\{
1,2,3,\ldots\right\}  $). Hence, Proposition \ref{prop.djun.Gamma} yields%
\[
\Gamma\left(  \bigsqcup_{i\in I}\mathbf{E}_{i},w\right)  =\prod_{i\in I}%
\Gamma\left(  \mathbf{E}_{i},\underbrace{w\circ\operatorname*{inc}%
\nolimits_{i}}_{\substack{=w_{i}\\\text{(by (\ref{pf.cor.djun.Gamma.2}))}%
}}\right)  =\prod_{i\in I}\Gamma\left(  \mathbf{E}_{i},w_{i}\right)  .
\]
This proves Corollary \ref{cor.djun.Gamma.2}.
\end{proof}

As a consequence of Corollary \ref{cor.djun.Gamma.2} in the case when
$I=\left\{  0,1\right\}  $, we obtain the following:

\begin{corollary}
\label{cor.djun.Gamma.EF}Let $\mathbf{E}=\left(  E,<_{1,0},<_{2,0}\right)  $
and $\mathbf{F}=\left(  F,<_{1,1},<_{2,1}\right)  $ be two double posets.
Recall that a double poset $\mathbf{E}\sqcup\mathbf{F}$ is defined (in
Definition \ref{def.djun.doubs2}).

Define a family $\left(  E_{i}\right)  _{i\in\left\{  0,1\right\}  }$ of sets
by setting $E_{0}=E$ and $E_{1}=F$. Recall that $E\sqcup F=\bigsqcup
_{i\in\left\{  0,1\right\}  }E_{i}$ (by the definition of $E\sqcup F$).

Recall that there is a map $\operatorname*{inc}\nolimits_{j}:E_{j}%
\rightarrow\bigsqcup_{i\in\left\{  0,1\right\}  }E_{i}$ defined for each
$j\in\left\{  0,1\right\}  $. Thus, we have two maps $\operatorname*{inc}%
\nolimits_{0}:E_{0}\rightarrow\bigsqcup_{i\in\left\{  0,1\right\}  }E_{i}$ and
$\operatorname*{inc}\nolimits_{1}:E_{1}\rightarrow\bigsqcup_{i\in\left\{
0,1\right\}  }E_{i}$. In other words, we have two maps $\operatorname*{inc}%
\nolimits_{0}:E\rightarrow E\sqcup F$ and $\operatorname*{inc}\nolimits_{1}%
:F\rightarrow E\sqcup F$ (since $E_{0}=E$, $E_{1}=F$ and $E\sqcup
F=\bigsqcup_{i\in\left\{  0,1\right\}  }E_{i}$). (Explicitly, these maps are
given as follows: The map $\operatorname*{inc}\nolimits_{0}$ sends each $e\in
E$ to $\left(  0,e\right)  \in E\sqcup F$; the map $\operatorname*{inc}%
\nolimits_{1}$ sends each $f\in F$ to $\left(  1,f\right)  \in E\sqcup F$.)

Every map $w:E\sqcup F\rightarrow\left\{  1,2,3,\ldots\right\}  $ satisfies%
\[
\Gamma\left(  \mathbf{E}\sqcup\mathbf{F},w\right)  =\Gamma\left(
\mathbf{E},w\circ\operatorname*{inc}\nolimits_{0}\right)  \Gamma\left(
\mathbf{F},w\circ\operatorname*{inc}\nolimits_{1}\right)  .
\]

\end{corollary}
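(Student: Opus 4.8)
The plan is to derive Corollary~\ref{cor.djun.Gamma.EF} as the special case of Corollary~\ref{cor.djun.Gamma.2} corresponding to the two-element index set $I = \left\{0, 1\right\}$. The entire strategy rests on the observation that the disjoint union $\mathbf{E} \sqcup \mathbf{F}$ of two double posets (Definition~\ref{def.djun.doubs2}) was \emph{defined} to be $\bigsqcup_{i \in \left\{0,1\right\}} \mathbf{E}_i$, where $\left(\mathbf{E}_i\right)_{i \in \left\{0,1\right\}}$ is the family with $\mathbf{E}_0 = \mathbf{E}$ and $\mathbf{E}_1 = \mathbf{F}$. So the work is almost entirely a matter of unwinding the general statement into the binary case.

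First I would fix the index set $I = \left\{0, 1\right\}$ and introduce the family $\left(\mathbf{E}_i\right)_{i \in \left\{0,1\right\}}$ with $\mathbf{E}_0 = \mathbf{E} = \left(E, <_{1,0}, <_{2,0}\right)$ and $\mathbf{E}_1 = \mathbf{F} = \left(F, <_{1,1}, <_{2,1}\right)$, matching the data already set up in the statement of the corollary. With $E_0 = E$ and $E_1 = F$, we have $\bigsqcup_{i \in \left\{0,1\right\}} E_i = E \sqcup F$ and $\bigsqcup_{i \in \left\{0,1\right\}} \mathbf{E}_i = \mathbf{E} \sqcup \mathbf{F}$ by the respective definitions. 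Now, given a map $w : E \sqcup F \to \left\{1, 2, 3, \ldots\right\}$, I would apply Proposition~\ref{prop.djun.Gamma} directly to this family and this $w$; it yields
\[
\Gamma\left(\mathbf{E} \sqcup \mathbf{F}, w\right)
= \Gamma\left(\bigsqcup_{i \in \left\{0,1\right\}} \mathbf{E}_i, w\right)
= \prod_{i \in \left\{0,1\right\}} \Gamma\left(\mathbf{E}_i, w \circ \operatorname{inc}_i\right) .
\]

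The only remaining step is to expand the two-factor product over $i \in \left\{0, 1\right\}$ explicitly and substitute back $\mathbf{E}_0 = \mathbf{E}$ and $\mathbf{E}_1 = \mathbf{F}$, giving
\[
\prod_{i \in \left\{0,1\right\}} \Gamma\left(\mathbf{E}_i, w \circ \operatorname{inc}_i\right)
= \Gamma\left(\mathbf{E}_0, w \circ \operatorname{inc}_0\right) \Gamma\left(\mathbf{E}_1, w \circ \operatorname{inc}_1\right)
= \Gamma\left(\mathbf{E}, w \circ \operatorname{inc}_0\right) \Gamma\left(\mathbf{F}, w \circ \operatorname{inc}_1\right) ,
\]
which is exactly the claimed identity. (Alternatively, I could invoke Corollary~\ref{cor.djun.Gamma.2} with $w_i = w \circ \operatorname{inc}_i$, but calling Proposition~\ref{prop.djun.Gamma} is the cleaner route since the corollary's hypothesis is phrased via $\operatorname{Restr}^{-1}$ rather than via a given global $w$.) There is no genuine obstacle here: the substantive content—that an $\left(\mathbf{E} \sqcup \mathbf{F}\right)$-partition is precisely a pair of partitions of the two pieces (Proposition~\ref{prop.djun.Epars}~(a)) and that the monomial factors accordingly (Proposition~\ref{prop.djun.Epars}~(b))—has already been absorbed into Proposition~\ref{prop.djun.Gamma}. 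The one thing to be mildly careful about is the bookkeeping of the canonical inclusion maps: one should confirm that the $\operatorname{inc}_0$ and $\operatorname{inc}_1$ named in the statement of the corollary coincide with the maps $\operatorname{inc}_i$ appearing in Proposition~\ref{prop.djun.Gamma} for this particular family, which is immediate from Definition~\ref{def.djun.sets} once $E_0 = E$ and $E_1 = F$ are recorded.
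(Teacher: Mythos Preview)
Your proposal is correct and follows essentially the same approach as the paper: both set $I=\{0,1\}$, identify $\mathbf{E}\sqcup\mathbf{F}$ with $\bigsqcup_{i\in\{0,1\}}\mathbf{E}_i$, apply Proposition~\ref{prop.djun.Gamma} directly, and then expand the two-factor product. Your side remark that Proposition~\ref{prop.djun.Gamma} is the cleaner route than Corollary~\ref{cor.djun.Gamma.2} is also in line with the paper's choice.
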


Corollary \ref{cor.djun.Gamma.EF} is the \textquotedblleft rule for
multiplying quasisymmetric functions of the form $\Gamma\left(  {\mathbf{E}%
},w\right)  $\textquotedblright\ mentioned at the end of Section
\ref{sect.lemmas} (but here we are denoting by $\mathbf{E}\sqcup\mathbf{F}$
what had been called $\mathbf{EF}$ back there).

\begin{proof}
[Proof of Corollary \ref{cor.djun.Gamma.EF}.]Define a family $\left(
\mathbf{E}_{i}\right)  _{i\in\left\{  0,1\right\}  }$ of double posets by
setting $\mathbf{E}_{0}=\mathbf{E}$ and $\mathbf{E}_{1}=\mathbf{F}$. Then,
$\mathbf{E}\sqcup\mathbf{F}=\bigsqcup_{i\in\left\{  0,1\right\}  }%
\mathbf{E}_{i}$ (by the definition of $\mathbf{E}\sqcup\mathbf{F}$).

We have $\mathbf{E}_{i}=\left(  E_{i},<_{1,i},<_{2,i}\right)  $ for every
$i\in\left\{  0,1\right\}  $\ \ \ \ \footnote{\textit{Proof:}
\par
\begin{itemize}
\item For $i=0$, this follows from $\mathbf{E}_{0}=\mathbf{E}=\left(
\underbrace{E}_{=E_{0}},<_{1,0},<_{2,0}\right)  =\left(  E_{0},<_{1,0}%
,<_{2,0}\right)  $.
\par
\item For $i=1$, this follows from $\mathbf{E}_{1}=\mathbf{F}=\left(
\underbrace{F}_{=E_{1}},<_{1,1},<_{2,1}\right)  =\left(  E_{1},<_{1,1}%
,<_{2,1}\right)  $.
\end{itemize}
}. Also, $w$ is a map $E\sqcup F\rightarrow\left\{  1,2,3,\ldots\right\}  $.
In other words, $w$ is a map $\bigsqcup_{i\in\left\{  0,1\right\}  }%
E_{i}\rightarrow\left\{  1,2,3,\ldots\right\}  $ (since $E\sqcup
F=\bigsqcup_{i\in\left\{  0,1\right\}  }E_{i}$). Thus, Proposition
\ref{prop.djun.Gamma} (applied to $I=\left\{  0,1\right\}  $) yields%
\begin{align*}
\Gamma\left(  \bigsqcup_{i\in\left\{  0,1\right\}  }\mathbf{E}_{i},w\right)
&  =\prod_{i\in\left\{  0,1\right\}  }\Gamma\left(  \mathbf{E}_{i}%
,w\circ\operatorname*{inc}\nolimits_{i}\right)  =\Gamma\left(
\underbrace{\mathbf{E}_{0}}_{=\mathbf{E}},w\circ\operatorname*{inc}%
\nolimits_{0}\right)  \Gamma\left(  \underbrace{\mathbf{E}_{1}}_{=\mathbf{F}%
},w\circ\operatorname*{inc}\nolimits_{1}\right) \\
&  =\Gamma\left(  \mathbf{E},w\circ\operatorname*{inc}\nolimits_{0}\right)
\Gamma\left(  \mathbf{F},w\circ\operatorname*{inc}\nolimits_{1}\right)  .
\end{align*}
Thus,%
\[
\Gamma\left(  \underbrace{\mathbf{E}\sqcup\mathbf{F}}_{=\bigsqcup
_{i\in\left\{  0,1\right\}  }\mathbf{E}_{i}},w\right)  =\Gamma\left(
\bigsqcup_{i\in\left\{  0,1\right\}  }\mathbf{E}_{i},w\right)  =\Gamma\left(
\mathbf{E},w\circ\operatorname*{inc}\nolimits_{0}\right)  \Gamma\left(
\mathbf{F},w\circ\operatorname*{inc}\nolimits_{1}\right)  .
\]
This proves Corollary \ref{cor.djun.Gamma.EF}.
\end{proof}

Here is a further corollary of Corollary \ref{cor.djun.Gamma.2}:

\begin{corollary}
\label{cor.djun.Gamma.2.cor}Let $I$ be a finite set. For each $i\in I$, let
$\mathbf{E}_{i}=\left(  E_{i},<_{1,i},<_{2,i}\right)  $ be a double poset. For
each $i\in I$, let $w_{i}:E_{i}\rightarrow\left\{  1,2,3,\ldots\right\}  $ be
a map. Then,%
\[
\prod_{i\in I}\Gamma\left(  \mathbf{E}_{i},w_{i}\right)  \in
\operatorname*{QSym}.
\]

\end{corollary}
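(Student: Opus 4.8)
The plan is to reduce the claim to a single instance of $\Gamma$ applied to one double poset, and then invoke Proposition~\ref{prop.Gammaw.qsym}. First I would recall the setup of Corollary~\ref{cor.djun.Gamma.2}: setting $F = \left\{1,2,3,\ldots\right\}$, each $w_{i}$ lies in $F^{E_{i}}$, so $\left(w_{i}\right)_{i \in I} \in \prod_{i \in I} F^{E_{i}}$. Since Corollary~\ref{cor.djun.restr} guarantees that $\operatorname*{Restr} : F^{\bigsqcup_{i \in I} E_{i}} \to \prod_{i \in I} F^{E_{i}}$ is a bijection, the element $w := \operatorname*{Restr}\nolimits^{-1}\left(\left(w_{i}\right)_{i \in I}\right)$ is a well-defined map $\bigsqcup_{i \in I} E_{i} \to \left\{1,2,3,\ldots\right\}$.

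Next I would apply Corollary~\ref{cor.djun.Gamma.2} directly, which yields
\[
\prod_{i \in I} \Gamma\left(\mathbf{E}_{i}, w_{i}\right) = \Gamma\left(\bigsqcup_{i \in I} \mathbf{E}_{i}, w\right).
\]
This is the crux of the argument: the product of the individual $\Gamma\left(\mathbf{E}_{i}, w_{i}\right)$ has been rewritten as a single $\Gamma\left(\mathbf{E}, w\right)$ for the double poset $\mathbf{E} = \bigsqcup_{i \in I} \mathbf{E}_{i}$.

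Then I would observe that $\bigsqcup_{i \in I} \mathbf{E}_{i}$ is indeed a double poset (this is exactly what Proposition~\ref{prop.djun.doubs} and Definition~\ref{def.djun.doubs} assert, using that $I$ is finite), and that $w$ is a map from its ground set $\bigsqcup_{i \in I} E_{i}$ to $\left\{1,2,3,\ldots\right\}$. Hence Proposition~\ref{prop.Gammaw.qsym}, applied to the double poset $\bigsqcup_{i \in I} \mathbf{E}_{i}$ together with the weight $w$, shows that $\Gamma\left(\bigsqcup_{i \in I} \mathbf{E}_{i}, w\right) \in \QSym$. Combining this with the displayed equality gives $\prod_{i \in I} \Gamma\left(\mathbf{E}_{i}, w_{i}\right) \in \QSym$, as desired.

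I do not expect any real obstacle here, since all the substantive work has already been carried out in Corollary~\ref{cor.djun.Gamma.2} (whose proof in turn rests on Proposition~\ref{prop.djun.Gamma}). The only points requiring a modicum of care are purely bookkeeping: verifying that $w$ is well-defined (which needs the finiteness of $I$ through the bijectivity statement of Corollary~\ref{cor.djun.restr}) and that the disjoint union genuinely carries the structure of a double poset, so that Proposition~\ref{prop.Gammaw.qsym} is applicable to it.
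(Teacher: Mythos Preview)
Your proposal is correct and follows essentially the same approach as the paper: define $w$ via $\operatorname*{Restr}^{-1}$, use Corollary~\ref{cor.djun.Gamma.2} to rewrite the product as $\Gamma\left(\bigsqcup_{i\in I}\mathbf{E}_{i}, w\right)$, and then apply Proposition~\ref{prop.Gammaw.qsym}. One tiny remark: Corollary~\ref{cor.djun.restr} does not actually require $I$ to be finite; the finiteness of $I$ is used instead to ensure that $\bigsqcup_{i\in I}\mathbf{E}_{i}$ is a double poset (Proposition~\ref{prop.djun.doubs}).
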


\begin{proof}
[Proof of Corollary \ref{cor.djun.Gamma.2.cor}.]Let $F=\left\{  1,2,3,\ldots
\right\}  $. For each $i\in I$, we have $w_{i}\in\left\{  1,2,3,\ldots
\right\}  ^{E_{i}}=F^{E_{i}}$ (since $\left\{  1,2,3,\ldots\right\}  =F$).
Thus, $\left(  w_{i}\right)  _{i\in I}\in\prod_{i\in I}F^{E_{i}}$.

Corollary \ref{cor.djun.restr} shows that the map $\operatorname*{Restr}%
:F^{\bigsqcup_{i\in I}E_{i}}\rightarrow\prod_{i\in I}F^{E_{i}}$ is a
bijection. Hence, $\operatorname*{Restr}\nolimits^{-1}\left(  \left(
w_{i}\right)  _{i\in I}\right)  $ is a well-defined element of $F^{\bigsqcup
_{i\in I}E_{i}}$. Denote this element by $w$. Thus, $w$ is an element of
$F^{\bigsqcup_{i\in I}E_{i}}$. In other words, $w$ is a map $\bigsqcup_{i\in
I}E_{i}\rightarrow F$. In other words, $w$ is a map $\bigsqcup_{i\in I}%
E_{i}\rightarrow\left\{  1,2,3,\ldots\right\}  $ (since $F=\left\{
1,2,3,\ldots\right\}  $).

Recall that $\bigsqcup_{i\in I}\mathbf{E}_{i}$ is a double poset. Its
definition shows that%
\[
\bigsqcup_{i\in I}\mathbf{E}_{i}=\left(  \bigsqcup_{i\in I}E_{i}%
,\bigoplus_{i\in I}\left(  <_{1,i}\right)  ,\bigoplus_{i\in I}\left(
<_{2,i}\right)  \right)  .
\]
Proposition~\ref{prop.Gammaw.qsym} (applied to $\bigsqcup_{i\in I}%
\mathbf{E}_{i}$, $\bigsqcup_{i\in I}E_{i}$, $\bigoplus_{i\in I}\left(
<_{1,i}\right)  $ and $\bigoplus_{i\in I}\left(  <_{2,i}\right)  $ instead of
$\mathbf{E}$, $E$, $<_{1}$ and $<_{2}$) thus yields that $\Gamma\left(
\bigsqcup_{i\in I}\mathbf{E}_{i},w\right)  \in\operatorname*{QSym}$.

But Corollary \ref{cor.djun.Gamma.2} yields%
\[
\prod_{i\in I}\Gamma\left(  \mathbf{E}_{i},w_{i}\right)  =\Gamma\left(
\bigsqcup_{i\in I}\mathbf{E}_{i},w\right)  \in\operatorname*{QSym}.
\]
This proves Corollary \ref{cor.djun.Gamma.2.cor}.
\end{proof}

Now, we can prove the following basic fact:

\begin{proposition}
\label{prop.QSym.alg}The subset $\operatorname*{QSym}$ of $\mathbf{k}\left[
\left[  x_{1},x_{2},x_{3},\ldots\right]  \right]  $ is a $\mathbf{k}%
$-subalgebra of $\mathbf{k}\left[  \left[  x_{1},x_{2},x_{3},\ldots\right]
\right]  $.
\end{proposition}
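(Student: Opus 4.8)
The plan is to show that $\QSym$ is closed under the three operations that a $\kk$-subalgebra must respect: it contains the unity $1$, it is closed under $\kk$-linear combinations, and it is closed under multiplication. The first two are essentially immediate from the definition of $\QSym$ as the set of quasisymmetric bounded-degree power series: the power series $1$ is obviously quasisymmetric and bounded-degree, and if $f$ and $g$ are quasisymmetric then so is any $\kk$-linear combination $\lambda f + \mu g$ (since the coefficient of a monomial in $\lambda f + \mu g$ is $\lambda$ times its coefficient in $f$ plus $\mu$ times its coefficient in $g$, and the pack-equivalence condition is preserved under such combinations); boundedness of degree is likewise preserved. So the real content is closure under multiplication.

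For multiplication, the idea is to exploit the machinery of double posets rather than argue directly about coefficients of products. First I would recall from Example~\ref{exam.Gamma}~(b) (made precise in Corollary~\ref{cor.Malpha.Gamma}) that every monomial quasisymmetric function $M_\alpha$ can be realized as $\Gamma\left(\EE, w\right)$ for a suitable (special) double poset $\EE$ and weight map $w$. Since the family $\left(M_\alpha\right)_{\alpha \in \Comp}$ is a $\kk$-module basis of $\QSym$, and since multiplication is $\kk$-bilinear, it suffices to prove that the product $M_\alpha M_\beta$ of any two basis elements lies in $\QSym$. Writing $M_\alpha = \Gamma\left(\mathbf{E}, u\right)$ and $M_\beta = \Gamma\left(\mathbf{F}, v\right)$ for appropriate double posets and weights, I would then invoke Corollary~\ref{cor.djun.Gamma.EF} (the disjoint-union product rule), which tells us that
\[
M_\alpha M_\beta = \Gamma\left(\mathbf{E}, u\right) \Gamma\left(\mathbf{F}, v\right) = \Gamma\left(\mathbf{E} \sqcup \mathbf{F}, w\right)
\]
for the combined weight $w$ obtained by restriction. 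By Proposition~\ref{prop.Gammaw.qsym}, the right-hand side belongs to $\QSym$, and hence so does $M_\alpha M_\beta$. Extending $\kk$-bilinearly over the basis then gives closure of $\QSym$ under multiplication. In fact Corollary~\ref{cor.djun.Gamma.2.cor} packages exactly this conclusion, so the product step reduces to citing it once the two factors are expressed as $\Gamma$'s.

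The main obstacle, such as it is, is not conceptual but bookkeeping: one must be careful that the realization $M_\alpha = \Gamma\left(\mathbf{E}, u\right)$ together with the product rule produces a genuine element of $\QSym$ and not merely of the larger algebra of all quasisymmetric (not-necessarily-bounded-degree) series. This is handled by Proposition~\ref{prop.Gammaw.qsym}, which guarantees $\Gamma\left(\EE, w\right) \in \QSym$ for \emph{every} double poset, so the degrees are automatically bounded. The other point requiring a word of care is that closure under multiplication of the \emph{basis} elements suffices for closure of the whole $\kk$-module, which follows because every element of $\QSym$ is a (finite) $\kk$-linear combination of the $M_\alpha$ and multiplication distributes over such combinations. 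With the heavy lifting already done in Corollary~\ref{cor.Malpha.Gamma} and Corollary~\ref{cor.djun.Gamma.2.cor}, the proof of Proposition~\ref{prop.QSym.alg} should be short.
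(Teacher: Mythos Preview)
Your proposal is correct and follows essentially the same approach as the paper: realize each $M_\alpha$ as a $\Gamma(\mathbf{E},w)$ via Corollary~\ref{cor.Malpha.Gamma}, use the disjoint-union product rule (the paper cites Corollary~\ref{cor.djun.Gamma.2.cor} directly, you route through Corollary~\ref{cor.djun.Gamma.EF} and Proposition~\ref{prop.Gammaw.qsym}, which is the same content), and extend bilinearly. The only cosmetic difference is that the paper also derives $1\in\QSym$ from the empty case $I=\varnothing$ of Corollary~\ref{cor.djun.Gamma.2.cor}, whereas you observe it directly; both are fine.
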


\begin{proof}
[Proof of Proposition \ref{prop.QSym.alg}.]It is well-known that
$\operatorname*{QSym}$ is a $\mathbf{k}$-submodule of $\mathbf{k}\left[
\left[  x_{1},x_{2},x_{3},\ldots\right]  \right]  $, and that $\left(
M_{\alpha}\right)  _{\alpha\in{\operatorname{Comp}}}$ is a basis of this
$\mathbf{k}$-module $\operatorname*{QSym}$.

We shall now prove that
\begin{equation}
M_{\alpha}M_{\beta}\in\operatorname*{QSym}\ \ \ \ \ \ \ \ \ \ \text{for any
}\alpha\in\operatorname*{Comp}\text{ and }\beta\in\operatorname*{Comp}.
\label{pf.prop.QSym.alg.prod.1}%
\end{equation}

[\textit{Proof of (\ref{pf.prop.QSym.alg.prod.1}):} Let $\alpha\in
\operatorname*{Comp}$ and $\beta\in\operatorname*{Comp}$.

We know that $\beta\in\operatorname*{Comp}$. In other words, $\beta$ is a
composition (since $\operatorname*{Comp}$ is the set of all compositions).
Corollary \ref{cor.Malpha.Gamma} (applied to $\beta$ instead of $\alpha$) thus
shows that there exist a set $E$, a special double poset ${\mathbf{E}}=\left(
E,<_{1},>_{1}\right)  $, and a map $w:E\rightarrow\left\{  1,2,3,\ldots
\right\}  $ satisfying $\Gamma\left(  {\mathbf{E}},w\right)  =M_{\beta}$.
Renaming $E$, $\mathbf{E}$, $<_{1}$, $>_{1}$ and $w$ as $E_{1}$,
$\mathbf{E}_{1}$, $<_{1,1}$, $>_{1,1}$ and $w_{1}$ in this statement, we
obtain the following: There exist a set $E_{1}$, a special double poset
$\mathbf{E}_{1}=\left(  E_{1},<_{1,1},>_{1,1}\right)  $, and a map
$w_{1}:E_{1}\rightarrow\left\{  1,2,3,\ldots\right\}  $ satisfying
$\Gamma\left(  {\mathbf{E}}_{1},w_{1}\right)  =M_{\beta}$. Consider these
$E_{1}$, $\mathbf{E}_{1}$, $<_{1,1}$, $>_{1,1}$ and $w_{1}$.

We know that $\alpha\in\operatorname*{Comp}$. In other words, $\alpha$ is a
composition (since $\operatorname*{Comp}$ is the set of all compositions).
Corollary \ref{cor.Malpha.Gamma} thus shows that there exist a set $E$, a
special double poset ${\mathbf{E}}=\left(  E,<_{1},>_{1}\right)  $, and a map
$w:E\rightarrow\left\{  1,2,3,\ldots\right\}  $ satisfying $\Gamma\left(
{\mathbf{E}},w\right)  =M_{\alpha}$. Renaming $E$, $\mathbf{E}$, $<_{1}$,
$>_{1}$ and $w$ as $E_{0}$, $\mathbf{E}_{0}$, $<_{1,0}$, $>_{1,0}$ and $w_{0}$
in this statement, we obtain the following: There exist a set $E_{0}$, a
special double poset $\mathbf{E}_{0}=\left(  E_{0},<_{1,0},>_{1,0}\right)  $,
and a map $w_{0}:E_{0}\rightarrow\left\{  1,2,3,\ldots\right\}  $ satisfying
$\Gamma\left(  {\mathbf{E}}_{0},w_{0}\right)  =M_{\alpha}$. Consider these
$E_{0}$, $\mathbf{E}_{0}$, $<_{1,0}$, $>_{1,0}$ and $w_{0}$.

Now, $\mathbf{E}_{i}=\left(  E_{i},<_{1,i},>_{1,i}\right)  $ for every
$i\in\left\{  0,1\right\}  $\ \ \ \ \footnote{\textit{Proof:}
\par
\begin{itemize}
\item For $i=0$, this follows from $\mathbf{E}_{0}=\left(  E_{0}%
,<_{1,0},>_{1,0}\right)  $.
\par
\item For $i=1$, this follows from $\mathbf{E}_{1}=\left(  E_{1}%
,<_{1,1},>_{1,1}\right)  $.
\end{itemize}
}. Moreover, $w_{i}$ is a map $E_{i}\rightarrow\left\{  1,2,3,\ldots\right\}
$ for each $i\in\left\{  0,1\right\}  $\ \ \ \ \footnote{\textit{Proof:}
\par
\begin{itemize}
\item For $i=0$, this holds because $w_{0}$ is a map $E_{0}\rightarrow\left\{
1,2,3,\ldots\right\}  $.
\par
\item For $i=1$, this holds because $w_{1}$ is a map $E_{1}\rightarrow\left\{
1,2,3,\ldots\right\}  $.
\end{itemize}
}. Hence, Corollary \ref{cor.djun.Gamma.2.cor} (applied to $\left\{
0,1\right\}  $ and $>_{1,i}$ instead of $I$ and $<_{2,i}$) shows that
\[
\prod_{i\in\left\{  0,1\right\}  }\Gamma\left(  \mathbf{E}_{i},w_{i}\right)
\in\operatorname*{QSym}.
\]
Since
\[
\prod_{i\in\left\{  0,1\right\}  }\Gamma\left(  \mathbf{E}_{i},w_{i}\right)
=\underbrace{\Gamma\left(  {\mathbf{E}}_{0},w_{0}\right)  }_{=M_{\alpha}%
}\underbrace{\Gamma\left(  {\mathbf{E}}_{1},w_{1}\right)  }_{=M_{\beta}%
}=M_{\alpha}M_{\beta},
\]
this rewrites as $M_{\alpha}M_{\beta}\in\operatorname*{QSym}$. Thus,
(\ref{pf.prop.QSym.alg.prod.1}) is proven.]

Now, we can see that%
\begin{equation}
ab\in\operatorname*{QSym}\ \ \ \ \ \ \ \ \ \ \text{for any }a\in
\operatorname*{QSym}\text{ and }b\in\operatorname*{QSym}.
\label{pf.prop.QSym.alg.prod.3}%
\end{equation}

[\textit{Proof of (\ref{pf.prop.QSym.alg.prod.3}):} Let $a\in
\operatorname*{QSym}$ and $b\in\operatorname*{QSym}$. We must prove the
relation $ab\in\operatorname*{QSym}$.

This relation is $\mathbf{k}$-linear in $b$ (since $\operatorname*{QSym}$ is a
$\mathbf{k}$-submodule of $\mathbf{k}\left[  \left[  x_{1},x_{2},x_{3}%
,\ldots\right]  \right]  $). Hence, we can WLOG assume that $b$ belongs to the
basis $\left(  M_{\alpha}\right)  _{\alpha\in\operatorname*{Comp}}$ of the
$\mathbf{k}$-module $\operatorname*{QSym}$. Assume this. Thus, $b=M_{\beta}$
for some $\beta\in\operatorname*{Comp}$. Consider this $\beta$.

We must prove the relation $ab\in\operatorname*{QSym}$. This relation is
$\mathbf{k}$-linear in $a$ (since $\operatorname*{QSym}$ is a $\mathbf{k}%
$-submodule of $\mathbf{k}\left[  \left[  x_{1},x_{2},x_{3},\ldots\right]
\right]  $). Hence, we can WLOG assume that $a$ belongs to the basis $\left(
M_{\alpha}\right)  _{\alpha\in\operatorname*{Comp}}$ of the $\mathbf{k}%
$-module $\operatorname*{QSym}$. Assume this. Thus, $a=M_{\alpha}$ for some
$\alpha\in\operatorname*{Comp}$. Consider this $\alpha$.

Now, $\underbrace{a}_{=M_{\alpha}}\underbrace{b}_{=M_{\beta}}=M_{\alpha
}M_{\beta}\in\operatorname*{QSym}$ (by (\ref{pf.prop.QSym.alg.prod.1})). This
proves (\ref{pf.prop.QSym.alg.prod.3}).]

Finally, let us prove that%
\begin{equation}
1\in\operatorname*{QSym}. \label{pf.prop.QSym.alg.prod.5}%
\end{equation}

[\textit{Proof of (\ref{pf.prop.QSym.alg.prod.5}):} There are many reasons why
(\ref{pf.prop.QSym.alg.prod.5}) is obvious (for example, it follows from Lemma
\ref{lem.Gammaw.empty} (a) or from $M_{\varnothing}=1$), but let us derive
(\ref{pf.prop.QSym.alg.prod.5}) from Corollary \ref{cor.djun.Gamma.2.cor}:

For each $i\in\varnothing$, we define a double poset $\mathbf{E}_{i}=\left(
E_{i},<_{1,i},<_{2,i}\right)  $ and a map $w_{i}:E_{i}\rightarrow\left\{
1,2,3,\ldots\right\}  $ as follows: There is nothing to define, because there
exists no $i\in\varnothing$.

Thus, Corollary \ref{cor.djun.Gamma.2.cor} (applied to $I=\varnothing$)
yields
\[
\prod_{i\in\varnothing}\Gamma\left(  \mathbf{E}_{i},w_{i}\right)
\in\operatorname*{QSym}.
\]
Since $\prod_{i\in\varnothing}\Gamma\left(  \mathbf{E}_{i},w_{i}\right)
=\left(  \text{empty product}\right)  =1$, this rewrites as $1\in
\operatorname*{QSym}$. Thus, (\ref{pf.prop.QSym.alg.prod.5}) is proven.]

Now, recall that $\operatorname*{QSym}$ is a $\mathbf{k}$-submodule of
$\mathbf{k}\left[  \left[  x_{1},x_{2},x_{3},\ldots\right]  \right]  $.
Combining this with (\ref{pf.prop.QSym.alg.prod.3}) and
(\ref{pf.prop.QSym.alg.prod.5}), we conclude that $\operatorname*{QSym}$ is a
$\mathbf{k}$-subalgebra of $\mathbf{k}\left[  \left[  x_{1},x_{2},x_{3}%
,\ldots\right]  \right]  $. This proves Proposition \ref{prop.QSym.alg}.
\end{proof}

\subsection{Restrictions and disjoint unions}

In this short section, we shall prove the following straightforward fact,
which will be used in the next section.

\begin{proposition}
\label{prop.djun.restrict}Let $I$ be a finite set. For each $i\in I$, let
$\mathbf{E}_{i}=\left(  E_{i},<_{1,i},<_{2,i}\right)  $ be a double poset.

Let $\mathbf{E}$ be the double poset $\bigsqcup_{i\in I}\mathbf{E}_{i}$.

For each $i\in I$, let $T_{i}$ be a subset of $E_{i}$.

Then, $\mathbf{E}\mid_{\bigsqcup_{i\in I}T_{i}}=\bigsqcup_{i\in I}\left(
\mathbf{E}_{i}\mid_{T_{i}}\right)  $.
\end{proposition}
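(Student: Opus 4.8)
The goal is to prove that two double posets are equal, namely
$\mathbf{E}\mid_{\bigsqcup_{i\in I}T_{i}}=\bigsqcup_{i\in I}\left(\mathbf{E}_{i}\mid_{T_{i}}\right)$. Since a double poset is a triple consisting of a ground set and two strict partial orders, the plan is to verify that both sides agree in all three components. First I would unfold the definitions. On the left-hand side, $\mathbf{E}=\bigsqcup_{i\in I}\mathbf{E}_{i}=\left(\bigsqcup_{i\in I}E_{i},\bigoplus_{i\in I}\left(<_{1,i}\right),\bigoplus_{i\in I}\left(<_{2,i}\right)\right)$, and restricting this to the subset $\bigsqcup_{i\in I}T_{i}$ of $\bigsqcup_{i\in I}E_{i}$ yields the triple whose ground set is $\bigsqcup_{i\in I}T_{i}$ and whose two orders are the restrictions of $\bigoplus_{i\in I}\left(<_{1,i}\right)$ and $\bigoplus_{i\in I}\left(<_{2,i}\right)$ to that subset. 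On the right-hand side, each $\mathbf{E}_{i}\mid_{T_{i}}=\left(T_{i},<_{1,i},<_{2,i}\right)$ (where the orders now denote their restrictions to $T_{i}$), and the disjoint union of these is $\left(\bigsqcup_{i\in I}T_{i},\bigoplus_{i\in I}\left(<_{1,i}\mid_{T_{i}}\right),\bigoplus_{i\in I}\left(<_{2,i}\mid_{T_{i}}\right)\right)$.

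The ground sets of both sides are manifestly equal, both being $\bigsqcup_{i\in I}T_{i}$; this is immediate once one checks that $\bigsqcup_{i\in I}T_{i}$ really is a subset of $\bigsqcup_{i\in I}E_{i}$, which follows since each $T_{i}\subseteq E_{i}$ gives $\{i\}\times T_{i}\subseteq\{i\}\times E_{i}$. So the substance of the proof is the equality of the two first orders (and, by an identical argument, the two second orders). Concretely, I would show that for any two elements $(j,e)$ and $(k,f)$ of $\bigsqcup_{i\in I}T_{i}$, the statement $(j,e)\,\bigl(\bigoplus_{i\in I}(<_{1,i})\bigr)\mid_{\bigsqcup_{i\in I}T_{i}}\,(k,f)$ is equivalent to $(j,e)\,\bigl(\bigoplus_{i\in I}(<_{1,i}\mid_{T_{i}})\bigr)\,(k,f)$. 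Both sides unfold, via Definition~\ref{def.djun.rels}, to the same condition: $j=k$ together with $e$ being related to $f$ by the order on the $j$-th piece. The only subtlety is that on the left the relation used is $<_{1,j}$ (restricted only implicitly, via the ambient restriction to $\bigsqcup_{i\in I}T_{i}$), whereas on the right it is explicitly $<_{1,j}\mid_{T_{j}}$; but for elements $e,f$ that already lie in $T_{j}$ these two relations agree, which is exactly what makes the restriction of a direct sum equal to the direct sum of restrictions.

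I expect the main (and only genuine) obstacle to be purely notational rather than mathematical: the paper systematically abuses notation by writing $<_{1}$ both for an order on a set and for its restriction to a subset, so care is needed to disambiguate which relation is meant at each occurrence, much as is done in the proof of Lemma~\ref{lem.tertispecial.subset}. I would handle this by temporarily introducing unambiguous names for the restricted orders (for instance writing the restriction of $<_{1,i}$ to $T_{i}$ explicitly) just long enough to carry out the equivalence check, then translating back. Once the two first orders are shown equal, the argument for the two second orders is word-for-word the same with $<_{2,i}$ in place of $<_{1,i}$, so I would simply remark that it is analogous rather than repeat it. Assembling the three component equalities then gives the claimed equality of triples, completing the proof.
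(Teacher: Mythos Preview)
Your proposal is correct and follows essentially the same approach as the paper: unfold both sides into triples and verify componentwise, with the main content being that restricting a direct sum of relations to $\bigsqcup_{i\in I}T_{i}$ yields the direct sum of the restrictions. The paper packages this last step as a separate lemma (Lemma~\ref{lem.djun.restrict.rel}) and introduces explicit notation $\rho\mid_{Y}$ for restrictions to avoid the abuse-of-notation issue you flagged, but the elementwise verification you sketch is exactly what underlies that lemma.
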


The proof will rely on a rather pedantic notation:

\begin{definition}
Let $X$ be a set. Let $\rho$ be a binary relation on the set $X$. Let $Y$ be a
subset of $X$. Then, $\rho\mid_{Y}$ shall denote the restriction of the
relation $\rho$ to $Y$.
\end{definition}

Using this notation, we can state the following trivial fact:

\begin{proposition}
\label{prop.djun.restrict-rewr}Let $\mathbf{E}=\left(  E,<_{1},<_{2}\right)  $
be a double poset. Let $T$ be a subset of $E$. Then, $\mathbf{E}\mid
_{T}=\left(  T,\left(  <_{1}\right)  \mid_{T},\left(  <_{2}\right)  \mid
_{T}\right)  $.
\end{proposition}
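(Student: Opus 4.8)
The plan is to simply unwind the definition of the restricted double poset $\mathbf{E}\mid_{T}$ and match it against the restriction notation $\left(<_{1}\right)\mid_{T}$ and $\left(<_{2}\right)\mid_{T}$ just introduced. Recall that for any subset $T$ of $E$, the double poset $\mathbf{E}\mid_{T}$ was defined to be the triple $\left(T,<_{1},<_{2}\right)$, where, by the abuse of notation announced in that definition, the symbols $<_{1}$ and $<_{2}$ in this triple actually stand for the restrictions of the relations $<_{1}$ and $<_{2}$ of $\mathbf{E}$ to the set $T$.

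First I would record that $\mathbf{E}=\left(E,<_{1},<_{2}\right)$, so that the two relations constituting $\mathbf{E}$ are exactly $<_{1}$ and $<_{2}$. Next I would invoke the definition of the notation $\left(<_{1}\right)\mid_{T}$ (respectively $\left(<_{2}\right)\mid_{T}$), which is precisely the restriction of $<_{1}$ (respectively $<_{2}$) to $T$. These are the very relations that the abuse of notation in the definition of $\mathbf{E}\mid_{T}$ refers to. Hence the first component of $\mathbf{E}\mid_{T}$ is $T$, its second component is $\left(<_{1}\right)\mid_{T}$, and its third component is $\left(<_{2}\right)\mid_{T}$; that is, $\mathbf{E}\mid_{T}=\left(T,\left(<_{1}\right)\mid_{T},\left(<_{2}\right)\mid_{T}\right)$, which is the claim.

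I expect no real obstacle: the statement is a tautological reformulation whose only content is to replace the abuse-of-notation symbols $<_{1},<_{2}$ in the definition of $\mathbf{E}\mid_{T}$ by the unambiguous restriction symbols $\left(<_{1}\right)\mid_{T},\left(<_{2}\right)\mid_{T}$. The one point worth being careful about is to quote the definition of $\mathbf{E}\mid_{T}$ verbatim, so that the identification of the two restricted relations is entirely explicit and leaves no room for the abuse of notation to obscure the argument.
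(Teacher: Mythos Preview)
Your proposal is correct and is essentially identical to the paper's own proof: both simply recall that $\mathbf{E}\mid_{T}$ was defined as $\left(T,<_{1},<_{2}\right)$ with the understanding that $<_{1}$ and $<_{2}$ there denote their restrictions to $T$, and then rewrite this using the unambiguous notation $\left(<_{1}\right)\mid_{T}$ and $\left(<_{2}\right)\mid_{T}$.
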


\begin{proof}
[Proof of Proposition \ref{prop.djun.restrict-rewr}.]Recall that the double
poset ${\mathbf{E}}\mid_{T}$ is defined as the double poset $\left(
T,<_{1},<_{2}\right)  $, where $<_{1}$ and $<_{2}$ (by abuse of notation)
denote the restrictions of the relations $<_{1}$ and $<_{2}$ to $T$. Avoiding
the abuse of notation, this rewrites as follows: The double poset
${\mathbf{E}}\mid_{T}$ is defined as the double poset $\left(  T,\left(
<_{1}\right)  \mid_{T},\left(  <_{2}\right)  \mid_{T}\right)  $. This proves
Proposition \ref{prop.djun.restrict-rewr}.
\end{proof}

\begin{lemma}
\label{lem.djun.restrict.rel}Let $I$ be a set. For each $i\in I$, let $E_{i}$
be a set. For each $i\in I$, let $T_{i}$ be a subset of $E_{i}$.

\begin{enumerate}
\item[(a)] The set $\bigsqcup_{i\in I}T_{i}$ is a subset of $\bigsqcup_{i\in
I}E_{i}$.

\item[(b)] For each $i\in I$, let $\rho_{i}$ be a binary relation on the set
$E_{i}$. Then, $\left(  \bigoplus_{i\in I}\rho_{i}\right)  \mid_{\bigsqcup
_{i\in I}T_{i}}=\bigoplus_{i\in I}\left(  \rho_{i}\mid_{T_{i}}\right)  $.
\end{enumerate}
\end{lemma}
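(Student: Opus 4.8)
The plan is to prove Lemma~\ref{lem.djun.restrict.rel} in two parts, tracking everything through the explicit pair-description of the disjoint union (Definition~\ref{def.djun.sets}), which encodes an element of $\bigsqcup_{i\in I}E_i$ as a pair $\left(i,e\right)$ with $i\in I$ and $e\in E_i$.

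For part (a), I would show $\bigsqcup_{i\in I}T_i\subseteq\bigsqcup_{i\in I}E_i$ by taking an arbitrary element $x\in\bigsqcup_{i\in I}T_i$ and using Remark~\ref{rmk.djun.elt}~(c) (applied to the family $\left(T_i\right)_{i\in I}$ in place of $\left(E_i\right)_{i\in I}$) to write $x=\left(j,t\right)$ for some $j\in I$ and $t\in T_j$. Since $T_j$ is a subset of $E_j$, we have $t\in E_j$, and hence Remark~\ref{rmk.djun.elt}~(b) (applied to $E_j$) gives $\left(j,t\right)\in\bigsqcup_{i\in I}E_i$; thus $x\in\bigsqcup_{i\in I}E_i$, proving the inclusion. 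This part is entirely routine.

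For part (b), the cleanest route is to verify that both relations $\left(\bigoplus_{i\in I}\rho_i\right)\mid_{\bigsqcup_{i\in I}T_i}$ and $\bigoplus_{i\in I}\left(\rho_i\mid_{T_i}\right)$ relate the same pairs of elements of $\bigsqcup_{i\in I}T_i$. Both are binary relations on $\bigsqcup_{i\in I}T_i$ (using part (a) to know the restriction on the left makes sense, and Definition~\ref{def.djun.rels} applied to the family $\left(T_i\right)_{i\in I}$ for the right). So I would fix two elements $\left(j,t\right)$ and $\left(k,s\right)$ of $\bigsqcup_{i\in I}T_i$ and chase the equivalences. On the left side, $\left(j,t\right)\left(\bigoplus_{i\in I}\rho_i\right)\mid_{\bigsqcup_{i\in I}T_i}\left(k,s\right)$ holds iff $\left(j,t\right)\left(\bigoplus_{i\in I}\rho_i\right)\left(k,s\right)$ holds (by definition of restriction), which by Definition~\ref{def.djun.rels} is equivalent to $\left(j=k\text{ and }t\rho_j s\right)$. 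On the right side, Definition~\ref{def.djun.rels} (applied to the relations $\rho_i\mid_{T_i}$) says $\left(j,t\right)\bigoplus_{i\in I}\left(\rho_i\mid_{T_i}\right)\left(k,s\right)$ holds iff $\left(j=k\text{ and }t\left(\rho_j\mid_{T_j}\right)s\right)$; and since $t,s\in T_j$, the condition $t\left(\rho_j\mid_{T_j}\right)s$ is just $t\rho_j s$. Hence both sides reduce to the same condition, so the two relations coincide.

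The only mild subtlety — and what I would flag as the point requiring the most care — is the bookkeeping in the last chain: on the right-hand relation, the condition coming out of Definition~\ref{def.djun.rels} involves $\rho_j\mid_{T_j}$ rather than $\rho_j$, and one must use that both $t$ and $s$ lie in $T_j$ (which is guaranteed because $\left(j,t\right),\left(k,s\right)\in\bigsqcup_{i\in I}T_i$ and $j=k$) to collapse $t\left(\rho_j\mid_{T_j}\right)s$ to $t\rho_j s$. This is genuinely where the equivalence of the two constructions is being used, but it is a one-line observation rather than a real obstacle; the proof is otherwise a direct definitional unwinding.
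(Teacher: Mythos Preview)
Your proposal is correct and is exactly the kind of definitional unwinding the paper has in mind; the paper itself dismisses this lemma with ``This is a straightforward consequence of the definitions'' and gives no further argument. Your write-up simply fills in those details, including the one point worth noting (that $t,s\in T_j$ is needed to identify $t\left(\rho_j\mid_{T_j}\right)s$ with $t\rho_j s$).
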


\begin{proof}
[Proof of Lemma \ref{lem.djun.restrict.rel}.]This is a straightforward
consequence of the definitions.
\end{proof}

\begin{proof}
[Proof of Proposition \ref{prop.djun.restrict}.]Let $E$ denote the set
$\bigsqcup_{i\in I}E_{i}$. Let $T$ denote the set $\bigsqcup_{i\in I}T_{i}$.

Let $<_{1}$ denote the relation $\bigoplus_{i\in I}\left(  <_{1,i}\right)  $.
Let $<_{2}$ denote the relation $\bigoplus_{i\in I}\left(  <_{2,i}\right)  $.

We have
\begin{align*}
\mathbf{E}  &  =\bigsqcup_{i\in I}\mathbf{E}_{i}=\left(  \underbrace{\bigsqcup
_{i\in I}E_{i}}_{=E},\underbrace{\bigoplus_{i\in I}\left(  <_{1,i}\right)
}_{=\left(  <_{1}\right)  },\underbrace{\bigoplus_{i\in I}\left(
<_{2,i}\right)  }_{=\left(  <_{2}\right)  }\right)
\ \ \ \ \ \ \ \ \ \ \left(  \text{by the definition of }\bigsqcup_{i\in
I}\mathbf{E}_{i}\right) \\
&  =\left(  E,<_{1},<_{2}\right)  .
\end{align*}
Lemma \ref{lem.djun.restrict.rel} (a) shows that the set $\bigsqcup_{i\in
I}T_{i}$ is a subset of $\bigsqcup_{i\in I}E_{i}$. In other words, the set $T$
is a subset of $E$ (since $\bigsqcup_{i\in I}T_{i}=T$ and $\bigsqcup_{i\in
I}E_{i}=E$). Hence, Proposition \ref{prop.djun.restrict-rewr} shows that
\begin{equation}
\mathbf{E}\mid_{T}=\left(  T,\left(  <_{1}\right)  \mid_{T},\left(
<_{2}\right)  \mid_{T}\right)  . \label{pf.prop.djun.restrict.3}%
\end{equation}

We have%
\begin{equation}
\bigoplus_{i\in I}\left(  \left(  <_{1,i}\right)  \mid_{T_{i}}\right)
=\left(  <_{1}\right)  \mid_{T} \label{pf.prop.djun.restrict.5a}%
\end{equation}
\footnote{\textit{Proof of (\ref{pf.prop.djun.restrict.5a}):} For each $i\in
I$, the relation $<_{1,i}$ is a binary relation on the set $E_{i}$ (since
$\left(  E_{i},<_{1,i},<_{2,i}\right)  $ is a double poset). Recall
furthermore that $T_{i}$ is a subset of $E_{i}$ for each $i\in I$. Hence,
Lemma \ref{lem.djun.restrict.rel} (b) (applied to $<_{1,i}$ instead of
$\rho_{i}$) shows that $\left(  \bigoplus_{i\in I}\left(  <_{1,i}\right)
\right)  \mid_{\bigsqcup_{i\in I}T_{i}}=\bigoplus_{i\in I}\left(  \left(
<_{1,i}\right)  \mid_{T_{i}}\right)  $. Thus,%
\[
\bigoplus_{i\in I}\left(  \left(  <_{1,i}\right)  \mid_{T_{i}}\right)
=\underbrace{\left(  \bigoplus_{i\in I}\left(  <_{1,i}\right)  \right)
}_{\substack{=\left(  <_{1}\right)  \\\text{(since }<_{1}\text{ was
defined}\\\text{as }\bigoplus_{i\in I}\left(  <_{1,i}\right)  \text{)}}%
}\mid_{\bigsqcup_{i\in I}T_{i}}=\left(  <_{1}\right)  \mid_{\bigsqcup_{i\in
I}T_{i}}=\left(  <_{1}\right)  \mid_{T}\ \ \ \ \ \ \ \ \ \ \left(  \text{since
}\bigsqcup_{i\in I}T_{i}=T\right)  .
\]
This proves (\ref{pf.prop.djun.restrict.5a}).}. The same argument (applied to
$<_{2,i}$ and $<_{2}$ instead of $<_{1,i}$ and $<_{1}$) shows that%
\begin{equation}
\bigoplus_{i\in I}\left(  \left(  <_{2,i}\right)  \mid_{T_{i}}\right)
=\left(  <_{2}\right)  \mid_{T}. \label{pf.prop.djun.restrict.5b}%
\end{equation}

Furthermore, for each $i\in I$, we have $\mathbf{E}_{i}\mid_{T_{i}}=\left(
T_{i},\left(  <_{1,i}\right)  \mid_{T_{i}},\left(  <_{2,i}\right)  \mid
_{T_{i}}\right)  $\ \ \ \ \footnote{\textit{Proof.} Let $i\in I$. Then,
$\mathbf{E}_{i}=\left(  E_{i},<_{1,i},<_{2,i}\right)  $ is a double poset,
whereas $T_{i}$ is a subset of $E_{i}$. Hence, Proposition
\ref{prop.djun.restrict-rewr} (applied to $\mathbf{E}_{i}$, $E_{i}$, $<_{1,i}%
$, $<_{2,i}$ and $T_{i}$ instead of $\mathbf{E}$, $E$, $<_{1}$, $<_{2}$ and
$T$) shows that $\mathbf{E}_{i}\mid_{T_{i}}=\left(  T_{i},\left(
<_{1,i}\right)  \mid_{T_{i}},\left(  <_{2,i}\right)  \mid_{T_{i}}\right)  $.}.
Hence, the definition of $\bigsqcup_{i\in I}\left(  \mathbf{E}_{i}\mid_{T_{i}%
}\right)  $ yields%
\begin{align*}
\bigsqcup_{i\in I}\left(  \mathbf{E}_{i}\mid_{T_{i}}\right)   &  =\left(
\underbrace{\bigsqcup_{i\in I}T_{i}}_{=T},\underbrace{\bigoplus_{i\in
I}\left(  \left(  <_{1,i}\right)  \mid_{T_{i}}\right)  }_{\substack{=\left(
<_{1}\right)  \mid_{T}\\\text{(by (\ref{pf.prop.djun.restrict.5a}))}%
}},\underbrace{\bigoplus_{i\in I}\left(  \left(  <_{2,i}\right)  \mid_{T_{i}%
}\right)  }_{\substack{=\left(  <_{2}\right)  \mid_{T}\\\text{(by
(\ref{pf.prop.djun.restrict.5b}))}}}\right) \\
&  =\left(  T,\left(  <_{1}\right)  \mid_{T},\left(  <_{2}\right)  \mid
_{T}\right)  =\mathbf{E}\mid_{T}=\mathbf{E}\mid_{\bigsqcup_{i\in I}T_{i}}%
\end{align*}
(since $T=\bigsqcup_{i\in I}T_{i}$). This proves Proposition
\ref{prop.djun.restrict}.
\end{proof}

\subsection{$\operatorname*{Adm}\left(  \bigsqcup_{i\in I}\mathbf{E}%
_{i}\right)  $ and the bialgebra $\operatorname*{QSym}$}

In this section, we shall continue analyzing the disjoint union of several
double posets. This will result in a new proof of the fact that
$\operatorname*{QSym}$ is a $\mathbf{k}$-bialgebra.

We begin with some simple facts:

\begin{proposition}
\label{prop.djun.Adm1}Let $I$ be a finite set. For each $i\in I$, let
$\mathbf{E}_{i}=\left(  E_{i},<_{1,i},<_{2,i}\right)  $ be a double poset.

Let $\mathbf{E}$ be the double poset $\bigsqcup_{i\in I}\mathbf{E}_{i}$.

\begin{enumerate}
\item[(a)] For each $\left(  P,Q\right)  \in\operatorname*{Adm}\mathbf{E}$, we
have%
\[
\left(  \left(  \left(  \operatorname*{inc}\nolimits_{i}\right)  ^{-1}\left(
P\right)  ,\left(  \operatorname*{inc}\nolimits_{i}\right)  ^{-1}\left(
Q\right)  \right)  \right)  _{i\in I}\in\prod_{i\in I}\operatorname*{Adm}%
\left(  \mathbf{E}_{i}\right)  .
\]

\item[(b)] For each $\left(  \left(  P_{i},Q_{i}\right)  \right)  _{i\in I}%
\in\prod_{i\in I}\operatorname*{Adm}\left(  \mathbf{E}_{i}\right)  $, we have%
\[
\left(  \bigsqcup_{i\in I}P_{i},\bigsqcup_{i\in I}Q_{i}\right)  \in
\operatorname*{Adm}\mathbf{E}.
\]

\end{enumerate}
\end{proposition}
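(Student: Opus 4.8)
Throughout, write $E = \bigsqcup_{i \in I} E_{i}$ for the ground set of $\mathbf{E}$ and $<_{1} = \bigoplus_{i \in I}\left(<_{1,i}\right)$ for its first order. The plan is, in each part, to verify the three defining conditions of an admissible partition directly from the definitions; the workhorse is the infix description of the direct-sum order from Definition~\ref{def.djun.rels}, namely that two elements $\left(j,e\right)$ and $\left(k,f\right)$ of $E$ satisfy $\left(j,e\right) <_{1} \left(k,f\right)$ if and only if $j = k$ and $e <_{1,j} f$. In particular, $<_{1}$ never relates elements lying in different summands, and this is precisely what lets the single global non-malposition condition split into the conjunction of the local ones. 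Recall also that membership in $\operatorname*{Adm}$ depends only on the first order, so the second orders $<_{2,i}$ play no role in what follows.

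For part~(a), I would fix $\left(P,Q\right) \in \operatorname*{Adm}\mathbf{E}$ and $i \in I$, and put $P_{i} = \left(\operatorname*{inc}\nolimits_{i}\right)^{-1}\left(P\right)$ and $Q_{i} = \left(\operatorname*{inc}\nolimits_{i}\right)^{-1}\left(Q\right)$. That $P_{i}$ and $Q_{i}$ are disjoint and cover $E_{i}$ follows by pulling the equalities $P \cap Q = \varnothing$ and $P \cup Q = E$ back along $\operatorname*{inc}\nolimits_{i}$ (using that preimages commute with intersections and unions, and that $\operatorname*{inc}\nolimits_{i}$ maps into $E = P \cup Q$). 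For the non-malposition condition, suppose $p \in P_{i}$ and $q \in Q_{i}$ satisfied $q <_{1,i} p$; then $\operatorname*{inc}\nolimits_{i}\left(q\right) = \left(i,q\right)$ and $\operatorname*{inc}\nolimits_{i}\left(p\right) = \left(i,p\right)$ would satisfy $\operatorname*{inc}\nolimits_{i}\left(q\right) <_{1} \operatorname*{inc}\nolimits_{i}\left(p\right)$ by the characterization above, while $\operatorname*{inc}\nolimits_{i}\left(p\right) \in P$ and $\operatorname*{inc}\nolimits_{i}\left(q\right) \in Q$, contradicting $\left(P,Q\right) \in \operatorname*{Adm}\mathbf{E}$. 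Hence $\left(P_{i},Q_{i}\right) \in \operatorname*{Adm}\left(\mathbf{E}_{i}\right)$, and ranging over $i$ gives the claim.

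For part~(b), I would begin with $\left(\left(P_{i},Q_{i}\right)\right)_{i \in I} \in \prod_{i \in I}\operatorname*{Adm}\left(\mathbf{E}_{i}\right)$, set $P = \bigsqcup_{i \in I} P_{i}$ and $Q = \bigsqcup_{i \in I} Q_{i}$, and note that these are subsets of $E$ by Lemma~\ref{lem.djun.restrict.rel}~(a). Since $P = \bigcup_{i \in I}\left(\left\{i\right\} \times P_{i}\right)$ and likewise for $Q$, the relations $P \cap Q = \varnothing$ and $P \cup Q = E$ reduce termwise to $P_{i} \cap Q_{i} = \varnothing$ and $P_{i} \cup Q_{i} = E_{i}$. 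For the non-malposition condition, I would take $p \in P$ and $q \in Q$ with $q <_{1} p$ and write $p = \left(j,a\right)$ and $q = \left(k,b\right)$ via Remark~\ref{rmk.djun.elt}~(c); the characterization of $<_{1}$ then forces $k = j$ and $b <_{1,j} a$, while $p \in P$ and $q \in Q$ give $a \in P_{j}$ and $b \in Q_{j}$, contradicting $\left(P_{j},Q_{j}\right) \in \operatorname*{Adm}\left(\mathbf{E}_{j}\right)$.

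There is no genuine obstacle here; the argument is an exercise in unwinding the definitions of the disjoint union, the direct-sum order, and admissibility. The only point demanding care is the bookkeeping of the canonical inclusions and of the index structure of $E = \bigsqcup_{i \in I} E_{i}$ — specifically, invoking the direct-sum characterization correctly so that any putative $<_{1}$-malposition is confined to a single summand $j$, where it is contradicted against the local admissibility of $\left(P_{j},Q_{j}\right)$.
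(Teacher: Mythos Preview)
Your proposal is correct and follows essentially the same approach as the paper: both arguments unpack the definition of $\operatorname*{Adm}$, use the preimage identities $\left(\operatorname*{inc}\nolimits_{i}\right)^{-1}\left(P\cap Q\right)$ and $\left(\operatorname*{inc}\nolimits_{i}\right)^{-1}\left(P\cup Q\right)$ for part~(a), handle the set-theoretic conditions in part~(b) termwise via the decomposition $\bigsqcup_{i\in I}P_{i}=\bigcup_{i\in I}\left(\left\{i\right\}\times P_{i}\right)$, and dispatch the non-malposition condition in both directions by invoking the characterization $\left(j,e\right)<_{1}\left(k,f\right)\Longleftrightarrow\left(j=k\text{ and }e<_{1,j}f\right)$ to localize any putative malposition to a single summand. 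The only cosmetic difference is that where you cite Lemma~\ref{lem.djun.restrict.rel}~(a) for the inclusion $\bigsqcup_{i\in I}P_{i}\subseteq E$, the paper verifies it inline.
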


\begin{proof}
[Proof of Proposition \ref{prop.djun.Adm1}.]Write the double poset
$\mathbf{E}$ as $\left(  E,<_{1},<_{2}\right)  $. Thus,%
\[
\left(  E,<_{1},<_{2}\right)  =\mathbf{E}=\bigsqcup_{i\in I}\mathbf{E}%
_{i}=\left(  \bigsqcup_{i\in I}E_{i},\bigoplus_{i\in I}\left(  <_{1,i}\right)
,\bigoplus_{i\in I}\left(  <_{2,i}\right)  \right)
\]
(by the definition of the double poset $\bigsqcup_{i\in I}\mathbf{E}_{i}$). In
other words,%
\[
E=\bigsqcup_{i\in I}E_{i},\ \ \ \ \ \ \ \ \ \ \left(  <_{1}\right)
=\bigoplus_{i\in I}\left(  <_{1,i}\right)  \ \ \ \ \ \ \ \ \ \ \text{and}%
\ \ \ \ \ \ \ \ \ \ \left(  <_{2}\right)  =\bigoplus_{i\in I}\left(
<_{2,i}\right)  .
\]

For any two elements $\left(  j,e\right)  $ and $\left(  k,f\right)  $ of $E$,
we have the equivalence%
\begin{equation}
\left(  \left(  j,e\right)  <_{1}\left(  k,f\right)  \right)
\ \Longleftrightarrow\ \left(  j=k\text{ and }e<_{1,j}f\right)
\label{pf.prop.djun.Adm1.le1}%
\end{equation}
\footnote{\textit{Proof of (\ref{pf.prop.djun.Adm1.le1}):} Recall that
$\left(  <_{1}\right)  =\bigoplus_{i\in I}\left(  <_{1,i}\right)  $.
\par
Now, the definition of the relation $\bigoplus_{i\in I}\left(  <_{1,i}\right)
$ shows that%
\[
\left(
\begin{array}
[c]{l}%
\left(  \left(  j,e\right)  \left(  \bigoplus_{i\in I}\left(  <_{1,i}\right)
\right)  \left(  k,f\right)  \right)  \ \Longleftrightarrow\ \left(  j=k\text{
and }e<_{1,j}f\right) \\
\ \ \ \ \ \ \ \ \ \ \text{for any two elements }\left(  j,e\right)  \text{ and
}\left(  k,f\right)  \text{ of }\bigsqcup_{i\in I}E_{i}%
\end{array}
\right)  .
\]
In other words,%
\[
\left(
\begin{array}
[c]{l}%
\left(  \left(  j,e\right)  <_{1}\left(  k,f\right)  \right)
\ \Longleftrightarrow\ \left(  j=k\text{ and }e<_{1,j}f\right) \\
\ \ \ \ \ \ \ \ \ \ \text{for any two elements }\left(  j,e\right)  \text{ and
}\left(  k,f\right)  \text{ of }E
\end{array}
\right)
\]
(since $\bigoplus_{i\in I}\left(  <_{1,i}\right)  =\left(  <_{1}\right)  $ and
$\bigsqcup_{i\in I}E_{i}=E$). This proves (\ref{pf.prop.djun.Adm1.le1}).}.

(a) Let $\left(  P,Q\right)  \in\operatorname*{Adm}\mathbf{E}$. We shall show
that \newline$\left(  \left(  \left(  \operatorname*{inc}\nolimits_{i}\right)
^{-1}\left(  P\right)  ,\left(  \operatorname*{inc}\nolimits_{i}\right)
^{-1}\left(  Q\right)  \right)  \right)  _{i\in I}\in\prod_{i\in
I}\operatorname*{Adm}\left(  \mathbf{E}_{i}\right)  $.

Indeed, fix $i\in I$. We shall prove that $\left(  \left(  \operatorname*{inc}%
\nolimits_{i}\right)  ^{-1}\left(  P\right)  ,\left(  \operatorname*{inc}%
\nolimits_{i}\right)  ^{-1}\left(  Q\right)  \right)  \in\operatorname*{Adm}%
\left(  \mathbf{E}_{i}\right)  $.

The sets $\left(  \operatorname*{inc}\nolimits_{i}\right)  ^{-1}\left(
P\right)  $ and $\left(  \operatorname*{inc}\nolimits_{i}\right)  ^{-1}\left(
Q\right)  $ are subsets of $E$ (since $\operatorname*{inc}\nolimits_{i}$ is a
map $E_{i}\rightarrow E$).

We have $\left(  P,Q\right)  \in\operatorname*{Adm}\mathbf{E}$. In other
words, $P$ and $Q$ are subsets of $E$ satisfying $P\cap Q=\varnothing$ and
$P\cup Q=E$ and having the property that
\begin{equation}
\text{no }p\in P\text{ and }q\in Q\text{ satisfy }q<_{1}p
\label{pf.prop.djun.Adm1.a.1}%
\end{equation}
(by the definition of $\operatorname*{Adm}\mathbf{E}$ (since $\mathbf{E}%
=\left(  E,<_{1},<_{2}\right)  $).

If $p\in P$ and $q\in Q$, then%
\begin{equation}
\text{we do not have }q<_{1}p. \label{pf.prop.djun.Adm1.a.1a}%
\end{equation}
(Indeed, this is merely a restatement of (\ref{pf.prop.djun.Adm1.a.1}).)

We have
\[
\left(  \operatorname*{inc}\nolimits_{i}\right)  ^{-1}\left(  P\right)
\cap\left(  \operatorname*{inc}\nolimits_{i}\right)  ^{-1}\left(  Q\right)
=\left(  \operatorname*{inc}\nolimits_{i}\right)  ^{-1}\left(
\underbrace{P\cap Q}_{=\varnothing}\right)  =\left(  \operatorname*{inc}%
\nolimits_{i}\right)  ^{-1}\left(  \varnothing\right)  =\varnothing
\]
and%
\[
\left(  \operatorname*{inc}\nolimits_{i}\right)  ^{-1}\left(  P\right)
\cup\left(  \operatorname*{inc}\nolimits_{i}\right)  ^{-1}\left(  Q\right)
=\left(  \operatorname*{inc}\nolimits_{i}\right)  ^{-1}\left(
\underbrace{P\cup Q}_{=E}\right)  =\left(  \operatorname*{inc}\nolimits_{i}%
\right)  ^{-1}\left(  E\right)  =E_{i}.
\]

Moreover, no $p\in\left(  \operatorname*{inc}\nolimits_{i}\right)
^{-1}\left(  P\right)  $ and $q\in\left(  \operatorname*{inc}\nolimits_{i}%
\right)  ^{-1}\left(  Q\right)  $ satisfy $q<_{1,i}p$%
\ \ \ \ \footnote{\textit{Proof.} Assume the contrary. Thus, there exist
$p\in\left(  \operatorname*{inc}\nolimits_{i}\right)  ^{-1}\left(  P\right)  $
and $q\in\left(  \operatorname*{inc}\nolimits_{i}\right)  ^{-1}\left(
Q\right)  $ satisfying $q<_{1,i}p$. Consider these $p$ and $q$.
\par
We have $p\in\left(  \operatorname*{inc}\nolimits_{i}\right)  ^{-1}\left(
P\right)  $. In other words, $p$ is an element of $E_{i}$ satisfying
$\operatorname*{inc}\nolimits_{i}\left(  p\right)  \in P$. The definition of
$\operatorname*{inc}\nolimits_{i}$ yields $\operatorname*{inc}\nolimits_{i}%
\left(  p\right)  =\left(  i,p\right)  $; thus, $\left(  i,p\right)
=\operatorname*{inc}\nolimits_{i}\left(  p\right)  \in P\subseteq E$.
\par
We have $q\in\left(  \operatorname*{inc}\nolimits_{i}\right)  ^{-1}\left(
Q\right)  $. In other words, $q$ is an element of $E_{i}$ satisfying
$\operatorname*{inc}\nolimits_{i}\left(  q\right)  \in Q$. The definition of
$\operatorname*{inc}\nolimits_{i}$ yields $\operatorname*{inc}\nolimits_{i}%
\left(  q\right)  =\left(  i,q\right)  $; thus, $\left(  i,q\right)
=\operatorname*{inc}\nolimits_{i}\left(  q\right)  \in Q\subseteq E$.
\par
Now, (\ref{pf.prop.djun.Adm1.le1}) (applied to $\left(  j,e\right)  =\left(
i,q\right)  $ and $\left(  k,f\right)  =\left(  i,p\right)  $) yields the
equivalence%
\[
\left(  \left(  i,q\right)  <_{1}\left(  i,p\right)  \right)
\ \Longleftrightarrow\ \left(  i=i\text{ and }q<_{1,i}p\right)  .
\]
Hence, we have $\left(  i,q\right)  <_{1}\left(  i,p\right)  $ (since we have
$\left(  i=i\text{ and }q<_{1,i}p\right)  $). Moreover, recall that $\left(
i,p\right)  \in P$ and $\left(  i,q\right)  \in Q$. Hence,
(\ref{pf.prop.djun.Adm1.a.1a}) (applied to $\left(  i,p\right)  $ and $\left(
i,q\right)  $ instead of $p$ and $q$) shows that we do not have $\left(
i,q\right)  <_{1}\left(  i,p\right)  $. This contradicts $\left(  i,q\right)
<_{1}\left(  i,p\right)  $. This contradiction shows that our assumption was
false. This completes the proof.}.

Thus, $\left(  \operatorname*{inc}\nolimits_{i}\right)  ^{-1}\left(  P\right)
$ and $\left(  \operatorname*{inc}\nolimits_{i}\right)  ^{-1}\left(  Q\right)
$ are subsets of $E_{i}$ satisfying $\left(  \operatorname*{inc}%
\nolimits_{i}\right)  ^{-1}\left(  P\right)  \cap\left(  \operatorname*{inc}%
\nolimits_{i}\right)  ^{-1}\left(  Q\right)  =\varnothing$ and $\left(
\operatorname*{inc}\nolimits_{i}\right)  ^{-1}\left(  P\right)  \cup\left(
\operatorname*{inc}\nolimits_{i}\right)  ^{-1}\left(  Q\right)  =E_{i}$ and
having the property that no $p\in\left(  \operatorname*{inc}\nolimits_{i}%
\right)  ^{-1}\left(  P\right)  $ and $q\in\left(  \operatorname*{inc}%
\nolimits_{i}\right)  ^{-1}\left(  Q\right)  $ satisfy $q<_{1,i}p$. In other
words, $\left(  \left(  \operatorname*{inc}\nolimits_{i}\right)  ^{-1}\left(
P\right)  ,\left(  \operatorname*{inc}\nolimits_{i}\right)  ^{-1}\left(
Q\right)  \right)  \in\operatorname*{Adm}\left(  \mathbf{E}_{i}\right)  $ (by
the definition of $\operatorname*{Adm}\left(  \mathbf{E}_{i}\right)  $ (since
$\mathbf{E}_{i}=\left(  E_{i},<_{1,i},<_{2,i}\right)  $)).

Now, forget that we fixed $i$. We thus have shown that $\left(  \left(
\operatorname*{inc}\nolimits_{i}\right)  ^{-1}\left(  P\right)  ,\left(
\operatorname*{inc}\nolimits_{i}\right)  ^{-1}\left(  Q\right)  \right)
\in\operatorname*{Adm}\left(  \mathbf{E}_{i}\right)  $ for each $i\in I$. In
other words, $\left(  \left(  \left(  \operatorname*{inc}\nolimits_{i}\right)
^{-1}\left(  P\right)  ,\left(  \operatorname*{inc}\nolimits_{i}\right)
^{-1}\left(  Q\right)  \right)  \right)  _{i\in I}\in\prod_{i\in
I}\operatorname*{Adm}\left(  \mathbf{E}_{i}\right)  $. This proves Proposition
\ref{prop.djun.Adm1} (a).

(b) Let $\left(  \left(  P_{i},Q_{i}\right)  \right)  _{i\in I}\in\prod_{i\in
I}\operatorname*{Adm}\left(  \mathbf{E}_{i}\right)  $. We shall show that
$\left(  \bigsqcup_{i\in I}P_{i},\bigsqcup_{i\in I}Q_{i}\right)
\in\operatorname*{Adm}\mathbf{E}$.

We have $\left(  \left(  P_{i},Q_{i}\right)  \right)  _{i\in I}\in\prod_{i\in
I}\operatorname*{Adm}\left(  \mathbf{E}_{i}\right)  $. In other words,%
\begin{equation}
\left(  P_{i},Q_{i}\right)  \in\operatorname*{Adm}\left(  \mathbf{E}%
_{i}\right)  \ \ \ \ \ \ \ \ \ \ \text{for each }i\in I.
\label{pf.prop.djun.Adm1.b.1}%
\end{equation}

From this, we can easily obtain the following observation:

\begin{statement}
\textit{Observation 1:} Let $i\in I$. Then, the sets $P_{i}$ and $Q_{i}$ are
subsets of $E_{i}$ satisfying $P_{i}\cap Q_{i}=\varnothing$ and $P_{i}\cup
Q_{i}=E_{i}$ and having the property that
\begin{equation}
\text{no }p\in P_{i}\text{ and }q\in Q_{i}\text{ satisfy }q<_{1,i}p.
\label{pf.prop.djun.Adm1.b.2}%
\end{equation}

\end{statement}

[\textit{Proof of Observation 1:} From (\ref{pf.prop.djun.Adm1.b.1}), we
obtain $\left(  P_{i},Q_{i}\right)  \in\operatorname*{Adm}\left(
\mathbf{E}_{i}\right)  $. From this, Observation 1 immediately follows by the
definition of $\operatorname*{Adm}\left(  \mathbf{E}_{i}\right)  $.]

Now, $\bigsqcup_{i\in I}P_{i}$ and $\bigsqcup_{i\in I}Q_{i}$ are subsets of
$E$\ \ \ \ \footnote{\textit{Proof.} For each $i\in I$, the set $P_{i}$ is a
subset of $E_{i}$ (by Observation 1). In other words, for each $i\in I$, we
have $P_{i}\subseteq E_{i}$. Thus, $\bigsqcup_{i\in I}\underbrace{P_{i}%
}_{\subseteq E_{i}}\subseteq\bigsqcup_{i\in I}E_{i}=E$. The same argument (but
applied to $Q_{i}$ instead of $P_{i}$) shows that $\bigsqcup_{i\in I}%
Q_{i}\subseteq E$. Thus, $\bigsqcup_{i\in I}P_{i}$ and $\bigsqcup_{i\in
I}Q_{i}$ are subsets of $E$.} satisfying $\left(  \bigsqcup_{i\in I}%
P_{i}\right)  \cap\left(  \bigsqcup_{i\in I}Q_{i}\right)  =\varnothing
$\ \ \ \ \footnote{\textit{Proof.} Let $x\in\left(  \bigsqcup_{i\in I}%
P_{i}\right)  \cap\left(  \bigsqcup_{i\in I}Q_{i}\right)  $. We shall derive a
contradiction.
\par
We have $x\in\left(  \bigsqcup_{i\in I}P_{i}\right)  \cap\left(
\bigsqcup_{i\in I}Q_{i}\right)  \subseteq\bigsqcup_{i\in I}P_{i}$. Hence,
Remark \ref{rmk.djun.elt} (c) (applied to $P_{i}$ instead of $E_{i}$) shows
that there exist an $i\in I$ and an $e\in P_{i}$ such that $x=\left(
i,e\right)  $. Denote these $i$ and $e$ by $j$ and $f$. Thus, $j\in I$ and
$f\in P_{j}$ satisfy $x=\left(  j,f\right)  $.
\par
We have $x\in\left(  \bigsqcup_{i\in I}P_{i}\right)  \cap\left(
\bigsqcup_{i\in I}Q_{i}\right)  \subseteq\bigsqcup_{i\in I}Q_{i}$. Hence,
Remark \ref{rmk.djun.elt} (c) (applied to $Q_{i}$ instead of $E_{i}$) shows
that there exist an $i\in I$ and an $e\in Q_{i}$ such that $x=\left(
i,e\right)  $. Denote these $i$ and $e$ by $k$ and $g$. Thus, $k\in I$ and
$g\in Q_{k}$ satisfy $x=\left(  k,g\right)  $.
\par
We have $\left(  j,f\right)  =x=\left(  k,g\right)  $. In other words, $j=k$
and $f=g$. Hence, $f=g\in Q_{k}=Q_{j}$ (since $k=j$).
\par
Observation 1 (applied to $i=j$) yields $P_{j}\cap Q_{j}=\varnothing$. But
combining $f\in P_{j}$ with $f\in Q_{j}$, we obtain $f\in P_{j}\cap
Q_{j}=\varnothing$. This shows that the set $\varnothing$ is nonempty (because
it contains the element $f$). But this is absurd.
\par
Now, forget that we have fixed $x$. We thus have derived a contradiction for
each $x\in\left(  \bigsqcup_{i\in I}P_{i}\right)  \cap\left(  \bigsqcup_{i\in
I}Q_{i}\right)  $. Hence, there exists no $x\in\left(  \bigsqcup_{i\in I}%
P_{i}\right)  \cap\left(  \bigsqcup_{i\in I}Q_{i}\right)  $. In other words,
the set $\left(  \bigsqcup_{i\in I}P_{i}\right)  \cap\left(  \bigsqcup_{i\in
I}Q_{i}\right)  $ is empty. In other words, $\left(  \bigsqcup_{i\in I}%
P_{i}\right)  \cap\left(  \bigsqcup_{i\in I}Q_{i}\right)  =\varnothing$.} and
$\left(  \bigsqcup_{i\in I}P_{i}\right)  \cup\left(  \bigsqcup_{i\in I}%
Q_{i}\right)  =E$\ \ \ \ \footnote{\textit{Proof.} We have $E=\bigsqcup_{i\in
I}E_{i}=\bigcup_{i\in I}\left(  \left\{  i\right\}  \times E_{i}\right)  $ (by
the definition of $\bigsqcup_{i\in I}E_{i}$) and $\bigsqcup_{i\in I}%
P_{i}=\bigcup_{i\in I}\left(  \left\{  i\right\}  \times P_{i}\right)  $ (by
the definition of $\bigsqcup_{i\in I}P_{i}$) and $\bigsqcup_{i\in I}%
Q_{i}=\bigcup_{i\in I}\left(  \left\{  i\right\}  \times Q_{i}\right)  $ (by
the definition of $\bigsqcup_{i\in I}Q_{i}$). Now,%
\begin{align*}
\underbrace{\left(  \bigsqcup_{i\in I}P_{i}\right)  }_{=\bigcup_{i\in
I}\left(  \left\{  i\right\}  \times P_{i}\right)  }\cup\underbrace{\left(
\bigsqcup_{i\in I}Q_{i}\right)  }_{=\bigcup_{i\in I}\left(  \left\{
i\right\}  \times Q_{i}\right)  }  &  =\left(  \bigcup_{i\in I}\left(
\left\{  i\right\}  \times P_{i}\right)  \right)  \cup\left(  \bigcup_{i\in
I}\left(  \left\{  i\right\}  \times Q_{i}\right)  \right)  =\bigcup_{i\in
I}\underbrace{\left(  \left(  \left\{  i\right\}  \times P_{i}\right)
\cup\left(  \left\{  i\right\}  \times Q_{i}\right)  \right)  }%
_{\substack{=\left\{  i\right\}  \times\left(  P_{i}\cup Q_{i}\right)
\\\text{(since any three sets }X\text{, }Y\text{ and }Z\\\text{satisfy
}\left(  X\times Y\right)  \cup\left(  X\times Z\right)  =X\times\left(  Y\cup
Z\right)  \text{)}}}\\
&  =\bigcup_{i\in I}\left(  \left\{  i\right\}  \times\underbrace{\left(
P_{i}\cup Q_{i}\right)  }_{\substack{=E_{i}\\\text{(by Observation 1)}%
}}\right)  =\bigcup_{i\in I}\left(  \left\{  i\right\}  \times E_{i}\right)
=E,
\end{align*}
qed.} and having the property that%
\[
\text{no }p\in\bigsqcup_{i\in I}P_{i}\text{ and }q\in\bigsqcup_{i\in I}%
Q_{i}\text{ satisfy }q<_{1}p
\]
\footnote{\textit{Proof.} Assume the contrary. Thus, there exist
$p\in\bigsqcup_{i\in I}P_{i}$ and $q\in\bigsqcup_{i\in I}Q_{i}$ satisfying
$q<_{1}p$. Fix these $p$ and $q$.
\par
We have $p\in\bigsqcup_{i\in I}P_{i}$. Hence, Remark \ref{rmk.djun.elt} (c)
(applied to $p$ and $P_{i}$ instead of $x$ and $E_{i}$) shows that there exist
an $i\in I$ and an $e\in P_{i}$ such that $p=\left(  i,e\right)  $. Denote
these $i$ and $e$ by $k$ and $f$. Thus, $k\in I$ and $f\in P_{k}$ satisfy
$p=\left(  k,f\right)  $.
\par
We have $q\in\bigsqcup_{i\in I}Q_{i}$. Hence, Remark \ref{rmk.djun.elt} (c)
(applied to $q$ and $Q_{i}$ instead of $x$ and $E_{i}$) shows that there exist
an $i\in I$ and an $e\in Q_{i}$ such that $q=\left(  i,e\right)  $. Denote
these $i$ and $e$ by $j$ and $e$. Thus, $j\in I$ and $e\in Q_{j}$ satisfy
$q=\left(  j,e\right)  $.
\par
We have $\left(  k,f\right)  =p\in\bigsqcup_{i\in I}P_{i}\subseteq E$ and
$\left(  j,e\right)  =q\in\bigsqcup_{i\in I}Q_{i}\subseteq E$. Also, $\left(
j,e\right)  =q<_{1}p=\left(  k,f\right)  $. But (\ref{pf.prop.djun.Adm1.le1})
yields the equivalence%
\[
\left(  \left(  j,e\right)  <_{1}\left(  k,f\right)  \right)
\ \Longleftrightarrow\ \left(  j=k\text{ and }e<_{1,j}f\right)  .
\]
Hence, we have $\left(  j=k\text{ and }e<_{1,j}f\right)  $ (since we have
$\left(  j,e\right)  <_{1}\left(  k,f\right)  $).
\par
We have $e\in Q_{j}=Q_{k}$ (since $j=k$). Also, we have $e<_{1,j}f$. This
rewrites as $e<_{1,k}f$ (since $j=k$).
\par
But (\ref{pf.prop.djun.Adm1.b.2}) (applied to $i=k$) shows that no $p\in
P_{k}$ and $q\in Q_{k}$ satisfy $q<_{1,k}p$. In other words, if $p\in P_{k}$
and $q\in Q_{k}$, then we do not have $q<_{1,k}p$. Applying this to $p=f$ and
$q=e$, we conclude that we do not have $e<_{1,k}f$ (since $f\in P_{k}$ and
$e\in Q_{k}$). This contradicts $e<_{1,k}f$. This contradiction shows that our
assumption was wrong. This completes the proof.}. In other words, $\left(
\bigsqcup_{i\in I}P_{i},\bigsqcup_{i\in I}Q_{i}\right)  \in\operatorname*{Adm}%
\mathbf{E}$ (by the definition of $\operatorname*{Adm}\mathbf{E}$ (since
$\mathbf{E}=\left(  E,<_{1},<_{2}\right)  $). This proves Proposition
\ref{prop.djun.Adm1} (b).
\end{proof}

\begin{definition}
\label{def.djun.Adm-bij}Let $I$ be a finite set. For each $i\in I$, let
$\mathbf{E}_{i}=\left(  E_{i},<_{1,i},<_{2,i}\right)  $ be a double poset.

Let $\mathbf{E}$ be the double poset $\bigsqcup_{i\in I}\mathbf{E}_{i}$.

\begin{enumerate}
\item[(a)] We define a map
\[
\operatorname*{Split}:\operatorname*{Adm}\mathbf{E}\rightarrow\prod_{i\in
I}\operatorname*{Adm}\left(  \mathbf{E}_{i}\right)
\]
by%
\[
\left(
\begin{array}
[c]{c}%
\operatorname*{Split}\left(  \left(  P,Q\right)  \right)  =\left(  \left(
\left(  \operatorname*{inc}\nolimits_{i}\right)  ^{-1}\left(  P\right)
,\left(  \operatorname*{inc}\nolimits_{i}\right)  ^{-1}\left(  Q\right)
\right)  \right)  _{i\in I}\\
\ \ \ \ \ \ \ \ \ \ \text{for each }\left(  P,Q\right)  \in\operatorname*{Adm}%
\mathbf{E}%
\end{array}
\right)  .
\]
(This is well-defined, because of Proposition \ref{prop.djun.Adm1} (a).)

\item[(b)] We define a map
\[
\operatorname*{Combine}:\prod_{i\in I}\operatorname*{Adm}\left(
\mathbf{E}_{i}\right)  \rightarrow\operatorname*{Adm}\mathbf{E}%
\]
by%
\[
\left(
\begin{array}
[c]{c}%
\operatorname*{Combine}\left(  \left(  \left(  P_{i},Q_{i}\right)  \right)
_{i\in I}\right)  =\left(  \bigsqcup_{i\in I}P_{i},\bigsqcup_{i\in I}%
Q_{i}\right) \\
\ \ \ \ \ \ \ \ \ \ \text{for every }\left(  \left(  P_{i},Q_{i}\right)
\right)  _{i\in I}\in\prod_{i\in I}\operatorname*{Adm}\left(  \mathbf{E}%
_{i}\right)
\end{array}
\right)  .
\]
(This is well-defined, because of Proposition \ref{prop.djun.Adm1} (b).)
\end{enumerate}
\end{definition}

Next, we need another basic lemma about sets:

\begin{lemma}
\label{lem.djun.Adm-bij.1}Let $I$ be a finite set. For each $i\in I$, let
$E_{i}$ be a set. For each $j\in I$, consider the map $\operatorname*{inc}%
\nolimits_{j}:E_{j}\rightarrow\bigsqcup_{i\in I}E_{i}$.

\begin{enumerate}
\item[(a)] If $R$ is a subset of $\bigsqcup_{i\in I}E_{i}$, then
$\bigsqcup_{i\in I}\left(  \operatorname*{inc}\nolimits_{i}\right)
^{-1}\left(  R\right)  =R$.

\item[(b)] For each $i\in I$, let $R_{i}$ be a subset of $E_{i}$. Let $j\in
I$. Then, $\left(  \operatorname*{inc}\nolimits_{j}\right)  ^{-1}\left(
\bigsqcup_{i\in I}R_{i}\right)  =R_{j}$.
\end{enumerate}
\end{lemma}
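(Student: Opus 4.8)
The plan is to prove both parts by directly unfolding the definition of the disjoint union $\bigsqcup_{i\in I}E_i = \bigcup_{i\in I}\left(\{i\}\times E_i\right)$ together with the explicit description $\operatorname{inc}_j(e) = (j,e)$ of the canonical inclusions. Everything reduces to the characterization of when a pair $(i,e)$ belongs to a disjoint union, for which the workhorse is Remark~\ref{rmk.djun.elt}: part (c) of that remark lets me write an arbitrary element of a disjoint union as a pair $(i,e)$, part (b) produces such pairs, and part (a) supplies the disjointness of the terms $\{i\}\times E_i$.

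For part (a), I would first record that for each $i\in I$ we have $\left(\operatorname{inc}_i\right)^{-1}(R) = \{e\in E_i : (i,e)\in R\}$, since $\operatorname{inc}_i(e) = (i,e)$. Unfolding the disjoint union then gives $\bigsqcup_{i\in I}\left(\operatorname{inc}_i\right)^{-1}(R) = \bigcup_{i\in I}\left(\{i\}\times\{e\in E_i : (i,e)\in R\}\right)$, and I would establish equality with $R$ by two inclusions. The inclusion into $R$ is immediate: every element of the left-hand side has the form $(i,e)$ with $(i,e)\in R$ by construction. For the reverse inclusion I would take an arbitrary $x\in R\subseteq\bigsqcup_{i\in I}E_i$, use Remark~\ref{rmk.djun.elt}(c) to write $x = (i,e)$ with $i\in I$ and $e\in E_i$, observe that then $e\in\left(\operatorname{inc}_i\right)^{-1}(R)$ (because $\operatorname{inc}_i(e) = (i,e) = x\in R$), and conclude that $x = (i,e)$ lies in the $i$-th term of the disjoint union.

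For part (b), I would compute $\left(\operatorname{inc}_j\right)^{-1}\left(\bigsqcup_{i\in I}R_i\right) = \{e\in E_j : (j,e)\in\bigsqcup_{i\in I}R_i\}$ and then prove, for $e\in E_j$, the equivalence $(j,e)\in\bigsqcup_{i\in I}R_i \Longleftrightarrow e\in R_j$. The direction $\Longleftarrow$ follows from Remark~\ref{rmk.djun.elt}(b) (applied with $R_j$ in place of $E_j$), which yields $(j,e)\in\bigsqcup_{i\in I}R_i$ whenever $e\in R_j$. For $\Longrightarrow$, membership $(j,e)\in\bigcup_{i\in I}\left(\{i\}\times R_i\right)$ gives some $i\in I$ with $(j,e)\in\{i\}\times R_i$, whence $j=i$ and $e\in R_i = R_j$; here the fact that the index $i$ is forced to equal $j$ rests on the disjointness of the terms (Remark~\ref{rmk.djun.elt}(a), applied to the subsets $R_i\subseteq E_i$). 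Combining the equivalence with $R_j\subseteq E_j$ gives $\left(\operatorname{inc}_j\right)^{-1}\left(\bigsqcup_{i\in I}R_i\right) = R_j$.

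There is no genuine obstacle here; the lemma is purely formal. The only thing demanding care is the pedantic bookkeeping that distinguishes the disjoint union $\bigsqcup$ from an ordinary union and keeps track of the index coordinate of each pair---in particular, making sure in part (b) that the index $i$ in a representation $(j,e)=(i,e')$ really is forced to be $j$, which is exactly where disjointness enters. Finiteness of $I$ plays no role and need not be used.
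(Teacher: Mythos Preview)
Your proposal is correct. The paper does not actually prove this lemma; it simply states ``This is, again, a straightforward fact about sets, and its proof is left to the reader,'' so your detailed unfolding of the definitions is exactly the kind of argument the paper expects the reader to supply.
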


\begin{proof}
[Proof of Lemma \ref{lem.djun.Adm-bij.1}.]This is, again, a straightforward
fact about sets, and its proof is left to the reader.
\end{proof}

\begin{proposition}
\label{prop.djun.Adm-bij}Let $I$ be a finite set. For each $i\in I$, let
$\mathbf{E}_{i}=\left(  E_{i},<_{1,i},<_{2,i}\right)  $ be a double poset.

Let $\mathbf{E}$ be the double poset $\bigsqcup_{i\in I}\mathbf{E}_{i}$.

The maps $\operatorname*{Split}$ and $\operatorname*{Combine}$ are mutually
inverse bijections.
\end{proposition}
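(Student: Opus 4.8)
The plan is to prove that $\operatorname{Split}$ and $\operatorname{Combine}$ are mutually inverse by verifying the two composition identities $\operatorname{Combine} \circ \operatorname{Split} = \operatorname{id}$ and $\operatorname{Split} \circ \operatorname{Combine} = \operatorname{id}$ directly. Both maps are already known to be well-defined (by Proposition~\ref{prop.djun.Adm1}), so there is no existence obstacle; the entire content reduces to unwinding the two definitions in Definition~\ref{def.djun.Adm-bij} and applying the two set-theoretic identities recorded in Lemma~\ref{lem.djun.Adm-bij.1}. These two lemma parts are precisely the inverse-relationship facts $\bigsqcup_{i \in I} \left(\operatorname{inc}_i\right)^{-1}\left(R\right) = R$ (for a subset $R$ of $\bigsqcup_{i \in I} E_i$) and $\left(\operatorname{inc}_j\right)^{-1}\left(\bigsqcup_{i \in I} R_i\right) = R_j$ (for subsets $R_i$ of $E_i$), which are exactly what is needed to cancel the $\bigsqcup$ and $\left(\operatorname{inc}_i\right)^{-1}$ operations against each other.

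First I would prove $\operatorname{Combine} \circ \operatorname{Split} = \operatorname{id}$. Fix an arbitrary $\left(P, Q\right) \in \operatorname{Adm}\mathbf{E}$. By the definition of $\operatorname{Split}$, we have $\operatorname{Split}\left(\left(P, Q\right)\right) = \left(\left(\left(\operatorname{inc}_i\right)^{-1}\left(P\right), \left(\operatorname{inc}_i\right)^{-1}\left(Q\right)\right)\right)_{i \in I}$. Applying $\operatorname{Combine}$ to this and unwinding its definition produces the pair $\left(\bigsqcup_{i \in I} \left(\operatorname{inc}_i\right)^{-1}\left(P\right), \bigsqcup_{i \in I} \left(\operatorname{inc}_i\right)^{-1}\left(Q\right)\right)$. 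Now Lemma~\ref{lem.djun.Adm-bij.1}~(a), applied once with $R = P$ and once with $R = Q$, collapses each of the two disjoint unions back to $P$ and $Q$ respectively. Hence $\left(\operatorname{Combine} \circ \operatorname{Split}\right)\left(\left(P, Q\right)\right) = \left(P, Q\right)$, and since $\left(P, Q\right)$ was arbitrary, $\operatorname{Combine} \circ \operatorname{Split} = \operatorname{id}$.

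Next I would prove $\operatorname{Split} \circ \operatorname{Combine} = \operatorname{id}$. Fix an arbitrary $\left(\left(P_i, Q_i\right)\right)_{i \in I} \in \prod_{i \in I} \operatorname{Adm}\left(\mathbf{E}_i\right)$. By the definition of $\operatorname{Combine}$, its image is $\left(\bigsqcup_{i \in I} P_i, \bigsqcup_{i \in I} Q_i\right)$. Applying $\operatorname{Split}$ and unwinding gives the family whose $j$-th entry is $\left(\left(\operatorname{inc}_j\right)^{-1}\left(\bigsqcup_{i \in I} P_i\right), \left(\operatorname{inc}_j\right)^{-1}\left(\bigsqcup_{i \in I} Q_i\right)\right)$. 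Lemma~\ref{lem.djun.Adm-bij.1}~(b), applied with $R_i = P_i$ and then with $R_i = Q_i$, simplifies this $j$-th entry to $\left(P_j, Q_j\right)$ for every $j \in I$. Therefore the whole family equals $\left(\left(P_i, Q_i\right)\right)_{i \in I}$, so $\left(\operatorname{Split} \circ \operatorname{Combine}\right)\left(\left(\left(P_i, Q_i\right)\right)_{i \in I}\right) = \left(\left(P_i, Q_i\right)\right)_{i \in I}$, and hence $\operatorname{Split} \circ \operatorname{Combine} = \operatorname{id}$.

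Having established both identities, I would conclude that $\operatorname{Split}$ and $\operatorname{Combine}$ are mutually inverse, which proves Proposition~\ref{prop.djun.Adm-bij}. I do not anticipate a genuine obstacle here: the whole argument is bookkeeping, and the only subtlety worth flagging is the careful application of the two parts of Lemma~\ref{lem.djun.Adm-bij.1} in the correct direction (part (a) for the $\operatorname{Combine} \circ \operatorname{Split}$ direction, where an index-stripping is followed by a reassembly, and part (b) for the $\operatorname{Split} \circ \operatorname{Combine}$ direction, where a reassembly is followed by an index-stripping). Since the well-definedness of both maps is already guaranteed, the proof needs no further verification that the outputs land in the claimed codomains.
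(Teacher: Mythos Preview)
Your proposal is correct and follows essentially the same approach as the paper: both verify the two composition identities $\operatorname{Combine} \circ \operatorname{Split} = \operatorname{id}$ and $\operatorname{Split} \circ \operatorname{Combine} = \operatorname{id}$ directly, invoking Lemma~\ref{lem.djun.Adm-bij.1}~(a) for the former and Lemma~\ref{lem.djun.Adm-bij.1}~(b) for the latter. The only cosmetic difference is that the paper checks the two compositions in the opposite order.
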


\begin{proof}
[Proof of Proposition \ref{prop.djun.Adm-bij}.]For each $\left(  P,Q\right)
\in\operatorname*{Adm}\mathbf{E}$, we have
\begin{align}
\operatorname*{Split}\left(  \left(  P,Q\right)  \right)   &  =\left(  \left(
\left(  \operatorname*{inc}\nolimits_{i}\right)  ^{-1}\left(  P\right)
,\left(  \operatorname*{inc}\nolimits_{i}\right)  ^{-1}\left(  Q\right)
\right)  \right)  _{i\in I}\ \ \ \ \ \ \ \ \ \ \left(  \text{by the definition
of }\operatorname*{Split}\right) \nonumber\\
&  =\left(  \left(  \left(  \operatorname*{inc}\nolimits_{j}\right)
^{-1}\left(  P\right)  ,\left(  \operatorname*{inc}\nolimits_{j}\right)
^{-1}\left(  Q\right)  \right)  \right)  _{j\in I}
\label{pf.prop.djun.Adm-bij.Split-rewr}%
\end{align}
(here, we have renamed the index $i$ as $j$).

We have $\operatorname*{Split}\circ\operatorname*{Combine}=\operatorname*{id}%
$\ \ \ \ \ \footnote{\textit{Proof.} Let $\alpha\in\prod_{i\in I}%
\operatorname*{Adm}\left(  \mathbf{E}_{i}\right)  $. Thus, $\alpha$ has the
form $\alpha=\left(  \alpha_{i}\right)  _{i\in I}$, where each $\alpha_{i}$ is
an element of $\operatorname*{Adm}\left(  \mathbf{E}_{i}\right)  $. Consider
these $\alpha_{i}$.
\par
For each $i\in I$, the element $\alpha_{i}\in\operatorname*{Adm}\left(
\mathbf{E}_{i}\right)  $ has the form $\alpha_{i}=\left(  P_{i},Q_{i}\right)
$ for two subsets $P_{i}$ and $Q_{i}$ of $E_{i}$ (by the definition of
$\operatorname*{Adm}\left(  \mathbf{E}_{i}\right)  $ (since $\mathbf{E}%
_{i}=\left(  E_{i},<_{1,i},<_{2,i}\right)  $)). Consider these $P_{i}$ and
$Q_{i}$.
\par
Now, $\alpha=\left(  \alpha_{i} \right)  _{i\in I} =\left(  \left(
P_{i},Q_{i}\right)  \right)  _{i\in I} $ (since $\alpha_{i}=\left(
P_{i},Q_{i}\right)  $ for all $i \in I$). Applying the map
$\operatorname*{Combine}$ to both sides of this equality, we find%
\[
\operatorname*{Combine}\left(  \alpha\right)  =\operatorname*{Combine}\left(
\left(  \left(  P_{i},Q_{i}\right)  \right)  _{i\in I}\right)  =\left(
\bigsqcup_{i\in I}P_{i},\bigsqcup_{i\in I}Q_{i}\right)
\]
(by the definition of $\operatorname*{Combine}$). Hence, $\left(
\bigsqcup_{i\in I}P_{i},\bigsqcup_{i\in I}Q_{i}\right)
=\operatorname*{Combine}\left(  \alpha\right)  \in\operatorname*{Adm}%
\mathbf{E}$. Thus, (\ref{pf.prop.djun.Adm-bij.Split-rewr}) (applied to
$\left(  P,Q\right)  =\left(  \bigsqcup_{i\in I}P_{i},\bigsqcup_{i\in I}%
Q_{i}\right)  $) yields%
\begin{align*}
\operatorname*{Split}\left(  \left(  \bigsqcup_{i\in I}P_{i},\bigsqcup_{i\in
I}Q_{i}\right)  \right)   &  =\left(  \left(  \underbrace{\left(
\operatorname*{inc}\nolimits_{j}\right)  ^{-1}\left(  \bigsqcup_{i\in I}%
P_{i}\right)  }_{\substack{=P_{j}\\\text{(by Lemma \ref{lem.djun.Adm-bij.1}
(b)}\\\text{(applied to }R_{i}=P_{i}\text{))}}},\underbrace{\left(
\operatorname*{inc}\nolimits_{j}\right)  ^{-1}\left(  \bigsqcup_{i\in I}%
Q_{i}\right)  }_{\substack{=Q_{j}\\\text{(by Lemma \ref{lem.djun.Adm-bij.1}
(b)}\\\text{(applied to }R_{i}=Q_{i}\text{))}}}\right)  \right)  _{j\in I}
=\left(  \left(  P_{j},Q_{j}\right)  \right)  _{j\in I}\\
&  =\left(  \left(  P_{i},Q_{i}\right)  \right)  _{i\in I}
\ \ \ \ \ \ \ \ \ \ \left(  \text{here, we have renamed the index }j\text{ as
}i\right) \\
&  =\alpha.
\end{align*}
\par
But
\begin{align*}
\left(  \operatorname*{Split}\circ\operatorname*{Combine}\right)  \left(
\alpha\right)   &  =\operatorname*{Split}\left(
\underbrace{\operatorname*{Combine}\left(  \alpha\right)  }_{=\left(
\bigsqcup_{i\in I}P_{i},\bigsqcup_{i\in I}Q_{i}\right)  }\right)
=\operatorname*{Split}\left(  \left(  \bigsqcup_{i\in I}P_{i},\bigsqcup_{i\in
I}Q_{i}\right)  \right)  =\alpha=\operatorname*{id}\left(  \alpha\right)  .
\end{align*}
\par
Now, forget that we fixed $\alpha$. We thus have shown that $\left(
\operatorname*{Split}\circ\operatorname*{Combine}\right)  \left(
\alpha\right)  =\operatorname*{id}\left(  \alpha\right)  $ for each $\alpha
\in\prod_{i\in I}\operatorname*{Adm}\left(  \mathbf{E}_{i}\right)  $. In other
words, $\operatorname*{Split}\circ\operatorname*{Combine}=\operatorname*{id}%
$.} and $\operatorname*{Combine}\circ\operatorname*{Split}=\operatorname*{id}%
$\ \ \ \ \footnote{\textit{Proof.} Let $\alpha\in\operatorname*{Adm}%
\mathbf{E}$. Recall that
\[
\mathbf{E}=\bigsqcup_{i\in I}\mathbf{E}_{i}=\left(  \bigsqcup_{i\in I}%
E_{i},\bigoplus_{i\in I}\left(  <_{1,i}\right)  ,\bigoplus_{i\in I}\left(
<_{2,i}\right)  \right)
\]
(by the definition of the double poset $\bigsqcup_{i\in I}\mathbf{E}_{i}$).
\par
The element $\alpha\in\operatorname*{Adm}\mathbf{E}$ has the form
$\alpha=\left(  P,Q\right)  $ for two subsets $P$ and $Q$ of $\bigsqcup_{i\in
I}E_{i}$ (by the definition of $\operatorname*{Adm}\mathbf{E}$ (since
$\mathbf{E}=\left(  \bigsqcup_{i\in I}E_{i},\bigoplus_{i\in I}\left(
<_{1,i}\right)  ,\bigoplus_{i\in I}\left(  <_{2,i}\right)  \right)  $)).
Consider these $P$ and $Q$.
\par
Now, $\alpha=\left(  P,Q\right)  $. Applying the map $\operatorname*{Split}$
to both sides of this equality, we find%
\[
\operatorname*{Split}\left(  \alpha\right)  =\operatorname*{Split}\left(
\left(  P,Q\right)  \right)  =\left(  \left(  \left(  \operatorname*{inc}%
\nolimits_{i}\right)  ^{-1}\left(  P\right)  ,\left(  \operatorname*{inc}%
\nolimits_{i}\right)  ^{-1}\left(  Q\right)  \right)  \right)  _{i\in I}%
\]
(by the definition of $\operatorname*{Split}$). Hence, $\left(  \left(
\left(  \operatorname*{inc}\nolimits_{i}\right)  ^{-1}\left(  P\right)
,\left(  \operatorname*{inc}\nolimits_{i}\right)  ^{-1}\left(  Q\right)
\right)  \right)  _{i\in I}=\operatorname*{Split}\left(  \alpha\right)
\in\prod_{i\in I}\operatorname*{Adm}\left(  \mathbf{E}_{i}\right)  $. Thus,
the definition of the map $\operatorname*{Combine}$ yields
\begin{align*}
\operatorname*{Combine}\left(  \left(  \left(  \left(  \operatorname*{inc}%
\nolimits_{i}\right)  ^{-1}\left(  P\right)  ,\left(  \operatorname*{inc}%
\nolimits_{i}\right)  ^{-1}\left(  Q\right)  \right)  \right)  _{i\in
I}\right)   &  =\left(  \underbrace{\bigsqcup_{i\in I}\left(
\operatorname*{inc}\nolimits_{i}\right)  ^{-1}\left(  P\right)  }%
_{\substack{=P\\\text{(by Lemma \ref{lem.djun.Adm-bij.1} (a)}\\\text{(applied
to }R=P\text{))}}},\underbrace{\bigsqcup_{i\in I}\left(  \operatorname*{inc}%
\nolimits_{i}\right)  ^{-1}\left(  Q\right)  }_{\substack{=Q\\\text{(by Lemma
\ref{lem.djun.Adm-bij.1} (a)}\\\text{(applied to }R=Q\text{))}}}\right) \\
&  =\left(  P,Q\right)  =\alpha.
\end{align*}
\par
But
\begin{align*}
\left(  \operatorname*{Combine}\circ\operatorname*{Split}\right)  \left(
\alpha\right)   &  =\operatorname*{Combine}\left(
\underbrace{\operatorname*{Split}\left(  \alpha\right)  }_{=\left(  \left(
\left(  \operatorname*{inc}\nolimits_{i}\right)  ^{-1}\left(  P\right)
,\left(  \operatorname*{inc}\nolimits_{i}\right)  ^{-1}\left(  Q\right)
\right)  \right)  _{i\in I}}\right) \\
&  =\operatorname*{Combine}\left(  \left(  \left(  \left(  \operatorname*{inc}%
\nolimits_{i}\right)  ^{-1}\left(  P\right)  ,\left(  \operatorname*{inc}%
\nolimits_{i}\right)  ^{-1}\left(  Q\right)  \right)  \right)  _{i\in
I}\right)  =\alpha=\operatorname*{id}\left(  \alpha\right)  .
\end{align*}
\par
Now, forget that we fixed $\alpha$. We thus have shown that $\left(
\operatorname*{Combine}\circ\operatorname*{Split}\right)  \left(
\alpha\right)  =\operatorname*{id}\left(  \alpha\right)  $ for each $\alpha
\in\operatorname*{Adm}\mathbf{E}$. In other words, $\operatorname*{Combine}%
\circ\operatorname*{Split}=\operatorname*{id}$.}. Hence, the maps
$\operatorname*{Split}$ and $\operatorname*{Combine}$ are mutually inverse.
Thus, these maps $\operatorname*{Split}$ and $\operatorname*{Combine}$ are
mutually inverse bijections. This proves Proposition \ref{prop.djun.Adm-bij}.
\end{proof}

\begin{lemma}
\label{lem.djun.restrict.2lem}Let $I$ be a finite set. For each $i\in I$, let
$\mathbf{E}_{i}=\left(  E_{i},<_{1,i},<_{2,i}\right)  $ be a double poset.
Let $w:\bigsqcup_{i \in I} E_{i}\rightarrow\left\{  1,2,3,\ldots\right\}  $ be
a map. For each $i\in I$, let $T_{i}$ be a subset of $E_{i}$.

Let $\mathbf{E}$ be the double poset $\bigsqcup_{i\in I}\mathbf{E}_{i}$. Then,%
\[
\Gamma\left(  {\mathbf{E}}\mid_{\bigsqcup_{i\in I}T_{i}},w\mid_{\bigsqcup
_{i\in I}T_{i}}\right)  =\prod_{i\in I}\Gamma\left(  \mathbf{E}_{i}\mid
_{T_{i}},\left(  w\circ\operatorname*{inc}\nolimits_{i}\right)  \mid_{T_{i}%
}\right)  .
\]

\end{lemma}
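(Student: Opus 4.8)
The plan is to deduce this lemma directly from the two structural facts about disjoint unions that have already been established: Proposition~\ref{prop.djun.restrict}, describing how restriction commutes with disjoint unions of double posets, and Proposition~\ref{prop.djun.Gamma}, computing $\Gamma$ of a disjoint union as a product. First I would invoke Proposition~\ref{prop.djun.restrict} to rewrite the double poset appearing on the left-hand side as
\[
\mathbf{E}\mid_{\bigsqcup_{i\in I}T_{i}}=\bigsqcup_{i\in I}\left(\mathbf{E}_{i}\mid_{T_{i}}\right).
\]
This converts the left-hand side $\Gamma\left(\mathbf{E}\mid_{\bigsqcup_{i\in I}T_{i}},w\mid_{\bigsqcup_{i\in I}T_{i}}\right)$ into $\Gamma\left(\bigsqcup_{i\in I}\left(\mathbf{E}_{i}\mid_{T_{i}}\right),w\mid_{\bigsqcup_{i\in I}T_{i}}\right)$, which is now literally the $\Gamma$ of a disjoint union of the family $\left(\mathbf{E}_{i}\mid_{T_{i}}\right)_{i\in I}$ of double posets, each $\mathbf{E}_{i}\mid_{T_{i}}$ having ground set $T_{i}$.

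Next I would apply Proposition~\ref{prop.djun.Gamma}, with the family $\left(\mathbf{E}_{i}\mid_{T_{i}}\right)_{i\in I}$ playing the role of $\left(\mathbf{E}_{i}\right)_{i\in I}$, with $T_{i}$ playing the role of $E_{i}$, and with $w\mid_{\bigsqcup_{i\in I}T_{i}}$ (which is indeed a map $\bigsqcup_{i\in I}T_{i}\rightarrow\left\{1,2,3,\ldots\right\}$) playing the role of the weight $w$. Writing $\operatorname*{inc}\nolimits'_{i}\colon T_{i}\rightarrow\bigsqcup_{i\in I}T_{i}$ for the canonical inclusions associated with the disjoint union $\bigsqcup_{i\in I}T_{i}$, this yields
\[
\Gamma\left(\bigsqcup_{i\in I}\left(\mathbf{E}_{i}\mid_{T_{i}}\right),w\mid_{\bigsqcup_{i\in I}T_{i}}\right)=\prod_{i\in I}\Gamma\left(\mathbf{E}_{i}\mid_{T_{i}},\left(w\mid_{\bigsqcup_{i\in I}T_{i}}\right)\circ\operatorname*{inc}\nolimits'_{i}\right).
\]

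The only genuine point to check, and the step I expect to require the most care, is the identification of the weight maps, namely that
\[
\left(w\mid_{\bigsqcup_{i\in I}T_{i}}\right)\circ\operatorname*{inc}\nolimits'_{i}=\left(w\circ\operatorname*{inc}\nolimits_{i}\right)\mid_{T_{i}}\qquad\text{for each }i\in I,
\]
where $\operatorname*{inc}\nolimits_{i}\colon E_{i}\rightarrow\bigsqcup_{i\in I}E_{i}$ is the inclusion for the original larger disjoint union. The subtlety is purely bookkeeping: the two families of inclusions $\operatorname*{inc}\nolimits'_{i}$ and $\operatorname*{inc}\nolimits_{i}$ are given by the same formula $e\mapsto(i,e)$ but have different codomains ($\bigsqcup_{i\in I}T_{i}$ versus $\bigsqcup_{i\in I}E_{i}$), and one must remember that $\bigsqcup_{i\in I}T_{i}$ is a subset of $\bigsqcup_{i\in I}E_{i}$ by Lemma~\ref{lem.djun.restrict.rel}~(a). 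I would verify the identity pointwise: for $e\in T_{i}$, the left-hand side sends $e$ to $w\left(\operatorname*{inc}\nolimits'_{i}(e)\right)=w\left((i,e)\right)$, which is legitimate since $(i,e)\in\bigsqcup_{i\in I}T_{i}$, while the right-hand side sends $e$ to $w\left(\operatorname*{inc}\nolimits_{i}(e)\right)=w\left((i,e)\right)$ as well. Substituting this identity into the product on the right and chaining the two displayed equalities produces exactly the asserted formula, which completes the proof.
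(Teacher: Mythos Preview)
Your proposal is correct and follows essentially the same route as the paper: invoke Proposition~\ref{prop.djun.restrict} to identify $\mathbf{E}\mid_{\bigsqcup_{i\in I}T_i}$ with $\bigsqcup_{i\in I}(\mathbf{E}_i\mid_{T_i})$, then apply Proposition~\ref{prop.djun.Gamma} to this new disjoint union, and finally check pointwise that the two candidate weight maps on $T_i$ agree. The paper introduces the notation $\operatorname*{inc}\nolimits_{j,T}$ where you write $\operatorname*{inc}\nolimits'_{i}$, and it records the weight-map identity (your displayed equation near the end) as a separate labeled fact before invoking the two propositions, but the substance is identical.
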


\begin{proof}
[Proof of Lemma \ref{lem.djun.restrict.2lem}.]We notice that the notation
$\operatorname*{inc}\nolimits_{j}$ (for $j\in I$) is slightly ambiguous: It
may mean both the map $\operatorname*{inc}\nolimits_{j}:E_{j}\rightarrow
\bigsqcup_{i\in I}E_{i}$ and the map $\operatorname*{inc}\nolimits_{j}%
:T_{j}\rightarrow\bigsqcup_{i\in I}T_{i}$. In order to resolve this ambiguity,
let us agree to denote the latter map by $\operatorname*{inc}\nolimits_{j,T}$
(instead of just calling it $\operatorname*{inc}\nolimits_{j}$). Thus,
$\operatorname*{inc}\nolimits_{j}$ shall only mean the map
$\operatorname*{inc}\nolimits_{j}:E_{j}\rightarrow\bigsqcup_{i\in I}E_{i}$.

Set $T=\bigsqcup_{i\in I}T_{i}$. This is a subset of $\bigsqcup_{i\in I} E_i$.

Each $j\in I$ satisfies%
\begin{equation}
\left(  w\mid_{T}\right)  \circ\operatorname*{inc}\nolimits_{j,T}=\left(
w\circ\operatorname*{inc}\nolimits_{j}\right)  \mid_{T_{j}}
\label{pf.lem.djun.restrict.2lem.1}%
\end{equation}
\footnote{\textit{Proof of (\ref{pf.lem.djun.restrict.2lem.1}):} Let $j\in I$.
\par
Notice that the map $\left(  w\mid_{T}\right)  \circ\operatorname*{inc}%
\nolimits_{j,T}$ is well-defined, since $\operatorname*{inc}\nolimits_{j,T}$
is a map from $T_{j}$ to $\bigsqcup_{i\in I}T_{i}=T$.
\par
Let $g\in T_{j}$. Then, $\operatorname*{inc}\nolimits_{j}\left(  g\right)
=\left(  j,g\right)  $ (by the definition of the map $\operatorname*{inc}%
\nolimits_{j}$). Applying the map $w$ to both sides of this equality, we
obtain $w \left(  \operatorname{inc}_{j} \left(  g\right)  \right)  = w
\left(  \left(  j, g\right)  \right)  $. On the other hand,
$\operatorname*{inc}\nolimits_{j,T}\left(  g\right)  =\left(  j,g\right)  $
(by the definition of the map $\operatorname*{inc}\nolimits_{j,T}$). Applying
the map $w$ to both sides of this equality, we obtain $w \left(
\operatorname{inc}_{j,T} \left(  g\right)  \right)  = w \left(  \left(  j,
g\right)  \right)  $. Now,
\[
\left(  \left(  w\circ\operatorname*{inc}\nolimits_{j}\right)  \mid_{T_{j}%
}\right)  \left(  g\right)  =\left(  w\circ\operatorname*{inc}\nolimits_{j}%
\right)  \left(  g\right)  =w\left(  \operatorname*{inc}\nolimits_{j}\left(
g\right)  \right)  =w\left(  \left(  j,g\right)  \right)  .
\]
Comparing this with%
\[
\left(  \left(  w\mid_{T}\right)  \circ\operatorname*{inc}\nolimits_{j,T}%
\right)  \left(  g\right)  =\left(  w\mid_{T}\right)  \left(
\operatorname*{inc}\nolimits_{j,T}\left(  g\right)  \right)  =w\left(
\operatorname*{inc}\nolimits_{j,T}\left(  g\right)  \right)  =w\left(  \left(
j,g\right)  \right)  ,
\]
we obtain $\left(  \left(  w\circ\operatorname*{inc}\nolimits_{j}\right)
\mid_{T_{j}}\right)  \left(  g\right)  =\left(  \left(  w\mid_{T}\right)
\circ\operatorname*{inc}\nolimits_{j,T}\right)  \left(  g\right)  $.
\par
Now, forget that we fixed $g$. We thus have proven that $\left(  \left(
w\circ\operatorname*{inc}\nolimits_{j}\right)  \mid_{T_{j}}\right)  \left(
g\right)  =\left(  \left(  w\mid_{T}\right)  \circ\operatorname*{inc}%
\nolimits_{j,T}\right)  \left(  g\right)  $ for each $g\in T_{j}$. In other
words, $\left(  w\circ\operatorname*{inc}\nolimits_{j}\right)  \mid_{T_{j}%
}=\left(  w\mid_{T}\right)  \circ\operatorname*{inc}\nolimits_{j,T}$. Thus,
the proof of (\ref{pf.lem.djun.restrict.2lem.1}) is complete.}.

But for each $i\in I$, we have $\mathbf{E}_{i}=\left(  E_{i},<_{1,i}%
,<_{2,i}\right)  $ and therefore $\mathbf{E}_{i}\mid_{T_{i}}=\left(
T_{i},<_{1,i},<_{2,i}\right)  $ (by the definition of $\mathbf{E}_{i}%
\mid_{T_{i}}$). Furthermore, $w\mid_{T}$ is a map from $\bigsqcup_{i\in
I}T_{i}$ to $\left\{  1,2,3,\ldots\right\}  $ (because the domain of
$w\mid_{T}$ is $T=\bigsqcup_{i\in I}T_{i}$). Now,%
\begin{align*}
&  \Gamma\left(  \underbrace{{\mathbf{E}}\mid_{\bigsqcup_{i\in I}T_{i}}%
}_{\substack{=\bigsqcup_{i\in I}\left(  \mathbf{E}_{i}\mid_{T_{i}}\right)
\\\text{(by Proposition \ref{prop.djun.restrict})}}},\underbrace{w\mid
_{\bigsqcup_{i\in I}T_{i}}}_{\substack{=w\mid_{T}\\\text{(since }%
\bigsqcup_{i\in I}T_{i}=T\text{)}}}\right) \\
&  =\Gamma\left(  \bigsqcup_{i\in I}\left(  \mathbf{E}_{i}\mid_{T_{i}}\right)
,w\mid_{T}\right)  =\prod_{i\in I}\Gamma\left(  \mathbf{E}_{i}\mid_{T_{i}%
},\underbrace{\left(  w\mid_{T}\right)  \circ\operatorname*{inc}%
\nolimits_{i,T}}_{\substack{=\left(  w\circ\operatorname*{inc}\nolimits_{i}%
\right)  \mid_{T_{i}}\\\text{(by (\ref{pf.lem.djun.restrict.2lem.1}%
)}\\\text{(applied to }j=i\text{))}}}\right) \\
&  \ \ \ \ \ \ \ \ \ \ \left(
\begin{array}
[c]{c}%
\text{by Proposition \ref{prop.djun.Gamma} (applied to }\mathbf{E}_{i}%
\mid_{T_{i}}\text{, }T_{i}\text{, }<_{1,i}\text{, }<_{2,i}\text{ and }%
w\mid_{T}\\
\text{instead of }\mathbf{E}_{i}\text{, }E_{i}\text{, }<_{1,i}\text{, }%
<_{2,i}\text{ and }w \text{)}%
\end{array}
\right) \\
&  =\prod_{i\in I}\Gamma\left(  \mathbf{E}_{i}\mid_{T_{i}},\left(
w\circ\operatorname*{inc}\nolimits_{i}\right)  \mid_{T_{i}}\right)  .
\end{align*}
This proves Lemma \ref{lem.djun.restrict.2lem}.
\end{proof}

\Needspace{3cm}

\begin{corollary}
\label{cor.djun.restrict.2}Let $I$ be a finite set. For each $i\in I$, let
$\mathbf{E}_{i}=\left(  E_{i},<_{1,i},<_{2,i}\right)  $ be a double poset. For
each $i\in I$, let $w_{i}:E_{i}\rightarrow\left\{  1,2,3,\ldots\right\}  $ be
a map. Then,%
\[
\Delta\left(  \prod_{i\in I}\Gamma\left(  \mathbf{E}_{i},w_{i}\right)
\right)  =\prod_{i\in I}\Delta\left(  \Gamma\left(  \mathbf{E}_{i}%
,w_{i}\right)  \right)  .
\]

\end{corollary}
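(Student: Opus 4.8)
The plan is to reduce everything to the single double poset $\mathbf{E}=\bigsqcup_{i\in I}\mathbf{E}_{i}$ and then compare the two sides term by term through the $\operatorname*{Split}$/$\operatorname*{Combine}$ bijection. First I would set $F=\{1,2,3,\ldots\}$, let $w=\operatorname*{Restr}\nolimits^{-1}\left((w_{i})_{i\in I}\right)\in F^{\bigsqcup_{i\in I}E_{i}}$, and record that $w\circ\operatorname*{inc}\nolimits_{i}=w_{i}$ for each $i\in I$ (this is immediate from the definition of $\operatorname*{Restr}$). Corollary \ref{cor.djun.Gamma.2} then gives $\prod_{i\in I}\Gamma(\mathbf{E}_{i},w_{i})=\Gamma(\mathbf{E},w)$, so applying Proposition \ref{prop.Gammaw.coprod} to $\Gamma(\mathbf{E},w)$ yields
\[
\Delta\left(\prod_{i\in I}\Gamma(\mathbf{E}_{i},w_{i})\right)=\sum_{(P,Q)\in\operatorname*{Adm}\mathbf{E}}\Gamma(\mathbf{E}\mid_{P},w\mid_{P})\otimes\Gamma(\mathbf{E}\mid_{Q},w\mid_{Q}).
\]

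The next step is to factor each tensorand. For a fixed $(P,Q)\in\operatorname*{Adm}\mathbf{E}$, write $P_{i}=(\operatorname*{inc}\nolimits_{i})^{-1}(P)$ and $Q_{i}=(\operatorname*{inc}\nolimits_{i})^{-1}(Q)$, so that $\operatorname*{Split}((P,Q))=((P_{i},Q_{i}))_{i\in I}$; since $\operatorname*{Combine}$ is the inverse of $\operatorname*{Split}$ (Proposition \ref{prop.djun.Adm-bij}), we have $P=\bigsqcup_{i\in I}P_{i}$ and $Q=\bigsqcup_{i\in I}Q_{i}$. Applying Lemma \ref{lem.djun.restrict.2lem} with $T_{i}=P_{i}$ and using $(w\circ\operatorname*{inc}\nolimits_{i})\mid_{P_{i}}=w_{i}\mid_{P_{i}}$, I obtain $\Gamma(\mathbf{E}\mid_{P},w\mid_{P})=\prod_{i\in I}\Gamma(\mathbf{E}_{i}\mid_{P_{i}},w_{i}\mid_{P_{i}})$, and similarly for $Q$. (Proposition \ref{prop.djun.restrict} is the geometric fact underlying Lemma \ref{lem.djun.restrict.2lem} and need not be invoked separately here.) Substituting these into the displayed sum and then re-indexing along the bijection $\operatorname*{Split}$, which carries $\operatorname*{Adm}\mathbf{E}$ bijectively onto $\prod_{i\in I}\operatorname*{Adm}(\mathbf{E}_{i})$, turns the sum into
\[
\sum_{((P_{i},Q_{i}))_{i\in I}\in\prod_{i\in I}\operatorname*{Adm}(\mathbf{E}_{i})}\left(\prod_{i\in I}\Gamma(\mathbf{E}_{i}\mid_{P_{i}},w_{i}\mid_{P_{i}})\right)\otimes\left(\prod_{i\in I}\Gamma(\mathbf{E}_{i}\mid_{Q_{i}},w_{i}\mid_{Q_{i}})\right).
\]

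On the other side, I would expand $\prod_{i\in I}\Delta(\Gamma(\mathbf{E}_{i},w_{i}))$ directly: by Proposition \ref{prop.Gammaw.coprod} each factor equals $\sum_{(P_{i},Q_{i})\in\operatorname*{Adm}(\mathbf{E}_{i})}\Gamma(\mathbf{E}_{i}\mid_{P_{i}},w_{i}\mid_{P_{i}})\otimes\Gamma(\mathbf{E}_{i}\mid_{Q_{i}},w_{i}\mid_{Q_{i}})$, so distributing the product over the index set $I$ produces exactly the same sum over $\prod_{i\in I}\operatorname*{Adm}(\mathbf{E}_{i})$, with each factor contributing a pure tensor. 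Here I use the multiplication rule $(a\otimes b)(c\otimes d)=(ac)\otimes(bd)$ in $\QSym\otimes\QSym$ to collapse $\prod_{i\in I}\bigl(\Gamma(\mathbf{E}_{i}\mid_{P_{i}},w_{i}\mid_{P_{i}})\otimes\Gamma(\mathbf{E}_{i}\mid_{Q_{i}},w_{i}\mid_{Q_{i}})\bigr)$ into the single tensor $\bigl(\prod_{i\in I}\Gamma(\mathbf{E}_{i}\mid_{P_{i}},w_{i}\mid_{P_{i}})\bigr)\otimes\bigl(\prod_{i\in I}\Gamma(\mathbf{E}_{i}\mid_{Q_{i}},w_{i}\mid_{Q_{i}})\bigr)$. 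Comparing the two displayed sums then gives the claim.

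The main obstacle I anticipate is organizational rather than conceptual: since $I$ is an unordered finite index set, I must justify that both the products $\prod_{i\in I}$ and the expansion of the product of coproducts are order-independent, which rests on the commutativity of $\QSym$ (hence of $\QSym\otimes\QSym$) as a subalgebra of the commutative power series ring $\Powser$. With commutativity in hand, distributing $\prod_{i\in I}$ over the sums and reorganizing the pure tensors via $(a\otimes b)(c\otimes d)=(ac)\otimes(bd)$ is routine, and the $\operatorname*{Split}$/$\operatorname*{Combine}$ bijection makes the index matching between the two sides exact.
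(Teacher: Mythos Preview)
Your proposal is correct and follows essentially the same route as the paper's proof: set $w=\operatorname*{Restr}^{-1}((w_i)_{i\in I})$, use Corollary~\ref{cor.djun.Gamma.2} and Proposition~\ref{prop.Gammaw.coprod} to expand $\Delta(\Gamma(\mathbf{E},w))$ over $\operatorname*{Adm}\mathbf{E}$, factor each tensorand via Lemma~\ref{lem.djun.restrict.2lem}, re-index along the $\operatorname*{Split}$/$\operatorname*{Combine}$ bijection from Proposition~\ref{prop.djun.Adm-bij}, and compare with the product-rule expansion of $\prod_{i\in I}\Delta(\Gamma(\mathbf{E}_i,w_i))$. The only cosmetic difference is that the paper substitutes along $\operatorname*{Combine}$ (from $\prod_{i\in I}\operatorname*{Adm}(\mathbf{E}_i)$ into $\operatorname*{Adm}\mathbf{E}$) whereas you go the other way via $\operatorname*{Split}$; since these are mutually inverse, the two are the same argument.
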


\begin{proof}
[Proof of Corollary \ref{cor.djun.restrict.2}.]Let $\mathbf{E}$ be the double
poset $\bigsqcup_{i\in I}\mathbf{E}_{i}$. Then,%
\[
\mathbf{E}=\bigsqcup_{i\in I}\mathbf{E}_{i}=\left(  \bigsqcup_{i\in I}%
E_{i},\bigoplus_{i\in I}\left(  <_{1,i}\right)  ,\bigoplus_{i\in I}\left(
<_{2,i}\right)  \right)
\]
(by the definition of $\bigsqcup_{i\in I}\mathbf{E}_{i}$).

Proposition \ref{prop.djun.Adm-bij} shows that the maps $\operatorname*{Split}%
$ and $\operatorname*{Combine}$ are mutually inverse bijections. In
particular, the map $\operatorname*{Combine}$ is a bijection. In other words,
the map%
\begin{equation}
\prod_{i\in I}\operatorname*{Adm}\left(  \mathbf{E}_{i}\right)  \rightarrow
\operatorname*{Adm}\mathbf{E},\ \ \ \ \ \ \ \ \ \ \left(  \left(  P_{i}%
,Q_{i}\right)  \right)  _{i\in I}\mapsto\left(  \bigsqcup_{i\in I}%
P_{i},\bigsqcup_{i\in I}Q_{i}\right)  \label{pf.cor.djun.restrict.2.combine}%
\end{equation}
is a bijection (since this map is the map $\operatorname*{Combine}%
$\ \ \ \ \footnote{because $\operatorname*{Combine}\left(  \left(  \left(
P_{i},Q_{i}\right)  \right)  _{i\in I}\right)  =\left(  \bigsqcup_{i\in
I}P_{i},\bigsqcup_{i\in I}Q_{i}\right)  $ for every $\left(  \left(
P_{i},Q_{i}\right)  \right)  _{i\in I}\in\prod_{i\in I}\operatorname*{Adm}%
\left(  \mathbf{E}_{i}\right)  $}).

Let $F=\left\{  1,2,3,\ldots\right\}  $. For each $i\in I$, we have $w_{i}%
\in\left\{  1,2,3,\ldots\right\}  ^{E_{i}}=F^{E_{i}}$ (since $\left\{
1,2,3,\ldots\right\}  =F$). Thus, $\left(  w_{i}\right)  _{i\in I}\in
\prod_{i\in I}F^{E_{i}}$.

Corollary \ref{cor.djun.restr} shows that the map $\operatorname*{Restr}%
:F^{\bigsqcup_{i\in I}E_{i}}\rightarrow\prod_{i\in I}F^{E_{i}}$ is a
bijection. Hence, $\operatorname*{Restr}\nolimits^{-1}\left(  \left(
w_{i}\right)  _{i\in I}\right)  $ is a well-defined element of $F^{\bigsqcup
_{i\in I}E_{i}}$. Denote this element by $w$. Then, Corollary
\ref{cor.djun.Gamma.2} shows that%
\[
\prod_{i\in I}\Gamma\left(  \mathbf{E}_{i},w_{i}\right)  =\Gamma\left(
\underbrace{\bigsqcup_{i\in I}\mathbf{E}_{i}}_{=\mathbf{E}},w\right)
=\Gamma\left(  \mathbf{E},w\right)  .
\]
Applying the map $\Delta$ to both sides of this equality, we obtain%
\begin{align}
&  \Delta\left(  \prod_{i\in I}\Gamma\left(  \mathbf{E}_{i},w_{i}\right)
\right) \nonumber\\
&  =\Delta\left(  \Gamma\left(  \mathbf{E},w\right)  \right) \nonumber\\
&  =\sum_{\left(  P,Q\right)  \in\operatorname{Adm}\mathbf{E}}\Gamma\left(
{\mathbf{E}}\mid_{P},w\mid_{P}\right)  \otimes\Gamma\left(  {\mathbf{E}}%
\mid_{Q},w\mid_{Q}\right)  \ \ \ \ \ \ \ \ \ \ \left(  \text{by Proposition
\ref{prop.Gammaw.coprod}}\right) \nonumber\\
&  =\sum_{\left(  \left(  P_{i},Q_{i}\right)  \right)  _{i\in I}\in\prod_{i\in
I}\operatorname*{Adm}\left(  \mathbf{E}_{i}\right)  }\Gamma\left(
{\mathbf{E}}\mid_{\bigsqcup_{i\in I}P_{i}},w\mid_{\bigsqcup_{i\in I}P_{i}%
}\right)  \otimes\Gamma\left(  {\mathbf{E}}\mid_{\bigsqcup_{i\in I}Q_{i}%
},w\mid_{\bigsqcup_{i\in I}Q_{i}}\right) \label{pf.cor.djun.restrict.2.3}\\
&  \ \ \ \ \ \ \ \ \ \ \left(
\begin{array}
[c]{c}%
\text{here, we have substituted }\left(  \bigsqcup_{i\in I}P_{i}%
,\bigsqcup_{i\in I}Q_{i}\right)  \text{ for }\left(  P,Q\right)  \text{ in the
sum,}\\
\text{since the map (\ref{pf.cor.djun.restrict.2.combine}) is a bijection}%
\end{array}
\right)  .\nonumber
\end{align}

We have $w=\operatorname*{Restr}\nolimits^{-1}\left(  \left(  w_{i}\right)
_{i\in I}\right)  $ (by the definition of $w$) and thus%
\[
\left(  w_{i}\right)  _{i\in I}=\operatorname*{Restr}\left(  w\right)
=\left(  w\circ\operatorname*{inc}\nolimits_{i}\right)  _{i\in I}%
\ \ \ \ \ \ \ \ \ \ \left(  \text{by the definition of }\operatorname*{Restr}%
\right)  .
\]
In other words,%
\begin{equation}
w_{i}=w\circ\operatorname*{inc}\nolimits_{i}\ \ \ \ \ \ \ \ \ \ \text{for each
}i\in I. \label{pf.cor.djun.restrict.2.wi=}%
\end{equation}

But the following holds:

\begin{statement}
\textit{Observation 1:} Let $\left(  \left(  P_{i},Q_{i}\right)  \right)
_{i\in I}\in\prod_{i\in I}\operatorname*{Adm}\left(  \mathbf{E}_{i}\right)  $.
Then,%
\begin{equation}
\Gamma\left(  {\mathbf{E}}\mid_{\bigsqcup_{i\in I}P_{i}},w\mid_{\bigsqcup
_{i\in I}P_{i}}\right)  =\prod_{i\in I}\Gamma\left(  \mathbf{E}_{i}\mid
_{P_{i}},w_{i}\mid_{P_{i}}\right)  \label{pf.cor.djun.restrict.2.factor1}%
\end{equation}
and%
\begin{equation}
\Gamma\left(  {\mathbf{E}}\mid_{\bigsqcup_{i\in I}Q_{i}},w\mid_{\bigsqcup
_{i\in I}Q_{i}}\right)  =\prod_{i\in I}\Gamma\left(  \mathbf{E}_{i}\mid
_{Q_{i}},w_{i}\mid_{Q_{i}}\right)  \label{pf.cor.djun.restrict.2.factor2}%
\end{equation}

\end{statement}

[\textit{Proof of Observation 1:} Let $\left(  \left(  P_{i},Q_{i}\right)
\right)  _{i\in I}\in\prod_{i\in I}\operatorname*{Adm}\left(  \mathbf{E}%
_{i}\right)  $. Thus, for each $i\in I$, we have $\left(  P_{i},Q_{i}\right)
\in\operatorname*{Adm}\left(  \mathbf{E}_{i}\right)  $. Hence, for each $i\in
I$, the sets $P_{i}$ and $Q_{i}$ are two subsets of $E_{i}$ (by the definition
of $\operatorname*{Adm}\left(  \mathbf{E}_{i}\right)  $ (since $\mathbf{E}%
_{i}=\left(  E_{i},<_{1,i},<_{2,i}\right)  $)). Hence, Lemma
\ref{lem.djun.restrict.2lem} (applied to $T_{i}=P_{i}$) yields%
\[
\Gamma\left(  {\mathbf{E}}\mid_{\bigsqcup_{i\in I}P_{i}},w\mid_{\bigsqcup
_{i\in I}P_{i}}\right)
=\prod_{i\in I}\Gamma\left(  \mathbf{E}_{i}\mid
_{P_{i}},\left(\underbrace{w\circ\operatorname{inc}_{i}}_{\substack{
=w_i \\ \text{(by (\ref{pf.cor.djun.restrict.2.wi=}))}}}\right)\mid_{P_{i}}\right)
=\prod_{i\in I}\Gamma\left(  \mathbf{E}_{i}\mid
_{P_{i}},w_{i}\mid_{P_{i}}\right)  .
\]
Furthermore, Lemma \ref{lem.djun.restrict.2lem} (applied to $T_{i}=Q_{i}$)
yields%
\[
\Gamma\left(  {\mathbf{E}}\mid_{\bigsqcup_{i\in I}Q_{i}},w\mid_{\bigsqcup
_{i\in I}Q_{i}}\right)
=\prod_{i\in I}\Gamma\left(  \mathbf{E}_{i}\mid
_{Q_{i}},\left(\underbrace{w\circ\operatorname{inc}_{i}}_{\substack{
=w_i \\ \text{(by (\ref{pf.cor.djun.restrict.2.wi=}))}}}\right)\mid_{Q_{i}}\right)
=\prod_{i\in I}\Gamma\left(  \mathbf{E}_{i}\mid
_{Q_{i}},w_{i}\mid_{Q_{i}}\right)  .
\]
This completes the proof of Observation 1.]

Now, (\ref{pf.cor.djun.restrict.2.3}) rewrites as follows:%
\begin{align}
&  \Delta\left(  \prod_{i\in I}\Gamma\left(  \mathbf{E}_{i},w_{i}\right)
\right) \nonumber\\
&  =\sum_{\left(  \left(  P_{i},Q_{i}\right)  \right)  _{i\in I}\in\prod_{i\in
I}\operatorname*{Adm}\left(  \mathbf{E}_{i}\right)  }\underbrace{\Gamma\left(
{\mathbf{E}}\mid_{\bigsqcup_{i\in I}P_{i}},w\mid_{\bigsqcup_{i\in I}P_{i}%
}\right)  }_{\substack{=\prod_{i\in I}\Gamma\left(  \mathbf{E}_{i}\mid_{P_{i}%
},w_{i}\mid_{P_{i}}\right)  \\\text{(by (\ref{pf.cor.djun.restrict.2.factor1}%
))}}}\otimes\underbrace{\Gamma\left(  {\mathbf{E}}\mid_{\bigsqcup_{i\in
I}Q_{i}},w\mid_{\bigsqcup_{i\in I}Q_{i}}\right)  }_{\substack{=\prod_{i\in
I}\Gamma\left(  \mathbf{E}_{i}\mid_{Q_{i}},w_{i}\mid_{Q_{i}}\right)
\\\text{(by (\ref{pf.cor.djun.restrict.2.factor2}))}}}\nonumber\\
&  =\sum_{\left(  \left(  P_{i},Q_{i}\right)  \right)  _{i\in I}\in\prod_{i\in
I}\operatorname*{Adm}\left(  \mathbf{E}_{i}\right)  }\left(  \prod_{i\in
I}\Gamma\left(  \mathbf{E}_{i}\mid_{P_{i}},w_{i}\mid_{P_{i}}\right)  \right)
\otimes\left(  \prod_{i\in I}\Gamma\left(  \mathbf{E}_{i}\mid_{Q_{i}}%
,w_{i}\mid_{Q_{i}}\right)  \right)  . \label{pf.cor.djun.restrict.2.4}%
\end{align}

On the other hand, for each $i\in I$, we have%
\begin{equation}
\Delta\left(  \Gamma\left(  \mathbf{E}_{i},w_{i}\right)  \right)
=\sum_{\left(  P,Q\right)  \in\operatorname{Adm}\left(  \mathbf{E}_{i}\right)
}\Gamma\left(  \mathbf{E}_{i}\mid_{P},w_{i}\mid_{P}\right)  \otimes
\Gamma\left(  \mathbf{E}_{i}\mid_{Q},w_{i}\mid_{Q}\right)
\label{pf.cor.djun.restrict.2.5}%
\end{equation}
\footnote{\textit{Proof of (\ref{pf.cor.djun.restrict.2.5}):} Let $i\in I$.
Recall that $\mathbf{E}_{i}=\left(  E_{i},<_{1,i},<_{2,i}\right)  $ is a
double poset. Hence, Proposition \ref{prop.Gammaw.coprod} (applied to
$\mathbf{E}_{i}$, $<_{1,i}$, $<_{2,i}$ and $w_{i}$ instead of $\mathbf{E}$,
$<_{1}$, $<_{2}$ and $w$) yields%
\[
\Delta\left(  \Gamma\left(  \mathbf{E}_{i},w_{i}\right)  \right)
=\sum_{\left(  P,Q\right)  \in\operatorname{Adm}\left(  \mathbf{E}_{i}\right)
}\Gamma\left(  \mathbf{E}_{i}\mid_{P},w_{i}\mid_{P}\right)  \otimes
\Gamma\left(  \mathbf{E}_{i}\mid_{Q},w_{i}\mid_{Q}\right)  .
\]
This proves (\ref{pf.cor.djun.restrict.2.5}).}. Hence,%
\begin{align*}
&  \prod_{i\in I}\underbrace{\Delta\left(  \Gamma\left(  \mathbf{E}_{i}%
,w_{i}\right)  \right)  }_{\substack{=\sum_{\left(  P,Q\right)  \in
\operatorname{Adm}\left(  \mathbf{E}_{i}\right)  }\Gamma\left(  \mathbf{E}%
_{i}\mid_{P},w_{i}\mid_{P}\right)  \otimes\Gamma\left(  \mathbf{E}_{i}\mid
_{Q},w_{i}\mid_{Q}\right)  \\\text{(by (\ref{pf.cor.djun.restrict.2.5}))}}}\\
&  =\prod_{i\in I}\sum_{\left(  P,Q\right)  \in\operatorname{Adm}\left(
\mathbf{E}_{i}\right)  }\Gamma\left(  \mathbf{E}_{i}\mid_{P},w_{i}\mid
_{P}\right)  \otimes\Gamma\left(  \mathbf{E}_{i}\mid_{Q},w_{i}\mid_{Q}\right)
\\
&  =\sum_{\left(  \left(  P_{i},Q_{i}\right)  \right)  _{i\in I}\in\prod_{i\in
I}\operatorname*{Adm}\left(  \mathbf{E}_{i}\right)  }\underbrace{\prod_{i\in
I}\left(  \Gamma\left(  \mathbf{E}_{i}\mid_{P_{i}},w_{i}\mid_{P_{i}}\right)
\otimes\Gamma\left(  \mathbf{E}_{i}\mid_{Q_{i}},w_{i}\mid_{Q_{i}}\right)
\right)  }_{\substack{=\left(  \prod_{i\in I}\Gamma\left(  \mathbf{E}_{i}%
\mid_{P_{i}},w_{i}\mid_{P_{i}}\right)  \right)  \otimes\left(  \prod_{i\in
I}\Gamma\left(  \mathbf{E}_{i}\mid_{Q_{i}},w_{i}\mid_{Q_{i}}\right)  \right)
\\\text{(by the definition of the multiplication on }\operatorname*{QSym}%
\otimes\operatorname*{QSym}\text{)}}}\\
&  \ \ \ \ \ \ \ \ \ \ \left(  \text{by the product rule}\right) \\
&  =\sum_{\left(  \left(  P_{i},Q_{i}\right)  \right)  _{i\in I}\in\prod_{i\in
I}\operatorname*{Adm}\left(  \mathbf{E}_{i}\right)  }\left(  \prod_{i\in
I}\Gamma\left(  \mathbf{E}_{i}\mid_{P_{i}},w_{i}\mid_{P_{i}}\right)  \right)
\otimes\left(  \prod_{i\in I}\Gamma\left(  \mathbf{E}_{i}\mid_{Q_{i}}%
,w_{i}\mid_{Q_{i}}\right)  \right) \\
&  =\Delta\left(  \prod_{i\in I}\Gamma\left(  \mathbf{E}_{i},w_{i}\right)
\right)  \ \ \ \ \ \ \ \ \ \ \left(  \text{by (\ref{pf.cor.djun.restrict.2.4}%
)}\right)  .
\end{align*}
This proves Corollary \ref{cor.djun.restrict.2}.
\end{proof}

\begin{corollary}
\label{cor.djun.restrict.3}Let $a\in\operatorname*{QSym}$ and $b\in
\operatorname*{QSym}$. Then, $\Delta\left(  ab\right)  =\Delta\left(
a\right)  \Delta\left(  b\right)  $.
\end{corollary}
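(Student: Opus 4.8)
The plan is to reduce the claim to the special case already handled by Corollary~\ref{cor.djun.restrict.2}, using bilinearity together with the realization of monomial quasisymmetric functions as power series of the form $\Gamma\left(\EE, w\right)$.

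First I would observe that both sides of the desired identity $\Delta\left(ab\right) = \Delta\left(a\right)\Delta\left(b\right)$ are $\kk$-bilinear in the pair $\left(a, b\right)$. Indeed, the map $\Delta$ is $\kk$-linear and multiplication in $\QSym$ is $\kk$-bilinear, so $\left(a, b\right) \mapsto \Delta\left(ab\right)$ is bilinear; likewise $\left(a, b\right) \mapsto \Delta\left(a\right)\Delta\left(b\right)$ is bilinear, since $\Delta$ is $\kk$-linear and the multiplication of the $\kk$-algebra $\QSym \otimes \QSym$ is $\kk$-bilinear. Hence the identity holds for all $a, b \in \QSym$ as soon as it holds whenever $a$ and $b$ each range over a $\kk$-module spanning set of $\QSym$.

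Next I would invoke the monomial basis. Since $\left(M_\gamma\right)_{\gamma \in \Comp}$ is a basis of the $\kk$-module $\QSym$, the bilinearity just established reduces the claim to proving $\Delta\left(M_\alpha M_\beta\right) = \Delta\left(M_\alpha\right)\Delta\left(M_\beta\right)$ for all $\alpha, \beta \in \Comp$. Fixing such $\alpha$ and $\beta$, I would apply Corollary~\ref{cor.Malpha.Gamma} to obtain (special) double posets $\EE$ and $\FF$ together with maps $u$ and $v$ satisfying $M_\alpha = \Gamma\left(\EE, u\right)$ and $M_\beta = \Gamma\left(\FF, v\right)$. It then remains to establish $\Delta\left(\Gamma\left(\EE, u\right)\Gamma\left(\FF, v\right)\right) = \Delta\left(\Gamma\left(\EE, u\right)\right)\Delta\left(\Gamma\left(\FF, v\right)\right)$.

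This last equality is precisely the case $I = \left\{0, 1\right\}$ of Corollary~\ref{cor.djun.restrict.2}: setting $\EE_0 = \EE$, $\EE_1 = \FF$, $w_0 = u$ and $w_1 = v$, that corollary yields $\Delta\left(\prod_{i \in \left\{0,1\right\}} \Gamma\left(\EE_i, w_i\right)\right) = \prod_{i \in \left\{0,1\right\}} \Delta\left(\Gamma\left(\EE_i, w_i\right)\right)$, which is exactly what is needed, and the proof is complete. I expect no genuine obstacle here: the entire combinatorial content, namely the interaction of the coproduct with products via admissible partitions (through the bijection $\operatorname{Combine}$ of Proposition~\ref{prop.djun.Adm-bij}), has already been extracted in Corollary~\ref{cor.djun.restrict.2}. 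The only point requiring mild care is the legitimacy of the bilinearity reduction in the first two paragraphs, i.e.\ that an identity between two $\kk$-bilinear maps that holds on all pairs of basis vectors holds on all pairs; this is standard and can be spelled out by expanding $a$ and $b$ in the basis $\left(M_\gamma\right)_{\gamma \in \Comp}$ and distributing.
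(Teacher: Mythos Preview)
Your proposal is correct and follows essentially the same approach as the paper: reduce by bilinearity to the case $a=M_\alpha$, $b=M_\beta$, realize each as a $\Gamma(\mathbf{E}_i,w_i)$ via Corollary~\ref{cor.Malpha.Gamma}, and then apply Corollary~\ref{cor.djun.restrict.2} with $I=\{0,1\}$. The only cosmetic difference is that the paper presents the basis case first and the bilinearity extension second, whereas you do it in the reverse order.
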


\begin{proof}
[Proof of Corollary \ref{cor.djun.restrict.3}.]It is well-known that
$\operatorname*{QSym}$ is a $\mathbf{k}$-submodule of $\mathbf{k}\left[
\left[  x_{1},x_{2},x_{3},\ldots\right]  \right]  $, and that $\left(
M_{\alpha}\right)  _{\alpha\in{\operatorname{Comp}}}$ is a basis of this
$\mathbf{k}$-module $\operatorname*{QSym}$.

We shall now prove that
\begin{equation}
\Delta\left(  M_{\alpha}M_{\beta}\right)  =\Delta\left(  M_{\alpha}\right)
\Delta\left(  M_{\beta}\right)  \ \ \ \ \ \ \ \ \ \ \text{for any }\alpha
\in\operatorname*{Comp}\text{ and }\beta\in\operatorname*{Comp}.
\label{pf.cor.djun.restrict.3.1}%
\end{equation}

[\textit{Proof of (\ref{pf.cor.djun.restrict.3.1}):} Let $\alpha
\in\operatorname*{Comp}$ and $\beta\in\operatorname*{Comp}$.

As in the proof of (\ref{pf.prop.QSym.alg.prod.1}), we can

\begin{itemize}
\item find a set $E_{1}$, a special double poset $\mathbf{E}_{1}=\left(
E_{1},<_{1,1},>_{1,1}\right)  $, and a map $w_{1}:E_{1}\rightarrow\left\{
1,2,3,\ldots\right\}  $ satisfying $\Gamma\left(  {\mathbf{E}}_{1}%
,w_{1}\right)  =M_{\beta}$;

\item find a set $E_{0}$, a special double poset $\mathbf{E}_{0}=\left(
E_{0},<_{1,0},>_{1,0}\right)  $, and a map $w_{0}:E_{0}\rightarrow\left\{
1,2,3,\ldots\right\}  $ satisfying $\Gamma\left(  {\mathbf{E}}_{0}%
,w_{0}\right)  =M_{\alpha}$;

\item show that $\mathbf{E}_{i}=\left(  E_{i},<_{1,i},>_{1,i}\right)  $ for
every $i\in\left\{  0,1\right\}  $;

\item show that $w_{i}$ is a map $E_{i}\rightarrow\left\{  1,2,3,\ldots
\right\}  $ for each $i\in\left\{  0,1\right\}  $.
\end{itemize}

Now, Corollary \ref{cor.djun.restrict.2} (applied to $\left\{  0,1\right\}  $
and $>_{1,i}$ instead of $I$ and $<_{2,i}$) shows that
\[
\Delta\left(  \prod_{i\in\left\{  0,1\right\}  }\Gamma\left(  \mathbf{E}%
_{i},w_{i}\right)  \right)  =\prod_{i\in\left\{  0,1\right\}  }\Delta\left(
\Gamma\left(  \mathbf{E}_{i},w_{i}\right)  \right)  .
\]
Since
\[
\prod_{i\in\left\{  0,1\right\}  }\Gamma\left(  \mathbf{E}_{i},w_{i}\right)
=\underbrace{\Gamma\left(  {\mathbf{E}}_{0},w_{0}\right)  }_{=M_{\alpha}%
}\underbrace{\Gamma\left(  {\mathbf{E}}_{1},w_{1}\right)  }_{=M_{\beta}%
}=M_{\alpha}M_{\beta},
\]
this rewrites as $\Delta\left(  M_{\alpha}M_{\beta}\right)  =\prod
_{i\in\left\{  0,1\right\}  }\Delta\left(  \Gamma\left(  \mathbf{E}_{i}%
,w_{i}\right)  \right)  $. Hence,%
\[
\Delta\left(  M_{\alpha}M_{\beta}\right)  =\prod_{i\in\left\{  0,1\right\}
}\Delta\left(  \Gamma\left(  \mathbf{E}_{i},w_{i}\right)  \right)
=\Delta\left(  \underbrace{\Gamma\left(  {\mathbf{E}}_{0},w_{0}\right)
}_{=M_{\alpha}}\right)  \Delta\left(  \underbrace{\Gamma\left(  {\mathbf{E}%
}_{1},w_{1}\right)  }_{=M_{\beta}}\right)  =\Delta\left(  M_{\alpha}\right)
\Delta\left(  M_{\beta}\right)  .
\]
Thus, (\ref{pf.cor.djun.restrict.3.1}) is proven.]

Now, we can see that%
\begin{equation}
\Delta\left(  ab\right)  =\Delta\left(  a\right)  \Delta\left(  b\right)
\ \ \ \ \ \ \ \ \ \ \text{for any }a\in\operatorname*{QSym}\text{ and }%
b\in\operatorname*{QSym}. \label{pf.cor.djun.restrict.3.3}%
\end{equation}

[\textit{Proof of (\ref{pf.cor.djun.restrict.3.3}):} Let $a\in
\operatorname*{QSym}$ and $b\in\operatorname*{QSym}$. We must prove the
relation $\Delta\left(  ab\right)  =\Delta\left(  a\right)  \Delta\left(
b\right)  $.

This relation is $\mathbf{k}$-linear in $b$ (since $\Delta$ is a $\mathbf{k}%
$-linear map). Hence, we can WLOG assume that $b$ belongs to the basis
$\left(  M_{\alpha}\right)  _{\alpha\in\operatorname*{Comp}}$ of the
$\mathbf{k}$-module $\operatorname*{QSym}$. Assume this. Thus, $b=M_{\beta}$
for some $\beta\in\operatorname*{Comp}$. Consider this $\beta$.

We must prove the relation $\Delta\left(  ab\right)  =\Delta\left(  a\right)
\Delta\left(  b\right)  $. This relation is $\mathbf{k}$-linear in $a$ (since
$\Delta$ is a $\mathbf{k}$-linear map). Hence, we can WLOG assume that $a$
belongs to the basis $\left(  M_{\alpha}\right)  _{\alpha\in
\operatorname*{Comp}}$ of the $\mathbf{k}$-module $\operatorname*{QSym}$.
Assume this. Thus, $a=M_{\alpha}$ for some $\alpha\in\operatorname*{Comp}$.
Consider this $\alpha$.

Now, \eqref{pf.cor.djun.restrict.3.1} yields $\Delta\left(  M_{\alpha}%
M_{\beta}\right)  =\Delta\left(  M_{\alpha}\right)  \Delta\left(  M_{\beta
}\right)  $. Since $a = M_{\alpha}$ and $b = M_{\beta}$, this rewrites as
$\Delta\left(  ab \right)  = \Delta\left(  a \right)  \Delta\left(  b \right)
$. This proves (\ref{pf.cor.djun.restrict.3.3}).]

Corollary \ref{cor.djun.restrict.3} immediately follows from
(\ref{pf.cor.djun.restrict.3.3}).
\end{proof}

Now, we can prove that $\operatorname*{QSym}$ is a bialgebra:

\begin{proposition}
\label{prop.QSym.bialg}The $\mathbf{k}$-algebra $\operatorname*{QSym}$,
equipped with the comultiplication $\Delta$ and the counit $\varepsilon$,
becomes a $\mathbf{k}$-bialgebra.
\end{proposition}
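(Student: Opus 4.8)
The plan is to assemble Proposition~\ref{prop.QSym.bialg} from the structural facts that have already been established in the preceding development, since all the real work has been done. Recall that a $\kk$-bialgebra is precisely a $\kk$-algebra $\left(A, m, u\right)$ together with a $\kk$-coalgebra structure $\left(A, \Delta, \varepsilon\right)$ such that $\Delta$ and $\varepsilon$ are $\kk$-algebra homomorphisms. We already know from the introductory discussion in Section~\ref{sect.qsym-intro} that $\left(\QSym, \Delta, \varepsilon\right)$ is a $\kk$-coalgebra (the coassociativity and counit axioms were stated there as well-known facts), and Proposition~\ref{prop.QSym.alg} shows that $\QSym$ is a $\kk$-algebra. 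Thus the only thing remaining to verify is that the two structure maps $\Delta$ and $\varepsilon$ respect the multiplication and unit.

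First I would dispose of $\varepsilon$. The map $\varepsilon : \QSym \to \kk$ sends every power series $f$ to its constant term $f\left(0,0,0,\ldots\right)$, and this substitution is manifestly multiplicative and sends $1$ to $1$; hence $\varepsilon$ is a $\kk$-algebra homomorphism. (Alternatively, one reads off multiplicativity directly from the formula $\varepsilon\left(M_\alpha\right) = \delta_{\ell,0}$ by checking it on products of basis elements, but the constant-term description makes it immediate.)

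The substantive point is that $\Delta : \QSym \to \QSym \otimes \QSym$ is a $\kk$-algebra homomorphism, and this is exactly what the machinery built in the last two subsections of the appendix delivers. Specifically, Corollary~\ref{cor.djun.restrict.3} asserts that every $a \in \QSym$ and $b \in \QSym$ satisfy $\Delta\left(ab\right) = \Delta\left(a\right)\Delta\left(b\right)$, so $\Delta$ is multiplicative. It remains to check that $\Delta$ preserves the unit, i.e.\ that $\Delta\left(1\right) = 1 \otimes 1$; this follows at once from the defining formula \eqref{eq.coproduct.M} applied to the empty composition $\alpha = \left(\,\right)$ (for which $M_{\left(\,\right)} = 1$), since the only term in the sum is $M_{\left(\,\right)} \otimes M_{\left(\,\right)} = 1 \otimes 1$. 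Thus $\Delta$ is a $\kk$-algebra homomorphism.

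Putting these together: $\QSym$ is a $\kk$-algebra by Proposition~\ref{prop.QSym.alg}, it is a $\kk$-coalgebra by the facts recalled in Section~\ref{sect.qsym-intro}, and both $\Delta$ and $\varepsilon$ are $\kk$-algebra homomorphisms by the two paragraphs above. Hence $\QSym$, equipped with $\Delta$ and $\varepsilon$, is a $\kk$-bialgebra. The only genuinely nontrivial ingredient is the multiplicativity of $\Delta$, and that obstacle has already been overcome by the disjoint-union argument culminating in Corollary~\ref{cor.djun.restrict.3}; everything else is bookkeeping. I expect no serious difficulty in the proof itself, precisely because the combinatorial heart of the matter—that the coproduct of a product $\prod_i \Gamma\left(\EE_i, w_i\right)$ factors as the product of the coproducts, via the bijection between admissible partitions of a disjoint union and tuples of admissible partitions of the pieces (Proposition~\ref{prop.djun.Adm-bij})—was the content of Corollary~\ref{cor.djun.restrict.2}.
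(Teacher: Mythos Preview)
Your proposal is correct and follows essentially the same approach as the paper's own proof: both dispose of $\varepsilon$ via the constant-term description, invoke Corollary~\ref{cor.djun.restrict.3} for the multiplicativity of $\Delta$, and verify $\Delta(1)=1\otimes 1$ separately (you via the empty composition in \eqref{eq.coproduct.M}, the paper via Corollary~\ref{cor.djun.restrict.2} with $I=\varnothing$, mentioning your route as an alternative). The paper also explicitly cites Proposition~\ref{prop.QSym.alg} and the coalgebra axioms from Section~\ref{sect.qsym-intro}, just as you do.
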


\begin{proof}
[Proof of Proposition \ref{prop.QSym.bialg}.]Let $\varepsilon^{\prime}$ be the
map%
\[
\mathbf{k}\left[  \left[  x_{1},x_{2},x_{3},\ldots\right]  \right]
\rightarrow\mathbf{k},\ \ \ \ \ \ \ \ \ \ f\mapsto f\left(  0,0,0,\ldots
\right)  .
\]
Then, $\varepsilon^{\prime}$ is a substitution homomorphism (since it
substitutes $\left(  0,0,0,\ldots\right)  $ for the indeterminates
$x_{1},x_{2},x_{3},\ldots$), and therefore is a $\mathbf{k}$-algebra homomorphism.

Now, $\varepsilon=\varepsilon^{\prime}\mid_{\operatorname*{QSym}}%
$\ \ \ \ \footnote{\textit{Proof.} Recall that the map $\varepsilon$ sends
every power series $f\in{\operatorname{QSym}}$ to the result $f\left(
0,0,0,\ldots\right)  $ of substituting zeroes for the variables $x_{1}%
,x_{2},x_{3},\ldots$ in $f$. Thus, for each $f\in\operatorname*{QSym}$, we
have%
\begin{align*}
\varepsilon\left(  f\right)   &  =f\left(  0,0,0,\ldots\right)  =\varepsilon
^{\prime}\left(  f\right)  \ \ \ \ \ \ \ \ \ \ \left(  \text{since
}\varepsilon^{\prime}\left(  f\right)  =f\left(  0,0,0,\ldots\right)  \text{
(by the definition of }\varepsilon^{\prime}\text{)}\right) \\
&  =\left(  \varepsilon^{\prime}\mid_{\operatorname*{QSym}}\right)  \left(
f\right)  .
\end{align*}
In other words, $\varepsilon=\varepsilon^{\prime}\mid_{\operatorname*{QSym}}%
$.}. Hence, $\varepsilon$ is a restriction of the map $\varepsilon^{\prime}$.
Thus, $\varepsilon$ is a restriction of a $\mathbf{k}$-algebra homomorphism
(since $\varepsilon^{\prime}$ is a $\mathbf{k}$-algebra homomorphism), and
thus itself is a $\mathbf{k}$-algebra homomorphism.

It is easy to see that $\Delta\left(  1\right)  =1\otimes1$%
\ \ \ \ \footnote{\textit{Proof.} There are many ways to prove $\Delta\left(
1\right)  =1\otimes1$ (for example, it follows from $1=M_{\varnothing}$ using
the definition of $\Delta$), but let us derive $\Delta\left(  1\right)
=1\otimes1$ from Corollary \ref{cor.djun.restrict.2}:
\par
For each $i\in\varnothing$, we define a double poset $\mathbf{E}_{i}=\left(
E_{i},<_{1,i},<_{2,i}\right)  $ and a map $w_{i}:E_{i}\rightarrow\left\{
1,2,3,\ldots\right\}  $ as follows: There is nothing to define, because there
exists no $i\in\varnothing$.
\par
Thus, Corollary \ref{cor.djun.restrict.2} (applied to $I=\varnothing$) yields
\begin{align*}
\Delta\left(  \prod_{i\in\varnothing}\Gamma\left(  \mathbf{E}_{i}%
,w_{i}\right)  \right)   &  =\prod_{i\in\varnothing}\Delta\left(
\Gamma\left(  \mathbf{E}_{i},w_{i}\right)  \right)  =\left(  \text{empty
product}\right) \\
&  =\left(  \text{the unity of the }\mathbf{k}\text{-algebra }%
\operatorname*{QSym}\otimes\operatorname*{QSym}\right)  =1\otimes1.
\end{align*}
Since $\prod_{i\in\varnothing}\Gamma\left(  \mathbf{E}_{i},w_{i}\right)
=\left(  \text{empty product}\right)  =1$, this rewrites as $\Delta\left(
1\right)  =1\otimes1$. Qed.}. Thus, the map $\Delta$ sends the unity of the
$\mathbf{k}$-algebra $\operatorname*{QSym}$ to the unity of the $\mathbf{k}%
$-algebra $\operatorname*{QSym}\otimes\operatorname*{QSym}$ (since the latter
unity is $1\otimes1$). Combining this with Corollary \ref{cor.djun.restrict.3}%
, we conclude that $\Delta$ is a $\mathbf{k}$-algebra homomorphism (since
$\Delta$ is a $\mathbf{k}$-linear map).

Now, the $\mathbf{k}$-algebra $\operatorname*{QSym}$, equipped with the
comultiplication $\Delta$ and the counit $\varepsilon$, becomes a $\mathbf{k}%
$-bialgebra (because it is a $\mathbf{k}$-algebra and a $\mathbf{k}%
$-coalgebra, and because $\Delta$ and $\varepsilon$ are $\mathbf{k}$-algebra
homomorphisms). This proves Proposition \ref{prop.QSym.bialg}.
\end{proof}

Finally, using \cite[Proposition 1.4.16]{Reiner}, we can leverage Proposition
\ref{prop.QSym.bialg} to a proof of the following fact:

\begin{proposition}
\label{prop.QSym.hopfalg}The $\mathbf{k}$-bialgebra $\operatorname*{QSym}$ is
a Hopf algebra (i.e., it has an antipode).
\end{proposition}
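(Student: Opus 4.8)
The statement to prove is that the $\kk$-bialgebra $\QSym$ is a Hopf algebra, i.e.\ that it admits an antipode. The plan is to invoke the standard result that every connected graded bialgebra over a commutative ring is automatically a Hopf algebra; the excerpt explicitly signals that \cite[Proposition 1.36]{Reiner} is the tool to be used here. So the argument will consist of verifying the hypotheses of that proposition rather than constructing the antipode by hand.

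First I would record that $\QSym$ is a $\kk$-bialgebra: this is exactly Proposition~\ref{prop.QSym.bialg}, which we are entitled to assume. Next I would equip $\QSym$ with its grading. The natural grading is by degree of the homogeneous power series: for each $n \in \NN$, let $\QSym_n$ denote the $\kk$-span of those $M_\alpha$ for which $\alpha$ is a composition of $n$ (equivalently, the homogeneous quasisymmetric functions of degree $n$). One checks that $\QSym = \bigoplus_{n \in \NN} \QSym_n$ as $\kk$-modules (this follows from $\left(M_\alpha\right)_{\alpha \in \Comp}$ being a basis, together with the fact that $M_\alpha$ is homogeneous of degree $|\alpha|$), that the multiplication $m$ respects the grading (the product $M_\alpha M_\beta$ lies in $\QSym_{|\alpha|+|\beta|}$), and that the comultiplication $\Delta$ respects it as well; the latter is visible from \eqref{eq.coproduct.M}, since each tensorand $M_{\left(\alpha_1,\ldots,\alpha_k\right)} \otimes M_{\left(\alpha_{k+1},\ldots,\alpha_\ell\right)}$ has total degree $|\alpha|$. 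The counit $\varepsilon$ and unit $u$ are likewise graded maps. Thus $\QSym$ is a graded $\kk$-bialgebra.

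Then I would check connectedness, namely $\QSym_0 \cong \kk$. The degree-$0$ component is the $\kk$-span of $M_{\left(\,\right)} = M_\varnothing = 1$, so $\QSym_0 = \kk \cdot 1$, which is free of rank one over $\kk$; moreover the composite $\kk \xrightarrow{u} \QSym_0 \xrightarrow{\varepsilon} \kk$ is the identity. This is precisely the connectedness hypothesis. With $\QSym$ now exhibited as a connected graded $\kk$-bialgebra, I would apply \cite[Proposition 1.36]{Reiner}, which states that any such bialgebra possesses a (unique) antipode $S$. This antipode $S$ is exactly the $\kk$-linear map whose existence was asserted via the diagram \eqref{eq.antipode}, and its presence makes $\QSym$ a Hopf algebra, completing the proof.

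The only real subtlety — and thus the step I would flag as the main obstacle — is to make sure the grading is well-defined and finitely-supported in each degree in a way compatible with the cited proposition's hypotheses: one must confirm that each graded piece $\QSym_n$ is a $\kk$-module (not necessarily finitely generated, but that is not required) and that the direct-sum decomposition is genuine, which rests on the linear independence of the $M_\alpha$. Since the excerpt already grants that $\left(M_\alpha\right)_{\alpha \in \Comp}$ is a $\kk$-basis of $\QSym$, all of this is routine bookkeeping; none of it requires the combinatorial machinery (double posets, $\Gamma$, the involution $T$) developed for the antipode \emph{formula}. In other words, the existence of the antipode is cheap and formal, whereas the content of the paper lies entirely in identifying what $S$ does to the functions $\Gamma\left(\EE, w\right)$.
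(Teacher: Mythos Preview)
Your proposal is correct and follows essentially the same approach as the paper: verify that $\QSym$ is a connected graded $\kk$-bialgebra (graded by degree) and then invoke \cite[Proposition 1.36]{Reiner}. The paper's own proof is in fact a terser sketch of exactly this argument, so your more detailed verification of the grading and connectedness is entirely in line with it.
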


\begin{proof}
[Proof of Proposition \ref{prop.QSym.hopfalg} (sketched).]It is not hard to
see that the $\mathbf{k}$-bialgebra $\operatorname*{QSym}$ is a connected
graded $\mathbf{k}$-bialgebra, where the grading is given by the degree of the
power series (i.e., for each $n\in\mathbb{N}$, the $n$-th graded component of
$\operatorname*{QSym}$ is the $\mathbf{k}$-submodule%
\[
\left\{  f\in\operatorname*{QSym}\ \mid\ \text{the power series }f\text{ is
homogeneous of degree }n\right\}
\]
of $\operatorname*{QSym}$\ \ \ \ \footnote{This $\mathbf{k}$-submodule has
basis $\left(  M_{\alpha}\right)  _{\alpha\in\operatorname*{Comp}%
\nolimits_{n}}$.}).

A well-known result states that every connected graded $\mathbf{k}$-bialgebra
is a Hopf algebra (i.e., it has an antipode).\footnote{This is proven, for
example, in \cite[Proposition 1.4.16]{Reiner}.} Applying this to the connected
graded $\mathbf{k}$-bialgebra $\operatorname*{QSym}$, we thus conclude that
the $\mathbf{k}$-bialgebra $\operatorname*{QSym}$ is a Hopf algebra (i.e., it
has an antipode). This proves Proposition \ref{prop.QSym.hopfalg}.
\end{proof}

There is also an alternative way to prove Proposition \ref{prop.QSym.hopfalg},
by constructing the antipode explicitly (e.g., using Proposition
\ref{prop.exam.antipode.Gammaw.b} as the \textbf{definition} of the antipode)
and then showing that it satisfies the axioms of an antipode. We shall not
dwell on this.

\subsection{$F_{\alpha}$ as $\Gamma\left(  \mathbf{E}\right)  $}

Next, let us prove a claim made in Example \ref{exam.Gamma} (c):

\begin{proposition}
\label{prop.example.Gamma.c1}Let $\alpha=\left(  \alpha_{1},\alpha_{2}%
,\ldots,\alpha_{\ell}\right)  $ be a composition of a nonnegative integer $n$.
Define a set $D\left(  \alpha\right)  $ as in Definition \ref{def.Dalpha}. Let
$E=\left\{  1,2,\ldots,n\right\}  $. Then, there exists a total order $<_{2}$
on the set $E$ satisfying%
\begin{equation}
\left(  i+1<_{2}i\qquad\text{ for every }i\in D\left(  \alpha\right)  \right)
\label{eq.prop.example.Gamma.c1.cond1}%
\end{equation}
and
\begin{equation}
\left(  i<_{2}i+1\qquad\text{ for every }i\in\left\{  1,2,\ldots,n-1\right\}
\setminus D\left(  \alpha\right)  \right)  .
\label{eq.prop.example.Gamma.c1.cond2}%
\end{equation}

\end{proposition}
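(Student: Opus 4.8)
The plan is to realize the desired total order as the pullback, along an injective ``potential function'' $g \colon E \to \ZZ$, of the usual order $<$ on $\ZZ$. Concretely, writing $\alpha = \left(\alpha_1, \ldots, \alpha_\ell\right)$ and defining the partial sums $s_i = \alpha_1 + \cdots + \alpha_i$ for $i \in \left\{0, 1, \ldots, \ell\right\}$ as in Lemma~\ref{lem.Dalpha.s}, I would reuse the ``block index'' map $f \colon \left[n\right] \to \left[\ell\right]$ introduced in Lemma~\ref{lem.Dalpha.s2} (recall that $E = \left\{1, 2, \ldots, n\right\} = \left[n\right]$ by Definition~\ref{def.k}), and set
\[
g\left(j\right) = -f\left(j\right)\left(n+1\right) + j \qquad \text{for every } j \in E .
\]
The idea is that the term $-f\left(j\right)\left(n+1\right)$ stacks the blocks of $\alpha$ in decreasing order, so that a jump across a block boundary dominates, while the $+j$ term keeps $g$ increasing inside each block; the weight $n+1$ is chosen large enough that these two effects never collide.

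First I would check that $g$ is injective: if $g\left(j\right) = g\left(j'\right)$, then $\left(f\left(j'\right) - f\left(j\right)\right)\left(n+1\right) = j' - j$, and since $\left|j' - j\right| \leq n-1 < n+1$, this forces $f\left(j'\right) = f\left(j\right)$ and hence $j' = j$. Consequently the binary relation $<_2$ on $E$ defined by $\left(a <_2 b\right) \Longleftrightarrow \left(g\left(a\right) < g\left(b\right)\right)$ is a strict total order, being the pullback under the injection $g$ of the strict total order $<$ on $\ZZ$ (irreflexivity, transitivity and antisymmetry are inherited directly, and totality follows because $g\left(a\right) \neq g\left(b\right)$ whenever $a \neq b$).

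It then remains to verify the two prescribed inequalities, and here the work is carried entirely by Lemma~\ref{lem.Dalpha.s2}. For every $i \in \left\{1, 2, \ldots, n-1\right\}$ one computes
\[
g\left(i\right) - g\left(i+1\right) = \left(f\left(i+1\right) - f\left(i\right)\right)\left(n+1\right) - 1 .
\]
If $i \in D\left(\alpha\right)$, then \eqref{pf.prop.Malpha.D.f.3} gives $f\left(i+1\right) - f\left(i\right) \geq 1$, whence $g\left(i\right) - g\left(i+1\right) \geq n > 0$, i.e.\ $g\left(i+1\right) < g\left(i\right)$, i.e.\ $i+1 <_2 i$; this is \eqref{eq.prop.example.Gamma.c1.cond1}. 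If instead $i \in \left\{1, 2, \ldots, n-1\right\} \setminus D\left(\alpha\right)$, then \eqref{pf.prop.Malpha.D.f.5} gives $f\left(i\right) = f\left(i+1\right)$, whence $g\left(i\right) - g\left(i+1\right) = -1 < 0$, i.e.\ $i <_2 i+1$; this is \eqref{eq.prop.example.Gamma.c1.cond2}.

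I do not expect any serious obstacle; the only point requiring a moment's care is the calibration of the weight $n+1$, which must be large enough that a genuine change of block ($f\left(i+1\right) > f\left(i\right)$) outweighs the within-block increment of $+1$, yet small enough relative to the spread $\left|j'-j\right| \leq n-1$ of the second coordinate that $g$ stays injective. Both required inequalities, $n+1 > 1$ and $n+1 > n-1$, hold for every $n$, so the single constant $n+1$ works uniformly. (The degenerate cases $n = 0$ or $\ell = 0$ give $E = \varnothing$, where the empty order trivially satisfies both conditions vacuously.)
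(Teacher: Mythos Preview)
Your proof is correct and follows essentially the same approach as the paper's: both construct the total order by pulling back the usual order on $\ZZ$ along an injective potential of the form $(\text{large negative weight})\cdot(\text{block-counting function}) + i$. The only cosmetic differences are that the paper uses $\rho(i) = \left|D(\alpha)\cap[i-1]\right| = f(i)-1$ in place of your $f(i)$, uses weight $n$ rather than $n+1$, and first proves the statement for an arbitrary subset $Q\subseteq[n-1]$ (Proposition~\ref{prop.example.Gamma.c2}) before specializing to $Q=D(\alpha)$; your direct reuse of Lemma~\ref{lem.Dalpha.s2} (e) and (f) is a bit more economical.
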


We shall actually prove the following fact first (which is easily seen to be
equivalent to Proposition~\ref{prop.example.Gamma.c1}):

\begin{proposition}
\label{prop.example.Gamma.c2}Let $n\in\mathbb{N}$. Let $E=\left\{
1,2,\ldots,n\right\}  $. Let $Q$ be a subset of $\left\{  1,2,\ldots
,n-1\right\}  $. Then, there exists a total order $<_{2}$ on the set $E$
satisfying%
\begin{equation}
\left(  i+1<_{2}i\qquad\text{ for every }i\in Q\right)
\label{eq.prop.example.Gamma.c2.cond1}%
\end{equation}
and
\begin{equation}
\left(  i<_{2}i+1\qquad\text{ for every }i\in\left\{  1,2,\ldots,n-1\right\}
\setminus Q\right)  . \label{eq.prop.example.Gamma.c2.cond2}%
\end{equation}

\end{proposition}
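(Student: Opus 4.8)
The plan is to construct the desired total order $<_2$ as the pullback of the usual order on $\ZZ$ along a suitable injection $f\colon E\to\ZZ$. Concretely, once we have an injective map $f\colon E\to\ZZ$, we may define a relation $<_2$ on $E$ by declaring $a<_2 b$ if and only if $f(a)<f(b)$; since $f$ is injective and $<$ is a total order on $\ZZ$, this relation $<_2$ is automatically a total order on $E$ (irreflexivity, transitivity, antisymmetry and comparability all transfer from $\ZZ$ along $f$, the latter using injectivity to rule out $f(a)=f(b)$ for $a\neq b$). Thus the whole problem reduces to finding an injection $f$ whose values descend across each $i\in Q$ and ascend across each $i\in\{1,2,\ldots,n-1\}\setminus Q$.

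Such an $f$ can be built by a greedy recursion. First I would set $f(1)=0$, and then, for each $i\in\{1,2,\ldots,n-1\}$ in increasing order, define
\[
f(i+1)=
\begin{cases}
1+\max\{f(1),f(2),\ldots,f(i)\}, & \text{if } i\notin Q;\\
-1+\min\{f(1),f(2),\ldots,f(i)\}, & \text{if } i\in Q.
\end{cases}
\]
The key invariant, to be proved by induction on $i$, is that the value $f(i+1)$ always lies strictly outside the closed interval spanned by the previously assigned values $f(1),\ldots,f(i)$ --- it is either strictly above their maximum or strictly below their minimum. This single observation does double duty: it shows that $f(i+1)$ differs from every earlier value (giving injectivity of $f$ by a further easy induction on the number of assigned values), and it pins down the local ascent/descent behaviour that we need.

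Checking the two required inequalities is then immediate from the recursion. For $i\in Q$ we have $f(i+1)<\min\{f(1),\ldots,f(i)\}\le f(i)$, hence $f(i+1)<f(i)$, i.e.\ $i+1<_2 i$; and for $i\in\{1,\ldots,n-1\}\setminus Q$ we have $f(i+1)>\max\{f(1),\ldots,f(i)\}\ge f(i)$, hence $f(i)<f(i+1)$, i.e.\ $i<_2 i+1$. This verifies \eqref{eq.prop.example.Gamma.c2.cond1} and \eqref{eq.prop.example.Gamma.c2.cond2}. The degenerate case $n=0$ (where $E=\varnothing$) is handled separately and trivially, the empty relation being vacuously a total order.

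The only step requiring genuine care --- and hence the main obstacle --- is establishing injectivity of $f$, that is, the ``strictly outside the previous range'' invariant; everything else is either a transfer of structure along $f$ or a one-line inequality. An alternative route avoiding the explicit formula would be a direct induction on $n$: given a total order on $\{1,\ldots,n-1\}$ realizing $Q\cap\{1,\ldots,n-2\}$, one inserts $n$ as the global minimum (if $n-1\in Q$) or as the global maximum (if $n-1\notin Q$), which affects only the comparison of the single pair $(n-1,n)$ and leaves all other prescribed comparisons untouched. I would favour the injection approach, however, since it sidesteps the need to re-verify the order axioms after each insertion.
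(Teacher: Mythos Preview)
Your argument is correct. Both you and the paper use the same high-level strategy: construct an injection $f\colon E\to\mathbb{Z}$ and define $<_2$ as the pullback of the usual order along $f$ (the paper formalises this step as a separate Proposition \ref{prop.order.pullback}). The difference lies only in how the injection is built. The paper gives a closed-form map $\omega(i)=-n\cdot\left|Q\cap[i-1]\right|+i$, for which injectivity is proved by a residue-mod-$n$ argument, and the local ascent/descent is read off from the explicit difference $\omega(i+1)-\omega(i)$. Your greedy recursion instead makes injectivity essentially automatic (each new value is shoved strictly outside the current range), at the cost of losing a closed formula. Your alternative induction-on-$n$ sketch, inserting $n$ as a global extremum, is a third valid route that the paper does not pursue. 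All three approaches are comparable in difficulty; the paper's has the minor virtue of giving an explicit labelling, while yours makes the injectivity verification more transparent.
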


Usually, there are, in fact, several total orders $<_{2}$ satisfying the
conditions of Proposition \ref{prop.example.Gamma.c2}, but of course it
suffices to construct one of them in order to prove the proposition.

Before we prove Proposition \ref{prop.example.Gamma.c2}, let us record a
really simple fact:

\begin{proposition}
\label{prop.order.pullback}Let $E$ and $F$ be two sets. Let $\omega
:E\rightarrow F$ be a map. Let $<$ be a strict partial order on the set $F$.
We define a binary relation $\prec$ on the set $E$ as follows: For any $a\in
E$ and $b\in E$, we set $a\prec b$ if and only if $\omega\left(  a\right)
<\omega\left(  b\right)  $.

\begin{enumerate}
\item[(a)] The relation $\prec$ is a strict partial order on $E$.

\item[(b)] Assume that the relation $<$ is a total order. Assume that the map
$\omega$ is injective. Then, the relation $\prec$ is a total order.
\end{enumerate}
\end{proposition}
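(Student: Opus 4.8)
The plan is to verify directly that $\prec$ satisfies the three defining conditions of a strict partial order (irreflexivity, transitivity, antisymmetry), pulling each condition back through $\omega$ to the corresponding property of $<$ on $F$, and then to obtain totality in part (b) by transporting $<$-comparability along $\omega$ and using injectivity to dispatch the equality case.

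For part (a), I would first treat irreflexivity: if we had $a \prec a$ for some $a \in E$, then the definition of $\prec$ would give $\omega(a) < \omega(a)$, contradicting the irreflexivity of the strict partial order $<$. Next, for transitivity, I would take $a, b, c \in E$ with $a \prec b$ and $b \prec c$; unwinding the definition of $\prec$ gives $\omega(a) < \omega(b)$ and $\omega(b) < \omega(c)$, so transitivity of $<$ yields $\omega(a) < \omega(c)$, which is exactly $a \prec c$. Antisymmetry then comes for free, since every irreflexive and transitive binary relation is antisymmetric (the same elementary fact already invoked in the proof of Proposition~\ref{prop.djun.rels.strict}). Having checked all three properties, $\prec$ is a strict partial order on $E$, as claimed.

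For part (b), I would show that any two elements $a, b \in E$ are $\prec$-comparable in the sense of Definition~\ref{def.double-poset} (b). Since $<$ is a total order on $F$, the elements $\omega(a)$ and $\omega(b)$ are $<$-comparable, i.e.\ we have $\omega(a) < \omega(b)$, or $\omega(a) = \omega(b)$, or $\omega(b) < \omega(a)$. In the first case $a \prec b$; in the third case $b \prec a$; and in the middle case injectivity of $\omega$ forces $a = b$. Thus in every case $a \prec b$ or $a = b$ or $b \prec a$, so $a$ and $b$ are $\prec$-comparable. Since $\prec$ is already known to be a strict partial order by part (a), this proves it is a total order.

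The only real subtlety is that injectivity of $\omega$ is indispensable in part (b): without it, two distinct elements of $E$ sharing the same image would be $\prec$-incomparable, so the equality case $\omega(a) = \omega(b)$ is precisely where the hypothesis is consumed. Everything else is a mechanical transport of the order axioms along $\omega$, so I do not expect any genuine obstacle.
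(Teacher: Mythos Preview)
Your proposal is correct and follows essentially the same approach as the paper's own proof: verify irreflexivity and transitivity of $\prec$ by pulling them back through $\omega$ to the corresponding properties of $<$, deduce antisymmetry from irreflexivity plus transitivity, and then establish totality in part (b) by a three-case split on the $<$-comparability of $\omega(a)$ and $\omega(b)$, using injectivity of $\omega$ to handle the equality case. Your observation about where injectivity is consumed is also exactly right.
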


Proposition \ref{prop.order.pullback} is a basic fact about partial and total
orders; its easy proof is left to the reader.

\begin{noncompile}

\begin{proof}
[Proof of Proposition \ref{prop.order.pullback}.]The relation $<$ is a strict
partial order on $F$. In other words, the relation $<$ is an irreflexive,
transitive and antisymmetric binary relation on $F$ (because this is how a
\textquotedblleft strict partial order\textquotedblright\ is defined).

Now, the relation $\prec$ is irreflexive\footnote{\textit{Proof.} Let $a\in E$
be such that $a\prec a$. We shall derive a contradiction.
\par
We have $a\prec a$ if and only if $\omega\left(  a\right)  <\omega\left(
a\right)  $ (by the definition of the relation $\prec$). Thus, we have
$\omega\left(  a\right)  <\omega\left(  a\right)  $ (since $a\prec a$). But
this is absurd, since the relation $<$ is irreflexive. Thus, we have obtained
a contradiction.
\par
Let us now forget that we fixed $a$. We thus have derived a contradiction for
every $a\in E$ satisfying $a\prec a$. Therefore, no $a\in E$ satisfies $a\prec
a$. In other words, the relation $\prec$ is irreflexive. Qed.},
transitive\footnote{\textit{Proof.} Let $a\in E$, $b\in E$ and $c\in E$ be
such that $a\prec b$ and $b\prec c$. We shall prove that $a\prec c$.
\par
We have $a\prec b$ if and only if $\omega\left(  a\right)  <\omega\left(
b\right)  $ (by the definition of the relation $\prec$). Thus, we have
$\omega\left(  a\right)  <\omega\left(  b\right)  $ (since $a\prec b$).
\par
We have $b\prec c$ if and only if $\omega\left(  b\right)  <\omega\left(
c\right)  $ (by the definition of the relation $\prec$). Thus, we have
$\omega\left(  b\right)  <\omega\left(  c\right)  $ (since $b\prec c$).
\par
From $\omega\left(  a\right)  <\omega\left(  b\right)  $ and $\omega\left(
b\right)  <\omega\left(  c\right)  $, we obtain $\omega\left(  a\right)
<\omega\left(  c\right)  $ (since the relation $<$ is transitive).
\par
We have $a\prec c$ if and only if $\omega\left(  a\right)  <\omega\left(
c\right)  $ (by the definition of the relation $\prec$). Thus, we have $a\prec
c$ (since $\omega\left(  a\right)  <\omega\left(  c\right)  $).
\par
Now, forget that we fixed $a$, $b$ and $c$. Thus, we have shown that every
$a\in E$, $b\in E$ and $c\in E$ satisfying $a\prec b$ and $b\prec c$ must
satisfy $a\prec c$. In other words, the relation $\prec$ is transitive. Qed.}
and antisymmetric\footnote{\textit{Proof.} The relation $\prec$ is irreflexive
and transitive. Hence, the relation $\prec$ is antisymmetric (since every
irreflexive and transitive binary relation is antisymmetric). Qed.}. Thus, the
relation $\prec$ is an irreflexive, transitive and antisymmetric binary
relation on $E$. In other words, the relation $\prec$ is a strict partial
order on $E$ (because this is how a \textquotedblleft strict partial
order\textquotedblright\ is defined). This proves Proposition
\ref{prop.order.pullback} (a).

(b) The relation $\prec$ is a strict partial order (by Proposition
\ref{prop.order.pullback} (a)). Hence, the relation $\prec$ is a total order
if and only if every two elements of $E$ are $\prec$-comparable (by the
definition of a \textquotedblleft total order\textquotedblright).

Now, let $a$ and $b$ be two elements of $E$. We shall show that $a$ and $b$
are $\prec$-comparable.

The relation $<$ is a strict partial order. Hence, the relation $<$ is a total
order if and only if every two elements of $F$ are $<$-comparable (by the
definition of a \textquotedblleft total order\textquotedblright). Thus, every
two elements of $F$ are $<$-comparable (since the relation $<$ is a total
order). Applying this to the two elements $\omega\left(  a\right)  $ and
$\omega\left(  b\right)  $, we conclude that the two elements $\omega\left(
a\right)  $ and $\omega\left(  b\right)  $ of $F$ are $<$-comparable. In other
words, we have either $\omega\left(  a\right)  <\omega\left(  b\right)  $ or
$\omega\left(  a\right)  =\omega\left(  b\right)  $ or $\omega\left(
b\right)  <\omega\left(  a\right)  $ (because $\omega\left(  a\right)  $ and
$\omega\left(  b\right)  $ are $<$-comparable if and only if we have either
$\omega\left(  a\right)  <\omega\left(  b\right)  $ or $\omega\left(
a\right)  =\omega\left(  b\right)  $ or $\omega\left(  b\right)
<\omega\left(  a\right)  $ (by the definition of \textquotedblleft%
$<$-comparable\textquotedblright)). Thus, we are in one of the following three cases:

\textit{Case 1:} We have $\omega\left(  a\right)  <\omega\left(  b\right)  $.

\textit{Case 2:} We have $\omega\left(  a\right)  =\omega\left(  b\right)  $.

\textit{Case 3:} We have $\omega\left(  b\right)  <\omega\left(  a\right)  $.

Let us first consider Case 1. In this case, we have $\omega\left(  a\right)
<\omega\left(  b\right)  $. But we have $a\prec b$ if and only if
$\omega\left(  a\right)  <\omega\left(  b\right)  $ (by the definition of the
relation $\prec$). Thus, $a\prec b$ (since $\omega\left(  a\right)
<\omega\left(  b\right)  $). Thus, we have either $a\prec b$ or $a=b$ or
$b\prec a$. In other words, the two elements $a$ and $b$ of $E$ are $\prec
$-comparable (because $a$ and $b$ are $\prec$-comparable if and only if we
have either $a\prec b$ or $a=b$ or $b\prec a$ (by the definition of
\textquotedblleft$\prec$-comparable\textquotedblright)). Thus, in Case 1, we
have shown that $a$ and $b$ are $\prec$-comparable.

Let us now consider Case 2. In this case, we have $\omega\left(  a\right)
=\omega\left(  b\right)  $. Hence, $a=b$ (since the map $\omega$ is
injective). Thus, we have either $a\prec b$ or $a=b$ or $b\prec a$. In other
words, the two elements $a$ and $b$ of $E$ are $\prec$-comparable (because $a$
and $b$ are $\prec$-comparable if and only if we have either $a\prec b$ or
$a=b$ or $b\prec a$ (by the definition of \textquotedblleft$\prec
$-comparable\textquotedblright)). Thus, in Case 2, we have shown that $a$ and
$b$ are $\prec$-comparable.

Let us now consider Case 3. In this case, we have $\omega\left(  b\right)
<\omega\left(  a\right)  $. But we have $b\prec a$ if and only if
$\omega\left(  b\right)  <\omega\left(  a\right)  $ (by the definition of the
relation $\prec$). Thus, $b\prec a$ (since $\omega\left(  b\right)
<\omega\left(  a\right)  $). Thus, we have either $a\prec b$ or $a=b$ or
$b\prec a$. In other words, the two elements $a$ and $b$ of $E$ are $\prec
$-comparable (because $a$ and $b$ are $\prec$-comparable if and only if we
have either $a\prec b$ or $a=b$ or $b\prec a$ (by the definition of
\textquotedblleft$\prec$-comparable\textquotedblright)). Thus, in Case 3, we
have shown that $a$ and $b$ are $\prec$-comparable.

Hence, in each of the three Cases 1, 2 and 3, we have shown that $a$ and $b$
are $\prec$-comparable. Thus, $a$ and $b$ are always $\prec$-comparable (since
the three Cases 1, 2 and 3 cover all possibilities).

Let us now forget that we fixed $a$ and $b$. We thus have shown that if $a$
and $b$ are two elements of $E$, then $a$ and $b$ are $\prec$-comparable. In
other words, every two elements of $E$ are $\prec$-comparable. In other words,
the relation $\prec$ is a total order (since the relation $\prec$ is a total
order if and only if every two elements of $E$ are $\prec$-comparable). This
proves Proposition \ref{prop.order.pullback} (b).
\end{proof}
\end{noncompile}

\begin{proof}
[Proof of Proposition \ref{prop.example.Gamma.c2}.]We shall use the notation
introduced in Definition \ref{def.k} (that is, we shall write $\left[
k\right]  $ for $\left\{  1,2,\ldots,k\right\}  $ when $k\in\mathbb{Z}$). In
particular, $\left[  n\right]  =\left\{  1,2,\ldots,n\right\}  =E$.

We shall also use the so-called \textit{Iverson bracket notation}: If
$\mathcal{A}$ is any logical statement, then we shall write $\left[
\mathcal{A}\right]  $ for the integer $%
\begin{cases}
1, & \text{if }\mathcal{A}\text{ is true;}\\
0, & \text{if }\mathcal{A}\text{ is false}%
\end{cases}
$.

Now, we define a map $\rho:\left[  n\right]  \rightarrow\mathbb{Z}$ by%
\[
\left(  \rho\left(  i\right)  =\left\vert Q\cap\left[  i-1\right]  \right\vert
\ \ \ \ \ \ \ \ \ \ \text{for every }i\in\left[  n\right]  \right)  .
\]
We also define a map $\omega:\left[  n\right]  \rightarrow\mathbb{Z}$ by%
\[
\left(  \omega\left(  i\right)  =-n\rho\left(  i\right)
+i\ \ \ \ \ \ \ \ \ \ \text{for every }i\in\left[  n\right]  \right)  .
\]
The map $\omega$ is injective\footnote{\textit{Proof.} Let $i$ and $j$ be two
elements of $\left[  n\right]  $ such that $\omega\left(  i\right)
=\omega\left(  j\right)  $. We shall show that $i=j$.
\par
Assume the contrary. Thus, $i\neq j$. We can WLOG assume that $i\geq j$ (since
otherwise, we can just switch $i$ with $j$). Assume this. Combining $i\geq j$
with $i\neq j$, we obtain $i>j$, so that $i-j>0$.
\par
We have $i\in\left[  n\right]  $, so that $1\leq i\leq n$. Thus, $n\geq1$, so
that $n>0$. Combined with $i-j>0$, this yields $\dfrac{i-j}{n}>0$.
\par
The definition of $\omega$ shows that $\omega\left(  i\right)
=\underbrace{-n\rho\left(  i\right)  }_{\equiv0\operatorname{mod}n}+i\equiv
i\operatorname{mod}n$. The same argument (applied to $j$ instead of $i$)
yields $\omega\left(  j\right)  \equiv j\operatorname{mod}n$. Thus,
$i\equiv\omega\left(  i\right)  =\omega\left(  j\right)  \equiv
j\operatorname{mod}n$. In other words, $n\mid i-j$. Thus, $\dfrac{i-j}{n}$ is
an integer. Hence, $\dfrac{i-j}{n}\geq1$ (since $\dfrac{i-j}{n}>0$), so that
$i-j\geq n$ (since $n>0$). Hence, $i\geq n+\underbrace{j}%
_{\substack{>0\\\text{(since }j\in\left[  n\right]  \text{)}}}>n$. This
contradicts $i\leq n$. This contradiction proves that our assumption was
wrong. Hence, $i=j$ is proven.
\par
Now, forget that we fixed $i$ and $j$. We thus have shown that if $i$ and $j$
are two elements of $\left[  n\right]  $ such that $\omega\left(  i\right)
=\omega\left(  j\right)  $, then $i=j$. In other words, the map $\omega$ is
injective. Qed.}. Notice that $\omega$ is a map $\left[  n\right]
\rightarrow\mathbb{Z}$. In other words, $\omega$ is a map $E\rightarrow
\mathbb{Z}$ (since $\left[  n\right]  =E$).

Consider the total order $<$ on the set $\mathbb{Z}$ (that is, the usual
smaller relation on $\mathbb{Z}$). Define a binary relation $\prec$ on the set
$E$ as follows: For any $a\in E$ and $b\in E$, we set $a\prec b$ if and only
if $\omega\left(  a\right)  <\omega\left(  b\right)  $. Proposition
\ref{prop.order.pullback} (b) (applied to $F=\mathbb{Z}$) thus shows that the
relation $\prec$ is a total order.

Now, it is easy to show that%
\begin{equation}
\rho\left(  i+1\right)  -\rho\left(  i\right)  =\left[  i\in Q\right]
\ \ \ \ \ \ \ \ \ \ \text{for every }i\in\left[  n-1\right]
\label{pf.prop.example.Gamma.c2.Deltarho}%
\end{equation}
\footnote{\textit{Proof of (\ref{pf.prop.example.Gamma.c2.Deltarho}):} Let
$i\in\left[  n-1\right]  $. We must prove
(\ref{pf.prop.example.Gamma.c2.Deltarho}). We are in one of the following two
cases:
\par
\textit{Case 1:} We have $i\in Q$.
\par
\textit{Case 2:} We have $i\notin Q$.
\par
Let us first consider Case 1. In this case, we have $i\in Q$. Hence, $\left\{
i\right\}  \subseteq Q$, so that $Q\cap\left\{  i\right\}  =\left\{
i\right\}  $. But%
\[
\left(  Q\cap\left[  i-1\right]  \right)  \cap\left\{  i\right\}
=Q\cap\underbrace{\left[  i-1\right]  \cap\left\{  i\right\}  }%
_{\substack{=\varnothing\\\text{(since }i\notin\left[  i-1\right]  \text{)}%
}}=Q\cap\varnothing=\varnothing.
\]
Also,%
\[
Q\cap\underbrace{\left[  i\right]  }_{=\left[  i-1\right]  \cup\left\{
i\right\}  }=Q\cap\left(  \left[  i-1\right]  \cup\left\{  i\right\}  \right)
=\left(  Q\cap\left[  i-1\right]  \right)  \cup\underbrace{\left(
Q\cap\left\{  i\right\}  \right)  }_{=\left\{  i\right\}  }=\left(
Q\cap\left[  i-1\right]  \right)  \cup\left\{  i\right\}  .
\]
Hence,%
\begin{align*}
\left\vert \underbrace{Q\cap\left[  i\right]  }_{=\left(  Q\cap\left[
i-1\right]  \right)  \cup\left\{  i\right\}  }\right\vert  &  =\left\vert
\left(  Q\cap\left[  i-1\right]  \right)  \cup\left\{  i\right\}  \right\vert
\\
&  =\underbrace{\left\vert Q\cap\left[  i-1\right]  \right\vert }%
_{\substack{=\rho\left(  i\right)  \\\text{(since }\rho\left(  i\right)
=\left\vert Q\cap\left[  i-1\right]  \right\vert \text{)}}%
}+\underbrace{\left\vert \left\{  i\right\}  \right\vert }_{=1}%
\ \ \ \ \ \ \ \ \ \ \left(  \text{since }\left(  Q\cap\left[  i-1\right]
\right)  \cap\left\{  i\right\}  =\varnothing\right) \\
&  =\rho\left(  i\right)  +1.
\end{align*}
Now, the definition of $\rho$ yields%
\[
\rho\left(  i+1\right)  =\left\vert Q\cap\left[  \underbrace{\left(
i+1\right)  -1}_{=i}\right]  \right\vert =\left\vert Q\cap\left[  i\right]
\right\vert =\rho\left(  i\right)  +1,
\]
so that $\rho\left(  i+1\right)  -\rho\left(  i\right)  =1=\left[  i\in
Q\right]  $ (since $\left[  i\in Q\right]  =1$ (since $i\in Q$)). Thus,
(\ref{pf.prop.example.Gamma.c2.Deltarho}) is proven in Case 1.
\par
Let us now consider Case 2. In this case, we have $i\notin Q$. Thus, $\left\{
i\right\}  \cap Q=\varnothing$. Now,
\begin{align*}
Q\cap\underbrace{\left[  i\right]  }_{=\left[  i-1\right]  \cup\left\{
i\right\}  }  &  =Q\cap\left(  \left[  i-1\right]  \cup\left\{  i\right\}
\right)  =\left(  Q\cap\left[  i-1\right]  \right)  \cup\underbrace{\left(
Q\cap\left\{  i\right\}  \right)  }_{=\left\{  i\right\}  \cap Q=\varnothing
}\\
&  =\left(  Q\cap\left[  i-1\right]  \right)  \cup\varnothing=Q\cap\left[
i-1\right]  .
\end{align*}
Hence,%
\[
\left\vert \underbrace{Q\cap\left[  i\right]  }_{=Q\cap\left[  i-1\right]
}\right\vert =\left\vert Q\cap\left[  i-1\right]  \right\vert =\rho\left(
i\right)  \ \ \ \ \ \ \ \ \ \ \left(  \text{since }\rho\left(  i\right)
=\left\vert Q\cap\left[  i-1\right]  \right\vert \right)  .
\]
Now, the definition of $\rho$ yields%
\[
\rho\left(  i+1\right)  =\left\vert Q\cap\left[  \underbrace{\left(
i+1\right)  -1}_{=i}\right]  \right\vert =\left\vert Q\cap\left[  i\right]
\right\vert =\rho\left(  i\right)  ,
\]
so that $\rho\left(  i+1\right)  -\rho\left(  i\right)  =0=\left[  i\in
Q\right]  $ (since $\left[  i\in Q\right]  =0$ (since $i\notin Q$)). Thus,
(\ref{pf.prop.example.Gamma.c2.Deltarho}) is proven in Case 2.
\par
We have now proven (\ref{pf.prop.example.Gamma.c2.Deltarho}) in each of the
two Cases 1 and 2. Therefore, (\ref{pf.prop.example.Gamma.c2.Deltarho}) always
holds.}.

We now claim that%
\begin{equation}
\left(  i+1\prec i\qquad\text{ for every }i\in Q\right)  .
\label{pf.prop.example.Gamma.c2.part1}%
\end{equation}

[\textit{Proof of (\ref{pf.prop.example.Gamma.c2.part1}):} Let $i\in Q$. Thus,
$i\in Q\subseteq\left[  n-1\right]  $. Thus, $1\leq i\leq n-1$, so that
$n-1\geq1$. Also, $i\in\left[  n-1\right]  $, so that
(\ref{pf.prop.example.Gamma.c2.Deltarho}) yields $\rho\left(  i+1\right)
-\rho\left(  i\right)  =\left[  i\in Q\right]  =1$ (since $i\in Q$). Now, the
definition of $\omega$ yields $\omega\left(  i+1\right)  =-n\rho\left(
i+1\right)  +\left(  i+1\right)  $ and $\omega\left(  i\right)  =-n\rho\left(
i\right)  +i$. Hence,%
\begin{align*}
\underbrace{\omega\left(  i+1\right)  }_{=-n\rho\left(  i+1\right)  +\left(
i+1\right)  }-\underbrace{\omega\left(  i\right)  }_{=-n\rho\left(  i\right)
+i}  &  =\left(  -n\rho\left(  i+1\right)  +\left(  i+1\right)  \right)
-\left(  -n\rho\left(  i\right)  +i\right) \\
&  =-n\underbrace{\left(  \rho\left(  i+1\right)  -\rho\left(  i\right)
\right)  }_{=1}+\underbrace{\left(  i+1\right)  -i}_{=1}\\
&  =-n+1=-\underbrace{\left(  n-1\right)  }_{\geq1>0}<-0=0.
\end{align*}
In other words, $\omega\left(  i+1\right)  <\omega\left(  i\right)  $.

But $i+1\prec i$ holds if and only if $\omega\left(  i+1\right)
<\omega\left(  i\right)  $ (by the definition of the relation $\prec$). Hence,
$i+1\prec i$ holds (since $\omega\left(  i+1\right)  <\omega\left(  i\right)
$). This proves (\ref{pf.prop.example.Gamma.c2.part1}).]

Furthermore, we claim that%
\begin{equation}
\left(  i\prec i+1\qquad\text{ for every }i\in\left\{  1,2,\ldots,n-1\right\}
\setminus Q\right)  . \label{pf.prop.example.Gamma.c2.part2}%
\end{equation}

[\textit{Proof of (\ref{pf.prop.example.Gamma.c2.part2}):} Let $i\in\left\{
1,2,\ldots,n-1\right\}  \setminus Q$. Thus, $i\in\left\{  1,2,\ldots
,n-1\right\}  $ but $i\notin Q$. We have $i\in\left\{  1,2,\ldots,n-1\right\}
=\left[  n-1\right]  $. Thus, (\ref{pf.prop.example.Gamma.c2.Deltarho}) yields
$\rho\left(  i+1\right)  -\rho\left(  i\right)  =\left[  i\in Q\right]  =0$
(since $i\notin Q$). Now, the definition of $\omega$ yields $\omega\left(
i+1\right)  =-n\rho\left(  i+1\right)  +\left(  i+1\right)  $ and
$\omega\left(  i\right)  =-n\rho\left(  i\right)  +i$. Hence,%
\begin{align*}
\underbrace{\omega\left(  i+1\right)  }_{=-n\rho\left(  i+1\right)  +\left(
i+1\right)  }-\underbrace{\omega\left(  i\right)  }_{=-n\rho\left(  i\right)
+i}  &  =\left(  -n\rho\left(  i+1\right)  +\left(  i+1\right)  \right)
-\left(  -n\rho\left(  i\right)  +i\right) \\
&  =-n\underbrace{\left(  \rho\left(  i+1\right)  -\rho\left(  i\right)
\right)  }_{=0}+\underbrace{\left(  i+1\right)  -i}_{=1}\\
&  =-0+1=1>0.
\end{align*}
In other words, $\omega\left(  i+1\right)  >\omega\left(  i\right)  $, so that
$\omega\left(  i\right)  <\omega\left(  i+1\right)  $.

But $i\prec i+1$ holds if and only if $\omega\left(  i\right)  <\omega\left(
i+1\right)  $ (by the definition of the relation $\prec$). Hence, $i\prec i+1$
holds (since $\omega\left(  i\right)  <\omega\left(  i+1\right)  $). This
proves (\ref{pf.prop.example.Gamma.c2.part2}).]

Now, we know that $\prec$ is a total order on the set $E$ and satisfies
(\ref{pf.prop.example.Gamma.c2.part1}) and
(\ref{pf.prop.example.Gamma.c2.part2}). Hence, there exists a total order
$<_{2}$ on the set $E$ satisfying%
\[
\left(  i+1<_{2}i\qquad\text{ for every }i\in Q\right)
\]
and
\[
\left(  i<_{2}i+1\qquad\text{ for every }i\in\left\{  1,2,\ldots,n-1\right\}
\setminus Q\right)
\]
(namely, $\prec$ is such a total order). This proves Proposition
\ref{prop.example.Gamma.c2}.
\end{proof}

\begin{proof}
[Proof of Proposition \ref{prop.example.Gamma.c1}.]We shall use the notation
introduced in Definition \ref{def.k} (that is, we shall write $\left[
k\right]  $ for $\left\{  1,2,\ldots,k\right\}  $ when $k\in\mathbb{Z}$). In
particular, $\left[  n\right]  =\left\{  1,2,\ldots,n\right\}  =E$.

Lemma \ref{lem.Dalpha.n-1} shows that $D\left(  \alpha\right)  \subseteq
\left[  n-1\right]  =\left\{  1,2,\ldots,n-1\right\}  $. In other words,
$D\left(  \alpha\right)  $ is a subset of $\left\{  1,2,\ldots,n-1\right\}  $.
Proposition \ref{prop.example.Gamma.c2} (applied to $Q=D\left(  \alpha\right)
$) thus shows that there exists a total order $<_{2}$ on the set $E$
satisfying%
\[
\left(  i+1<_{2}i\qquad\text{ for every }i\in D\left(  \alpha\right)  \right)
\]
and
\[
\left(  i<_{2}i+1\qquad\text{ for every }i\in\left\{  1,2,\ldots,n-1\right\}
\setminus D\left(  \alpha\right)  \right)  .
\]
This proves Proposition \ref{prop.example.Gamma.c1}.
\end{proof}

Next, let us prove another claim made in Example \ref{exam.Gamma} (c):

\begin{proposition}
\label{prop.example.Gamma.c3}Let $\alpha=\left(  \alpha_{1},\alpha_{2}%
,\ldots,\alpha_{\ell}\right)  $ be a composition of a nonnegative integer $n$.
Define a set $D\left(  \alpha\right)  $ as in Definition \ref{def.Dalpha}. Let
$E=\left\{  1,2,\ldots,n\right\}  $. Let $<_{1}$ be the total order on the set
$E$ inherited from $\mathbb{Z}$ (thus, two elements $a$ and $b$ of $E$ satisfy
$a<_{1}b$ if and only if they satisfy $a<b$). Let $<_{2}$ be a strict partial
order on the set $E$ satisfying (\ref{eq.prop.example.Gamma.c1.cond1}) and
(\ref{eq.prop.example.Gamma.c1.cond2}). Then,%
\begin{align*}
\Gamma\left(  \left(  E,<_{1},<_{2}\right)  \right)   &  =\sum
_{\substack{i_{1}\leq i_{2}\leq\cdots\leq i_{n};\\i_{j}<i_{j+1}\text{ whenever
}j\in D\left(  \alpha\right)  }}x_{i_{1}}x_{i_{2}}\cdots x_{i_{n}}\\
&  =\sum_{\beta\text{ is a composition of }n;\ D\left(  \beta\right)
\supseteq D\left(  \alpha\right)  }M_{\beta}.
\end{align*}

\end{proposition}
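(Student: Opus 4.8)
The second equality is nothing more than Corollary~\ref{cor.Dalpha.F}, so the entire content lies in the first equality, and the plan is to read off both sides as sums over the same index set. Recall from Example~\ref{exam.Gamma}~(a) and Definition~\ref{def.Gammaw} that $\Gamma\left(\left(E,<_1,<_2\right)\right) = \sum_{\pi} \xx_\pi$, where $\pi$ ranges over all $\left(E,<_1,<_2\right)$-partitions and $\xx_\pi = \prod_{e\in E} x_{\pi\left(e\right)}$. Since $<_1$ is the usual order on $E=\left\{1,2,\ldots,n\right\}$, the assignment $\pi \mapsto \left(\pi\left(1\right),\pi\left(2\right),\ldots,\pi\left(n\right)\right)$ identifies maps $E\to\left\{1,2,3,\ldots\right\}$ with $n$-tuples $\left(i_1,i_2,\ldots,i_n\right)$ of positive integers, under which $\xx_\pi = x_{i_1}x_{i_2}\cdots x_{i_n}$. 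Thus it suffices to show that, under this identification, the $\left(E,<_1,<_2\right)$-partitions are precisely the tuples satisfying $i_1 \leq i_2 \leq \cdots \leq i_n$ together with $i_j < i_{j+1}$ whenever $j \in D\left(\alpha\right)$.

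First I would check that $\left(E,<_1,<_2\right)$ is tertispecial. By Lemma~\ref{lem.Dalpha.n-1} we have $D\left(\alpha\right)\subseteq\left\{1,2,\ldots,n-1\right\}$, and since $<_1$ is the usual total order, an element $a$ is $<_1$-covered by $b$ exactly when $b=a+1$ with $a\in\left\{1,2,\ldots,n-1\right\}$. For each such $a$, either $a\in D\left(\alpha\right)$, in which case \eqref{eq.prop.example.Gamma.c1.cond1} gives $a+1<_2 a$, or $a\notin D\left(\alpha\right)$, in which case \eqref{eq.prop.example.Gamma.c1.cond2} gives $a<_2 a+1$; either way $a$ and $a+1$ are $<_2$-comparable. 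Hence every $<_1$-covering pair is $<_2$-comparable, so $\left(E,<_1,<_2\right)$ is tertispecial and Lemma~\ref{lem.Epartition.cover} applies. Combining that lemma with the immediate observation that every $\EE$-partition automatically satisfies its Condition~1 and Condition~2, I obtain the equivalence: a map $\phi$ is an $\left(E,<_1,<_2\right)$-partition if and only if it satisfies Condition~1 and Condition~2 of Lemma~\ref{lem.Epartition.cover}, both of which constrain only $<_1$-covering pairs, i.e.\ consecutive pairs $\left(i,i+1\right)$.

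It then remains to translate these two Conditions into the desired inequalities. Fix $i\in\left\{1,2,\ldots,n-1\right\}$. If $i\in D\left(\alpha\right)$ then $i+1<_2 i$, so Condition~2 forces $\phi\left(i\right)<\phi\left(i+1\right)$; moreover $i<_2 i+1$ cannot hold (as $<_2$ is a strict partial order, hence asymmetric), so Condition~1 is vacuous for this $i$. If instead $i\notin D\left(\alpha\right)$ then $i<_2 i+1$, so Condition~1 gives $\phi\left(i\right)\leq\phi\left(i+1\right)$, while Condition~2 is vacuous. Since these consecutive inequalities chain up, $\phi$ being an $\EE$-partition is equivalent to $\phi\left(1\right)\leq\phi\left(2\right)\leq\cdots\leq\phi\left(n\right)$ with $\phi\left(j\right)<\phi\left(j+1\right)$ for every $j\in D\left(\alpha\right)$, which is exactly the index condition on the tuple $\left(\phi\left(1\right),\ldots,\phi\left(n\right)\right)$. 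Summing $\xx_\pi = x_{i_1}\cdots x_{i_n}$ across this bijection yields the first equality, and Corollary~\ref{cor.Dalpha.F} supplies the second.

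The step I expect to be most delicate is this reduction to consecutive pairs. Here $<_2$ is permitted to be \emph{any} strict partial order satisfying \eqref{eq.prop.example.Gamma.c1.cond1} and \eqref{eq.prop.example.Gamma.c1.cond2}, not necessarily the total order built in Proposition~\ref{prop.example.Gamma.c1}; in particular it may relate far-apart elements, so the raw second defining clause of an $\EE$-partition (which quantifies over all pairs $e<_1 f$ with $f<_2 e$) is a priori stronger than the cover-conditions. It is precisely Lemma~\ref{lem.Epartition.cover}, valid because the double poset is tertispecial, that localizes the $\EE$-partition property to covering pairs and thereby lets the purely local data \eqref{eq.prop.example.Gamma.c1.cond1} and \eqref{eq.prop.example.Gamma.c1.cond2} on $<_2$ pin down $\Gamma\left(\left(E,<_1,<_2\right)\right)$ independently of the remaining, unspecified part of the relation $<_2$.
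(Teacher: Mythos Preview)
Your proof is correct, and it takes a somewhat different route from the paper's. The paper argues the bijection between $\left(E,<_1,<_2\right)$-partitions and admissible tuples directly: the delicate direction (that a tuple satisfying the inequalities really gives an $\EE$-partition) is handled by an ad hoc observation (labelled \eqref{pf.prop.example.Gamma.c3.0} in the proof) that whenever $u<v$ and $v<_2 u$, the interval $\{u,u+1,\ldots,v-1\}$ must meet $D(\alpha)$; this is proved by chaining the relations \eqref{eq.prop.example.Gamma.c1.cond2} and is then used to produce a strict step somewhere between $u$ and $v$. You instead recognise that the double poset is tertispecial and invoke Lemma~\ref{lem.Epartition.cover} to localise the $\EE$-partition property to covering pairs, after which the translation is immediate. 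Your route is shorter and more structural, since it reuses a lemma the paper has already proved for other purposes; the paper's route is self-contained within the appendix and essentially reproves a special case of that lemma inline. Both arguments hinge on the same phenomenon you correctly flagged as the delicate point: that the unspecified global behaviour of $<_2$ cannot impose extra constraints beyond those coming from consecutive pairs.
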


\begin{proof}
[Proof of Proposition \ref{prop.example.Gamma.c3}.]Let us first observe a
simple fact: If $u$ and $v$ are two elements of $\left\{  1,2,\ldots
,n\right\}  $ such that $u<v$ and $v<_{2}u$, then
\begin{equation}
\left\{  u,u+1,\ldots,v-1\right\}  \cap D\left(  \alpha\right)  \neq
\varnothing\label{pf.prop.example.Gamma.c3.0}%
\end{equation}
\footnote{\textit{Proof of (\ref{pf.prop.example.Gamma.c3.0}):} Let $u$ and
$v$ be two elements of $\left\{  1,2,\ldots,n\right\}  $ such that $u<v$ and
$v<_{2}u$. We must prove (\ref{pf.prop.example.Gamma.c3.0}).
\par
Indeed, assume (for the sake of contradiction) that $\left\{  u,u+1,\ldots
,v-1\right\}  \cap D\left(  \alpha\right)  =\varnothing$.
\par
We have $u\geq1$ (since $u\in\left\{  1,2,\ldots,n\right\}  $) and $v\leq n$
(since $v\in\left\{  1,2,\ldots,n\right\}  $). Also, $u\neq v$ (since $u<v$).
\par
Now, let $k\in\left\{  u,u+1,\ldots,v-1\right\}  $ be arbitrary. If we had
$k\in D\left(  \alpha\right)  $, then we would have%
\begin{align*}
k  &  \in\left\{  u,u+1,\ldots,v-1\right\}  \cap D\left(  \alpha\right)
\ \ \ \ \ \ \ \ \ \ \left(  \text{since }k\in\left\{  u,u+1,\ldots
,v-1\right\}  \text{ and }k\in D\left(  \alpha\right)  \right) \\
&  =\varnothing,
\end{align*}
which would imply that the empty set $\varnothing$ has at least one element
(namely, the element $k$); but this is clearly absurd. Thus, we cannot have
$k\in D\left(  \alpha\right)  $. Therefore, we must have $k\notin D\left(
\alpha\right)  $. But $k\in\left\{  u,u+1,\ldots,v-1\right\}  \subseteq
\left\{  1,2,\ldots,n-1\right\}  $ (since $u\geq1$ and $v\leq n$). Combining
this with $k\notin D\left(  \alpha\right)  $, we obtain $k\in\left\{
1,2,\ldots,n-1\right\}  \setminus D\left(  \alpha\right)  $. Hence,
(\ref{eq.prop.example.Gamma.c1.cond2}) (applied to $i=k$) shows that
$k<_{2}k+1$.
\par
Now, forget that we fixed $k$. We thus have shown that $k<_{2}k+1$ for each
$k\in\left\{  u,u+1,\ldots,v-1\right\}  $. In other words, we have the
relations $u<_{2}u+1$, $u+1<_{2}u+2$, $\ldots$, $v-1<_{2}v$. Since $\left(
E,<_{2}\right)  $ is a poset (because $<_{2}$ is a strict partial order on
$E$), we can combine these relations into a chain of inequalities:%
\[
u<_{2}u+1<_{2}u+2<_{2}\cdots<_{2}v.
\]
Thus, $u<_{2}v$ (since $u\neq v$). This contradicts $v<_{2}u$ (since $<_{2}$
is a strict partial order on $E$).
\par
This contradiction shows that our assumption (that $\left\{  u,u+1,\ldots
,v-1\right\}  \cap D\left(  \alpha\right)  =\varnothing$) was false. Hence, we
cannot have $\left\{  u,u+1,\ldots,v-1\right\}  \cap D\left(  \alpha\right)
=\varnothing$. We thus must have $\left\{  u,u+1,\ldots,v-1\right\}  \cap
D\left(  \alpha\right)  \neq\varnothing$. This proves
(\ref{pf.prop.example.Gamma.c3.0}).}.

Let $Z$ be the set of all $\left(  E,<_{1},<_{2}\right)  $-partitions. The
definition of $\Gamma\left(  \left(  E,<_{1},<_{2}\right)  \right)  $ yields%
\[
\Gamma\left(  \left(  E,<_{1},<_{2}\right)  \right)  =\sum_{\pi\text{ is an
}\left(  E,<_{1},<_{2}\right)  \text{-partition}}{\mathbf{x}}_{\pi},
\]
where ${\mathbf{x}}_{\pi}=\prod_{e\in E}x_{\pi\left(  e\right)  }$. Thus,%
\begin{equation}
\Gamma\left(  \left(  E,<_{1},<_{2}\right)  \right)  =\underbrace{\sum
_{\pi\text{ is an }\left(  E,<_{1},<_{2}\right)  \text{-partition}}%
}_{\substack{=\sum_{\pi\in Z}\\\text{(since }Z\text{ is the set of
all}\\\left(  E,<_{1},<_{2}\right)  \text{-partitions)}}}{\mathbf{x}}_{\pi
}=\sum_{\pi\in Z}\mathbf{x}_{\pi}. \label{pf.prop.example.Gamma.c3.Gamma=}%
\end{equation}

On the other hand, define a set $W$ by%
\begin{align}
W  &  =\left\{  \left(  i_{1},i_{2},\ldots,i_{n}\right)  \in\left\{
1,2,3,\ldots\right\}  ^{n}\ \mid\ i_{1}\leq i_{2}\leq\cdots\leq i_{n}\right.
\nonumber\\
&  \ \ \ \ \ \ \ \ \ \ \left.  \text{and }\left(  i_{j}<i_{j+1}\text{ whenever
}j\in D\left(  \alpha\right)  \right)  \right\}  .
\label{pf.prop.example.Gamma.c3.W=}%
\end{align}
Thus, we have the following equality between summation signs:%
\[
\sum_{\left(  i_{1},i_{2},\ldots,i_{n}\right)  \in W}=\sum_{\substack{\left(
i_{1},i_{2},\ldots,i_{n}\right)  \in\left\{  1,2,3,\ldots\right\}
^{n};\\i_{1}\leq i_{2}\leq\cdots\leq i_{n};\\i_{j}<i_{j+1}\text{ whenever
}j\in D\left(  \alpha\right)  }}=\sum_{\substack{i_{1}\leq i_{2}\leq\cdots\leq
i_{n};\\i_{j}<i_{j+1}\text{ whenever }j\in D\left(  \alpha\right)  }}.
\]

For every $\pi\in Z$, we have $\left(  \pi\left(  1\right)  ,\pi\left(
2\right)  ,\ldots,\pi\left(  n\right)  \right)  \in W$%
\ \ \ \ \footnote{\textit{Proof.} Let $\pi\in Z$. We must show that $\left(
\pi\left(  1\right)  ,\pi\left(  2\right)  ,\ldots,\pi\left(  n\right)
\right)  \in W$.
\par
First, let us see that $\left(  \pi\left(  1\right)  ,\pi\left(  2\right)
,\ldots,\pi\left(  n\right)  \right)  $ is well-defined. We have $\pi\in Z$.
In other words, $\pi$ is an $\left(  E,<_{1},<_{2}\right)  $-partition (since
$Z$ is the set of all $\left(  E,<_{1},<_{2}\right)  $-partitions). Hence,
$\pi$ is a map $E\rightarrow\left\{  1,2,3,\ldots\right\}  $. In other words,
$\pi$ is a map $\left\{  1,2,\ldots,n\right\}  \rightarrow\left\{
1,2,3,\ldots\right\}  $ (since $E=\left\{  1,2,\ldots,n\right\}  $).
%Thus, $\pi\left(  1\right)
%,\pi\left(  2\right)  ,\ldots,\pi\left(  n\right)  $ are well-defined elements
%of $\left\{  1,2,3,\ldots\right\}  $.
Hence, $\left(  \pi\left(  1\right)  ,\pi\left(  2\right)  ,\ldots,\pi\left(
n\right)  \right)  $ is well-defined and an element of $\left\{
1,2,3,\ldots\right\}  ^{n}$.
\par
Recall the definition of an $\left(  E,<_{1},<_{2}\right)  $-partition. This
definition shows that $\pi$ is an $\left(  E,<_{1},<_{2}\right)  $-partition
if and only if it satisfies the following two assertions:
\par
\textit{Assertion }$\mathcal{A}_{1}$\textit{:} Every $e\in E$ and $f\in E$
satisfying $e<_{1}f$ satisfy $\pi\left(  e\right)  \leq\pi\left(  f\right)  $.
\par
\textit{Assertion }$\mathcal{A}_{2}$\textit{:} Every $e\in E$ and $f\in E$
satisfying $e<_{1}f$ and $f<_{2}e$ satisfy $\pi\left(  e\right)  <\pi\left(
f\right)  $.
\par
Thus, $\pi$ satisfies Assertions $\mathcal{A}_{1}$ and $\mathcal{A}_{2}$
(since $\pi$ is an $\left(  E,<_{1},<_{2}\right)  $-partition).
\par
We now shall show that
\begin{equation}
\pi\left(  k\right)  \leq\pi\left(  k+1\right)  \ \ \ \ \ \ \ \ \ \ \text{for
every }k\in\left\{  1,2,\ldots,n-1\right\}
.\label{pf.prop.example.Gamma.c3.ZW.1}%
\end{equation}
\par
[\textit{Proof of (\ref{pf.prop.example.Gamma.c3.ZW.1}):} Let $k\in\left\{
1,2,\ldots,n-1\right\}  $ be arbitrary. We shall show that $\pi\left(
k\right)  \leq\pi\left(  k+1\right)  $.
\par
We have $k\in\left\{  1,2,\ldots,n-1\right\}  $. Thus, both $k$ and $k+1$
belong to the set $\left\{  1,2,\ldots,n\right\}  $. In other words, both $k$
and $k+1$ belong to the set $E$ (since $E=\left\{  1,2,\ldots,n\right\}  $).
\par
Recall that $<_{1}$ is the total order on the set $E$ inherited from
$\mathbb{Z}$. Hence, $k<_{1}k+1$ (since $k<k+1$). Therefore, Assertion
$\mathcal{A}_{1}$ (applied to $e=k$ and $f=k+1$) yields $\pi\left(  k\right)
\leq\pi\left(  k+1\right)  $. This proves (\ref{pf.prop.example.Gamma.c3.ZW.1}%
).]
\par
Now, (\ref{pf.prop.example.Gamma.c3.ZW.1}) shows that $\pi\left(  1\right)
\leq\pi\left(  2\right)  \leq\cdots\leq\pi\left(  n\right)  $.
\par
Next, let us show that
\begin{equation}
\pi\left(  j\right)  <\pi\left(  j+1\right)
\ \ \ \ \ \ \ \ \ \ \text{whenever }j\in D\left(  \alpha\right)
.\label{pf.prop.example.Gamma.c3.ZW.2}%
\end{equation}
\par
[\textit{Proof of (\ref{pf.prop.example.Gamma.c3.ZW.2}):} Let $j\in D\left(
\alpha\right)  $.
\par
We shall use the notation introduced in Definition \ref{def.k} (that is, we
shall write $\left[  k\right]  $ for $\left\{  1,2,\ldots,k\right\}  $ when
$k\in\mathbb{Z}$). In particular, $\left[  n-1\right]  =\left\{
1,2,\ldots,n-1\right\}  $. Lemma \ref{lem.Dalpha.n-1} shows that $D\left(
\alpha\right)  \subseteq\left[  n-1\right]  =\left\{  1,2,\ldots,n-1\right\}
$. Hence, $j\in D\left(  \alpha\right)  \subseteq\left\{  1,2,\ldots
,n-1\right\}  $. Thus, both $j$ and $j+1$ belong to the set $\left\{
1,2,\ldots,n\right\}  $. In other words, both $j$ and $j+1$ belong to the set
$E$ (since $E=\left\{  1,2,\ldots,n\right\}  $).
\par
Recall that $<_{1}$ is the total order on the set $E$ inherited from
$\mathbb{Z}$. Hence, $j<_{1}j+1$ (since $j<j+1$). Also, $j+1<_{2}j$ (by
(\ref{eq.prop.example.Gamma.c1.cond1}), applied to $i=j$). Therefore,
Assertion $\mathcal{A}_{2}$ (applied to $e=j$ and $f=j+1$) yields $\pi\left(
j\right)  <\pi\left(  j+1\right)  $. This proves
(\ref{pf.prop.example.Gamma.c3.ZW.2}).]
\par
Now, we know that $\left(  \pi\left(  1\right)  ,\pi\left(  2\right)
,\ldots,\pi\left(  n\right)  \right)  $ is an element of $\left\{
1,2,3,\ldots\right\}  ^{n}$ and satisfies $\pi\left(  1\right)  \leq\pi\left(
2\right)  \leq\cdots\leq\pi\left(  n\right)  $ and $\left(  \pi\left(
j\right)  <\pi\left(  j+1\right)  \text{ whenever }j\in D\left(
\alpha\right)  \right)  $. Thus,
\begin{align*}
\left(  \pi\left(  1\right)  ,\pi\left(  2\right)  ,\ldots,\pi\left(
n\right)  \right)   &  \in\left\{  \left(  i_{1},i_{2},\ldots,i_{n}\right)
\in\left\{  1,2,3,\ldots\right\}  ^{n}\ \mid\ i_{1}\leq i_{2}\leq\cdots\leq
i_{n}\right.  \\
&  \ \ \ \ \ \ \ \ \ \ \left.  \text{and }\left(  i_{j}<i_{j+1}\text{ whenever
}j\in D\left(  \alpha\right)  \right)  \right\}  \\
&  =W.
\end{align*}
}. Hence, we can define a map $\Phi:Z\rightarrow W$ by setting%
\[
\left(  \Phi\left(  \pi\right)  =\left(  \pi\left(  1\right)  ,\pi\left(
2\right)  ,\ldots,\pi\left(  n\right)  \right)  \ \ \ \ \ \ \ \ \ \ \text{for
every }\pi\in Z\right)  .
\]
Consider this map $\Phi$. Thus, $\Phi$ is the map $Z\rightarrow W,\ \pi
\mapsto\left(  \pi\left(  1\right)  ,\pi\left(  2\right)  ,\ldots,\pi\left(
n\right)  \right)  $.

The map $\Phi$ is surjective\footnote{\textit{Proof.} Let $\mathbf{g}\in W$.
We shall show that $\mathbf{g}\in\Phi\left(  Z\right)  $.
\par
We have%
\begin{align*}
\mathbf{g}  &  \in W=\left\{  \left(  i_{1},i_{2},\ldots,i_{n}\right)
\in\left\{  1,2,3,\ldots\right\}  ^{n}\ \mid\ i_{1}\leq i_{2}\leq\cdots\leq
i_{n}\right. \\
&  \ \ \ \ \ \ \ \ \ \ \ \ \ \ \ \ \ \ \ \ \left.  \text{and }\left(
i_{j}<i_{j+1}\text{ whenever }j\in D\left(  \alpha\right)  \right)  \right\}
.
\end{align*}
In other words, $\mathbf{g}$ is an $\left(  i_{1},i_{2},\ldots,i_{n}\right)
\in\left\{  1,2,3,\ldots\right\}  ^{n}$ satisfying $i_{1}\leq i_{2}\leq
\cdots\leq i_{n}$ and $\left(  i_{j}<i_{j+1}\text{ whenever }j\in D\left(
\alpha\right)  \right)  $. Consider this $\left(  i_{1},i_{2},\ldots
,i_{n}\right)  $. Thus, $\mathbf{g}=\left(  i_{1},i_{2},\ldots,i_{n}\right)
$.
\par
We have $i_{1}\leq i_{2}\leq\cdots\leq i_{n}$. In other words, for any
$u\in\left\{  1,2,\ldots,n\right\}  $ and $v\in\left\{  1,2,\ldots,n\right\}
$ satisfying $u\leq v$, we have%
\begin{equation}
i_{u}\leq i_{v}. \label{pf.prop.example.Gamma.c3.surj.1}%
\end{equation}
Notice also that we have%
\begin{equation}
\left(  i_{j}<i_{j+1}\text{ whenever }j\in D\left(  \alpha\right)  \right)  .
\label{pf.prop.example.Gamma.c3.surj.2}%
\end{equation}
\par
Define a map $\pi:\left\{  1,2,\ldots,n\right\}  \rightarrow\left\{
1,2,3,\ldots\right\}  $ by setting%
\[
\left(  \pi\left(  k\right)  =i_{k}\ \ \ \ \ \ \ \ \ \ \text{for every }%
k\in\left\{  1,2,\ldots,n\right\}  \right)  .
\]
Thus,
%$\pi$ is a map from $\left\{  1,2,\ldots,n\right\}  $ to $\left\{
%1,2,3,\ldots\right\}  $. In other words,
$\pi$ is a map from $E$ to $\left\{  1,2,3,\ldots\right\}  $ (since
$E=\left\{  1,2,\ldots,n\right\}  $).
\par
Recall the definition of an $\left(  E,<_{1},<_{2}\right)  $-partition. This
definition shows that $\pi$ is an $\left(  E,<_{1},<_{2}\right)  $-partition
if and only if it satisfies the following two assertions:
\par
\textit{Assertion }$\mathcal{A}_{1}$\textit{:} Every $e\in E$ and $f\in E$
satisfying $e<_{1}f$ satisfy $\pi\left(  e\right)  \leq\pi\left(  f\right)  $.
\par
\textit{Assertion }$\mathcal{A}_{2}$\textit{:} Every $e\in E$ and $f\in E$
satisfying $e<_{1}f$ and $f<_{2}e$ satisfy $\pi\left(  e\right)  <\pi\left(
f\right)  $.
\par
We shall now show that $\pi$ satisfies Assertions $\mathcal{A}_{1}$ and
$\mathcal{A}_{2}$.
\par
\textit{Proof of Assertion $\mathcal{A}$}$_{1}$\textit{:} Let $e\in E$ and
$f\in E$ be such that $e<_{1}f$. Recall that $<_{1}$ is the total order on the
set $E$ inherited from $\mathbb{Z}$. Hence, $e<_{1}f$ holds if and only if
$e<f$. Thus, $e<f$ (since $e<_{1}f$ holds). Thus, $e\leq f$. Hence,
(\ref{pf.prop.example.Gamma.c3.surj.1}) (applied to $u=e$ and $v=f$) yields
$i_{e}\leq i_{f}$. Now, the definition of $\pi$ yields $\pi\left(  e\right)
=i_{e}$ and $\pi\left(  f\right)  =i_{f}$. Hence, $\pi\left(  e\right)
=i_{e}\leq i_{f}=\pi\left(  f\right)  $. This proves Assertion $\mathcal{A}%
_{1}$.
\par
\textit{Proof of Assertion $\mathcal{A}$}$_{2}$\textit{:} Let $e\in E$ and
$f\in E$ be such that $e<_{1}f$ and $f<_{2}e$. Recall that $<_{1}$ is the
total order on the set $E$ inherited from $\mathbb{Z}$. Hence, $e<_{1}f$ holds
if and only if $e<f$. Thus, $e<f$ (since $e<_{1}f$ holds). Thus, $e\leq f$.
\par
Both $e$ and $f$ are elements of $E$. In other words, both $e$ and $f$ are
elements of $\left\{  1,2,\ldots,n\right\}  $ (since $E=\left\{
1,2,\ldots,n\right\}  $). We have $e<f$ and $f<_{2}e$. Thus,
(\ref{pf.prop.example.Gamma.c3.0}) (applied to $u=e$ and $v=f$) yields
$\left\{  e,e+1,\ldots,f-1\right\}  \cap D\left(  \alpha\right)
\neq\varnothing$. In other words, the set $\left\{  e,e+1,\ldots,f-1\right\}
\cap D\left(  \alpha\right)  $ is nonempty; hence, this set contains an
element. In other words, there exists some $j\in\left\{  e,e+1,\ldots
,f-1\right\}  \cap D\left(  \alpha\right)  $. Consider such $j$.
\par
We have $j\in\left\{  e,e+1,\ldots,f-1\right\}  \cap D\left(  \alpha\right)
\subseteq\left\{  e,e+1,\ldots,f-1\right\}  $. Thus, $e\leq j\leq f-1$. From
$j\leq f-1$, we obtain $j+1\leq f$.
\par
From $e\in\left\{  1,2,\ldots,n\right\}  $, we obtain $e\geq1$. From
$f\in\left\{  1,2,\ldots,n\right\}  $, we obtain $f\leq n$. Thus,
$j\in\left\{  e,e+1,\ldots,f-1\right\}  \subseteq\left\{  1,2,\ldots
,n-1\right\}  $ (since $e\geq1$ and $f\leq n$). Hence, both $j$ and $j+1$ are
elements of $\left\{  1,2,\ldots,n\right\}  $.
\par
We have $j\in\left\{  e,e+1,\ldots,f-1\right\}  \cap D\left(  \alpha\right)
\subseteq D\left(  \alpha\right)  $. Thus, $i_{j}<i_{j+1}$ (by
(\ref{pf.prop.example.Gamma.c3.surj.2})). Applying
(\ref{pf.prop.example.Gamma.c3.surj.1}) to $u=e$ and $v=j$, we obtain
$i_{e}\leq i_{j}$ (since $e\leq j$). Applying
(\ref{pf.prop.example.Gamma.c3.surj.1}) to $u=j+1$ and $v=f$, we obtain
$i_{j+1}\leq i_{f}$ (since $j+1\leq f$). Hence, $i_{e}\leq i_{j}<i_{j+1}\leq
i_{f}$.
\par
Now, the definition of $\pi$ yields $\pi\left(  e\right)  =i_{e}$ and
$\pi\left(  f\right)  =i_{f}$. Hence, $\pi\left(  e\right)  =i_{e}<i_{f}%
=\pi\left(  f\right)  $. This proves Assertion $\mathcal{A}_{2}$.
\par
Now, we have shown that $\pi$ satisfies Assertions $\mathcal{A}_{1}$ and
$\mathcal{A}_{2}$. Thus, $\pi$ is an $\left(  E,<_{1},<_{2}\right)
$-partition (since we know that $\pi$ is an $\left(  E,<_{1},<_{2}\right)
$-partition if and only if it satisfies Assertions $\mathcal{A}_{1}$ and
$\mathcal{A}_{2}$). In other words, $\pi\in Z$ (since $Z$ is the set of all
$\left(  E,<_{1},<_{2}\right)  $-partitions).
\par
Now, the definition of $\Phi$ yields%
\begin{align*}
\Phi\left(  \pi\right)   &  =\left(  \pi\left(  1\right)  ,\pi\left(
2\right)  ,\ldots,\pi\left(  n\right)  \right)  =\left(  i_{1},i_{2}%
,\ldots,i_{n}\right) \\
&  \ \ \ \ \ \ \ \ \ \ \left(  \text{since }\pi\left(  k\right)  =i_{k}\text{
for every }k\in\left\{  1,2,\ldots,n\right\}  \right) \\
&  =\mathbf{g}.
\end{align*}
Hence, $\mathbf{g}=\Phi\left(  \underbrace{\pi}_{\in Z}\right)  \in\Phi\left(
Z\right)  $.
\par
Let us now forget that we fixed $\mathbf{g}$. We thus have shown that
$\mathbf{g}\in\Phi\left(  Z\right)  $ for every $\mathbf{g}\in W$. In other
words, $W\subseteq\Phi\left(  Z\right)  $. In other words, the map $\Phi$ is
surjective, qed.} and injective\footnote{\textit{Proof.} Let $\pi_{1}$ and
$\pi_{2}$ be two elements of $Z$ such that $\Phi\left(  \pi_{1}\right)
=\Phi\left(  \pi_{2}\right)  $. We shall prove that $\pi_{1}=\pi_{2}$.
\par
The definition of $\Phi$ yields $\Phi\left(  \pi_{1}\right)  =\left(  \pi
_{1}\left(  1\right)  ,\pi_{1}\left(  2\right)  ,\ldots,\pi_{1}\left(
n\right)  \right)  $ and $\Phi\left(  \pi_{2}\right)  =\left(  \pi_{2}\left(
1\right)  ,\pi_{2}\left(  2\right)  ,\ldots,\pi_{2}\left(  n\right)  \right)
$. Thus,%
\[
\left(  \pi_{1}\left(  1\right)  ,\pi_{1}\left(  2\right)  ,\ldots,\pi
_{1}\left(  n\right)  \right)  =\Phi\left(  \pi_{1}\right)  =\Phi\left(
\pi_{2}\right)  =\left(  \pi_{2}\left(  1\right)  ,\pi_{2}\left(  2\right)
,\ldots,\pi_{2}\left(  n\right)  \right)  .
\]
In other words,%
\[
\pi_{1}\left(  i\right)  =\pi_{2}\left(  i\right)
\ \ \ \ \ \ \ \ \ \ \text{for every }i\in\left\{  1,2,\ldots,n\right\}  .
\]
In other words,%
\[
\pi_{1}\left(  i\right)  =\pi_{2}\left(  i\right)
\ \ \ \ \ \ \ \ \ \ \text{for every }i\in E
\]
(since $E=\left\{  1,2,\ldots,n\right\}  $). In other words, $\pi_{1}=\pi_{2}%
$.
\par
Now, forget that we fixed $\pi_{1}$ and $\pi_{2}$. We thus have shown that if
$\pi_{1}$ and $\pi_{2}$ are two elements of $Z$ such that $\Phi\left(  \pi
_{1}\right)  =\Phi\left(  \pi_{2}\right)  $, then $\pi_{1}=\pi_{2}$. In other
words, the map $\Phi$ is injective, qed.}. In other words, the map $\Phi$ is
bijective, i.e., a bijection. In other words, the map $Z\rightarrow
W,\ \pi\mapsto\left(  \pi\left(  1\right)  ,\pi\left(  2\right)  ,\ldots
,\pi\left(  n\right)  \right)  $ is a bijection (since $\Phi$ is the map
$Z\rightarrow W,\ \pi\mapsto\left(  \pi\left(  1\right)  ,\pi\left(  2\right)
,\ldots,\pi\left(  n\right)  \right)  $).

Every $\pi\in Z$ satisfies%
\begin{equation}
\mathbf{x}_{\pi}=x_{\pi\left(  1\right)  }x_{\pi\left(  2\right)  }\cdots
x_{\pi\left(  n\right)  } \label{pf.prop.example.Gamma.c3.x}%
\end{equation}
\footnote{\textit{Proof of (\ref{pf.prop.example.Gamma.c3.x}):} Let $\pi\in
Z$. The definition of $\mathbf{x}_{\pi}$ yields%
\begin{align*}
{\mathbf{x}}_{\pi}  &  =\prod_{e\in E}x_{\pi\left(  e\right)  }=\prod
_{e\in\left\{  1,2,\ldots,n\right\}  }x_{\pi\left(  e\right)  }%
\ \ \ \ \ \ \ \ \ \ \left(  \text{since }E=\left\{  1,2,\ldots,n\right\}
\right) \\
&  =x_{\pi\left(  1\right)  }x_{\pi\left(  2\right)  }\cdots x_{\pi\left(
n\right)  }.
\end{align*}
This proves (\ref{pf.prop.example.Gamma.c3.x}).}. Now,
(\ref{pf.prop.example.Gamma.c3.Gamma=}) becomes%
\begin{align*}
\Gamma\left(  \left(  E,<_{1},<_{2}\right)  \right)   &  =\sum_{\pi\in
Z}\underbrace{\mathbf{x}_{\pi}}_{\substack{=x_{\pi\left(  1\right)  }%
x_{\pi\left(  2\right)  }\cdots x_{\pi\left(  n\right)  }\\\text{(by
(\ref{pf.prop.example.Gamma.c3.x}))}}}=\sum_{\pi\in Z}x_{\pi\left(  1\right)
}x_{\pi\left(  2\right)  }\cdots x_{\pi\left(  n\right)  }\\
&  =\underbrace{\sum_{\left(  i_{1},i_{2},\ldots,i_{n}\right)  \in W}}%
_{=\sum_{\substack{i_{1}\leq i_{2}\leq\cdots\leq i_{n};\\i_{j}<i_{j+1}\text{
whenever }j\in D\left(  \alpha\right)  }}}x_{i_{1}}x_{i_{2}}\cdots x_{i_{n}}\\
&  \ \ \ \ \ \ \ \ \ \ \left(
\begin{array}
[c]{c}%
\text{here, we have substituted }\left(  i_{1},i_{2},\ldots,i_{n}\right)
\text{ for }\left(  \pi\left(  1\right)  ,\pi\left(  2\right)  ,\ldots
,\pi\left(  n\right)  \right)  \text{,}\\
\text{since the map }Z\rightarrow W,\ \pi\mapsto\left(  \pi\left(  1\right)
,\pi\left(  2\right)  ,\ldots,\pi\left(  n\right)  \right) \\
\text{is a bijection}%
\end{array}
\right) \\
&  =\sum_{\substack{i_{1}\leq i_{2}\leq\cdots\leq i_{n};\\i_{j}<i_{j+1}\text{
whenever }j\in D\left(  \alpha\right)  }}x_{i_{1}}x_{i_{2}}\cdots x_{i_{n}}\\
&  =\sum_{\beta\text{ is a composition of }n;\ D\left(  \beta\right)
\supseteq D\left(  \alpha\right)  }M_{\beta}%
\end{align*}
(by Corollary \ref{cor.Dalpha.F}). This proves Proposition
\ref{prop.example.Gamma.c3}.
\end{proof}

\begin{definition}
\label{def.Falpha}Let $\alpha=\left(  \alpha_{1},\alpha_{2},\ldots
,\alpha_{\ell}\right)  $ be a composition of a nonnegative integer $n$. Define
a set $D\left(  \alpha\right)  $ as in Definition \ref{def.Dalpha}. Define a
formal power series $F_{\alpha}\in\mathbf{k}\left[  \left[  x_{1},x_{2}%
,x_{3},\ldots\right]  \right]  $ by%
\begin{equation}
F_{\alpha}=\sum_{\substack{i_{1}\leq i_{2}\leq\cdots\leq i_{n};\\i_{j}%
<i_{j+1}\text{ whenever }j\in D\left(  \alpha\right)  }}x_{i_{1}}x_{i_{2}%
}\cdots x_{i_{n}}. \label{eq.def.Falpha.1}%
\end{equation}
This formal power series $F_{\alpha}$ is called the \textit{fundamental
quasisymmetric function} corresponding to the composition $\alpha$. We have%
\[
F_{\alpha}=\sum_{\substack{i_{1}\leq i_{2}\leq\cdots\leq i_{n};\\i_{j}%
<i_{j+1}\text{ whenever }j\in D\left(  \alpha\right)  }}x_{i_{1}}x_{i_{2}%
}\cdots x_{i_{n}}=\sum_{\beta\text{ is a composition of }n;\ D\left(
\beta\right)  \supseteq D\left(  \alpha\right)  }M_{\beta}%
\]
(by Proposition \ref{prop.example.Gamma.c3}), so that $F_{\alpha}%
\in\operatorname*{QSym}$.
\end{definition}

\subsection{The antipode of $M_{\alpha}$}

Next, we shall prove a fact which is (in a sense) similar to Corollary
\ref{cor.Dalpha.F}:

\begin{corollary}
\label{cor.Dalpha.SM}Let $\alpha$ be a composition of a nonnegative integer
$n$. Then,
\[
\sum_{\substack{i_{1}\leq i_{2}\leq\cdots\leq i_{n};\\\left\{  j\in\left[
n-1\right]  \ \mid\ i_{j}<i_{j+1}\right\}  \subseteq D\left(  \alpha\right)
}}x_{i_{1}}x_{i_{2}}\cdots x_{i_{n}}=\sum_{\substack{\beta\text{ is a
composition of }n;\\D\left(  \beta\right)  \subseteq D\left(  \alpha\right)
}}M_{\beta}.
\]
(Here, we are using the notations of Definition \ref{def.k} and Definition
\ref{def.Dalpha}.)
\end{corollary}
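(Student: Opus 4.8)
The plan is to mirror the proof of Corollary~\ref{cor.Dalpha.F}, simply reversing the direction of every inclusion; in fact the argument here is a shade cleaner, because the summation condition is already phrased in exactly the form needed for the regrouping. I would write $\left[n-1\right] = \left\{1,2,\ldots,n-1\right\}$ as in Definition~\ref{def.k}, and observe at the outset that for any $n$-tuple $\left(i_1, i_2, \ldots, i_n\right)$ with $i_1 \leq i_2 \leq \cdots \leq i_n$, the set $\left\{j \in \left[n-1\right] \mid i_j < i_{j+1}\right\}$ is automatically a subset of $\left[n-1\right]$. Hence the summation condition $\left\{j \in \left[n-1\right] \mid i_j < i_{j+1}\right\} \subseteq D\left(\alpha\right)$ singles out precisely those tuples whose associated set $\left\{j \in \left[n-1\right] \mid i_j < i_{j+1}\right\}$ equals some $G \in \mathcal{P}\left(\left[n-1\right]\right)$ with $G \subseteq D\left(\alpha\right)$.

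First I would partition the left-hand sum according to the value $G = \left\{j \in \left[n-1\right] \mid i_j < i_{j+1}\right\}$, obtaining
\[
\sum_{\substack{i_1 \leq \cdots \leq i_n; \\ \left\{j \in \left[n-1\right] \mid i_j < i_{j+1}\right\} \subseteq D\left(\alpha\right)}} x_{i_1} x_{i_2} \cdots x_{i_n}
= \sum_{\substack{G \in \mathcal{P}\left(\left[n-1\right]\right); \\ G \subseteq D\left(\alpha\right)}} \ \sum_{\substack{i_1 \leq \cdots \leq i_n; \\ \left\{j \in \left[n-1\right] \mid i_j < i_{j+1}\right\} = G}} x_{i_1} x_{i_2} \cdots x_{i_n}.
\]
Since every set of the form $\left\{j \in \left[n-1\right] \mid i_j < i_{j+1}\right\}$ automatically lies in $\mathcal{P}\left(\left[n-1\right]\right)$, extending the outer index to range over all of $\mathcal{P}\left(\left[n-1\right]\right)$ (subject to $G \subseteq D\left(\alpha\right)$) introduces no extra terms.

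Next I would invoke the bijection $D : \operatorname*{Comp}\nolimits_n \to \mathcal{P}\left(\left[n-1\right]\right)$ of Proposition~\ref{prop.Dalpha.comp} to substitute $G = D\left(\beta\right)$ in the outer sum; under this substitution the condition $G \subseteq D\left(\alpha\right)$ becomes $D\left(\beta\right) \subseteq D\left(\alpha\right)$, and the outer index ranges over compositions $\beta$ of $n$ with $D\left(\beta\right) \subseteq D\left(\alpha\right)$. Each inner sum then has exactly the shape treated by Proposition~\ref{prop.Malpha.D}, which (applied to $\beta$ in place of $\alpha$) identifies it with $M_\beta$. Assembling the pieces yields $\sum_{\beta} M_\beta$ over all compositions $\beta$ of $n$ satisfying $D\left(\beta\right) \subseteq D\left(\alpha\right)$, which is the desired right-hand side.

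I do not expect a genuine obstacle: both ingredients — Proposition~\ref{prop.Malpha.D} for the inner identity and the bijectivity of $D$ for the index substitution — are already established. The only points requiring a little care are bookkeeping ones: verifying that the regrouping by $G$ neither loses nor double-counts any tuple (each admissible tuple contributes to exactly the one set $G = \left\{j \in \left[n-1\right] \mid i_j < i_{j+1}\right\}$), and keeping the inclusion direction straight so that $G \subseteq D\left(\alpha\right)$ correctly becomes $D\left(\beta\right) \subseteq D\left(\alpha\right)$ under $G = D\left(\beta\right)$.
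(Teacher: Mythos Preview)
Your proposal is correct and follows essentially the same approach as the paper: partition the left-hand sum by the value $G = \{j \in [n-1] \mid i_j < i_{j+1}\}$, substitute $G = D(\beta)$ using the bijection of Proposition~\ref{prop.Dalpha.comp}, and identify each inner sum with $M_\beta$ via Proposition~\ref{prop.Malpha.D}.
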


\begin{proof}
[Proof of Corollary \ref{cor.Dalpha.SM}.]Proposition \ref{prop.Dalpha.comp}
yields that the maps $D$ and $\operatorname*{comp}$ are mutually inverse.
Thus, the map $D$ is invertible, i.e., a bijection.

Now,%
\begin{align*}
&  \sum_{\substack{i_{1}\leq i_{2}\leq\cdots\leq i_{n};\\\left\{  j\in\left[
n-1\right]  \ \mid\ i_{j}<i_{j+1}\right\}  \subseteq D\left(  \alpha\right)
}}x_{i_{1}}x_{i_{2}}\cdots x_{i_{n}}\\
&  =\underbrace{\sum_{\substack{G\subseteq\left[  n-1\right]  ;\\G\subseteq
D\left(  \alpha\right)  }}}_{=\sum_{\substack{G\in\mathcal{P}\left(  \left[
n-1\right]  \right)  ;\\G\subseteq D\left(  \alpha\right)  }}}\sum
_{\substack{i_{1}\leq i_{2}\leq\cdots\leq i_{n};\\\left\{  j\in\left[
n-1\right]  \ \mid\ i_{j}<i_{j+1}\right\}  =G}}x_{i_{1}}x_{i_{2}}\cdots
x_{i_{n}}\\
&  \ \ \ \ \ \ \ \ \ \ \left(  \text{since }\left\{  j\in\left[  n-1\right]
\ \mid\ i_{j}<i_{j+1}\right\}  \subseteq\left[  n-1\right]  \text{ for every
}\left(  i_{1}\leq i_{2}\leq\cdots\leq i_{n}\right)  \right) \\
&  =\sum_{\substack{G\in\mathcal{P}\left(  \left[  n-1\right]  \right)
;\\G\subseteq D\left(  \alpha\right)  }}\sum_{\substack{i_{1}\leq i_{2}%
\leq\cdots\leq i_{n};\\\left\{  j\in\left[  n-1\right]  \ \mid\ i_{j}%
<i_{j+1}\right\}  =G}}x_{i_{1}}x_{i_{2}}\cdots x_{i_{n}}\\
&  =\sum_{\substack{\beta\in\operatorname*{Comp}\nolimits_{n};\\D\left(
\beta\right)  \subseteq D\left(  \alpha\right)  }}\sum_{\substack{i_{1}\leq
i_{2}\leq\cdots\leq i_{n};\\\left\{  j\in\left[  n-1\right]  \ \mid
\ i_{j}<i_{j+1}\right\}  =D\left(  \beta\right)  }}x_{i_{1}}x_{i_{2}}\cdots
x_{i_{n}}%
\end{align*}
(here, we have substituted $D\left(  \beta\right)  $ for $G$ in the outer sum,
since the map $D:\operatorname*{Comp}\nolimits_{n}\rightarrow\mathcal{P}%
\left(  \left[  n-1\right]  \right)  $ is a bijection). Comparing this with%
\begin{align*}
&  \underbrace{\sum_{\substack{\beta\text{ is a composition of }n;\\D\left(
\beta\right)  \subseteq D\left(  \alpha\right)  }}}_{\substack{=\sum
_{\substack{\beta\in\operatorname*{Comp}\nolimits_{n};\\D\left(  \beta\right)
\subseteq D\left(  \alpha\right)  }}\\\text{(since }\operatorname*{Comp}%
\nolimits_{n}\text{ is the set}\\\text{of all compositions of }n\text{)}%
}}\underbrace{M_{\beta}}_{\substack{=\sum_{\substack{i_{1}\leq i_{2}\leq
\cdots\leq i_{n};\\\left\{  j\in\left[  n-1\right]  \ \mid\ i_{j}%
<i_{j+1}\right\}  =D\left(  \beta\right)  }}x_{i_{1}}x_{i_{2}}\cdots x_{i_{n}%
}\\\text{(by Proposition \ref{prop.Malpha.D} (applied to }\beta\text{ instead
of }\alpha\text{))}}}\\
&  =\sum_{\substack{\beta\in\operatorname*{Comp}\nolimits_{n};\\D\left(
\beta\right)  \subseteq D\left(  \alpha\right)  }}\sum_{\substack{i_{1}\leq
i_{2}\leq\cdots\leq i_{n};\\\left\{  j\in\left[  n-1\right]  \ \mid
\ i_{j}<i_{j+1}\right\}  =D\left(  \beta\right)  }}x_{i_{1}}x_{i_{2}}\cdots
x_{i_{n}},
\end{align*}
we obtain%
\[
\sum_{\substack{i_{1}\leq i_{2}\leq\cdots\leq i_{n};\\\left\{  j\in\left[
n-1\right]  \ \mid\ i_{j}<i_{j+1}\right\}  \subseteq D\left(  \alpha\right)
}}x_{i_{1}}x_{i_{2}}\cdots x_{i_{n}}=\sum_{\substack{\beta\text{ is a
composition of }n;\\D\left(  \beta\right)  \subseteq D\left(  \alpha\right)
}}M_{\beta}.
\]
This proves Corollary \ref{cor.Dalpha.SM}.
\end{proof}

Next, we shall prove an analogue of Proposition \ref{prop.example.Gamma.b}:

\begin{proposition}
\label{prop.example.Gamma.Sb}Let $\ell\in\mathbb{N}$. Let $E=\left\{
1,2,\ldots,\ell\right\}  $. Let $<_{1}$ be the restriction of the standard
relation $<$ on $\mathbb{Z}$ to the subset $E$. (Thus, two elements $e$ and
$f$ of $E$ satisfy $e<_{1}f$ if and only if $e<f$.) Let $>_{1}$ be the
opposite relation of $<_{1}$. (Thus, two elements $e$ and $f$ of $E$ satisfy
$e>_{1}f$ if and only if $f<_{1}e$.) Let ${\mathbf{E}}^{\prime}=\left(
E,>_{1},>_{1}\right)  $.

\begin{enumerate}
\item[(a)] Then, $\mathbf{E}^{\prime}$ is a special double poset.

\item[(b)] Let $w:E\rightarrow\left\{  1,2,3,\ldots\right\}  $ be any map. Set
$\alpha=\left(  w\left(  1\right)  ,w\left(  2\right)  ,\ldots,w\left(
\ell\right)  \right)  $. Then, $\alpha$ is a composition. Write $\alpha$ in
the form $\alpha=\left(  \alpha_{1},\alpha_{2},\ldots,\alpha_{\ell}\right)  $.
We have%
\[
\Gamma\left(  {\mathbf{E}}^{\prime},w\right)  =\sum_{i_{1}\geq i_{2}\geq
\cdots\geq i_{\ell}}x_{i_{1}}^{\alpha_{1}}x_{i_{2}}^{\alpha_{2}}\cdots
x_{i_{\ell}}^{\alpha_{\ell}}.
\]

\end{enumerate}
\end{proposition}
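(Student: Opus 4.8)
The plan is to mirror the proof of Proposition~\ref{prop.example.Gamma.b}, replacing the appeal to Proposition~\ref{prop.example.strictinc} by one to Proposition~\ref{prop.example.weaklinc} and the strict increase by a weak decrease. For part~(a), I would first note that $<_1$ is a total order on $E$, since it is the restriction of the standard total order $<$ on $\ZZ$ to $E$. The relation $>_1$ is the opposite relation of $<_1$, and the opposite of a total order is again a total order; hence $>_1$ is a total order (in particular, a strict partial order). Therefore $\EE' = (E, >_1, >_1)$ is a double poset whose second component $>_1$ is a total order, so $\EE'$ is special by the definition of ``special'' (Definition~\ref{def.double-poset}~(e)). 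This settles~(a).

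For part~(b), since $w$ takes values in $\left\{1,2,3,\ldots\right\}$, each entry $w(i)$ is a positive integer, so $\alpha = \left(w(1), w(2), \ldots, w(\ell)\right)$ is a finite sequence of positive integers, i.e.\ a composition; write it as $\left(\alpha_1, \alpha_2, \ldots, \alpha_\ell\right)$, so that $w(e) = \alpha_e$ for every $e \in E$. The key step is to identify the $\EE'$-partitions. Viewing $\EE' = (E, >_1, >_1)$ as the double poset whose first order is $>_1$ and whose second order is $>_1$, I observe that the second order is (trivially) an extension of the first; hence Proposition~\ref{prop.example.weaklinc} (applied to $\EE'$, with $>_1$ playing both roles) shows that the $\EE'$-partitions are precisely the weakly increasing maps from the poset $(E, >_1)$ to $\left\{1,2,3,\ldots\right\}$. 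Unwinding the definition of $>_1$ (namely, $e >_1 f \Longleftrightarrow f < e$), such a map is exactly a map $\pi \colon \left\{1,2,\ldots,\ell\right\} \to \left\{1,2,3,\ldots\right\}$ satisfying $\pi(1) \geq \pi(2) \geq \cdots \geq \pi(\ell)$.

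With this identification in hand, I would set up the bijection $\pi \mapsto \left(\pi(1), \pi(2), \ldots, \pi(\ell)\right)$ between the set of $\EE'$-partitions and the set of weakly decreasing sequences $\left(i_1 \geq i_2 \geq \cdots \geq i_\ell\right)$ of positive integers; injectivity and surjectivity are immediate, exactly as in the proof of Proposition~\ref{prop.example.Gamma.b}~(b). For every $\EE'$-partition $\pi$, the definition of $\xx_{\pi, w}$ gives $\xx_{\pi,w} = \prod_{e=1}^{\ell} x_{\pi(e)}^{w(e)} = x_{\pi(1)}^{\alpha_1} x_{\pi(2)}^{\alpha_2} \cdots x_{\pi(\ell)}^{\alpha_\ell}$, using $w(e) = \alpha_e$. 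Summing over all $\EE'$-partitions and substituting $\left(i_1, \ldots, i_\ell\right)$ for $\left(\pi(1), \ldots, \pi(\ell)\right)$ along the bijection then yields $\Gamma(\EE', w) = \sum_{i_1 \geq i_2 \geq \cdots \geq i_\ell} x_{i_1}^{\alpha_1} x_{i_2}^{\alpha_2} \cdots x_{i_\ell}^{\alpha_\ell}$, as desired.

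The only real subtlety, and the step I would be most careful about, is the sign-flip bookkeeping in the identification of the $\EE'$-partitions: one must correctly read off that the ``weakly increasing with respect to $>_1$'' condition furnished by Proposition~\ref{prop.example.weaklinc} amounts to the chain $\pi(1) \geq \pi(2) \geq \cdots \geq \pi(\ell)$ in the ordinary order, and that the second defining condition of an $\EE'$-partition is vacuous (since $e >_1 f$ and $f >_1 e$ cannot hold simultaneously, $>_1$ being antisymmetric). Everything else is a routine transcription of the argument already given for Proposition~\ref{prop.example.Gamma.b}.
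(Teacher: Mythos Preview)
Your proposal is correct and follows essentially the same approach as the paper: both prove part~(a) by noting that $>_1$ is a total order (as the opposite of one), and both prove part~(b) by invoking Proposition~\ref{prop.example.weaklinc} to identify the $\EE'$-partitions with weakly increasing maps from $(E,>_1)$ (equivalently, maps $\pi$ with $\pi(1)\geq\pi(2)\geq\cdots\geq\pi(\ell)$), then setting up the bijection $\pi\mapsto(\pi(1),\ldots,\pi(\ell))$ to weakly decreasing sequences and reading off the sum. Your remark about the vacuity of the second $\EE'$-partition condition is precisely the content of Proposition~\ref{prop.example.weaklinc}, so nothing is missing.
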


\begin{proof}
[Proof of Proposition \ref{prop.example.Gamma.Sb}.](a) The relation $<_{1}$ is
a total order (since it is a restriction of the relation $<$ on $\mathbb{Z}$,
which is a total order). Hence, the relation $>_{1}$ is a total order as well
(since it is the opposite relation of the total order $<_{1}$). Thus, $\left(
E,>_{1},>_{1}\right)  $ is a special double poset (by the definition of
\textquotedblleft special\textquotedblright). In other words, $\mathbf{E}%
^{\prime}$ is a special double poset (since ${\mathbf{E}}^{\prime}=\left(
E,>_{1},>_{1}\right)  $). This proves Proposition \ref{prop.example.Gamma.Sb} (a).

(b) The map $w$ is a map $E\rightarrow\left\{  1,2,3,\ldots\right\}  $. In
other words, the map $w$ is a map $\left\{  1,2,\ldots,\ell\right\}
\rightarrow\left\{  1,2,3,\ldots\right\}  $ (since $E=\left\{  1,2,\ldots
,\ell\right\}  $). Hence, for every $i\in\left\{  1,2,\ldots,\ell\right\}  $,
we have $w\left(  i\right)  \in\left\{  1,2,3,\ldots\right\}  $. Thus,
$\left(  w\left(  1\right)  ,w\left(  2\right)  ,\ldots,w\left(  \ell\right)
\right)  $ is a sequence of positive integers, i.e., a composition. In other
words, $\alpha$ is a composition (since $\alpha=\left(  w\left(  1\right)
,w\left(  2\right)  ,\ldots,w\left(  \ell\right)  \right)  $).

We have $\alpha=\left(  w\left(  1\right)  ,w\left(  2\right)  ,\ldots
,w\left(  \ell\right)  \right)  $. Thus, $\left(  \alpha_{1},\alpha_{2}%
,\ldots,\alpha_{\ell}\right)  =\alpha=\left(  w\left(  1\right)  ,w\left(
2\right)  ,\ldots,w\left(  \ell\right)  \right)  $. In other words,
$\alpha_{k}=w\left(  k\right)  $ for each $k\in\left\{  1,2,\ldots
,\ell\right\}  $. Hence,
\begin{equation}
\sum_{i_{1}\geq i_{2}\geq\cdots\geq i_{\ell}}\underbrace{x_{i_{1}}^{\alpha
_{1}}x_{i_{2}}^{\alpha_{2}}\cdots x_{i_{\ell}}^{\alpha_{\ell}}}%
_{\substack{=x_{i_{1}}^{w\left(  1\right)  }x_{i_{2}}^{w\left(  2\right)
}\cdots x_{i_{\ell}}^{w\left(  \ell\right)  }\\\text{(since }\alpha
_{k}=w\left(  k\right)  \text{ for each }k\in\left\{  1,2,\ldots,\ell\right\}
\text{)}}}=\sum_{i_{1}\geq i_{2}\geq\cdots\geq i_{\ell}}x_{i_{1}}^{w\left(
1\right)  }x_{i_{2}}^{w\left(  2\right)  }\cdots x_{i_{\ell}}^{w\left(
\ell\right)  }. \label{pf.prop.example.Gamma.Sb.b.Malpha=}%
\end{equation}

It remains to prove that $\Gamma\left(  \mathbf{E}^{\prime},w\right)
=\sum_{i_{1}\geq i_{2}\geq\cdots\geq i_{\ell}}x_{i_{1}}^{\alpha_{1}}x_{i_{2}%
}^{\alpha_{2}}\cdots x_{i_{\ell}}^{\alpha_{\ell}}$. The order $>_{1}$ is an
extension of the order $>_{1}$ (obviously). Thus, Proposition
\ref{prop.example.weaklinc} (applied to $\mathbf{E}^{\prime}$, $>_{1}$ and
$>_{1}$ instead of $\mathbf{E}$, $<_{1}$ and $<_{2}$) shows that the
$\mathbf{E}^{\prime}$-partitions are precisely the weakly increasing maps from
the poset $\left(  E,>_{1}\right)  $ to the totally ordered set $\left\{
1,2,3,\ldots\right\}  $.

On the other hand, let $\mathcal{J}$ denote the set of all length-$\ell$
weakly decreasing sequences of positive integers. In other words,%
\[
\mathcal{J}=\left\{  \left(  i_{1},i_{2},\ldots,i_{\ell}\right)  \in\left\{
1,2,3,\ldots\right\}  ^{\ell}\ \mid\ i_{1}\geq i_{2}\geq\cdots\geq i_{\ell
}\right\}  .
\]
Thus,%
\[
\sum_{\left(  i_{1},i_{2},\ldots,i_{\ell}\right)  \in\mathcal{J}}%
=\sum_{\substack{\left(  i_{1},i_{2},\ldots,i_{\ell}\right)  \in\left\{
1,2,3,\ldots\right\}  ^{\ell};\\i_{1}\geq i_{2}\geq\cdots\geq i_{\ell}}%
}=\sum_{i_{1}\geq i_{2}\geq\cdots\geq i_{\ell}}%
\]
(an equality between summation signs).

Let $Z$ denote the set of all $\mathbf{E}^{\prime}$-partitions.

For every $\phi\in Z$, we have $\left(  \phi\left(  1\right)  ,\phi\left(
2\right)  ,\ldots,\phi\left(  \ell\right)  \right)  \in\mathcal{J}%
$\ \ \ \ \footnote{\textit{Proof.} Let $\phi\in Z$. Thus, $\phi$ is an
$\mathbf{E}^{\prime}$-partition (since $Z$ is the set of all $\mathbf{E}%
^{\prime}$-partitions). In other words, $\phi$ is a weakly increasing map from
the poset $\left(  E,>_{1}\right)  $ to the totally ordered set $\left\{
1,2,3,\ldots\right\}  $ (since the $\mathbf{E}^{\prime}$-partitions are
precisely the weakly increasing maps from the poset $\left(  E,>_{1}\right)  $
to the totally ordered set $\left\{  1,2,3,\ldots\right\}  $). In other words,
$\phi$ is a map $E\rightarrow\left\{  1,2,3,\ldots\right\}  $ which has the
property that if $e$ and $f$ are two elements of $E$ satisfying $e>_{1}f$,
then%
\begin{equation}
\phi\left(  e\right)  \leq\phi\left(  f\right)
\label{pf.prop.example.Gamma.Sb.b.fn1.1}%
\end{equation}
(by the definition of a \textquotedblleft weakly increasing
map\textquotedblright).
\par
The map $\phi$ is a map $E\rightarrow\left\{  1,2,3,\ldots\right\}  $. In
other words, the map $\phi$ is a map $\left\{  1,2,\ldots,\ell\right\}
\rightarrow\left\{  1,2,3,\ldots\right\}  $ (since $E=\left\{  1,2,\ldots
,\ell\right\}  $). Thus, $\left(  \phi\left(  1\right)  ,\phi\left(  2\right)
,\ldots,\phi\left(  \ell\right)  \right)  $ is an element of $\left\{
1,2,3,\ldots\right\}  ^{\ell}$.
\par
Now, let $i$ and $j$ be two elements of $\left\{  1,2,\ldots,\ell\right\}  $
satisfying $i<j$. We have $i\in\left\{  1,2,\ldots,\ell\right\}  =E$; thus,
$\phi\left(  i\right)  $ is well-defined. Similarly, $\phi\left(  j\right)  $
is well-defined. The definition of the relation $<_{1}$ shows that the
relation $<_{1}$ is the restriction of the standard relation $<$ on
$\mathbb{Z}$ to the subset $E$. Thus, $i<_{1}j$ if and only if $i<j$. Hence,
$i<_{1}j$ (since $i<j$). Thus, $j>_{1}i$ (since $>_{1}$ is the opposite
relation of $<_{1}$). Hence, (\ref{pf.prop.example.Gamma.Sb.b.fn1.1}) (applied
to $e=j$ and $f=i$) shows that $\phi\left(  j\right)  \leq\phi\left(
i\right)  $. In other words, $\phi\left(  i\right)  \geq\phi\left(  j\right)
$.
\par
Now, forget that we fixed $i$ and $j$. We thus have shown that if $i$ and $j$
are two elements of $\left\{  1,2,\ldots,\ell\right\}  $ satisfying $i<j$,
then $\phi\left(  i\right)  \geq\phi\left(  j\right)  $. In other words,
$\phi\left(  1\right)  \geq\phi\left(  2\right)  \geq\cdots\geq\phi\left(
\ell\right)  $.
\par
Now, $\left(  \phi\left(  1\right)  ,\phi\left(  2\right)  ,\ldots,\phi\left(
\ell\right)  \right)  $ is an element of $\left\{  1,2,3,\ldots\right\}
^{\ell}$ and satisfies $\phi\left(  1\right)  \geq\phi\left(  2\right)
\geq\cdots\geq\phi\left(  \ell\right)  $. In other words,%
\begin{align*}
&  \left(  \phi\left(  1\right)  ,\phi\left(  2\right)  ,\ldots,\phi\left(
\ell\right)  \right) \\
&  \in\left\{  \left(  i_{1},i_{2},\ldots,i_{\ell}\right)  \in\left\{
1,2,3,\ldots\right\}  ^{\ell}\ \mid\ i_{1}\geq i_{2}\geq\cdots\geq i_{\ell
}\right\}  =\mathcal{J},
\end{align*}
qed.}. Hence, we can define a map $\Phi:Z\rightarrow\mathcal{J}$ by%
\[
\left(  \Phi\left(  \phi\right)  =\left(  \phi\left(  1\right)  ,\phi\left(
2\right)  ,\ldots,\phi\left(  \ell\right)  \right)
\ \ \ \ \ \ \ \ \ \ \text{for every }\phi\in Z\right)  .
\]
Consider this map $\Phi$. This map $\Phi$ is
injective\footnote{\textit{Proof.} Let $\phi_{1}$ and $\phi_{2}$ be two
elements of $Z$ such that $\Phi\left(  \phi_{1}\right)  =\Phi\left(  \phi
_{2}\right)  $. We shall show that $\phi_{1}=\phi_{2}$.
\par
The definition of $\Phi$ shows that $\Phi\left(  \phi_{1}\right)  =\left(
\phi_{1}\left(  1\right)  ,\phi_{1}\left(  2\right)  ,\ldots,\phi_{1}\left(
\ell\right)  \right)  $. The definition of $\Phi$ shows that $\Phi\left(
\phi_{2}\right)  =\left(  \phi_{2}\left(  1\right)  ,\phi_{2}\left(  2\right)
,\ldots,\phi_{2}\left(  \ell\right)  \right)  $. Hence,%
\[
\left(  \phi_{1}\left(  1\right)  ,\phi_{1}\left(  2\right)  ,\ldots,\phi
_{1}\left(  \ell\right)  \right)  =\Phi\left(  \phi_{1}\right)  =\Phi\left(
\phi_{2}\right)  =\left(  \phi_{2}\left(  1\right)  ,\phi_{2}\left(  2\right)
,\ldots,\phi_{2}\left(  \ell\right)  \right)  .
\]
In other words, $\phi_{1}\left(  i\right)  =\phi_{2}\left(  i\right)  $ for
each $i\in\left\{  1,2,\ldots,\ell\right\}  $. In other words, $\phi
_{1}\left(  i\right)  =\phi_{2}\left(  i\right)  $ for each $i\in E$ (since
$E=\left\{  1,2,\ldots,\ell\right\}  $). In other words, $\phi_{1}=\phi_{2}$.
\par
Now, forget that we fixed $\phi_{1}$ and $\phi_{2}$. We thus have shown that
if $\phi_{1}$ and $\phi_{2}$ are two elements of $Z$ such that $\Phi\left(
\phi_{1}\right)  =\Phi\left(  \phi_{2}\right)  $, then $\phi_{1}=\phi_{2}$. In
other words, the map $\Phi$ is injective, qed.} and
surjective\footnote{\textit{Proof.} Let $\mathbf{j}\in\mathcal{J}$. We shall
show that $\mathbf{j}\in\Phi\left(  Z\right)  $.
\par
We have $\mathbf{j}\in\mathcal{J}=\left\{  \left(  i_{1},i_{2},\ldots,i_{\ell
}\right)  \in\left\{  1,2,3,\ldots\right\}  ^{\ell}\ \mid\ i_{1}\geq i_{2}%
\geq\cdots\geq i_{\ell}\right\}  $. In other words, $\mathbf{j}$ has the form
$\left(  i_{1},i_{2},\ldots,i_{\ell}\right)  $ for some $\left(  i_{1}%
,i_{2},\ldots,i_{\ell}\right)  \in\left\{  1,2,3,\ldots\right\}  ^{\ell}$
satisfying $i_{1}\geq i_{2}\geq\cdots\geq i_{\ell}$. Consider this $\left(
i_{1},i_{2},\ldots,i_{\ell}\right)  $. Thus, $\mathbf{j}=\left(  i_{1}%
,i_{2},\ldots,i_{\ell}\right)  $.
\par
We have $i_{e}\in\left\{  1,2,3,\ldots\right\}  $ for every $e\in\left\{
1,2,\ldots,\ell\right\}  $ (since $\left(  i_{1},i_{2},\ldots,i_{\ell}\right)
\in\left\{  1,2,3,\ldots\right\}  ^{\ell}$). In other words, $i_{e}\in\left\{
1,2,3,\ldots\right\}  $ for every $e\in E$ (since $E=\left\{  1,2,\ldots
,\ell\right\}  $). Thus, we can define a map $\phi:E\rightarrow\left\{
1,2,3,\ldots\right\}  $ by $\left(  \phi\left(  e\right)  =i_{e}\text{ for
every }e\in E\right)  $. Consider this map $\phi$.
\par
We have $i_{1}\geq i_{2}\geq\cdots\geq i_{\ell}$. In other words, if $e$ and
$f$ are two elements of $\left\{  1,2,\ldots,\ell\right\}  $ such that $e<f$,
then%
\begin{equation}
i_{e}\geq i_{f}. \label{pf.prop.example.Gamma.Sb.b.fn3.1}%
\end{equation}
\par
Let $e$ and $f$ be two elements of $E$ satisfying $e>_{1}f$. We have $e>_{1}%
f$. In other words, $f<_{1}e$ (since $>_{1}$ is the opposite relation of
$<_{1}$). In other words, $f<e$ (since $<_{1}$ is the restriction of the
standard relation $<$ on $\mathbb{Z}$ to the subset $E$). Thus,
(\ref{pf.prop.example.Gamma.Sb.b.fn3.1}) (applied to $f$ and $e$ instead of
$e$ and $f$) shows that $i_{f}\geq i_{e}$. In other words, $i_{e}\leq i_{f}$.
But the definition of $\phi$ shows that $\phi\left(  e\right)  =i_{e}$ and
$\phi\left(  f\right)  =i_{f}$. Hence, $\phi\left(  e\right)  =i_{e}\leq
i_{f}=\phi\left(  f\right)  $.
\par
Now, forget that we fixed $e$ and $f$. We thus have shown that if $e$ and $f$
are two elements of $E$ satisfying $e>_{1}f$, then $\phi\left(  e\right)
\leq\phi\left(  f\right)  $. In other words, $\phi$ is a weakly increasing map
from the poset $\left(  E,>_{1}\right)  $ to the totally ordered set $\left\{
1,2,3,\ldots\right\}  $ (by the definition of a \textquotedblleft weakly
increasing map\textquotedblright). In other words, $\phi$ is an $\mathbf{E}%
^{\prime}$-partition (since the $\mathbf{E}^{\prime}$-partitions are precisely
the weakly increasing maps from the poset $\left(  E,>_{1}\right)  $ to the
totally ordered set $\left\{  1,2,3,\ldots\right\}  $). In other words,
$\phi\in Z$ (since $Z$ is the set of all $\mathbf{E}^{\prime}$-partitions).
\par
We have $\phi\left(  e\right)  =i_{e}$ for every $e\in E$ (by the definition
of $\phi$). In other words, $\phi\left(  e\right)  =i_{e}$ for every
$e\in\left\{  1,2,\ldots,\ell\right\}  $ (since $E=\left\{  1,2,\ldots
,\ell\right\}  $). In other words, $\left(  \phi\left(  1\right)  ,\phi\left(
2\right)  ,\ldots,\phi\left(  \ell\right)  \right)  =\left(  i_{1}%
,i_{2},\ldots,i_{\ell}\right)  $.
\par
Now, the definition of $\Phi$ yields%
\[
\Phi\left(  \phi\right)  =\left(  \phi\left(  1\right)  ,\phi\left(  2\right)
,\ldots,\phi\left(  \ell\right)  \right)  =\left(  i_{1},i_{2},\ldots,i_{\ell
}\right)  =\mathbf{j}.
\]
Thus, $\mathbf{j}=\Phi\left(  \underbrace{\phi}_{\in Z}\right)  \in\Phi\left(
Z\right)  $.
\par
Now, forget that we fixed $\mathbf{j}$. We thus have shown that $\mathbf{j}%
\in\Phi\left(  Z\right)  $ for every $\mathbf{j}\in\mathcal{J}$. In other
words, $\mathcal{J}\subseteq\Phi\left(  Z\right)  $. In other words, the map
$\Phi$ is surjective, qed.}. In other words, the map $\Phi$ is bijective.
Thus, $\Phi$ is a bijection. In other words, the map%
\begin{equation}
Z\rightarrow\mathcal{J},\ \ \ \ \ \ \ \ \ \ \pi\mapsto\left(  \pi\left(
1\right)  ,\pi\left(  2\right)  ,\ldots,\pi\left(  \ell\right)  \right)
\label{pf.prop.example.Gamma.Sb.b.themap}%
\end{equation}
is a bijection (since the map (\ref{pf.prop.example.Gamma.Sb.b.themap}) is the
map $\Phi$).

For every $\pi\in Z$, we have%
\begin{equation}
\mathbf{x}_{\pi,w}=x_{\pi\left(  1\right)  }^{w\left(  1\right)  }%
x_{\pi\left(  2\right)  }^{w\left(  2\right)  }\cdots x_{\pi\left(
\ell\right)  }^{w\left(  \ell\right)  } \label{pf.prop.example.Gamma.Sb.b.x=}%
\end{equation}
\footnote{\textit{Proof of (\ref{pf.prop.example.Gamma.Sb.b.x=}):} Let $\pi\in
Z$. Then, the definition of $\mathbf{x}_{\pi,w}$ yields%
\begin{align*}
{\mathbf{x}}_{\pi,w}  &  =\prod_{e\in E}x_{\pi\left(  e\right)  }^{w\left(
e\right)  }=\underbrace{\prod_{e\in\left\{  1,2,\ldots,\ell\right\}  }%
}_{=\prod_{e=1}^{\ell}}x_{\pi\left(  e\right)  }^{w\left(  e\right)
}\ \ \ \ \ \ \ \ \ \ \left(  \text{since }E=\left\{  1,2,\ldots,\ell\right\}
\right) \\
&  =\prod_{e=1}^{\ell}x_{\pi\left(  e\right)  }^{w\left(  e\right)  }%
=x_{\pi\left(  1\right)  }^{w\left(  1\right)  }x_{\pi\left(  2\right)
}^{w\left(  2\right)  }\cdots x_{\pi\left(  \ell\right)  }^{w\left(
\ell\right)  }.
\end{align*}
This proves (\ref{pf.prop.example.Gamma.Sb.b.x=}).}.

Now, the definition of $\Gamma\left(  \mathbf{E}^{\prime},w\right)  $ yields%
\begin{align*}
\Gamma\left(  {\mathbf{E}}^{\prime},w\right)   &  =\underbrace{\sum_{\pi\text{
is an }{\mathbf{E}}^{\prime}\text{-partition}}}_{\substack{=\sum_{\pi\in
Z}\\\text{(since }Z\text{ is the set of}\\\text{all }\mathbf{E}^{\prime
}\text{-partitions)}}}{\mathbf{x}}_{\pi,w}=\sum_{\pi\in Z}%
\underbrace{{\mathbf{x}}_{\pi,w}}_{\substack{=x_{\pi\left(  1\right)
}^{w\left(  1\right)  }x_{\pi\left(  2\right)  }^{w\left(  2\right)  }\cdots
x_{\pi\left(  \ell\right)  }^{w\left(  \ell\right)  }\\\text{(by
(\ref{pf.prop.example.Gamma.Sb.b.x=}))}}}=\sum_{\pi\in Z}x_{\pi\left(
1\right)  }^{w\left(  1\right)  }x_{\pi\left(  2\right)  }^{w\left(  2\right)
}\cdots x_{\pi\left(  \ell\right)  }^{w\left(  \ell\right)  }\\
&  =\underbrace{\sum_{\left(  i_{1},i_{2},\ldots,i_{\ell}\right)
\in\mathcal{J}}}_{=\sum_{i_{1}\geq i_{2}\geq\cdots\geq i_{\ell}}}x_{i_{1}%
}^{w\left(  1\right)  }x_{i_{2}}^{w\left(  2\right)  }\cdots x_{i_{\ell}%
}^{w\left(  \ell\right)  }\\
&  \ \ \ \ \ \ \ \ \ \ \left(
\begin{array}
[c]{c}%
\text{here, we have substituted }\left(  i_{1},i_{2},\ldots,i_{\ell}\right)
\text{ for }\left(  \pi\left(  1\right)  ,\pi\left(  2\right)  ,\ldots
,\pi\left(  \ell\right)  \right)  \text{,}\\
\text{since the map }Z\rightarrow\mathcal{J},\ \pi\mapsto\left(  \pi\left(
1\right)  ,\pi\left(  2\right)  ,\ldots,\pi\left(  \ell\right)  \right) \\
\text{is a bijection}%
\end{array}
\right) \\
&  =\sum_{i_{1}\geq i_{2}\geq\cdots\geq i_{\ell}}x_{i_{1}}^{w\left(  1\right)
}x_{i_{2}}^{w\left(  2\right)  }\cdots x_{i_{\ell}}^{w\left(  \ell\right)
}=\sum_{i_{1}\geq i_{2}\geq\cdots\geq i_{\ell}}x_{i_{1}}^{\alpha_{1}}x_{i_{2}%
}^{\alpha_{2}}\cdots x_{i_{\ell}}^{\alpha_{\ell}}\ \ \ \ \ \ \ \ \ \ \left(
\text{by (\ref{pf.prop.example.Gamma.Sb.b.Malpha=})}\right)  .
\end{align*}
This completes the proof of Proposition \ref{prop.example.Gamma.Sb} (b).
\end{proof}

Our next claim is an analogue of Proposition \ref{prop.Malpha.D}:

\begin{proposition}
\label{prop.SMalpha.D}Let $\alpha=\left(  \alpha_{1},\alpha_{2},\ldots
,\alpha_{\ell}\right)  $ be a composition of a nonnegative integer $n$. Then,%
\[
\sum_{i_{1}\leq i_{2}\leq\cdots\leq i_{\ell}}x_{i_{1}}^{\alpha_{1}}x_{i_{2}%
}^{\alpha_{2}}\cdots x_{i_{\ell}}^{\alpha_{\ell}}=\sum_{\substack{i_{1}\leq
i_{2}\leq\cdots\leq i_{n};\\\left\{  j\in\left[  n-1\right]  \ \mid
\ i_{j}<i_{j+1}\right\}  \subseteq D\left(  \alpha\right)  }}x_{i_{1}}%
x_{i_{2}}\cdots x_{i_{n}}.
\]

\end{proposition}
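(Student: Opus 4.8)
The plan is to mirror the proof of Proposition~\ref{prop.Malpha.D} almost verbatim, replacing the strict index conditions used there by weak ones throughout. Write $\alpha=\left(\alpha_1,\ldots,\alpha_\ell\right)$, and for $i\in\left\{0,1,\ldots,\ell\right\}$ set $s_i=\alpha_1+\alpha_2+\cdots+\alpha_i$, so that $s_0<s_1<\cdots<s_\ell=n$ (Lemma~\ref{lem.Dalpha.s}(b),(e)) and $D\left(\alpha\right)=\left\{s_1,\ldots,s_{\ell-1}\right\}$ (Lemma~\ref{lem.Dalpha.s}(c)). Let $f:\left[n\right]\to\left[\ell\right]$ be the map of Lemma~\ref{lem.Dalpha.s2}. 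Denote by $\mathcal{J}'$ the set of all weakly increasing $\ell$-tuples $\left(h_1\leq\cdots\leq h_\ell\right)$ of positive integers, and by $\mathcal{I}'$ the set of all weakly increasing $n$-tuples $\left(i_1\leq\cdots\leq i_n\right)$ of positive integers whose ascent set $\left\{j\in\left[n-1\right]\ \mid\ i_j<i_{j+1}\right\}$ is contained in $D\left(\alpha\right)$. Then the left-hand side of the claimed identity equals $\sum_{\left(h_1,\ldots,h_\ell\right)\in\mathcal{J}'}x_{h_1}^{\alpha_1}\cdots x_{h_\ell}^{\alpha_\ell}$, and the right-hand side equals $\sum_{\left(i_1,\ldots,i_n\right)\in\mathcal{I}'}x_{i_1}\cdots x_{i_n}$.

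First I would build mutually inverse bijections $\Phi':\mathcal{I}'\to\mathcal{J}'$ and $\Psi':\mathcal{J}'\to\mathcal{I}'$ by the same formulas as in Proposition~\ref{prop.Malpha.D}, namely $\Phi'\left(i_1,\ldots,i_n\right)=\left(i_{s_1},\ldots,i_{s_\ell}\right)$ and $\Psi'\left(h_1,\ldots,h_\ell\right)=\left(h_{f\left(1\right)},\ldots,h_{f\left(n\right)}\right)$. That $\Phi'$ lands in $\mathcal{J}'$ is Lemma~\ref{lem.Dalpha.ItoJ}(b), using only its weak-monotonicity conclusion and not the strictness of part (c); that $\Psi'$ lands in $\mathcal{I}'$ follows from Lemma~\ref{lem.Dalpha.JtoI}(a) together with the containment $\left\{j\ \mid\ h_{f\left(j\right)}<h_{f\left(j+1\right)}\right\}\subseteq D\left(\alpha\right)$ of Lemma~\ref{lem.Dalpha.JtoI}(b) (again only the weak version, not the equality of part (c)). The verification $\Phi'\circ\Psi'=\id$ uses $f\left(s_k\right)=k$, which is \eqref{pf.prop.Malpha.D.f.7}, and $\Psi'\circ\Phi'=\id$ uses $i_{s_{f\left(k\right)}}=i_k$, which is exactly Lemma~\ref{lem.Dalpha.ItoJ}(a) and whose hypothesis is precisely the defining condition of $\mathcal{I}'$. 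These computations are identical to those in the proof of Proposition~\ref{prop.Malpha.D}, with $\mathcal{J}$, $\mathcal{I}$ replaced by $\mathcal{J}'$, $\mathcal{I}'$.

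Next I would record that the bijection is monomial-preserving: for every $\left(i_1,\ldots,i_n\right)\in\mathcal{I}'$ one has $x_{i_1}x_{i_2}\cdots x_{i_n}=x_{i_{s_1}}^{\alpha_1}x_{i_{s_2}}^{\alpha_2}\cdots x_{i_{s_\ell}}^{\alpha_\ell}$. This is the content of the computation proving \eqref{pf.prop.Malpha.D.f-1.3}, which invokes only $f^{-1}\left(j\right)=\left[s_j\right]\setminus\left[s_{j-1}\right]$ and $\left|f^{-1}\left(j\right)\right|=\alpha_j$ (Lemma~\ref{lem.Dalpha.s2}(g),(h)) together with $i_k=i_{s_{f\left(k\right)}}$ (Lemma~\ref{lem.Dalpha.ItoJ}(a)); hence it goes through for all of $\mathcal{I}'$, not merely for the equality-constrained subset used in Proposition~\ref{prop.Malpha.D}. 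Substituting $\left(i_{s_1},\ldots,i_{s_\ell}\right)=\Phi'\left(i_1,\ldots,i_n\right)$ for the summation index along the bijection $\Phi'$ then gives
\[
\sum_{\left(i_1,\ldots,i_n\right)\in\mathcal{I}'}x_{i_1}\cdots x_{i_n}=\sum_{\left(h_1,\ldots,h_\ell\right)\in\mathcal{J}'}x_{h_1}^{\alpha_1}\cdots x_{h_\ell}^{\alpha_\ell},
\]
which is exactly the asserted identity.

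There is no genuine obstacle here beyond bookkeeping: the whole point is that Lemmas~\ref{lem.Dalpha.ItoJ} and~\ref{lem.Dalpha.JtoI} were already stated with a weak version (parts (a),(b), phrased via $\subseteq D\left(\alpha\right)$) and a strict version (part (c), phrased via $=D\left(\alpha\right)$), and Proposition~\ref{prop.Malpha.D} used the strict version while this proposition needs only the weak one. The single point requiring care is to confirm that \eqref{pf.prop.Malpha.D.f-1.3} and the two inverse-map computations never secretly use the equality $\left\{j\ \mid\ i_j<i_{j+1}\right\}=D\left(\alpha\right)$; inspection shows they rely only on $i_k=i_{s_{f\left(k\right)}}$ and $f\left(s_k\right)=k$, both valid under the weaker hypothesis. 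As an alternative that shortens one side, I could instead group the left-hand side into $\sum_{\beta\,:\,D\left(\beta\right)\subseteq D\left(\alpha\right)}M_\beta$ and invoke Corollary~\ref{cor.Dalpha.SM}, which already identifies the right-hand side with that same sum; but the direct bijection above keeps the argument exactly parallel to Proposition~\ref{prop.Malpha.D}.
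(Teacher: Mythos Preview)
Your proposal is correct and follows essentially the same approach as the paper: the paper defines the same sets $\mathcal{I}$ and $\mathcal{J}$ (your $\mathcal{I}'$ and $\mathcal{J}'$), the same maps $\Phi$ and $\Psi$, and verifies well-definedness, mutual inverseness, and monomial preservation by citing exactly the lemmas you name (Lemma~\ref{lem.Dalpha.ItoJ}(a),(b), Lemma~\ref{lem.Dalpha.JtoI}(a),(b), \eqref{pf.prop.Malpha.D.f.7}, and Lemma~\ref{lem.Dalpha.s2}(g),(h)). Your observation that only the weak parts of those lemmas are needed here, as opposed to the strict parts used for Proposition~\ref{prop.Malpha.D}, is precisely the point.
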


\begin{proof}
[Proof of Proposition \ref{prop.SMalpha.D}.]Let $\mathcal{J}$ denote the set
of all length-$\ell$ weakly increasing sequences of positive integers. In
other words,%
\begin{equation}
\mathcal{J}=\left\{  \left(  i_{1},i_{2},\ldots,i_{\ell}\right)  \in\left\{
1,2,3,\ldots\right\}  ^{\ell}\ \mid\ i_{1}\leq i_{2}\leq\cdots\leq i_{\ell
}\right\}  . \label{pf.prop.SMalpha.D.J=}%
\end{equation}
Renaming the index $\left(  i_{1},i_{2},\ldots,i_{\ell}\right)  $ as $\left(
j_{1},j_{2},\ldots,j_{\ell}\right)  $ in this formula, we obtain%
\[
\mathcal{J}=\left\{  \left(  j_{1},j_{2},\ldots,j_{\ell}\right)  \in\left\{
1,2,3,\ldots\right\}  ^{\ell}\ \mid\ j_{1}\leq j_{2}\leq\cdots\leq j_{\ell
}\right\}  .
\]

Now,%
\begin{align}
&  \underbrace{\sum_{i_{1}\leq i_{2}\leq\cdots\leq i_{\ell}}}_{\substack{=\sum
_{\substack{\left(  i_{1},i_{2},\ldots,i_{\ell}\right)  \in\left\{
1,2,3,\ldots\right\}  ^{\ell};\\i_{1}\leq i_{2}\leq\cdots\leq i_{\ell}}%
}=\sum_{\left(  i_{1},i_{2},\ldots,i_{\ell}\right)  \in\mathcal{J}%
}\\\text{(since }\left\{  \left(  i_{1},i_{2},\ldots,i_{\ell}\right)
\in\left\{  1,2,3,\ldots\right\}  ^{\ell}\ \mid\ i_{1}\leq i_{2}\leq\cdots\leq
i_{\ell}\right\}  =\mathcal{J}\text{)}}}x_{i_{1}}^{\alpha_{1}}x_{i_{2}%
}^{\alpha_{2}}\cdots x_{i_{\ell}}^{\alpha_{\ell}}\nonumber\\
&  =\sum_{\left(  i_{1},i_{2},\ldots,i_{\ell}\right)  \in\mathcal{J}}x_{i_{1}%
}^{\alpha_{1}}x_{i_{2}}^{\alpha_{2}}\cdots x_{i_{\ell}}^{\alpha_{\ell}}%
=\sum_{\left(  j_{1},j_{2},\ldots,j_{\ell}\right)  \in\mathcal{J}}x_{j_{1}%
}^{\alpha_{1}}x_{j_{2}}^{\alpha_{2}}\cdots x_{j_{\ell}}^{\alpha_{\ell}}
\label{pf.prop.SMalpha.D.Malpha=}%
\end{align}
(here, we have renamed $\left(  i_{1},i_{2},\ldots,i_{\ell}\right)  $ as
$\left(  j_{1},j_{2},\ldots,j_{\ell}\right)  $ in the sum).

Define a set $\mathcal{I}$ by%
\begin{align}
\mathcal{I}  &  =\left\{  \left(  i_{1},i_{2},\ldots,i_{n}\right)  \in\left\{
1,2,3,\ldots\right\}  ^{n}\ \mid\ i_{1}\leq i_{2}\leq\cdots\leq i_{n}\right.
\nonumber\\
&  \ \ \ \ \ \ \ \ \ \ \left.  \text{and }\left\{  j\in\left[  n-1\right]
\ \mid\ i_{j}<i_{j+1}\right\}  \subseteq D\left(  \alpha\right)  \right\}  .
\label{pf.prop.SMalpha.D.I=}%
\end{align}
Thus, $\sum_{\substack{i_{1}\leq i_{2}\leq\cdots\leq i_{n};\\\left\{
j\in\left[  n-1\right]  \ \mid\ i_{j}<i_{j+1}\right\}  \subseteq D\left(
\alpha\right)  }}=\sum_{\left(  i_{1},i_{2},\ldots,i_{n}\right)
\in\mathcal{I}}$ (an equality between summation signs). Hence,%
\begin{equation}
\sum_{\substack{i_{1}\leq i_{2}\leq\cdots\leq i_{n};\\\left\{  j\in\left[
n-1\right]  \ \mid\ i_{j}<i_{j+1}\right\}  \subseteq D\left(  \alpha\right)
}}x_{i_{1}}x_{i_{2}}\cdots x_{i_{n}}=\sum_{\left(  i_{1},i_{2},\ldots
,i_{n}\right)  \in\mathcal{I}}x_{i_{1}}x_{i_{2}}\cdots x_{i_{n}}.
\label{pf.prop.SMalpha.D.RHS=}%
\end{equation}

The definition of $\mathcal{I}$ shows that%
\begin{align*}
\mathcal{I}  &  =\left\{  \left(  i_{1},i_{2},\ldots,i_{n}\right)  \in\left\{
1,2,3,\ldots\right\}  ^{n}\ \mid\ i_{1}\leq i_{2}\leq\cdots\leq i_{n}\right.
\\
&  \ \ \ \ \ \ \ \ \ \ \left.  \text{and }\left\{  j\in\left[  n-1\right]
\ \mid\ i_{j}<i_{j+1}\right\}  \subseteq D\left(  \alpha\right)  \right\} \\
&  =\left\{  \left(  k_{1},k_{2},\ldots,k_{n}\right)  \in\left\{
1,2,3,\ldots\right\}  ^{n}\ \mid\ k_{1}\leq k_{2}\leq\cdots\leq k_{n}\right.
\\
&  \ \ \ \ \ \ \ \ \ \ \left.  \text{and }\left\{  j\in\left[  n-1\right]
\ \mid\ k_{j}<k_{j+1}\right\}  \subseteq D\left(  \alpha\right)  \right\}
\end{align*}
(here, we have renamed the index $\left(  i_{1},i_{2},\ldots,i_{n}\right)  $
as $\left(  k_{1},k_{2},\ldots,k_{n}\right)  $).

Now, for every $i\in\left\{  0,1,\ldots,\ell\right\}  $, define a nonnegative
integer $s_{i}$ by
\[
s_{i}=\alpha_{1}+\alpha_{2}+\cdots+\alpha_{i}.
\]

Define a map $f:\left[  n\right]  \rightarrow\left[  \ell\right]  $ as in
Lemma \ref{lem.Dalpha.s2}.

Now, for every $\left(  i_{1},i_{2},\ldots,i_{n}\right)  \in\mathcal{I}$, we
have $\left(  i_{s_{1}},i_{s_{2}},\ldots,i_{s_{\ell}}\right)  \in\mathcal{J}%
$\ \ \ \ \footnote{\textit{Proof.} Let $\left(  i_{1},i_{2},\ldots
,i_{n}\right)  \in\mathcal{I}$. Thus,%
\begin{align*}
\left(  i_{1},i_{2},\ldots,i_{n}\right)   &  \in\mathcal{I}\\
&  =\left\{  \left(  k_{1},k_{2},\ldots,k_{n}\right)  \in\left\{
1,2,3,\ldots\right\}  ^{n}\ \mid\ k_{1}\leq k_{2}\leq\cdots\leq k_{n}\right.
\\
&  \ \ \ \ \ \ \ \ \ \ \left.  \text{and }\left\{  j\in\left[  n-1\right]
\ \mid\ k_{j}<k_{j+1}\right\}  \subseteq D\left(  \alpha\right)  \right\}  .
\end{align*}
In other words, $\left(  i_{1},i_{2},\ldots,i_{n}\right)  $ is an element of
$\left\{  1,2,3,\ldots\right\}  ^{n}$ satisfying $i_{1}\leq i_{2}\leq
\cdots\leq i_{n}$ and $\left\{  j\in\left[  n-1\right]  \ \mid\ i_{j}%
<i_{j+1}\right\}  \subseteq D\left(  \alpha\right)  $.
\par
Lemma \ref{lem.Dalpha.ItoJ} (b) shows that we have $\left(  i_{s_{1}}%
,i_{s_{2}},\ldots,i_{s_{\ell}}\right)  \in\left\{  1,2,3,\ldots\right\}
^{\ell}$ and $i_{s_{1}}\leq i_{s_{2}}\leq\cdots\leq i_{s_{\ell}}$. In other
words,%
\[
\left(  i_{s_{1}},i_{s_{2}},\ldots,i_{s_{\ell}}\right)  \in\left\{  \left(
j_{1},j_{2},\ldots,j_{\ell}\right)  \in\left\{  1,2,3,\ldots\right\}  ^{\ell
}\ \mid\ j_{1}\leq j_{2}\leq\cdots\leq j_{\ell}\right\}  =\mathcal{J},
\]
qed.}. Hence, we can define a map $\Phi:\mathcal{I}\rightarrow\mathcal{J}$ by
setting%
\[
\left(  \Phi\left(  i_{1},i_{2},\ldots,i_{n}\right)  =\left(  i_{s_{1}%
},i_{s_{2}},\ldots,i_{s_{\ell}}\right)  \ \ \ \ \ \ \ \ \ \ \text{for every
}\left(  i_{1},i_{2},\ldots,i_{n}\right)  \in\mathcal{I}\right)  .
\]
Consider this $\Phi$.

For every $\left(  i_{1},i_{2},\ldots,i_{n}\right)  \in\mathcal{I}$, we have%
\begin{equation}
i_{s_{f\left(  k\right)  }}=i_{k}\ \ \ \ \ \ \ \ \ \ \text{for every }%
k\in\left[  n\right]  \label{pf.prop.SMalpha.D.sf.1}%
\end{equation}
\footnote{\textit{Proof of (\ref{pf.prop.SMalpha.D.sf.1}):} Fix $\left(
i_{1},i_{2},\ldots,i_{n}\right)  \in\mathcal{I}$. We need to prove the
equality (\ref{pf.prop.SMalpha.D.sf.1}).
\par
We have%
\begin{align*}
\left(  i_{1},i_{2},\ldots,i_{n}\right)   &  \in\mathcal{I}\\
&  =\left\{  \left(  k_{1},k_{2},\ldots,k_{n}\right)  \in\left\{
1,2,3,\ldots\right\}  ^{n}\ \mid\ k_{1}\leq k_{2}\leq\cdots\leq k_{n}\right.
\\
&  \ \ \ \ \ \ \ \ \ \ \left.  \text{and }\left\{  j\in\left[  n-1\right]
\ \mid\ k_{j}<k_{j+1}\right\}  \subseteq D\left(  \alpha\right)  \right\}  .
\end{align*}
In other words, $\left(  i_{1},i_{2},\ldots,i_{n}\right)  $ is an element of
$\left\{  1,2,3,\ldots\right\}  ^{n}$ satisfying $i_{1}\leq i_{2}\leq
\cdots\leq i_{n}$ and $\left\{  j\in\left[  n-1\right]  \ \mid\ i_{j}%
<i_{j+1}\right\}  \subseteq D\left(  \alpha\right)  $. Hence, Lemma
\ref{lem.Dalpha.ItoJ} (a) shows that we have $i_{s_{f\left(  k\right)  }%
}=i_{k}$ for every $k\in\left[  n\right]  $. Hence,
(\ref{pf.prop.SMalpha.D.sf.1}) is proven.}.

Now, for every $\left(  h_{1},h_{2},\ldots,h_{\ell}\right)  \in\mathcal{J}$,
we have $\left(  h_{f\left(  1\right)  },h_{f\left(  2\right)  }%
,\ldots,h_{f\left(  n\right)  }\right)  \in\mathcal{I}$%
\ \ \ \ \footnote{\textit{Proof.} Let $\left(  h_{1},h_{2},\ldots,h_{\ell
}\right)  \in\mathcal{J}$. Thus, $\left(  h_{1},h_{2},\ldots,h_{\ell}\right)
\in\mathcal{J}=\left\{  \left(  i_{1},i_{2},\ldots,i_{\ell}\right)
\in\left\{  1,2,3,\ldots\right\}  ^{\ell}\ \mid\ i_{1}\leq i_{2}\leq\cdots\leq
i_{\ell}\right\}  $. In other words, $\left(  h_{1},h_{2},\ldots,h_{\ell
}\right)  $ is an $\ell$-tuple in $\left\{  1,2,3,\ldots\right\}  ^{\ell}$ and
satisfies $h_{1}\leq h_{2}\leq\cdots\leq h_{\ell}$. Hence, Lemma
\ref{lem.Dalpha.JtoI} (a) shows that we have $\left(  h_{f\left(  1\right)
},h_{f\left(  2\right)  },\ldots,h_{f\left(  n\right)  }\right)  \in\left\{
1,2,3,\ldots\right\}  ^{n}$ and $h_{f\left(  1\right)  }\leq h_{f\left(
2\right)  }\leq\cdots\leq h_{f\left(  n\right)  }$. Furthermore, Lemma
\ref{lem.Dalpha.JtoI} (b) yields $\left\{  j\in\left[  n-1\right]
\ \mid\ h_{f\left(  j\right)  }<h_{f\left(  j+1\right)  }\right\}  \subseteq
D\left(  \alpha\right)  $.
\par
Thus, $\left(  h_{f\left(  1\right)  },h_{f\left(  2\right)  },\ldots
,h_{f\left(  n\right)  }\right)  $ is an $n$-tuple in $\left\{  1,2,3,\ldots
\right\}  ^{n}$ which satisfies $h_{f\left(  1\right)  }\leq h_{f\left(
2\right)  }\leq\cdots\leq h_{f\left(  n\right)  }$ and $\left\{  j\in\left[
n-1\right]  \ \mid\ h_{f\left(  j\right)  }<h_{f\left(  j+1\right)  }\right\}
\subseteq D\left(  \alpha\right)  $. In other words,%
\begin{align*}
&  \left(  h_{f\left(  1\right)  },h_{f\left(  2\right)  },\ldots,h_{f\left(
n\right)  }\right) \\
&  \in\left\{  \left(  i_{1},i_{2},\ldots,i_{n}\right)  \in\left\{
1,2,3,\ldots\right\}  ^{n}\ \mid\ i_{1}\leq i_{2}\leq\cdots\leq i_{n}\right.
\\
&  \ \ \ \ \ \ \ \ \ \ \left.  \text{and }\left\{  j\in\left[  n-1\right]
\ \mid\ i_{j}<i_{j+1}\right\}  \subseteq D\left(  \alpha\right)  \right\}  .
\end{align*}
In light of (\ref{pf.prop.SMalpha.D.I=}), this rewrites as $\left(
h_{f\left(  1\right)  },h_{f\left(  2\right)  },\ldots,h_{f\left(  n\right)
}\right)  \in\mathcal{I}$. Qed.}. Hence, we can define a map $\Psi
:\mathcal{J}\rightarrow\mathcal{I}$ by setting%
\[
\left(  \Psi\left(  h_{1},h_{2},\ldots,h_{\ell}\right)  =\left(  h_{f\left(
1\right)  },h_{f\left(  2\right)  },\ldots,h_{f\left(  n\right)  }\right)
\ \ \ \ \ \ \ \ \ \ \text{for every }\left(  h_{1},h_{2},\ldots,h_{\ell
}\right)  \in\mathcal{J}\right)  .
\]
Consider this $\Psi$.

Now, $\Phi\circ\Psi=\operatorname*{id}$\ \ \ \ \footnote{\textit{Proof.} Let
$\left(  h_{1},h_{2},\ldots,h_{\ell}\right)  \in\mathcal{J}$. For every
$i\in\left[  \ell\right]  $, we have $f\left(  s_{i}\right)  =i$ (by
(\ref{pf.prop.Malpha.D.f.7})) and thus $h_{f\left(  s_{i}\right)  }=h_{i}$.
Now,%
\begin{align*}
\left(  \Phi\circ\Psi\right)  \left(  h_{1},h_{2},\ldots,h_{\ell}\right)   &
=\Phi\left(  \underbrace{\Psi\left(  h_{1},h_{2},\ldots,h_{\ell}\right)
}_{=\left(  h_{f\left(  1\right)  },h_{f\left(  2\right)  },\ldots,h_{f\left(
n\right)  }\right)  }\right)  =\Phi\left(  h_{f\left(  1\right)  },h_{f\left(
2\right)  },\ldots,h_{f\left(  n\right)  }\right) \\
&  =\left(  h_{f\left(  s_{1}\right)  },h_{f\left(  s_{2}\right)  }%
,\ldots,h_{f\left(  s_{\ell}\right)  }\right)  \ \ \ \ \ \ \ \ \ \ \left(
\text{by the definition of }\Phi\right) \\
&  =\left(  h_{1},h_{2},\ldots,h_{\ell}\right)
\end{align*}
(since $h_{f\left(  s_{i}\right)  }=h_{i}$ for every $i\in\left[  \ell\right]
$).
\par
Now, forget that we fixed $\left(  h_{1},h_{2},\ldots,h_{\ell}\right)  $. We
thus have shown that $\left(  \Phi\circ\Psi\right)  \left(  h_{1},h_{2}%
,\ldots,h_{\ell}\right)  =\left(  h_{1},h_{2},\ldots,h_{\ell}\right)  $ for
every $\left(  h_{1},h_{2},\ldots,h_{\ell}\right)  \in\mathcal{J}$. In other
words, $\Phi\circ\Psi=\operatorname*{id}$, qed.} and $\Psi\circ\Phi
=\operatorname*{id}$ \ \ \ \footnote{\textit{Proof.} For every $\left(
i_{1},i_{2},\ldots,i_{n}\right)  \in\mathcal{I}$, we have%
\begin{align*}
\left(  \Psi\circ\Phi\right)  \left(  i_{1},i_{2},\ldots,i_{n}\right)   &
=\Psi\left(  \underbrace{\Phi\left(  i_{1},i_{2},\ldots,i_{n}\right)
}_{\substack{=\left(  i_{s_{1}},i_{s_{2}},\ldots,i_{s_{\ell}}\right)
\\\text{(by the definition of }\Phi\text{)}}}\right)  =\Psi\left(  i_{s_{1}%
},i_{s_{2}},\ldots,i_{s_{\ell}}\right) \\
&  =\left(  i_{s_{f\left(  1\right)  }},i_{s_{f\left(  2\right)  }}%
,\ldots,i_{s_{f\left(  n\right)  }}\right)  \ \ \ \ \ \ \ \ \ \ \left(
\text{by the definition of }\Psi\right) \\
&  =\left(  i_{1},i_{2},\ldots,i_{n}\right)  \ \ \ \ \ \ \ \ \ \ \left(
\text{by (\ref{pf.prop.SMalpha.D.sf.1})}\right)  .
\end{align*}
In other words, $\Psi\circ\Phi=\operatorname*{id}$, qed.}. Hence, the maps
$\Phi$ and $\Psi$ are mutually inverse. Thus, the map $\Phi$ is a bijection.
In other words, the map
\[
\mathcal{I}\rightarrow\mathcal{J},\ \ \ \ \ \ \ \ \ \ \left(  i_{1}%
,i_{2},\ldots,i_{n}\right)  \mapsto\left(  i_{s_{1}},i_{s_{2}},\ldots
,i_{s_{\ell}}\right)
\]
is a bijection\footnote{since $\Phi$ is the map
\[
\mathcal{I}\rightarrow\mathcal{J},\ \ \ \ \ \ \ \ \ \ \left(  i_{1}%
,i_{2},\ldots,i_{n}\right)  \mapsto\left(  i_{s_{1}},i_{s_{2}},\ldots
,i_{s_{\ell}}\right)
\]
(by the definition of $\Phi$)}.

Now, for every $\left(  i_{1},i_{2},\ldots,i_{n}\right)  \in\mathcal{I}$, we
have
\begin{equation}
x_{i_{1}}x_{i_{2}}\cdots x_{i_{n}}=x_{i_{s_{1}}}^{\alpha_{1}}x_{i_{s_{2}}%
}^{\alpha_{2}}\cdots x_{i_{s_{\ell}}}^{\alpha_{\ell}}
\label{pf.prop.SMalpha.D.f-1.3}%
\end{equation}
\footnote{\textit{Proof of (\ref{pf.prop.SMalpha.D.f-1.3}):} Let $\left(
i_{1},i_{2},\ldots,i_{n}\right)  \in\mathcal{I}$. Then,%
\begin{align*}
x_{i_{1}}x_{i_{2}}\cdots x_{i_{n}}  &  =\prod_{k\in\left[  n\right]  }%
x_{i_{k}}=\prod_{j\in\left[  \ell\right]  }\underbrace{\prod_{\substack{k\in
\left[  n\right]  ;\\f\left(  k\right)  =j}}}_{=\prod_{k\in f^{-1}\left(
j\right)  }}\underbrace{x_{i_{k}}}_{\substack{=x_{i_{s_{j}}}\\\text{(because
(\ref{pf.prop.SMalpha.D.sf.1}) shows that}\\i_{k}=i_{s_{f\left(  k\right)  }%
}=i_{s_{j}}\text{ (since }f\left(  k\right)  =j\text{))}}%
}\ \ \ \ \ \ \ \ \ \ \left(  \text{since }f\left(  k\right)  \in\left[
\ell\right]  \text{ for every }k\in\left[  n\right]  \right) \\
&  =\prod_{j\in\left[  \ell\right]  }\underbrace{\prod_{k\in f^{-1}\left(
j\right)  }x_{i_{s_{j}}}}_{\substack{=x_{i_{s_{j}}}^{\left\vert f^{-1}\left(
j\right)  \right\vert }=x_{i_{s_{j}}}^{\alpha_{j}}\\\text{(by
(\ref{pf.prop.Malpha.D.f-1.2}))}}}=\prod_{j\in\left[  \ell\right]
}x_{i_{s_{j}}}^{\alpha_{j}}=x_{i_{s_{1}}}^{\alpha_{1}}x_{i_{s_{2}}}%
^{\alpha_{2}}\cdots x_{i_{s_{\ell}}}^{\alpha_{\ell}}.
\end{align*}
This proves (\ref{pf.prop.SMalpha.D.f-1.3}).}. But
(\ref{pf.prop.SMalpha.D.Malpha=}) becomes%
\begin{align*}
\sum_{i_{1}\leq i_{2}\leq\cdots\leq i_{\ell}}
x_{i_{1}}^{\alpha_{1}}x_{i_{2}}^{\alpha_{2}}\cdots x_{i_{\ell}}^{\alpha_{\ell}}
&  =\sum_{\left(  j_{1},j_{2},\ldots,j_{\ell}\right)
\in\mathcal{J}}x_{j_{1}}^{\alpha_{1}}x_{j_{2}}^{\alpha_{2}}\cdots x_{j_{\ell}%
}^{\alpha_{\ell}}=\sum_{\left(  i_{1},i_{2},\ldots,i_{n}\right)
\in\mathcal{I}}\underbrace{x_{i_{s_{1}}}^{\alpha_{1}}x_{i_{s_{2}}}^{\alpha
_{2}}\cdots x_{i_{s_{\ell}}}^{\alpha_{\ell}}}_{\substack{=x_{i_{1}}x_{i_{2}%
}\cdots x_{i_{n}}\\\text{(by (\ref{pf.prop.SMalpha.D.f-1.3}))}}}\\
&  \ \ \ \ \ \ \ \ \ \ \left(
\begin{array}
[c]{c}%
\text{here, we have substituted }\left(  i_{s_{1}},i_{s_{2}},\ldots
,i_{s_{\ell}}\right)  \text{ for }\left(  j_{1},j_{2},\ldots,j_{\ell}\right)
\text{ in the}\\
\text{sum, since the map }\mathcal{I}\rightarrow\mathcal{J},\ \left(
i_{1},i_{2},\ldots,i_{n}\right)  \mapsto\left(  i_{s_{1}},i_{s_{2}}%
,\ldots,i_{s_{\ell}}\right) \\
\text{is a bijection}%
\end{array}
\right) \\
&  =\sum_{\left(  i_{1},i_{2},\ldots,i_{n}\right)  \in\mathcal{I}}x_{i_{1}%
}x_{i_{2}}\cdots x_{i_{n}}=\sum_{\substack{i_{1}\leq i_{2}\leq\cdots\leq
i_{n};\\\left\{  j\in\left[  n-1\right]  \ \mid\ i_{j}<i_{j+1}\right\}
\subseteq D\left(  \alpha\right)  }}x_{i_{1}}x_{i_{2}}\cdots x_{i_{n}}%
\end{align*}
(by (\ref{pf.prop.SMalpha.D.RHS=})). This proves Proposition
\ref{prop.SMalpha.D}.
\end{proof}

Let us now prove the main statements made in Example
\ref{exam.antipode.Gammaw} (b):

\begin{proposition}
\label{prop.exam.antipode.Gammaw.b}Let $\alpha=\left(  \alpha_{1},\alpha
_{2},\ldots,\alpha_{\ell}\right)  $ be a composition of a nonnegative integer
$n$. Then,%
\begin{align*}
S\left(  M_{\alpha}\right)   &  =\left(  -1\right)  ^{\ell}\sum_{i_{1}\geq
i_{2}\geq\cdots\geq i_{\ell}}x_{i_{1}}^{\alpha_{1}}x_{i_{2}}^{\alpha_{2}%
}\cdots x_{i_{\ell}}^{\alpha_{\ell}}=\left(  -1\right)  ^{\ell}\sum_{i_{1}\leq
i_{2}\leq\cdots\leq i_{\ell}}x_{i_{1}}^{\alpha_{\ell}}x_{i_{2}}^{\alpha
_{\ell-1}}\cdots x_{i_{\ell}}^{\alpha_{1}}\\
&  =\left(  -1\right)  ^{\ell}\sum_{\substack{\gamma\text{ is a composition of
}n;\\D\left(  \gamma\right)  \subseteq D\left(  \left(  \alpha_{\ell}%
,\alpha_{\ell-1},\ldots,\alpha_{1}\right)  \right)  }}M_{\gamma}.
\end{align*}

\end{proposition}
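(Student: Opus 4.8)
The plan is to establish the three claimed equalities in turn, reading the formula from left to right. The core input is the antipode theorem, which I apply to a realization of $M_\alpha$ as a $\Gamma$-function; the remaining two equalities are elementary manipulations that invoke the appendix lemmas on $D(\alpha)$.

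First I would take $E = \left\{ 1, 2, \ldots, \ell \right\}$ with $<_1$ the order inherited from $\ZZ$ and $>_1$ its opposite relation, and define $w : E \to \left\{ 1, 2, 3, \ldots \right\}$ by $w(i) = \alpha_i$; thus $\alpha = (w(1), w(2), \ldots, w(\ell))$. By Proposition~\ref{prop.example.Gamma.b}, the double poset $(E, <_1, >_1)$ is special and satisfies $\Gamma((E, <_1, >_1), w) = M_\alpha$; being special, it is in particular tertispecial, so Theorem~\ref{thm.antipode.Gammaw} applies. Here the first order is $<_1$ and the chosen second order is $>_1$, while the opposite of the first order $<_1$ is again $>_1$; hence the theorem yields
\[
S(M_\alpha) = S(\Gamma((E, <_1, >_1), w)) = (-1)^{\left|E\right|} \Gamma((E, >_1, >_1), w) = (-1)^\ell \Gamma((E, >_1, >_1), w) .
\]
Since $(E, >_1, >_1)$ is precisely the double poset $\mathbf{E}'$ of Proposition~\ref{prop.example.Gamma.Sb}, that proposition evaluates the right-hand side and delivers the first claimed equality.

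For the second equality I would reverse the summation index via the bijection $(i_1, i_2, \ldots, i_\ell) \mapsto (i_\ell, i_{\ell-1}, \ldots, i_1)$ on $\ell$-tuples of positive integers. This bijection carries the weakly decreasing tuples ($i_1 \geq \cdots \geq i_\ell$) onto the weakly increasing ones ($i_1 \leq \cdots \leq i_\ell$) and transforms the monomial $\prod_{k=1}^\ell x_{i_k}^{\alpha_k}$ into $\prod_{k=1}^\ell x_{i_k}^{\alpha_{\ell+1-k}} = x_{i_1}^{\alpha_\ell} x_{i_2}^{\alpha_{\ell-1}} \cdots x_{i_\ell}^{\alpha_1}$, which is exactly the second displayed sum.

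For the third equality I would set $\widetilde{\alpha} = (\alpha_\ell, \alpha_{\ell-1}, \ldots, \alpha_1)$, again a composition of $n$, with entries $\widetilde{\alpha}_k = \alpha_{\ell+1-k}$. The middle expression is then $\sum_{i_1 \leq \cdots \leq i_\ell} x_{i_1}^{\widetilde{\alpha}_1} \cdots x_{i_\ell}^{\widetilde{\alpha}_\ell}$, to which I would apply Proposition~\ref{prop.SMalpha.D} (with $\widetilde{\alpha}$ in place of $\alpha$) and then Corollary~\ref{cor.Dalpha.SM} (again with $\widetilde{\alpha}$): the first rewrites the sum as one over weakly increasing $n$-tuples whose set of ascents $\left\{ j : i_j < i_{j+1} \right\}$ is contained in $D(\widetilde{\alpha})$, and the second converts that into $\sum_{D(\gamma) \subseteq D(\widetilde{\alpha})} M_\gamma$; since $\widetilde{\alpha} = (\alpha_\ell, \alpha_{\ell-1}, \ldots, \alpha_1)$, this is the third claimed equality. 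Everything is routine given the cited results; the only point demanding attention is the notational coincidence in the first step, where $>_1$ serves simultaneously as the chosen second order of $(E, <_1, >_1)$ and as the opposite of its first order, so that the output double poset is $(E, >_1, >_1)$ and not something else. Once this is disentangled, each step is a direct appeal to an already-established result or an elementary re-indexing.
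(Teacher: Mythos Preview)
Your proof is correct and follows essentially the same route as the paper's own proof: realize $M_\alpha$ as $\Gamma((E,<_1,>_1),w)$ via Proposition~\ref{prop.example.Gamma.b}, apply Theorem~\ref{thm.antipode.Gammaw} to get $(-1)^\ell\Gamma((E,>_1,>_1),w)$, evaluate this via Proposition~\ref{prop.example.Gamma.Sb}, reverse the index for the second equality, and then invoke Proposition~\ref{prop.SMalpha.D} and Corollary~\ref{cor.Dalpha.SM} applied to $(\alpha_\ell,\ldots,\alpha_1)$ for the third.
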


\begin{proof}
[Proof of Proposition \ref{prop.exam.antipode.Gammaw.b}.]Let $E=\left\{
1,2,\ldots,\ell\right\}  $. Thus, $\left\vert E\right\vert =\ell$.

Let $w:\left\{  1,2,\ldots,\ell\right\}  \rightarrow\left\{  1,2,3,\ldots
\right\}  $ be the map sending every $i$ to $\alpha_{i}$. Thus, $w$ is a map
from $\left\{  1,2,\ldots,\ell\right\}  $ to $\left\{  1,2,3,\ldots\right\}
$. In other words, $w$ is a map from $E$ to $\left\{  1,2,3,\ldots\right\}  $
(since $E=\left\{  1,2,\ldots,\ell\right\}  $). Now, the definition of $w$
yields $w\left(  i\right)  =\alpha_{i}$ for every $i\in\left\{  1,2,\ldots
,\ell\right\}  $. In other words, $\alpha_{i}=w\left(  i\right)  $ for every
$i\in\left\{  1,2,\ldots,\ell\right\}  $. 

Let $<_{1}$ be the restriction of the standard relation $<$ on $\mathbb{Z}$ to
the subset $E$. (Thus, two elements $e$ and $f$ of $E$ satisfy $e<_{1}f$ if
and only if $e<f$.) Let $>_{1}$ be the opposite relation of $<_{1}$. (Thus,
two elements $e$ and $f$ of $E$ satisfy $e>_{1}f$ if and only if $f<_{1}e$.)
Let ${\mathbf{E}}=\left(  E,<_{1},>_{1}\right)  $. Then, Proposition
\ref{prop.example.Gamma.b} (a) shows that $\mathbf{E}$ is a special double
poset. The double poset $\mathbf{E}$ is special, thus
semispecial\footnote{since every special double poset is semispecial}, and
therefore tertispecial\footnote{since every semispecial double poset is
tertispecial}. Hence, Theorem \ref{thm.antipode.Gammaw} (applied to $>_{1}$
instead of $<_{2}$) yields%
\begin{align}
S\left(  \Gamma\left(  \left(  E,<_{1},>_{1}\right)  ,w\right)  \right)   &
=\underbrace{\left(  -1\right)  ^{\left\vert E\right\vert }}%
_{\substack{=\left(  -1\right)  ^{\ell}\\\text{(since }\left\vert E\right\vert
=\ell\text{)}}}\Gamma\left(  \left(  E,>_{1},>_{1}\right)  ,w\right)
\nonumber\\
&  =\left(  -1\right)  ^{\ell}\Gamma\left(  \left(  E,>_{1},>_{1}\right)
,w\right)  . \label{pf.prop.exam.antipode.Gammaw.b.1}%
\end{align}

We have%
\[
\alpha=\left(  \alpha_{1},\alpha_{2},\ldots,\alpha_{\ell}\right)  =\left(
w\left(  1\right)  ,w\left(  2\right)  ,\ldots,w\left(  \ell\right)  \right)
\]
(since $\alpha_{i}=w\left(  i\right)  $ for every $i\in\left\{  1,2,\ldots
,\ell\right\}  $). Hence, Proposition \ref{prop.example.Gamma.b} (b) shows
that $\alpha$ is a composition and satisfies $\Gamma\left(  {\mathbf{E}%
},w\right)  =M_{\alpha}$. Thus, $M_{\alpha}=\Gamma\left(
\underbrace{{\mathbf{E}}}_{=\left(  E,<_{1},>_{1}\right)  },w\right)
=\Gamma\left(  \left(  E,<_{1},>_{1}\right)  ,w\right)  $. Applying the map
$S$ to both sides of this equality, we obtain%
\begin{equation}
S\left(  M_{\alpha}\right)  =S\left(  \Gamma\left(  \left(  E,<_{1}%
,>_{1}\right)  ,w\right)  \right)  =\left(  -1\right)  ^{\ell}\Gamma\left(
\left(  E,>_{1},>_{1}\right)  ,w\right)
\label{pf.prop.exam.antipode.Gammaw.b.2}%
\end{equation}
(by (\ref{pf.prop.exam.antipode.Gammaw.b.1})).

Let ${\mathbf{E}}^{\prime}=\left(  E,>_{1},>_{1}\right)  $. Then, Proposition
\ref{prop.example.Gamma.Sb} (a) shows that $\mathbf{E}^{\prime}$ is a special
double poset. Proposition \ref{prop.example.Gamma.Sb} (b) furthermore shows
that%
\[
\Gamma\left(  {\mathbf{E}}^{\prime},w\right)  =\sum_{i_{1}\geq i_{2}\geq
\cdots\geq i_{\ell}}x_{i_{1}}^{\alpha_{1}}x_{i_{2}}^{\alpha_{2}}\cdots
x_{i_{\ell}}^{\alpha_{\ell}}.
\]
Now, (\ref{pf.prop.exam.antipode.Gammaw.b.2}) becomes%
\begin{align}
S\left(  M_{\alpha}\right)   &  =\left(  -1\right)  ^{\ell}\Gamma\left(
\underbrace{\left(  E,>_{1},>_{1}\right)  }_{=\mathbf{E}^{\prime}},w\right)
=\left(  -1\right)  ^{\ell}\underbrace{\Gamma\left(  \mathbf{E}^{\prime
},w\right)  }_{=\sum_{i_{1}\geq i_{2}\geq\cdots\geq i_{\ell}}x_{i_{1}}%
^{\alpha_{1}}x_{i_{2}}^{\alpha_{2}}\cdots x_{i_{\ell}}^{\alpha_{\ell}}%
}\nonumber\\
&  =\left(  -1\right)  ^{\ell}\sum_{i_{1}\geq i_{2}\geq\cdots\geq i_{\ell}%
}x_{i_{1}}^{\alpha_{1}}x_{i_{2}}^{\alpha_{2}}\cdots x_{i_{\ell}}^{\alpha
_{\ell}}\label{pf.prop.exam.antipode.Gammaw.b.result1}\\
&  =\left(  -1\right)  ^{\ell}\underbrace{\sum_{i_{\ell}\geq i_{\ell-1}%
\geq\cdots\geq i_{1}}}_{=\sum_{i_{1}\leq i_{2}\leq\cdots\leq i_{\ell}}%
}\underbrace{x_{i_{\ell}}^{\alpha_{1}}x_{i_{\ell-1}}^{\alpha_{2}}\cdots
x_{i_{1}}^{\alpha_{\ell}}}_{=x_{i_{1}}^{\alpha_{\ell}}x_{i_{2}}^{\alpha
_{\ell-1}}\cdots x_{i_{\ell}}^{\alpha_{1}}}\nonumber\\
&  \ \ \ \ \ \ \ \ \ \ \left(
\begin{array}
[c]{c}%
\text{here, we have substituted }\left(  i_{\ell},i_{\ell-1},\ldots
,i_{1}\right) \\
\text{for }\left(  i_{1},i_{2},\ldots,i_{\ell}\right)  \text{ in the sum}%
\end{array}
\right) \nonumber\\
&  =\left(  -1\right)  ^{\ell}\sum_{i_{1}\leq i_{2}\leq\cdots\leq i_{\ell}%
}x_{i_{1}}^{\alpha_{\ell}}x_{i_{2}}^{\alpha_{\ell-1}}\cdots x_{i_{\ell}%
}^{\alpha_{1}}. \label{pf.prop.exam.antipode.Gammaw.b.result2}%
\end{align}

Now, recall that $\left(  \alpha_{1},\alpha_{2},\ldots,\alpha_{\ell}\right)
=\alpha$ is a composition of $n$. Thus, $\alpha_{1}+\alpha_{2}+\cdots
+\alpha_{\ell}=n$. Hence, $\alpha_{\ell}+\alpha_{\ell-1}+\cdots+\alpha
_{1}=\alpha_{1}+\alpha_{2}+\cdots+\alpha_{\ell}=n$. Thus, $\left(
\alpha_{\ell},\alpha_{\ell-1},\ldots,\alpha_{1}\right)  $ is a composition of
$n$. Therefore, Corollary \ref{cor.Dalpha.SM} (applied to $\left(
\alpha_{\ell},\alpha_{\ell-1},\ldots,\alpha_{1}\right)  $ instead of $\alpha$)
yields
\begin{equation}
\sum_{\substack{i_{1}\leq i_{2}\leq\cdots\leq i_{n};\\\left\{  j\in\left[
n-1\right]  \ \mid\ i_{j}<i_{j+1}\right\}  \subseteq D\left(  \left(
\alpha_{\ell},\alpha_{\ell-1},\ldots,\alpha_{1}\right)  \right)  }}x_{i_{1}%
}x_{i_{2}}\cdots x_{i_{n}}=\sum_{\substack{\beta\text{ is a composition of
}n;\\D\left(  \beta\right)  \subseteq D\left(  \left(  \alpha_{\ell}%
,\alpha_{\ell-1},\ldots,\alpha_{1}\right)  \right)  }}M_{\beta}.
\label{pf.prop.exam.antipode.Gammaw.b.8}%
\end{equation}
(Here, we are using the notations of Definition \ref{def.k} and Definition
\ref{def.Dalpha}.) On the other hand, Proposition \ref{prop.SMalpha.D}
(applied to $\left(  \alpha_{\ell},\alpha_{\ell-1},\ldots,\alpha_{1}\right)  $
and $\alpha_{\ell+1-k}$ instead of $\alpha$ and $\alpha_{k}$) yields%
\begin{align*}
\sum_{i_{1}\leq i_{2}\leq\cdots\leq i_{\ell}}x_{i_{1}}^{\alpha_{\ell}}%
x_{i_{2}}^{\alpha_{\ell-1}}\cdots x_{i_{\ell}}^{\alpha_{1}}  &  =\sum
_{\substack{i_{1}\leq i_{2}\leq\cdots\leq i_{n};\\\left\{  j\in\left[
n-1\right]  \ \mid\ i_{j}<i_{j+1}\right\}  \subseteq D\left(  \left(
\alpha_{\ell},\alpha_{\ell-1},\ldots,\alpha_{1}\right)  \right)  }}x_{i_{1}%
}x_{i_{2}}\cdots x_{i_{n}}\\
&  =\sum_{\substack{\beta\text{ is a composition of }n;\\D\left(
\beta\right)  \subseteq D\left(  \left(  \alpha_{\ell},\alpha_{\ell-1}%
,\ldots,\alpha_{1}\right)  \right)  }}M_{\beta}\ \ \ \ \ \ \ \ \ \ \left(
\text{by (\ref{pf.prop.exam.antipode.Gammaw.b.8})}\right)  .
\end{align*}
Hence, (\ref{pf.prop.exam.antipode.Gammaw.b.result2}) becomes%
\[
S\left(  M_{\alpha}\right)  =\left(  -1\right)  ^{\ell}\underbrace{\sum
_{i_{1}\leq i_{2}\leq\cdots\leq i_{\ell}}x_{i_{1}}^{\alpha_{\ell}}x_{i_{2}%
}^{\alpha_{\ell-1}}\cdots x_{i_{\ell}}^{\alpha_{1}}}_{=\sum_{\substack{\beta
\text{ is a composition of }n;\\D\left(  \beta\right)  \subseteq D\left(
\left(  \alpha_{\ell},\alpha_{\ell-1},\ldots,\alpha_{1}\right)  \right)
}}M_{\beta}}=\left(  -1\right)  ^{\ell}\sum_{\substack{\beta\text{ is a
composition of }n;\\D\left(  \beta\right)  \subseteq D\left(  \left(
\alpha_{\ell},\alpha_{\ell-1},\ldots,\alpha_{1}\right)  \right)  }}M_{\beta}.
\]
This equality, combined with (\ref{pf.prop.exam.antipode.Gammaw.b.result1})
and (\ref{pf.prop.exam.antipode.Gammaw.b.result2}), proves Proposition
\ref{prop.exam.antipode.Gammaw.b}.
\end{proof}

\subsection{The antipode of $F_{\alpha}$}

Recall how we defined a power series $F_{\alpha}\in\operatorname*{QSym}$ for
every composition $\alpha$ in Definition \ref{def.Falpha}. We shall now study
the antipode of this $F_{\alpha}$. First, let us introduce an operation on compositions:

\begin{definition}
\label{def.rev}Let $\alpha=\left(  \alpha_{1},\alpha_{2},\ldots,\alpha_{\ell
}\right)  $ be a composition of a nonnegative integer $n$. Thus, $\left(
\alpha_{1},\alpha_{2},\ldots,\alpha_{\ell}\right)  $ is a composition of $n$.
In other words, $\alpha_{1},\alpha_{2},\ldots,\alpha_{\ell}$ are positive
integers, and their sum is $\alpha_{1}+\alpha_{2}+\cdots+\alpha_{\ell}=n$.
Hence, $\alpha_{\ell}+\alpha_{\ell-1}+\cdots+\alpha_{1}=\alpha_{1}+\alpha
_{2}+\cdots+\alpha_{\ell}=n$. Thus, $\left(  \alpha_{\ell},\alpha_{\ell
-1},\ldots,\alpha_{1}\right)  $ is a composition of $n$ as well. We denote
this composition $\left(  \alpha_{\ell},\alpha_{\ell-1},\ldots,\alpha
_{1}\right)  $ by $\operatorname*{rev}\alpha$, and we call it the
\textit{reversal} of $\alpha$.
\end{definition}

Let us pause to see how this allows rewriting part of Proposition
\ref{prop.exam.antipode.Gammaw.b}:

\begin{proposition}
\label{prop.exam.antipode.Gammaw.b.rev}Let $\alpha$ be a composition of a
nonnegative integer $n$. Let $\ell\in\mathbb{N}$ be such that $\alpha$ is an
$\ell$-tuple. Consider the map $D:\operatorname*{Comp}\nolimits_{n}%
\rightarrow\mathcal{P}\left(  \left[  n-1\right]  \right)  $ defined in
Definition \ref{def.Dalpha.D}. Then,%
\[
S\left(  M_{\alpha}\right)  =\left(  -1\right)  ^{\ell}\sum_{\substack{\gamma
\text{ is a composition of }n;\\D\left(  \gamma\right)  \subseteq D\left(
\operatorname*{rev}\alpha\right)  }}M_{\gamma}.
\]

\end{proposition}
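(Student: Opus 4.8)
The plan is to deduce this proposition directly from Proposition \ref{prop.exam.antipode.Gammaw.b}, since the two statements differ only notationally: the composition $\left(\alpha_\ell, \alpha_{\ell-1}, \ldots, \alpha_1\right)$ appearing in the summation condition of Proposition \ref{prop.exam.antipode.Gammaw.b} is, by Definition \ref{def.rev}, precisely the reversal $\operatorname{rev}\alpha$.

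First I would write $\alpha$ explicitly in the form $\left(\alpha_1, \alpha_2, \ldots, \alpha_\ell\right)$. This is legitimate because $\alpha$ is an $\ell$-tuple by hypothesis, and a composition has a unique length, so the integer $\ell$ here coincides with the length $\ell$ used throughout Proposition \ref{prop.exam.antipode.Gammaw.b}. Since $\alpha$ is a composition of $n$, Definition \ref{def.rev} then gives that $\operatorname{rev}\alpha = \left(\alpha_\ell, \alpha_{\ell-1}, \ldots, \alpha_1\right)$ is again a composition of $n$; in particular $\operatorname{rev}\alpha \in \operatorname{Comp}_n$, so that $D\left(\operatorname{rev}\alpha\right)$ is a well-defined element of $\mathcal{P}\left(\left[n-1\right]\right)$.

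Next I would invoke Proposition \ref{prop.exam.antipode.Gammaw.b} for this $\alpha$ (together with its chosen presentation $\left(\alpha_1, \alpha_2, \ldots, \alpha_\ell\right)$), which yields
\[
S\left(M_\alpha\right)
= \left(-1\right)^\ell
  \sum_{\substack{\gamma \text{ is a composition of } n; \\
        D\left(\gamma\right) \subseteq
        D\left(\left(\alpha_\ell, \alpha_{\ell-1}, \ldots, \alpha_1\right)\right)}}
  M_\gamma .
\]
Substituting the equality $\left(\alpha_\ell, \alpha_{\ell-1}, \ldots, \alpha_1\right) = \operatorname{rev}\alpha$ into the condition $D\left(\gamma\right) \subseteq D\left(\left(\alpha_\ell, \ldots, \alpha_1\right)\right)$ on the summation index transforms this into
\[
S\left(M_\alpha\right)
= \left(-1\right)^\ell
  \sum_{\substack{\gamma \text{ is a composition of } n; \\
        D\left(\gamma\right) \subseteq D\left(\operatorname{rev}\alpha\right)}}
  M_\gamma ,
\]
which is exactly the claim.

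There is no substantive obstacle here: the only point requiring care is the bookkeeping identification of the integer $\ell$ in the hypothesis with the length of $\alpha$ implicitly used in Proposition \ref{prop.exam.antipode.Gammaw.b}, and this is immediate from the uniqueness of the length of a composition. Everything else is a verbatim rewriting of an already-proven formula using the definition of $\operatorname{rev}$.
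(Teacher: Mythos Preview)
Your proposal is correct and follows essentially the same approach as the paper: write $\alpha = (\alpha_1, \ldots, \alpha_\ell)$, observe that $\operatorname{rev}\alpha = (\alpha_\ell, \ldots, \alpha_1)$ by definition, invoke Proposition~\ref{prop.exam.antipode.Gammaw.b}, and substitute. The only difference is that you add a remark about identifying $\ell$ with the length of $\alpha$, which the paper leaves implicit.
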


\begin{proof}
[Proof of Proposition \ref{prop.exam.antipode.Gammaw.b.rev}.]Write $\alpha$ in
the form $\alpha=\left(  \alpha_{1},\alpha_{2},\ldots,\alpha_{\ell}\right)  $
(this is possible since $\alpha$ is an $\ell$-tuple). Then, the definition of
$\operatorname*{rev}\alpha$ yields $\operatorname*{rev}\alpha=\left(
\alpha_{\ell},\alpha_{\ell-1},\ldots,\alpha_{1}\right)  $. But Proposition
\ref{prop.exam.antipode.Gammaw.b} yields%
\[
S\left(  M_{\alpha}\right)  =\left(  -1\right)  ^{\ell}\sum_{\substack{\gamma
\text{ is a composition of }n;\\D\left(  \gamma\right)  \subseteq D\left(
\left(  \alpha_{\ell},\alpha_{\ell-1},\ldots,\alpha_{1}\right)  \right)
}}M_{\gamma}=\left(  -1\right)  ^{\ell}\sum_{\substack{\gamma\text{ is a
composition of }n;\\D\left(  \gamma\right)  \subseteq D\left(
\operatorname*{rev}\alpha\right)  }}M_{\gamma}%
\]
(since $\left(  \alpha_{\ell},\alpha_{\ell-1},\ldots,\alpha_{1}\right)
=\operatorname*{rev}\alpha$). This proves Proposition
\ref{prop.exam.antipode.Gammaw.b.rev}.
\end{proof}

Proposition \ref{prop.exam.antipode.Gammaw.b.rev} is easily seen to be
equivalent to \cite[Theorem 5.1.11]{Reiner}.\footnote{Indeed, the $\ell\left(
\alpha\right)  $ in \cite[Theorem 5.1.11]{Reiner} is precisely what we call
$\ell$ in Proposition \ref{prop.exam.antipode.Gammaw.b.rev}, whereas the
condition \textquotedblleft$\gamma$ coarsens $\operatorname*{rev}\left(
\alpha\right)  $\textquotedblright\ in \cite[Theorem 5.1.11]{Reiner} is
equivalent to \textquotedblleft$D\left(  \gamma\right)  \subseteq D\left(
\operatorname*{rev}\alpha\right)  $\textquotedblright.}

Next, let us define the conjugate of a composition (following \cite[Definition
5.2.14]{Reiner}):

\begin{definition}
\label{def.conjugate}Let $\alpha$ be a composition of a nonnegative integer
$n$. We shall use the notations that were used in Proposition
\ref{prop.Dalpha.comp}.

We know that $\operatorname*{rev}\alpha$ is a composition of $n$; in other
words, $\operatorname*{rev}\alpha\in\operatorname*{Comp}\nolimits_{n}$. Hence,
$D\left(  \operatorname*{rev}\alpha\right)  \in\mathcal{P}\left(  \left[
n-1\right]  \right)  $. Thus, $\left[  n-1\right]  \setminus D\left(
\operatorname*{rev}\alpha\right)  \in\mathcal{P}\left(  \left[  n-1\right]
\right)  $. Hence, $\operatorname*{comp}\left(  \left[  n-1\right]  \setminus
D\left(  \operatorname*{rev}\alpha\right)  \right)  \in\operatorname*{Comp}%
\nolimits_{n}$. In other words, $\operatorname*{comp}\left(  \left[
n-1\right]  \setminus D\left(  \operatorname*{rev}\alpha\right)  \right)  $ is
a composition of $n$.

We denote this composition $\operatorname*{comp}\left(  \left[  n-1\right]
\setminus D\left(  \operatorname*{rev}\alpha\right)  \right)  $ by
$\omega\left(  \alpha\right)  $, and call it the \textit{conjugate} of
$\alpha$.
\end{definition}

\begin{remark}
\label{rmk.conjugate.D}Let $\alpha$ be a composition of a nonnegative integer
$n$. Then,%
\[
D\left(  \omega\left(  \alpha\right)  \right)  =\left[  n-1\right]  \setminus
D\left(  \operatorname*{rev}\alpha\right)
\]
(where we are using the notations that were used in Proposition
\ref{prop.Dalpha.comp}).
\end{remark}

\begin{proof}
[Proof of Remark \ref{rmk.conjugate.D}.]Proposition \ref{prop.Dalpha.comp}
shows that the maps $D$ and $\operatorname*{comp}$ are mutually inverse. Thus,
$D\circ\operatorname*{comp}=\operatorname*{id}$. Now, the definition of
$\omega\left(  \alpha\right)  $ shows that $\omega\left(  \alpha\right)
=\operatorname*{comp}\left(  \left[  n-1\right]  \setminus D\left(
\operatorname*{rev}\alpha\right)  \right)  $. Applying the map $D$ to both
sides of this equality, we obtain%
\begin{align*}
D\left(  \omega\left(  \alpha\right)  \right)   &  =D\left(
\operatorname*{comp}\left(  \left[  n-1\right]  \setminus D\left(
\operatorname*{rev}\alpha\right)  \right)  \right)  =\underbrace{\left(
D\circ\operatorname*{comp}\right)  }_{=\operatorname*{id}}\left(  \left[
n-1\right]  \setminus D\left(  \operatorname*{rev}\alpha\right)  \right) \\
&  =\left[  n-1\right]  \setminus D\left(  \operatorname*{rev}\alpha\right)  .
\end{align*}
This proves Remark \ref{rmk.conjugate.D}.
\end{proof}

\begin{lemma}
\label{lem.Dalpha.rev}Let $\alpha$ be a composition of a nonnegative integer
$n$. Consider the map $D:\operatorname*{Comp}\nolimits_{n}\rightarrow
\mathcal{P}\left(  \left[  n-1\right]  \right)  $ defined in Definition
\ref{def.Dalpha.D}.

We have $D\left(  \operatorname*{rev}\alpha\right)  =\left\{  n-u\ \mid\ u\in
D\left(  \alpha\right)  \right\}  $.
\end{lemma}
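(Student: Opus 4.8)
The plan is to express everything in terms of the partial sums of $\alpha$, as set up in Lemma~\ref{lem.Dalpha.s}. First I would write $\alpha$ in the form $\left(\alpha_1, \alpha_2, \ldots, \alpha_\ell\right)$, and for each $i \in \left\{0, 1, \ldots, \ell\right\}$ set $s_i = \alpha_1 + \alpha_2 + \cdots + \alpha_i$. Lemma~\ref{lem.Dalpha.s}~(c) then gives $D\left(\alpha\right) = \left\{s_1, s_2, \ldots, s_{\ell-1}\right\} = \left\{s_i \mid i \in \left[\ell-1\right]\right\}$, while Lemma~\ref{lem.Dalpha.s}~(e) and~(f) give $s_\ell = n$ and $s_0 = 0$. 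Consequently $\left\{n - u \mid u \in D\left(\alpha\right)\right\} = \left\{n - s_i \mid i \in \left[\ell-1\right]\right\}$.

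Next I would compute $D\left(\operatorname*{rev}\alpha\right)$ in the same fashion. By Definition~\ref{def.rev}, $\operatorname*{rev}\alpha = \left(\alpha_\ell, \alpha_{\ell-1}, \ldots, \alpha_1\right)$; writing this composition as $\left(\beta_1, \ldots, \beta_\ell\right)$ with $\beta_j = \alpha_{\ell+1-j}$ and introducing its partial sums $t_j = \beta_1 + \cdots + \beta_j$, Lemma~\ref{lem.Dalpha.s}~(c) (applied to $\operatorname*{rev}\alpha$ instead of $\alpha$) yields $D\left(\operatorname*{rev}\alpha\right) = \left\{t_j \mid j \in \left[\ell-1\right]\right\}$. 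The key computation is the telescoping identity $t_j = \alpha_\ell + \alpha_{\ell-1} + \cdots + \alpha_{\ell+1-j} = s_\ell - s_{\ell-j} = n - s_{\ell-j}$, valid for every $j \in \left[\ell\right]$ (here I use $s_\ell = n$).

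Finally I would combine these observations. Substituting $i = \ell - j$ and noting that $j \mapsto \ell - j$ is a bijection from $\left[\ell-1\right]$ to itself, I obtain $D\left(\operatorname*{rev}\alpha\right) = \left\{n - s_{\ell-j} \mid j \in \left[\ell-1\right]\right\} = \left\{n - s_i \mid i \in \left[\ell-1\right]\right\} = \left\{n - u \mid u \in D\left(\alpha\right)\right\}$, which is exactly the asserted equality.

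I expect no genuine obstacle here: the statement is a routine bookkeeping computation once the partial sums are in place, and the only point requiring a moment's care is the index reversal $j \leftrightarrow \ell - j$ together with the telescoping step. The degenerate cases $\ell = 0$ and $\ell = 1$ need no separate treatment, since then $\left[\ell-1\right]$ is empty and both sides reduce to $\varnothing$.
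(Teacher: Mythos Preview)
Your proof is correct and follows essentially the same approach as the paper: introduce the partial sums $s_i$ of $\alpha$ and $t_j$ of $\operatorname*{rev}\alpha$, use Lemma~\ref{lem.Dalpha.s}~(c) to identify both descent sets with these partial sums, establish the relation $t_j = n - s_{\ell-j}$, and reindex via $j \mapsto \ell - j$. The only cosmetic difference is that you name the entries of $\operatorname*{rev}\alpha$ as $\beta_j$ and invoke parts~(e) and~(f) of Lemma~\ref{lem.Dalpha.s} explicitly, whereas the paper works directly with $\alpha_\ell + \cdots + \alpha_{(\ell+1)-i}$.
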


\begin{proof}
[Proof of Lemma \ref{lem.Dalpha.rev}.]Write the composition $\alpha$ in the
form $\left(  \alpha_{1},\alpha_{2},\ldots,\alpha_{\ell}\right)  $. Thus,
$\alpha=\left(  \alpha_{1},\alpha_{2},\ldots,\alpha_{\ell}\right)  $. Hence,
$\left(  \alpha_{1},\alpha_{2},\ldots,\alpha_{\ell}\right)  $ is a composition
of $n$ (since $\alpha$ is a composition of $n$). Therefore, $\alpha_{1}%
,\alpha_{2},\ldots,\alpha_{\ell}$ are positive integers whose sum is $n$.
Thus, $\alpha_{1}+\alpha_{2}+\cdots+\alpha_{\ell}=n$.

For every $i\in\left\{  0,1,\ldots,\ell\right\}  $, define a nonnegative
integer $s_{i}$ by
\[
s_{i}=\alpha_{1}+\alpha_{2}+\cdots+\alpha_{i}.
\]
Lemma \ref{lem.Dalpha.s} (c) thus shows that $D\left(  \alpha\right)
=\left\{  s_{1},s_{2},\ldots,s_{\ell-1}\right\}  $.

For every $i\in\left\{  0,1,\ldots,\ell\right\}  $, define a nonnegative
integer $t_{i}$ by
\[
t_{i}=\alpha_{\ell}+\alpha_{\ell-1}+\cdots+\alpha_{\left(  \ell+1\right)
-i}.
\]
Lemma \ref{lem.Dalpha.s} (c) (applied to $\operatorname*{rev}\alpha$,
$\alpha_{\ell+1-k}$ and $t_{i}$ instead of $\alpha$, $\alpha_{k}$ and $s_{i}$)
thus shows that $D\left(  \operatorname*{rev}\alpha\right)  =\left\{
t_{1},t_{2},\ldots,t_{\ell-1}\right\}  $ (since $\operatorname*{rev}%
\alpha=\left(  \alpha_{\ell},\alpha_{\ell-1},\ldots,\alpha_{1}\right)  $ (by
the definition of $\operatorname*{rev}\alpha$)). But each $i\in\left\{
1,2,\ldots,\ell-1\right\}  $ satisfies $t_{i}=n-s_{\ell-i}$%
\ \ \ \ \footnote{\textit{Proof.} Let $i\in\left\{  1,2,\ldots,\ell-1\right\}
$. The definition of $s_{\ell-i}$ yields $s_{\ell-i}=\alpha_{1}+\alpha
_{2}+\cdots+\alpha_{\ell-i}$. But from $\alpha_{1}+\alpha_{2}+\cdots
+\alpha_{\ell}=n$, we obtain%
\[
n=\alpha_{1}+\alpha_{2}+\cdots+\alpha_{\ell}=\underbrace{\left(  \alpha
_{1}+\alpha_{2}+\cdots+\alpha_{\ell-i}\right)  }_{=s_{\ell-i}}%
+\underbrace{\left(  \alpha_{\ell-i+1}+\alpha_{\ell-i+2}+\cdots+\alpha_{\ell
}\right)  }_{\substack{=\alpha_{\ell}+\alpha_{\ell-1}+\cdots+\alpha_{\ell
-i+1}\\=\alpha_{\ell}+\alpha_{\ell-1}+\cdots+\alpha_{\left(  \ell+1\right)
-i}=t_{i}}}=s_{\ell-i}+t_{i}.
\]
Hence, $t_{i}=n-s_{\ell-i}$, qed.}. Hence,
\[
\left(  t_{1},t_{2},\ldots,t_{\ell-1}\right)  =\left(  n-s_{\ell-1}%
,n-s_{\ell-2},\ldots,n-s_{\ell-\left(  \ell-1\right)  }\right)  =\left(
n-s_{\ell-1},n-s_{\ell-2},\ldots,n-s_{1}\right)  ,
\]
so that $\left\{  t_{1},t_{2},\ldots,t_{\ell-1}\right\}  =\left\{
n-s_{\ell-1},n-s_{\ell-2},\ldots,n-s_{1}\right\}  $. Now,%
\begin{align*}
D\left(  \operatorname*{rev}\alpha\right)   &  =\left\{  t_{1},t_{2}%
,\ldots,t_{\ell-1}\right\}  =\left\{  n-s_{\ell-1},n-s_{\ell-2},\ldots
,n-s_{1}\right\} \\
&  =\left\{  n-u\ \mid\ u\in\underbrace{\left\{  s_{\ell-1},s_{\ell-2}%
,\ldots,s_{1}\right\}  }_{=\left\{  s_{1},s_{2},\ldots,s_{\ell-1}\right\}
=D\left(  \alpha\right)  }\right\}  =\left\{  n-u\ \mid\ u\in D\left(
\alpha\right)  \right\}  .
\end{align*}
This proves Lemma \ref{lem.Dalpha.rev}.
\end{proof}

Now, let us prove an analogue of Proposition \ref{prop.example.Gamma.c3}:

\begin{proposition}
\label{prop.example.Gamma.Sc3}Let $\alpha=\left(  \alpha_{1},\alpha_{2}%
,\ldots,\alpha_{\ell}\right)  $ be a composition of a nonnegative integer $n$.
Define a set $D\left(  \alpha\right)  $ as in Definition \ref{def.Dalpha}. Let
$E=\left\{  1,2,\ldots,n\right\}  $. Let $<_{1}$ be the total order on the set
$E$ inherited from $\mathbb{Z}$ (thus, two elements $a$ and $b$ of $E$ satisfy
$a<_{1}b$ if and only if they satisfy $a<b$). Let $>_{1}$ be the opposite
relation of $<_{1}$. (Thus, two elements $e$ and $f$ of $E$ satisfy $e>_{1}f$
if and only if $f<_{1}e$.) Let $<_{2}$ be a strict partial order on the set
$E$ satisfying (\ref{eq.prop.example.Gamma.c1.cond1}) and
(\ref{eq.prop.example.Gamma.c1.cond2}). We shall use the notation introduced
in Definition \ref{def.k} (that is, we shall write $\left[  k\right]  $ for
$\left\{  1,2,\ldots,k\right\}  $ when $k\in\mathbb{Z}$).

\begin{enumerate}
\item[(a)] We have%
\[
\Gamma\left(  \left(  E,>_{1},<_{2}\right)  \right)  =\sum_{\substack{i_{1}%
\geq i_{2}\geq\cdots\geq i_{n};\\i_{j}>i_{j+1}\text{ whenever }j\in\left[
n-1\right]  \setminus D\left(  \alpha\right)  }}x_{i_{1}}x_{i_{2}}\cdots
x_{i_{n}}.
\]

\item[(b)] We have%
\[
\Gamma\left(  \left(  E,>_{1},<_{2}\right)  \right)  =\sum_{\substack{i_{1}%
\leq i_{2}\leq\cdots\leq i_{n};\\i_{j}<i_{j+1}\text{ whenever }j\in\left[
n-1\right]  \setminus D\left(  \operatorname*{rev}\alpha\right)  }}x_{i_{1}%
}x_{i_{2}}\cdots x_{i_{n}}=F_{\omega\left(  \alpha\right)  }.
\]

\end{enumerate}
\end{proposition}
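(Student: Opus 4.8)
The plan is to prove part (a) as a mirror image of the proof of Proposition \ref{prop.example.Gamma.c3}, and then to derive part (b) from part (a) by a reversal substitution together with Lemma \ref{lem.Dalpha.rev} and Remark \ref{rmk.conjugate.D}.

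For part (a), I would first unwind the definition of an $\left(E, >_1, <_2\right)$-partition. Writing $>_1$ in terms of $<_1 = {<}$, a map $\pi : E \to \left\{1, 2, 3, \ldots\right\}$ is an $\left(E, >_1, <_2\right)$-partition if and only if (i) every $f < e$ satisfies $\pi\left(e\right) \leq \pi\left(f\right)$, and (ii) every $f < e$ with $f <_2 e$ satisfies $\pi\left(e\right) < \pi\left(f\right)$. Condition (i) says exactly that $\left(\pi\left(1\right), \pi\left(2\right), \ldots, \pi\left(n\right)\right)$ is weakly decreasing. I would then build a bijection $\Phi$ from the set $Z$ of all $\left(E, >_1, <_2\right)$-partitions to the set
\[
W = \left\{\left(i_1, \ldots, i_n\right) \in \left\{1,2,3,\ldots\right\}^n \ \mid\ i_1 \geq \cdots \geq i_n \text{ and } \left(i_j > i_{j+1} \text{ whenever } j \in \left[n-1\right] \setminus D\left(\alpha\right)\right)\right\},
\]
sending $\pi$ to $\left(\pi\left(1\right), \ldots, \pi\left(n\right)\right)$, exactly as in the proof of Proposition \ref{prop.example.Gamma.c3}. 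Injectivity is immediate, and combining the bijection with the weakly decreasing expansion of $\Gamma\left(\left(E, >_1, <_2\right)\right)$ yields the claimed formula.

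The crux is the well-definedness and surjectivity of $\Phi$, and for these I would establish a dual of (\ref{pf.prop.example.Gamma.c3.0}): if $u, v \in \left\{1, \ldots, n\right\}$ satisfy $u < v$ and $u <_2 v$, then $\left\{u, u+1, \ldots, v-1\right\} \setminus D\left(\alpha\right) \neq \varnothing$. This follows by contradiction: if $\left\{u, \ldots, v-1\right\} \subseteq D\left(\alpha\right)$, then (\ref{eq.prop.example.Gamma.c1.cond1}) gives $k+1 <_2 k$ for each $k \in \left\{u, \ldots, v-1\right\}$, and chaining these relations yields $v <_2 u$, contradicting $u <_2 v$ and the antisymmetry of $<_2$. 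For well-definedness, condition (ii) applied with $f = j$ and $e = j+1$, together with (\ref{eq.prop.example.Gamma.c1.cond2}) (which gives $j <_2 j+1$ for $j \in \left[n-1\right]\setminus D\left(\alpha\right)$), forces $\pi\left(j\right) > \pi\left(j+1\right)$. For surjectivity, given $\left(i_1, \ldots, i_n\right) \in W$ I set $\pi\left(k\right) = i_k$ and verify (i) (trivial) and (ii): for $f < e$ with $f <_2 e$, the dual lemma produces some $j \in \left\{f, \ldots, e-1\right\} \setminus D\left(\alpha\right)$, whence $i_f \geq i_j > i_{j+1} \geq i_e$ and so $\pi\left(e\right) < \pi\left(f\right)$.

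For part (b), I would start from the weakly decreasing sum established in part (a) and apply the substitution $\left(i_1, \ldots, i_n\right) \mapsto \left(i_n, i_{n-1}, \ldots, i_1\right)$, which is a bijection on index sequences and leaves the monomial $x_{i_1} \cdots x_{i_n}$ unchanged (the variables commute). A weakly decreasing sequence becomes weakly increasing, and a strict descent $i_j > i_{j+1}$ turns into a strict ascent at position $n - j$. The involution $j \mapsto n - j$ is a bijection of $\left[n-1\right]$ onto itself that, by Lemma \ref{lem.Dalpha.rev}, carries $D\left(\alpha\right)$ onto $D\left(\operatorname{rev}\alpha\right)$ and hence $\left[n-1\right] \setminus D\left(\alpha\right)$ onto $\left[n-1\right] \setminus D\left(\operatorname{rev}\alpha\right)$; this produces the first equality of part (b). Finally, Remark \ref{rmk.conjugate.D} gives $D\left(\omega\left(\alpha\right)\right) = \left[n-1\right] \setminus D\left(\operatorname{rev}\alpha\right)$, so by the defining formula (\ref{eq.def.Falpha.1}) for $F$ the sum obtained is exactly $F_{\omega\left(\alpha\right)}$. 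The main obstacle throughout is the index bookkeeping in the reversal substitution — correctly matching $\left[n-1\right]\setminus D\left(\alpha\right)$ against $\left[n-1\right] \setminus D\left(\operatorname{rev}\alpha\right)$ — rather than any conceptual difficulty, since the dual lemma is a routine adaptation of the argument already used for (\ref{pf.prop.example.Gamma.c3.0}).
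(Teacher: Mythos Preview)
Your proposal is correct and follows essentially the same approach as the paper: for (a) you set up the same bijection $\Phi\colon Z\to W$, $\pi\mapsto(\pi(1),\ldots,\pi(n))$, and prove the same dual of (\ref{pf.prop.example.Gamma.c3.0}) (the paper states it as $\{u,\ldots,v-1\}\not\subseteq D(\alpha)$, which is your $\{u,\ldots,v-1\}\setminus D(\alpha)\neq\varnothing$); for (b) you perform the same index reversal and invoke Lemma~\ref{lem.Dalpha.rev} and Remark~\ref{rmk.conjugate.D} just as the paper does.
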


\begin{proof}
[Proof of Proposition \ref{prop.example.Gamma.Sc3}.](a) Let us first observe a
simple fact: If $u$ and $v$ are two elements of $\left\{  1,2,\ldots
,n\right\}  $ such that $u<v$ and $u<_{2}v$, then
\begin{equation}
\left\{  u,u+1,\ldots,v-1\right\}  \not \subseteq D\left(  \alpha\right)
\label{pf.prop.example.Gamma.Sc3.0}%
\end{equation}
\footnote{\textit{Proof of (\ref{pf.prop.example.Gamma.Sc3.0}):} Let $u$ and
$v$ be two elements of $\left\{  1,2,\ldots,n\right\}  $ such that $u<v$ and
$u<_{2}v$. We must prove (\ref{pf.prop.example.Gamma.Sc3.0}).
\par
Indeed, assume (for the sake of contradiction) that $\left\{  u,u+1,\ldots
,v-1\right\}  \subseteq D\left(  \alpha\right)  $.
\par
We have $u\geq1$ (since $u\in\left\{  1,2,\ldots,n\right\}  $) and $v\leq n$
(since $v\in\left\{  1,2,\ldots,n\right\}  $). Also, $u\neq v$ (since $u<v$),
so that $v\neq u$.
\par
Now, let $k\in\left\{  u,u+1,\ldots,v-1\right\}  $ be arbitrary. Thus,
$k\in\left\{  u,u+1,\ldots,v-1\right\}  \subseteq D\left(  \alpha\right)  $.
Hence, (\ref{eq.prop.example.Gamma.c1.cond1}) (applied to $i=k$) shows that
$k+1<_{2}k$.
\par
Now, forget that we fixed $k$. We thus have shown that $k+1<_{2}k$ for each
$k\in\left\{  u,u+1,\ldots,v-1\right\}  $. In other words, we have the
relations $v<_{2}v-1$, $v-1<_{2}v-2$, $\ldots$, $u+1<_{2}u$. Since $\left(
E,<_{2}\right)  $ is a poset (because $<_{2}$ is a strict partial order on
$E$), we can combine these relations into a chain of inequalities:%
\[
v<_{2}v-1<_{2}v-2<_{2}\cdots<_{2}u.
\]
Thus, $v<_{2}u$ (since $v\neq u$). This contradicts $u<_{2}v$ (since $<_{2}$
is a strict partial order on $E$).
\par
This contradiction shows that our assumption (that $\left\{  u,u+1,\ldots
,v-1\right\}  \subseteq D\left(  \alpha\right)  $) was false. Hence, we cannot
have $\left\{  u,u+1,\ldots,v-1\right\}  \subseteq D\left(  \alpha\right)  $.
We thus must have $\left\{  u,u+1,\ldots,v-1\right\}  \not \subseteq D\left(
\alpha\right)  $. This proves (\ref{pf.prop.example.Gamma.Sc3.0}).}.

Let $Z$ be the set of all $\left(  E,>_{1},<_{2}\right)  $-partitions. The
definition of $\Gamma\left(  \left(  E,>_{1},<_{2}\right)  \right)  $ yields%
\[
\Gamma\left(  \left(  E,>_{1},<_{2}\right)  \right)  =\sum_{\pi\text{ is an
}\left(  E,>_{1},<_{2}\right)  \text{-partition}}{\mathbf{x}}_{\pi},
\]
where ${\mathbf{x}}_{\pi}=\prod_{e\in E}x_{\pi\left(  e\right)  }$. Thus,%
\begin{equation}
\Gamma\left(  \left(  E,>_{1},<_{2}\right)  \right)  =\underbrace{\sum
_{\pi\text{ is an }\left(  E,>_{1},<_{2}\right)  \text{-partition}}%
}_{\substack{=\sum_{\pi\in Z}\\\text{(since }Z\text{ is the set of
all}\\\left(  E,>_{1},<_{2}\right)  \text{-partitions)}}}{\mathbf{x}}_{\pi
}=\sum_{\pi\in Z}\mathbf{x}_{\pi}. \label{pf.prop.example.Gamma.Sc3.Gamma=}%
\end{equation}

On the other hand, define a set $W$ by%
\begin{align}
W  &  =\left\{  \left(  i_{1},i_{2},\ldots,i_{n}\right)  \in\left\{
1,2,3,\ldots\right\}  ^{n}\ \mid\ i_{1}\geq i_{2}\geq\cdots\geq i_{n}\right.
\nonumber\\
&  \ \ \ \ \ \ \ \ \ \ \left.  \text{and }\left(  i_{j}>i_{j+1}\text{ whenever
}j\in\left[  n-1\right]  \setminus D\left(  \alpha\right)  \right)  \right\}
. \label{pf.prop.example.Gamma.Sc3.W=}%
\end{align}
Thus, we have the following equality between summation signs:%
\[
\sum_{\left(  i_{1},i_{2},\ldots,i_{n}\right)  \in W}=\sum_{\substack{\left(
i_{1},i_{2},\ldots,i_{n}\right)  \in\left\{  1,2,3,\ldots\right\}
^{n};\\i_{1}\geq i_{2}\geq\cdots\geq i_{n};\\i_{j}>i_{j+1}\text{ whenever
}j\in\left[  n-1\right]  \setminus D\left(  \alpha\right)  }}=\sum
_{\substack{i_{1}\geq i_{2}\geq\cdots\geq i_{n};\\i_{j}>i_{j+1}\text{ whenever
}j\in\left[  n-1\right]  \setminus D\left(  \alpha\right)  }}.
\]

For every $\pi\in Z$, we have $\left(  \pi\left(  1\right)  ,\pi\left(
2\right)  ,\ldots,\pi\left(  n\right)  \right)  \in W$%
\ \ \ \ \footnote{\textit{Proof.} Let $\pi\in Z$. We must show that $\left(
\pi\left(  1\right)  ,\pi\left(  2\right)  ,\ldots,\pi\left(  n\right)
\right)  \in W$.
\par
First, let us see that $\left(  \pi\left(  1\right)  ,\pi\left(  2\right)
,\ldots,\pi\left(  n\right)  \right)  $ is well-defined. We have $\pi\in Z$.
In other words, $\pi$ is an $\left(  E,>_{1},<_{2}\right)  $-partition (since
$Z$ is the set of all $\left(  E,>_{1},<_{2}\right)  $-partitions). Hence,
$\pi$ is a map $E\rightarrow\left\{  1,2,3,\ldots\right\}  $. In other words,
$\pi$ is a map $\left\{  1,2,\ldots,n\right\}  \rightarrow\left\{
1,2,3,\ldots\right\}  $ (since $E=\left\{  1,2,\ldots,n\right\}  $). Thus,
$\left(  \pi\left(  1\right)  ,\pi\left(  2\right)  ,\ldots,\pi\left(
n\right)  \right)  $ is well-defined and an element of $\left\{
1,2,3,\ldots\right\}  ^{n}$.
\par
Recall the definition of an $\left(  E,>_{1},<_{2}\right)  $-partition. This
definition shows that $\pi$ is an $\left(  E,>_{1},<_{2}\right)  $-partition
if and only if it satisfies the following two assertions:
\par
\textit{Assertion }$\mathcal{A}_{1}$\textit{:} Every $e\in E$ and $f\in E$
satisfying $e>_{1}f$ satisfy $\pi\left(  e\right)  \leq\pi\left(  f\right)  $.
\par
\textit{Assertion }$\mathcal{A}_{2}$\textit{:} Every $e\in E$ and $f\in E$
satisfying $e>_{1}f$ and $f<_{2}e$ satisfy $\pi\left(  e\right)  <\pi\left(
f\right)  $.
\par
Thus, $\pi$ satisfies Assertions $\mathcal{A}_{1}$ and $\mathcal{A}_{2}$
(since $\pi$ is an $\left(  E,>_{1},<_{2}\right)  $-partition).
\par
We now shall show that
\begin{equation}
\pi\left(  k\right)  \geq\pi\left(  k+1\right)  \ \ \ \ \ \ \ \ \ \ \text{for
every }k\in\left\{  1,2,\ldots,n-1\right\}
.\label{pf.prop.example.Gamma.Sc3.ZW.1}%
\end{equation}
\par
[\textit{Proof of (\ref{pf.prop.example.Gamma.Sc3.ZW.1}):} Let $k\in\left\{
1,2,\ldots,n-1\right\}  $ be arbitrary. We shall show that $\pi\left(
k\right)  \geq\pi\left(  k+1\right)  $.
\par
We have $k\in\left\{  1,2,\ldots,n-1\right\}  $. Thus, both $k$ and $k+1$
belong to the set $\left\{  1,2,\ldots,n\right\}  $. In other words, both $k$
and $k+1$ belong to the set $E$ (since $E=\left\{  1,2,\ldots,n\right\}  $).
\par
Recall that $<_{1}$ is the total order on the set $E$ inherited from
$\mathbb{Z}$. Hence, $k<_{1}k+1$ (since $k<k+1$).
\par
But $k+1>_{1}k$ holds if and only if $k<_{1}k+1$ (since $>_{1}$ is the
opposite relation of $<_{1}$). Thus, $k+1>_{1}k$ holds (since $k<_{1}k+1$).
Therefore, Assertion $\mathcal{A}_{1}$ (applied to $e=k+1$ and $f=k$) yields
$\pi\left(  k+1\right)  \leq\pi\left(  k\right)  $. Hence, $\pi\left(
k\right)  \geq\pi\left(  k+1\right)  $. This proves
(\ref{pf.prop.example.Gamma.Sc3.ZW.1}).]
\par
Now, (\ref{pf.prop.example.Gamma.Sc3.ZW.1}) shows that $\pi\left(  1\right)
\geq\pi\left(  2\right)  \geq\cdots\geq\pi\left(  n\right)  $.
\par
Next, let us show that
\begin{equation}
\pi\left(  j\right)  >\pi\left(  j+1\right)
\ \ \ \ \ \ \ \ \ \ \text{whenever }j\in\left[  n-1\right]  \setminus D\left(
\alpha\right)  .\label{pf.prop.example.Gamma.Sc3.ZW.2}%
\end{equation}
\par
[\textit{Proof of (\ref{pf.prop.example.Gamma.Sc3.ZW.2}):} Let $j\in\left[
n-1\right]  \setminus D\left(  \alpha\right)  $. Thus, $j\in\left[
n-1\right]  $ and $j\notin D\left(  \alpha\right)  $.
\par
From $j\in\left[  n-1\right]  =\left\{  1,2,\ldots,n-1\right\}  $, we conclude
that both $j$ and $j+1$ belong to the set $\left\{  1,2,\ldots,n\right\}  $.
In other words, both $j$ and $j+1$ belong to the set $E$ (since $E=\left\{
1,2,\ldots,n\right\}  $). We have $j\in\underbrace{\left[  n-1\right]
}_{=\left\{  1,2,\ldots,n-1\right\}  }\setminus D\left(  \alpha\right)
=\left\{  1,2,\ldots,n-1\right\}  \setminus D\left(  \alpha\right)  $, so that
$j<_{2}j+1$ (by (\ref{eq.prop.example.Gamma.c1.cond2}), applied to $i=j$).
\par
Recall that $<_{1}$ is the total order on the set $E$ inherited from
$\mathbb{Z}$. Hence, $j<_{1}j+1$ (since $j<j+1$). But $j+1>_{1}j$ holds if and
only if $j<_{1}j+1$ (since $>_{1}$ is the opposite relation of $<_{1}$). Thus,
$j+1>_{1}j$ holds (since $j<_{1}j+1$). Also, $j<_{2}j+1$. Therefore, Assertion
$\mathcal{A}_{2}$ (applied to $e=j+1$ and $f=j$) yields $\pi\left(
j+1\right)  <\pi\left(  j\right)  $. In other words, $\pi\left(  j\right)
>\pi\left(  j+1\right)  $. This proves (\ref{pf.prop.example.Gamma.Sc3.ZW.2}%
).]
\par
Now, we know that $\left(  \pi\left(  1\right)  ,\pi\left(  2\right)
,\ldots,\pi\left(  n\right)  \right)  $ is an element of $\left\{
1,2,3,\ldots\right\}  ^{n}$ and satisfies $\pi\left(  1\right)  \geq\pi\left(
2\right)  \geq\cdots\geq\pi\left(  n\right)  $ and $\left(  \pi\left(
j\right)  >\pi\left(  j+1\right)  \text{ whenever }j\in\left[  n-1\right]
\setminus D\left(  \alpha\right)  \right)  $. In other words,%
\begin{align*}
\left(  \pi\left(  1\right)  ,\pi\left(  2\right)  ,\ldots,\pi\left(
n\right)  \right)   &  \in\left\{  \left(  i_{1},i_{2},\ldots,i_{n}\right)
\in\left\{  1,2,3,\ldots\right\}  ^{n}\ \mid\ i_{1}\geq i_{2}\geq\cdots\geq
i_{n}\right.  \\
&  \ \ \ \ \ \ \ \ \ \ \left.  \text{and }\left(  i_{j}>i_{j+1}\text{ whenever
}j\in\left[  n-1\right]  \setminus D\left(  \alpha\right)  \right)  \right\}
\\
&  =W.
\end{align*}
}. Hence, we can define a map $\Phi:Z\rightarrow W$ by setting%
\[
\left(  \Phi\left(  \pi\right)  =\left(  \pi\left(  1\right)  ,\pi\left(
2\right)  ,\ldots,\pi\left(  n\right)  \right)  \ \ \ \ \ \ \ \ \ \ \text{for
every }\pi\in Z\right)  .
\]
Consider this map $\Phi$. Thus, $\Phi$ is the map $Z\rightarrow W,\ \pi
\mapsto\left(  \pi\left(  1\right)  ,\pi\left(  2\right)  ,\ldots,\pi\left(
n\right)  \right)  $ (since $\Phi\left(  \pi\right)  =\left(  \pi\left(
1\right)  ,\pi\left(  2\right)  ,\ldots,\pi\left(  n\right)  \right)  $ for
every $\pi\in Z$).

The map $\Phi$ is surjective\footnote{\textit{Proof.} Let $\mathbf{g}\in W$.
We shall show that $\mathbf{g}\in\Phi\left(  Z\right)  $.
\par
We have%
\begin{align*}
\mathbf{g} &  \in W=\left\{  \left(  i_{1},i_{2},\ldots,i_{n}\right)
\in\left\{  1,2,3,\ldots\right\}  ^{n}\ \mid\ i_{1}\geq i_{2}\geq\cdots\geq
i_{n}\right.  \\
&  \ \ \ \ \ \ \ \ \ \ \left.  \text{and }\left(  i_{j}>i_{j+1}\text{ whenever
}j\in\left[  n-1\right]  \setminus D\left(  \alpha\right)  \right)  \right\}
.
\end{align*}
In other words, $\mathbf{g}$ is an $\left(  i_{1},i_{2},\ldots,i_{n}\right)
\in\left\{  1,2,3,\ldots\right\}  ^{n}$ satisfying $i_{1}\geq i_{2}\geq
\cdots\geq i_{n}$ and $\left(  i_{j}>i_{j+1}\text{ whenever }j\in\left[
n-1\right]  \setminus D\left(  \alpha\right)  \right)  $. Consider this
$\left(  i_{1},i_{2},\ldots,i_{n}\right)  $. Thus, $\mathbf{g}=\left(
i_{1},i_{2},\ldots,i_{n}\right)  $.
\par
We have $i_{1}\geq i_{2}\geq\cdots\geq i_{n}$. In other words, for any
$u\in\left\{  1,2,\ldots,n\right\}  $ and $v\in\left\{  1,2,\ldots,n\right\}
$ satisfying $u\leq v$, we have%
\begin{equation}
i_{u}\geq i_{v}.\label{pf.prop.example.Gamma.Sc3.surj.1}%
\end{equation}
Notice also that we have%
\begin{equation}
\left(  i_{j}>i_{j+1}\text{ whenever }j\in\left[  n-1\right]  \setminus
D\left(  \alpha\right)  \right)  .\label{pf.prop.example.Gamma.Sc3.surj.2}%
\end{equation}
\par
Define a map $\pi:\left\{  1,2,\ldots,n\right\}  \rightarrow\left\{
1,2,3,\ldots\right\}  $ by setting%
\[
\left(  \pi\left(  k\right)  =i_{k}\ \ \ \ \ \ \ \ \ \ \text{for every }%
k\in\left\{  1,2,\ldots,n\right\}  \right)  .
\]
Thus, $\pi$ is a map from $E$ to $\left\{  1,2,3,\ldots\right\}  $ (since
$E=\left\{  1,2,\ldots,n\right\}  $).
\par
Recall the definition of an $\left(  E,>_{1},<_{2}\right)  $-partition. This
definition shows that $\pi$ is an $\left(  E,>_{1},<_{2}\right)  $-partition
if and only if it satisfies the following two assertions:
\par
\textit{Assertion }$\mathcal{A}_{1}$\textit{:} Every $e\in E$ and $f\in E$
satisfying $e>_{1}f$ satisfy $\pi\left(  e\right)  \leq\pi\left(  f\right)  $.
\par
\textit{Assertion }$\mathcal{A}_{2}$\textit{:} Every $e\in E$ and $f\in E$
satisfying $e>_{1}f$ and $f<_{2}e$ satisfy $\pi\left(  e\right)  <\pi\left(
f\right)  $.
\par
We shall now show that $\pi$ satisfies Assertions $\mathcal{A}_{1}$ and
$\mathcal{A}_{2}$.
\par
\textit{Proof of Assertion }$\mathcal{A}_{1}$\textit{:} Let $e\in E$ and $f\in
E$ be such that $e>_{1}f$. Recall that $>_{1}$ is the opposite relation of
$<_{1}$. Hence, $e>_{1}f$ holds if and only if $f<_{1}e$. Thus, $f<_{1}e$
(since $e>_{1}f$ holds). Now, recall that $<_{1}$ is the total order on the
set $E$ inherited from $\mathbb{Z}$. Hence, $f<_{1}e$ holds if and only if
$f<e$. Thus, $f<e$ (since $f<_{1}e$ holds). Thus, $f\leq e$. Hence,
(\ref{pf.prop.example.Gamma.Sc3.surj.1}) (applied to $u=f$ and $v=e$) yields
$i_{f}\geq i_{e}$. In other words, $i_{e}\leq i_{f}$. Now, the definition of
$\pi$ yields $\pi\left(  e\right)  =i_{e}$ and $\pi\left(  f\right)  =i_{f}$.
Hence, $\pi\left(  e\right)  =i_{e}\leq i_{f}=\pi\left(  f\right)  $. This
proves Assertion $\mathcal{A}_{1}$.
\par
\textit{Proof of Assertion }$\mathcal{A}_{2}$\textit{:} Let $e\in E$ and $f\in
E$ be such that $e>_{1}f$ and $f<_{2}e$. Thus, $f<e$ (indeed, this can be
proven just as in our proof of Assertion $\mathcal{A}_{1}$).
\par
Both $e$ and $f$ are elements of $E$. In other words, both $e$ and $f$ are
elements of $\left\{  1,2,\ldots,n\right\}  $ (since $E=\left\{
1,2,\ldots,n\right\}  $). We have $f<e$ and $f<_{2}e$. Thus,
(\ref{pf.prop.example.Gamma.Sc3.0}) (applied to $u=f$ and $v=e$) yields
$\left\{  f,f+1,\ldots,e-1\right\}  \not \subseteq D\left(  \alpha\right)  $.
In other words, there exists some $j\in\left\{  f,f+1,\ldots,e-1\right\}  $
such that $j\notin D\left(  \alpha\right)  $. Consider such $j$.
\par
We have $j\in\left\{  f,f+1,\ldots,e-1\right\}  $. Thus, $f\leq j\leq e-1$.
From $j\leq e-1$, we obtain $j+1\leq e$.
\par
From $f\in\left\{  1,2,\ldots,n\right\}  $, we obtain $f\geq1$. From
$e\in\left\{  1,2,\ldots,n\right\}  $, we obtain $e\leq n$. Thus,
$j\in\left\{  f,f+1,\ldots,e-1\right\}  \subseteq\left\{  1,2,\ldots
,n-1\right\}  $ (since $f\geq1$ and $e\leq n$). Hence, both $j$ and $j+1$ are
elements of $\left\{  1,2,\ldots,n\right\}  $.
\par
We have $j\in\left\{  1,2,\ldots,n-1\right\}  =\left[  n-1\right]  $.
Combining this with $j\notin D\left(  \alpha\right)  $, we obtain $j\in\left[
n-1\right]  \setminus D\left(  \alpha\right)  $. Thus, $i_{j}>i_{j+1}$ (by
(\ref{pf.prop.example.Gamma.Sc3.surj.2})). Applying
(\ref{pf.prop.example.Gamma.Sc3.surj.1}) to $u=f$ and $v=j$, we obtain
$i_{f}\geq i_{j}$ (since $f\leq j$). Applying
(\ref{pf.prop.example.Gamma.Sc3.surj.1}) to $u=j+1$ and $v=e$, we obtain
$i_{j+1}\geq i_{e}$ (since $j+1\leq e$). Hence, $i_{f}\geq i_{j}>i_{j+1}\geq
i_{e}$, so that $i_{e}<i_{f}$.
\par
Now, the definition of $\pi$ yields $\pi\left(  e\right)  =i_{e}$ and
$\pi\left(  f\right)  =i_{f}$. Hence, $\pi\left(  e\right)  =i_{e}<i_{f}%
=\pi\left(  f\right)  $. This proves Assertion $\mathcal{A}_{2}$.
\par
Now, we have shown that $\pi$ satisfies Assertions $\mathcal{A}_{1}$ and
$\mathcal{A}_{2}$. Thus, $\pi$ is an $\left(  E,>_{1},<_{2}\right)
$-partition (since we know that $\pi$ is an $\left(  E,>_{1},<_{2}\right)
$-partition if and only if it satisfies Assertions $\mathcal{A}_{1}$ and
$\mathcal{A}_{2}$). In other words, $\pi\in Z$ (since $Z$ is the set of all
$\left(  E,>_{1},<_{2}\right)  $-partitions).
\par
Now, the definition of $\Phi$ yields%
\begin{align*}
\Phi\left(  \pi\right)   &  =\left(  \pi\left(  1\right)  ,\pi\left(
2\right)  ,\ldots,\pi\left(  n\right)  \right)  =\left(  i_{1},i_{2}%
,\ldots,i_{n}\right)  \\
&  \ \ \ \ \ \ \ \ \ \ \left(  \text{since }\pi\left(  k\right)  =i_{k}\text{
for every }k\in\left\{  1,2,\ldots,n\right\}  \right)  \\
&  =\mathbf{g}.
\end{align*}
Hence, $\mathbf{g}=\Phi\left(  \underbrace{\pi}_{\in Z}\right)  \in\Phi\left(
Z\right)  $.
\par
Let us now forget that we fixed $\mathbf{g}$. We thus have shown that
$\mathbf{g}\in\Phi\left(  Z\right)  $ for every $\mathbf{g}\in W$. In other
words, $W\subseteq\Phi\left(  Z\right)  $. In other words, the map $\Phi$ is
surjective, qed.} and injective\footnote{\textit{Proof.} Let $\pi_{1}$ and
$\pi_{2}$ be two elements of $Z$ such that $\Phi\left(  \pi_{1}\right)
=\Phi\left(  \pi_{2}\right)  $. We shall prove that $\pi_{1}=\pi_{2}$.
\par
The definition of $\Phi$ yields $\Phi\left(  \pi_{1}\right)  =\left(  \pi
_{1}\left(  1\right)  ,\pi_{1}\left(  2\right)  ,\ldots,\pi_{1}\left(
n\right)  \right)  $ and $\Phi\left(  \pi_{2}\right)  =\left(  \pi_{2}\left(
1\right)  ,\pi_{2}\left(  2\right)  ,\ldots,\pi_{2}\left(  n\right)  \right)
$. Thus,%
\[
\left(  \pi_{1}\left(  1\right)  ,\pi_{1}\left(  2\right)  ,\ldots,\pi
_{1}\left(  n\right)  \right)  =\Phi\left(  \pi_{1}\right)  =\Phi\left(
\pi_{2}\right)  =\left(  \pi_{2}\left(  1\right)  ,\pi_{2}\left(  2\right)
,\ldots,\pi_{2}\left(  n\right)  \right)  .
\]
In other words,%
\[
\pi_{1}\left(  i\right)  =\pi_{2}\left(  i\right)
\ \ \ \ \ \ \ \ \ \ \text{for every }i\in\left\{  1,2,\ldots,n\right\}  .
\]
In other words,%
\[
\pi_{1}\left(  i\right)  =\pi_{2}\left(  i\right)
\ \ \ \ \ \ \ \ \ \ \text{for every }i\in E
\]
(since $E=\left\{  1,2,\ldots,n\right\}  $). In other words, $\pi_{1}=\pi_{2}%
$.
\par
Now, forget that we fixed $\pi_{1}$ and $\pi_{2}$. We thus have shown that if
$\pi_{1}$ and $\pi_{2}$ are two elements of $Z$ such that $\Phi\left(  \pi
_{1}\right)  =\Phi\left(  \pi_{2}\right)  $, then $\pi_{1}=\pi_{2}$. In other
words, the map $\Phi$ is injective, qed.}. In other words, the map $\Phi$ is
bijective, i.e., a bijection. In other words, the map $Z\rightarrow
W,\ \pi\mapsto\left(  \pi\left(  1\right)  ,\pi\left(  2\right)  ,\ldots
,\pi\left(  n\right)  \right)  $ is a bijection (since $\Phi$ is the map
$Z\rightarrow W,\ \pi\mapsto\left(  \pi\left(  1\right)  ,\pi\left(  2\right)
,\ldots,\pi\left(  n\right)  \right)  $).

Every $\pi\in Z$ satisfies%
\begin{equation}
\mathbf{x}_{\pi}=x_{\pi\left(  1\right)  }x_{\pi\left(  2\right)  }\cdots
x_{\pi\left(  n\right)  } \label{pf.prop.example.Gamma.Sc3.x}%
\end{equation}
\footnote{\textit{Proof of (\ref{pf.prop.example.Gamma.Sc3.x}):} Let $\pi\in
Z$. The definition of $\mathbf{x}_{\pi}$ yields%
\begin{align*}
{\mathbf{x}}_{\pi}  &  =\prod_{e\in E}x_{\pi\left(  e\right)  }=\prod
_{e\in\left\{  1,2,\ldots,n\right\}  }x_{\pi\left(  e\right)  }%
\ \ \ \ \ \ \ \ \ \ \left(  \text{since }E=\left\{  1,2,\ldots,n\right\}
\right) \\
&  =x_{\pi\left(  1\right)  }x_{\pi\left(  2\right)  }\cdots x_{\pi\left(
n\right)  }.
\end{align*}
This proves (\ref{pf.prop.example.Gamma.Sc3.x}).}. Now,
(\ref{pf.prop.example.Gamma.Sc3.Gamma=}) becomes%
\begin{align*}
&  \Gamma\left(  \left(  E,>_{1},<_{2}\right)  \right) \\
&  =\sum_{\pi\in Z}\underbrace{\mathbf{x}_{\pi}}_{\substack{=x_{\pi\left(
1\right)  }x_{\pi\left(  2\right)  }\cdots x_{\pi\left(  n\right)
}\\\text{(by (\ref{pf.prop.example.Gamma.Sc3.x}))}}}=\sum_{\pi\in Z}%
x_{\pi\left(  1\right)  }x_{\pi\left(  2\right)  }\cdots x_{\pi\left(
n\right)  }\\
&  =\underbrace{\sum_{\left(  i_{1},i_{2},\ldots,i_{n}\right)  \in W}}%
_{=\sum_{\substack{i_{1}\geq i_{2}\geq\cdots\geq i_{n};\\i_{j}>i_{j+1}\text{
whenever }j\in\left[  n-1\right]  \setminus D\left(  \alpha\right)  }%
}}x_{i_{1}}x_{i_{2}}\cdots x_{i_{n}}\\
&  \ \ \ \ \ \ \ \ \ \ \left(
\begin{array}
[c]{c}%
\text{here, we have substituted }\left(  i_{1},i_{2},\ldots,i_{n}\right)
\text{ for }\left(  \pi\left(  1\right)  ,\pi\left(  2\right)  ,\ldots
,\pi\left(  n\right)  \right)  \text{,}\\
\text{since the map }Z\rightarrow W,\ \pi\mapsto\left(  \pi\left(  1\right)
,\pi\left(  2\right)  ,\ldots,\pi\left(  n\right)  \right) \\
\text{is a bijection}%
\end{array}
\right) \\
&  =\sum_{\substack{i_{1}\geq i_{2}\geq\cdots\geq i_{n};\\i_{j}>i_{j+1}\text{
whenever }j\in\left[  n-1\right]  \setminus D\left(  \alpha\right)  }%
}x_{i_{1}}x_{i_{2}}\cdots x_{i_{n}}.
\end{align*}
This proves Proposition \ref{prop.example.Gamma.Sc3} (a).

(b) Write the composition $\alpha$ in the form $\left(  \alpha_{1},\alpha
_{2},\ldots,\alpha_{\ell}\right)  $. Thus, $\alpha=\left(  \alpha_{1}%
,\alpha_{2},\ldots,\alpha_{\ell}\right)  $. The definition of
$\operatorname*{rev}\alpha$ yields $\operatorname*{rev}\alpha=\left(
\alpha_{\ell},\alpha_{\ell-1},\ldots,\alpha_{1}\right)  $.

Proposition \ref{prop.example.Gamma.Sc3} (a) yields%
\begin{align}
\Gamma\left(  \left(  E,>_{1},<_{2}\right)  \right)   &  =\sum
_{\substack{i_{1}\geq i_{2}\geq\cdots\geq i_{n};\\i_{j}>i_{j+1}\text{ whenever
}j\in\left[  n-1\right]  \setminus D\left(  \alpha\right)  }}x_{i_{1}}%
x_{i_{2}}\cdots x_{i_{n}}\nonumber\\
&  =\underbrace{\sum_{\substack{i_{n}\geq i_{n-1}\geq\cdots\geq i_{1}%
;\\i_{n+1-j}>i_{n+1-\left(  j+1\right)  }\text{ whenever }j\in\left[
n-1\right]  \setminus D\left(  \alpha\right)  }}}_{\substack{=\sum
_{\substack{i_{1}\leq i_{2}\leq\cdots\leq i_{n};\\i_{n+1-j}>i_{n+1-\left(
j+1\right)  }\text{ whenever }j\in\left[  n-1\right]  \setminus D\left(
\alpha\right)  }}\\\text{(since the condition }i_{n}\geq i_{n-1}\geq\cdots\geq
i_{1}\text{ is equivalent}\\\text{to the condition }i_{1}\leq i_{2}\leq
\cdots\leq i_{n}\text{)}}}\underbrace{x_{i_{n}}x_{i_{n-1}}\cdots x_{i_{1}}%
}_{=x_{i_{1}}x_{i_{2}}\cdots x_{i_{n}}}\nonumber\\
&  \ \ \ \ \ \ \ \ \ \ \left(
\begin{array}
[c]{c}%
\text{here, we have renamed the summation index}\\
\left(  i_{1},i_{2},\ldots,i_{n}\right)  \text{ as }\left(  i_{n}%
,i_{n-1},\ldots,i_{1}\right)
\end{array}
\right) \nonumber\\
&  =\sum_{\substack{i_{1}\leq i_{2}\leq\cdots\leq i_{n};\\i_{n+1-j}%
>i_{n+1-\left(  j+1\right)  }\text{ whenever }j\in\left[  n-1\right]
\setminus D\left(  \alpha\right)  }}x_{i_{1}}x_{i_{2}}\cdots x_{i_{n}}.
\label{pf.prop.example.Gamma.Sc3.b.1}%
\end{align}

But for every $n$-tuple $\left(  i_{1},i_{2},\ldots,i_{n}\right)  \in\left\{
1,2,3,\ldots\right\}  ^{n}$, the condition
\begin{equation}
\left(  i_{n+1-j}>i_{n+1-\left(  j+1\right)  }\text{ whenever }j\in\left[
n-1\right]  \setminus D\left(  \alpha\right)  \right)
\label{pf.prop.example.Gamma.Sc3.b.eqL}%
\end{equation}
is equivalent to the condition
\begin{equation}
\left(  i_{j}<i_{j+1}\text{ whenever }j\in\left[  n-1\right]  \setminus
D\left(  \operatorname*{rev}\alpha\right)  \right)
\label{pf.prop.example.Gamma.Sc3.b.eqR}%
\end{equation}
\footnote{\textit{Proof.} Let $\left(  i_{1},i_{2},\ldots,i_{n}\right)
\in\left\{  1,2,3,\ldots\right\}  ^{n}$ be an $n$-tuple. We must show that the
condition (\ref{pf.prop.example.Gamma.Sc3.b.eqL}) is equivalent to the
condition (\ref{pf.prop.example.Gamma.Sc3.b.eqR}).
\par
Lemma \ref{lem.Dalpha.rev} yields $D\left(  \operatorname*{rev}\alpha\right)
=\left\{  n-u\ \mid\ u\in D\left(  \alpha\right)  \right\}  $.
\par
Let us first prove that (\ref{pf.prop.example.Gamma.Sc3.b.eqL}) implies
(\ref{pf.prop.example.Gamma.Sc3.b.eqR}).
\par
\textit{Proof that (\ref{pf.prop.example.Gamma.Sc3.b.eqL}) implies
(\ref{pf.prop.example.Gamma.Sc3.b.eqR}):} Assume that
(\ref{pf.prop.example.Gamma.Sc3.b.eqL}) holds. Now, let $j\in\left[
n-1\right]  \setminus D\left(  \operatorname*{rev}\alpha\right)  $. Thus,
$j\in\left[  n-1\right]  $ but $j\notin D\left(  \operatorname*{rev}%
\alpha\right)  $.
\par
Now, we shall prove that $n-j\notin D\left(  \alpha\right)  $. Indeed, assume
the contrary. Thus, $n-j\in D\left(  \alpha\right)  $, so that $n-\left(
n-j\right)  \in\left\{  n-u\ \mid\ u\in D\left(  \alpha\right)  \right\}  $
(since $n-\left(  n-j\right)  $ has the form $n-u$ for some $u\in D\left(
\alpha\right)  $ (namely, for $u=n-j$)). Therefore, $n-\left(  n-j\right)
\in\left\{  n-u\ \mid\ u\in D\left(  \alpha\right)  \right\}  =D\left(
\operatorname*{rev}\alpha\right)  $. Since $n-\left(  n-j\right)  =j$, this
rewrites as $j\in D\left(  \operatorname*{rev}\alpha\right)  $. This
contradicts $j\notin D\left(  \operatorname*{rev}\alpha\right)  $. This
contradiction proves that our assumption was wrong. Hence, $n-j\notin D\left(
\alpha\right)  $ is proven.
\par
Also, $j\in\left[  n-1\right]  $, so that $n-j\in\left[  n-1\right]  $.
Combining this with $n-j\notin D\left(  \alpha\right)  $, we obtain
$n-j\in\left[  n-1\right]  \setminus D\left(  \alpha\right)  $. Now, we
assumed that (\ref{pf.prop.example.Gamma.Sc3.b.eqL}) holds. Hence,
(\ref{pf.prop.example.Gamma.Sc3.b.eqL}) (applied to $n-j$ instead of $j$)
shows that $i_{n+1-\left(  n-j\right)  }>i_{n+1-\left(  \left(  n-j\right)
+1\right)  }$ (since $n-j\in\left[  n-1\right]  \setminus D\left(
\alpha\right)  $). In other words, $i_{j+1}>i_{j}$ (since $n+1-\left(
n-j\right)  =j+1$ and $n+1-\left(  \left(  n-j\right)  +1\right)  =j$). In
other words, $i_{j}<i_{j+1}$.
\par
Now, forget that we fixed $j$. We thus have shown that $i_{j}<i_{j+1}$
whenever $j\in\left[  n-1\right]  \setminus D\left(  \operatorname*{rev}%
\alpha\right)  $. In other words, (\ref{pf.prop.example.Gamma.Sc3.b.eqR})
holds.
\par
Now, forget that we assumed that (\ref{pf.prop.example.Gamma.Sc3.b.eqL})
holds. We thus have shown that if (\ref{pf.prop.example.Gamma.Sc3.b.eqL})
holds, then (\ref{pf.prop.example.Gamma.Sc3.b.eqR}) holds. In other words, we
have proven that (\ref{pf.prop.example.Gamma.Sc3.b.eqL}) implies
(\ref{pf.prop.example.Gamma.Sc3.b.eqR}).
\par
Let us now prove that (\ref{pf.prop.example.Gamma.Sc3.b.eqR}) implies
(\ref{pf.prop.example.Gamma.Sc3.b.eqL}).
\par
\textit{Proof that (\ref{pf.prop.example.Gamma.Sc3.b.eqR}) implies
(\ref{pf.prop.example.Gamma.Sc3.b.eqL}):} Assume that
(\ref{pf.prop.example.Gamma.Sc3.b.eqR}) holds. Now, let $j\in\left[
n-1\right]  \setminus D\left(  \alpha\right)  $. Thus, $j\in\left[
n-1\right]  $ but $j\notin D\left(  \alpha\right)  $.
\par
Now, we shall prove that $n-j\notin D\left(  \operatorname*{rev}\alpha\right)
$. Indeed, assume the contrary. Thus, $n-j\in D\left(  \operatorname*{rev}%
\alpha\right)  $. Hence, $n-j\in D\left(  \operatorname*{rev}\alpha\right)
=\left\{  n-u\ \mid\ u\in D\left(  \alpha\right)  \right\}  $. In other words,
$n-j$ has the form $n-u$ for some $u\in D\left(  \alpha\right)  $. Consider
this $u$. Thus, $n-j=n-u$, so that $j=u$. Hence, $j=u\in D\left(
\alpha\right)  $. This contradicts $j\notin D\left(  \alpha\right)  $. This
contradiction proves that our assumption was wrong. Hence, $n-j\notin D\left(
\operatorname*{rev}\alpha\right)  $ is proven.
\par
Also, $n-j\in\left[  n-1\right]  $ (since $j\in\left[  n-1\right]  $).
Combining this with $n-j\notin D\left(  \operatorname*{rev}\alpha\right)  $,
we obtain $n-j\in\left[  n-1\right]  \setminus D\left(  \operatorname*{rev}%
\alpha\right)  $. Now, we assumed that (\ref{pf.prop.example.Gamma.Sc3.b.eqR})
holds. Hence, (\ref{pf.prop.example.Gamma.Sc3.b.eqR}) (applied to $n-j$
instead of $j$) shows that $i_{n-j}<i_{\left(  n-j\right)  +1}$ (since
$n-j\in\left[  n-1\right]  \setminus D\left(  \operatorname*{rev}%
\alpha\right)  $). In other words, $i_{\left(  n-j\right)  +1}>i_{n-j}$.
Hence,%
\begin{align*}
i_{n+1-j}  &  =i_{\left(  n-j\right)  +1}\ \ \ \ \ \ \ \ \ \ \left(
\text{since }n+1-j=\left(  n-j\right)  +1\right) \\
&  >i_{n-j}=i_{n+1-\left(  j+1\right)  }\ \ \ \ \ \ \ \ \ \ \left(
\text{since }n-j=n+1-\left(  j+1\right)  \right)  .
\end{align*}
\par
Now, forget that we fixed $j$. We thus have shown that $i_{n+1-j}%
>i_{n+1-\left(  j+1\right)  }$ whenever $j\in\left[  n-1\right]  \setminus
D\left(  \alpha\right)  $. In other words,
(\ref{pf.prop.example.Gamma.Sc3.b.eqL}) holds.
\par
Now, forget that we assumed that (\ref{pf.prop.example.Gamma.Sc3.b.eqR})
holds. We thus have shown that if (\ref{pf.prop.example.Gamma.Sc3.b.eqR})
holds, then (\ref{pf.prop.example.Gamma.Sc3.b.eqL}) holds. In other words, we
have proven that (\ref{pf.prop.example.Gamma.Sc3.b.eqR}) implies
(\ref{pf.prop.example.Gamma.Sc3.b.eqL}).
\par
We have now shown the following two facts:
\par
\begin{itemize}
\item (\ref{pf.prop.example.Gamma.Sc3.b.eqL}) implies
(\ref{pf.prop.example.Gamma.Sc3.b.eqR}).
\par
\item (\ref{pf.prop.example.Gamma.Sc3.b.eqR}) implies
(\ref{pf.prop.example.Gamma.Sc3.b.eqL}).
\end{itemize}
\par
Combining these two facts, we conclude that the condition
(\ref{pf.prop.example.Gamma.Sc3.b.eqL}) is equivalent to the condition
(\ref{pf.prop.example.Gamma.Sc3.b.eqR}). Qed.}.

Now, (\ref{pf.prop.example.Gamma.Sc3.b.1}) becomes%
\begin{align}
\Gamma\left(  \left(  E,>_{1},<_{2}\right)  \right)   &  =\underbrace{\sum
_{\substack{i_{1}\leq i_{2}\leq\cdots\leq i_{n};\\i_{n+1-j}>i_{n+1-\left(
j+1\right)  }\text{ whenever }j\in\left[  n-1\right]  \setminus D\left(
\alpha\right)  }}}_{\substack{=\sum_{\substack{i_{1}\leq i_{2}\leq\cdots\leq
i_{n};\\i_{j}<i_{j+1}\text{ whenever }j\in\left[  n-1\right]  \setminus
D\left(  \operatorname*{rev}\alpha\right)  }}\\\text{(because for every
}n\text{-tuple }\left(  i_{1},i_{2},\ldots,i_{n}\right)  \in\left\{
1,2,3,\ldots\right\}  ^{n}\text{,}\\\text{the condition }\left(
i_{n+1-j}>i_{n+1-\left(  j+1\right)  }\text{ whenever }j\in\left[  n-1\right]
\setminus D\left(  \alpha\right)  \right)  \\\text{is equivalent to the
condition }\left(  i_{j}<i_{j+1}\text{ whenever }j\in\left[  n-1\right]
\setminus D\left(  \operatorname*{rev}\alpha\right)  \right)  \text{)}%
}}x_{i_{1}}x_{i_{2}}\cdots x_{i_{n}}\nonumber\\
&  =\sum_{\substack{i_{1}\leq i_{2}\leq\cdots\leq i_{n};\\i_{j}<i_{j+1}\text{
whenever }j\in\left[  n-1\right]  \setminus D\left(  \operatorname*{rev}%
\alpha\right)  }}x_{i_{1}}x_{i_{2}}\cdots x_{i_{n}}.
\label{pf.prop.example.Gamma.Sc3.b.result1}%
\end{align}

Now, recall that $\omega\left(  \alpha\right)  $ is a composition of $n$
(since $\alpha$ is a composition of $n$). Hence, the definition of
$F_{\omega\left(  \alpha\right)  }$ yields%
\[
F_{\omega\left(  \alpha\right)  }=\sum_{\substack{i_{1}\leq i_{2}\leq
\cdots\leq i_{n};\\i_{j}<i_{j+1}\text{ whenever }j\in D\left(  \omega\left(
\alpha\right)  \right)  }}x_{i_{1}}x_{i_{2}}\cdots x_{i_{n}}=\sum
_{\substack{i_{1}\leq i_{2}\leq\cdots\leq i_{n};\\i_{j}<i_{j+1}\text{ whenever
}j\in\left[  n-1\right]  \setminus D\left(  \operatorname*{rev}\alpha\right)
}}x_{i_{1}}x_{i_{2}}\cdots x_{i_{n}}%
\]
(since $D\left(  \omega\left(  \alpha\right)  \right)  =\left[  n-1\right]
\setminus D\left(  \operatorname*{rev}\alpha\right)  $ (by Remark
\ref{rmk.conjugate.D})). Comparing this with
(\ref{pf.prop.example.Gamma.Sc3.b.result1}), we obtain $\Gamma\left(  \left(
E,>_{1},<_{2}\right)  \right)  =F_{\omega\left(  \alpha\right)  }$. This
equality and (\ref{pf.prop.example.Gamma.Sc3.b.result1}) together complete the
proof of Proposition \ref{prop.example.Gamma.Sc3} (b).
\end{proof}

Finally, we can derive a formula for the antipode of $F_{\alpha}$:

\begin{proposition}
\label{prop.SF}Let $\alpha$ be a composition of a nonnegative integer $n$.
Then, $S\left(  F_{\alpha}\right)  =\left(  -1\right)  ^{n}F_{\omega\left(
\alpha\right)  }$.
\end{proposition}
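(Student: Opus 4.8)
The plan is to realize $F_{\alpha}$ as a double-poset generating function $\Gamma$ and then feed it into the antipode theorem (Corollary~\ref{cor.antipode.Gamma}). First I would set $E=\left\{1,2,\ldots,n\right\}$ and let $<_{1}$ be the total order inherited from $\ZZ$. By Proposition~\ref{prop.example.Gamma.c1}, there exists a \emph{total} order $<_{2}$ on $E$ satisfying \eqref{eq.prop.example.Gamma.c1.cond1} and \eqref{eq.prop.example.Gamma.c1.cond2}; I fix one such $<_{2}$. Then Proposition~\ref{prop.example.Gamma.c3}, together with the defining formula \eqref{eq.def.Falpha.1} for $F_{\alpha}$ in Definition~\ref{def.Falpha}, identifies $\Gamma\left(\left(E,<_{1},<_{2}\right)\right)=F_{\alpha}$.

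Next, since $<_{2}$ is a total order, the double poset $\left(E,<_{1},<_{2}\right)$ is special, hence in particular tertispecial. Therefore Corollary~\ref{cor.antipode.Gamma} applies and gives
\[
S\left(\Gamma\left(\left(E,<_{1},<_{2}\right)\right)\right)
=\left(-1\right)^{\left|E\right|}\Gamma\left(\left(E,>_{1},<_{2}\right)\right),
\]
where $>_{1}$ denotes the opposite relation of $<_{1}$ and $\left|E\right|=n$.

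Finally, I would identify the right-hand side using the work already done for the reversed first order: Proposition~\ref{prop.example.Gamma.Sc3}~(b) shows that $\Gamma\left(\left(E,>_{1},<_{2}\right)\right)=F_{\omega\left(\alpha\right)}$, with $\omega\left(\alpha\right)$ the conjugate composition of Definition~\ref{def.conjugate}. Chaining the three equalities yields
\[
S\left(F_{\alpha}\right)
=S\left(\Gamma\left(\left(E,<_{1},<_{2}\right)\right)\right)
=\left(-1\right)^{n}\Gamma\left(\left(E,>_{1},<_{2}\right)\right)
=\left(-1\right)^{n}F_{\omega\left(\alpha\right)},
\]
which is exactly the claim.

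The proof is essentially a three-line assembly, so there is no genuine computational obstacle; the only point demanding a moment's care is that \emph{one and the same} total order $<_{2}$ must serve in both Proposition~\ref{prop.example.Gamma.c3} (to express $\Gamma\left(\left(E,<_{1},<_{2}\right)\right)$ as $F_{\alpha}$) and Proposition~\ref{prop.example.Gamma.Sc3}~(b) (to express $\Gamma\left(\left(E,>_{1},<_{2}\right)\right)$ as $F_{\omega\left(\alpha\right)}$). Both propositions require only that $<_{2}$ be a strict partial order on $E$ satisfying \eqref{eq.prop.example.Gamma.c1.cond1} and \eqref{eq.prop.example.Gamma.c1.cond2}, so a single choice of $<_{2}$ simultaneously meets both hypotheses and no compatibility condition is violated. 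All the substantive combinatorics — the existence of $<_{2}$, the two $\Gamma$-identifications, and the antipode formula — has been carried out in the auxiliary results cited above.
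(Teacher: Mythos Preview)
Your proof is correct and follows essentially the same route as the paper's own argument: realize $F_{\alpha}$ as $\Gamma\left(\left(E,<_{1},<_{2}\right)\right)$ via Proposition~\ref{prop.example.Gamma.c3}, apply Corollary~\ref{cor.antipode.Gamma} (using that $<_{2}$ is total so the double poset is special and hence tertispecial), and identify the resulting $\Gamma\left(\left(E,>_{1},<_{2}\right)\right)$ with $F_{\omega\left(\alpha\right)}$ via Proposition~\ref{prop.example.Gamma.Sc3}~(b). Your remark that a single choice of $<_{2}$ serves both identifications is well taken and matches the paper's implicit use of the same order throughout.
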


Proposition \ref{prop.SF} is \cite[(5.2.7)]{Reiner} (although \cite{Reiner} uses
the notation $L_{\alpha}$ for what we call $F_{\alpha}$).

\begin{proof}
[Proof of Proposition \ref{prop.SF}.]Write the composition $\alpha$ in the
form $\left(  \alpha_{1},\alpha_{2},\ldots,\alpha_{\ell}\right)  $. Thus,
$\alpha=\left(  \alpha_{1},\alpha_{2},\ldots,\alpha_{\ell}\right)  $. Define a
set $D\left(  \alpha\right)  $ as in Definition \ref{def.Dalpha}.

Let $E=\left\{  1,2,\ldots,n\right\}  $. Thus, $\left\vert E\right\vert =n$.

Let $<_{1}$ be the total order on the set $E$ inherited from $\mathbb{Z}$
(thus, two elements $a$ and $b$ of $E$ satisfy $a<_{1}b$ if and only if they
satisfy $a<b$). Let $>_{1}$ be the opposite relation of $<_{1}$. (Thus, two
elements $e$ and $f$ of $E$ satisfy $e>_{1}f$ if and only if $f<_{1}e$.)

Proposition \ref{prop.example.Gamma.c1} shows that there exists a total order
$<_{2}$ on the set $E$ satisfying (\ref{eq.prop.example.Gamma.c1.cond1}) and
(\ref{eq.prop.example.Gamma.c1.cond2}). Consider such a $<_{2}$. Thus, $<_{2}$
is a total order, hence a strict partial order. Hence, Proposition
\ref{prop.example.Gamma.c3} yields%
\[
\Gamma\left(  \left(  E,<_{1},<_{2}\right)  \right)  =\sum_{\substack{i_{1}%
\leq i_{2}\leq\cdots\leq i_{n};\\i_{j}<i_{j+1}\text{ whenever }j\in D\left(
\alpha\right)  }}x_{i_{1}}x_{i_{2}}\cdots x_{i_{n}}.
\]
Comparing this with (\ref{eq.def.Falpha.1}), we obtain $\Gamma\left(  \left(
E,<_{1},<_{2}\right)  \right)  =F_{\alpha}$.

We shall use the notation introduced in Definition \ref{def.k} (that is, we
shall write $\left[  k\right]  $ for $\left\{  1,2,\ldots,k\right\}  $ when
$k\in\mathbb{Z}$). Proposition \ref{prop.example.Gamma.Sc3} (b) yields%
\[
\Gamma\left(  \left(  E,>_{1},<_{2}\right)  \right)  =\sum_{\substack{i_{1}%
\leq i_{2}\leq\cdots\leq i_{n};\\i_{j}<i_{j+1}\text{ whenever }j\in\left[
n-1\right]  \setminus D\left(  \operatorname*{rev}\alpha\right)  }}x_{i_{1}%
}x_{i_{2}}\cdots x_{i_{n}}=F_{\omega\left(  \alpha\right)  }.
\]

The double poset $\left(  E,<_{1},<_{2}\right)  $ is special (since $<_{2}$ is
a total order), thus semispecial\footnote{since every special double poset is
semispecial}, and therefore tertispecial\footnote{since every semispecial
double poset is tertispecial}. Hence, Corollary \ref{cor.antipode.Gamma}
yields $S\left(  \Gamma\left(  \left(  E,<_{1},<_{2}\right)  \right)  \right)
=\left(  -1\right)  ^{\left\vert E\right\vert }\Gamma\left(  \left(
E,>_{1},<_{2}\right)  \right)  $. Since $\Gamma\left(  \left(  E,<_{1}%
,<_{2}\right)  \right)  =F_{\alpha}$, $\left\vert E\right\vert =n$ and
$\Gamma\left(  \left(  E,>_{1},<_{2}\right)  \right)  =F_{\omega\left(
\alpha\right)  }$, this rewrites as follows: $S\left(  F_{\alpha}\right)
=\left(  -1\right)  ^{n}F_{\omega\left(  \alpha\right)  }$. This proves
Proposition \ref{prop.SF}.
\end{proof}

% Appendix ends here!
\end{footnotesize}
\end{verlong}

\end{document}